\documentclass{amsart}

\usepackage{mathrsfs}
\usepackage{amscd}
\usepackage{amsmath}
\usepackage{amssymb}
\usepackage{amsthm}
\usepackage{epsf}
\usepackage{latexsym}

\usepackage{verbatim}\usepackage[all, cmtip]{xy}
\usepackage{tikz}
\usetikzlibrary{math}
\usetikzlibrary{positioning}
\usetikzlibrary{matrix}
\usepackage{float}
\usepackage[hidelinks]{hyperref}
\usepackage{comment}
\usepackage{enumitem}

\usepackage{ytableau}

\usepackage[OT2,T1]{fontenc}
\DeclareSymbolFont{cyrletters}{OT2}{wncyr}{m}{n}
\DeclareMathSymbol{\Sha}{\mathalpha}{cyrletters}{"58}

\tikzstyle{bsq}=[rectangle, draw, thick, minimum width=.5cm, minimum height=.5cm]
\tikzstyle{bver}=[rectangle, draw, thick, minimum width=1cm, minimum height=2cm]
\tikzstyle{bhor}=[rectangle, draw, thick, minimum width=2cm, minimum height=1cm]

\usepackage[left=3.2cm, right=3.2cm]{geometry}

\setlength{\textheight}{8.5 in} \setlength{\topmargin}{-.2in}
\setlength{\headheight}{.3in} \setlength{\headsep}{0.3in}
\setlength{\leftmargin}{1.5in}

\newtheorem{theorem}{Theorem}[section]
\newtheorem{lemma}[theorem]{Lemma}

\newtheorem{corollary}[theorem]{Corollary}
\newtheorem{proposition}[theorem]{Proposition}

\newtheorem{varexample}[theorem]{Example}

\theoremstyle{definition}
\newtheorem{remark}[theorem]{Remark}
\newtheorem{definition}[theorem]{Definition}

\newcommand{\Spec}{\mathrm{Spec}\,}

\DeclareSymbolFont{extraup}{U}{zavm}{m}{n}
\DeclareMathSymbol{\varheart}{\mathalpha}{extraup}{86}
\DeclareMathSymbol{\vardiamond}{\mathalpha}{extraup}{87}


\def\PP{{\textbf P}}
\def\OO{\mathcal{O}}
\def\cN{\mathcal{N}}
\def\F{\mathcal{F}}
\def\P{\mathcal{P}}

\def\E{\mathcal{E}}
\def\G{\mathcal{G}}
\def\cS{\mathcal{S}}
\def\cT{\mathcal{T}}

\def\L{\mathcal{L}}

\def\cM{\mathcal{M}}

\def\mm{\overline{\mathcal{M}}}

\def\pm{\widetilde{\mathcal{M}}}

\def\ttem{\overline{\mathcal{M}}^{\circ}}


\newcommand{\RR}{\mathbb{R}}
\newcommand{\fM}{\mathfrak{M}}
\newcommand{\M}{\overline{M}}
\newcommand{\MM}{\overline{\mathfrak{M}}}
\newcommand{\Grd}{\widetilde{\mathfrak{G}}^r_d}
\newcommand{\fU}{\mathfrak{U}}

\newcommand{\cD}{\mathcal{D}}

\newcommand{\cO}{\mathcal{O}}

\newcommand{\cA}{\mathcal{A}}
\newcommand{\cB}{\mathcal{B}}

\newcommand{\ord}{\operatorname{ord}}
\newcommand{\Trop}{\operatorname{Trop}}
\newcommand{\trop}{\operatorname{trop}}
\newcommand{\ddiv}{\operatorname{div}}
\newcommand{\Div}{\operatorname{Div}}
\newcommand{\PL}{\operatorname{PL}}

\newcommand{\val}{\operatorname{val}}
\newcommand{\Pic}{\operatorname{Pic}}
\newcommand{\Sym}{\operatorname{Sym}}

\newcommand{\vir}{\mathrm{virt}}

\newcommand{\wt}{\mathrm{wt}}

\newcommand{\an}{\mathrm{an}}

\newenvironment{example}{\begin{varexample}
\begin{normalfont}}{\end{normalfont}
\end{varexample}}



\begin{document}
\title{The Kodaira dimensions of $\mm_{22}$ and $\mm_{23}$}
\author[G. Farkas]{Gavril Farkas}
\address{Gavril Farkas: Institut f\"ur Mathematik,   Humboldt-Universit\"at zu Berlin \hfill \newline\texttt{}
\indent Unter den Linden 6,
10099 Berlin, Germany}
\email{{\tt farkas@math.hu-berlin.de}}
\author[D. Jensen]{David Jensen}
\address{David Jensen: Department of Mathematics,  University of Kentucky \hfill \newline\texttt{}
\indent 733 Patterson Office Tower,
Lexington, KY 40506--0027, USA}
\email{{\tt dave.jensen@uky.edu}}
\author[S. Payne]{Sam Payne}
\address{Sam Payne: Department of Mathematics,  University of Texas at Austin \hfill \newline\texttt{}
\indent 2515 Speedway, RLM 8.100,
Austin, TX 78712, USA}
\email{{\tt sampayne@utexas.edu}}
\date{\today}
\bibliographystyle{alpha}

\vspace{-10 pt}

\begin{abstract}
We prove that the moduli spaces of curves of genus 22 and 23 are of general type.
To do this, we calculate certain virtual divisor classes of small slope associated to linear series of rank $6$ with quadric relations.  We then develop new tropical methods for studying linear series and independence of quadrics and show that these virtual classes are represented by effective divisors.
\end{abstract}

\maketitle

\setcounter{tocdepth}{1}
\tableofcontents

\vspace{-15 pt}

\section{Introduction}

Many of the familiar moduli spaces in algebraic geometry, such as those parametrizing curves, abelian varieties, or $K3$ surfaces, have infinitely many irreducible components, of which all but finitely many are of general type.  The remaining few components are typically uniruled.  Understanding which components are uniruled and which are of general type is often difficult.  Indeed, aside from the moduli of spin curves \cite{FarkasVerra14}, all of the standard moduli spaces include notorious open cases, that is, components whose Kodaira dimensions are unknown.  The present paper aims to resolve two long-standing cases for the moduli space $\mm_g$ of curves of genus $g$.

\begin{theorem}
\label{thm:generaltype}
The moduli spaces $\mm_{22}$ and $\mm_{23}$ are of general type.
\end{theorem}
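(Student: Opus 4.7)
The plan follows the standard slope strategy. By the Harris-Mumford formula, the canonical class $K_{\mm_g}$ has slope $13/2$; combined with the results on extension of pluricanonical forms across the singularities and boundary of $\mm_g$ (which apply for $g \geq 4$), it suffices to exhibit an effective divisor $D \equiv a\lambda - \sum_i b_i \delta_i$ with slope $s(D) := a/\min_i b_i < 13/2$ in order to conclude that $K_{\mm_g}$ is big, and hence that $\mm_g$ is of general type.

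In each case I would obtain $D$ as the push-forward of a Strong Maximal Rank locus on a Brill-Noether stack of rank-six linear series. Take $r = 6$ with $d = 25$ for $g = 22$ (so that $\rho(g,r,d) = 1$) and $d = 26$ for $g = 23$ (so that $\rho(g,r,d) = 2$). Over $\Grd \to \mm_g$, the multiplication of sections induces a morphism of vector bundles
\[
\mu_2 \colon \Sym^2 \pi_* \cL \longrightarrow \pi_* \cL^{\otimes 2}
\]
of ranks $\binom{8}{2} = 28$ and $2d - g + 1$, equal to $29$ and $30$ in the two cases. Thom-Porteous assigns the degeneracy locus $\fU \subset \Grd$ where $\mu_2$ drops rank a virtual codimension of $2$ and $3$ respectively, so in both cases its push-forward to $\mm_g$ has virtual codimension exactly $1$. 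This produces a virtual divisor class $[\fU]^{\vir}$, which I would compute explicitly in terms of $\lambda$ and the $\delta_i$ via a Porteous / Grothendieck-Riemann-Roch calculation on the universal Picard scheme. The expected outcome is a class of slope strictly less than $13/2$ in both cases.

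The main obstacle, to which the new tropical techniques of the paper are addressed, is the upgrade from virtual to actual effective class. This reduces to the Strong Maximal Rank Conjecture for these particular triples $(g,r,d)$: for a general pair $(C, L)$ with $L \in W^6_d(C)$, the map $\mu_2(C, L)$ has maximum rank, so that no nonzero quadric in $\Sym^2 H^0(L)$ vanishes on the image curve in $\mathbb{P}^6$. Once SMRC is known, $\fU$ has the expected codimension and a standard excess-intersection and semicontinuity argument identifies $[\fU]^{\vir}$ with the class of an honest effective divisor on $\mm_g$. To prove SMRC I would specialize $C$ to a stable model whose dual graph supports a combinatorially tractable tropical limit linear series — a chain of elliptic components, or a similar degeneration — and use the paper's tropical machinery to translate the independence of the images $\mu_2(s_i s_j) \in H^0(L^{\otimes 2})$ into a condition on divisors on the associated metric graph. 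Exhibiting a single $(C_0, L_0)$ on which $\mu_2$ has maximum rank then lifts to the general $(C, L)$ by upper semicontinuity of the rank of a morphism of coherent sheaves, completing the proof.
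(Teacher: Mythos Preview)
Your outline captures the overall architecture correctly, but there are two genuine gaps, and the second one is precisely the subtle point that the paper singles out as requiring new work.

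\medskip

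\textbf{First gap: the semicontinuity argument proves only ordinary maximal rank.} You write that ``exhibiting a single $(C_0,L_0)$ on which $\mu_2$ has maximum rank then lifts to the general $(C,L)$ by upper semicontinuity.'' This establishes only that the \emph{generic} pair $(C,L)$ in $\Grd$ has injective $\mu_2$, i.e.\ the ordinary Maximal Rank Conjecture. What is needed is the \emph{Strong} Maximal Rank Conjecture: for a general curve $C$, \emph{every} $L\in W^6_d(C)$ has injective $\mu_2$. Since $\dim W^6_d(C)=\rho>0$ here, this means controlling a positive-dimensional family of linear series simultaneously; a single good specialization does not suffice. The paper's tropical argument is designed to handle all $L$ at once, case-by-case over the combinatorial types on a chain of loops.

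\medskip

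\textbf{Second gap: SMRC alone does not give effectivity.} You assert that ``once SMRC is known, $\fU$ has the expected codimension and a standard excess-intersection and semicontinuity argument identifies $[\fU]^{\vir}$ with the class of an honest effective divisor.'' This is not correct. SMRC shows only that $\sigma(\fU)$ has codimension $\geq 1$ in $\mm_g$; it says nothing about the dimension of $\fU$ itself. Writing $\sigma_*[\fU]^{\vir}=\sum a_i[\mathcal{Z}_i]$ over the codimension-one components $\mathcal{Z}_i$ of $\sigma(\fU)$, a component of $\fU$ of excess dimension mapping with positive-dimensional fibers onto some $\mathcal{Z}_i$ can contribute a \emph{negative} $a_i$. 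The paper proves a separate theorem (Theorem~\ref{thm:genfinite}) that $\fU$ is generically finite over each $\mathcal{Z}_i$; the proof combines further tropical independence arguments (now for linear series with imposed ramification on curves of genus $g-1$ and $g-2$) with a numerical vanishing criterion on $\mm_g$ involving pullbacks along $\jmath_2\colon\mm_{2,1}\to\mm_g$ and $\jmath_3\colon\mm_{3,1}\to\mm_g$. This step is not standard and cannot be absorbed into ``excess intersection and semicontinuity''; indeed the paper's acknowledgments record that this gap was pointed out to the authors and required the additional Theorem~\ref{thm:genfinite}.
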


\noindent
This extends earlier results of Harris, Mumford and Eisenbud, who showed that $\mm_g$ is of general type for $g\geq 24$, in the landmark papers \cite{HarrisMumford82, Harris84, EisenbudHarris87b}, and improves on the thesis result of the first author, who showed that $\mm_{23}$ has Kodaira dimension at least 2 \cite{Farkas00}. These general type statements contrast with the classical result of Severi \cite{Severi15} that $\mm_g$ is unirational for $g\leq 10$ (see \cite{ArbarelloCornalba81} for a modern treatment) and with the more recent results of many authors \cite{Sernesi81, ChangRan84, ChangRan86, Verra05, BrunoVerra05, Schreyer15}, which taken together show that $\mm_g$ is unirational for $g\leq 14$ and that $\mm_{15}$ is rationally connected.  Chang and Ran also argued that $\mm_{16}$ is  uniruled \cite{ChangRan91}, but Tseng recently found a fatal computational error in this argument \cite{Tseng19}, and this case is again open.  The Kodaira dimension of $\mm_g$ in unknown for $16 \leq g \leq 21$.

\subsection{Divisors of small slope} \label{sec:intro-smallslope}
As in the earlier proofs for $g \geq 24$, we show that $\mm_{22}$ and $\mm_{23}$ are of general type by producing effective divisors of slope less than $\frac{13}{2}$, which is the slope of the canonical divisor $K_{\mm_g}$.  The Slope Conjecture of Harris and Morrison \cite{HarrisMorrison90} predicted that all effective divisors on $\mm_g$ have slope at least $6+\frac{12}{g+1}$.  This led people to believe that $\mm_{g}$ would be uniruled for $g < 23$. The earliest known counterexample to the Slope Conjecture is the closure in $\mm_{10}$ of the locus of smooth curves lying on a $K3$ surface, which is equal to the divisorial component of the locus of curves $[X]\in \cM_{10}$ with a degree $12$ map to $\PP^4$ whose image is contained in a quadric  \cite{FarkasPopa05}.  This is the first in an infinite sequence of counterexamples in genus $2s^2+s$, for $s \geq 2$; the second is the closure in $\mm_{21}$ of the divisorial component of the locus of curves with a degree $24$ map to $\PP^6$ whose image is contained in a quadric \cite{Farkas09b, Khosla07}.

\vskip 5pt

Our divisors of small slope on $\mm_{22}$ and $\mm_{23}$ are natural generalizations of this second example on $\mm_{21}$.  Roughly speaking, the divisors $\widetilde{\mathfrak{D}}_{22}$ and $\widetilde{\mathfrak{D}}_{23}$ are the closures of the loci of smooth curves with a map to $\PP^6$ of degree $25$ and $26$, respectively, whose image is contained in a quadric. We note, however, that this example on $\mm_{21}$ is the push forward of a codimension 1 locus in a space of linear series that is generically finite over the moduli of curves.  A major new difficulty in the present construction is that we push forward a higher codimension locus in a space of linear series that maps onto $\mm_g$ with positive dimensional fibers. This makes both carrying out the intersection theory calculations and checking the needed transversality assumptions incomparably more challenging.  We now sketch the construction; for the precise details, see \S\ref{virtualis_divizorok}.

For a general curve $X$ of genus $g=22$ or $23$, the variety $W^6_{g+3}(X)$ is irreducible of dimension equal to $g-21$. Moreover, each line bundle $L\in W^6_{g+3}(X)$ is very ample, with $h^0(X,L) = 7$ and $h^1(X, L)=3$. Consider the multiplication map
\[
\phi_L \colon \mathrm{Sym}^2 H^0(X, L)\rightarrow H^0(X, L^{\otimes 2}).
\]
Note that $\mbox{dim } \mbox{Sym}^2 H^0(X,L)=28$ and, by Riemann-Roch, $h^0(X, L^{\otimes 2})=g+7$.  Therefore, the locus where $\phi_L$ is non-injective has expected codimension $h^0(X,L^{\otimes 2})-28+1=g-20$ in the space of such pairs $[X,L]$.  Since this expected codimension is one more than the dimension of $W^6_{g+3}(X)$, one expects its image in $\mm_g$, which is the locus of curves with a map of degree $g + 3$ to $\PP^6$ with image contained in a quadric, to have codimension 1. To use the closure of this locus to prove that $\mm_g$ is of general type for $g = 22$ and $23$, there are three significant challenges: (i) computing the expected slope, by pushing forward the virtual class of the degeneracy locus for a natural map of vector bundles whose fiber over $[X,L]$ is $\phi_L$, (ii) showing the closure of this locus is not all of $\mm_g$, and (iii) showing that the push forward of the virtual class is effective.  The next theorem concerns the computation of the expected slope.

We work over an open substack $\widetilde{\mathfrak{M}}_g$ of the moduli stack of stable curves $\MM_g$, whose rational divisor class group is freely generated by the Hodge class $\lambda$ and the boundary classes $\delta_0$ and $\delta_1$.  We then consider a stack of limit linear series $\sigma\colon \Grd \to \widetilde{\mathfrak{M}}_g$, where $r = 6$ and $d = g+3$, and a map of vector bundles over $\Grd$ that restricts to $\phi_L$ over $[X,L]$.  The locus $\mathfrak{U}$ of pairs $[X,L]$ where $\phi_L$ is not injective inherits a closed determinantal substack structure, as a degeneracy locus for this map of vector bundles, and hence it carries a virtual class of expected codimension $g-20$.  Let $[\widetilde{\mathfrak D}_g]^\vir$ be the push forward of this virtual class, which is a divisor class on $\widetilde{\mathfrak{M}}_g$.
\begin{theorem} \label{thm:slopes}
The virtual divisor classes $[\widetilde{\mathfrak D}_{22}]^\vir$ and $[\widetilde{\mathfrak D}_{23}]^\vir$ associated to the loci of curves of genus 22 and 23 with maps to $\mathbf{P}^6$ of degree 25 and 26 with image contained in a quadric are
\[
[\widetilde{\mathfrak{D}}_{22}]^{\mathrm{virt}}= \frac{2}{3}\binom{19}{8}\Bigl(17121 \lambda- 2636\ \delta_0-14511\ \delta_1\Bigr)\in CH^1(\widetilde{\mathfrak{M}}_{22})
\]
and respectively
\[
[\widetilde{\mathfrak{D}}_{23}]^{\mathrm{virt}}= \frac{4}{9}\binom{19}{8}\Bigl(470749 \lambda- 72725\ \delta_0-401951\ \delta_1\Bigr) \in CH^1(\widetilde{\mathfrak{M}}_{23}).
\]
\end{theorem}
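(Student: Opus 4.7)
The plan is to identify $[\widetilde{\mathfrak{D}}_g]^{\vir}$ with the pushforward of a Porteous-type degeneracy class, compute that class by Grothendieck--Riemann--Roch on the universal curve, and then push it forward to $\widetilde{\mathfrak{M}}_g$. Let $\pi\colon\mathcal{X}\to\Grd$ denote the universal curve and $\mathcal{L}$ the universal line bundle, and set $\mathcal{E}:=\pi_*\mathcal{L}$ and $\mathcal{F}:=\pi_*\mathcal{L}^{\otimes 2}$. Multiplication of sections yields a morphism $\mu\colon\mathrm{Sym}^2\mathcal{E}\to\mathcal{F}$ whose fiber over $[X,L]$ is $\phi_L$. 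Since $\mathrm{rank}(\mathrm{Sym}^2\mathcal{E})=28$ and $\mathrm{rank}(\mathcal{F})=g+7$, the Porteous formula identifies $[\fU]^{\vir}$ with $c_{g-20}(\mathcal{F}-\mathrm{Sym}^2\mathcal{E})\cap[\Grd]$; the desired divisor class on $\widetilde{\mathfrak{M}}_g$ is its pushforward $\sigma_*[\fU]^{\vir}$.

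The second step is to compute the Chern classes of $\mathcal{E}$ and $\mathcal{F}$ up to degree $g-20$. Grothendieck--Riemann--Roch expresses $\mathrm{ch}(\mathcal{E})$ and $\mathrm{ch}(\mathcal{F})$ in terms of pushed-forward monomials in $c_1(\mathcal{L})$, $c_1(\omega_\pi)$, and boundary classes of $\mathcal{X}$. The classes $\pi_*(c_1(\omega_\pi)^{j+1})$ are the usual $\kappa$-classes, polynomial in $\lambda,\delta_0,\delta_1$ via Mumford's formula, while the mixed terms $\pi_*(c_1(\mathcal{L})^i c_1(\omega_\pi)^j)$ introduce genuinely new tautological classes on $\Grd$. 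Assembling $\mathrm{ch}(\mathcal{F}-\mathrm{Sym}^2\mathcal{E})$ and converting to Chern classes by Newton's identities yields a polynomial expression for $c_{g-20}(\mathcal{F}-\mathrm{Sym}^2\mathcal{E})$ in these tautological classes.

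To carry out the pushforward $\sigma_*$, note that the rational Picard group of $\widetilde{\mathfrak{M}}_g$ is freely generated by $\lambda,\delta_0,\delta_1$, so only three numerical coefficients are needed. These can be determined either intrinsically, by computing $\sigma_*$ of every tautological monomial appearing (using the projection formula together with intersection-theoretic models of $W^6_{g+3}(X)$ over a general curve, in the spirit of the Eisenbud--Harris degeneracy locus construction), or by pairing the resulting candidate class with a spanning set of one-parameter families on $\widetilde{\mathfrak{M}}_g$, for example a Lefschetz pencil on a general $K3$ surface of genus $g$ together with the standard test families sweeping out $\delta_0$ and $\delta_1$.

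The main obstacle is twofold. First, because $\sigma$ has positive-dimensional fibers (of dimension $1$ for $g=22$ and $2$ for $g=23$), one must work with the Chern class $c_{g-20}$ of rank $\geq 2$, not merely the top Chern class of a relative rank-one bundle as in the classical Brill--Noether divisor calculations; this substantially expands the list of tautological monomials to be tracked. Second, the behavior of $\mathcal{L}$, and hence of $\mu$, over the boundary divisors of $\widetilde{\mathfrak{M}}_g$ must be analyzed carefully: at a reducible stable curve of compact type and at an irreducible nodal curve, the limit linear series impose predictable vanishing conditions that modify the rank of $\mu$ and therefore contribute correction terms both to $c_{g-20}(\mathcal{F}-\mathrm{Sym}^2\mathcal{E})$ and to its pushforward. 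Organizing these boundary corrections and executing the resulting bookkeeping is what produces the explicit integer coefficients appearing in the theorem.
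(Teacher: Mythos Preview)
Your proposal correctly identifies the overall framework: the virtual class is $c_{g-20}(\mathcal{F}-\mathrm{Sym}^2\mathcal{E})$ on $\Grd$, one computes the Chern classes of $\mathcal{E}$ and $\mathcal{F}$ via Grothendieck--Riemann--Roch, and one extracts the $\lambda,\delta_0,\delta_1$ coefficients by pairing with test families. This matches the paper's setup in Definition~3.5.

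However, the proposal stops at the level of an outline and omits the computational mechanism that actually produces the numbers, and your suggested test families are not the ones that make the computation tractable. The paper does \emph{not} use a $K3$ Lefschetz pencil. Instead it uses three boundary test curves $F_0$, $F_1$, $F_{\mathrm{ell}}$ built from a \emph{fixed} general pointed curve $[X,q]$ of genus $g-1$. The point of this choice is that the pullback $\sigma^*(F_1)$ (resp.\ $\sigma^*(F_0)$) is, up to components that contribute nothing, identified with an explicit determinantal variety $Z$ (resp.\ $Y$) inside $X\times W^r_d(X)$. The entire calculation is thus transported to intersection theory on the Jacobian of a single curve of genus $g-1$, where one has access to the Poincar\'e bundle, jet bundles, and crucially the Harris--Tu formula for Chern numbers of the tautological bundle $\mathcal{M}$ on $W^r_d(X)$. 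The long computations in Theorems~4.3--4.5 (for $g=23$) and Theorems~5.4--5.5 (for general $s$, specialized to $g=22$) all take place in $H^*(X\times W^r_d(X))$ and are evaluated via explicit monomials in $\theta,\eta,\gamma$ and the Chern roots of $\mathcal{M}$. Your proposal does not mention the Harris--Tu machinery, the varieties $Y,Z$, or the reduction to a fixed Jacobian, and these are precisely what turn the abstract pushforward into a finite computation.

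A second, smaller point: your discussion of ``boundary corrections'' to $c_{g-20}$ does not reflect how the paper handles the boundary. Rather than tracking correction terms in a global Chern class, the paper analyzes the fibers of $\mathcal{E}$ and $\mathcal{F}$ over each boundary locus directly (Proposition~3.4 and Corollary~3.7), establishing local freeness and explicit fiberwise descriptions; the test-curve method then bypasses any need for global boundary bookkeeping. The relation $a-12b_0+b_1=0$ comes for free from the triviality of $\mathcal{E}$ and $\mathcal{F}$ along $F_{\mathrm{ell}}$.
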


For the precise definitions of $\widetilde{\mathfrak{M}}_g$, the stack $\Grd$ of limit linear series and the virtual classes $[\widetilde{\mathfrak{D}}_{g}]^{\mathrm{virt}}$, we refer the reader to \S\ref{virtualis_divizorok}.  Provided that these virtual classes are represented by effective divisors, pushing forward to the coarse space and then taking closures in $\mm_g$ produces divisor classes of slope $\frac{17121}{2636} =6.495\ldots$ and $\frac{470749}{72725}  =6.473\ldots$ in $\mm_{22}$ and $\mm_{23}$, respectively.  Most importantly for the proof of Theorem~\ref{thm:generaltype}, both of these slopes are strictly less than $\frac{13}{2}$.

This construction is inspired by results in \cite{FarkasPopa05}, where it is shown that any divisor on $\mm_g$ with slope less that $6+\frac{12}{g+1}$ must contain the locus $\mathcal{K}_g\subseteq \cM_g$ of curves lying on a $K3$ surface. Finding geometric divisors on $\mm_g$ which contain this locus has proven to be quite difficult, as curves on $K3$ surfaces behave generically with respect to many natural  geometric properties, such as Brill-Noether and Gieseker-Petri conditions.

\bigskip

\subsection{Strong Maximal Rank Conjecture}  The Maximal Rank Conjecture, now a theorem of Larson \cite{Larson17}, has classical origins in the work of M. Noether and Severi \cite{Severi15}. It was brought to modern attention by Harris \cite{Harris82}.  It says that if $X$ is a general curve of genus $g$ and $L\in W^r_d(X)$ is a general linear series, then the multiplication of global sections
\[
\phi_L^k\colon\mbox{Sym}^k H^0(X, L)\rightarrow H^0(X, L^{\otimes k})
\]
is of maximal rank for all $k$.  This determines the Hilbert function of the general embedding of the general curve for each degree and genus. The Maximal Rank Conjecture has been the focus of much activity over the decades, with many important cases, especially for small values of $k$, proved using embedded degenerations in projective space \cite{BallicoEllia89, BallicoFontanari10}, tropical geometry \cite{MRC, MRC2}, or limit linear series \cite{LOTZ21}.  These special cases have applications, including to the surjectivity of Wahl maps \cite{Voisin92} and the construction of counterexamples to the Slope Conjecture \cite{FarkasPopa05}.

The \emph{Strong Maximal Rank Conjecture} is a proposed refinement that takes into account \emph{every} linear series $L\in W^r_d(X)$ on a general curve, rather than just the \emph{general} one \cite[Conjecture 5.4]{AproduFarkas11}. The case $k = 2$ is of particular interest, because the failure of the map $\phi_L := \phi_L^2$ to be of maximal rank is equivalent to the existence of a rank $2$ vector bundle with a prescribed number of sections, and it is known due to work of Lazarsfeld and Mukai that this is a condition that distinguishes curves lying on $K3$ surfaces. It predicts that for a general curve $X$ of genus $g$, and for positive integers $r,d$ such that $0\leq \rho(g,r,d)\leq r-2$, the determinantal variety
\[
\Sigma^r_d(X):=\Bigl\{L\in W^r_d(X)| \phi_L \mbox{ is not of maximal rank} \Bigr\}
\]
has the expected dimension. In particular, the Strong Maximal Rank Conjecture predicts that $\phi_L$ is injective for \emph{every} line bundle $L\in W^r_d(X)$ when the following inequality is satisfied:
\begin{equation}\label{ineqpar}
\mathrm{expdim}\ \Sigma^r_d(X) := g-(r+1)(g-d+r)-(2d+1-g)+\frac{r(r+3)}{2} < 0.
\end{equation}

When $\mathrm{expdim}\ \Sigma_d^r(X)=-1$, the locus of curves for which $\Sigma^r_d(X)$ is not empty has expected codimension 1 in $\cM_g$, and contains the locus of curves on $K3$ surfaces.  So its divisorial part is a natural candidate for an effective divisor of small slope.  In the two cases $g=22$, $d=25$, $r=6$ and $g=23$, $d=26$ and $r=6$, the Strong Maximal Rank Conjecture amounts to the statement that the degeneracy locus $\fU$ discussed above does not dominate $\mm_g$, so its divisorial part is well-defined.  We prove the conjecture in these two cases.

\begin{theorem}
\label{Thm:MainThm}
Set $g=22$ or $23$. For a general curve $X$ of genus $g$, the multiplication map
\[
\phi_L \colon \Sym^2 H^0(X, L) \to H^0(X,L^{\otimes 2})
\]
is injective for \emph{all} line bundles $L\in W^6_{g+3}(X)$.
\end{theorem}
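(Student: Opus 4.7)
My plan is to prove Theorem~\ref{Thm:MainThm} by degenerating to a suitable nodal curve and verifying injectivity of $\phi_L$ on every limit linear series via a tropical independence argument. Since the failure of $\phi_L$ to be injective is a closed condition in families (it cuts out the degeneracy locus $\fU$ considered in \S\ref{sec:intro-smallslope}), it suffices to exhibit a single stable curve $X_0$ of genus $g\in\{22,23\}$ on which every limit $\mathfrak{g}^6_{g+3}$ has injective multiplication map on quadrics; upper semicontinuity of the rank of $\phi_L$ then transfers the statement to a general smooth curve $X$.

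The natural candidate for $X_0$ is a chain-of-loops curve with generic edge lengths, for which the limit linear series are classified via the bijection with displacement tableaux on a $(r+1)\times(g-d+r)=7\times 3$ rectangle, as in the work of Cools--Draisma--Payne--Robeva. Because $\rho(g,6,g+3)=g-21\in\{1,2\}$, the space of limit linear series is positive-dimensional, but it decomposes into finitely many combinatorial strata indexed by such tableaux. On each stratum one can write down an explicit basis of $H^0(X_0,L)$ and describe the products in $H^0(X_0,L^{\otimes 2})$ as tropicalizations, i.e.\ piecewise-linear functions on the dual graph with prescribed break divisors.

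The heart of the argument is then a tropical independence check: the $28$ pairwise products of basis sections are identified with piecewise-linear functions on the tropical chain of loops, and the claim is that no non-trivial tropical linear combination of these functions has the minimum achieved at least twice at every point. This reduces, stratum by stratum, to a combinatorial assertion about break divisors on the tropical curve that can in principle be checked by hand, provided one has a sufficiently flexible "witness" divisor whose support distinguishes all the products. Once tropical independence is established uniformly across the classifying tableaux, injectivity of $\phi_L$ on $X_0$ follows by the standard dictionary between tropical independence and algebraic independence of sections.

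The main obstacle is the positive-dimensional nature of $W^6_{g+3}(X)$: unlike previous SMRC results (see \cite{AproduFarkas11}) where the relevant Brill--Noether locus is zero-dimensional and one analyzes isolated linear series, here one must argue uniformly over a positive-dimensional family. The combinatorial classification produces many distinct displacement tableaux, each giving rise to a different arrangement of break divisors for the $28$ quadrics, and a single witness divisor is unlikely to suffice for all of them simultaneously. Developing tropical tools flexible enough to handle this family uniformly—choosing the witness coherently as the tableau varies, and controlling what happens along boundary strata where several tableaux degenerate together—is where the bulk of the technical work will lie, and presumably corresponds to the "new tropical methods for studying linear series and independence of quadrics" advertised in the abstract.
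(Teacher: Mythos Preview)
Your broad strategy---tropical independence on a chain of loops---matches the paper, but there are two genuine gaps in the proposal as written.

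First, a framing issue: you propose working with limit linear series on the stable curve $X_0$, but the paper explicitly observes that $X_0$ lies in the closure of the hyperelliptic locus, so its intrinsic geometry cannot possibly distinguish injective from non-injective $\phi_L$. The argument must instead take place on a \emph{smooth} curve $X$ over a nonarchimedean field whose skeleton is the chain of loops $\Gamma$ with carefully chosen (admissible) edge lengths; one then studies $\Sigma=\trop(V)\subseteq R(D)$ for each $V=H^0(X,\mathcal{O}(D_X))$. The metric data---not the special fiber---carries the information.

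Second, and more substantively, your plan to classify by displacement tableaux and ``write down an explicit basis'' on each stratum only addresses the \emph{vertex-avoiding} case. For a dense open subset of $W^6_{g+3}(\Gamma)$ this works (and the paper carries it out in \S\ref{Sec:VertexAvoiding}), but tropicalizations of linear series on $X$ need not be vertex-avoiding: there can be \emph{switching loops} and \emph{switching bridges} where the slope indices of functions in $\Sigma$ jump, and then there are no canonical functions $\varphi_i\in\Sigma$ with prescribed slopes. The paper organizes the remaining cases not by tableaux (of which there are hundreds of millions) but by the pattern of multiplicities and switching behavior, which is tightly constrained since $\rho\le 2$. In each case one reduces to understanding the tropicalization of a \emph{pencil} inside $V$ (as in Example~\ref{Ex:Interval}), extracts from it a small set of functions $\varphi_A,\varphi_B,\varphi_C,\ldots\in\Sigma$, and expresses these as tropical combinations of simpler \emph{building blocks} in $R(D)$. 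A master template $\theta$ is then constructed algorithmically from pairwise sums of building blocks, and the independence is obtained as the \emph{best approximation} of $\theta$ from above by $28$ pairwise sums of the functions actually lying in $\Sigma$. The engine making this constructive approach possible is the new independence criterion (Theorem~\ref{thm:independencecriterion}): one exhibits coefficients so that each of the $28$ functions achieves the minimum \emph{uniquely} somewhere, rather than arguing by contradiction with a hypothetical dependence as in earlier work. Your ``witness divisor'' language suggests the older contradiction approach, which the paper indicates does not scale to this situation.
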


\noindent Theorem \ref{Thm:MainThm} shows that the determinantal locus $\fU$ does not map dominantly onto $\mm_g$.  It follows that $[\widetilde{\mathfrak{D}}_g]^\vir$ is a divisor, rather than just a divisor class.  In other words, the virtual class is a linear combination of the codimension 1 components of the image of $\fU$ in $\widetilde{\mathcal{M}}_g$.  The first proof of Theorem~\ref{Thm:MainThm} appeared in the preprint \cite{JensenPayne18}; that work was never submitted for publication and is incorporated into the present paper.  An alternative approach using limit linear series was put forward in \cite{LiuOssermanTeixidorZhang18}.

The main difficulty in the proof of Theorem \ref{Thm:MainThm}, in comparison with the corresponding cases of the Maximal Rank Conjecture, is that one must control \emph{all} linear series on the general curve $X$, rather than just a sufficiently general one.  For this purpose, the embedded degeneration methods initiated by Hartshorne, Hirschowitz, and much refined by Larson are unsuitable. Instead, we prove Theorem \ref{Thm:MainThm} by taking $X$ to be a curve over a nonarchimedean field whose skeleton is a chain of loops with specified edge lengths and applying tropical methods to study the linear series of degree $g+3$ and rank $6$.  Along the way, we develop new techniques for understanding the tropicalization of a linear series, based on the valuated matroids given by relations among collections of sections (see, e.g., Example~\ref{Ex:Interval}), and an effective criterion for verifying tropical independence (Theorem~\ref{thm:independencecriterion}).  Each of these represents a significant advance beyond the approach to maximal rank statements via tropical methods developed in \cite{MRC, MRC2}.

\subsection{Effectivity of the virtual divisor}

Together, Theorems~\ref{thm:slopes} and~\ref{Thm:MainThm} do not suffice to show that $[\widetilde{\mathfrak{D}}_g]^{\mathrm{virt}}$ is effective on $\widetilde{\mathcal{M}}_g$.
Theorem~\ref{Thm:MainThm} establishes that for $g=22$ or $23$, the image of the degeneracy locus $\fU$ has positive codimension.  Since the push forward of its virtual class is well-defined as a divisor class supported on its image, it follows that
\[
[\widetilde{\mathfrak{D}}_g]^\vir = a_1 \mathcal{Z}_1 + \cdots + a_s \mathcal{Z}_s
\]
is a linear combination of the codimension one components in the image of $\fU$.  A priori, the degeneracy locus $\fU$ could still have components of higher than expected dimension that map with positive dimensional fibers onto some of these codimension 1 components, in which case, some coefficient $a_i$ may be negative.  The following theorem rules out this possibility.

\begin{theorem} \label{thm:genfinite}
Let $\mathcal{Z} \subseteq \mm_g$ be the closure of a codimension one component of $\sigma(\fU)$.  Then the generic fiber of $\fU$ over $\mathcal{Z}$ is finite.
\end{theorem}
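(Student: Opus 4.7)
Since $\Grd$ has dimension $3g-3+(g-21)=4g-24$ and $\fU$ is defined as the degeneracy locus of a map between vector bundles of ranks $28$ and $g+7$ over $\Grd$, every component of $\fU$ has codimension at most $(28-27)(g+7-27)=g-20$ in $\Grd$, and hence dimension at least $3g-4$. Because $\dim \mathcal Z = 3g-4$, each irreducible component of $\fU$ that dominates $\mathcal Z$ has generic fiber of dimension at least $0$, and the content of the theorem is the reverse inequality: no component of $\fU$ has dimension strictly greater than $3g-4$ above $\mathcal Z$.

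My plan is to deduce this inequality from upper semicontinuity of fiber dimension, which reduces the problem to exhibiting a single closed point $[X_0]\in\mathcal Z$ at which the scheme-theoretic fiber $\fU_{X_0}\subset W^6_{g+3}(X_0)$ is zero-dimensional; the set $\{[X]\in\mathcal Z:\dim\fU_{X}\le 0\}$ is then a non-empty open subset of $\mathcal Z$, forcing the generic fiber to be finite. To produce such an $X_0$, I would further specialize a chain-of-loops curve of the type used in the proof of Theorem~\ref{Thm:MainThm}. For generic edge lengths Theorem~\ref{Thm:MainThm} asserts that $\fU_{X}$ is empty; I would move in a one-parameter subfamily of edge-length configurations until a non-injectivity just appears. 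Using the valuated-matroid description of relations among global sections, together with the tropical independence criterion of Theorem~\ref{thm:independencecriterion}, one enumerates the finitely many tropical divisor classes at which the tropicalized multiplication map drops rank on the resulting metric graph, and each lifts to only finitely many algebraic line bundles on a smoothing. Since $\dim\fU\ge 3g-4=\dim\mathcal Z$, any such $X_0$ automatically lies on some codimension-one component of $\sigma(\fU)$, and by moving the specialization in different directions one can realize a point of this form in every such component.

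The main obstacle is the last step: verifying that after perturbing the edge lengths slightly from the generic configuration, the tropical rank-drop locus is zero-dimensional rather than positive-dimensional. In the proof of Theorem~\ref{Thm:MainThm} the tropical independence criterion is used to rule out rank drops altogether; here one must instead perturb just enough to create a degeneracy while still using the same machinery to show that the set of degenerate tropical classes has the expected dimension. This is a delicate balancing act, because admissible perturbations are tightly constrained by the need to remain in the chain-of-loops family and still realize each codimension-one component of $\sigma(\fU)$. Once this tropical finiteness is in place, the lower bound on $\dim\fU$ together with upper semicontinuity of fiber dimension delivers the theorem.
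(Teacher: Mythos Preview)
Your approach has a genuine gap at exactly the point you flag as the ``main obstacle,'' but the difficulty is more fundamental than a tropical dimension count. The theorem must hold for \emph{every} codimension-one component $\mathcal{Z}$ of $\sigma(\fU)$, and we have no explicit description of what these components are. Your plan to perturb edge lengths on a chain of loops until a degeneracy appears might land you in \emph{some} component, but there is no mechanism to ensure you hit the particular $\mathcal{Z}$ in question, let alone every component. The claim that ``by moving the specialization in different directions one can realize a point of this form in every such component'' is precisely where the argument fails: without knowing anything about the geometry of $\mathcal{Z}$, you cannot guarantee that chain-of-loops deformations meet it at all. Moreover, tropical independence is a one-directional implication (tropical independence implies algebraic independence), so there is no well-defined ``tropical rank-drop locus'' that corresponds to the algebraic degeneracy locus; the tropical methods certify injectivity but do not enumerate failures.

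The paper proceeds in a completely different way. It argues by contradiction: if $\fU$ has positive-dimensional generic fiber over $\mathcal{Z}$, then for every $[X]\in\mathcal{Z}$ and every point $p\in X$ there is a linear series $\ell$ in the fiber that is \emph{ramified at $p$} (Lemma~\ref{lem:ramify}). This ramification condition is the key leverage. It allows one to analyze the restriction of $\mathcal{Z}$ to boundary strata and to the images of attaching maps $\jmath_2\colon\mm_{2,1}\to\mm_g$ and $\jmath_3\colon\mm_{3,1}\to\mm_g$. The tropical machinery is then applied not on curves of genus $g$, but on chains of $g-1$ or $g-2$ loops with imposed ramification, proving analogues of the Strong Maximal Rank Conjecture in those settings (Theorems~\ref{thm:MRCg-1} and~\ref{thm:MRCg-2}). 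These feed into showing $\jmath_2^*[\mathcal{Z}]=0$, that $\mathcal{Z}$ avoids the strata $\Delta_{2,j}$, and (for $g=22$) that $\jmath_3^*[\mathcal{Z}]$ is supported on $\overline{\mathcal{W}}_3\cup\overline{\mathcal{H}}_3$. A purely intersection-theoretic vanishing criterion (Proposition~\ref{prop:vanishing}) then forces $[\mathcal{Z}]=0$, which is impossible. So rather than exhibiting a finite fiber directly, the paper converts the infinite-fiber hypothesis into numerical constraints on the divisor class and derives a contradiction in $CH^1(\mm_g)$.
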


The proof of Theorem~\ref{thm:genfinite} has two main parts.  One part carried out in \S\ref{sec:genfinite} uses tropical methods, very similar to those used in the proof of Theorem~\ref{Thm:MainThm}, to show that if the generic fiber of $\fU$ over $\mathcal{Z}$ is infinite, then $\mathcal{Z}$ does not contain certain codimension $2$ strata in $\mm_g$, and to control its pull back under natural maps between moduli spaces.  More precisely, we show that the class $[\mathcal{Z}]$ pulls back to \emph{zero} under the map $\jmath_2\colon\mm_{2,1}\rightarrow \mm_g$ obtained by attaching a general pointed curve of genus $g-2$ to each pointed curve of genus $2$. Similarly, we show that $[\mathcal{Z}]$ pulls back to a nonnegative combination of the Weierstrass divisor and the hyperelliptic divisor in $\mm_{3,1}$ under the analogous attaching map $\jmath_3\colon\mm_{3,1}\rightarrow \mm_g$.  The other part, carried out in \S\ref{sec:vanishingcondition}, is a series of computations in $CH^1(\mm_g)$ showing that there is no nonzero effective divisor with these properties.

\bigskip

Theorem~\ref{thm:generaltype} follows in a straightforward manner from Theorems~\ref{thm:slopes}, \ref{Thm:MainThm}, and \ref{thm:genfinite}.  Indeed, Theorems~\ref{Thm:MainThm} and \ref{thm:genfinite} together imply that $[\widetilde{\mathfrak{D}}_g]$ is a well-defined effective divisor on $\widetilde{\mathcal{M}}_g$.  Taking closure in $\mm_g$, for $g = 22$ or $23$ gives an effective divisor whose slope is the ratio $\frac{a}{b_0}$ computed in Theorem~\ref{thm:slopes}.  It follows that $\mm_g$ is of general type, since this slope is less than $\frac{13}{2}$.

\subsection{Tropical independence}

Our proofs of Theorems~\ref{Thm:MainThm} and \ref{thm:genfinite} rely on tropical independence, as in \cite{MRC, MRC2}.  Roughly speaking, this is a method for proving that a set of sections $\{s_1, \ldots, s_n \}$ of a line bundle is linearly independent by extending the line bundle and its sections over a semistable degeneration such that the specialization map is diagonal, i.e., there are irreducible components $X_1, \ldots, X_n$ in the special fiber such that $s_i$ is nonzero on $X_j$ if and only if $i = j$. We now briefly summarize the foundations of the method, with the hope that this will be helpful for those accustomed to other degeneration techniques in the study of algebraic curves and their linear series.

\subsubsection{Tropicalization of rational functions} Let $X$ be a curve over a complete and algebraically closed valued field $K$, with valuation ring $R$.  Suppose $\mathcal X$ is a semistable model of $X$, that is, a flat and proper scheme over $\Spec R$ with generic fiber $\mathcal X_K \cong X$ and a reduced special fiber with only nodal singularities. Near each node in the special fiber, $\mathcal X$ is \'etale locally isomorphic to $xy = f$ for some $f$ in the maximal ideal of $R$.  The valuation $\val(f)$ is independent of the choice of coordinates and is called the \emph{thickness} of the node.

Let $\Gamma$ be the metric dual graph of this degeneration.  The underlying graph has one vertex for each irreducible component of the special fiber and one edge for each node.  Loops and multiple edges appear when irreducible components have self-intersections and when two components meet at multiple nodes.  The length of an edge is the thickness of the corresponding node.

Each point $v$ in $\Gamma$ is naturally identified with a valuation $\val_v$ on the function field $K(X)$.  Roughly speaking, the valuation at a vertex corresponds to the order of vanishing along the corresponding component of the special fiber.  More precisely, $\val_v(f)$ is equal to $-\val(a)$ for any $a \in K^*$ such that $af$ is regular and nonvanishing at the generic point of the corresponding component $X_v$.  The points in the interior of an edge correspond to monomial valuations in the local coordinates $x$ and $y$ at a node $xy = f$ that agree with the given valuation on the scalar subfield $K \subseteq K(X)$.

\begin{remark}
In the special case where $X$ is defined over a discretely valued subfield $K' \subseteq K$ and $\mathcal X$ is defined over the valuation ring $R' \subseteq K'$, the thickness of each node is an integer, and $\mathcal X$ has an $A_{n-1}$ singularity at a node of thickness $n$.  Recalling that the length of an edge equals the thickness of the corresponding node, the valuations given by the integer points on the corresponding edge of length $n$ are the vanishing orders along the $n-1$ exceptional components of the chain of rational curves in the minimal resolution of this singularity.
\end{remark}

Each valuation is naturally identified with a point in the Berkovich analytification $X^\an$, and the resulting map $\Gamma \to X^\an$ is a homeomorphism onto its image (and, in an appropriate sense, an isometry).  When no confusion seems possible, we identify $\Gamma$ with its image in $X^\an$.  There is a natural retraction $\Trop \colon X^\an \to \Gamma$, which is called \emph{tropicalization}, as is the induced map $\Div(X) \to \Div(\Gamma)$ taking a formal sum of $K$-points to the formal sum of their images in $\Gamma$.

The tropicalization of a nonzero rational function $f \in K(X)^*$ is defined as
\[
\trop(f)\colon \Gamma \to \RR; \ \ v \mapsto \val_v(f).
\]
This function is continuous, and piecewise linear on each edge, with integer slopes.  Moreover, $\trop(f)$ is determined up to an additive constant by $\Trop(\ddiv(f))$.

\subsubsection{Tropicalization of linear relations}

Suppose $\{f_1, ..., f_n\} \subseteq K(X)^*$  is a collection of non-zero rational functions satisfying a linear relation $a_1 f_1 + \cdots + a_n f_n = 0$, with $a_i \in K^*$.  Note that $\val_v (a_i f_i) = \trop(f_i)(v) + \val(a_i)$.  Therefore, at each point $v \in \Gamma$, the minimum in
\[
\theta(v) = \min_i \{ \trop(f_i)(v) + \val(a_i) \}
\]
must be achieved at least twice.  This is a strong restriction on the functions $\trop(f_i)$.  For instance, after subdividing $\Gamma$ so that each of these functions has constant slope on every edge, then on each edge there must be two functions with equal slope.

We say that a collection of  real-valued functions $\{ \psi_1 , \ldots, \psi_n \}$ on $\Gamma$ is \emph{tropically dependent} if there are real numbers $b_1, \ldots, b_n$ such that, for every point $v \in \Gamma$, the minimum in $\min_i \{ \psi_i(v) + b_i \}$ is achieved at least twice.  If there are no such real numbers, then we say that $\{\psi_1, \ldots, \psi_n \}$ is \emph{tropically indepedent}.  Tropical independence of $\{\trop(f_1), \ldots, \trop(f_n)\}$ is a sufficient condition for the linear independence of $\{f_1, \ldots, f_r\} \subseteq K(X)^*$.

Now suppose that $L = \mathcal O_X(D_X)$ is a line bundle in $W^r_d(X)$, as in \S\ref{sec:intro-smallslope} above.  We identify $H^0(X,L)$ and $H^0(X, L^{\otimes 2})$ with $K$-linear subspaces of $K(X)$, in the usual way.  We can then show that the map $\phi_L \colon \Sym^2 H^0(X,L) \to H^0(X, L^{\otimes 2})$ has rank at least $n$ by finding rational functions $f_1, \ldots, f_n$ in the image of $\phi_L$ such that $\{\trop(f_1), \ldots, \trop(f_n)\}$ is tropically independent.
In practice, we do not work directly with rational functions in the image of $\phi_L$.  Instead, we identify piecewise linear functions $\varphi_1, \ldots, \varphi_s$ in the image of $H^0(X,L)$ under tropicalization.  Then all pairwise sums $\varphi_{ij} = \varphi_i + \varphi_j$ are tropicalizations of functions in the image of $\phi_L$.  To prove that $\phi_L$ is injective, we look for a set of such pairwise sums, of size equal to $\dim \Sym^2 H^0(X,L)$, that is tropically independent.

\subsubsection{A characterization of tropical independence}  One of the foundational advances in this paper is a new necessary and sufficient condition for tropical independence. Given a finite set of PL functions $\{ \varphi_1, \ldots, \varphi_n \}$ on $\Gamma$, and real numbers $b_1, \ldots, b_n$, we consider the corresponding \emph{tropical linear combination}
\[
\theta = \min_i \{ \varphi_i + b_i \}.
\]
We say that $\varphi_i + b_i$ \emph{achieves the minimum} at $v$ if $\theta(v) = \varphi_i(v) + b_i$ and that it \emph{achieves the minimum uniquely} if, furthermore, $\theta(v) \neq \varphi_j(v) + b_j$ for $j \neq i$.

\begin{theorem}
\label{thm:independencecriterion}
A finite set $\{ \varphi_1, \ldots, \varphi_n \} \subset \PL(\Gamma)$ is tropically independent if and only if there are real numbers $b_1, \ldots, b_n$ such that each $\varphi_i + b_i$ achieves the minimum uniquely at some $v_i \in \Gamma$.
\end{theorem}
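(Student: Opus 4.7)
The plan is to establish the two directions separately. For the implication that the existence of such $b_1, \ldots, b_n$ implies tropical independence, I would argue by contradiction. Assume that $b_1, \ldots, b_n$ and points $v_1, \ldots, v_n$ witness the unique-minimum condition, so $\varphi_i(v_i) + b_i < \varphi_k(v_i) + b_k$ for every $k \neq i$, and suppose that $c_1, \ldots, c_n$ witness tropical dependence. Setting $\alpha_i := c_i - b_i$, at each $v_i$ the strict inequality from the hypothesis combined with the dependence condition (the minimum of $\varphi_j + c_j$ at $v_i$ is attained at least twice) forces the existence of some $k \neq i$ with $\alpha_k < \alpha_i$. This yields a fixed-point-free self-map of $\{1, \ldots, n\}$ along which $\alpha$ strictly decreases, producing an infinite strictly decreasing sequence of values in the finite set $\{\alpha_1, \ldots, \alpha_n\}$---the desired contradiction.

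For the converse, I would proceed by induction on $n$, with the base case $n = 1$ trivial. In the inductive step, since any subset of a tropically independent set is again tropically independent (adding a function with a very large constant preserves any dependence), the inductive hypothesis applied to $\{\varphi_1, \ldots, \varphi_{n-1}\}$ yields $b_1, \ldots, b_{n-1}$ and nonempty open regions $U_i \subseteq \Gamma$ on which $\varphi_i + b_i$ uniquely achieves the minimum of $\theta := \min_{j < n}(\varphi_j + b_j)$. Setting $\psi := \varphi_n - \theta$ and $b_n^* := -\inf_\Gamma \psi$, for $b_n = b_n^* - \epsilon$ with small $\epsilon > 0$ the function $\varphi_n + b_n$ uniquely minimizes on a small neighborhood $W_\epsilon$ of $\mathrm{argmin}\,\psi$, while each old $U_i$ can only shrink by its intersection with $W_\epsilon$. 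Provided no $U_i$ is entirely contained in $\mathrm{argmin}\,\psi$, one can choose $\epsilon$ small enough that $U_i \setminus W_\epsilon$ remains nonempty for each $i < n$, completing the inductive step.

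The main obstacle is the pathological case in which some $U_i \subseteq \mathrm{argmin}\,\psi$---equivalently, $\varphi_n - \varphi_i$ is constant on an open subregion of $U_i$. I expect to handle this by exploiting the tropical independence of the full $n$-tuple to rule out such local coincidences, possibly after adjusting the choices of $b_j$ for $j < n$. An alternative route, avoiding induction entirely, is a global extremal argument: take $b$ maximizing the integer-valued, lower-semicontinuous function $b \mapsto |\{i : U_i(b) \neq \emptyset\}|$, assume the maximum is some $k < n$, and construct a small perturbation of $b$ that strictly increases this count, contradicting maximality. Handling the pathological case---or, equivalently, constructing this perturbation cleanly---constitutes the heart of the proof.
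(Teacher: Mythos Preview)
Your argument for the easy direction (an independence implies tropical independence) is correct. It is the contrapositive of the paper's version, which takes the index $j$ maximizing $c_j - b_j$ and observes that $\varphi_j$ then never achieves the minimum uniquely; your infinite-descent phrasing reaches the same conclusion.

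For the hard direction (tropical independence implies an independence exists), you have correctly located the obstacle but not overcome it. The pathological case---some $U_i$ contained in $\mathrm{argmin}\,\psi$---is real, and neither ``adjusting the earlier $b_j$'' nor the extremal perturbation argument comes with enough detail to close the gap. In particular, the inductive hypothesis hands you \emph{one} choice of $b_1,\ldots,b_{n-1}$, not the freedom to vary it; perturbing some $b_j$ to rescue $U_i$ may destroy another $U_{j'}$, and you have no mechanism to control this cascade. The same circularity afflicts the extremal approach: showing that a strictly improving perturbation exists at a non-optimal $b$ is essentially the statement you are trying to prove.

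The paper's argument for this direction is entirely different and avoids induction. Let $A_i \subseteq \RR^n$ be the (closed) set of coefficient vectors $c$ for which $\varphi_i$ never achieves the minimum uniquely. Then $c$ is an independence iff $c \notin \bigcup_i A_i$, and $c$ is a dependence iff $c \in \bigcap_i A_i$. Assuming no independence exists, the $A_i$ cover $\RR^n$; restricting to a sufficiently large simplex, one checks that each face is covered by the $A_i$ indexed by its vertices, and the Knaster--Kuratowski--Mazurkiewicz lemma then produces a point in $\bigcap_i A_i$, i.e., a dependence. The topological fixed-point input is exactly what replaces your missing perturbation argument.
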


\noindent This is proved in \S\ref{sec:tropicalindependence}, using the Knaster-Kuratowski-Mazurkiewicz lemma, a set-covering variant of the Brouwer fixed-point theorem.

\subsubsection{From tropical independence to diagonal specialization}

We now return to the setup where $X$ is a curve over an algebraically closed valued field, $\mathcal{X}$ is a semistable model with metric dual graph $\Gamma$, $L = \mathcal{O}_X(D_X)$ is a line bundle, and $f_1, \ldots, f_n \in K(X)^*$ are sections of $L$.  Let $\varphi_i = \trop(f_i)$.  Let $\theta = \min_i \{ \varphi_i + b_i \}$ be a tropical linear combination in which each $\varphi_i + b_i$ achieves the minimum uniquely at some point $v_i \in \Gamma$.

We can then choose a toroidal modification $\mathcal{X'} \to \mathcal X$ so that each $v_i$ corresponds to an irreducible component $X_i$ of the special fiber of $\mathcal{X'}$, and each of the functions $\varphi_1, \ldots, \varphi_n$ and $\theta$ is linear on each edge of the metric dual graph $\Gamma'$, which is a subdivision of $\Gamma$.  Furthermore, we can extend $L$ to a line bundle $\mathcal L$ over $\mathcal X'$ so that $f \in H^0(X,L)$ is a regular section of $\mathcal L$ if and only if $\trop(f) \geq \theta$, and nonvanishing on the component $X_v$ corresponding to a vertex $v \in \Gamma'$ if and only if $\trop(f)(v) = \theta(v)$; see Proposition~\ref{prop:ag-interpretation}.  In particular, if $a_i \in K^*$ are scalars such that $\val(a_i) = b_i$, then $a_i f_i$ is a regular section of $\mathcal L$ and is nonvanishing on the irreducible component corresponding to $v_j$ if and only if $i = j$.  This diagonal specialization property ensures that $\{a_1 f_1, \ldots, a_n f_n \}$ is independent in the special fiber, and hence also independent in the general fiber.

\subsection{Chains of loops}

In our proofs of Theorems~\ref{Thm:MainThm} and \ref{thm:genfinite}, we apply the method of tropical independence (and Theorem~\ref{thm:independencecriterion} in particular) to linear series on curves $X$ whose skeletons are specific, carefully chosen graphs $\Gamma$.  As in \cite{MRC, MRC2}, the graphs $\Gamma$ are chains of loops with specified edge lengths.  The divisor classes on such graphs $\Gamma$ that can arise in tropicalizations of linear series of degree $d$ and dimension $r$ have been studied in \cite{tropicalBN, Pflueger17a, JensenRanganathan21, CookPowellJensen22}.  For those unfamiliar with such curves, we explain the geometry of their stable reductions.

Let $X$ be a curve of genus $g$ over $K$ whose skeleton is a chain of $g$ loops $\Gamma$.  Let $X_0$ be its stable reduction.   Then $[X_0] \in \overline{\mathcal{M}}_g$ is a $0$-stratum.  One can label its $2g-2$ rational components as $$X_1, Y_2, X_2, Y_3, \ldots, X_{g-1}, Y_g,$$ such that
\begin{enumerate}
\item The components $X_1$ and $Y_g$ each have one node, and the rest are smooth;
\item For $1 \leq i \leq g-1$, $X_i$ meets $Y_{i+1}$ at a single node;
\item For $2 \leq i \leq g-1$, $X_i$ meets $Y_i$ at two nodes.
\end{enumerate}
This curve $X_0$ is in the closure of the locus of hyperelliptic curves in $\mathcal{M}_g$.  Our arguments therefore cannot use the geometry of $X_0$ in any meaningful way.  Instead, we use the edge lengths of $\Gamma$.   Thinking of $X$ as the general fiber in a family over a germ of a curve, with central fiber $X_0$, the edge lengths specify the contact orders of this germ with the $3g-3$ branches of the boundary divisor that meet at $[X_0]$. Our proof of Theorem~\ref{Thm:MainThm} shows that the general member of such a family with certain specified contact orders satisfies the conclusion of the Strong Maximal Rank Conjecture.

\medskip

For those familiar with this method, we briefly describe the novel aspects of the constructions presented here.  Recall that the space of all divisor classes of degree $d$ on $\Gamma$ is a real torus of dimension $g$, and the subspace $W^r_d(\Gamma)$ parametrizing those that can come from linear series of degree $d$ and rank $r$ on an algebraic curve form a finite union of translates of subtori, called \emph{combinatorial types}. These combinatorial types are naturally indexed by certain tableaux.  When the edge lengths of $\Gamma$ are sufficiently general, an open dense subset of $W^r_d(\Gamma)$ consists of divisor classes that are \emph{vertex avoiding}, in the sense of  \cite{LiftingDivisors}.

Suppose $X$ is a curve whose tropicalization is $\Gamma$, $D_X$ is a divisor of degree $d$ in a linear series of dimension $r$, and assume the class of $D = \Trop(D_X)$ is vertex avoiding.  Then we have a canonical collection of PL functions $\{ \varphi_0, \ldots, \varphi_6 \}$ on  $\Gamma$ that is the tropicalization of a basis for $H^0(X, \mathcal{O}(D_X))$.  In \cite{MRC}, we fix one particular $D$, assume that there is a tropical linear combination $\theta = \min_{ij}  \{ \varphi_i + \varphi_j + b_{ij} \}$ such that the minimum is achieved at least twice at every point $v \in \Gamma$, compute the degree of the associated effective divisor $D + \ddiv(\theta)$, and derive a contradiction.  There are several difficulties in extending this approach to \emph{all} divisors of degree $d$ and rank $r$.  One is sheer combinatorial complexity.  The arguments in \cite{MRC} are specific to the combinatorial type of $D$.  When $\Gamma$ has genus 23, the number of combinatorial types in $W^6_{26} (\Gamma)$ is
\[
\frac{23!}{9!\cdot 8!\cdot 7!} = 350,574,510.
\]
This difficulty is overcome primarily through the new constructive method for proving tropical independence, given by Theorem~\ref{thm:independencecriterion}.

In the non vertex avoiding cases, we face the additional problem of understanding which functions in $R(D)$ are tropicalizations of functions in $H^0(X, \mathcal{O}(D_X))$, and finding a suitable substitute for the distinguished functions $\varphi_i$.  For an arbitrary divisor $D$, this seems to be an intractable problem.  However, when $\rho$ is small, we find that in most cases it is enough to understand the tropicalizations of certain pencils in $H^0(X, \mathcal{O}(D_X))$.  These, in turn, behave similarly to tropicalizations of pencils on $\PP^1$, which we analyze in Example~\ref{Ex:Interval}.  The possibilities for the tropicalizations of $H^0(X, \mathcal{O}(D_X))$ are then divided into cases, according to the combinatorial properties of these pencils.  We then construct a tropical independence case-by-case, in \S\S\ref{Sec:Construction}-\ref{Sec:Generic}, using a variant of the algorithm that works for vertex avoiding divisors.  Only one subcase, treated in \S\ref{sec:4d}, does not reduce to an analysis of pencils; the arguments in this subcase are nevertheless of a similar flavor, with a few more combinatorial possibilities to consider.

\subsection{Further constructions of virtual divisors of small slope}

The construction of the virtual divisor class $[\widetilde{\mathfrak{D}}_{22}]$ and the computation of its slope can be extended to an infinite family of cases, as follows.  Fix an integer $s\geq 2$, and set
\[
g := 2s^2+s+1, \ d := 2s^2+2s+1 \ \mbox{ and } \ r:=2s.
\]
Then a general curve $[X] \in \cM_{g}$ carries a $1$-dimensional family of line bundles
$L\in W^{r}_{d}(X)$. For each such $L$ we consider the multiplication map
\[
\phi_L \colon \mathrm{Sym}^2 H^0(X, L) \rightarrow H^0(X,L^{\otimes 2}).
\]
Observe that $h^0(X,L^{\otimes 2})-\dim\mathrm{Sym}^2 H^0(X,L)=1$.  Therefore, the locus where the map $\phi_L$ is non-injective has expected codimension \emph{two} in the parameter space of pairs $[X,L]$, where $L\in W^{r}_{d}(X)$.

Just as in the special case $s = 3$ and $g = 22$ discussed above, we work over a partial compactification $\widetilde{\mathfrak{M}}_g$ of an open substack of $\MM_g$ whose divisor class group is generated by $\lambda$, $\delta_0$, and $\delta_1$. We consider the stack
$\sigma\colon \Grd\rightarrow \widetilde{\mathfrak{M}}_g$ of limit linear series of type $\mathfrak g^r_d$.  On the resulting stack $\Grd$ we then construct a map of vector bundles that restricts to $\phi_L$ on the fiber over $[X,L]$, and we compute the push forward of the virtual class of the degeneracy locus $\mathfrak{U}$, where this map is not injective.

\begin{theorem}\label{rho1virtual}
Fix $s\geq 2$ and set $g:=2s^2+s+1$.  Let $\mathfrak{U} \subseteq \Grd$ be the degeneracy locus described above, and let $[\widetilde{\mathfrak{D}}_g]= \sigma_* [\mathfrak U]^\vir$.  Write  $[\widetilde{\mathfrak{D}}_g] = a \lambda -b_0 \delta_0 - b_1 \delta_1$.  Then
\begin{align*}
\frac{a}{b_0} &= \frac{3(48s^8-56s^7+92s^6-90s^5+86s^4+324s^3+317s^2+182s+48)}{24s^8-28s^7+22s^6-5s^5+43s^4+112s^3+100s^2+50s+12} \\
&= 6+\frac{12}{g+1}-\frac{3(120s^6-140s^5-162s^4+67s^3+153s^2+94s+24)}{(2s^2+s+1)(24s^8-28s^7+22s^6-5s^5+43s^4+112s^3+100s^2+50s+12)}.
\end{align*}
In particular, $\frac{a}{b_0} < 6+\frac{12}{g+1}$ for all $s\geq 3$.
\end{theorem}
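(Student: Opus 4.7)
The proof extends the Porteous-style intersection calculation that yields Theorem~\ref{thm:slopes} to the full family. The stack $\sigma\colon\Grd\to\widetilde{\mathfrak{M}}_g$ has relative dimension $\rho(g,r,d)=1$; on it one has the tautological bundles $\mathcal{V}$ of rank $r+1=2s+1$ and $\mathcal{W}$ of rank $2d+1-g=2s^2+3s+2$ with fibres $H^0(X,L)$ and $H^0(X,L^{\otimes 2})$, the latter locally free since $\deg L^{\otimes 2}=4s^2+4s+2>2g-2$. Because $\mathrm{rk}\,\mathcal{W}-\mathrm{rk}\,\Sym^2\mathcal{V}=1$, the non-injectivity locus $\mathfrak{U}$ of the multiplication morphism $\phi\colon\Sym^2\mathcal{V}\to\mathcal{W}$ is a degeneracy locus of expected codimension $2$, and Porteous's formula gives
\[
[\mathfrak{U}]^{\vir} = c_2(\mathcal{W}-\Sym^2\mathcal{V}) \in CH^2(\Grd).
\]
Expanding via the splitting-principle identities $c_1(\Sym^2\mathcal{V})=(r+2)c_1(\mathcal{V})$ and $c_2(\Sym^2\mathcal{V})=\tfrac{r(r+3)}{2}c_1(\mathcal{V})^2+(r+3)c_2(\mathcal{V})$ reduces $[\mathfrak{U}]^{\vir}$ to an explicit polynomial in $c_1(\mathcal{V})$, $c_2(\mathcal{V})$, $c_1(\mathcal{W})$, $c_2(\mathcal{W})$.

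The plan is then to compute these four Chern classes on $\Grd$ via Grothendieck-Riemann-Roch applied to the universal line bundle $\mathcal{L}$ on the universal curve $\pi\colon\mathcal{X}\to\Grd$, writing each as a polynomial in $\lambda,\delta_0,\delta_1$ (pulled back from $\widetilde{\mathfrak{M}}_g$) together with a single relative class $\zeta\in CH^1(\Grd)$ that records the twist of $\mathcal{L}$ along the fibres of $\sigma$. This is the tautological-class package developed by Khosla and used in Farkas's earlier Brill-Noether divisor computations. Since $\sigma_*[\Grd]=0$ on relative-dimension grounds, the projection formula implies that only the terms of $[\mathfrak{U}]^{\vir}$ of positive $\zeta$-degree contribute to $\sigma_*[\mathfrak{U}]^{\vir}$; explicitly, $\sigma_*\zeta$ is a rational multiple of $[\widetilde{\mathfrak{M}}_g]$ given by the degree of $\zeta$ on a generic fibre $W^r_d(X)$, and $\sigma_*\zeta^2$ is an explicit linear combination of $\lambda,\delta_0,\delta_1$ obtainable from the $\rho=1$ Brill-Noether intersection formulas in the works cited above. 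Assembling these contributions expresses $[\widetilde{\mathfrak{D}}_g]=a\lambda-b_0\delta_0-b_1\delta_1$ with $a/b_0$ equal to the claimed ratio of two degree-$8$ polynomials in $s$.

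The conceptual content is identical to the $s=3$ case already executed for Theorem~\ref{thm:slopes}; the main obstacle is the bookkeeping required to carry the symmetric-square Chern class expansion, the GRR contributions for both $\mathcal{L}$ and $\mathcal{L}^{\otimes 2}$, and the relative push-forwards of $\zeta^j$ uniformly in $s$, rather than at a single specialization. Once $a/b_0$ is in the closed form given by the first line of the statement, the rewriting as $6+\tfrac{12}{g+1}$ minus a correction term is an algebraic manipulation using $g+1=2s^2+s+2$, producing a rational function in $s$ whose denominator is a multiple of $(g+1)D(s)$ and whose numerator is a sextic. The strict inequality $\tfrac{a}{b_0}<6+\tfrac{12}{g+1}$ for $s\geq 3$ then reduces to positivity of that sextic numerator on integer $s\geq 3$, which is immediate from dominance of its leading term relative to the lower-order contributions.
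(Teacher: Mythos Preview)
Your outline and the paper's proof take genuinely different routes to the same formula.

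The paper does \emph{not} push forward directly from $\Grd$ via a Khosla-type tautological calculus. Instead it intersects $[\widetilde{\mathfrak{D}}_g]^{\vir}$ with three test curves $F_1$, $F_0$, $F_{\mathrm{ell}}$ in $\widetilde{\mathcal{M}}_g$, obtaining three linear relations among $a,b_0,b_1$. For $F_1$ and $F_0$ the computation pulls back to the $(2s{+}1)$-dimensional variety $W^{2s}_{2s^2+2s+1}(X)$ for a fixed general curve $X$ of genus $2s^2+s$; there the class $c_2(\mathcal{F}-\Sym^2\mathcal{E})$ is expanded against the Porteous classes $[Z]$ and $[Y]$ of the loci where $L$ has a cusp (resp.\ a base point plus a simple node condition), and every resulting monomial in $\theta$ and the Chern roots of the tautological bundle is evaluated by the Harris--Tu determinantal formula. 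The pencil $F_{\mathrm{ell}}$ then gives $a-12b_0+b_1=0$ trivially. This is exactly the mechanism used for $g=23$ in the preceding section; the $g=22$ half of Theorem~\ref{thm:slopes} is in fact obtained \emph{from} the present theorem by setting $s=3$, not computed separately, so your appeal to ``the $s=3$ case already executed'' points in the wrong direction.

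Your approach---GRR on $\Grd$ for $\mathcal{L}$ and $\mathcal{L}^{\otimes 2}$, then pushing $\zeta^j$ down to $\widetilde{\mathfrak{M}}_g$---is legitimate in principle and is indeed the framework of Khosla and Farkas's earlier syzygy-divisor papers. But as written it is a plan rather than a proof: you do not produce the push-forwards $\sigma_*\zeta$ and $\sigma_*\zeta^2$ over the boundary $\widetilde{\Delta}_0\cup\widetilde{\Delta}_1$, and the paper's \S3 shows that getting $\mathcal{F}$ to be locally free there already requires the divisorial contraction to pseudo-stable curves, so the boundary corrections are not automatic. The test-curve method trades that global analysis for explicit Jacobian intersection numbers, which is why the paper can carry the computation uniformly in $s$ with the Harris--Tu identities of Propositions~\ref{intersectionsit2}--\ref{intersection2sit2}.
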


\noindent Setting $s=3$, we obtain the virtual divisor $\widetilde{\mathfrak{D}}_{22}$ in $\widetilde{\mathcal{M}}_{22}$ that appears in Theorem \ref{thm:slopes}.  When $s=2$, $\sigma_* [\mathfrak U]^\vir$ is an effective divisor whose closure in $\mm_{11}$ has slope 7.  This interesting divisor has also appeared in \cite{FarkasOrtega12, BakkerFarkas18}, and can be seen as the closure of the locus of curves $[X]\in \cM_{11}$ possessing a semistable rank $2$ vector bundle $E$ whose Clifford index $\mbox{Cliff}(E)$ is strictly less than the Clifford index $\mbox{Cliff}(X)$, which
is, as usual, computed with respect to special line bundles on $X$.

For $g=22$, results from \cite{FarkasPopa05} show that the closure in $\mm_g$ of the image of any effective representative of $[\mathfrak{U}]^\vir$ has slope equal to $\frac{a}{b_0}$. It is possible to extend this statement for $s>3$, following closely the methods of \cite{Farkas06}.  However, in the interest of not increasing further the length of this paper we choose not to carry this out here.

\bigskip

We have written this paper for an audience that includes experts on moduli spaces as well as experts on tropical and nonarchimedean geometry.  While the class computations (\S\ref{Sec:23}-\ref{sect:rho1}) and the main tropical arguments (\S\ref{Sec:Construction}-\ref{sec:genfinite}) are necessarily technical, the presentation also includes detailed examples (such as Examples~\ref{Ex:Interval} and \ref{ex:randomtableau}) and complete arguments in special cases, such as the proof of injectivity of $\phi_L$ in the vertex avoiding case, in \S\ref{Sec:VertexAvoiding}. These are not logically necessary, but should clarify and motivate the essential steps in the proofs of the main theorems.

\bigskip

\noindent{\textbf{Acknowledgments.}}   We thank M. Fedorchuk for graciously pointing out that Theorems~\ref{thm:slopes} and \ref{Thm:MainThm} do not suffice to prove that $\mm_{22}$ and $\mm_{23}$ are of general type, highlighting the need for a stronger statement than the Strong Maximal Rank Conjecture, such as Theorem~\ref{thm:genfinite}.  We also thank O. Amini and D. Maclagan for helpful conversations related to finite generation of tropicalizations of linear series.  And we are especially grateful to the two referees for many insightful comments that led to significant improvements in the presentation.

Farkas was supported by the DFG grant \emph{Syzygien und Moduli} and by the ERC Advanced Grant \emph{Syzygy}.  Jensen is supported by NSF DMS--1601896.  Jensen would also like to thank Yale University for hosting him during the Summer and Fall of 2017, during which time much of the work on this paper was completed.  Payne was partially supported by NSF DMS--2001502,  NSF DMS--2053261 and a Simons Fellowship. Portions of this research were carried out while visiting MSRI.

\section{Preliminaries}

In this section we lay the groundwork for the main sections of the paper, establishing notation and recalling basic facts that will be used throughout, and proving two foundational results that may be more broadly useful.  In particular, we recall the notion of slopes of divisors on moduli spaces of stable curves, review the intersection numbers for curves and divisors on these moduli spaces, and prove a vanishing criterion for effective divisors (Proposition~\ref{prop:vanishing}).  We then discuss skeletons of curves over nonarchimedean fields and tropicalization of rational functions, before proving an effective criterion for tropical independence (Theorem~\ref{thm:independencecriterion}).

\subsection{Slopes of divisors}  \label{sec:slopesofdivisors}
We denote by $\MM_g$ the moduli stack of stable curves of genus $g\geq 2$ and by $\mm_g$ the associated coarse moduli space. All of the cycles and Chow groups that we consider are with rational coefficients.  The push forward $CH^*(\MM_g) \to CH^*(\mm_g)$ is an isomorphism, and we identify each cycle and cycle class on $\MM_g$ with its push forward to $\mm_g$.  In particular, if $\mathfrak{Y} \subseteq \MM_g$ is an irreducible closed substack with coarse moduli space $\mathcal{Y}$ and generic stabilizer $G$, then $[\mathfrak Y] = \frac{1}{|G|} [\mathcal{Y}]$.

All intersection theory calculations in this paper are carried out on the stack $\MM_g$, whereas the results about Kodaira dimension concern the coarse space $\mm_g$. We follow the standard convention that $CH^i$ denotes the Chow group of cycles of codimension $i$, modulo rational equivalence.

Recall that for $g\geq 3$ the group $CH^1(\mm_g)\cong CH^1(\MM_g)$  is freely generated by the Hodge class $\lambda$ and the classes of boundary divisors $\delta_i=[\Delta_i]$, for $i=0,  \ldots, \lfloor \frac{g}{2} \rfloor$. For $g=2$ one has the supplementary relation $\lambda=\frac{1}{10}\delta_0+\frac{1}{5}\delta_1$. The canonical class of $\mm_g$, computed in \cite{HarrisMumford82}, is
\[
K_{\mm_g} = 13 \lambda - 2 \delta_0 - 3 \delta_1 - 2 \delta_2 - \cdots - 2 \delta_{\lfloor \frac{g}{2} \rfloor}.
\]
The singularities of $\mm_g$ are mild enough that all sections of $nK_{\mm_g}$ extend to pluricanonical forms on a resolution of singularities \cite{HarrisMumford82}.  Therefore, $\mm_g$ is of general type if and only if there is an effective divisor class $[D] \sim a\lambda - \sum_{i=0}^{\lfloor \frac{g}{2} \rfloor} b_i\delta_i$ on $\mm_g$ that satisfies
\begin{equation} \label{eq:slope}
\frac{a}{b_i} < \frac{13}{2} \mbox{ for } i \neq 1 \mbox{ and } \frac{a}{b_1} < \frac{13}{3}.
\end{equation}

\noindent The \emph{slope} of an effective divisor $D$ on $\mm_g$ with class as above is $s(D) := \max_i \big\{ \frac{a}{b_i} \big\}$ \cite{HarrisMorrison90}.  For the purpose of studying the Kodaira dimension of $\mm_g$, one is most concerned with $\frac{a}{b_0}$.  Indeed, if $\frac{a}{b_0} < \frac{13}{2}$, then $\frac{a}{b_1} < \frac{13}{3}$, and if, moreover, $g \leq 23$, then $s(D) = \frac{a}{b_0}$ \cite[Theorem~1.1(c) and Corollary~1.2]{FarkasPopa05}. In particular, for $g \leq 23$, $\mm_g$ is of general type if and only if there is an effective divisor $D$ with $\frac{a}{b_0} < \frac{13}{2}$.

In their study of the Kodaira dimension of $\mm_g$ for $g \geq 24$, Harris, Mumford, and Eisenbud considered Brill-Noether divisors, defined as follows. For integers $r\geq 1$ and $d\leq g+r-2$ such that
\begin{equation} \label{eq:BN-1}
\rho(g,r,d):=g-(r+1)(g-d+r)=-1,
\end{equation}
one defines $\cM_{g,d}^r$ to be the (divisorial part of the) locus of curves $[X]\in \cM_g$ with a linear series $L \in W^r_d(X)$.
The fact that this locus is not all of $\cM_g$ is the essential content of the Brill-Noether Theorem \cite{GriffithsHarris80}.  Since the slope of the closure $\mm_{g,d}^r$ of $\cM_{g,d}^r$ inside $\mm_g$ is $6+\frac{12}{g+1}$ \cite{HarrisMumford82, EisenbudHarris87b}, it follows that $\mm_g$ is of general type when $g \geq 24$ and $g+1$ is composite.  (If $g + 1$ is prime, then the equation \eqref{eq:BN-1} has no solutions, and there is no Brill-Noether divisor on $\mm_g$.)

When $g$ is even and at least 28, one similarly obtains a virtual divisor satisfying \eqref{eq:slope} supported on the closure of the locus of  curves $[X]\in \cM_g$ with a line bundle $L\in W^r_d(X)$ where $\rho(g,r,d)=0$, such that the Petri map is not injective \cite{EisenbudHarris87b}.  The fact that this locus is not all of $\mm_g$, from which it follows that this virtual class is effective, is the essential content of the Gieseker-Petri Theorem \cite{Gieseker82}.  Note that both the Brill-Noether Theorem and the Gieseker-Petri Theorem  have more recent proofs by tropical arguments on chains of loops \cite{tropicalBN, tropicalGP}.  For $g<24$ the Brill-Noether and Gieseker-Petri divisors do not satisfy inequality \eqref{eq:slope}.

As explained in the introduction, for $g = 22$ and $23$, we construct a different virtual divisor with smaller slope.  Specifically, we consider the push forward of the virtual class of the locus of curves admitting a map of minimal degree to $\PP^6$  with image contained in a quadric.  Theorem~\ref{thm:slopes} says that these virtual divisors satisfy \eqref{eq:slope}, and Theorems~\ref{Thm:MainThm} and~\ref{thm:genfinite} combine to show that they are effective.

\subsection{Test curves and intersection numbers}

We introduce a few standard test curves in $\mm_g$ that will be used several times in the paper.
Choose a general pointed curve $[X, q]$ of genus $g-1$. Then construct the families of stable curves of genus $g$

\begin{equation}\label{f0}
F_0:=\Bigl\{[X_{yq}]:=[X/y\sim q] \mid y\in X \Bigr\}\subseteq \Delta_0 \subset \mm_{g} \ \ \mbox{ and }
\end{equation}
\begin{equation}\label{fell}
F_{\mathrm{ell}}:=\Bigl\{[X\cup_q E_t] \mid t\in \PP^1\Bigr\}\subseteq \Delta_1 \subset \mm_{g},
\end{equation}
where $\{[E_t, q]\}_{t\in \PP^1}$ denotes a pencil of plane cubics and $q$ is a fixed point of the pencil.
The intersection of these test curves with the generators of $CH^1(\mm_{g})$ is well-known, see  \cite{HarrisMumford82}:
\begin{align*}
F_0\cdot \lambda &= 0, &  F_0\cdot \delta_0 &= 2-2g, &  F_0 \cdot \delta_1 &= 1 \
\mbox{ and } & F_0 \cdot \delta_j &= 0 \ \mbox{ for } \ j=2, \ldots, \Big \lfloor \frac{g}{2}\Big \rfloor, \\
F_{\mathrm{ell}}\cdot \lambda &= 1, &  F_{\mathrm{ell}} \cdot \delta_0 &=12, & F_{\mathrm{ell}}\cdot \delta_1 &=-1 \ \mbox{ and } & F_{\mathrm{ell}} \cdot \delta_j &= 0 \ \mbox{ for } j=2, \ldots, \Big \lfloor \frac{g}{2} \Big \rfloor.
\end{align*}
For a fixed pointed curve $[C, p]\in \cM_{g-i,1}$ and a fixed curve $[D]\in \cM_i$, we consider the family
\begin{equation}\label{fi}
F_i:=\Bigl\{[C\cup_{p\sim y} D]\mid  y\in D\Bigr\}\subseteq \Delta_i\subset \mm_g.
\end{equation}
Then using again \cite{HarrisMumford82} we find
\[
F_i\cdot \lambda=0, \ F_i\cdot \delta_i=2-2i, \ \mbox{ and } F_i\cdot \delta_j=0\ \mbox{ for } j\neq i.
\]

\subsection{A vanishing condition for effective divisors}  \label{sec:vanishingcondition}

In order to prove Theorem~\ref{thm:genfinite}, we must show that there is no nonzero effective divisor in $\mm_g$ over which the fibers of the degeneracy locus $\fU$ inside $\Grd$ are generically infinite.  We will do so by applying certain sufficient vanishing conditions for effective divisors on $\mm_g$, which we explain next.
\begin{definition} \label{def:deltaij}
For $1\leq i, j<g$, let $\Delta_{i,j} \subset \mm_g$ be the codimension 2 boundary stratum parametrizing curves with two separating nodes, whose two tail components have genus $i$ and $j$. Such a curve has a third component, of genus $g - i - j$, meeting each tail component at a node.
\end{definition}

For $1\leq k< g$, let $\jmath_k \colon \mm_{k,1} \to \mm_g$ be the map obtained by attaching a fixed, general pointed curve $[X,p]\in \cM_{g-k,1}$ to an arbitrary pointed curve of genus $k$.  We let $\psi\in CH^1(\mm_{k,1})$ be the cotangent class, and we let $\delta_i=\delta_{i:1}\in CH^1(\mm_{k,1})$ denote the class of the closure of the locus of the union of two smooth curves of genera $i$ and $k-i$, with the marked point lying on the genus $i$ component, for $i=1, \ldots, k-1$.  We then have the following formulas:
{\begin{align}\label{pullback}
\jmath_k^*(\lambda) &= \lambda, & \jmath_k^*(\delta_0) &=\delta_0, &\jmath_k^*(\delta_k) &=-\psi+\delta_{2k-g},  & \jmath_k^*(\delta_{i}) &=\delta_{i+k-g}+\delta_{k-i} \mbox{ for } i\neq k.
\end{align}}
Here we make the convention $\delta_i:=0$, for $i<0$ or $i\geq g$.

The \emph{Weierstrass divisor} $\overline{\mathcal{W}}_k$ in $\mm_{k,1}$ is the closure of the locus of smooth pointed curves $[X, p]$, where $p\in X$ is a Weierstrass point. The \emph{hyperelliptic divisor} $\overline{\mathcal{H}}_3$ in  $\mm_{3,1}$ is the locus where the underlying curve is hyperelliptic.

The following result provides  sufficient intersection-theoretic conditions for an effective divisor on $\mm_g$ to be zero.
It relies on the existing detailed knowledge of the Picard group of $\mm_{g,1}$.

\begin{proposition} \label{prop:vanishing}
Let $g \geq 6$ and let $D$ be an effective divisor on $\mm_g$ with the following properties:
\begin{enumerate}
\item $D$ is the closure of a divisor in $\cM_g$;
\item $\jmath_2^*(D) = 0$;
\item $D$ does not contain any codimension $2$ stratum $\Delta_{2,j}$.
\item if $g$ is even then $\jmath_3^*(D)$ is a nonnegative combination of the classes $[\overline{\mathcal{W}}_3]$ and $[\overline{\mathcal{H}}_3]$ on $\mm_{3,1}$.
\end{enumerate}
Then $D = 0$.
\end{proposition}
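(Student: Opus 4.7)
Write $[D] = a\lambda - \sum_{i=0}^{\lfloor g/2 \rfloor} b_i\,\delta_i \in CH^1(\mm_g)$. The plan is to eliminate the free parameters in stages---first via condition (2), then via condition (3) or (4), and finally via condition (1) together with effectivity---to force $D=0$.

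The first stage uses condition (2). By the pullback formulas \eqref{pullback} with $k=2$, every $\delta_i$ with $i \ge 3$ pulls back to zero on $\mm_{2,1}$ for $g \ge 6$, so
\[
\jmath_2^*(D) \;=\; a\,\lambda - b_0\,\delta_0 - b_1\,\delta_1 + b_2\,\psi \ \in\ CH^1(\mm_{2,1}).
\]
Using Mumford's relation $\lambda = \tfrac{1}{10}\delta_0 + \tfrac{1}{5}\delta_1$ on $\mm_2$ lifted to $\mm_{2,1}$, and passing to the free basis $\{\psi, \delta_0, \delta_1\}$, setting $\jmath_2^*(D) = 0$ gives $b_0 = a/10$, $b_1 = a/5$, and $b_2 = 0$. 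Condition (1) with the test curves $F_i$ of \eqref{fi} yields $D \cdot F_i = (2i-2) b_i \ge 0$, hence $b_i \ge 0$ for each $i \ge 2$; similarly $F_0 \cdot D = a(2g-4)/10 \ge 0$ after substitution, hence $a \ge 0$.

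Next we force $a = 0$. When $g$ is even, condition (4) applies: compute $\jmath_3^*(D)$ via \eqref{pullback} and match against $\alpha[\overline{\mathcal{W}}_3] + \beta[\overline{\mathcal{H}}_3]$, using Cukierman's formula $[\overline{\mathcal{W}}_3] = -\lambda + 6\psi - 3\delta_1 - \delta_2$ and the pullback $[\overline{\mathcal{H}}_3] = 9\lambda - \delta_0 - 3\delta_1 - 3\delta_2$ from $\mm_3$. The $\delta_1$-coefficient of $\jmath_3^*(D)$ reads $b_2 = 3\alpha + 3\beta$; since $b_2 = 0$ and $\alpha,\beta \ge 0$ we conclude $\alpha = \beta = 0$, and the $\lambda$-coefficient then gives $a = -\alpha + 9\beta = 0$. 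When $g$ is odd, condition (4) is vacuous, and one invokes condition (3) via the clutching map $\mu_j\colon \mm_{2,1} \times \mm_{g-2-j,2} \times \mm_{j,1} \to \mm_g$ with image $\Delta_{2,j}$. Condition (3) makes $\mu_j^*(D)$ an effective class on the product, and condition (2) makes its restriction to each $\mm_{2,1}$-fiber trivial. Using the K\"unneth splitting of the Picard group of the product, available because each factor has vanishing $h^{1,0}$ by Arbarello--Cornalba, $\mu_j^*(D)$ descends to an effective class on the remaining factors whose $\lambda$-coefficient is still $a$ and whose $\delta_0$-coefficient is $-a/10$; choosing $j$ so that the effective cone of $\mm_{g-2-j,2}$ has $\delta_0$-slope strictly less than $10$ forces $a \le 0$, and combined with $a \ge 0$ this gives $a = 0$.

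Once $a = 0$, the first stage yields $b_0 = b_1 = b_2 = 0$, so $[D] = -\sum_{i \ge 3} b_i\,\delta_i$ with $b_i \ge 0$ for each $i$. Then $D + \sum_{i \ge 3} b_i\,\delta_i$ is effective and $\QQ$-linearly equivalent to zero on the projective coarse moduli space $\mm_g$, and so vanishes; effectivity of the summands gives $D = 0$. The main obstacle is forcing $a = 0$ in the odd case: the Step 1 relations automatically annihilate $D \cdot F$ for every naive one-parameter family $F \subseteq \Delta_{2,j}$ varying moduli in a single factor via a pencil, so one must break the cancellation either by the K\"unneth route above---which demands a Picard decomposition on a triple product of pointed-curve moduli stacks and a sharp enough upper bound on the $\delta_0$-slope of the effective cone of $\mm_{g-2-j,2}$---or by a self-gluing construction producing a test curve in $\Delta_{2,j} \cap \Delta_0$ whose $\delta_0$- and $\delta_1$-intersection numbers are not already controlled by the Step 1 relations.
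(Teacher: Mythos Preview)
Your even-$g$ argument is correct and in fact tighter than the paper's: once $b_2=0$ from $\jmath_2^*(D)=0$, the $\delta_1$-coefficient of $\jmath_3^*(D)$ vanishes, forcing $\alpha=\beta=0$ and hence $a=0$; then $[D]=-\sum_{i\ge 3}b_i\delta_i$ with $b_i\ge 0$ and effectivity finishes. You do not even touch condition~(3) here, whereas the paper first uses~(3) to reduce $[D]$ to a one-parameter family before applying $\jmath_3^*$.

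The odd-$g$ case, however, has a genuine gap. You recognize this yourself: the K\"unneth/effective-cone route you sketch is not carried out, and the required slope bound on the effective cone of $\mm_{g-2-j,2}$ is neither stated precisely nor proved. More importantly, this machinery is unnecessary. The paper uses condition~(3) via a concrete test curve that you overlooked: fix a pointed genus-$2$ curve and a pointed genus-$(g-2-j)$ curve, and let the attachment point move over a fixed genus-$j$ curve. Call this $F_{2,g-2-j}\subseteq\Delta_{2,j}$. For $j\neq 2,g-4$ one computes
\[
F_{2,g-2-j}\cdot\lambda=0,\quad F_{2,g-2-j}\cdot\delta_2=1-2j,\quad F_{2,g-2-j}\cdot\delta_j=1,\quad F_{2,g-2-j}\cdot\delta_{j+2}=-1,
\]
with all other intersections zero. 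Since $\Delta_{2,g-2-j}\not\subseteq D$ by~(3), we get $F_{2,g-2-j}\cdot D\ge 0$, i.e.\ $b_j\le b_{j+2}$ (using $b_2=0$). With the convention $b_k=b_{g-k}$, replacing $j$ by $g-j-2$ reverses the inequality, so $b_j=b_{j+2}$ for $1\le j\le g-3$. When $g$ is odd, the identification $b_{(g+1)/2}=b_{(g-1)/2}$ links the even- and odd-index chains, so $b_2=0$ propagates to all $b_j$, hence $a=5b_1=0$ and $D=0$.

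Your remark that ``the Step~1 relations automatically annihilate $D\cdot F$ for every naive one-parameter family $F\subseteq\Delta_{2,j}$ varying moduli in a single factor'' is what led you astray: the curve $F_{2,g-2-j}$ varies the \emph{gluing point}, not the moduli, and its intersection with $D$ is not killed by the relations from $\jmath_2^*$.
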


\begin{proof}
Write the class of $D$ as
\[
[D] = a\lambda - b_0 \delta_0 - \cdots - b_{\lfloor  \frac{g}{2} \rfloor} \delta_{\lfloor \frac{g}{2} \rfloor}\in CH^1(\mm_g).
\]
Since $\mm_g$ is projective, to show that $D$ is zero, it suffices to show that $[D] = 0$.

First, note that $a \geq 0$, since $\lambda$ is nef and the complete curves disjoint from the boundary are dense in $\mathcal{M}_g$.  Next, we claim that $b_i \geq 0$ for all $i$.  For $i=2, \ldots, g-1$, the curve $F_i$ moves in a family that covers the boundary divisor $\Delta_i$.  Since $\Delta_i\nsubseteq \mathrm{supp}(D)$, it follows that $F_i\cdot D\geq 0$, hence $b_i\geq 0$.  For $i=0$, we similarly use the curve $F_0$ which moves in a family that covers $\Delta_0$, to deduce that
$(2g-2)b_0-b_1=F_0\cdot D\geq 0$, so $b_0\geq 0$.

By \cite[Theorem~2.1]{EisenbudHarris87b}, the condition $\jmath_2^*(D) = 0$ implies that $a = 10 b_0 = 5b_1$ and $b_2 = 0$.  Indeed, recall the relation $\lambda=\frac{1}{10}\delta_0+\frac{1}{5}\delta_1$ in genus $2$.  By (\ref{pullback}) we have $\jmath_2^*([D])=a\lambda-b_0\delta_0-b_1\delta_1+b_2\psi$, and the conclusion follows from the fact that the classes $\psi, \delta_0, \delta_1\in CH^1(\mm_{2,1})$ are independent.

Next we consider the test curve $F_{2, g-2-j}\subseteq \Delta_{2,g-j-2}$ obtained by gluing a fixed pointed curve of genus 2 to a moving point on a curve of genus $j$, which is itself glued at a fixed point to a curve of genus $g-j-2$.  If $j \neq 2, g-4$, we have
\begin{align*}
F_{2,g-2-j} \cdot \lambda &= 0, & F_{2, g-2-j} \cdot \delta_2 &= 1-2j, &  F_{2, g-2-j} \cdot \delta_j &= 1, \\
F_{2, g-2-j} \cdot \delta_{j+2} &= -1, & F_{2,g-2-j} \cdot \delta_i &= 0, \mbox{ for } i \neq 2,j,j+2.
\end{align*}
Similarly, for the test curves $F_{2,g-4}$ and $F_{2,2}$ we have
\begin{align*}
F_{2, g-4} \cdot \lambda &= 0, & F_{2, g-4} \cdot \delta_i &= 0, \mbox{ for } i \neq 2,4, &  F_{2,g-4} \cdot \delta_2 &= -2, & F_{2, g-4} \cdot \delta_4 &= -1. \\
F_{2, 2} \cdot \lambda &= 0, & F_{2, 2} \cdot \delta_i &= 0 \mbox{ for }  i \neq 2,4, & F_{2,2} \cdot \delta_2 &= -4, & F_{2,2} \cdot \delta_4 &= 1.
\end{align*}
Since $F_{2,g-2-j}$ covers the stratum $\Delta_{2, g-2-j}$, which is not contained in $D$, we have $F_{2, j} \cdot D \geq 0$.  Since $b_2 = 0$, it follows that $b_{j} \leq b_{j+2} \mbox{ for } 1 \leq j \leq g-3$,
where we adopt the usual convention that $b_k := b_{g-k}$ for $\lfloor \frac{g}{2} \rfloor < k < g$.  Replacing $j$ by $g - j - 2$, we see that
\begin{equation}\label{equalby2}
b_j = b_{j+2}, \mbox{  for } 1\leq j \leq g-3.
\end{equation}

When $g$ is odd,  combining (\ref{equalby2}) with the fact that $b_2 = 0$ shows that $b_j = 0$ for all $j \geq 1$.  Since $a = 10 b_0 = 5b_1$, it follows that $D = 0$.

\bigskip

It remains to consider the case when $g$ is even. So far, using (\ref{equalby2}) and the relation $a=10b_0=5b_1$, we have shown that
\[
[D]=c\bigl(10\lambda-\delta_0-2\delta_1-2\delta_3-\cdots-2\delta_{\frac{g}{2}}\bigr)
\]
for some $c\in \mathbb Q_{\geq 0}$. We aim to prove that $c=0$.

By assumption, we have a relation $[\jmath_3^*(D)]=\alpha[\overline{\mathcal{W}}_3]+\beta[\overline{\mathcal{H}}_3]$, for certain nonnegative rational constants $\alpha$ and $\beta$.  By \cite{Cukierman89} or \cite{EisenbudHarris87b}, the class of the Weierstrass divisor is
\[
[\overline{\mathcal{W}}_3]=-\lambda + \psi - 3 \delta_1 - 6 \delta_2 .
\]
By for instance \cite[Section~3.H]{HarrisMorrison98}, the class of the hyperelliptic divisor is
\[
[\overline{\mathcal{H}}_3]=9\lambda -\delta_0 - 3\delta_1 - 3\delta_2.
\]

Applying once more the formula (\ref{pullback}), we find that
\[
[\jmath_3^*(D)]=c\bigl(10\lambda-\delta_0-2\delta_2+2\psi)=\alpha\bigl(-\lambda-3\delta_1-6\delta_2 + \psi \bigr)+\beta\bigl(9\lambda-\delta_0-3\delta_1
-3\delta_2\bigr).
\]
Since the classes $\lambda, \delta_0, \delta_1, \delta_2, \psi$ freely generate $CH^1(\mm_{3,1})$, we immediately obtain from this relation that $c=\alpha=\beta=0$.  That is, $D=0$.
\end{proof}

\subsection{Tropical and nonarchimedean geometry of curves} \label{sec:tropicalprelim}

The techniques that we use to prove Theorems~\ref{Thm:MainThm} and \ref{thm:genfinite}, and thereby establish the transversality statements needed to produce effective divisors of small slope on $\mm_{22}$ and $\mm_{23}$, are based on tropical and nonarchimedean geometry.  Let $X$ be a curve of positive genus over an algebraically closed nonarchimedean field $K$ with valuation ring $R$ and residue field $\kappa$, of characteristic zero.  For simplicity, we assume that $K$ is spherically complete with value group $\RR$.  An example of such a field is $\mathbb{C}(\!(t^{\mathbb R})\!)$, the field of power series with real exponents and well-ordered support.  See \cite{Poonen93} for an exposition of nonarchimedean fields with such completeness properties.  Note that any two uncountable algebraically closed fields of the same cardinality and characteristic are isomorphic \cite[Proposition~2.2.5]{Marker02}.  In particular, we may choose $K$ to be isomorphic to $\mathbb{C}$, as an abstract field.  The additional nonarchimedean structure on $K$ gives us access to techniques from tropical geometry and Berkovich theory, just as the Euclidean norm on $\mathbb{C}$ gives access to techniques from Riemann surfaces and complex analytic geometry.

Since $K$ is spherically complete with value group $\mathbb{R}$, every point in the nonarchimedean analytification $X^\mathrm{an}$ has type 1 or 2.  Here, a type 1 point is simply a $K$-rational point, and a type 2 point $v$ corresponds to a valuation $\val_v$ on the function field $K(X)$ whose associated residue field is a transcendence degree 1 extension of the residue field $\kappa$ of $K$.  See, e.g.,  \cite[\S3.5]{BPR16}.

\subsubsection{Skeletons} The \emph{minimal skeleton} of $X^\an$ is the set of points with no neighborhood isomorphic to an analytic ball, and carries canonically the structure of a finite metric graph.  More generally, a \emph{skeleton}  for $X^\an$ is the underlying set of a finite connected subgraph of $X^\an$ that contains this minimal skeleton.  Any skeleton $\Gamma$ is contained in the set of type 2 points, and any decomposition of a skeleton $\Gamma$ into vertices and edges determines a semistable model of $X$ over $R$.  The vertices correspond to the irreducible components of the special fiber, and the irreducible component $X_v$ corresponding to $v$ has function field $\kappa(X_v)$, the transcendence degree 1 extension of $\kappa$ given by the completion of $K(X)$ with respect to $\val_v$.  The edges correspond to the nodes of the special fiber, with the length of each edge given by the thickness of the corresponding node.

\subsubsection{Tropicalizations and reductions of rational functions} \label{sec:slopesandreductions} Let $\Gamma$ be a skeleton for $X^\an$.   Since $\Gamma$ is contained in the set of type 2 points, for each nonzero rational function $f \in K(X)^*$ we get a real-valued function
\[
\trop(f):\Gamma \rightarrow \RR, \ \ v \mapsto \val_v(f).
\]
This function is piecewise linear with integer slopes, and its slope along an edge incident to $v$ is related to the reduction of $f$ at $v$.  This relation is known as the slope formula, a nonarchimedean analogue of the Poincar\'e-Lelong formula, which we now describe.

Given a nonzero rational function $f \in K(X)$ and a type 2 point $v \in X^\an$, choose $c \in K^*$ whose valuation is equal to $\val_v(f)$.  Then $f/c$ has valuation zero, and the \emph{reduction} of $f$ at $v$, denoted $f_v$, is defined to be its image in $\kappa (X_v)^*/\kappa^*$.  This does not depend on the choice of $c$, so $f_v$ is well-defined.  Divisors of rational functions are invariant under multiplication by nonzero scalars, and we denote the divisor on $X_v$ of any representative of $f_v$ in $\kappa(X_v)^*$ by $\ddiv (f_v)$.  Each germ of an edge of $\Gamma$ incident to $v$ corresponds to a point of $X_v$ (a node in the special fiber of a semistable model with skeleton $\Gamma$, in which $X_v$ appears as a component).  The slope formula then says that the outgoing slope of $\trop(f)$ along this germ of an edge is equal to the order of vanishing of $f_v$ at that point \cite[Theorem~5.15(3)]{BPR13}.

\subsubsection{Complete linear series on graphs} Let $\PL(\Gamma)$ be the set of continuous piecewise linear functions on $\Gamma$ with integer slopes.  Throughout, we will use both the additive group structure on $\PL(\Gamma)$, and the tropical module structure given by pointwise minimum and addition of real scalars.

A divisor $D$ on $\Gamma$ is an element of the free abelian group generated by the points of $\Gamma$, i.e. a finite formal linear combination of points of $\Gamma$ with integer coefficients.  The \emph{order} of $\varphi \in \PL (\Gamma)$ at a point $v \in \Gamma$, denoted $\ord_v (\varphi)$, is the sum of the incoming slopes of $\varphi$ at $v$.  The \emph{principal divisor} associated to $\varphi$ is then $\ddiv (\varphi) := \sum_{v \in \Gamma} \ord_v (\varphi) v$.  The \emph{complete linear series} of such a divisor $D$ is
\[
R(D) := \bigl\{ \varphi \in \PL (\Gamma): \ddiv (\varphi) + D \geq 0 \bigr\}.
\]
Note that $R(D) \subseteq \PL(\Gamma)$ is a tropical submodule, i.e.  it is closed under scalar addition and pointwise minimum.

By the Poincar\'{e}-Lelong formula, if $D_X$ is any divisor on $X$ tropicalizing to $D$ and $f$ is a section of $\mathcal{O}(D_X)$, then $\trop(f) \in R(D)$.  We refer the reader to \cite{BakerNorine07, Baker08} for further background on the divisor theory of graphs and metric graphs, and specialization from curves to graphs.

\subsection{Tropical independence}  \label{sec:tropicalindependence}

We now recall the notion of tropical independence, as defined in \cite{tropicalGP}, and prove Theorem~\ref{thm:independencecriterion}.
Let $\{ \psi_i  : \ i \in I\}$ be a finite collection of piecewise linear functions.  A \emph{tropical linear combination} is an expression
\[
\theta = \min \{ \psi_i + c_i \},
\]
for some choice of real coefficients $\{c_i\}$.  Note that different choices of coefficients may yield the same pointwise minimum, but we consider the coefficients $\{ c_i \}$ to be part of the data in a tropical linear combination, so the tropical linear combinations of  $\{ \psi_i \}$  are naturally identified with $\RR^I$.

Given a tropical linear combination $\theta = \min \{ \psi_i + c_i \}$, we say that $\psi_i + c_i$ \emph{achieves the minimum} at $v \in \Gamma$ if $\theta(v) = \psi_i(v) + c_i$, and \emph{achieves the minimum uniquely} if, moreover, $ \theta(v)\neq \psi_j(v) + c_j $ for $j \neq i$.  We say that \emph{the minimum is achieved at least twice} at $v \in \Gamma$ if there are at least two distinct indices $i \neq j$ such that $\theta(v)$ is equal to both $\psi_i(v) + c_i$ and $\psi_j(v) + c_j$.

A \emph{tropical dependence} is a tropical linear combination $\theta = \min \{ \psi_i + c_i \}$ such that the minimum of the functions $\psi_i + c_i$ is achieved at least twice at every point of $\Gamma$.   Equivalently, $\theta = \min \{ \psi_i + c_i \}$ is a tropical dependence if  $\theta = \min_{j \neq i} \{ \psi_j + c_j \}$, for all $i$.  If no such tropical linear combination exists, then $\{\psi_0 , \ldots \psi_r\}$ is  \emph{tropically independent}.

Most importantly for applications to Brill-Noether theory, if a set of nonzero functions $\{f_0, \ldots , f_r\}$ is linearly dependent over $K$, then the set of tropicalizations $\{ \trop(f_0), \ldots , \trop (f_r) \}$ is tropically dependent \cite[Lemma~3.2]{tropicalGP}.  Therefore, tropical independence of the tropicalizations is a sufficient condition for linear independence of rational functions.

\medskip

Our arguments in this paper use the following new characterization of tropical independence.

\begin{definition}
A tropical linear combination $\theta = \min \{ \psi_i + c_i \}$ is a \emph{certificate of independence} if each $\psi_i + c_i$ achieves the minimum uniquely at some point $v \in \Gamma$.
\end{definition}

\noindent Equivalently, $\theta = \min \{ \psi_i + c_i \}$ is a certificate of independence if $\theta \neq \min_{j \neq i} \{ \psi_j + c_j \}$, for all $i \in I$.

\begin{remark}
In linear algebra, a \emph{dependence} is a linear combination that shows a collection of vectors is linearly dependent.  Similarly, a tropical dependence is a tropical linear combination that shows a collection of PL functions is tropically dependent.  
By Theorem~\ref{thm:independencecriterion}, the existence of a certificate of independence for a collection of PL functions shows that these functions are tropically independent.  There is no analogous criterion for linear independence in linear algebra.
\end{remark}

Although not logically necessary for the proofs of our theorems, we include the following interpretation of tropical independence in the language of algebraic geometry.

\begin{proposition}  \label{prop:ag-interpretation}
Let $X$ be a curve over $K$ with skeleton $\Gamma$.  Let $f_0, \ldots, f_r$ be sections of $\mathcal{O}(D_X)$.  Then the following are equivalent:
\begin{enumerate}
\item  The collection $\{ \trop(f_0), \ldots, \trop(f_r) \}$ is tropically independent.
\item  There is a semistable model $\mathcal X$ of $X$, a line bundle $\mathcal{L}$ extending $\mathcal{O}(D_X)$, irreducible components $X_0, \ldots, X_r$ in the special fiber of $\mathcal X$, and scalars $a_0, \ldots, a_r \in K$ such that $a_i f_i$ extends to a regular section of $\mathcal{L}$ and vanishes on $X_j$ if and only if $i \neq j$.
\end{enumerate}
\end{proposition}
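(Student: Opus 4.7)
The plan is to pass through Theorem~\ref{thm:independencecriterion}: both directions amount to translating the inequality characterization ``each $\trop(f_i)$ uniquely attains the minimum somewhere'' into the geometric property of diagonal specialization on a suitable semistable model. The bridge is the standard dictionary between PL functions with integer slopes on a subdivision of $\Gamma$ and Cartier divisors on the associated toroidal modification.

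For the direction (2) $\Rightarrow$ (1), given the model $\mathcal{X}$, the extension $\mathcal{L}$, the components $X_0, \ldots, X_r$, and the scalars $a_i$, I would set $b_i := \val(a_i)$ and consider the tropical linear combination $\theta := \min_i\{\trop(f_i) + b_i\}$. Writing $\mathcal{L}$ locally as $\mathcal{O}(\overline{D_X} + \sum c_v X_v)$, regularity of a section $g$ on the generic point of $X_v$ amounts to $\trop(g)(v) \geq -c_v$, with equality if and only if $g$ is nonvanishing on $X_v$. Applied to $g = a_i f_i$ and $v = v_j$ (the vertex of $\Gamma$ corresponding to $X_j$), the hypothesis gives $\trop(f_i)(v_i) + b_i = -c_{v_i}$ and $\trop(f_i)(v_j) + b_i > -c_{v_j}$ for $j \neq i$. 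In particular $\theta(v_i) = \trop(f_i)(v_i) + b_i$, and the minimum at $v_i$ is achieved uniquely by $\trop(f_i)$. By Theorem~\ref{thm:independencecriterion}, $\{\trop(f_0), \ldots, \trop(f_r)\}$ is tropically independent.

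For the direction (1) $\Rightarrow$ (2), I would invoke Theorem~\ref{thm:independencecriterion} to produce an independence $\theta = \min_i\{\trop(f_i) + b_i\}$, together with distinct points $v_0, \ldots, v_r \in \Gamma$ at which $\trop(f_i)$ uniquely attains the minimum. After subdividing $\Gamma$ so that the $v_i$ are vertices and $\theta$ together with each $\trop(f_i)$ is linear with integer slopes on every edge, one obtains a semistable model $\mathcal{X}'$ whose dual graph is this subdivision. I would then define $\mathcal{L}$ by taking the closure of $\mathcal{O}(D_X)$ and twisting by the vertical Cartier divisor $-\sum_v \theta(v) X_v$; integrality of the slopes of $\theta$ guarantees that this twist is Cartier. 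Choosing $a_i \in K^*$ with $\val(a_i) = b_i$, the inequality $\trop(a_i f_i) = \trop(f_i) + b_i \geq \theta$ shows that $a_i f_i$ extends to a regular section of $\mathcal{L}$, while the uniqueness clause in the independence condition gives $\trop(a_i f_i)(v_j) = \theta(v_j)$ precisely when $j = i$, which is the desired diagonal specialization.

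The main obstacle is bookkeeping in direction (1) $\Rightarrow$ (2): producing a toroidal modification on which $\theta$ lives as an integral PL function, and checking that the twisted extension $\mathcal{L}$ is a well-defined line bundle whose regular sections correspond exactly to algebraic functions $g$ with $\trop(g) \geq \theta$, with nonvanishing on $X_v$ corresponding to equality $\trop(g)(v) = \theta(v)$. This is the content of the nonarchimedean Poincar\'e--Lelong/slope formula recalled in \S\ref{sec:slopesandreductions}, applied to each section $a_i f_i$, and once this dictionary is in place both implications become formal consequences of Theorem~\ref{thm:independencecriterion}.
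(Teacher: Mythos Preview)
Your proposal is correct and follows the same approach as the paper: both directions go through Theorem~\ref{thm:independencecriterion}, and the line bundle $\mathcal{L}$ in direction (1) $\Rightarrow$ (2) is constructed so that regularity and nonvanishing on a component $X_v$ correspond exactly to the inequalities $\trop(g)(v) \geq \theta(v)$ and $\trop(g)(v) = \theta(v)$. The only noteworthy difference is in how $\mathcal{L}$ is described: the paper defines it directly as a subsheaf of $K(X)$ by the two local conditions (i) $\ddiv(f)+D_X$ effective and (ii) $\trop(f)\geq\theta$, whereas you describe it as the closure of $\mathcal{O}(D_X)$ twisted by the vertical divisor $-\sum_v \theta(v)\,X_v$. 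These are the same object, but your phrasing ``integrality of the slopes of $\theta$ guarantees that this twist is Cartier'' is slightly off: the values $\theta(v)$ are real, not integral, and what actually makes the sheaf locally free is the compatibility at each node (integer slope times edge length equals the difference of $\theta$-values at the endpoints) together with the fact that the value group of $K$ is all of $\mathbb{R}$. The paper's intrinsic subsheaf definition sidesteps this bookkeeping.
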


\begin{proof}
Suppose $\{ \trop(f_0), \ldots, \trop(f_r) \}$ is tropically independent.  Then there are real numbers $c_0, \ldots, c_r$ such that $\theta = \min \{ \trop(f_i) + c_i \}$ is independent.  Choose points $v_0, \ldots, v_r$ in $\Gamma$ such that $\trop(f_i) + c_i$ achieves the minimum uniquely at $v_i$.  Let $\mathcal X$ be the model corresponding to some semistable vertex set for $\Gamma$ that contains $\{v_0, \ldots, v_r\}$ and the tropicalization of every point in the support of $D_X$, and let $X_i$ be the irreducible component of the special fiber corresponding to $v_i$.

We define a subsheaf $\mathcal{L}$ of $K(X)$ on $\mathcal X$, extending $\mathcal{O}(D_X)$, as follows.  A rational function $f$ is a regular section of $\mathcal L$ at a point $x$ in the special fiber if and only if there is an affine open neighborhood $U$ of $x$ in the special fiber such that
\begin{enumerate}[label=(\roman*)]
\item $\ddiv(f) + D_X$ is effective on $\mathrm{sp}^{-1}(U)$, and
\item $\trop(f) \geq \theta$ on $\trop(\mathrm{sp}^{-1}(U))$.
\end{enumerate}
The choice of $\mathcal{X}$, which depends on both $D_X$ and $\theta$, guarantees that this sheaf is locally free of rank $1$.  Furthermore, by construction, a section $f$ of $\mathcal{O}(D_X)$ is regular on $X_i$ (resp. vanishes on $X_i$) if and only if $\trop(f) \geq \theta(v_i)$ (resp. $\trop(f) > \theta(v_i)$).  In particular, if we choose scalars $a_i \in K^*$ such that $\val(a_i) = c_i$, then the sections $a_i f_i$ of $\mathcal{O}(D_X)$ extend to regular sections of $\mathcal{L}$, and $a_i f_i$ vanishes on $X_j$ if and only if $i \neq j$, as required.

For the converse, given scalars $a_0, \ldots, a_r$, irreducible components $X_0, \ldots, X_r$, and an extension $\mathcal{L}$ of $\mathcal{O}(D_X)$ satisfying (2), set $c_i = \val(a_i)$.  By comparing $\trop(f_i)$ with the valuation of a local generator for $\mathcal L$ at the generic point of $X_i$, we conclude that $\trop(f_i) + c_i$ is strictly less than $\trop(f_j) + c_j$ at $v_i$, and hence $\theta = \min \{ \trop(f_i) + c_i \}$ is a certificate of independence.
\end{proof}

\begin{remark}
Proposition~\ref{prop:ag-interpretation} suggests some resemblance between our approach to proving linear independence of sections via tropical independences and the technique used to prove cases of the maximal rank conjecture via limit linear series and linked Grassmannians on chains of elliptic curves in \cite{LOTZ21}.  Osserman has also developed a notion of limit linear series for curves of pseudocompact type \cite{Osserman19a}, a class of curves that includes the semistable reduction of the curve $X$ we study here.  Relations to the Amini--Baker notion of limit linear series in tropical and nonarchimedean geometry are spelled out in \cite{Osserman19b}.
\end{remark}

Recall that Theorem~\ref{thm:independencecriterion} says a finite subset $\{ \psi_i \mid i \in I \} \subseteq \PL(\Gamma)$ is tropically independent if and only if there is a certificate of independence $\theta = \min \{ \psi_i + c_i \}$.

\begin{proof}[Proof of Theorem~\ref{thm:independencecriterion}]
First, we suppose that $\{ \psi_i \}$ is tropically dependent, and show that there is no such independence.  Choose real coefficients $c'_i$ such that the minimum of  $\{\psi_i + c'_i\}$ occurs at least twice at every point $v \in \Gamma$.  Now, consider an arbitrary tropical linear combination $\theta = \min \{ \psi_i + c_i \}$.  Choose $j \in I$ so that $c_j - c'_j$ is maximal.  At every $v \in \Gamma$, there is some $i \neq j$ such that $\psi_i(v) + c'_i \leq \psi_j(v) + c'_j$.  It follows that $\psi_i(v) + c_i \leq \psi_j(v) + c_j$, and hence $\theta = \min \{ \psi_i + c_i  \}$ is not a certificate of independence.

It remains to show that if there is no such independence, then $\{ \psi_i \}$ is tropically dependent.  Let $A_i$ be the set of vectors $c = (c_1, \ldots, c_n) \in \RR^{n}$ such that $\psi_i(v) + c_i \geq \min_{j \neq i} \{ \psi_j(v) + c_j \}$ for all $v \in \Gamma$.  Note that each $A_i$ is closed, and $c$ gives a certificate of independence if and only if it is contained in none of the $A_i$.  Similarly, $c$ gives a tropical dependence if and only if it is contained in all of the $A_i$. Hence, we must show that if the sets $A_i$ cover $\RR^{n}$ then their intersection is nonempty.

Suppose $A_1 \cup \cdots \cup A_n = \RR^{n}$. Choose $m$ sufficiently large so that $\psi_i(v) + m > \psi_j(v)$ for all $i,j$ and all $v \in \Gamma$.  Let $\Delta$ be the simplex spanned by $mn$ times the standard basis vectors in $\RR^{n}$.  If $c$ is in the proper face $\Delta_I \subset \Delta$ corresponding to $I \subset \{1, \ldots, n\}$, then $c_i > m$ for some $i \in I$ and $c_j = 0$ for $j \not \in I$.  Hence $\Delta_I$ is covered by $\{A_i \cap \Delta \}_{i \in I}$. The Knaster-Kuratowski-Mazurkiewicz lemma (that is, the set-covering variant of the Brouwer fixed-point theorem) then says that $A_1 \cap \cdots \cap A_n \cap \Delta$ is nonempty, as required.
\end{proof}

\section{Constructing the virtual divisors} \label{virtualis_divizorok}

In this section, we construct the virtual divisor class $[\widetilde {\mathfrak{D}}_{23}]^\vir$ as the push forward of the virtual class of a codimension $3$ determinantal locus.  This determinantal locus is contained inside a universal parameter space of limit linear series of type $\mathfrak g^6_{26}$ over an open substack $\widetilde{\mathfrak{M}}_{23}$  of $\overline \fM_{23}$ that differs from $\fM_{23}\cup \Delta_0\cup \Delta_1$ outside a subset of codimension $2$.  We follow a similar procedure in the case of the virtual divisor classes $[\widetilde{\mathfrak{D}}_g]^\vir$ on $\widetilde{\mathfrak{M}}_g$, for $g = 2s^2 + s + 1$, with $s\geq 2$. As long as the two constructions run parallel, we treat both simultaneously. Throughout, we work over an algebraically closed field $K$ of characteristic zero.

\vskip 4pt

We first recall the notation for vanishing and ramification sequences of limit linear series \cite{EisenbudHarris86}.

\begin{definition}
Let $X$ be a smooth curve of genus $g$, $q \in X$ a point, and $\ell = (L, V) \in G^r_d(X)$ a linear series on $X$.  The \emph{ramification sequence} of $\ell$ at $q$
\[
\alpha^{\ell}(q) : \alpha_0^{\ell}(q) \leq \cdots \leq \alpha_r^{\ell}(q)
\]
is obtained from the \emph{vanishing sequence}
\[
a^{\ell}(q) : a_0^{\ell}(q) < \cdots < a_r^{\ell}(q) \leq d
\]
by setting $\alpha^{\ell}_i(q) := a^{\ell}_i(q) - i$, for $i=0, \ldots, r$.  Sometimes, when $L$ is clear from the context, we write
$\alpha^{V}(q) = \alpha^{\ell}(q)$ and similarly $a^{V}(q) = a^{\ell}(q)$.  The \emph{ramification weight} of $q$ with respect to $\ell$ is $\mathrm{wt}^{\ell}(q) := \sum_{i=0}^r \alpha^{\ell}_i(q)$.  We denote by $\rho(\ell, q) := \rho(g,r,d) - \mathrm{wt}^{\ell}(q)$ the \emph{adjusted Brill-Noether number} of $\ell$ with respect to $q$.
\end{definition}

Recall from \cite[p. 364]{EisenbudHarris87b} that a \emph{generalized limit linear series} on a tree-like curve $X$ consists of a collection $\ell=\{(L_C, V_C) \mid C\mbox{ is a component of } X\}$, where $L_C$ is a rank $1$ torsion free sheaf of degree $d$ on $C$ and $V_C\subseteq H^0(C,L_C)$ is an $(r+1)$-dimensional space of sections satisfying the usual compatibility condition on the vanishing sequences at the nodes of $X$.
For such a tree-like curve $X$, we denote by $\overline{G}^r_d(X)$ the variety of generalized limit linear series of type $\mathfrak{g}^r_d$.

In what follows we fix positive integers $g$, $r$, and $d$ such that either
\begin{equation}\label{sit1}
g=23, \ r=6, \ d=26,  \ \mbox{ or }
\end{equation}
\begin{equation}\label{sit2}
g=2s^2+s+1, \ r=2s, \ d=2s^2+2s+1, \ \mbox{ where } \  s\geq 2.
\end{equation}

\noindent Note that $\rho(g,r,d)=2$ in case (\ref{sit1}) and $\rho(g,r,d)=1$ in case (\ref{sit2}).

\subsection{An open substack of $\overline \fM_g$}

We denote by $\cM_{g,d-1}^r$ the closed subvariety of $\cM_{g}$
parametrizing curves $X$  such that $W^r_{d-1}(X)\neq \emptyset$.  We claim that $\mathrm{codim}(\cM_{g, d-1}^r, \cM_{g})\geq 2$.  To see this, it suffices to observe that $\rho(g, r, d-1)$ is less than $-1$, and then apply \cite[Theorem 1.1]{EisenbudHarris89}.  For each curve $[X]\in \cM_g\setminus \cM_{g,d-1}^r$, every line bundle $L\in W^r_{d}(X)$ is base point free, with $H^1(X, L^{\otimes 2})=0$, since $d \geq g$. We denote by $\mm_{g,d-1}^r$ the closure of $\cM_{g,d-1}^r$ in $\mm_g$.

\vskip 4pt

Let $\Delta_1^{\circ}\subseteq \Delta_1\subseteq \mm_{g}$ be the locus of curves
$[X\cup_y E]$, where $X$ is a smooth curve of genus $g-1$ and $[E,y]\in \mm_{1,1}$ is an arbitrary elliptic curve.  The point of attachment $y \in X$ is chosen arbitrarily.  Furthermore, let  $\Delta_0^{\circ} \subseteq \Delta_0\subseteq \mm_{g}$ be the locus of curves $[X_{yq}:=X/y\sim q]\in \Delta_0$, where $[X, q]$  is a smooth curve of genus $g-1$ and $y \in X$ is an arbitrary point, together with their degenerations $[X \cup_q E_{\infty}]$, where
$E_{\infty}$ is a rational nodal curve (that is, $E_{\infty}$ is a nodal elliptic curve and $j(E_{\infty})=\infty$). Points of this form comprise the intersection
$\Delta_0^{\circ}\cap \Delta_1^{\circ}$. We define the following open subset of $\mm_g$:
$$\mm_g^{\circ}:=\cM_g\cup \Delta_0^{\circ}\cup \Delta_1^{\circ}.$$

\vskip 3pt

In order to define the open substack of $\mm_g^{\circ}$ over which Theorems \ref{thm:slopes} and \ref{rho1virtual} will be ultimately proved, we need further notation. Let $\mathcal{T}_0$ be the subvariety of $\Delta_0^{\circ}$ of curves $[X_{yq}:=X/y\sim q]$, where the curve $X$ satisfies $\overline{G}^{r+1}_d(X)\neq \emptyset$ or $\overline{G}^{r}_{d-2}(X)\neq \emptyset$.  Similarly, $\mathcal{T}_1\subseteq  \Delta_1^{\circ}$ denotes the subvariety of curves $[X\cup_y E]$, where $X$ is a smooth curve of genus $g-1$ with $G^{r+1}_d(X)\neq \emptyset$ or $G^r_{d-2}(X)\neq \emptyset$. Observe that both $\mathcal{T}_0$ and $\mathcal{T}_1$ are closed in $\mm_g^{\circ}$.

We introduce the following open subset of $\mm_g$:
\begin{equation}\label{def:tm}
\pm_{g}:=\mm_g^{\circ} \setminus \Bigl( \mm_{g,d-1}^r   \cup \mathcal{T}_0 \cup \mathcal{T}_1 \Bigr).
\end{equation}
We define $\widetilde{\Delta}_0:=\pm_g\cap \Delta_0\subseteq \Delta_0^{\circ}$ and $\widetilde{\Delta}_1:=\pm_g\cap \Delta_1\subseteq \Delta_1^{\circ}$, so
\[
\pm_g=\bigl(\cM_g\setminus \cM_{g,d-1}^r\bigr)\cup \widetilde{\Delta}_0 \cup \widetilde{\Delta}_1.
\]

Note that $\pm_g$ and $\cM_g\cup \Delta_0\cup \Delta_1$ differ outside a set of codimension $2$ and we use the identification $\Pic (\pm_{g}) \cong CH^1(\pm_{g}) = \mathbb Q \langle \lambda, \delta_0, \delta_1 \rangle$, where $\lambda$ is the Hodge class, $\delta_0:=[\widetilde{\Delta}_0]$ and $\delta_1:=[\widetilde{\Delta}_1]$.

\subsection{Stacks of limit linear series.}

Next we introduce the parameter spaces of limit linear series that we will use.
\begin{definition}\label{var_limitlinseries}
Let $\widetilde{\mathfrak{G}}^r_{d}$ be the stack of pairs $[X, \ell]$, where $[X]\in \widetilde{\mathfrak{M}}_{g}$ and $\ell$ is a (generalized) limit linear series on the tree-like curve $X$ in the sense of \cite{EisenbudHarris87b}. We consider the proper projection map
\[
\sigma\colon \widetilde{\mathfrak{G}}_{d}^r  \rightarrow  \widetilde{\mathfrak{M}}_g.
 \]
\end{definition}

We refer to \cite{EisenbudHarris86} and \cite{EisenbudHarris87b} for facts on limit linear series  and to \cite{Osserman06} and \cite{LieblichOsserman19} for details regarding the construction of $\widetilde{\mathfrak{G}}^r_{d}$. We discuss the fibers of $\sigma$. Over a curve $[X\cup_y E]\in \widetilde{\Delta}_1$, we identify $\sigma^{-1}([X\cup_y E])$ with the variety of limit linear series $\ell = (\ell_X, \ell_E) \in G^r_d(X) \times G^r_d(E)$ satisfying the compatibility conditions described in \cite{EisenbudHarris86}.  Over a point $[X\cup_y E_{\infty}] \in \widetilde{\Delta}_0 \cap \widetilde{\Delta}_1$, the fiber $\sigma^{-1}([X\cup_y E_{\infty}])$ is identified with the variety of generalized limit linear series $\overline{G}^r_d(X\cup _y E_{\infty})$.  In order to describe the fiber $\sigma^{-1}([X_{yq}])$ over  an irreducible curve $[X_{yq}] \in \widetilde{\Delta}_0$, we recall a few things about the variety $\overline{W}^r_d(X_{yq})$ of rank $1$ torsion free sheaves $L$ on $X_{yq}$ having $\mbox{deg}(L) = d$ and $h^0(X_{yq},L) \geq r+1$. We denote by $W^r_d(X_{yq})$ the open subvariety of $\overline{W}^r_d(X_{yq})$ consisting of line bundles.  If $\nu \colon X \rightarrow X_{yq}$ is the normalization map, and the curve $X$ satisfies $\overline{G}^{r+1}_d(X) = \emptyset$, we observe  that  $h^0(X_{yq}, L) = r+1$ for every sheaf $L\in \overline{W}_d^r(X_{yq})$.  In particular, we identify the fiber $\sigma^{-1}([X_{yq}])$ with $\overline{W}^r_d(X_{yq})$.  Moreover, the pull back map $\nu^* \colon W^r_d(X_{yq})\rightarrow \mbox{Pic}^d(X)$ is injective.

\vskip 4pt

For a pointed curve $[X, y,q]\in \cM_{g-1,2}$, by \cite[Proposition 12.1]{OdaSeshadri79}, there is a desingularization of the compactified Jacobian
\[
\widetilde{\mathrm{Pic}}^d(X_{yq}):=\PP \bigl(\P_{y}\oplus \P_{q}\bigr)\rightarrow \overline{\mathrm{Pic}}^d(X_{yq}).
\]
Here, $\P$ denotes a Poincar\'e bundle on $X\times \Pic^{d}(X)$, $\P_{y}$ denotes the restriction of $\P$ to $\{ y \} \times
\Pic^{d}(X)$, and $\P_q$ denotes the restriction of $\P$ to $\{q\}\times \Pic^{d}(X)$.  A point in $\widetilde{\Pic} ^d(X_{yq})$ can be thought of as a pair $(L,Q)$, where $L$ is a line bundle of degree $d$ on $X$ and $L_y\oplus L_q \twoheadrightarrow Q$ is a $1$-dimensional quotient.  The map $\widetilde{\Pic}^{d}(X_{yq}) \rightarrow \overline{\Pic}^{d}(X_{yq})$ assigns to a pair $(L,Q)$ the sheaf $L'$ on $X_{yq}$, defined by the exact sequence
\[
0\longrightarrow L'\longrightarrow \nu_*L\longrightarrow Q\longrightarrow 0.
\]

\begin{remark}\label{corresp2}
If the rank $1$ torsion free sheaf $L\in \overline{W}^r_d(X_{yq}) \setminus W^r_d(X_{yq})$ is not locally free, then this point corresponds to \emph{two} points in $\widetilde{\Pic}^d(X_{yq})$.  If $A \in W^r_{d-1}(X)$ is the unique line bundle such that $\nu_*(A)=L$, then these points are $\bigl(A(q)=A\otimes \OO_X(q),  A(q)_q\bigr)$ and $\bigl(A(y)=A\otimes \OO_X(y),  A(y)_y\bigr)$ respectively.
\end{remark}

\vskip 4pt

Let $\widetilde{\mathfrak{C}}_{g} \rightarrow \widetilde{\mathfrak{M}}_{g}$ be the universal curve, and let $p_2\colon \widetilde{\mathfrak{C}}_{g} \times_{\widetilde{\mathfrak{M}}_{g}} \widetilde{\mathfrak{G}}^r_{d} \rightarrow \widetilde{\mathfrak{G}}^r_{d}$ be the projection map.  We denote by $\mathfrak{Z}\subseteq \widetilde{\mathfrak{C}}_{g} \times_{\widetilde{\mathfrak{M}}_{g}} \widetilde{\mathfrak{G}}^r_{d}$ the codimension $2$ substack consisting
of pairs $[X_{yq}, L, z]$, where $[X_{yq}]\in \Delta_0^{\circ}$, the point $z$ is the node of $X_{yq}$  and $L\in \overline{W}^r_d(X_{yq})\setminus W^r_d(X_{yq})$ is a \emph{non-locally free} torsion free sheaf.  Let
\[
\epsilon:\widehat{\mathfrak{C}}_g:=\mathrm{Bl}_{\mathfrak{Z}}\Bigl(\widetilde{\mathfrak{C}}_{g} \times_{\widetilde{\mathfrak{M}}_{g}} \widetilde{\mathfrak{G}}^r_{d}\Bigr)\rightarrow \widetilde{\mathfrak{C}}_{g} \times_{\widetilde{\mathfrak{M}}_{g}} \widetilde{\mathfrak{G}}^r_{d}
\]
be the blow-up of this locus, and we denote the induced universal curve by
\[
\wp:=p_2\circ \epsilon\colon \widehat{\mathfrak{C}}_g\rightarrow \widetilde{\mathfrak{G}}^r_{d}.
\]

The fiber of $\wp$ over a point $[X_{yq}, L] \in \widetilde{\Delta}_0$, where $L\in \overline{W}^r_d(X_{yq})\setminus W^r_d(X_{yq})$, is the semistable curve $X\cup_{\{y,q\}} R$ of genus $g$, where $R$ is a smooth rational curve meeting $X$ transversally at $y$ and $q$.

\subsection{A degeneracy locus in the universal linear series}

We choose a Poincar\'e line bundle $\L$ over $\widehat{\mathfrak{C}}_g$ having the following properties:
\begin{enumerate}
\item For a curve $[X\cup_y E]\in \widetilde{\Delta}_1$ and a limit linear series $\ell = (\ell_X, \ell_E) \in G^r_{d}(X) \times G^r_{d}(E)$, we have that
$\L_{| [X\cup_y E, \ell]} \in \Pic^{d}(X) \times \Pic^0(E)$, where the restriction $\L_{|E}$ is obtained by twisting the underlying line bundle $L_E$ of the $E$-aspect $\ell_E$ by $\OO_E(-dy)$.

\item For a point $t=[X_{yq}, L]$, where $[X_{yq}] \in \widetilde{\Delta}_0$ and $L \in \overline{W}^r_d(X_{yq}) \setminus W^r_d(X_{yq})$, thus $L=\nu_*(A)$ for some $A\in W^r_{d-1}(X)$, we have $\mathcal{L}_{|X}\cong A$ and $\mathcal{L}_{|R}\cong \OO_R(1)$.  Here, as before, $\wp^{-1}(t) = X \cup R$.
\end{enumerate}

Next we introduce the sheaves
\begin{equation}\label{twosheaves}
\E:=\wp_*(\mathcal{L})  \ \mbox{ and }
\F:=\wp_*(\mathcal{L}^{\otimes 2})
\end{equation}
which play an essential role in the paper.  By Grauert's theorem, $\E$ is locally free and $\mbox{rank}(\E)=r+1$, and $\F_{\Grd\setminus \sigma^{-1}(\widetilde{\Delta}_1)}$ is also locally free and $\mbox{rank}(\F)=2d+1-g$.  We will show in Proposition \ref{vectorbundles} that in fact $\F$ is locally free over $\Grd$, and give a geometric interpretation of its fibers.

\vskip 4pt

There is a natural vector bundle morphism over $\widetilde{\mathfrak G}^r_{d}$ given by multiplication of sections,
\begin{equation}\label{morphism1}
\phi\colon \mbox{Sym}^{2}(\E)\rightarrow \F.
\end{equation}
We denote by  $\fU \subseteq  \widetilde{\mathfrak{G}}^r_{d}$ the first degeneracy locus of $\phi$, which carries a natural virtual class in the expected codimension, as the next definition explains.

\begin{definition}\label{def:virtclass}
We define the virtual divisor class  $[\widetilde{\mathfrak{D}}_{g}]^{\mathrm{virt}}:=\sigma_*([\fU]^\vir)$. Precisely, the classes $[\widetilde{\mathfrak{D}}_g]^{\mathrm{virt}}$ are virtual divisors in $\widetilde{\mathfrak{M}}_{g}$ given by
\[
[\widetilde{\mathfrak{D}}_{23}]^{\mathrm{virt}}:=\sigma_*\Bigl(c_3(\F-\mathrm{Sym}^2(\E))\Bigr)\in CH^1(\widetilde{\mathfrak{M}}_{23}),
\]
and, for $s\geq 2$ and $g = 2s^2 + s + 1$,
\[
[\widetilde{\mathfrak{D}}_g]^{\mathrm{virt}}:=\sigma_*\Bigl(c_2(\F-\mathrm{Sym}^2(\E))\Bigr)\in CH^1(\widetilde{\mathfrak{M}}_{g}).
\]
\end{definition}

In order to establish the local freeness of $\F$ and understand better the morphism $\phi$ in (\ref{morphism1}), we need further preparation.  For a pointed curve $[X,y]\in \cM_{g-1,1}$, we denote by $X_y'$ the genus $g$ curve obtained from $X$ by creating a cusp at $y$ and by $\nu\colon X\rightarrow X_{y}'$ the normalization map.  Recall that a \emph{pseudo-stable} curve is a connected curve having only nodes and cusps as singularities, such that its dualizing sheaf is ample and each smooth irreducible component of genus $1$ intersects the rest of the curve in at least two points.  Pseudo-stable curves of genus $g$ form a Delige-Mumford stack $\MM_g^{\mathrm{ps}}$.  One has a divisorial contraction $\pi\colon \MM_g\rightarrow  \MM_g^{\mathrm{ps}}$ replacing each elliptic tail of a stable curve with a cusp \cite{HassettHyeon09}.  Set-theoretically, $\pi\bigl([X \cup_y E]\bigr)=[X_y']$.

\begin{definition}\label{cuspcomp}
Let $\mathcal{W}\subseteq \sigma^{-1}(\widetilde{\Delta}_1)\subseteq \Grd$ be the divisor consisting of pairs
$[X\cup_y E, \ell]$, where $[X\cup_y E]\in \widetilde{\Delta}_1$ and $\ell=(\ell_X, \ell_E)$ is a limit linear series on $X\cup_y E$
such that $L_E=\OO_E(dy)$.
\end{definition}

If $[X\cup_y E, \ell] \in \sigma^{-1}(\widetilde{\Delta}_1)$, then the $X$-aspect $\ell_X$ of $\ell$ has a cusp at the point $y$. From the definition (\ref{def:tm}) of
$\pm_g$, it follows that $\ell_X$ must be complete. Arguing along the lines of \cite{HassettHyeon09} one sees that there is a divisorial contraction $\widetilde{\pi}\colon \Grd
\rightarrow \widetilde{\mathfrak{G}}_d^{r, \mathrm{ps}}$ of $\sigma^{-1}(\widetilde{\Delta}_1)$, where $\widetilde{\mathfrak{G}}_d^{r,\mathrm{ps}}$ denotes the stack of linear series of type $\mathfrak g^r_d$ over curves from the open substack $\pi(\widetilde{\mathfrak{M}}_g)$ of $\MM_g^{\mathrm{ps}}$. The morphism $\widetilde{\pi}$ replaces each curve $[X\cup_y E]\in \widetilde{\Delta}_1$ with the cuspidal curve $X_y'$ and a limit linear series $\bigl(\ell_X=|L_X|,\ell_E\bigr)$ on $X \cup_y E$ (where note that $L_X$ is locally free)
with the line bundle $L_X'\in W^r_d(X_y')$ such that $\nu^*(L_X')=L_X$. Observe that $h^0\bigl(X, L_X(-2y)\bigr)< h^0(X,L_X)$ since $[X\cup_y E]\notin \mathcal{T}_1$, therefore the line bundle $L_X'$ on $X_y'$ is uniquely determined by its pull back $L_X$ under the normalization map $\nu\colon X\rightarrow X_y'$.

\vskip 3pt

If we denote by $\Upsilon$ the divisor in $\wp^{-1}\bigl(\sigma^{-1}(\widetilde{\Delta}_1)\bigr)$ corresponding to marked points lying on the elliptic tail, then the morphism $\widetilde{\pi}:\Grd\rightarrow \widetilde{\mathfrak{G}}_d^{r, \mathrm{ps}}$ is induced by the linear series $\bigl|\wp_*\bigl(\omega_{\wp}(\Upsilon)\bigr)\bigr|$ (see \cite[Proposition 3.8]{HassettHyeon09} for a very similar claim).  We denote by $$\widetilde{\wp}:\widehat{\mathfrak{C}}_{g}^{\mathrm{ps}}\rightarrow \widetilde{\mathfrak{G}}_d^{r,\mathrm{ps}}$$ the universal curve and by $\mathcal{L}^{\mathrm{ps}}$ the Poincar\'e bundle on $\widehat{\mathfrak{C}}_{g}^{\mathrm{ps}}$.

\vskip 5pt

After this preparation, we now describe the morphism $\phi$ defined in (\ref{morphism1}) in more detail.

\begin{proposition}\label{vectorbundles}
Both sheaves $\E=\wp_*(\mathcal{L})$ and  $\F=\wp_*(\mathcal{L}^{\otimes 2})$ are locally free over $\Grd$.
\end{proposition}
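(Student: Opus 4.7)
The plan is to invoke Grauert's base change theorem: since $\wp\colon \widehat{\mathfrak{C}}_g \to \Grd$ is proper and flat of relative dimension one, a coherent pushforward $\wp_*\mathcal{N}$ is locally free precisely when the fiberwise cohomology $t \mapsto h^0(\wp^{-1}(t), \mathcal{N}_t)$ is locally constant on $\Grd$. It therefore suffices to verify that $h^0(\wp^{-1}(t), \L) = r+1$ and $h^0(\wp^{-1}(t), \L^{\otimes 2}) = 2d - g + 1$ for every $t \in \Grd$.

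For $\E = \wp_*\L$ I would proceed stratum by stratum. On the open stratum $\cM_g \setminus \mm^r_{g,d-1}$, subtracting a base point from any $\mathfrak{g}^{r+1}_d$ would produce a $\mathfrak{g}^r_{d-1}$, so the exclusion of $\mm^r_{g,d-1}$ forces $h^0(L) = r+1$ for every $L \in W^r_d(X)$. Over $[X_{yq},L] \in \widetilde{\Delta}_0$ the exclusions defining $\mathcal{T}_0$ together with the normalization sequence yield $h^0 = r+1$ in the locally free case, while in the non locally free case $L = \nu_*A$ the blow up of $\mathfrak{Z}$ replaces the fiber by $X \cup_{\{y,q\}} R$ with $\L_{|X} = A$ and $\L_{|R} = \OO_R(1)$; a direct count of compatible pairs $(s_X, s_R)$, using that $H^0(\OO_R(1)) \twoheadrightarrow \kappa(y)\oplus \kappa(q)$, gives $h^0 = h^0(A) = r+1$. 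Over $[X \cup_y E, \ell] \in \widetilde{\Delta}_1$, the Eisenbud--Harris compatibility of $\ell$ at $y$ together with the exclusion of $\mathcal{T}_1$ once more forces $h^0 = r+1$.

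For $\F = \wp_*\L^{\otimes 2}$, Riemann--Roch together with the vanishing $H^1(\L^{\otimes 2}) = 0$ (valid because $\deg \L^{\otimes 2} = 2d$ exceeds $2p_a - 2$) gives the constant value $2d - g + 1$ on every smooth fiber and every fiber over $\widetilde{\Delta}_0$. The delicate case is $\sigma^{-1}(\widetilde{\Delta}_1)$, where the restriction $\L^{\otimes 2}|_E = L_E(-dy)^{\otimes 2}$ is a degree $0$ line bundle on the elliptic tail $E$ that may be trivial, so that naive $h^0$ and $h^1$ computations are liable to jump. To address this I would invoke the pseudo-stable contraction $\widetilde{\pi}\colon \Grd \to \widetilde{\mathfrak{G}}_d^{r,\mathrm{ps}}$ and the induced contraction $\rho\colon \widehat{\mathfrak{C}}_g \to \widehat{\mathfrak{C}}_g^{\mathrm{ps}}$ of universal curves, which replaces each elliptic tail with a cusp. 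Via base change and the projection formula, one identifies $\F$ with $\widetilde{\pi}^*\,\widetilde{\wp}_*\bigl((\mathcal{L}^{\mathrm{ps}})^{\otimes 2}\bigr)$. On the pseudo-stable side every fiber is a smooth or cuspidal curve of arithmetic genus $g$, so $\deg (L_X')^{\otimes 2} = 2d > 2g - 2$ forces $H^1 = 0$ and Riemann--Roch yields the constant $h^0 = 2d - g + 1$; hence $\widetilde{\wp}_*((\mathcal{L}^{\mathrm{ps}})^{\otimes 2})$ is locally free, and so is its pullback $\F$.

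The main obstacle will be the identification $\F \cong \widetilde{\pi}^*\,\widetilde{\wp}_*((\mathcal{L}^{\mathrm{ps}})^{\otimes 2})$, which reduces to establishing $\rho_*\L^{\otimes 2} \cong (\mathcal{L}^{\mathrm{ps}})^{\otimes 2}$ (and similarly for $\L$). This requires a careful analysis of the contraction of the elliptic tail: one must verify that the degree $0$ twist $L_E(-dy)$ contributes no extra sections after pushforward by $\rho$ and that the gluing data at the resulting cusp is correctly captured by $L_X'$, separating the divisor $\mathcal{W}$ (where $L_E = \OO_E(dy)$ and $\L|_E$ is trivial) from the complementary generic locus. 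Once this pushforward comparison is in place, the analogous statement for $\E$ follows by the same strategy with $\L$ in place of $\L^{\otimes 2}$.
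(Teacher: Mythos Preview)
Your proposal is essentially the same approach as the paper's: apply Grauert by computing fiberwise $h^0$ stratum by stratum, and for $\F$ on the locus $\mathcal{W}\subset\sigma^{-1}(\widetilde{\Delta}_1)$ where $L_E\cong\OO_E(dy)$, bypass the jumping $h^0$ by pulling back from the pseudo-stable side via $\widetilde{\pi}$.

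Two small remarks. First, your blanket justification ``$\deg \L^{\otimes 2}=2d>2p_a-2$ so $H^1=0$'' is not valid as stated on reducible fibers: on a nodal curve the total degree alone does not force $H^1$ to vanish. You need the componentwise degrees, e.g.\ on $X\cup_{\{y,q\}}R$ one has $\deg A^{\otimes 2}=2(d-1)>2(g-1)-2$ on the genus $g-1$ component and $\deg\OO_R(2)>-2$ on $R$, and then Mayer--Vietoris gives the right count; the paper runs exactly this sequence. Second, your closing sentence suggesting the pseudo-stable comparison is also needed for $\E$ is unnecessary: the paper handles $\E$ over $\widetilde{\Delta}_1$ by a direct split into the two subcases $L_E\cong\OO_E(dy)$ and $L_E\ncong\OO_E(dy)$, and in both the exact sequence $0\to H^0(\wp^{-1}(t),\L)\to H^0(X,L_X)\oplus H^0(E,L_E(-dy))\to\OO_y$ already yields $h^0=r+1$. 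The contraction $\widetilde{\pi}$ is invoked only for $\F$, and there the identification $\wp_*(\L^{\otimes 2})=\widetilde{\pi}^*\widetilde{\wp}_*((\L^{\mathrm{ps}})^{\otimes 2})$ is, as you anticipate, asserted rather than proved in detail.
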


\begin{proof}
We first show that for any $t\in \Grd$, one has $h^0\bigl(\wp^{-1}(t), \mathcal{L}_{| \wp^{-1}(t)}\bigr)=r+1$. Since the claim obviously holds for points in $\sigma^{-1}(\cM_g\setminus \cM_{g,d-1}^r)$, we assume first that $t = (X\cup_y E, \ell_X, \ell_E)\in \sigma^{-1}(\widetilde{\Delta}_1)$. Since $[X\cup_y E]\notin \mathcal{T}_1$, we have $h^0(X,L_X)=r+1$ and thus $\ell_X=|L_X|$ is a complete linear series. We have the exact sequence on $\wp^{-1}(t)$
\begin{equation}\label{exseq4}
0\longrightarrow H^0\bigl(\wp^{-1}(t), \mathcal{L}_{|\wp^{-1}(t)}\bigr)\longrightarrow H^0(X, L_X)\oplus H^0\bigl(E, L_E(-dy)\bigr)\stackrel{\mathrm{ev}_y}\longrightarrow \OO_y,
\end{equation}
where $\OO_y$ is the structure sheaf of the point $y$. We distinguish two cases. If $L_E\ncong \OO_E(dy)$, then $a_r^{\ell_E}(y)<d$, hence $a_0^{\ell_X}(y)>0$ and $\ell_X$ has a base point at $y$, in which case from (\ref{exseq4}) we get $H^0\bigl(\wp^{-1}(t),\mathcal{L}_{\wp^{-1}(t)}\bigr)\cong H^0(X,L_X)\cong H^0\bigl(X, L_X(-y)\bigr)$, which is $(r+1)$-dimensional.

If on the other hand $L_E\cong \OO_E(dy)$, then by restricting to the second factor, we see that the evaluation map $\mathrm{ev}_y$ is surjective, and again from (\ref{exseq4}) we obtain that $h^0\bigl(\wp^{-1}(t), \mathcal{L}_{|\wp^{-1}(t)}\bigr)=r+1$.

\vskip 4pt

Assume now that $t = [X_{yq}, L] \in \sigma^{-1}(\widetilde{\Delta}_0)$ . The case where $L$ is locally free is clear.  Assume instead that $L=\nu_*(A)$, with $A\in W^r_{d-1}(X)$. Recall that $\wp^{-1}(t)=X\cup_{\{y,q\}} R$, with $R$ being a smooth rational curve meeting $X$ at the points $y$ and $q$.  The Mayer-Vietoris sequence on $\wp^{-1}(t)$ then gives rise to exact sequences
\[
0\longrightarrow H^0\bigl(\wp^{-1}(t), \mathcal{L}_{|\wp^{-1}(t)}\bigr)\longrightarrow H^0(X, A)\oplus H^0(R, \OO_R(1))\longrightarrow
 \OO_y\oplus \OO_q,
\]
and
\[
0\longrightarrow H^0\bigl(\wp^{-1}(t), \mathcal{L}^{\otimes 2}_{|\wp^{-1}(t)}\bigr)\longrightarrow H^0(X, A^{\otimes 2})\oplus H^0(R, \OO_R(2))\longrightarrow \OO_y\oplus \OO_q.
\]
Since $[X_{yq}]\notin \mathcal{T}_0$, it follows that $h^0(X, A)=r+1$.  Again, by restricting to the second factor, we see that the righthand map is surjective.  Thus $h^0\bigl(\wp^{-1}(t), \mathcal{L}_{|\wp^{-1}(t)}\bigr)=r+1$ for every $t\in \Grd$, which shows that $\E$ is locally free.

\vskip 4pt

We now turn our attention to the sheaf $\F$ and first show that for $t \in \Grd\setminus \mathcal{W}$  we have that
\[h^0\bigl(\wp^{-1}(t), \mathcal{L}^{\otimes 2}_{|\wp^{-1}(t)}\bigr)=2d+1-g
\]
The case $t=[X_{yq}, \nu_*(A)]\in \sigma^{-1}(\widetilde{\Delta}_0)$ follows from the second exact sequence above.  Specifically, we have $h^1(X, A^{\otimes 2})=0$, so $h^0(X,A^{\otimes 2})=2d-g$, and by restricting to the second factor, we see that the righthand map is surjective.

If now $t = (X\cup_y E, \ell_X, \ell_E)\in \sigma^{-1}(\widetilde{\Delta}_1)$, we have an exact sequence
\[
0\longrightarrow H^0\bigl(\wp^{-1}(t), \mathcal{L}^{\otimes 2}_{|\wp^{-1}(t)}\bigr)\longrightarrow H^0(X, L_X^{\otimes 2})\oplus H^0\bigl(E, L_E^{\otimes 2}(-2dy)\bigr)\stackrel{\mathrm{ev}_y}\longrightarrow \OO_y .
\]
Since $h^1(X, L_X^{\otimes 2})=h^1(X,L_X^{\otimes 2}(-y))=0$, it follows that the map $\mathrm{ev}_y$ in the previous sequence is surjective.
If $L_E\ncong \OO_E(dy)$, we obtain $h^0\bigl(p^{-1}(t), \mathcal{L}^{\otimes 2}_{|p^{-1}(t)}\bigr)=2d+1-g$.

\vskip 4pt

If $t\in \mathcal{W}$, then $L_E=\OO_E(dy)$ and $h^0\bigl(\wp^{-1}(t), \mathcal{L}^{\otimes 2}_{|\wp^{-1}(t)}\bigr)=2d+2-g$ and this argument breaks down.  Instead, we recall that we introduced the divisorial contraction $\widetilde{\pi}\colon \Grd\rightarrow \widetilde{\mathfrak{G}}_d^{r,\mathrm{ps}}$ of $\sigma^{-1}(\widetilde{\Delta}_1)$. Then
\[
\wp_*(\mathcal{L}^{\otimes 2})=\widetilde{\pi}^*\Bigl(\widetilde{\wp}_*\bigl((\mathcal{L}^{\mathrm{ps}})^{\otimes 2}\bigr)\Bigr) .
\]
That is, for each $t\in \mathcal{W}$, the linear series $\wp_*(\mathcal{L}^{\otimes 2})_{|\wp^{-1}(t)}$ replaces the elliptic tail with a cusp.  Since $h^0\bigl(X_y', (L^{\mathrm{ps}})^{\otimes 2}\bigr)=2d+1-g$ for every cuspidal curve $X_y'$ and each $L^{\mathrm{ps}}\in W^r_d(X_y')$, applying Grauert's theorem over $\widetilde{\mathfrak{G}}_d^{r, \mathrm{ps}}$,  we conclude that
the sheaf $\F=\wp(\mathcal{L}^{\otimes 2})$ is locally free as well.
\end{proof}

\begin{remark} In situation (\ref{sit2}) the local freeness of $\F$ follows from general principles, without having to resort to the local analysis above. Indeed, applying \cite[Corollary 1.7]{Hartshorne80}, it follows that  $\wp_*(\mathcal{L}^{\otimes 2})$ is a reflexive sheaf, thus its singular locus is of codimension at least $3$ in
$\widetilde{\mathfrak{G}}^r_d$. Removing this locus, one can still define the virtual class $[\widetilde{\mathfrak{D}}_g]^{\mathrm{virt}}$ as in Definition \ref{def:virtclass}. This argument falls short in case (\ref{sit1}), however, where we cannot discard codimension $3$ loci in $\widetilde{\mathfrak{G}}^r_d$.
\end{remark}

The next corollary summarizes the fiberwise description of $\E$ and $\F$ implicitly obtained above. It follows from the application of Grauert's Theorem explained in the proof of Proposition \ref{vectorbundles}.

\begin{corollary}\label{cor:vectorbundles}
The vector bundle map  $\phi\colon \mathrm{Sym}^2(\E)\rightarrow \F$ has the following local description:

\noindent $(i)$  For $[X, L]\in \widetilde{\mathfrak{G}}^r_{d}$, with $[X]\in \cM_g\setminus \cM_{g, d-1}^r$ smooth, one has the following description of the fibers
\[
\E_{(X, L)}=H^0(X, L)\ \mbox{ and } \ \F_{(X, L)}=H^0(X, L^{\otimes 2})
\]
and $\phi_{(X,L)}\colon \mathrm{Sym}^2 H^0(X, L)\rightarrow H^0(X, L^{\otimes 2})$ is the usual multiplication map of global sections.

\vskip 5pt

\noindent $(ii)$
Suppose $t=(X\cup_y E, \ell_X, \ell_E)\in \sigma^{-1}(\widetilde{\Delta}_1)$,
where $X$ is a curve of genus $g-1$, $E$ is an elliptic curve  and $\ell_X=|L_X|$ is the $X$-aspect of the corresponding limit linear series with $L_X\in
W^r_{d}(X)$ such that $h^0(X,L_X(-2y))\geq r$. If $L_X$ has no base point at $y$, then
\[
\E_t=H^0(X, L_X)\cong H^0\bigl(X,L_X(-2y)\bigr)\oplus K \cdot u \ \mbox{ and
} \ \F_t=H^0\bigl(X, L_X^{\otimes 2}(-2y)\bigr)\oplus  K \cdot u^2,
\]
where $u\in H^0(X, L_X)$ is any section such that $\mathrm{ord}_y(u)=0$.

\vskip 5pt

If $L_X$ has a base point at $y$, then $\E_t=H^0(X, L_X)\cong H^0(X, L_X(-y))$ and the image of the
map $\F_t \rightarrow H^0(X, L_X^{\otimes 2})$ is the subspace $H^0\bigl(X, L_X^{\otimes 2}(-2y)\bigr)\subseteq H^0(X,L_X^{\otimes 2})$.

\vskip 4pt

\noindent $(iii)$  Let $t=[X_{yq}, L] \in \sigma^{-1}(\widetilde{\Delta}_0)$ be a point with $q,y \in X$ and let $L\in W^r_{d}(X_{yq})$ be a locally free sheaf of rank $1$, such that $h^0(X, \nu^*L(-y-q))\geq r$, where $\nu\colon X\rightarrow X_{yq}$ is the
normalization map. Then
\[
\E(t)=H^0(X, \nu^*L)\ \mbox{ and }\ \F(t)=H^0\bigl(X, \nu^*L^{\otimes
2}(-y-q)\bigr)\oplus  K \cdot u^2,
\]
where $u\in H^0(X, \nu^*L)$ is any section not vanishing at both points $y$ and $q$.

\vskip 5pt

\noindent $(iv)$ Let $t = [X_{yq}, \nu_*(A)]$, where $A\in W^r_{d-1}(X)$ and set again $X\cup_{\{y,q\}} R$ to be the fiber $\wp^{-1}(t)$.
Then $\E(t)=H^0(X\cup R, \mathcal{L}_{X\cup R})\cong H^0(X,A)$ and $\F(t)=H^0(X\cup R, \mathcal{L}^{\otimes 2}_{X\cup R})$. Furthermore,
$\phi(t)$ is the multiplication map on $X\cup R$.
\end{corollary}

\subsection{Pull back to test curves}

In preparation for the proofs of Theorems~\ref{thm:slopes} and~\ref{rho1virtual}, concerning the calculation of $[\widetilde{\mathfrak{D}}_{g}]^{\mathrm{virt}}$, we describe the restriction of the morphism $\phi$ along the pull backs of the three standard test curves $F_0$, $F_{\mathrm{ell}}$ and $F_1$ defined by (\ref{f0}), (\ref{fell}) and (\ref{fi}), respectively.    Recall that we fix a general pointed curve $[X, q]$ of genus $g-1$ and a pointed elliptic curve $[E, y]$.  We then have
\[
F_0:=\Bigl\{X_{yq}:=X/y\sim q \mid y\in X\Bigr\}\subset \Delta_0^{\circ}\subset \ttem_g \   \mbox{ and  } \  F_1:=\Bigl\{X\cup_y E \mid y \in X\Bigr\}\subset \Delta_1^{\circ} \subset \ttem_g.
\]

\begin{proposition}
One has that $F_0 \subset  \pm_g$ and $F_1\subset \pm_g.$
\end{proposition}

\begin{proof}
We only show that $F_1\subset \widetilde{\Delta}_1 \subset \pm_g$, the argument for $F_0$ being analogous. To that end, choose a point $[X\cup_y E]\in \Delta_1^{\circ}$, where $X$ is a general curve of genus $g-1$. Assuming $[X\cup_y E]\in \mm_{g,d-1}^r$, it follows that $\overline{G}^{r}_{d-1}(X\cup_y E)\neq \emptyset$. Denoting by $L_X\in \mbox{Pic}^{d-1}(X)$ the underlying line bundle of the $X$-aspect of $\ell$, we obtain $h^0(X, L_X(-2y))\geq r$, that is, $W_{d-3}^{r-1}(X)\neq \emptyset$. In both cases (\ref{sit1}) and (\ref{sit2}), we have $\rho(g-1, r-1, d-3)<0$, which contradicts the generality of $X$. The same consideration shows that $F_1$ is disjoint from both $\mathcal{T}_0$ and $\mathcal{T}_1$.
\end{proof}

We now turn our attention to the pull back $\sigma^*(F_0)\subset \widetilde{\mathfrak G}^r_{d}$. We consider the determinantal variety
\begin{equation}\label{defx}
Y:=\Bigl\{(y, L)\in X\times W^r_{d}(X) \mid h^0(X, L(-y-q))\geq r\Bigr\},
\end{equation}
together with the projection $\pi_1\colon Y \rightarrow X$.

\begin{proposition}\label{purediml}
The variety $Y$ is pure of dimension $\rho(g, r,d)+1$.  That is, $3$-dimensional in case (\ref{sit1}) and $2$-dimensional in case (\ref{sit2}).
\end{proposition}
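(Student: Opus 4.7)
The plan is to realize $Y$ as a determinantal locus on $X \times W^r_d(X)$, giving a universal lower bound on the dimension of every component, and then match it with an upper bound by analyzing the projection to $X$. This two-sided bound, together with the determinantal lower bound on each component, yields purity.

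For the lower bound, restrict to the open subset $U \subseteq W^r_d(X)$ on which $h^0(X, L) = r+1$ is constant; here the pushforward of a Poincar\'e bundle is a vector bundle $\mathcal{E}$ of rank $r+1$. On $X \times U$ there is a natural evaluation morphism of vector bundles
\[
\mathrm{ev}\colon \mathrm{pr}_2^*\mathcal{E} \longrightarrow \mathcal{L}|_{X \times U} \oplus \mathrm{pr}_2^*\bigl(\mathcal{L}|_{\{q\} \times U}\bigr),
\]
of ranks $r+1$ and $2$, whose rank-$\leq 1$ locus is precisely $Y \cap (X \times U)$. By the standard codimension bound for degeneracy loci of rank-$\leq k$ type, every irreducible component of $Y$ has codimension at most $(r+1-1)(2-1)=r$ in $X \times W^r_d(X)$, hence dimension at least $1 + \rho(g-1,r,d) - r = \rho(g,r,d)+1$.

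For the matching upper bound, I would analyze the fibers of the projection $\pi_1 \colon Y \to X$. For each $y \in X$, the fiber $\pi_1^{-1}(y) \subseteq W^r_d(X)$ is the subvariety of $L$ for which the evaluation map $H^0(X,L) \to L|_y \oplus L|_q$ has rank at most $1$, i.e., for which the hyperplanes $H^0(L(-y))$ and $H^0(L(-q))$ coincide inside $H^0(L)$. The goal is to show $\dim \pi_1^{-1}(y) \leq \rho(g,r,d)$ for every $y$. Since $\dim X = 1$, this would give $\dim Y \leq \rho(g,r,d)+1$, which combined with the determinantal lower bound on each component forces every component to have dimension exactly $\rho(g,r,d)+1$.

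The main obstacle is this upper bound on fiber dimensions. For $X$ general of genus $g-1$ and $q\in X$ general, the claim is that the rank-drop condition at the pair $(y,q)$ imposes the expected codimension $r$ on $W^r_d(X)$; this is an instance of Brill-Noether theory with an incidence condition at two points. The cleanest way to verify it, consistent with the methods in the rest of the paper, is by specialization: degenerate $(X,q)$ to a suitable pointed semistable curve (for example a chain of elliptic curves, or a chain of loops of the type used later in the paper), pass to limit linear series via the smoothing results of Eisenbud-Harris, and apply upper semicontinuity of fiber dimension to transfer the bound back to the general smooth pointed curve. On the specialized curve, the dimension estimate reduces to a combinatorial count of strata of limit linear series, indexed by the vanishing sequences at the nodes, which is in the spirit of the tropical and combinatorial arguments used throughout the paper.
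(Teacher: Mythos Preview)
Your approach is structurally the same as the paper's---determinantal lower bound plus fiber-dimension upper bound for $\pi_1\colon Y\to X$---but you miss the shortcut that makes the paper's proof a few lines long. The paper does not try to prove $\dim\pi_1^{-1}(y)\le\rho(g,r,d)$ for \emph{every} $y$. Instead it singles out the fiber over $y=q$, where the condition $h^0(L(-y-q))\ge r$ becomes the cusp condition $h^0(L(-2q))\ge r$; by \cite[Theorem~1.1]{EisenbudHarris87b} this locus has the same dimension as $W^r_d$ on a general curve of genus $g$, namely exactly $\rho(g,r,d)$. That one fiber already controls every component of $Y$ dominating $X$ (such a component meets $\pi_1^{-1}(q)$, and upper semicontinuity forces its dimension to be at most $1+\rho(g,r,d)$). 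For the remaining components---those contained in a single fiber---the paper only needs the weaker bound $\dim\pi_1^{-1}(y)\le\rho(g,r,d)+1$, which follows immediately from a standard flag-curve degeneration.

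Your plan to establish $\dim\pi_1^{-1}(y)\le\rho(g,r,d)$ for all $y$ via limit linear series would also finish the argument, but it asks for more than is needed and the uniform bound over arbitrary (possibly special) $y$ requires more care than you indicate. The paper's observation about the fiber over $q$ replaces that uniform estimate with a single citation.
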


\begin{proof}
We consider the projection $\pi_1\colon Y\rightarrow X$.  Its fiber over the point $q\in X$ is the variety of linear series $L\in W^r_d(X)$ having a cusp
at $q$, that is, $h^0(X, L(-2q))\geq r$.  By \cite[Theorem 1.1]{EisenbudHarris87b}, it follows that $\pi_1^{-1}(q)$ has the same dimension as the variety $W^r_d(X_{\mathrm{gen}})$ for a general curve $X_{\mathrm{gen}}$ of genus $g$, which is $\rho(g, r,d)$.  Furthermore, using a standard degeneration to a flag curve, it follows that for every point $y\in X$ we have $\mbox{dim } \pi_1^{-1}(y)\leq \rho(g,r,d)+1$.  Therefore each component of $Y$ has dimension $\rho(g,r,d)+1$.
\end{proof}

Inside $Y$ we introduce the following subvarieties of $Y$:
\begin{align*}
\Gamma_1 &:= \Bigl\{(y, A(y)) \mid y\in X, \ A\in W^r_{d-1}(X)\Bigr\} \ \mbox{  and } \\
\Gamma_2 &:= \Bigl\{(y, A(q)) \mid y\in X, \ A\in W^r_{d-1}(X)\Bigr\}.
\end{align*}
These are divisors intersecting transversally along the smooth locus
\[
\Gamma:=\Bigl\{(q, A(q)) \mid A\in W_{d-1}^r(X)\Bigr\}\cong W_{d-1}^r(X).
\]
We then consider the variety obtained from $Y$ by identifying for each $(y, A)\in X\times W^r_{d-1}(X)$, the points $(y, A(y))\in \Gamma_1$ and
$(y, A(q))\in \Gamma_2$, that is,
\[
\widetilde{Y} := Y/[ \Gamma_1 \cong \Gamma_2 ],
\]
and denote by $\vartheta\colon Y\rightarrow \widetilde{Y}$ the projection map.

\begin{proposition}\label{limitlin0}
With notation as above, there is a birational morphism
\[
f\colon \sigma^*(F_0)\rightarrow \widetilde{Y},
\]
which is an isomorphism outside $\vartheta (\pi_1^{-1}(q))$. The restriction of $f$ to $f^{-1}\bigl(\vartheta(\pi_1^{-1}(q))\bigr)$  forgets the aspect of each limit linear series on the elliptic curve $E_{\infty}$.  Furthermore, both $\E_{| \sigma^*(F_0)}$ and $\F_{| \sigma^*(F_0)}$ are pull backs under $f$ of vector bundles on $\widetilde{Y}$.
\end{proposition}

\begin{proof}
Let $y \in X \setminus \{ q \}$ and, as usual, let $\nu\colon X \rightarrow X_{yq}$ be the normalization.  Recall that we have identified $\sigma^{-1}([X_{yq}])$ with the variety $\overline{W}^r_{d}(X_{yq}) \subseteq \overline{\Pic}^d(X_{yq})$  of rank $1$ torsion-free sheaves on $X_{yq}$ with  $h^0(X_{yq}, L)\geq r+1$.  A locally free sheaf $L\in \overline{W}^r_{d} (X_{yq})$ is uniquely determined by its pull back $\nu^*(L)\in W^r_{d}(X)$, which has the property that $h^0(X, \nu^*L(-y-q))=r$.  Since $X$ is assumed to be Brill-Noether general $W^r_{d-2}(X)=\emptyset$, so there exists a section of $L$ that does not vanish simultaneously at both $y$ and $q$. In other words, the $1$-dimensional quotient $Q$ of $L_y\oplus L_q$ is uniquely determined as $\nu_*(\nu^*L)/L$.

Assume  $L\in \overline{W}_{d}^r(X_{yq})$ is not locally free, thus $L=\nu_*(A)$ for some line bundle $A \in W^r_{d-1}(X)$.  By Remark~\ref{corresp2}, this point corresponds to two points in $Y$, namely $(y, A(y))$ and $(y, A(q))$.  There is a birational morphism $\pi_1^{-1}(y)\rightarrow \overline{W}_{d}^r(X_{yq})$ which is an isomorphism over the locus $W^r_d(X_{yq})$ of locally free sheaves.  More precisely, $\overline{W}_{d}^r(X_{yq})$ is obtained from $\pi_1^{-1}(y)$ by identifying the disjoint divisors $\Gamma_1\cap \pi_1^{-1}(y)$ and $\Gamma_2\cap \pi_1^{-1}(y)$.

Finally, when $y=q$, then $X_{yq}$ degenerates to $X\cup _q E_{\infty}$, where $E_{\infty}$ is a rational nodal curve.  The fiber $\sigma^{-1} \bigl([ X \cup_q E_{\infty}] \bigr)$ is the variety of generalized limit linear series $\mathfrak g^r_d$ on $X \cup_q E_{\infty}$ and there is a map $\sigma^{-1} \bigl([ X \cup_q E_{\infty}] \bigr) \rightarrow \pi^{-1}(q)$ obtained by forgetting the $E_{\infty}$-aspect of each limit linear
series.  The statement about the restrictions
$\E_{| \sigma^*(F_0)}$ and $\F_{| \sigma^*(F_0)}$ follows from Corollary \ref{cor:vectorbundles} because both restrictions are defined by dropping the information coming from the elliptic tail.
\end{proof}

We now describe the pull back $\sigma^*(F_1)\subset \widetilde{\mathfrak{G}}^r_{d}$.  To that end, we define the locus
\begin{equation}\label{defz}
Z := \Bigl\{(y, L)\in X\times W^r_{d}(X) \mid h^0(X, L(-2y))\geq r\Bigr\} .
\end{equation}
By slight abuse of notation, we denote again by $\pi_1\colon Z\rightarrow X$ the first projection.  Arguing along the lines of Proposition \ref{purediml}, it follows that $Z$ is pure of dimension $\rho(g, r,d)+1$.

\begin{proposition}\label{limitlin1}
The variety $Z$ is an irreducible component of $\sigma^*(F_1)$.  Furthermore, we have
\begin{align*}
c_3\bigl(\F-\mathrm{Sym}^2(\E)\bigr)_{| \sigma^*(F_1)} &= c_3\bigl(\F-\mathrm{Sym}^2(\E)\bigr)_{| Z} \ \mbox{ in case (\ref{sit1}), and} \\
c_2\bigl(\F-\mathrm{Sym}^2(\E)\bigr)_{| \sigma^*(F_1)} &= c_2\bigl(\F-\mathrm{Sym}^2(\E)\bigr)_{| Z} \ \mbox{ in case (\ref{sit2}).}
\end{align*}
\end{proposition}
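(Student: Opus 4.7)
The plan is to decompose the scheme-theoretic pullback $\sigma^*(F_1)$ into two pieces of the same dimension $\rho(g,r,d)+1$: the component $Z$ where the elliptic aspect has $L_E \cong \mathcal{O}_E(dy)$, and a complementary component $T$ where $L_E \not\cong \mathcal{O}_E(dy)$.  The heart of the argument is that on $T$ the multiplication map $\phi$ is fiberwise injective, so its cokernel is locally free of rank at most $\mathrm{rk}(\F) - \mathrm{rk}(\mathrm{Sym}^2 \E)$, which forces the vanishing of the relevant Chern class on $T$.

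\emph{Embedding of $Z$.}  Given $(y,L) \in Z$, the defining condition $h^0(X, L(-2y)) \geq r$ ensures the vanishing sequence of $\ell_X := |L|$ at $y$ satisfies $a_0^{\ell_X}(y) = 0$ and $a_1^{\ell_X}(y) \geq 2$.  Refined compatibility then forces $a_r^{\ell_E}(y) = d$, hence $L_E \cong \mathcal{O}_E(dy)$, while the complementary vanishing sequence on $E$ uniquely determines $V_E$ as the image of $H^0(E, \mathcal{O}_E((r+1)y)) \hookrightarrow H^0(E, \mathcal{O}_E(dy))$ along the generic stratum.  This yields a morphism $Z \to \mathcal{W} \cap \sigma^*(F_1)$ which is generically an isomorphism onto its image; combined with $\dim Z = \rho(g,r,d)+1$ (by the argument of Proposition~\ref{purediml} applied to $Z$) and $\dim \sigma^*(F_1) = \dim F_1 + \rho(g,r,d)$, this identifies $Z$ as an irreducible component.

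\emph{The complementary locus $T$.}  A limit linear series on $X \cup_y E$ outside $\mathcal{W}$ has $a_0^{\ell_X}(y) \geq 1$, meaning $y$ is a base point of $L_X$.  Writing $L_X = M \otimes \mathcal{O}_X(a_0 y)$, the condition $M \in W^r_{d - a_0}(X)$ on the general curve $X$ of genus $g-1$ has negative Brill-Noether number for $a_0 \geq 2$ in both cases by the definition of $\widetilde{\mathfrak{M}}_g$ (which excludes $\mathcal{T}_1$), so only $a_0 = 1$ contributes.  The resulting stratum $T$ is parameterized by triples $(y, M, z) \in X \times W^r_{d-1}(X) \times (E \setminus \{y\})$ with $L_E = \mathcal{O}_E((d-1)y + z)$, and has dimension $\rho(g-1,r,d-1)+2 = \rho(g,r,d)+1$, matching $\dim Z$.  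Using Corollary~\ref{cor:vectorbundles}(ii), the map $\phi|_T$ is identified, after twisting by the appropriate distinguished section, with the multiplication map $\phi_M \colon \mathrm{Sym}^2 H^0(X, M) \to H^0(X, M^{\otimes 2})$ on the genus $g-1$ curve.

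\emph{Chern class vanishing on $T$.}  In case~(\ref{sit1}), Theorem~\ref{Thm:MainThm} applied to the general curve $X$ of genus $22$ shows that $\phi_M$ is injective for every $M \in W^6_{25}(X)$; in case~(\ref{sit2}), the analogous statement for the zero-dimensional Brill-Noether locus $W^{2s}_{2s^2+2s}(X)$ follows from the Maximal Rank Conjecture for general $M$ on the general curve, since the locus is finite.  Consequently $\phi|_T$ is everywhere fiberwise injective, so $\mathrm{coker}(\phi)|_T$ is locally free of rank $\mathrm{rk}(\F) - \mathrm{rk}(\mathrm{Sym}^2 \E)$, which equals $2$ in case~(\ref{sit1}) and $1$ in case~(\ref{sit2}).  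Since the third Chern class of a rank-$2$ bundle and the second Chern class of a rank-$1$ bundle vanish, we obtain $c_{\rho+1}(\F - \mathrm{Sym}^2 \E) \cdot [T] = 0$.  Combined with the cycle-class decomposition $[\sigma^*(F_1)] = [Z] + [T]$ (the multiplicities being $1$ since the generic fiber of $\sigma$ over $F_1$ is reduced), this gives the claimed equality.

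The main obstacle is the \emph{everywhere} fiberwise injectivity of $\phi_M$ on $T$, not merely at a generic point: a positive-codimensional non-injective locus would contribute excess intersection and spoil the Chern class vanishing.  In case~(\ref{sit1}) this is precisely the strong maximal rank statement for genus $22$ (Theorem~\ref{Thm:MainThm}), which is one of the main inputs of the paper and whose proof is independent of the present argument; in case~(\ref{sit2}) it reduces to a simpler $\rho = 0$ maximal rank statement.
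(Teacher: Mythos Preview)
Your approach differs substantially from the paper's and brings in far heavier machinery; it can be made to work in case~(\ref{sit1}) but has a gap in case~(\ref{sit2}).

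The paper argues by pure dimension count. One writes $\sigma^*(F_1) = Z + \sum_\alpha m_\alpha\, Z_\alpha \times W_\alpha$, summing over the remaining Schubert ramification profiles $\alpha$ at $y$. Since the gluing point on $E$ is fixed in $F_1$, the bundles $\E$ and $\F$ restricted to each $Z_\alpha \times W_\alpha$ are pulled back from the $X$-side factor $Z_\alpha$ alone. The pointed Brill--Noether theorem then gives $\dim Z_\alpha = 1+\rho(g-1,r,d)-\sum_j\alpha_j < \rho(g,r,d)+1$ for every such $\alpha$, so a codimension-$(\rho+1)$ Chern class pulled back from $Z_\alpha$ vanishes identically. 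No property of the multiplication map enters.

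You instead prove that $\phi$ is fiberwise injective on the complement $T$, making $\mathrm{coker}(\phi)$ locally free of rank at most $\rho$. In case~(\ref{sit1}) this is precisely Theorem~\ref{Thm:MainThm} for $g=22$; its tropical proof is logically independent, but invoking a main theorem for a preliminary cycle-class lemma is structurally backwards. In case~(\ref{sit2}) your claim that injectivity for all $M\in W^{2s}_{2s^2+2s}(X)$ ``follows from the Maximal Rank Conjecture for general $M$, since the locus is finite'' is incomplete: the MRC furnishes only one good $M$, and finiteness alone does not propagate this; one needs transitivity of the monodromy action on $W^r_d$ when $\rho=0$, which you do not mention. (Your parametrization of $T$ also omits the $V_E$-moduli, so the stated dimension of $T$ is off, though this does not damage the rank argument.)

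You already hold the elementary argument: your observation via Corollary~\ref{cor:vectorbundles}(ii) that $\phi_{|T}$ depends only on the $X$-aspect says exactly that $\E_{|T}$ and $\F_{|T}$ are pulled back from $X\times W^r_{d-1}(X)$, whose dimension is $1+\rho(g-1,r,d-1)=\rho(g,r,d)$. That bound alone kills $c_{\rho+1}$, with no appeal to injectivity.
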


\begin{proof}
We deal with the case $(g,r,d)=(23,6,26)$, the case (\ref{sit2}) being analogous. By the additivity of the Brill-Noether number, if
$(\ell_X, \ell_E)\in \sigma^{-1}([X\cup_y E])$ is a limit linear series of type $\mathfrak{g}^6_{26}$, we
have that $2=\rho(23, 6, 26)\geq \rho(\ell_X, y)+\rho(\ell_E, y)$. Since
$\rho(\ell_E, y)\geq 0$, we obtain that $\rho(\ell_X, y)\leq 2$. If
$\rho(\ell_E, y)=0$, then $\ell_E=19y+|\OO_E(7y)|$.  This shows that $\ell_E$ is
uniquely determined, while the aspect $\ell_X\in G^6_{26}(X)$ is a complete
linear series with a cusp at $y\in X$.
This gives rise to an element from $Z$ and shows that $Z\times \{\ell_E\}\cong Z$ is a component of $\sigma^*(F_1)$.

The other components of $\sigma^*(F_1)$ are indexed by Schubert
indices
\[
\alpha:=\bigl(0\leq \alpha_0\leq \ldots \leq \alpha_6\leq 20=26-6\bigr),
\]
 such that lexicographically $\alpha>(0, 1, 1, 1, 1, 1,1)$, and $7\leq \sum_{j=0}^6 \alpha_j \leq 9$, for we must also have $-1\leq \rho(\ell_X,y)\leq 1$ for any point $y\in X$, see \cite[Theorem 0.1]{Farkas13}.   For such an index $\alpha$, we set $\alpha^c:=(20-\alpha_6,\ldots, 20-\alpha_0)$ to be the complementary Schubert index, and define
\[
Z_{\alpha}:=\bigl\{(y, \ell_X)\in X\times G^6_{26}(X) \mid \alpha^{\ell_X}(y)\geq \alpha\bigr\}\ \mbox{ and } W_{\alpha}:=\bigl\{\ell_E\in G^6_{26}(E) \mid \alpha^{\ell_E}(y)\geq \alpha^c\bigr\}.
\]
Then the following relation holds
\[
\sigma^*(F_1)=Z+\sum_{\alpha>(0,1,1,1,1,1,1)} m_{\alpha}\  Z_{\alpha}\times W_{\alpha},
\]
where the multiplicities $m_{\alpha}$ can be determined via Schubert calculus but play no role in our calculation. Our claim now follows for dimension reasons. Applying the Brill-Noether Theorem \cite[Theorem 1.1]{EisenbudHarris87b} in the pointed setting and using that $X$ is a general curve, we obtain  the estimate $\mbox{dim } Z_{\alpha}=1+\rho(22, 6, 26)-\sum_{j=0}^6 \alpha_j<3$, for every index $\alpha>(0,1,1, 1, 1, 1, 1)$. In the definition of the test curve $F_1$, the point of attachment $y\in E$ is fixed, therefore the restrictions of both $\E$ and $\F$ are pulled-back from $Z_{\alpha}$ and one obtains that $c_3\bigl(\F- \mbox{Sym}^2(\E)\bigr)_{| Z_{\alpha}\times W_{\alpha}}=0$ for dimension reasons.
\end{proof}

\section{The class of the virtual divisor on \texorpdfstring{$\widetilde{\mathcal{M}}_{23}$}{M23}}
\label{Sec:23}

In this section we compute the class of $[\widetilde{\cD}_{23}]^\vir$ and prove the $g = 23$ part of Theorem \ref{thm:slopes}.

\subsection{Chern numbers of tautological classes on Jacobians.}
We repeatedly use facts about intersection theory on Jacobians, and refer to \cite[Chapters VII--VIII]{ACGH} for background on this topic and to \cite{HarrisMumford82, Harris84, Farkas09b} for applications to divisor class calculations on $\mm_g$. Let $X$ be a Brill-Noether general curve of genus $g$.  Denote by $\P$ a Poincar\'e line bundle on $X\times \mbox{Pic}^d(X)$ and by
\[
\pi_1\colon X\times \mbox{Pic}^d(X)\longrightarrow X \mbox{ and } \pi_2\colon X\times \mbox{Pic}^d(X)\longrightarrow \mbox{Pic}^d(X)
\]
the two projections.  We introduce the class $\eta=\pi_1^*([x_0])\in H^2(X \times \mbox{Pic}^d(X), \mathbb Z)$, where $x_0\in X$ is an arbitrary point.  After picking a symplectic basis $\delta_1,\ldots, \delta_{2g}\in H^1(X, \mathbb Z)\cong H^1(\mbox{Pic}^d(X), \mathbb Z)$, we consider the class
\[
\gamma:=-\sum_{\alpha=1}^g \Bigl( \pi_1^*(\delta_{\alpha}) \pi_2^*(\delta_{g+\alpha}) - \pi_1^*(\delta_{g+\alpha})\pi_2^*(\delta_{\alpha}) \Bigr) \in H^2(X \times \mathrm{Pic}^d(X), \mathbb{Z} ).
\]
One has the formula $c_1(\P)=d\cdot\eta+\gamma$, which describes the K\"unneth decomposition of $c_1(\P)$, as well as the relations $\gamma^3=0$, $\gamma \eta=0$,  $\eta^2=0$, and $\gamma^2 = -2\eta \pi_2^*(\theta)$, see \cite[page 335]{ACGH}.  Assuming $W^{r+1}_d(X)=\emptyset$, that is, when the Brill-Noether number $\rho(g,r+1,d)$ is negative (which happens in both cases (\ref{sit1}) and (\ref{sit2})), the smooth variety $W^r_d(X)$ admits a rank $r+1$ vector bundle
\[
\mathcal{M}:=(\pi_2)_{*}\Bigl(\mathcal{P}_{| X\times W^r_d(X)}\Bigr)
\]
with fibers $\cM(L)\cong H^0(X,L)$, for $L\in W^r_d(X)$.  In order to compute the Chern numbers of $\mathcal{M}$, we repeatedly employ the Harris-Tu
formula \cite{HarrisTu84}, which we now explain.  We write
\[
\sum_{i=0}^r c_i(\mathcal{M}^{\vee})=(1+x_1)\cdots (1+x_{r+1}).
\]
Then, for each class $\zeta \in H^*\bigl(\mbox{Pic}^d(X), \mathbb{Z} \bigr)$, any Chern number $c_{j_1}(\mathcal{M}) \cdots c_{j_s}(\mathcal{M})\ \cdot \zeta \in H^{\mathrm{top}}(W^r_d(X), \mathbb{Z} )$ can be computed by using repeatedly the formal identities\footnote{Formula (\ref{harristu}) is to be interpreted as a \emph{formal recipe} for evaluating  the Chern numbers $c_{j_1}(\mathcal{M}) \cdots c_{j_s}(\mathcal{M})\ \cdot \zeta$. Precisely, $W^r_d(X)$ can be expressed as the degeneracy locus of a morphism of vector bundles
$\mathcal{V}_1\rightarrow \mathcal{V}_2$ over $\mbox{Pic}^d(X)$ and  $\mathcal{M}$ is the kernel bundle of the restriction of this map to $W^r_d(X)$. Passing to a flag variety $\alpha\colon \mathbb F:=F(\mathcal{V}_1)\rightarrow \mbox{Pic}^d(X)$ over which one has canonical choices for the Chern roots $x_1, \ldots, x_{r+1}$,  formula (\ref{harristu}) is then proven in \cite[Corollary 2.6]{HarrisTu84} at the level of $\mathbb F$.}:
\begin{equation}\label{harristu}
x_1^{i_1}\cdots x_{r+1}^{i_{r+1}}\cdot  \zeta=\mbox{det}\Bigl(\frac{\theta^{g+r-d+i_j-j+k}}{(g+r-d+i_j-j+k)!}\Bigr)_{1\leq
j, k\leq r+1}\ \zeta.
\end{equation}

Via the expression of the Vandermonde determinant, one has the identity, see also \cite[p. 320]{ACGH}:
$$\mbox{det}\Bigl(\frac{1}{(a_j+k-1)!}\Bigr)_{1\leq j, k\leq r+1}=\frac{\prod_{j>k}(a_k-a_j)}{\prod_{j=1}^{r+1}(a_j+r)!}.$$
Using then (\ref{harristu}), we obtain the following formula in $H^{\mathrm{top}}(W^r_d(X), \mathbb Z)$:

\begin{equation}\label{harristu2}
x_1^{i_1}\cdots x_{r+1}^{i_{r+1}}\cdot
\theta^{\rho(g,r,d)-i_1-\cdots-i_{r+1}} =g!\ \frac{\prod_{j>k} (i_{k}-i_j+j-k)}{\prod_{k=1}^{r+1} (g-d+2r+1+i_k-k)!}.
\end{equation}

Jet bundles are employed several times in this section, and we recall their definition. Denote by
\[
\mu, \nu\colon X \times X\times \mbox{Pic}^{d}(X) \longrightarrow X \times \mbox{Pic}^{d}(X)
\]
the two projections and by $\Delta \subset X \times X \times \mbox{Pic}^{d}(X)$ the diagonal. Then the \emph{jet bundle} of the Poincar\'e line bundle $\mathcal{P}$ on $X\times \mbox{Pic}^d(X)$ is defined as $J_1(\mathcal{P}):=\nu_*\bigl(\mu^*(\mathcal{P})\otimes \OO_{2\Delta}\bigr)$. Its fiber over a point $(y, L)\in X\times \mbox{Pic}^d(X)$ is naturally identified with $H^0(L\otimes \OO_{2y})$.

\subsection{Top intersection products in the Jacobian of a curve of genus $22$.}
We now specialize to the case of a general curve $X$ of genus $22$. By Riemann-Roch the duality $W^6_{26}(X)\cong W^1_{16}(X)$ holds.  Since $\rho(22,7,26)=-2<0$,  note that $W^7_{26}(X)=\emptyset$, so we can consider the rank $7$ tautological vector bundle $\cM$ on $W^6_{26}(X)$ with fibers $\cM_L\cong H^0(X, L)$. The vector bundle $\mathcal{N}:=(R^1\pi_2)_{*}\Bigl(\mathcal{P}_{| X\times W^6_{26}(X)}\Bigr)$ has rank $2$ and we explain how its two Chern classes determine all of the Chern classes of $\cM$.

\begin{proposition}
\label{chernosztalyok}
For a general curve $X$ of genus $22$ we set  $c_i:=c_i(\cM^{\vee})$, for $i=1, \ldots, 7$, and $y_i:=c_i(\cN)$, for $i=1,2$. Then the following relations hold in $H^*(W^6_{26}(X), \mathbb Z)$:
\begin{align*}
c_1 &= \theta-y_1 \mbox{ and} \\
c_{i+2} &= \frac{1}{i!}y_2 \theta^i-\frac{1}{(i+1)!}y_1 \theta^{i+1}+\frac{1}{(i+2)!}\theta^{i+2} \mbox{ for all } i\geq 0.
\end{align*}
\end{proposition}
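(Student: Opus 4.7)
The plan is to derive both identities from a single application of Grothendieck-Riemann-Roch to the proper projection $\pi_2\colon X \times W^6_{26}(X) \to W^6_{26}(X)$ with the Poincar\'e line bundle $\mathcal{P}$ restricted to $X \times W^6_{26}(X)$. Since $W^7_{26}(X) = \emptyset$, one has $R^0\pi_{2*}\mathcal{P} = \mathcal{M}$ and $R^1\pi_{2*}\mathcal{P} = \mathcal{N}$, so $R\pi_{2*}\mathcal{P} = [\mathcal{M}] - [\mathcal{N}]$ in $K$-theory and
\[
\mathrm{ch}(\mathcal{M}) - \mathrm{ch}(\mathcal{N}) = \pi_{2*}\bigl(\mathrm{ch}(\mathcal{P}) \cdot \pi_1^*\mathrm{td}(T_X)\bigr).
\]

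The first step is to compute $\mathrm{ch}(\mathcal{P}) = e^{d\eta + \gamma}$ using the identities $\eta^2 = \eta\gamma = \gamma^3 = 0$ and $\gamma^2 = -2\eta\pi_2^*\theta$ recalled at the start of the section. These cut off the exponential after three terms:
\[
\mathrm{ch}(\mathcal{P}) = 1 + \gamma + d\eta - \eta\pi_2^*\theta.
\]
Multiplying by $\pi_1^*\mathrm{td}(T_X) = 1 + (1-g)\eta$ and pushing forward using $\pi_{2*}(1) = \pi_{2*}(\gamma) = 0$ and $\pi_{2*}(\eta) = 1$ yields
\[
\mathrm{ch}(\mathcal{M}) - \mathrm{ch}(\mathcal{N}) = (d+1-g) - \theta = 5 - \theta.
\]
Dualizing flips the sign of the odd-degree part, so $\mathrm{ch}(\mathcal{M}^\vee) - \mathrm{ch}(\mathcal{N}^\vee) = 5 + \theta$. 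In particular, all higher Chern characters of $\mathcal{M}^\vee - \mathcal{N}^\vee$ vanish.

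The final step is to pass from Chern character to total Chern class through the universal identity $\log c(E) = \sum_{k \geq 1} (-1)^{k-1}(k-1)!\,\mathrm{ch}_k(E)$, which is additive on the Grothendieck group. Since $\mathrm{ch}_k(\mathcal{M}^\vee) - \mathrm{ch}_k(\mathcal{N}^\vee)$ vanishes for $k \geq 2$ and equals $\theta$ for $k = 1$, only one term survives and I obtain $\log c(\mathcal{M}^\vee) - \log c(\mathcal{N}^\vee) = \theta$, hence
\[
c(\mathcal{M}^\vee) = e^\theta \cdot c(\mathcal{N}^\vee) = \left(\sum_{k \geq 0} \frac{\theta^k}{k!}\right)(1 - y_1 + y_2),
\]
using that $\mathcal{N}$ has rank $2$. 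Expanding the right-hand side and extracting the degree one part gives $c_1 = \theta - y_1$, and the degree $i+2$ part, for $i \geq 0$, gives exactly the claimed formula for $c_{i+2}$.

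There is no serious obstacle here; once $\mathcal{M}$ and $\mathcal{N}$ are identified with the only two non-vanishing higher direct images of $\mathcal{P}$, the argument is routine bookkeeping. The only point requiring care is the systematic use of the relations among $\eta$, $\gamma$, and $\pi_2^*\theta$ in truncating $\mathrm{ch}(\mathcal{P})$, together with the observation that the Chern character of a virtual bundle with vanishing components above degree one determines its total Chern class as a simple exponential.
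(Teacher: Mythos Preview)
Your proof is correct and arrives at the same key identity as the paper, namely $c(\mathcal{M}^\vee) = e^{\theta}\,c(\mathcal{N}^\vee)$, from which the stated formulas follow by expanding. The organization, however, differs from the paper's argument. The paper follows the standard ACGH device: it twists $\mathcal{P}$ by the pullback of an effective divisor $D$ of large degree so that all higher direct images except $R^1\pi_{2*}\mathcal{P}=\mathcal{N}$ vanish, obtains a four-term exact sequence with $\mathcal{M}$ on the left, and then invokes the known formula $c_{\mathrm{tot}}\bigl((\pi_2)_*(\mathcal{P}\otimes\mathcal{O}(\pi_1^*D))\bigr)=e^{-\theta}$ together with the triviality of the Chern class of the restriction to $\pi_1^*D$. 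This yields $c(\mathcal{N})\cdot e^{-\theta}=c(\mathcal{M})$ directly at the level of total Chern classes.

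Your route bypasses the auxiliary divisor by computing $\mathrm{ch}(\mathcal{M})-\mathrm{ch}(\mathcal{N})$ via Grothendieck--Riemann--Roch applied straight to $\mathcal{P}$, and then converts to total Chern classes through the logarithmic identity $\log c = \sum_{k\ge 1}(-1)^{k-1}(k-1)!\,\mathrm{ch}_k$. This is slightly more self-contained, since the formula $c=e^{-\theta}$ that the paper quotes from ACGH is itself a GRR computation; what you gain is a single direct computation rather than a citation plus an exact sequence. What the paper's approach buys is that it stays entirely at the level of Chern classes and exact sequences, avoiding the passage through $\mathrm{ch}$ and $\log$. Both are standard and equally valid here.
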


\begin{proof}
Fix an effective divisor $D\in X_e$ of sufficiently large degree $e$.  There is an exact sequence
\[
0 \rightarrow \cM \rightarrow (\pi_{2})_*\Bigl( \P \otimes \OO(\pi^*D) \Bigr)\rightarrow (\pi_2)_*\Bigl( \P \otimes \OO(\pi_1^*D)_{| \pi_1^*D} \Bigr) \rightarrow R^1\pi_{2*} \Bigl( \P_{| X\times W^6_{26}(X)} \Bigr) \rightarrow 0.
\]
Recall that $\mathcal{N}$ is the vector bundle on the right in the exact sequence above.  By \cite[Chapter VII]{ACGH}, we have  $c_{\mathrm{tot}}\Bigl((\pi_2)_*(\P \otimes \OO(\pi_1^*D))\Bigr)=e^{-\theta}$, and the total Chern class of the vector bundle $(\pi_2)_* \Bigl( \P \otimes \OO(\pi_1^*D)_{| \pi_1^*D} \Bigr)$ is trivial.  We therefore obtain
\[
c_{\mathrm{tot}}(\mathcal{N}) \cdot e^{-\theta} = \sum_{i=0}^8 (-1)^i c_i.
\]
Hence $c_{i+2}=\frac{1}{i!}y_2 \theta^i-\frac{1}{(i+1)!}y_i \theta^{i+1}+\frac{1}{(i+2)!}\theta^{i+2}$ for all $i\geq 0$, as desired.
\end{proof}

Using Proposition \ref{chernosztalyok}, any Chern number on the smooth $8$-fold $W^6_{26}(X)$ can be expressed in terms of monomials in the classes $u_1$, $u_2$, and $\theta$, where $u_1$ and $u_2$ are the Chern roots of $\cN$, that is,
\[
y_1=c_1(\cN)=u_1+u_2 \ \mbox{ and } y_2=c_2(\cN)=u_1\cdot u_2.
\]
We record for further use the following formal identities on $H^{\mathrm{top}}(W^6_{26}(X),\mathbb Z)$, which are obtained by applying formula (\ref{harristu}) in the case $g=22$, $r=1$ and $d=16$, using the canonical isomorphism $H^1(X, L)\cong H^0\bigl(X, \omega_X\otimes L^{\vee}\bigr)^{\vee}$ provided by Serre duality.

\begin{align*}
u_1^3\theta^5 &= \frac{4\cdot 22!}{11!\cdot 7!}, & u_2^3\theta^5 &= -\frac{2\cdot 22!}{8!\cdot 10!}, & u_1^2\theta^6 &= \frac{3\cdot 22!}{10!\cdot 7!}, & u_2^2\theta^6 &= -\frac{22!}{8!\cdot 9!}, \\ u_1\theta^7 &= \frac{2\cdot 22!}{7!\cdot 9!}, &
u_2\theta^7 &= 0, & u_1u_2^4\theta^3 &= -\frac{2\cdot 22!}{9!\cdot 11!}, & u_1^4u_2\theta^3 &= \frac{4\cdot 22!}{8!\cdot 12!}, \\ u_1^2u_2\theta^5 &= \frac{2\cdot 22!}{8!\cdot 10!}, & u_1u_2^2\theta^5 &= 0, &
u_1^2 u_2^3\theta^3 &= 0, & u_1^3u_2^2\theta^3 &= \frac{2\cdot 22!}{9!\cdot 11!}, \\ u_1^2u_2^2\theta^4 &= \frac{22!}{9!\cdot 10!}, & u_1^4\theta^4 &= \frac{5\cdot 22!}{7!\cdot 12!}, & u_2^4\theta^4 &= -\frac{3\cdot 22!}{8!\cdot 11!}, &
\ u_1^3u_2\theta^4 &= \frac{3\cdot 22!}{8!\cdot 11!}, \\ u_1u_2^3\theta^4 &= -\frac{22!}{9!\cdot 10!}, & u_1u_2\theta^6 &= \frac{22!}{8!\cdot 9!}, & \theta^8 &= \frac{22!}{7!\cdot 8!}.
\end{align*}

To compute the corresponding Chern numbers on $W^6_{26}(X)$, one uses Proposition \ref{chernosztalyok} and the previous formulas.  Each Chern number corresponds to a degree $8$ polynomial in  $u_1$, $u_2$, and $\theta$, which is symmetric in $u_1$ and $u_2$.

We now compute the classes of the loci $Y$ and $Z$ appearing in Propositions \ref{limitlin0} and \ref{limitlin1}.

\begin{proposition}\label{prop:osztalyxy}
\label{xy}
Let $[X,q]$ be a general $1$-pointed curve of genus $22$, let $\cM$ denote the tautological rank $7$ vector bundle over $W^6_{26}(X)$, and let $c_i = c_i(\cM^{\vee}) \in H^{2i}(W^6_{26}(X), \mathbb{Z})$ as before.  Then the following hold:
\begin{enumerate}
\item  $[Z] = \pi_2^*(c_6) - 6\eta \theta \pi_2^*(c_4) + (94\eta + 2\gamma) \pi_2^*(c_5) \in H^{12} (X \times W^6_{26}(X), \mathbb{Z} )$.

\item  $[Y] = \pi_2^*(c_6) - 2\eta \theta \pi_2^*(c_4) + (25\eta + \gamma) \pi_2^*(c_5) \in H^{12}(X \times W^6_{26}(X), \mathbb{Z} )$.
\end{enumerate}
\end{proposition}

\begin{proof}
Recall that $W^6_{26}(X)$ is smooth of dimension $8$.  We realize the locus $Z$ defined by (\ref{defz})  as the degeneracy locus of a vector bundle morphism over $X \times W^6_{26}(X)$. Precisely, for each pair $(y,L) \in X \times W^6_{26}(X)$, there is a natural map
\[
H^0(X, L\otimes \OO_{2y})^{\vee} \longrightarrow H^0(X, L)^{\vee},
\]
which globalizes to a bundle morphism $\zeta \colon J_1(\mathcal{P})^{\vee} \rightarrow \pi_2^*(\cM)^{\vee}$ over $X\times W^6_{26}(X)$.  Then we have the identification $Z=Z_1(\zeta)$, that is, $Z$ is the first degeneracy locus of $\zeta$.  The Porteous formula yields $[Z] = c_6 \Bigl( \pi^*_2 (\cM)^{\vee} - J_1 (\mathcal{P})^{\vee} \Bigr)$.  To evaluate this class, we use the exact sequence over $X \times \mbox{Pic}^{26}(X)$ involving the jet bundle:
\[
0\longrightarrow \pi_1^*(\omega_X) \otimes \mathcal{P} \longrightarrow J_1(\mathcal{P}) \longrightarrow \mathcal{P} \longrightarrow 0 .
\]
We compute the total Chern class of the formal inverse of the jet bundle as follows:
\begin{align*}
c_{\mathrm{tot}} (J_1 (\mathcal{P})^{\vee})^{-1} &= \Bigl( \sum_{j\geq 0} (\mathrm{deg}(L) \eta + \gamma)^j \Bigr) \cdot \Bigl( \sum_{j\geq 0} \bigl( (2g(X)-2+\mathrm{deg}(L)) \eta + \gamma \bigr)^j \Bigr) \\
&= \bigl( 1+26\eta + \gamma + \gamma^2 + \cdots \bigr) \cdot \bigl( 1+68\eta + \gamma + \gamma^2 + \cdots \bigr) = 1 + 94\eta + 2\gamma - 6\eta\theta,
\end{align*}
leading to the desired formula for $[Z]$.

\vskip 4pt

To compute the class of the variety $Y$ defined in (\ref{defx}) we proceed in a similar way.  Recall that
\[
\mu, \nu\colon X \times X\times \mbox{Pic}^{26}(X) \rightarrow X \times \mbox{Pic}^{26}(X)
\]
denote the two projections and $\Delta \subseteq X \times X \times \mbox{Pic}^{26}(X)$ is the diagonal. Set $\Gamma_q:=\{ q \} \times \mbox{Pic}^{26}(X)$.
We introduce the rank $2$ vector bundle $\cB:=\mu_*\bigl( \nu^*(\mathcal{P}) \otimes \OO_{\Delta + \nu^*(\Gamma_q)} \bigr)$ over $X \times W^6_{26}(X)$.  Note that there is a bundle morphism $\chi: \cB^{\vee} \rightarrow (\pi_2)^*(\cM)^{\vee}$ such that $Y=Z_1(\chi)$.  Since we also have that
\begin{equation}\label{eq:classofB}
c_{\mathrm{tot}} ( \cB^{\vee} )^{-1}=\Bigl( 1 + (\mathrm{deg}(L) \eta + \gamma) + (\mathrm{deg}(L) \eta + \gamma)^2 + \cdots \Bigr) \cdot \bigl( 1 - \eta \bigr) = 1 + 25\eta + \gamma - 2\eta\theta,
\end{equation}
we immediately obtain the stated expression for $[Y]$.
\end{proof}

The following formulas are applications of Grothendieck-Riemann-Roch.

\begin{proposition}
\label{a121}
Let $X$ be a general curve of genus $22$, let $q \in X$ be a fixed point, and consider the vector bundles $\cA_2$ and $\cB_2$ on $X \times {\rm{Pic}}^{26}(X)$ having fibers
\[
\cA_2(y,L) = H^0 \bigl( X, L^{\otimes 2} (-2y) \bigr) \ \mbox{ and } \ \cB_2(y,L) = H^0 \bigl( X, L^{\otimes 2} (-y-q) \bigr),
\]
respectively.  One then has the following formulas:
\begin{align*}
c_1(\cA_2) &= -4 \theta - 4\gamma - 146 \eta, & c_1(\cB_2) &= -4 \theta - 2\gamma -51 \eta, \\
c_2(\cA_2) &= 8 \theta^2 + 560 \eta \theta + 16 \gamma \theta, & c_2(\cB_2) &= 8 \theta^2 + 196 \eta \theta + 8 \theta \gamma, \\
c_3(\cA_2) &= -\frac{32}{3} \theta^3 - 1072 \eta \theta^2 - 32 \theta^2 \gamma, & c_3(\cB_2) &= -\frac{32}{3} \theta^3 - 376 \eta \theta^2 - 16 \theta^2 \gamma.
\end{align*}
\end{proposition}

\begin{proof}
This is an immediate application of Grothendieck-Riemann-Roch with respect to the projection map $\nu\colon X\times X\times \mbox{Pic}^{26}(X)\rightarrow X\times \mbox{Pic}^{26}(X)$.  Since $H^1 (X, L^{\otimes 2}(-2y)) = 0$ for every $(y,L)\in X \times \mbox{Pic}^{26}(X)$, the vector bundle $\cA_2$ is realized as a push forward under the map $\nu$:
\[
\cA_2 = \nu_{!} \Bigl( \mu^*\bigl(\P^{\otimes 2}\bigr) \otimes \OO_{X \times X \times \mathrm{Pic}^{26}(X)}(-2\Delta)\Bigr) = \nu_* \Bigl( \mu^*\bigl(\P^{\otimes 2}\bigr) \otimes \OO_{X \times X \times \mathrm{Pic}^{26}(X)}(-2\Delta)\Bigr),
\]
and we apply Grothendieck-Riemann-Roch to $\nu$.  One finds  $\mbox{ch}_2(\cA_2) = 8 \eta \theta$ and $\mbox{ch}_n (\cA_2)=0$ for $n\geq 3$.  Furthermore, $\nu_* (c_1(\P)^2) = -2\theta$.  One then obtains  $c_1(\cA_2) = -4\theta - 4\gamma - (4d+2g-2) \eta$, which then yields the formula for $c_2(\cA_2)$.  Since $\mbox{ch}_3(\cA_2) = 0$, we find that $c_3(\cA_2) = c_1(\cA_2) c_2(\cA_2) - \frac{c_1^3(\cA_2)}{3}$, which by substitution leads to the claimed expression.

The calculation of $\cB_2$ is similar. We find that $c_1(\cB_2) = -4\theta -2\gamma - (2d-1)\eta$ and $\mbox{ch}_2(\cB_2) = 4\eta \theta$, whereas $\mbox{ch}_n (\cB_2) = 0$ for $n\geq 3$.
\end{proof}


\subsection{The slope computation}

In this section we complete the calculation of the virtual class $[\widetilde{\mathfrak{D}}_{23}]^{\mathrm{virt}}$.  We shall use repeatedly that if $\mathcal{V}$ is a vector bundle of rank $r+1$ on a stack, the Chern classes of its second symmetric product can be computed as follows:

\begin{enumerate}
\item $c_1(\mathrm{Sym}^2 (\mathcal{V})) = (r+2) c_1(\mathcal{V})$,
\item $c_2(\mathrm{Sym}^2 (\mathcal{V})) = \frac{r(r+3)}{2} c_1^2(\mathcal{V}) + (r+3)c_2(\mathcal{V})$,
\item $c_3(\mathrm{Sym}^2 (\mathcal{V})) = \frac{r(r+4)(r-1)}{6} c_1^3(\mathcal{V}) + (r+5)c_3(\mathcal{V}) + (r^2+4r-1)c_1(\mathcal{V}) c_2(\mathcal{V})$.
\end{enumerate}


We expand the virtual class
\[
[\widetilde{\mathfrak{D}}_{23}]^{\mathrm{virt}} = \sigma_* \Bigl( c_3(\F - \mbox{Sym}^2(\E)) \Bigr) = a\lambda - b_0\delta_0 - b_1\delta_1 \in CH^1(\pm_{23}).
\]
Our task is to determine the coefficients $a, b_0$ and $b_1$.  We begin with the coefficient of $\delta_1$.

\begin{theorem}
\label{d1}
Let $X$ be a general curve of genus $22$  and denote by $F_1 \subset \widetilde{\Delta}_1\subset \pm_{23}$ the associated test curve.  Then the coefficient of $\delta_1$ in the expansion of $[\widetilde{\mathfrak{D}}_{23}]^{\mathrm{virt}}$ is equal to
\[
b_1 = \frac{1}{2g(X)-2} \sigma^*(F_1)\cdot c_3 \bigr( \F- \mathrm{Sym}^2(\E) \bigr) = 13502337992 = \frac{4}{9}\binom{19}{8} 401951.
\]
\end{theorem}

\begin{proof} We intersect the degeneracy locus of the map $\phi\colon \mbox{Sym}^2(\E)\rightarrow \F$ with the $3$-fold $\sigma^*(F_1)$.
By Proposition \ref{limitlin1}, we have
\[
\sigma^*(F_1) \cdot c_3 \bigl( \F-\mbox{Sym}^2(\E) \bigr) = c_3 \bigl( \F - \mbox{Sym}^2(\E) \bigr)_{| Z} = c_3(\F_{|Z}) - c_3(\mbox{Sym}^2 \E_{| Z}) - c_1(\F_{| Z})c_2(\mbox{Sym}^2 \E_{| Z})
\]
\[
+ 2c_1(\mbox{Sym}^2\E_{| Z}) c_2(\mbox{Sym}^2\E_{| Z}) - c_1(\mbox{Sym}^2 \E_{| Z}) c_2(\F_{| Z}) + c_1^2(\mbox{Sym}^2 \E_{| Z}) c_1(\F_{| Z}) - c_1^3(\mbox{Sym}^2 \E_{| Z}).
\]
We now evaluate the terms that appear  in the righthand side of this expression.

\vskip 4pt

In the course of proving Proposition~\ref{xy}, we constructed a morphism  $\zeta\colon J_1(\P)^{\vee} \rightarrow \pi_2^*(\cM)^{\vee}$ of vector bundles on $Y$ globalizing the maps $H^0(\OO_{2y})^{\vee}\rightarrow H^0(X,L)^{\vee}$.  The kernel sheaf $\mbox{Ker}(\zeta)$ is locally free of rank $1$.  If $U$ is the line bundle on $Z$ with fiber
\[
U(y,L) = \frac{H^0(X, L)}{H^0(X, L(-2y))} \hookrightarrow H^0(X, L \otimes \OO_{2y})
\]
over a point $(y, L)\in Z$, then one has the following exact sequence over $Z$:
\[
0 \longrightarrow U \longrightarrow J_1(\P) \longrightarrow \bigl( \mbox{Ker}(\zeta) \bigr)^{\vee} \longrightarrow 0.
\]
In particular, since in the course of proving  Proposition~\ref{xy} we computed $c_1(J_1(\P))=94\eta+2\gamma$, we find that
\begin{equation}\label{Uosztaly}
c_1(U) = 2\gamma + 94\eta + c_1(\mathrm{Ker}(\zeta)).
\end{equation}  The products of the Chern class of $\mbox{Ker}(\zeta)$ with other classes coming from $X \times W^6_{26}(X)$ can be computed from the formula in \cite{HarrisTu84}:
\begin{equation}
\label{harristu1}
c_1 \bigl( \mathrm{Ker}(\zeta) \bigr) \cdot \xi_{| Z} = -c_7 \Bigl( \pi_2^*(\cM)^{\vee} - J_1(\P)^{\vee} \Bigr) \cdot \xi_{| X\times W^6_{26}(X)}
\end{equation}
\[
= -\Bigl( \pi_2^*(c_7) - 6\eta \theta \pi_2^*(c_5) + (94\eta +2 \gamma)\pi_2^*(c_6) \Bigr) \cdot \xi_{| X\times W^6_{26}(X)},
\]
where $\xi \in H^4( X \times W^6_{26}(X), \mathbb{Z})$.

If $\cA_3$ denotes the rank $31$ vector bundle on $Z$ having fibers
\[
\cA_3(y,L) = H^0(X, L^{\otimes 2})
\]
constructed as a push forward of a line bundle on $X \times X \times \mbox{Pic}^{26}(X)$, then $U^{\otimes 2}$ can be embedded in $\cA_3/\cA_2$.  We consider the quotient
\[
\G := \frac{\cA_3/\cA_2}{U^{\otimes 2}}.
\]
The morphism $U^{\otimes 2} \rightarrow \cA_3/\cA_2$ vanishes along the locus of pairs $(y,L)$ where $L$ has a base point.  This implies that $\G$ has torsion along the locus $\Gamma \subseteq Z$ consisting of pairs $(q,A(q))$, where $A \in W^6_{25}(X)$.  Furthermore, $\F_{|Z}$ is identified as a subsheaf of $\cA_3$ with the kernel of the map $\cA_3 \rightarrow \G$. Summarizing, there is an exact sequence of vector bundles on $Z$
\begin{equation}
\label{exseqZ}
0 \longrightarrow \cA_{2 |Z}\longrightarrow \F_{| Z} \longrightarrow U^{\otimes 2} \longrightarrow 0.
\end{equation}
Over a general point $(y,L)\in Z$, this sequence reflects the decomposition
\[
\F(y,L) = H^0(X, L^{\otimes 2}(-2y)) \oplus K \cdot u^2,
\]
where $u \in H^0(X, L)$ is a section such that $\mbox{ord}_y (u) = 1$.

Hence using the exact sequence (\ref{exseqZ}), one computes:
\begin{align*}
c_1(\F_{| Z}) &= c_1(\cA_{2 |Z})+2c_1(U), & c_2(\F_{| Z}) &= c_2(\cA_{2 |Z})+2c_1(\cA_{2 | Z}) c_1(U) \mbox{ and } \\
c_3(\F_{|Z}) &= c_3(\cA_2) + 2c_2(\cA_{2| Z}) c_1(U).
\end{align*}
Recalling that  $\E_{| Z} = \pi_2^*(\cM)_{|Z}$, we obtain that:
\begin{align*}
\sigma^*(F_1) &\cdot c_3 \bigl( \F - \mbox{Sym}^2 \E \bigr) = c_3(\cA_{2 |Z}) + c_2(\cA_{2 |Z}) c_1(U^{\otimes 2}) - c_3(\mathrm{Sym}^2 \pi_2^*\cM_{| Z}) \\
&- \Bigl( \frac{r(r+3)}{2} c_1(\pi_2^*\cM_{|Z}) + (r+3)c_2(\pi_2^*\cM_{|Z}) \Bigr) \cdot \Bigl( c_1(\cA_{2 |Z})+c_1(U^{\otimes 2}) - 2(r+2)c_1(\pi_2^*\cM_{|Z})\Bigr) \\
&- (r+2)c_1(\pi_2^*\cM_{|Z}) c_2(\cA_{2 |Z}) - (r+2)c_1(\pi_2^*\cM_{|Z}) c_1(\cA_{2 | Z}) c_1(U^{\otimes 2}) \\
&+ (r+2)^2 c_1^2(\pi_2^*\cM_{| Z}) c_1(\cA_{2 | Z}) + (r+2)^2 c_1^2(\pi_2^*\cM_{|Z}) c_1(U^{\otimes 2}) - (r+2)^3 c_1^3(\pi_2^*\cM_{| Z}).
\end{align*}
Here, $c_i(\pi_2^*\cM_{|Z}^{\vee}) = \pi_2^*(c_i)\in H^{2i}(Z, \mathbb{Z})$ and $r = \mbox{rk}(\cM)-1 = 6$.  The Chern classes of $\cA_{2|Z}$ are obtained by applying Proposition \ref{a121}.  Recall that in (\ref{Uosztaly}) we expressed $c_1(U)$ in terms of $c_1((\mbox{Ker}(\zeta))$ and the classes $\eta $ and $\gamma$. Substituting (\ref{Uosztaly}) for $c_1(U)$, when expanding
$\sigma^*(F_1)\cdot c_3 \bigl( \F-\mbox{Sym}^2( \E) \bigr)$, one distinguishes between terms that do and those that do not contain
the first Chern class of $\mathrm{Ker}(\zeta)$. The coefficient of $c_1 \bigl( \mathrm{Ker}(\zeta) \bigr)$ in  $\sigma^*(F_1)\cdot c_3 \bigl( \F-\mbox{Sym}^2( \E) \bigr)$
is evaluated using (\ref{harristu1}).   First we consider the part of this product that \emph{does not} contain $c_1 \bigl( \mathrm{Ker}(\zeta) \bigr)$, and we obtain
\begin{align*}
36 \pi_2^*(c_2) \theta - 148 \pi_2^*(c_1^2) \theta + 1554 \eta \pi_2^*(c_1^2) - 85 \pi_2^*(c_1c_2) - \frac{32}{3} \theta^3 + 304 \eta\theta^2 - 1280 \eta \theta \pi_2^*(c_1) \\
+ 130 \pi_2^*(c_1^3) - 378 \eta \pi_2^*(c_2) + 64 \theta^2\pi_2^*(c_1) + 11 \pi_2^*(c_3) \in H^6 \bigl( X \times W^6_{26}(X), \mathbb{Z} \bigr).
\end{align*}

This polynomial of degree $3$ gets multiplied by the class $[Z]$, expressed as the degree $6$ polynomial in $\theta$, $\eta$, and $\pi_2^*(c_i)$ obtained in Proposition~\ref{xy}.  Adding to it the contribution coming from $c_1 \bigl( \mbox{Ker}(\zeta) \bigr)$, one obtains a homogeneous polynomial of degree $9$ in $\eta$, $\theta$, and $\pi_2^*(c_i)$ for $i = 1, \ldots,7$.  The only nonzero monomials are those containing $\eta$.  After retaining only these monomials and dividing by $\eta$, the resulting degree $8$ polynomial in $\theta$, $c_i \in H^*(W^6_{26}(X), \mathbb{Z})$ can be brought to a manageable form using Proposition \ref{chernosztalyok}.  After lengthy but straightforward manipulations carried out using \emph{Maple}, one finds

\begin{align*}
\sigma^*(F_1) \cdot c_3 \bigl( \mathrm{Sym}^2(\E) - \F \bigr) &= \eta \pi_2^*\Bigl( -780 c_1^3 c_4 \theta + 12220 c_1^3 c_5 + 888 c_1^2 c_4 \theta^2 - 13468 c_1^2 c_5\theta - 5402 c_1^2 c_6 \\
&- 384 \theta^3 c_1 c_4 + 5632 \theta^2 c_1 c_5 + 510 \theta c_1 c_2 c_4 + 4480 c_1 c_6 \theta - 7990 c_1 c_2 c_5 \\
&+ 2336 c_1 c_7 - 216 c_2 c_4 \theta^2 + 3276 c_2 c_5 \theta - 66 c_3 c_4 \theta + 1034 c_3 c_5 + 1314 c_2 c_6 \\
&+ 64 c_4 \theta^4 - \frac{2720}{3} c_5 \theta^3 - 1072 c_6 \theta^2 - 1120 c_7 \theta \Bigr).
\end{align*}

We suppress $\eta$ and the remaining polynomial lives inside $H^{16}(W^6_{26}(X), \mathbb{Z})$.  Using (\ref{harristu}), we explicitly calculate all top Chern numbers on $W^6_{26}(X)$ and we eventually find that
\[
b_1 = \frac{1}{42} \sigma^*(F_1) \cdot c_3 \bigl( \F - \mbox{Sym}^2(\E) \bigr) = 13502337992,
\]
as required.
\end{proof}

\begin{theorem}
\label{d0}
Let $[X,q]$ be a general pointed curve of genus $22$ and let $F_0 \subset \widetilde{\Delta}_0\subset \pm_{23}$ be the associated test curve.  Then
\[
\sigma^*(F_0) \cdot c_3 \bigl( \F - \mathrm{Sym}^2(\E) \bigr) = 44b_0 - b_1 = 93988702808.
\]
It follows that $b_0 = \frac{4}{9}\binom{19}{8} 72725$.
\end{theorem}

\begin{proof}
By Proposition~\ref{limitlin0}, the vector bundles $\E_{| \sigma^*(F_0)}$ and $\F_{| \sigma^*(F_0)}$ are both pull backs of vector bundles on $\widetilde{Y} = Y / [\Gamma_1 \sim \Gamma_2 ]$. By abuse of notation we denote these vector bundles by the same symbols, that is, we have $\E_{| \sigma^*(F_0)} = f^*(\E_{| \widetilde{Y}})$ and $\F_{| \sigma^*(F_0)} = f^*(\F_{| \widetilde{Y}})$.  Following broadly the proof of Theorem~\ref{d1}, we evaluate the terms appearing in $\sigma^*(F_0) \cdot c_3(\F - \mbox{Sym}^2(\E))=c_3\bigl(\F_{|Y}-\mbox{Sym}^2(\E_{|Y})\bigr)$, where $\E_{|Y}=\vartheta^*(\E_{| \widetilde{Y}})$ and $\F_{|Y}=\vartheta^*(\F_{| \widetilde{Y}})$ respectively.

\vskip 4pt

Let $V$ be the line bundle on $Y$ with fiber
\[
V(y,L) = \frac{H^0(X, L)}{H^0(X, L(-y-q))} \hookrightarrow H^0(X, L \otimes \OO_{y+q})
\]
over a point $(y,L) \in Y$. There is an exact sequence of vector bundles over $Y$
\[
0 \longrightarrow V \longrightarrow \cB \longrightarrow \bigl( \mbox{Ker}(\chi) \bigr)^{\vee} \longrightarrow 0,
\]
where $\chi\colon \cB^{\vee} \rightarrow \pi_2^*(\cM)^{\vee}$ is the bundle morphism defined in the second part of the proof of Proposition~\ref{xy}.  In particular, $c_1(V) = 25\eta+ \gamma +c_1 \bigl( \mathrm{Ker}(\chi) \bigr)$, for the Chern class of $\cB$ has been computed in the proof of Proposition~\ref{xy}. By using again \cite{HarrisTu84}, we find the following formulas for the Chern numbers of $\mathrm{Ker}(\chi)$:
\[
c_1 (\mathrm{Ker}(\chi)) \cdot \xi_{|Y} = -c_7 \bigl( \pi_2^*(\cM)^{\vee} - \cB^{\vee} \bigr) \cdot \xi = -\Bigl( \pi_2^*(c_7) + \pi_2^*(c_6)(25\eta + \gamma) - 2\pi_2^*(c_5)\eta \theta \Bigr) \cdot \xi_{|X\times W^6_{26}(X)},
\]
for any class $\xi \in H^4(X\times W^6_{26}(X), \mathbb{Z})$.  We have previously defined the vector bundle $\cB_2$ over 
$X \times W^6_{26}(X)$ with fiber $\cB_2(y, L)=H^0(X, L^{\otimes 2}(-y-q))$.  We show that there is an exact sequence of bundles over $Y$
\begin{equation}
\label{exseq}
0 \longrightarrow \cB_{2 |Y} \longrightarrow \F_{| Y} \longrightarrow V^{\otimes 2} \longrightarrow 0.
\end{equation}
If $\cB_3$ is the vector bundle on $Y$ with fibers $\cB_3(y,L) = H^0(X, L^{\otimes 2})$, we have an injective morphism of sheaves $V^{\otimes 2} \hookrightarrow \cB_3/\cB_2$ locally given by
\[
v^{\otimes 2} \mapsto v^2 \mbox{ mod } H^0(X, L^{\otimes 2}(-y-q)),
\]
where $v \in H^0(X, L)$ is any section not vanishing at $q$ and $y$.  Then $\F_{|Y}$ is canonically identified with the kernel of the projection morphism
\[
\cB_3 \rightarrow \frac{\cB_3/\cB_2} {V^{\otimes 2}}
\]
and the exact sequence (\ref{exseq}) now becomes clear. Therefore
\begin{align*}
c_1(\F_{| Y}) &= c_1(\cB_{2 |Y}) + 2c_1(V), &  c_2(\F_{|Y}) &= c_2(\cB_{2 |Y}) + 2c_1(\cB_{2| Y}) c_1(V) \mbox{ and} \\
c_3(\F_{|Y}) &= c_3(\cB_{2| Y})+2c_2(\cB_{2 |Y}) c_1(V).
\end{align*}

The part of the intersection number $\sigma^*(F_0) \cdot c_3 (\F - \mbox{Sym}^2(\E))$ \emph{not} containing $c_1 \bigl( \mbox{Ker}(\chi) \bigr)$ equals
\begin{align*}
36 \pi_2^*(c_2) \theta - 148 \pi_2^*(c_1^2) \theta - 37 \eta \pi_2^*(c_1^2) - 85 \pi_2^*(c_1 c_2) - \frac{32}{3} \theta^3 -8 \eta \theta^2 +32 \eta \theta \pi_2^*(c_1) \\
+130 \pi_2^*(c_1^3) + 9 \eta \pi_2^*(c_2) + 64 \theta^2 \pi_2^*(c_1) + 11 \pi_2^*(c_3) \in H^6 \bigl( X \times W^6_{26}(X), \mathbb{Z} \bigr).
\end{align*}

We multiply this expression by the class $[Y]$ computed in Proposition~\ref{xy}.  The coefficient of $c_1 \bigl( \mbox{Ker}(\chi) \bigr)$ in $\sigma^*(F_0) \cdot c_3 \bigl( \F - \mathrm{Sym}^2(\E) \bigr)$ equals
\[
-2c_2(\cB_{2| Y}) - 2(r+2)^2 \pi_2^*(c_1^2) - 2(r+2) c_1(\cB_{2 |Y}) \pi_2^*(c_1) +r (r+3) \pi_2^*(c_1^2) + 2(r+3)\pi_2^*(c_2),
\]
where recall that $r=\mbox{rk}(\cM) -1 = 6$.  All in all, we find
\begin{align*}
44 b_0 - b_1 &= \sigma^*(F_0) \cdot c_3(\F - \mathrm{Sym}^2\E) = \eta \pi_2^*\Bigl( -260 c_1^3 c_4 \theta + 3250 c_1^3 c_5 + 296 c_1^2 c_4 \theta^2 - 3552 c_1^2 c_5 \theta \\
&- 1887 c_1^2 c_6 - 128 \theta^3 c_1 c_4 + 1472 \theta^2 c_1 c_5 + 170 \theta c_1 c_2 c_4 + 1568 c_1 c_6 \theta - 2125 c_1 c_2 c_5 \\
&+ 816 c_1 c_7 - 72 c_2 c_4 \theta^2 + 864 c_2 c_5 \theta - 22 c_3 c_4 \theta + 275 c_3 c_5 + 459 c_2 c_6 + \frac{64}{3} c_4 \theta^4 \\
&- \frac{704}{3} c_5 \theta^3 - 376 c_6 \theta^2 - 392 c_7 \theta \Bigr) \in H^{18} \bigl( X \times W^6_{26}(X), \mathbb{Z} \bigr).
\end{align*}
We evaluate each term in this expression by first deleting $\eta$ and then using (\ref{harristu}).
\end{proof}

The following result follows from the definition of the vector bundles $\E$ and $\F$ given in Proposition \ref{vectorbundles}.  It will provide the third relation between the coefficients of $[\widetilde{\mathfrak{D}}_{23}]^{\mathrm{virt}}$, and thus complete the calculation of its slope.

\begin{theorem}
\label{elltail}
Let $[X,q]$ be a general $1$-pointed curve of genus $22$ and $F_{\mathrm{ell}} \subset \pm_{23}$ be the pencil obtained by attaching at the fixed point $q \in X$ a pencil of plane cubics at one of the base points of the pencil.  Then one has the relation
\[
a - 12b_0 + b_1 = F_{\mathrm{ell}} \cdot \sigma_* c_3 \bigl( \F - \mathrm{Sym}^2(\E) \bigr) = 0.
\]
\end{theorem}
\begin{proof} Since the genus $g-1$ aspect of each curve in $F_{\mathrm{ell}}$ does not vary, it follows from Corollary~\ref{cor:vectorbundles} that the vector bundles $\mathcal{E}_{| \sigma^*(F_{\mathrm{ell}})}$ and $\mathcal{F}_{|\sigma^*(F_{\mathrm{ell}})}$ are both trivial, therefore $c_i\bigl(\mathcal{E}_{| \sigma^*(F_{\mathrm{ell}})}\bigr)=0$ and $c_i\bigl(\mathcal{F}_{| \sigma^*(F_{\mathrm{ell}})}\bigr)=0$ for $i\geq 1$, from which the conclusion follows.
\end{proof}

\begin{proof}[Proof of Theorem~\ref{thm:slopes} for ${[\widetilde{\mathfrak{D}}_{23}]}^{\mathrm{virt}}$]
By Theorems~\ref{d1} and~\ref{d0}, we have
\[
b_0 = \frac{4}{9} \binom{19}{8} 72725 \mbox{ and } b_1 = \frac{4}{9} \binom{19}{8} 401951.
\]
Combined with Theorem~\ref{elltail}, we obtain
\[
a = \frac{4}{9} \binom{19}{8} 470749,
\]
and the result follows.
\end{proof}

\section{The class of the virtual divisor on \texorpdfstring{$\widetilde{\mathcal{M}}_{2s^2+s+1}$}{M2s2+s+1}}\label{sect:rho1}

In this section we prove Theorem~\ref{rho1virtual}.  In particular, we determine the class $[\widetilde{\mathfrak{D}}_{22}]^{\mathrm{virt}}$ that will ultimately be used in the proof that $\mm_{22}$ is of general type.

\subsection{Top intersection products in the Jacobian of a curve of genus $2s^2+s$.}

We next turn our attention to the top intersection products on $W^{2s}_{2s^2+2s+1}(X)$, when $X$ is a general curve of genus  $2s^2+s$, for $s\geq 2$.
We apply (\ref{harristu}) systematically.  Our computations are analogous to those in \S\ref{Sec:23}, and in many cases we omit the details.  Observe that $\rho(2s^2+s, 2s,2s^2+2s) = 0$, so $W^{2s}_{2s^2+2s}(X)$ is reduced and 0-dimensional.  We denote its cardinality by
\begin{equation}
\label{nrlin0}
C_{2s+1} := \frac{(2s^2+s)!\ (2s)! \ (2s-1)! \cdots  2! \ 1!}{(3s)!\ (3s-1)! \cdots (s+1)! \ s!}\ = \# \Bigl( W^{2s}_{2s^2+2s}(X) \Bigr).
\end{equation}
Moreover $\rho(2s^2+s, 2s+1, 2s^2+2s+1) = -s < 0$, hence it follows that $W^{2s+1}_{2s^2+2s+1}(X) = \emptyset$ and we can consider the tautological rank $2s+1$ vector bundle $\cM$ over $W^{2s}_{2s^2+2s+1}(X)$.  We write
$\sum_{i=0}^{2s+1} c_i(\mathcal{M}^{\vee})=(1+x_1)\cdots (1+x_{2s+1})$. We collect the following formulas obtained by applying the Harris-Tu formula (\ref{harristu2}):

\begin{proposition}
\label{intersectionsit2}
Let $X$ be as above and set $c_i:=c_i ( \cM^{\vee}) \in H^{2i} \bigl( W^{2s}_{2s^2+2s+1}(X), \mathbb{Z} \bigr)$ to be the Chern classes of the dual of the tautological bundle on $W^{2s}_{2s^2+2s+1}(X)$.  The following hold:
\begin{align*}
c_{2s+1} &= x_1 x_2\cdots x_{2s+1}=C_{2s+1}, \\
c_{2s}\cdot c_1 &= x_1 x_2\cdots x_{2s+1}+x_1^2 x_2\cdots x_{2s}, \\
c_{2s-1}\cdot c_2 &= x_1x_2\cdots x_{2s+1}+x_1^2x_2\cdots x_{2s}+x_1^2 x_2^2x_3\cdots x_{2s-1}, \\
c_{2s-1}\cdot c_1^2 &= x_1x_2 \cdots x_{2s+1}+2x_1^2 x_2 \cdots x_{2s}+x_1^2 x_2^2 x_3\cdots x_{2s-1}+
x_1^3 x_2 x_3 \cdots x_{2s-1}, \\
c_{2s}\cdot \theta &= x_1x_2\cdots x_{2s}\cdot \theta=(2s+1)s \ C_{2s+1}, \\
c_{2s-1}\cdot c_1 \cdot \theta &= x_1x_2 \cdots x_{2s}\cdot \theta+x_1^2 x_2 \cdots x_{2s-1}\cdot \theta, \\
c_{2s-2}\cdot c_2\cdot \theta &= x_1 x_2 \cdots x_{2s}\cdot \theta + x_1^2 x_2 \cdots x_{2s-1}\cdot \theta+x_1^2 x_2^2 x_3 \cdots x_{2s-2}\cdot \theta, \\
c_{2s-2}\cdot c_1^2\cdot \theta &= x_1 x_2 \cdots x_{2s}\cdot \theta+2x_1^2x_2\ldots x_{2s-1}\cdot \theta +
x_1^2 x_2^2 x_3 \cdots x_{2s-2}\cdot \theta+x_1^3 x_2x_3 \cdots x_{2s-2}\cdot \theta, \\
c_{2s-1}\cdot \theta^2 &= x_1 x_2 \cdots x_{2s-1}\cdot \theta^2, \\
c_{2s-2}\cdot c_1\cdot \theta^2 &= x_1 x_2 \cdots x_{2s-1} \cdot \theta^2 +x_1^2 x_2 \cdots x_{2s-2}\cdot \theta^2.
\end{align*}
\end{proposition}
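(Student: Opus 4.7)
The plan is to derive each identity by systematic application of the Harris-Tu formulas (\ref{harristu}) and (\ref{harristu2}) to the setup $g = 2s^2+s$, $r = 2s$, $d = 2s^2+2s+1$, exploiting that since $\mathcal{M}$ has rank $2s+1$, each Chern class $c_i = c_i(\mathcal{M}^\vee)$ equals the $i$-th elementary symmetric polynomial in the Chern roots $x_1, \ldots, x_{2s+1}$. For each product $c_{i_1}\cdots c_{i_k}$ appearing on the left-hand side, the first step is to expand it as a polynomial in these Chern roots by multiplying out the elementary symmetric polynomials. The resulting monomials partition into orbits under the $S_{2s+1}$-action permuting the variables, and the proposition records one representative from each contributing orbit.

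The second step is to evaluate each such monomial via (\ref{harristu2}) when the total degree equals $\rho = 2s+1$, or via (\ref{harristu}) paired with the appropriate power of $\theta$ when the degree is smaller. A crucial feature is that many monomials vanish under this evaluation: either an entry $3s - 1 + i_k - k$ of a factorial in the denominator of (\ref{harristu2}) becomes negative, or a Vandermonde-type factor $i_k - i_j + j - k$ in the numerator equals zero. The surviving representatives are exactly those listed on the right-hand sides, and the integer coefficients visible in the statement (such as the factor $2$ appearing in the expression for $c_{2s-1}c_1^2$) record the multiplicities of these survivors in the polynomial expansion of the corresponding product of elementary symmetric polynomials.

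For the base identity $c_{2s+1} = x_1 \cdots x_{2s+1} = C_{2s+1}$, the Harris-Tu evaluation with all exponents equal to $1$ reduces, via a Vandermonde product in the numerator together with a suitable simplification of factorials in the denominator, to the Castelnuovo count in (\ref{nrlin0}). For the $\theta$-weighted identities such as $c_{2s}\theta = (2s+1)s\,C_{2s+1}$ and $c_{2s-1}\theta^2 = x_1\cdots x_{2s-1}\cdot\theta^2$, the explicit prefactors like $(2s+1)s$ emerge from the shift in the factorial arguments of (\ref{harristu}) when one of the $x$-exponents is dropped from $1$ to $0$ and replaced by an extra power of $\theta$; this is a direct ratio-of-factorials manipulation built into the formula. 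The main obstacle is purely combinatorial bookkeeping---organizing the polynomial expansions, identifying which monomials vanish under Harris-Tu, and verifying the multiplicities of the surviving terms---but once the procedure is set up for the first identity, each remaining case follows by the same recipe and reduces to a direct calculation.
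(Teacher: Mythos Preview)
Your overall strategy---expand each product of Chern classes as a polynomial in the Chern roots and evaluate monomials via the Harris--Tu formula (\ref{harristu2})---is exactly the paper's approach. However, your description of \emph{what} the right-hand sides record is wrong in a way that would lead to incorrect computations if carried out.

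You write that the monomials partition into $S_{2s+1}$-orbits and that ``the proposition records one representative from each contributing orbit'', with the visible integer coefficients recording raw multiplicities in the polynomial expansion. But the Harris--Tu evaluation is \emph{not} $S_{2s+1}$-invariant: writing $a_k = i_k - k$, formula (\ref{harristu2}) becomes $g!\,\prod_{j>k}(a_k-a_j)\big/\prod_k (3s-1+a_k)!$, which is alternating in the $a_k$. Thus distinct monomials in the same $S_{2s+1}$-orbit typically survive and evaluate to different (often opposite-sign) multiples of one another, and these must be combined. The paper's worked example of $c_{2s}\cdot c_1$ illustrates this exactly: the raw expansion contains $(2s+1)\,x_1\cdots x_{2s+1}$ together with the $2s$ surviving ``shifted'' monomials $x_2^2x_3\cdots x_{2s+1},\,x_1x_3^2x_4\cdots x_{2s+1},\ldots$, each of which evaluates (by a single transposition of the $a_k$) to $-x_1\cdots x_{2s+1}$. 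Only after this cancellation does the coefficient of $x_1\cdots x_{2s+1}$ drop from $2s+1$ to $1$. Under your reading, you would have predicted coefficient $2s+1$.

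So the missing ingredient is precisely this combination step: after discarding the monomials that vanish, you must still evaluate each survivor via (\ref{harristu2}) and add the results, using that monomials whose shifted exponent sequences $(i_k-k)$ differ by a permutation contribute the same value up to sign. The right-hand sides in the proposition are the outcome of that combination, not a list of orbit representatives with naive multiplicities.
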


\begin{proof} This amounts to a repeated application of (\ref{harristu2}) and evaluating the corresponding determinants. The right hand side of each formula retains the non-zero terms that appear in the corresponding Chern number. To give an example, we evaluate $c_{2s}\cdot c_1$. Using (\ref{harristu2}), each monomial $x_1^{i_1} x_2^{i_2} \cdots x_{2s+1}^{i_{2s+1}}$ will vanish as long as there exists a pair $k<j$ such that $i_j-i_{k}=j-k$. We compute
$$c_{2s}\cdot c_1=(2s+1)x_1\cdots x_{2s+1}+x_2^2x_3\cdots x_{2s+1}+x_1x_3^2x_4\cdots x_{2s+1}+\cdots+x_1\cdots x_{2s-1}x_{2s+1}^2+x_1^2x_2\cdots x_{2s}.$$
All other terms vanish. Then by (\ref{harristu2}), evaluating each determinant we observe that
$$x_2^2x_3\cdots x_{2s+1}=\cdots=x_1x_2\cdots x_{2s-1} x_{2s+1}^2=x_1x_2\cdots x_{2s-2}x_{2s}^2 x_{2s+1}=
-x_1\cdots x_{2s+1},$$ which leads to the claimed formula for $c_{2s}\cdot c_1$.
The case of the remaining Chern numbers is analogous.
\end{proof}

Using Proposition \ref{intersectionsit2}, any top intersection product on the smooth $(2s+1)$-dimensional variety $W^{2s}_{2s^2+2s+1}(X)$ reduces to a sum of monomials in the variables $x_i$ and $\theta$.  Next we record the values of these monomials. All terms are essentially reduced to expressions involving $x_1 \cdots x_{2s+1}=C_{2s+1}$.

\begin{proposition}
\label{intersection2sit2}
Keep the notation from above.  The following hold in $H^{4s+2} \bigl( W_{2s^2+2s+1}^{2s}(X), \mathbb{Z} \bigr)$:
\begin{align*}
x_1 x_2\cdots x_{2s+1} &= C_{2s+1}, \\
x_1^2 x_2^2x_3\cdots x_{2s-1} &= \frac{(s-1)(s+1)^2(2s+1)^2}{3(3s+1)} C_{2s+1}, \\
x_1^2 x_2\cdots x_{2s} &= \frac{4s^2(s+1)}{3s+1} C_{2s+1}, \\
x_1^3 x_2 x_3 \cdots x_{2s-1} &= \frac{s^2(s+1)^2(2s-1)(2s+3)}{(3s+1)(3s+2)} C_{2s+1}, \\
x_1x_2\cdots x_{2s}\cdot \theta &= (2s+1)sC_{2s+1}, \\
x_1^2 x_2 \cdots x_{2s-1}\cdot \theta &= \frac{(s+1)^2(2s-1)}{3s+1}x_1x_2\cdots x_{2s}\cdot \theta, \\
x_1^2 x_2^2 x_3 \cdots x_{2s-2} &= \frac{(2s-3)(2s+1)(s+1)^2(s+2)}{9(3s+1)}x_1x_2\cdots x_{2s}\cdot \theta, \\
x_1^3 x_2x_3 \cdots x_{2s-2}\cdot \theta &= \frac{(s-1)(s+1)^2(s+2)(2s-1)(2s+3)}{3(3s+1)(3s+2)}x_1x_2\cdots x_{2s}\cdot \theta, \\
x_1 x_2 \cdots x_{2s-1}\cdot \theta^2 &= s^2(s+1)(2s+1)C_{2s+1}, \\
x_1^2 x_2 \cdots x_{2s-2}\cdot \theta^2 &= \frac{4(s+1)(s-1)(s+2)}{3(3s+1)}x_1 x_2 \cdots x_{2s-1}\cdot \theta^2 \\
x_1 x_2 \cdots x_{2s-2}\cdot \theta^3 &= \frac{(2s+1)(2s-1)(s+2)(s+1)s^2}{3}C_{2s+1}.
\end{align*}
\end{proposition}

We record the formulas for the classes of $Z$ and $Y$, the proofs being analogous to those of Proposition \ref{xy}.

\begin{proposition}
\label{xyrh1}
Let $[X,q] \in \cM_{2s^2+s, 1}$ be a general pointed curve. If $c_i := c_i(\cM^{\vee})$ are the Chern classes of the tautological vector bundle over $W^{2s}_{2s^2+2s+1}(X)$, then one has:
\begin{enumerate}
\item  $[Z] = \pi_2^* (c_{2s}) - 6\pi_2^* (c_{2s-2}) \eta \theta + 2\bigl( s(4s+3) \eta + \gamma \bigr) \pi_2^* (c_{2s-1}) \in H^{4s} \bigl( X \times W^{2s}_{2s^2+2s+1}(X), \mathbb{Z}\bigr)$.

\item  $[Y] = \pi_2^* (c_{2s}) - 2\pi_2^* (c_{2s-2}) \eta \theta + \bigl( 2s(s+1) \eta + \gamma \bigr) \pi_2^* (c_{2s-1}) \in H^{4s} \bigl( X \times W^{2s}_{2s^2+2s+1}(X), \mathbb{Z}\bigr)$.
\end{enumerate}
\end{proposition}

\begin{remark}
For future reference we also record the following formulas, where we recall that $J_1(\mathcal{P})$ denotes the jet bundle of the Poincar\'e bundle over $X\times \mbox{Pic}^{2s^2+2s+1}(X)$.
\begin{align}
\label{c3d1} c_{2s+1} \bigl( \pi_2^*(\cM)^{\vee} - J_1(\P)^{\vee} \bigr) &= \pi_2^*(c_{2s+1}) - 6\pi_2^*(c_{2s-1}) \eta \theta + 2\bigl(s(4s+3) \eta + \gamma \bigr) \pi_2^*(c_{2s}), \\
\label{c3d0}  c_{2s+1} \bigl( \pi_2^*(\cM)^{\vee} - \mathcal{B}^{\vee} \bigr) &= \pi_2^*(c_{2s+1}) - 2\pi_2^*(c_{2s-1}) \eta \theta + \bigl( 2s(s+1) \eta + \gamma \bigr) \pi_2^*(c_{2s}).
\end{align}
\end{remark}

The following formulas follow in an analogous way to Proposition~\ref{a121}.

\begin{proposition}
\label{a121Part2}
Let $X$ be a general curve of genus $2s^2+s$, let $q \in X$ be a fixed point, and consider the vector bundles $\cA_2$ and $\cB_2$ on $X \times {\rm{Pic}}^{2s^2+2s+1}(X)$ having fibers
\[
\cA_2(y,L) = H^0 \bigl( X, L^{\otimes 2}(-2y) \bigr) \ \mbox{ and } \ \cB_2(y,L) = H^0\bigl( X, L^{\otimes 2}(-y-q) \bigr),
\]
respectively.  One then has the following formulas:
\begin{align*}
c_1(\cA_2) &= -4 \theta - 4\gamma - 2(3s+1)(2s+1)\eta, & c_1(\cB_2) &= -4\theta - 2\gamma - (2s+1)^2\eta, \\
c_2(\cA_2) &= 8\theta^2 + 8(6s^2+5s-2)\eta \theta + 16 \gamma \theta, & c_2(\cB_2) &= 8\theta^2 + 4(4s^2+4s-1) \eta \theta + 8\theta \gamma.
\end{align*}
\end{proposition}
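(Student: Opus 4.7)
The proof is a direct adaptation of Proposition~\ref{a121} to the current numerical setting ($d = 2s^2+2s+1$, $g = 2s^2+s$). I would realize $\cA_2$ and $\cB_2$ as pushforwards under the projection $\nu\colon X\times X \times \mathrm{Pic}^d(X) \to X\times \mathrm{Pic}^d(X)$ of the line bundles $\mu^*(\P^{\otimes 2})\otimes \OO(-2\Delta)$ and $\mu^*(\P^{\otimes 2})\otimes \OO(-\Delta - \nu^*(\Gamma_q))$ respectively, where $\Gamma_q := \{q\} \times \mathrm{Pic}^d(X)$ and $\mu$ is the complementary projection. Since the fiberwise restrictions have degree $2d-2 = 4s^2+4s \geq 2g - 1 = 4s^2+2s-1$ for every $s \geq 2$, we have $R^1\nu_* = 0$, so that $\cA_2 = \nu_*\bigl(\mu^*(\P^{\otimes 2})\otimes \OO(-2\Delta)\bigr)$ and likewise for $\cB_2$.

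Then I apply Grothendieck--Riemann--Roch along $\nu$. The standard identities $c_1(\P) = d\eta + \gamma$, $\eta^2 = 0$, $\eta\gamma = 0$, $\gamma^2 = -2\eta\pi_2^*(\theta)$, combined with the pushforward relations $\nu_*(1) = 0$, $\nu_*([\Delta]) = 1$, and $\nu_*\bigl(c_1(\P)^2\bigr) = -2\theta$, together with the relative Todd contribution (pulled back from the contracted $X$-factor), yield
\[
c_1(\cA_2) = -4\theta - 4\gamma - (4d+2g-2)\eta, \qquad \mathrm{ch}_2(\cA_2) = 8\eta\theta,
\]
and $\mathrm{ch}_n(\cA_2) = 0$ for $n \geq 3$ (the latter by dimensional considerations on the fibers of $\nu$). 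Analogously, $c_1(\cB_2) = -4\theta - 2\gamma - (2d-1)\eta$ and $\mathrm{ch}_2(\cB_2) = 4\eta\theta$. Substituting $4d+2g-2 = 2(2s+1)(3s+1)$ and $2d-1 = (2s+1)^2$ gives the first Chern classes in the form claimed.

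Finally, I compute $c_2(\cA_2) = \tfrac{1}{2}\bigl(c_1(\cA_2)^2 - 2\,\mathrm{ch}_2(\cA_2)\bigr)$. Expanding $c_1(\cA_2)^2$ using the relations above and in particular $\gamma^2 = -2\eta\theta$ produces the stated formula for $c_2(\cA_2)$, and an identical computation with the appropriate constants gives $c_2(\cB_2)$. The argument is entirely parallel to the proof of Proposition~\ref{a121}; the only structural input beyond GRR is the degree estimate ensuring vanishing of $R^1\nu_*$, which holds uniformly for $s \geq 2$. There is no substantive obstacle, and the main work consists in accurately tracking the constants as polynomials in $s$.
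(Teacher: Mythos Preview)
Your proposal is correct and follows essentially the same approach as the paper, which simply states that the formulas follow in an analogous way to Proposition~\ref{a121}. You have filled in the details of that analogy accurately, including the GRR computation and the substitution of the correct numerical values; the only cosmetic difference is that your coefficient $4d+2g-2$ uses $g=g(X)=2s^2+s$, whereas the paper's formula $4d+2g-4$ in Proposition~\ref{a121} uses the ambient genus $g=g(X)+1$, and these are of course the same number.
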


\subsection{The slope computation}

We are now in a position to  complete the proof of Theorem \ref{rho1virtual}.  Recall that $g = 2s^2+s+1$ and we express the virtual class
\[
[\widetilde{\mathfrak{D}}_g]^{\mathrm{virt}} = \sigma_* \bigl( c_2(\F - \mbox{Sym}^2(\E)) \bigr) = a\lambda - b_0 \delta_0 - b_1 \delta_1 \in CH^1(\widetilde{\mathcal{M}}_g).
\]
The determination of the coefficients $a$, $b_0$, and $b_1$ is similar to the computations for $g = 23$, and we shall highlight the differences. Recall that $C_{2s+1}$ denotes the number of linear series of type $\mathfrak{g}_{2s^2+2s}^{2s}$ on a general curve of genus $2s^2+s$.

\begin{theorem}
\label{d1sit2}
Let $X$ be a general curve of genus $2s^2+s$ and denote by $F_1\subset \widetilde{\Delta}_1\subset \pm_{2s^2+s+1}$ the associated test curve.  Then
the coefficient of $\delta_1$ in the expansion of $[\widetilde{\mathfrak{D}}_g]^{\mathrm{virt}}$ is equal to
\begin{align*}
b_1 &= \frac{1}{2g(X)-2} \sigma^*(F_1) \cdot c_2 \bigl( \F - \mathrm{Sym}^2(\E) \bigr) \\
&= C_{2s+1} \frac{2s(s-1)(2s+1)}{3(2s-1)(3s+1)(3s+2)} \Bigl( 24s^6 - 40s^5 + 18s^4 + 26s^3 + 30s^2 + 47s + 18 \Bigr).
\end{align*}
\end{theorem}

\begin{proof}
Recall that $W^{2s}_{2s^2+2s+1}(X)$ is a smooth variety of dimension $2s+1$.  We work on the product $X \times W^{2s}_{2s^2+2s+1}(X)$ and intersect the degeneracy locus of the map $\phi\colon \mathrm{Sym}^2(\E) \rightarrow \F$ with the surface $\sigma^*(F_1)$, containing $Z$ as an irreducible component.  It follows from Proposition~\ref{limitlin1} that $Z$ is the only component contributing to this intersection product, that is,
\begin{equation}
\label{contsit2}
\sigma^*(F_1) \cdot c_2 \bigl( \F - \mbox{Sym}^2(\E) \bigr) = c_2(\F_{|Z}) - c_2(\mbox{Sym}^2 \E_{| Z}) - c_1(\F_{|Z}) c_1(\mbox{Sym}^2 \E_{| Z}) + c_1^2(\mbox{Sym}^2\E_{|Z}).
\end{equation}

The kernel $\mbox{Ker}(\zeta)$ of the vector bundle morphism $\zeta\colon J_1(\P)^{\vee} \rightarrow \pi_2^*(\cM)^{\vee}$, defined in the proof of Proposition~\ref{xy}, is a line bundle on $Z$.  If $U$ is the line bundle on $Z$ with fiber
\[
U(y,L) = \frac{H^0(X, L)}{H^0(X, L(-2y))} \hookrightarrow H^0(X, L \otimes \OO_{2y})
\]
over a point $(y,L) \in Z$, then one has the following exact sequence over $Z$
\[
0 \longrightarrow U \longrightarrow J_1(\P) \longrightarrow \bigl( \mbox{Ker}(\zeta) \bigr)^{\vee} \longrightarrow 0.
\]
From this sequence, it follows that $c_1(U) = 2\gamma + 2(4s^2+3s)\eta + c_1(\mathrm{Ker}(\zeta))$, where the products of $c_1 \bigl( \mbox{Ker}(\zeta) \bigr)$ with arbitrary classes $\xi$ coming from $X \times W^{2s}_{2s^2+2s+1}(X)$ can be computed using once more the Harris-Tu formula \cite{HarrisTu84}:
\begin{equation}
\label{harristu1sit2}
c_1 \bigl( \mathrm{Ker}(\zeta) \bigr) \cdot \xi_{| Z} = -c_{2s+1} \bigl( \pi_2^*(\cM)^{\vee} - J_1(\P)^{\vee} \bigr) \cdot \xi_{|Z}.
\end{equation}

The Chern classes on the righthand side of (\ref{harristu1sit2}) have been evaluated in the formula (\ref{c3d1}).  Using a local analysis identical to the one in Theorem~\ref{d1}, we conclude that the restriction $\F_{|Z}$ appears in the following exact sequence of vector bundles
\[
0 \longrightarrow \cA_{2 |Z} \longrightarrow \F_{| Z} \longrightarrow U^{\otimes 2} \longrightarrow 0.
\]
We obtain the following intersection product on the surface $Z$:
\begin{align*}
c_2 \bigl( \F_{|Z} - \mathrm{Sym}^2(\E)_{|Z}\bigr) &= c_2 \bigl( \cA_{2|Z}) + 2c_1(\cA_{2|Z}) \cdot c_1 \bigl( J_1(\P) \bigr) - (2s+3)c_2(\pi_2^*\cM^{\vee}) \\
&+ \bigl( (2s+2)^2 - s(2s+3) \bigr) c_1^2 \bigl( \pi_2^*\cM^{\vee})+(2s+2)c_1\bigl( \cA_{2|Z}\bigr) \cdot c_1\bigl( \pi_2^*\cM^{\vee} \bigr) 
\\
&+c_1 \bigl( \mbox{Ker}(\zeta)) \cdot \Bigl( 2c_1(\cA_{2|Z}) 
+ 4(s+1)c_1(\pi_2^*\cM^{\vee})\Bigr)
\\
&+4(s+1)c_1\bigl(J_1(\mathcal{P})\bigr)\cdot c_1\bigl(\pi_2^*\mathcal{M}^{\vee}\bigr).
\end{align*}

This expression gets multiplied with the class $[Z]$ computed in Proposition~\ref{xyrh1}.  The Chern classes of $\cA_{2|Z}$ have been computed in Proposition~\ref{a121Part2}.  We obtain a homogeneous polynomial of degree $2s+2$ on $H^{\mathrm{top}} \bigl( X \times W^{2s}_{2s^2+2s+1}(X), \mathbb{Z} \bigr)$.  We first consider the terms that \emph{do not} involve $c_1 \bigl( \mbox{Ker}(\zeta) \bigr)$. Since $W^{2s}_{2s^2+2s+1}(X)$ is $(2s+1)$-dimensional, each non-zero term in this polynomial has to contain the class $\eta$. We collect these terms and obtain the following contribution:
\begin{align*}
2 \eta \pi_2^* \Bigr( -24\ c_{2s-2} \theta^3 - 8s(s+1)(4s+3)\ c_{2s-1} c_1 \theta - (6s^2+15s+12)\ c_{2s-2} c_1^2 \theta\\ + 8s(4s+3) c_{2s-1} \theta^2
+ 3(2s+3)\ c_{2s-2} c_2 \theta + s(4s+3)(2s^2+5s+4)\ c_{2s-1} c_1^2\\
- s(2s+3)(4s+3)\ c_{2s-1}c_2
+ 24(s+1)\ c_{2s-2} c_1 \theta^2 + 2(2s-1)(s+1)^2\ c_{2s} c_1 \\- (8s^2+4s-8)\ c_{2s} \theta \Bigr) \in H^{\mathrm{top}} \bigl( X \times W^{2s}_{2s^2+2s+1}(X), \mathbb{Z} \bigr).
\end{align*}

Finally, the contribution of the terms containing $c_1 \bigl( \mbox{Ker}(\zeta) \bigr)$ is evaluated using (\ref{c3d1}).  The coefficient of $c_1 \bigl( \mathrm{Ker}(\zeta) \bigr)$ is given by $2c_1(\cA_2)+ 4(s+1)c_1\bigl(\pi_2^*\mathcal{M}^{\vee}\bigr)$.  Substituting this for the class $\xi$ in formula (\ref{harristu1sit2}) and using the formula (\ref{c3d1}), we obtain the following contribution to the product $\sigma^*(F_1) \cdot c_2 \bigl( \F - \mbox{Sym}^2(\E) \bigr)$:
\begin{align*}
4\eta \pi_2^* \Bigl( (3s+1)(2s+1) c_{2s+1} - 12\ c_{2s-1} \theta^2 + 6(s+1)\ c_{2s-1} c_1 \theta \\
+ (16s^2 + 12s - 8) c_{2s} \theta - 2s(s+1)(4s+3) c_{2s} c_1 \Bigr) \in H^{\mathrm{top}} \bigl( X \times W^{2s}_{2s^2+2s+1}(X), \mathbb{Z} \bigr).
\end{align*}

The resulting intersection product is then evaluated with \emph{Maple}.  Dropping the class $\eta$ from the sum of the two displayed contributions, one is led to a sum of top Chern numbers on $W^{2s}_{2s^2+2s+1}(X)$, which can be evaluated individually using Propositions~\ref{intersectionsit2} and~\ref{intersection2sit2}.
\end{proof}

\begin{theorem}
\label{d0sit2}
Let $[X,q]$ be a general pointed curve of genus $2s^2+s$ and let $F_0 \subset \widetilde{\Delta}_0\subset \widetilde{\mathcal{M}}_{g}$ be the associated test curve.  Then the coefficient of $\delta_0$ in the expression of $[\widetilde{\mathfrak{D}}_g]^{\mathrm{virt}}$ is equal to
\begin{align*}
b_0 &= \frac{\sigma^*(F_0) \cdot c_2 \Bigl( \F - \mathrm{Sym}^2(\E) \Bigr) + b_1}{2s(2s+1)} \\
&= C_{2s+1} \frac{2(s-1)(24s^8-28s^7+22s^6-5s^5+43s^4+112s^3+100s^2+50s+12)}{9(2s-1)(3s+1)(3s+2)}.
\end{align*}
\end{theorem}

\begin{proof}
Using Proposition~\ref{limitlin0}, we observe that
\begin{equation}
\label{keplet1}
c_2 \bigl( \F - \mathrm{Sym}^2(\E) \bigr)_{| \sigma^*(F_0)} = c_2 \bigl( \F - \mathrm{Sym}^2(\E) \bigr)_{|Y}.
\end{equation}
To determine the Chern classes of $\F_{|Y}$, we introduce the line bundle $V$ on $Y$ with  fiber
\[
V(y,L) = \frac{H^0(X, L)}{H^0(X, L(-y-q))} \hookrightarrow H^0(X, L \otimes \OO_{y+q})
\]
over a point $(y,L) \in Y$.  There is an exact sequence of vector bundles over $Y$
\[
0 \longrightarrow V \longrightarrow \cB \longrightarrow  \bigl(\mbox{Ker}(\chi) ^{\vee} \bigr)\longrightarrow 0,
\]
where the morphism $\chi\colon \cB^{\vee} \rightarrow \pi_2^*(\cM)^{\vee}$ was defined in the second part of the proof of Proposition~\ref{prop:osztalyxy}. Precisely, $\cB$ is the vector bundle over $X\times W^{2s}_{2s^2+2s+1}(X)$ having fibres $\cB_{|(y, L)}=L_{y+q}$ and using (\ref{eq:classofB}) one has that $c_1(\cB)=(2s^2+2s)\eta+\gamma$.  Recalling the vector bundle $\cB_2$ defined in Proposition~\ref{a121Part2}, a local analysis similar to that in the proof of Theorem~\ref{d0} shows that one has an exact sequence on $Y$
\[
0 \longrightarrow \cB_{2|Y} \longrightarrow \F_{|Y} \longrightarrow V^{\otimes 2} \longrightarrow 0.
\]
This determines $c_i(\mathcal{F}_{|Y})$ in terms of $c_1\bigl(\mbox{Ker}(\chi)\bigr)$. Furthermore, by the Harris-Tu formula we have that $c_1 \bigl( \mbox{Ker}(\chi) \bigr) = -c_{2s+1} \bigl( \pi_2^*(\cM^{\vee}) - \cB^{\vee} \bigr)$, where the righthand side is estimated using (\ref{c3d0}).

We first collect terms that do not contain $c_1 \bigl( \mbox{Ker}(\chi) \bigr)$ in $c_2(\F-\mbox{Sym}^2(\E))$ and multiply the result by $[Y]$, which was computed in Proposition \ref{xyrh1}. This gives  the following contribution:
\begin{align*}
\eta \pi_2^*\Bigr( -16\ c_{2s-2} \theta^3 - 16s(s+1)^2\ c_{2s-1} c_1 \theta - (4s^2 + 10s + 8)\ c_{2s-2} c_1^2 \theta + 16s(s+1) c_{2s-1} \theta^2 \\
+ (4s+6)\ c_{2s-2} c_2 \theta + 2s(s+1)(2s^2+5s+4)\ c_{2s-1} c_1^2 - 2s(s+1)(2s+3)\ c_{2s-1} c_2 \\
+ 16(s+1)\ c_{2s-2} c_1 \theta^2 - 2(s+1)\ c_{2s} c_1 + 4\ c_{2s} \theta \Bigr) \in H^{\mathrm{top}} \bigl( X \times W^{2s}_{2s^2+2s+1}(X),\mathbb{Z} \bigr).
\end{align*}

We collect terms containing $c_1 \bigl( \mbox{Ker}(\chi) \bigr)$, and obtain an expression in $H^{\mathrm{top}} \bigl( X \times W^{2s}_{2s^2+2s+1}(X), \mathbb{Z} \bigr)$ that contributes towards $\sigma^*(F_0) \cdot c_2(\F - \mbox{Sym}^2(\E))$:
\[
\eta \pi_2^* \Bigl( -16\ c_{2s-1} \theta^2 + 8(s+1)\ c_{2s-1} c_1 \theta + 2(2s+1)^2 c_{2s+1} + (16s^2+16s-8)c_{2s} \theta - 8s(s+1)^2 c_{2s} c_1 \Bigr).
\]

We now substitute in (\ref{keplet1}), and as in the proof of Theorem~\ref{d1sit2}, after manipulations we obtain a polynomial of degree $2s+1$ on $W^{2s}_{2s^2+2s+1}(X)$, that we compute by applying (\ref{intersectionsit2}) and (\ref{intersection2sit2}).
\end{proof}

We can now complete the calculation of the slope of $[\widetilde{\mathfrak{D}}_g]^{\mathrm{virt}}$.

\begin{proof}[Proof of Theorem~\ref{rho1virtual}.]
We denote once more by $F_{\mathrm{ell}} \subset \pm_{g}$ the pencil obtained by attaching at the fixed point of a general curve $X$ of genus $2s^2+s$ a pencil of plane cubics at one of the base points of the pencil.  Then one has the relation
\[
a - 12b_0 + b_1 = F_{\mathrm{ell}} \cdot \sigma_* c_2 \bigl( \F - \mathrm{Sym}^2(\E) \bigr) = 0.
\]
We therefore find the following expression for the $\lambda$-coefficient
\[
a = C_{2s+1}\frac{2(s-1)(48s^8-56s^7+92s^6-90s^5+86s^4+324s^3+317s^2+182s+48)}{3(3 s+2)(2s-1)(3s+1)}.
\]
\end{proof}

\noindent In particular, when $s=3$, we obtain the formula for the class of $[\widetilde{\mathfrak{D}}_{22}]^{\mathrm{virt}}$ in Theorem~\ref{thm:slopes}.

\begin{remark}
Substituting $s=2$, we obtain $g=11$, $d=13$ and $r=4$ and the slope of the corresponding divisor $\widetilde{\mathfrak{D}}_{11}$ is equal to $s\bigl([\widetilde{\mathfrak{D}}_{11}]\bigr)=7$. Note that $\widetilde{\mathfrak{D}}_{11}$ is precisely the Koszul divisor considered both in \cite[Theorem 1.3]{FarkasOrtega12} and in \cite[Theorem 2]{BakkerFarkas18}. In particular, its slope has been determined in \cite{BakkerFarkas18} using geometric considerations and Theorem \ref{rho1virtual} matches that calculation.    
\end{remark}

\section{Tropicalizations of linear series}
\label{Sec:Slopes}

We continue to work over an algebraically closed field $K$ of characteristic zero.  For the remainder of the paper, as in \S\ref{sec:tropicalprelim}, we choose the field to be  spherically complete with respect to a surjective valuation $\nu \colon K^\times \to \mathbb{R}$.  Let $R \subseteq K$ be the valuation ring, and $\kappa$ its residue field.  We begin by discussing properties of tropicalizations of not necessarily complete linear series, when the skeleton is an arbitrary tropical curve.

\bigskip

Let $X$ be a curve over $K$ with a skeleton $\Gamma \subseteq X^{\an}$.  Let $D_X$ be a divisor on $X$, with $V \subseteq H^0(X, \mathcal{O} (D_X))$ a linear series of rank $r$.  We consider $$ \trop (V) := \{ \trop(f) \in \PL(\Gamma) \mid f \in V \smallsetminus \{0 \} \}.$$  Let $D = \trop(D_X)$.

\begin{lemma}
For any effective divisor $E$ on $\Gamma$ of degree $r$, there is some $\varphi \in \trop(V)$ such that $\ddiv (\varphi) + D - E$ is effective.
\end{lemma}

\begin{proof}
We follow the standard argument showing that the rank of the tropicalization of a divisor is greater than or equal to its rank on the algebraic curve \cite{Baker08}. Let $E_X$ be an effective divisor of degree $r$ on $X$ that specializes to $E$.  Since $V$ has rank $r$, there is a function $f \in V$ such that $\ddiv (f) + D_X - E_X$ is effective.  Setting $\varphi = \trop (f)$ yields the result.
\end{proof}

\begin{lemma}
Any subset of $\trop(V)$ of size $s > r + 1$ is tropically dependent.
\end{lemma}

\begin{proof}
Let $f_1, \ldots, f_{s} \in V$, with $s > r + 1$. Since $\dim_K V = r+1$, the set $\{ f_1, \ldots, f_{s} \}$ is linearly dependent, and hence $\{ \trop(f_1), \ldots, \trop(f_{s}) \}$ is tropically dependent.
\end{proof}

\begin{lemma}
Any subset of $\trop(V)$ of size $s \leq r$ is contained in the tropicalization of a linear subseries of rank $s-1$.  Moreover, if $S_1$ and $S_2$ are subsets of size $s_1$ and $s_2$, respectively, with $s_i \leq r$ and $s_1 + s_2 \geq r + 2$, then there are tropicalizations of linear subseries $\Sigma_1$ and $\Sigma_2$ of ranks $s_1 - 1$ and $s_2-1$ containing $S_1$ and $S_2$, respectively, such that $\Sigma_1 \cap \Sigma_2$ contains the tropicalization of a linear series of rank $s_1 + s_2 - r - 2$.
\end{lemma}

\begin{proof}
 The $K$-linear span of $\{f_1, \ldots, f_s \} \subset V$ has dimension at most $s$ and hence, if $s \leq r$, it is contained in a subspace $W \subset V$ of dimension exactly $r$.  Then $W$ is a linear series of rank $s-1$ whose tropicalization contains $\{ \trop(f_1), \ldots, \trop(f_s) \}$.

Similarly, if $s_1$ and $s_2$ satisfy the specified inequalities, then any subsets  $\{f_1, \ldots, f_{s_1} \}$ and $\{g_1, \ldots, g_{s_2} \}$ are contained in linear subseries of rank $s_1 - 1$ and $s_2 - 1$ respectively, and the intersection of these is a linear subseries of rank at least $s_1 + s_2 - r - 2$.
\end{proof}

\begin{proposition} \label{Prop:FG}
As a tropical module $\trop(V) \subset R(D)$ is finitely generated.
\end{proposition}

\begin{proof}
It will suffice to show that $\trop(V)$ is the homomorphic image of a tropical linear space.  Indeed, any tropical linear space is the set of vectors of a valuated matroid, and the set of vectors of a valuated matroid is generated as a tropical module by one vector with each possible minimal support, i.e., by one vector supported on each circuit of the underlying matroid \cite[Section~3]{MurotaTamura01}.

We may assume that $D_X$ is effective.  Choose a semistable vertex set $S \subset \Gamma$, in the sense of \cite[\S3]{BPR13}, that contains the support of $D$. Then $\Trop^{-1}(\Gamma \smallsetminus S)$ is a union of finitely many open annuli $\{U_1, \ldots, U_s\}$.  Let $f \in V$.  Then $f$ is regular on $X \smallsetminus D_X$ and hence regular on each $U_i$.  We can identify $U_i$ with a standard open annulus $\Trop^{-1}(a_i,b_i)$, as in \cite[\S2]{BPR13}.

Then $f_{|U_i}$ has a power series expansion
\[
f_{|U_i} = \sum_{n=-\infty}^\infty \alpha_n t^n,
\]
where $\big(\val (\alpha_n) + n c\big) \to \infty$ for $c \in (a_i,b_i)$, i.e., for $c \in (a_i,b_i)$ and $N \in \RR$, there are only finitely many $n$ such that $\val(\alpha_n) + n c \leq N$.  Identifying $\Trop(U_i)$ with $(a_i, b_i)$, we have
\begin{equation} \label{eq:powerseries}
\Trop(f_{|U_i})(c) = \min_{n=-\infty}^\infty n\cdot c + \val(\alpha_n).
\end{equation}
By \cite[Lemma~7]{HMY12}, the slope of $\Trop(f)$ along any segment is at most $d := \deg(D)$.  Hence, $\Trop(f|_{U_i})$ is determined by $(\val(\alpha_{-d}), \ldots, \val(\alpha_d)) \in \overline{\mathbb{R}}^{2d+1}$, where $\overline {\mathbb{R}} = \mathbb{R} \cup \infty$ and $\infty = \val(0)$.

Thus $\Trop(f)$ is determined by the valuations of the $2d+1$ coefficients $\alpha_{-d}, \ldots, \alpha_d$ in the respective power series expansions on $U_1, \ldots, U_s$.  Considering these coefficients algebraically gives a $K$-linear embedding $V \hookrightarrow K^{(2d+1)s}$.  The image of $V$ in $\overline{\mathbb{R}}^{(2d+1) \cdot s}$ under coordinatewise valuation is a tropical linear space that surjects onto $\trop(V)$, and the proposition follows.
\end{proof}

Motivated by these properties, we define an abstract tropical linear series as follows.

\begin{definition} \label{Def:TLS}
A tropical linear series of rank $r$ on $\Gamma$ is a divisor $D$ together with a finitely generated tropical submodule $\Sigma \subset R(D)$ such that,
\begin{enumerate}
\item \label{DefItem:Rank} for every effective divisor $E$ of degree $r$ there is some $\varphi \in \Sigma$ such that $D + \ddiv(\varphi) \geq E$;
\item \label{DefItem:Dependence} any subset  of $\Sigma$ of size $s > r + 1$ is tropically dependent;
\item \label{DefItem:Recursive} any subset  of $\Sigma$ of size $s \leq r$ is contained in a tropical linear subseries of rank $r-1$.
\item \label{DefItem:Intersection} if $S_1$ and $S_2$ are subsets of $\Sigma$ of size $s_1$ and $s_2$, respectively, with $s_i \leq r$ and $s_1 + s_2  \geq r + 2$ then there are tropical linear subseries $\Sigma_1$ and $\Sigma_2$ containing $S_1$ and $S_2$ of rank $s_1-1$ and $s_2-1$, respectively, such that $\Sigma_1 \cap \Sigma_2$ contains a tropical linear series of rank $s_1 + s_2 - r -2$.

\end{enumerate}
\end{definition}

\noindent Definition~\ref{Def:TLS} isolates the essential combinatorial properties of tropicalizations of linear series that are used in the proof of our main results and we believe that it points the way forward toward a systematic combinatorial study of abstract tropical linear series.
Note that conditions \eqref{DefItem:Recursive}-\eqref{DefItem:Intersection} make the definition recursive in $r$.  Conditions \eqref{DefItem:Rank}-\eqref{DefItem:Recursive} suffice for nearly all of the arguments in the paper.  In particular, the analogs of Theorems~\ref{thm:generalv1} and \ref{thm:independence} hold for $g = 22$ for a weaker version of tropical linear series that omits condition \eqref{DefItem:Intersection}. The last condition is used only in the proof of Proposition~\ref{Prop:Case3Fns}, which is part of the case analysis for $g = 23$.

\medskip

A \emph{tangent vector} in $\Gamma$ is a germ of a directed edge.  Given $\varphi \in \PL(\Gamma)$,  we write $s_\zeta(\varphi)$ for the slope of $\varphi$ along a tangent vector $\zeta$.

\begin{lemma}
\label{Lem:Slopes}
Let $\Sigma \subseteq R(D)$ be a tropical linear series of rank $r$.  For each tangent vector $\zeta$, the set of slopes $s_\zeta(\Sigma)$ has size exactly $r+1$.
\end{lemma}

\begin{proof}
Suppose $\zeta$ is a tangent vector based at $v \in \Gamma$, and let $I \subseteq \Gamma$ be a half-open interval with one endpoint at $v$ that contains $\zeta$.  By choosing $I$ sufficiently small, we may assume that $I \smallsetminus \{ v \}$ does not intersect the support of $D$,\ and each of the functions in a finite generating set for $\Sigma$ is linear on $I$.  Then $s_{\eta} (\Sigma) = s_{\zeta} (\Sigma)$ for all tangent vectors $\eta$ in $I$ that are oriented away from $v$.  By the rank property \eqref{DefItem:Rank}, if $E$ is the sum of $r$ distinct points of $I \smallsetminus \{ v \}$, then there exists $\varphi \in \Sigma$ such that $\ddiv (\varphi) + D \geq E$.  At each of the $r$ distinct points, the incoming slope of $\varphi$ must be greater than the outgoing slope, and thus the function $\varphi$ has at least $r+1$ distinct slopes on $I$.  It follows that $|s_\zeta(\Sigma)| \geq r+1$.

On the other hand, any set of functions in $\Sigma$ with distinct slopes along $\zeta$ is tropically independent.  Hence, by the dependence property \eqref{DefItem:Dependence}, we also have $|s_\zeta(\Sigma)| \leq r+1$.
\end{proof}

We write
\[
s_\zeta(\Sigma) := (s_\zeta [0], \ldots, s_\zeta [r])
\]
for the vector of slopes in $\{ s_\zeta(\varphi) \mid \varphi \in \Sigma\}$, ordered so that  $s_\zeta [0] <  \cdots < s_\zeta[r]$.

\medskip

The recursive structure of tropical linear series guarantees the existence of functions in $\Sigma$ satisfying slope conditions at multiple  tangent vectors, as follows.

\begin{lemma} \label{Lemma:InductiveExistence}
Let $\Sigma \subset R(D)$ be a tropical linear series of rank $r$ on $\Gamma$ and let $\zeta$ be a tangent vector.  Then, for any $0 \leq i \leq r$, the tropical submodule $\{ \varphi \in \Sigma \mid s_\zeta(\varphi) \leq s_\zeta[i] \}$ contains a tropical linear series of rank $i$.  Similarly, $\{ \varphi \in \Sigma \mid s_\zeta(\varphi) \geq s_\zeta[i] \}$ contains a tropical linear series of rank $r -i$.
\end{lemma}

\begin{proof}
Choose $\varphi_0, \ldots, \varphi_r$ in $\Sigma$  so that $s_{\zeta}(\varphi_j) = s_{\zeta}[j]$.  By (3), the set $\{\varphi_0, \ldots, \varphi_i \}$ is contained in a tropical linear subseries of rank $i$, and similarly $\{\varphi_i, \ldots, \varphi_{r} \}$ is contained in a tropical linear subseries of rank $r - i$.
\end{proof}

\begin{lemma}  \label{Lemma:Existence}
Let $\Sigma \subset R(D)$ be a tropical linear series of rank $r$ on $\Gamma$, and let $\zeta$ and $\zeta'$ be tangent vectors. Then, for any $0 \leq i \leq r$, there is a function $\varphi \in \Sigma$ such that
\[
 s_\zeta (\varphi) \leq s_\zeta [i] \mbox{ \ and \ } s_{\zeta'} (\varphi) \geq s_{\zeta'} [i].
\]
\end{lemma}

\begin{proof}
There is a tropical linear series $\Sigma'$ of rank $i$ contained in the set of functions $\varphi \in \Sigma$ with $s_{\zeta}(\varphi) \leq s_{\zeta}[i]$. The functions in $\Sigma'$ have precisely $i+1$ different slopes along $\zeta'$, all of which are contained in the set $\{ s_{\zeta'}[0], \ldots, s_{\zeta'}[r]\}$.  Hence, the largest of these must be $s_{\zeta'}[j]$ for some $j \geq i$.
\end{proof}

\begin{proposition}
\label{Prop:Restriction}
Let $\Sigma \subseteq R(D)$ a tropical linear series of rank $r$.  If $\Gamma' \subseteq \Gamma$ is a metric subgraph, let $D'$ be the divisor on $\Gamma'$ given by
\[
D'(w) := D(w) - \min_{\varphi \in \Sigma} \bigg\{ \sum_{\zeta} s_{\zeta}(\varphi) \bigg\} \mbox{ for all } w \in \Gamma',
\]
where the sum is over all tangent vectors $\zeta$ from $w$ into the complement of $\Gamma'$.  Then the restriction
\[
\Sigma _{\vert \Gamma'} := \{ \varphi _{\vert \Gamma'} \mid \varphi \in \Sigma \} \subseteq R(D')
\]
is a tropical linear series of rank $r$ on $\Gamma'$.
\end{proposition}

\begin{proof}
We verify conditions~\eqref{DefItem:Rank}-\eqref{DefItem:Recursive} from Definition~\ref{Def:TLS}.  For \eqref{DefItem:Rank}, let $E$ be an effective divisor of degree $r$ on $\Gamma'$.  Since $\Sigma$ is a tropical linear series of rank $r$ on $\Gamma$, there exists a function $\varphi \in \Sigma$ such that $\ddiv (\varphi) + D \geq  E$.  For any $w \in \Gamma'$, we have
\[
\ord_w (\varphi_{\vert \Gamma'}) = \ord_w (\varphi) + \sum_{\zeta} s_{\zeta} (\varphi),
\]
where the sum is over all tangent vectors $\zeta$ from $w$ into the complement of $\Gamma'$.  It follows that
\[
\ord_w (\varphi \vert_{\Gamma'})  \geq \ord_w (\varphi) + \min_{\varphi \in \Sigma} \bigg \{ \sum_{\zeta} s_{\zeta}(\varphi) \bigg \} \geq E(w) - D'(w),
\]
hence $\ddiv (\varphi_{\vert \Gamma'}) + D'_{\vert \Gamma'} \geq E$.

For \eqref{DefItem:Dependence}, let $\{ \varphi_0 , \ldots , \varphi_{r+1} \}$ be a set of $r+2$ functions on $\Gamma$.  Since $\Sigma$ is a tropical linear series of rank $r$ on $\Gamma$, there exist coefficients $b_0 , \ldots , b_{r+1}$ such that $\min \{ \varphi_i + b_i \}$ achieves the minimum at least twice at every point of $\Gamma$.  But then $\min \{ (\varphi_i) _{\vert \Gamma'} + b_i \}$ achieves the minimum at least twice at every point of $\Gamma'$.

For \eqref{DefItem:Recursive}, we argue by induction on $r$.  The base case $r=0$ is trivial.  Let $\varphi_1 , \ldots, \varphi_r \in \Sigma_{\vert \Gamma'}$.  By definition, there exist functions $\varphi'_1 , \ldots , \varphi'_r$ such that $(\varphi'_i)_{\vert \Gamma'} = \varphi_i$, and a tropical linear subseries $\Sigma' \subset \Sigma$ of rank $r-1$ containing $\varphi'_1 , \ldots , \varphi'_r$.  By induction, the restriction $\Sigma'_{\vert \Gamma'}$ is a tropical linear series of rank $r-1$ on $\Gamma'$, and the result follows.

For \eqref{DefItem:Intersection}, if $S'_1$ and $S'_2$ are subsets of $\Sigma _{\vert \Gamma'}$ then we can lift them to subsets $S_1$ and $S_2$ of $\Sigma$.  By definition, there are tropical linear subseries $\Sigma_1$ and $\Sigma_2$ containing $S_1$ and $S_2$ of rank $s_1-1$ and $s_2-1$, respectively, such that $\Sigma_1 \cap \Sigma_2$ contains a tropical linear series of rank $s_1 + s_2 - r -2$. Then ${\Sigma_1} _{\vert \Gamma'}$ and ${\Sigma_2} _{\vert \Gamma'}$ are tropical linear subseries of $\Sigma _{\vert \Gamma'}$ with the required properties.
\end{proof}

We now discuss the simplest nontrivial case: the tropical linear series of degree 2 and rank 1 on an interval.  We treat this case in detail because many of the features of tropical linear series that are essential to the proof of our main results are already visible in this example.

\begin{example}
\label{Ex:Interval}
Let $\Gamma$ be an interval with left endpoint $w$ and right endpoint $v$.  Let $D = 2w$, and let $\Sigma \subset R(D)$ be a rank 1 tropical linear series.  For each rightward tangent vector $\zeta$, we consider the vector of slopes $s_\zeta = (s_{\zeta}[0], s_\zeta[1])$.  Similarly, we write $s_w = (s_w[0], s_w[1])$ and $s_v = (s_v[0], s_v[1])$ for the rightward slopes of functions in $\Sigma$ at the endpoints $w$ and $v$, respectively.

Note that $R(D)$ consists of all PL functions on $\Gamma$ whose rightward slopes are nonincreasing and bounded between 0 and 2.  At each rightward tangent vector $\zeta$, $s_\zeta$ is either $(1,2)$, $(0,2)$, or $(0,1)$.  We divide the interval $\Gamma$ into regions, according to these three possibilities.  Because the slopes of functions in $R(D)$ do not increase from left to right, the region where $s_\zeta$ is equal to $(1,2)$ must be to the left of the region where it is equal to $(0,2)$, which in turn must be to the left of the region where it is equal to $(0,1)$. Note that any of these regions may be empty.

\medskip

\textbf{Case 1:}   First, consider the case where there is a non-empty region where $s_\zeta$ is equal to $(0,2)$.  Choose functions $\psi_0$ and $\psi_1$ in $\Sigma$ such that
\[
s_w(\psi_0) = s_w[0] \mbox{ \ \ and \ \ } s_v(\psi_1) = s_v[1].
\]
Because the slopes of functions in $R(D)$ cannot increase from left to right, $s_\zeta(\psi_1)$ must be positive at all rightward tangent vectors $\zeta$.
It follows that $$s_\zeta(\psi_1) = s_\zeta[1]$$ for all rightward tangent vectors $\zeta$.  By a similar argument, $s_\zeta(\psi_0) = s_\zeta[0]$ for all $\zeta$.
See Figure~\ref{Fig:EasyCase}.

\begin{figure}[H]
\begin{center}

\begin{tikzpicture}
\tikzmath{ \y = .6;}
\draw (-0.3,2*\y) node {{\tiny $\psi_1$}};
\draw[dashed] (0,2*\y)--(4,6*\y);
\draw[dashed] (4,6*\y)--(6,7*\y);

\draw (-0.3,3.5*\y) node {{\tiny $\psi_0$}};
\draw (0,3.5*\y)--(2,4.5*\y);
\draw (2,4.5*\y)--(6,4.5*\y);

\draw (0,1.5*\y)--(6,1.5*\y);
\draw [ball color=white] (0,1.5*\y) circle (0.55mm);
\draw [ball color=white] (6,1.5*\y) circle (0.55mm);
\draw [ball color=white] (2,1.5*\y) circle (0.55mm);
\draw [ball color=white] (4,1.5*\y) circle (0.55mm);
\draw (-0.25, 1.25*\y) node {{\tiny $w$}};
\draw (6.25, 1.25*\y) node {{\tiny $v$}};
\draw (1,1.7*\y) node {{\tiny $(1,2)$}};
\draw (3,1.7*\y) node {{\tiny $(0,2)$}};
\draw (5,1.7*\y) node {{\tiny $(0,1)$}};
\end{tikzpicture}

\end{center}

\caption{The functions $\psi_0$ and $\psi_1$, when the $(0,2)$ region is nonempty.}
\label{Fig:EasyCase}
\end{figure}

Similarly, for any $\psi \in \Sigma$, there is some point $v_\psi \in \Gamma$ such that $s_\zeta(\psi) = s_\zeta[1]$ at a rightward tangent vector $\zeta$ if and only if the basepoint of $\zeta$ is to the right of $v_\psi$.  Then $\psi = \min\{ \psi_0 + a_0, \psi_1 + a_1 \}$ is a tropical linear combination of $\psi_0$ and $\psi_1$, with coefficients chosen so that $\psi_i(v_\psi) + a_i = \psi(v_\psi)$.  It follows that $\Sigma$ is the tropical submodule of $R(D)$ generated by $\psi_0$ and $\psi_1$, and hence is completely determined by the nonempty region where $s_\zeta = (0,2)$.

\medskip

\textbf{Case 2:} Suppose the region where $s_\zeta = (0,2)$ is empty.   Then there is a distinguished point $x \in \Gamma$ such that $s_\zeta$ is equal to $(1,2)$ to the left of $x$ and $(0,1)$ to the right of $x$.  We now consider functions $\psi_A$ and $\psi_B$ such that
\[
s_w(\psi_A) = s_w[0] \mbox{\ \ and \ \ } s_v(\psi_B) = s_v[1].
\]
Note that $\psi_A$ must have slope $1$ at all points to the left of $x$.  It continues to have slope 1 for some distance $t$ to the right of $x$, and then its slope is 0 the rest of the way.  Similarly, $\psi_B$ has slope 1 at all points to the right of $x$, and for some distance $t'$ to the left of $x$.  Then at all points of distance greater than $t'$ to the left of $x$, the slope of $\psi_B$ is 2.

By taking a tropical linear combination of two functions that agree at $x$, one with rightward slope 0 and the other with leftward slope $-2$, we get a third function $\psi_C \in \Sigma$ with rightward slope $2$ everywhere to the left of $x$ and $0$ everywhere to the right of $x$.
See Figure~\ref{Fig:HardCase}.

\begin{figure}[h!]
\begin{center}
\begin{tikzpicture}
\tikzmath{ \y = .75;}
\draw (-0.3,4*\y) node {{\tiny $\psi_A$}};
\draw (0,4*\y)--(4,6*\y);
\draw (4,6*\y)--(6,6*\y);

\draw (-0.3,3.35*\y) node {{\tiny $\psi_B$}};
\draw[dashed] (0,3.5*\y)--(1.5,5*\y);
\draw[dashed] (1.5,5*\y)--(6,7.25*\y);

\draw (-0.3,2.7*\y) node {{\tiny $\psi_C$}};
\draw[dotted] (0,2.9*\y)--(3,5.9*\y);
\draw[dotted] (3,5.9*\y)--(6,5.9*\y);

\draw (0,2.3*\y)--(6,2.3*\y);
\draw [ball color=white] (0,2.3*\y) circle (0.55mm);
\draw [ball color=white] (6,2.3*\y) circle (0.55mm);
\draw [ball color=white] (3,2.3*\y) circle (0.55mm);
\draw [ball color=black] (1.4,2.3*\y) circle (0.55mm);
\draw [ball color=black] (4,2.3*\y) circle (0.55mm);
\draw (-0.25, 2.1*\y) node {{\tiny $w$}};
\draw (6.25, 2.1*\y) node {{\tiny $v$}};
\draw (3,2.5*\y) node {{\tiny $x$}};
\draw [<->] (2.95,2.1*\y) -- (1.45,2.1*\y);
\draw [<->] (3.05,2.1*\y) -- (3.95,2.1*\y);
\draw (2.25,1.9*\y) node {{\tiny $t'$}};
\draw (3.5,1.9*\y) node {{\tiny $t$}};
\end{tikzpicture}

\end{center}

\caption{The functions $\psi_A$, $\psi_B$, and $\psi_C$ when the $(0,2)$ region is empty.}
\label{Fig:HardCase}
\end{figure}

We claim that the distances $t$ and $t'$ \emph{must be equal}.   This follows from Definition~\ref{Def:TLS}\eqref{DefItem:Dependence}, which says that $\{\psi_A, \psi_B, \psi_C\}$ is tropically dependent.  On each region where all three functions are linear, there are exactly two with the same slope, and this determines the combinatorial type of the tropical dependence, i.e., which functions achieve the minimum on which regions, as shown in Figure~\ref{Fig:IntervalDependence}.

\begin{figure}
\begin{center}
\begin{tikzpicture}
\tikzmath{ \y = .75;}
\draw (0,0)--(6,0);
\draw [ball color=white] (0,0) circle (0.55mm);
\draw [ball color=white] (6,0) circle (0.55mm);
\draw (-0.25, -.15*\y) node {{\tiny $w$}};
\draw (6.25, -.15*\y) node {{\tiny $v$}};

\draw (-0.3,2.15*\y) node {{\tiny $\psi_A$}};
\draw (0,2.15*\y)--(4,4.15*\y);
\draw (4,4.15*\y)--(6,4.15*\y);

\draw (-0.3,0.9*\y) node {{\tiny $\psi_B$}};
\draw[dashed] (0,1*\y)--(2.05,3.05*\y);
\draw[dashed] (2.16,3.16*\y)--(6,5.08*\y);

\draw (-0.3,1.15*\y) node {{\tiny $\psi_C$}};
\draw[dotted] (0,1.1*\y)--(3,4.1*\y);
\draw[dotted] (3,4.1*\y)--(6,4.1*\y);

\draw [ball color=black] (2,0) circle (0.55mm);
\draw [ball color=white] (3,0) circle (0.55mm);
\draw [ball color=black] (4,0) circle (0.55mm);
\draw (1,0.3*\y) node {{\tiny $BC$}};
\draw (3,0.3*\y) node {{\tiny $AB$}};
\draw (5,0.3*\y) node {{\tiny $AC$}};
\draw (3,-0.2*\y) node {{\tiny $x$}};

\end{tikzpicture}
\caption{The tropical dependence that shows $t=t'$.}
\label{Fig:IntervalDependence}
\end{center}
\end{figure}

Note that all three functions in this dependence achieve the minimum together at two points, as shown in Figure~\ref{Fig:IntervalDependence}:  the point of distance $t$ to the right of $x$, and the point of distance $t'$ to the left of $x$.   Comparing the slopes of $\psi_C$ to those of $\psi_A$ shows that $\psi_C - \psi_A$ is equal to $t'$ at $x$, and also equal to $t$.  This proves that $t = t'$, as claimed.  Analogous arguments show that an arbitrary function $\psi \in \Sigma$ is a tropical linear combination of $\psi_A$, $\psi_B$, and $\psi_C$, so $\Sigma$ is determined by $x$ and $t$.
\end{example}

\noindent Note that one may realize the interval as a skeleton of the analytification of a rational curve, and thereby interpret Example~\ref{Ex:Interval} as a study of the possible tropicalizations of degree $2$ pencils on $\PP^1$.

\section{Divisors and linear series on chains of loops}

\subsection{Admissible edge lengths} Our main results are proved using a detailed study of tropical linear series on a chain of $g$ loops with bridges whose edge lengths are constrained as follows.  Let $\Gamma$ be a graph with $2g+2$ vertices, labeled $w_0, \ldots , w_{g}$, and $v_1, \ldots, v_{g+1}$. There are two edges connecting $v_k$ to $w_k$, whose lengths are denoted $\ell_k$ and $m_k$, for $1 \leq k \leq g$, along with a bridge $\beta_k$ of length $n_k$ connecting $w_{k-1}$ to $v_k$ for $1 \leq k \leq g + 1$, as shown in Figure~\ref{Fig:TheGraph}.

{\centering
\begin{figure}[H]
\scalebox{.9}{
\begin{tikzpicture}
\draw (-1.75,0) node {\footnotesize $w_0$};
\draw(-.9,.3) node {\footnotesize $\beta_1$};
\draw (-1.5,0)--(-0.5,0);
\draw (0,0) circle (0.5);
\draw (-0.25,0) node {\footnotesize $v_1$};
\draw (0.25,0) node {\footnotesize $w_1$};
\draw (0.5,0)--(1.5,0);
\draw (2,0) circle (0.5);
\draw (1.75,0) node {\footnotesize $v_2$};
\draw (2.5,0)--(3.5,0);
\draw (4,0) circle (0.5);
\draw (4.5,0)--(5.5,0);
\draw (6,0) circle (0.5);
\draw (6.5,0)--(7.5,0);
\draw (8,0) circle (0.5);
\draw (7.75,0) node {\footnotesize $v_{g}$};
\draw (8.25,0) node {\footnotesize $w_{g}$};
\draw (8.5,0)--(9.5,0);
\draw (10,0) node {\footnotesize $v_{g+1}$};
\draw [<->] (2.6,0.15)--(3.3,0.15);
\draw [<->] (4.6,.25) arc[radius = 0.65, start angle=20, end angle=160];
\draw [<->] (4.61, -.15) arc[radius = 0.63, start angle=-9, end angle=-173];
\draw (3,0.4) node {\footnotesize$n_k$};
\draw (4,1) node {\footnotesize$\ell_k$};
\draw (4,-1) node {\footnotesize$m_k$};
\end{tikzpicture}
}
\caption{The chain of loops $\Gamma$.}
\label{Fig:TheGraph}
\end{figure}
}

\noindent We write $\gamma_k$ for the $k$th loop, formed by the two edges connecting $v_k$ and $w_k$, for $1 \leq k \leq g$.

Throughout, we assume that the edge lengths of $\Gamma$ satisfy the following condition for some large positive number $C$.

\begin{definition} \label{Def:Admissible}
The graph $\Gamma$ has \emph{$C$-admissible edge lengths} if
\[
\ell_k > C \cdot m_k, \ \ \ \ell_k > C \cdot \ell_{k+1}, \ \ \ n_k > C \cdot n_{k+1}, \ \ \mbox{ and } \ \ n_{k+1} > C \cdot \ell_k, \ \ \mbox{ for all $k$.}
\]
\end{definition}

\noindent The required size of the parameter $C$ depends on the degree $d$ of the divisors under consideration and the genus $g$ of the chain of loops; taking $C > 12 d g$ suffices for the constructions in this paper.

\begin{remark}
These conditions on edge lengths are more restrictive than those in \cite{MRC}.
Our arguments here, e.g., in Lemma~\ref{Lem:VANotToRight}, require not only that the bridges are much longer than the loops, but also that each loop is much larger than the loops that come after it.  For illustrative purposes, we will generally draw the loops and bridges as if they are the same size.
\end{remark}

\subsection{Break divisors}
Every divisor of degree $d$ on $\Gamma$ is equivalent to a unique \emph{break divisor} $D$, with multiplicity $d-g$ at $w_0$ and precisely one point of multiplicity 1 on each loop $\gamma_k$; see, for instance, \cite{ABKS}.  In this way, $\Pic^d(\Gamma)$ is naturally identified with $\prod_{k=1}^g \gamma_k$.  We choose a coordinate on each loop $\gamma_k$, and hence coordinates on $\Pic^d(\Gamma)$, as in Definition~\ref{def:Pic-coords} below.  Since the top and bottom edges have lengths $\ell_k$ and $m_k$, respectively, the loop has length $\ell_k + m_k$.

\begin{remark}
There are other natural choices for representatives of divisor classes on $\Gamma$.  For example, \cite{tropicalBN} uses $w_0$-reduced divisors.  The combinatorial advantages of break divisors on chains of loops were illustrated by Nathan Pflueger in \cite{Pflueger17a}.  One such advantage is that the natural map $\Sym^g (\Gamma) \to \Pic^g (\Gamma)$ admits a continuous section, taking a divisor class to its unique break divisor representative \cite{MikhalkinZharkov08}.  The map taking a divisor class to its $w_0$-reduced representative, on the other hand, is not continuous.
\end{remark}

\begin{definition} \label{def:Pic-coords}
For a break divisor $D$, let $x_k(D) \in \mathbb{R} / (\ell_k + m_k ) \cdot \mathbb{Z}$ be the counterclockwise distance from $v_k$ to the unique point in the support of $D_{|{\gamma_k}}$.
\end{definition}

\noindent Thus the class $[D]$ is determined uniquely by its degree $d$ and the coordinates $(x_1(D), \ldots, x_g(D))$.  When $D$ is fixed, we omit it from the notation, and write simply $x_k$.

Recall that $\PL(\Gamma)$ is the additive group of continuous functions on $\Gamma$ that are piecewise-linear with integer slopes.  The \emph{order of $\varphi \in \PL(\Gamma)$ at $v$} is the sum of the incoming slopes along all edges incident to $v$.  Note that $\ord_v(\varphi)$ is zero for all but finitely many $v \in \Gamma$.  The \emph{divisor of $\varphi$} is then
\begin{equation} \label{eq:divphi}
\ddiv(\varphi) := \sum_{v \in \Gamma} \ord_v(\varphi) \cdot v,
\end{equation}
and $R(D) := \{ \varphi \in \PL(\Gamma) \mid D + \ddiv(\varphi) \geq 0 \}$.

\subsection{Analysis on a single loop} \label{sec:singleloop}
We often  analyze divisors $D$ and functions $\varphi \in \PL(\Gamma)$ by restricting to one loop $\gamma_k$ at a time.  The slopes along adjacent bridges play a special role.

\begin{definition}
For $\varphi \in \PL(\Gamma)$, let $s_k(\varphi)$ denote the rightward slope of $\varphi$ at $v_k$ along the bridge $\beta_k$.  Similarly, let $s'_k(\varphi)$ denote the rightward slope of $\varphi$ at $w_k$ along the bridge $\beta_{k+1}$.
\end{definition}

\begin{figure}[h!]
\begin{tikzpicture}
\begin{scope}[grow=right, baseline]
\draw (-1,0) circle (1);
\draw (-3.5,0)--(-2,0);
\draw (0,0)--(1.5,0);
\draw (-1,-1.25) node {{$\gamma_k$}};
\draw (-2.25,-.25) node {{$v_k$}};
\draw (.25,-.25) node {{$w_k$}};
\draw (-2.5,.45) node {{$s_k$}};
\draw [->] (-3,.2)--(-2.15,0.2);
\draw (.6,.45) node {{$s'_k$}};
\draw [->] (.15,.2)--(1,0.2);
\end{scope}
\end{tikzpicture}
\caption{The slopes $s_k$ and $s'_k$.}
\label{Fig:Slopes}
\end{figure}

Note that the operations of restricting to a loop and computing the divisor of a $\PL$ function do not commute. The difference is naturally expressed in terms of the slopes $s_k$ and $s'_k$ as follows.

\begin{lemma} \label{Lem:RestrictOneLoop}
Let $\varphi \in \PL(\Gamma)$.  Then
$
\ddiv(\varphi_{|\gamma_k}) = (\ddiv(\varphi))_{|\gamma_k} - s_k(\varphi) \cdot v_k + s'_k(\varphi) \cdot w_k.
$
\end{lemma}

\begin{proof}
The lemma follows directly from \eqref{eq:divphi} and the definition of $\ord_v(\varphi)$.
\end{proof}

\begin{lemma} \label{Lem:DegreeOneLoop}
Let $\varphi \in \PL(\Gamma)$.  Then $\deg\big(\ddiv(\varphi)_{|\gamma_k}\big) = s_k(\varphi) - s'_k(\varphi)$.
\end{lemma}

\begin{proof}
This is a consequence of Lemma~\ref{Lem:RestrictOneLoop}, since $\deg(\ddiv(\varphi_{|\gamma_k}))= 0$.
\end{proof}

Because the difference between $s'_k(\varphi)$ and $s_k(\varphi)$ appears so frequently, we introduce notation:
\[
\delta_k(\varphi) := s'_k(\varphi) - s_k(\varphi).
\]
Thus, if $\varphi$ has constant slope along each bridge then $s_{k+1}(\varphi) = s_0(\varphi) + \sum_{i=1}^k \delta_k(\varphi)$.  Also, with this notation Lemma~\ref{Lem:DegreeOneLoop} says that $$\deg( \ddiv(\varphi)_{|\gamma_k} ) = -\delta_k.$$

\medskip

We recall that every divisor of degree $g$ on $\Gamma$ is equivalent to a unique break divisor, whose support contains one point on each edge in the complement of a spanning tree \cite{ABKS}. More generally, we say that a divisor $D$ of degree $d$ is a \emph{break divisor} if $D + (g-d)w_0$ is a break divisor of degree $g$.  Concretely, the break divisors on $\Gamma$ are exactly those of the form $(d-g)w_0 + p_1 + \cdots + p_g$, where $p_k \in \gamma_k$.

\begin{lemma} \label{Lem:Slope++}
Let $D$ be a break divisor on $\Gamma$ and let $\varphi \in R(D)$.  Then $\delta_k(\varphi) \leq 1$ for all $k$. Moreover, $\delta_k(\varphi) = 1$ if and only if $(D + \ddiv(\varphi))_{|\gamma_k} = 0$.
\end{lemma}

\begin{proof}
First, $\deg(D_{|\gamma_k}) = 1$ for all $k$, since $D$ is a break divisor.  Also, $D + \ddiv(\varphi)$ is effective, since $\varphi \in R(D)$.  Thus $\deg(\ddiv(\varphi)_{|\gamma_k}) \geq -1$ with equality if and only if $(D + \ddiv(\varphi))_{|\gamma_k} = 0$.
\end{proof}

\begin{lemma} \label{Lem:EquivOneLoop}
Let $p_1, \ldots, p_s$ be points on $\gamma_k$ and let $x_i$ be the counterclockwise distance from $v_k$ to $p_i$.  Then $m_1 p_1 + \cdots + m_s p_s$ is a principal divisor on $\gamma_k$ if and only if $m_1 + \cdots + m_s = 0$ and $m_1 x_1 + \cdots + m_s x_s \equiv 0 \mod (\ell_k + m_k)$.
\end{lemma}

\begin{proof}
This is the tropical Abel-Jacobi Theorem on the loop $\gamma_k$.  Specifically, the map from $\gamma_k$ to $\mathbb{R}/(\ell_k + m_k)\mathbb{Z}$, sending a point to its clockwise distance from $v_k$, is precisely the tropical Abel-Jacobi map with basepoint $v_k$.
\end{proof}

\begin{lemma}
\label{Lem:PointsOfSlopeIncrease}
Let $D$ be a break divisor of degree $d$ on $\Gamma$ and let $\varphi \in R(D)$.  If $(D + \ddiv(\varphi))_{|\gamma_k} = 0$ then $x_k(D) = (s_k(\varphi) + 1) m_k$.
\end{lemma}

\begin{proof}
If $(D + \ddiv(\varphi))_{|\gamma_k} = 0$ then $\delta_k(\varphi) = 1$, by Lemma~\ref{Lem:Slope++}, and hence $s'_k(\varphi) = s_k(\varphi) + 1$.  By Lemma~\ref{Lem:RestrictOneLoop}, we have
$$ D_{|\gamma_k} + \ddiv(\varphi_{|\gamma_k}) = -s_k(\varphi) \cdot v_k + (s_k(\varphi) + 1) \cdot w_k.$$  Now, apply Lemma \ref{Lem:EquivOneLoop} to $\ddiv(\varphi_{|\gamma_k})$, and use the fact that $\Gamma$ has $C$-admissible edge lengths.
\end{proof}

\subsection{Classification of special divisors}
By the Riemann-Roch Theorem, every divisor class of degree $d$ on $\Gamma$ has rank at least $d - g$ \cite{BakerNorine07}.  A divisor is \emph{special} if its rank is non-negative and strictly greater than $d - g$.  When $0 \leq \rho(g,r,d) < g$, the Brill-Noether locus $W^r_d (\Gamma) \subseteq \Pic^d(\Gamma)$ parametrizing special divisor classes of degree $d$ and rank at least $r$ is a union of translates of $\rho(g,r,d)$-dimensional coordinate subtori, each of which corresponds to a standard Young tableau $T$ on a $(r+1) \times (g-d+r)$ rectangle, with entries from $\{1, \ldots, g \}$. This is part of the classification of special divisors on chains of loops, from \cite{tropicalBN}.

\begin{remark}
The metric graphs in \cite{tropicalBN} are chains of loops without bridges. However, the classification in that paper, and its proof, carries over to chains of loops with bridges essentially without change. This is because pushing forward divisors under the continuous map obtained by contracting all bridges induces a degree and rank preserving isomorphism on divisor class groups.  Note, in particular, that any two points on the same bridge of $\Gamma$ are linearly equivalent to each other.  Thus, if $D$ is a divisor on $\Gamma$ whose support intersects the open bridge $\beta_k^\circ = (w_{k-1}, v_k)$ then there is an equivalent divisor $D'$ with $D'_{|(\Gamma \smallsetminus \beta_k)} = D_{|(\Gamma \smallsetminus \beta_k)}$ whose support does not intersect $\beta_k^\circ$; such a $D'$ can be obtained by moving points of $D$ in $\beta_k^\circ$ to either of bridge's endpoints $w_{k-1}$ or $v_k$.
\end{remark}

Let $c_i(T)$ denote the set of entries in the $i$th column of a Young tableau $T$, ordered from right to left, starting with 0. Thus, if $T$ has $r + 1$ columns then the set of entries in the leftmost column is $c_r (T)$, the set of entries in the next column is $c_{r-1} (T)$, and so on.

\begin{proposition}
\label{Prop:DivisorsOnGamma}
Let $D$ be a special divisor of degree $d$ and rank $r$ on $\Gamma$. Then there is a $(r + 1) \times (g-d+r)$ rectangular standard Young tableau $T$ with entries from $\{1, \ldots, g \}$ such that, for each $0 \leq i \leq r$ there is a divisor $D_i \sim D$ satisfying
\begin{enumerate}
\item \label{it:eff} $D_i - (r-i)w_0 - i v_{g+1}$ is effective,
\item $(D_i)_{|\beta_k^\circ} = 0$ for all $k$
\item $\deg(D_i)_{|\gamma_k} \leq 1$ for all $k$,
\item \label{it:zero} if $k \in c_i(T)$ then $(D_i)_{|\gamma_k} = 0$.
\end{enumerate}
\end{proposition}

\begin{proof}
The proposition follows from \cite[Theorem~4.6]{tropicalBN}, via the correspondence between lattice paths in Weyl chambers and Young tableaux given in the proof of \cite[Theorem~1.4]{tropicalBN}.
\end{proof}

\subsection{Vertex avoiding divisors}
\label{Sec:VADivisors}

The set of divisor classes that satisfy conditions \eqref{it:eff}-\eqref{it:zero} of Proposition~\ref{Prop:DivisorsOnGamma} for a fixed tableau $T$ is a coordinate subtorus of dimension $\rho(g,r,d) = g - (r+1)(g-d+r)$ in $\Pic^d(\Gamma)$.  When $\rho$ is positive, these subtori intersect nontrivially. However, each contains a dense open subset of classes of \emph{vertex avoiding divisors} that are contained in exactly one of these subtori.

\begin{definition}
Let $D$ be a divisor of rank $r$ on $\Gamma$.  Then $D$ is \emph{vertex avoiding} if there is a unique divisor $D_i \sim D$ such that $D_i - (r-i) w_0 - i v_{g+1}$ is effective, for each $0 \leq i \leq r$.
\end{definition}

By Proposition~\ref{Prop:DivisorsOnGamma}, the existence of such a divisor $D_i \sim D$ holds for any divisor $D$ of rank $r$; the defining property of vertex avoiding divisors is the uniqueness.  Recall that the set of vertices of $\Gamma$ is $\{w_0, v_1, w_1, \ldots, v_g, w_g, v_{g+1}\}$.

\begin{proposition}
\label{Prop:VAiff}
Let $D$ be a divisor of rank $r$ on $\Gamma$.  Then $D$ is vertex avoiding if and only if, for each $0 \leq i \leq r$, there is a divisor $D_i$ satisfying properties \eqref{it:eff}-\eqref{it:zero} in Proposition~\ref{Prop:DivisorsOnGamma} such that the support of $D_i - (r-i) w_0 - i v_{g+1}$ does not contain any of the vertices of $\Gamma$.
\end{proposition}

\begin{proof}
Suppose $D_i$ satisfies properties \eqref{it:eff}-\eqref{it:zero} in Proposition~\ref{Prop:DivisorsOnGamma} and let $D'_i := D_i - (r-i) w_0 - i v_{g+1}$.  If the support of $D'_i$ does not contain any of the vertices of $\Gamma$, then $D'_i$ has at most one point on each loop, and that point is on either the top or the bottom edge (but not both).  Thus, we can choose a spanning tree disjoint from the support of $D'_i$. By \cite[Lemma~3.5]{ABKS}, we conclude that $D'_i$ is a rigid effective divisor, and hence $D_i$ is unique.

Conversely, if the support of $D'_i$ contains at least one of the vertices of $\Gamma$, then moving this point along the bridge adjacent to this vertex gives an infinite family of divisors equivalent to $D$ that satisfy \eqref{it:eff}-\eqref{it:zero} and hence $D_i$ is not unique.
\end{proof}

Fix a vertex avoiding break divisor $D$ of degree $d$ and rank $r$.

\begin{definition}
For $0 \leq i \leq r$, we define $\varphi_i \in \PL(\Gamma)$ to be the unique function such that $D + \ddiv(\varphi_i) = D_i$ and $\varphi_i(w_0) = 0$.
\end{definition}

Let $T$ be the tableau satisfying Proposition~\ref{Prop:DivisorsOnGamma}\eqref{it:eff}-\eqref{it:zero}; it is unique because $D$ is vertex avoiding.

\begin{proposition}
\label{Prop:VASlopes}
The function $\varphi_i$ has constant slope $s_k(\varphi_i)$ along the bridge $\beta_k$.  These slopes are given by
\[
s_k(\varphi_i) =  i - (g-d+r) + \#\{ \ell \in c_i(T) \mid \ell < k \}
\]
\end{proposition}

In other words, moving from left to right across the graph, we start with
\[
s_1(\varphi_i) = i - (g-d+r) = i - \# c_i(T),
\]
and $s_k(\varphi_i)$ is nondecreasing as a function of $k$.  When passing the loop $\gamma_k$, it increases by
\[
\delta_k(\varphi_i) = \left\{ \begin{array}{ll} 1 & \mbox{ if } k \in c_i(T), \\
							0 & \mbox{ otherwise.} \end{array} \right.
\]
When we arrive at the final bridge, the slopes are $s_{g+1}(\varphi_i) = i$ for all $i$.

\begin{proof}
The fact that $\varphi_i$ has constant slope along the bridge $\beta_k$ follows from Proposition~\ref{Prop:VAiff}.  Since $D(w_0) = d-g$ and $D_i (w_0) = r-i$, we see that $s_1 (\varphi_i) = i - (g-d+r)$.  Now, recall that $\deg(D_{|\gamma_k}) = 1$ for all $k$.  By Proposition~\ref{Prop:DivisorsOnGamma}, $\deg(D_i)_{|\gamma_k} \leq 1$ for all $k$ with equality if and only if $k \notin c_i(T)$.  Since $\deg( \ddiv({\varphi_i}_{|\gamma_k}) ) = -\delta_k$, we see that
\begin{equation} \label{Eq:OneIncrease}
\delta_k(\varphi_i) = \left\{ \begin{array}{ll} 1 & \mbox{ if } k \in c_i(T), \\
							0 & \mbox{ otherwise,} \end{array} \right.
\end{equation}
and the result follows.
\end{proof}

\begin{corollary}
\label{Cor:VABreak}
Fix a tableau $T$ on a $(r+1) \times (g-d+r)$ rectangle with entries from $\{1, \ldots, g \}$, and set
\[
s_k[i] := i - (g-d+r) + \# \{ \ell \in c_i(T) \mid \ell < k \}.
\]
Then the vertex avoiding break divisors of degree $d$ and rank $r$ associated to the tableau $T$ are precisely those break divisors $D$ of degree $d$ such that
$
x_k(D) = (s_k[i] + 1) m_k \mbox{ if and only if } k \in c_i(T).
$
\end{corollary}

\begin{proof}
This follows from Proposition~\ref{Prop:VASlopes} and Lemma~\ref{Lem:PointsOfSlopeIncrease}.
\end{proof}

\begin{corollary} \label{Cor:DistinctSlopes}
Let $D$ be a vertex avoiding break divisor of degree $d$ and rank $r$.  On each bridge $\beta_k$, the slopes $\{ s_k(\varphi_i) \mid 0 \leq i \leq r \}$ are distinct and satisfy
\[
s_k(\varphi_0) < \cdots < s_k(\varphi_r).
\]
\end{corollary}

\begin{proof}
This follows from the previous corollary, since
\[
\# \{ \ell \in c_i(T) \mid \ell < k \} \leq \# \{ \ell' \in c_{j}(T) \mid \ell' < k \}
\]
for $i < j$.
\end{proof}

\begin{definition}
Let $D$ be a fixed vertex avoiding break divisor of degree $d$ and rank $r$, and let $T$ be the associated tableau.  Then $\gamma_k$ is a \emph{lingering loop} if $k$ does not appear in $T$.
\end{definition}

Equivalently, $\gamma_k$ is a lingering loop if $\delta_k(\varphi_i) = 0$ for all $i$.  If $\gamma_k$ is not a lingering loop, then there is precisely one index $i$ such that $\delta_k(\varphi_i) = 1$; it is the index $i$ such that $k \in c_i(T)$.  The restriction of $\varphi_i$ to a non-lingering loop may be pictured schematically as in Figure~\ref{Fig:FnShapes}.
\begin{figure}[H]
\begin{tikzpicture}[thick, scale=0.8]

\begin{scope}[grow=right, baseline]
\draw (-1,0) circle (1);
\draw (-3,0)--(-2,0);
\draw (0,0)--(1,0);
\draw [ball color=white] (-.5,.866) circle (0.55mm);
\draw [ball color=black] (-1.866,0.5) circle (0.55mm);
\draw (-2.5,0.25) node {\tiny$s_k[i]$};
\draw (0.5,0.25) node {\tiny$s_k[i]$};
\draw (-1,-1.25) node {\tiny$s_k[i] - 1$};
\draw (-0.35,0.45) node {\tiny$1$};
\draw (-1.8,0.2) node {\tiny$1$};
\draw (-1.2,0.75) node {\tiny$0$};
\draw (-1,-2) node {$x_k < (s_{k}[i]+1)m_k$};

\draw (4,0) circle (1);
\draw (2,0)--(3,0);
\draw (5,0)--(6,0);
\draw [ball color=white] (4.5,.866) circle (0.55mm);
\draw (2.5,0.25) node {\tiny$s_k[i]$};
\draw (5.7,0.25) node {\tiny$s_k[i] + 1$};
\draw (4,-1.25) node {\tiny$s_k[i]$};
\draw (4.65,0.45) node {\tiny$1$};
\draw (3.5,0.6) node {\tiny$0$};
\draw (4,-2) node {$x_k = (s_{k}[i]+1)m_k$};

\draw (9,0) circle (1);
\draw (7,0)--(8,0);
\draw (10,0)--(11,0);
\draw [ball color=white] (9.5,.866) circle (0.55mm);
\draw [ball color=black] (9.866,0.5) circle (0.55mm);
\draw (7.5,0.25) node {\tiny$s_k[i]$};
\draw (10.5,0.25) node {\tiny$s_k[i]$};
\draw (9,-1.25) node {\tiny$s_k[i]$};
\draw (8.45,0.55) node {\tiny$0$};
\draw (9.8,0.2) node {\tiny$0$};
\draw (9.55,0.6) node {\tiny$1$};
\draw (9,-2) node {$x_k > (s_{k}[i]+1)m_k$};
\end{scope}
\end{tikzpicture}
\caption{The function $(\varphi_i)_{|\gamma_k}$ when $\gamma_k$ is not lingering.}
\label{Fig:FnShapes}
\end{figure}

\noindent The point of $D$ on $\gamma_k$ is indicated by a white dot, and the point of $D_i$, if any, is indicated by a black dot.  Each edge is labeled with the rightward slope of $\varphi_i$.  Note that if $x_k = (s_k[i] +1)m_k$, then $(D_i)_{|\gamma_k} = 0$.  Otherwise, $(D_i)_{|\gamma_k}$ is a point on the top edge.  If $x_k < (s_k[i] +1)m_k$, then this point is on the left half of the top edge, near $v_k$, and if $x_k < (s_k[i] +1)m_k$, then it is on the right half.

\begin{remark} \label{Rem:NotToScale}
Note that Figure~\ref{Fig:FnShapes}, and other similar figures in this paper, is schematic, and not to scale. In particular, because we assume that $\Gamma$ has $C$-admissible edge lengths, the top edge of $\gamma_k$ is much longer than the bottom edge, by a factor of at least $12dg$. It follows that the region where $\varphi_i$ has slope zero occupies most of the top edge, as shown. The point of $D$ on $\gamma_k$ is relatively close to $w_k$; its distance from $w_k$ is a small integer multiple of $m_k$. In particular, it is not near the middle of the top edge.  The point of $D_i$ on $\gamma_k$, if any, is relatively close to either $w_k$ or $v_k$, with the distance being a small integer multiple of $m_k$.
\end{remark}

\begin{lemma}
\label{Lem:HalfInteger}
Let $D$ be a vertex avoiding break divisor of degree $d$ and rank $r$ and let $\gamma_k$ be a non-lingering loop.  Suppose $x$ is a point on $\gamma_k$ whose distance from $w_k$ is a half-integer multiple of $m_k$, and choose constants $c_i$ such that $\varphi_i(w_k) + c_i = \varphi_j(w_k) + c_j$ for all $j$.  Then $\varphi_i(x) + c_i - \varphi_j(x) - c_j$ is a half-integer multiple of $m_k$.
\end{lemma}

\begin{proof}
Let $\gamma$ be the shortest path from $w_k$ to $x$.  The restriction of $\varphi_i(x) + c_i - \varphi_j(x) - c_j$ to $\gamma$ is a piecewise linear function with integer slopes.  By assumption, its value at $w_k$ is 0.  By examining Figure~\ref{Fig:FnShapes}, we see that the length of each domain of linearity is a half-integer multiple of $m_k$, and the result follows.
\end{proof}

\begin{example}
\label{Ex:Genus3}
As an example, we consider the canonical divisor $K_{\Gamma}$ on the chain of 3 loops.  By Riemann-Roch, $[K_{\Gamma}]$ is the only divisor class of degree 4 and rank 2 on $\Gamma$.  The corresponding tableau $T$ is the unique tableau on a rectangle with 3 columns and 1 row:

\begin{figure}[h!]
\begin{ytableau}
1 & 2 & 3
\end{ytableau}
\caption{The unique tableau $T$ on a $3 \times 1$ rectangle.}
\label{Fig:CanonicalTableau}
\end{figure}

We can use Corollary~\ref{Cor:VABreak} to determine the unique break divisor $D$ equivalent to $K_{\Gamma}$.  We see that $x_1 (D) =2$, $x_2 (D) = 1$, and $x_3 (D) = 0$, as depicted schematically in Figure~\ref{Fig:BreakDivisor}.

\begin{figure}[H]
\begin{tikzpicture}[thick, scale=0.8]
\begin{scope}[grow=right, baseline]
\draw (-1,0) circle (1);
\draw (-3,0)--(-2,0);
\draw (0,0)--(1,0);
\draw (2,0) circle (1);
\draw (3,0)--(4,0);
\draw (5,0) circle (1);
\draw (6,0)--(7,0);
\draw [ball color=white] (-3,0) circle (0.75mm);
\draw [ball color=white] (-0.13,0.5) circle (0.75mm);
\draw [ball color=white] (3,0) circle (0.75mm);
\draw [ball color=white] (4,0) circle (0.75mm);
\draw (-3.5,.5) node {$D$};

\end{scope}
\end{tikzpicture}
\caption{Break divisor equivalent to the canonical divisor}
\label{Fig:BreakDivisor}
\end{figure}

We can then use Proposition~\ref{Prop:VASlopes} to explicitly determine the divisors $D_i$ and the functions $\varphi_i$, for $i \in \{0, 1, 2\}$. The result is schematically illustrated in Figure~\ref{Fig:CanonicalExample}. The support of $D_i$ is indicated with black dots, and the remaining points in the support of $D$ are indicated with white dots.  All of these points have multiplicity $1$, except that the left endpoint has multiplicity 2 in $D_0$, and the right endpoint has multiplicity 2 in $D_2$.

\begin{figure}[H]
\begin{tikzpicture}[thick, scale=0.63]
\begin{scope}[grow=right, baseline, shift={(-6,0)}]
\draw (-1,6) circle (1);
\draw (-3,6)--(-2,6);
\draw (0,6)--(1,6);
\draw (2,6) circle (1);
\draw (3,6)--(4,6);
\draw (5,6) circle (1);
\draw (6,6)--(7,6);
\draw [ball color=black] (-3,6) circle (0.75mm);
\draw (-3.25,7) node {$D_0$};
\draw [ball color=white] (-0.13,6.5) circle (0.75mm);
\draw [ball color=white] (3,6) circle (0.75mm);
\draw [ball color=white] (4,6) circle (0.75mm);
\draw [ball color=black] (-.5,6.87) circle (0.75mm);
\draw [ball color=black] (2.87,6.5) circle (0.75mm);

\draw (-2.5,5.75) node {\tiny -1};
\draw (0.5,5.75) node {\tiny -1};
\draw (3.5,5.75) node {\tiny -1};
\draw (6.5,5.75) node {\tiny 0};
\draw (-1,4.75) node {\tiny -1};
\draw (2,4.75) node {\tiny -1};
\draw (5,4.75) node {\tiny 0};
\draw (-1.85,6.85) node {\tiny 0};
\draw (1.6,7.15) node {\tiny 0};
\draw (5,7.25) node {\tiny 0};
\draw (0.15,6.35) node {\tiny 0};
\draw (-0.1,6.9) node {\tiny -1};
\draw (3.25,6.4) node {\tiny -1};

\end{scope}

\begin{scope}[grow=right, baseline, shift={(6,3)}]
\draw (-1,3) circle (1);
\draw (-3,3)--(-2,3);
\draw (0,3)--(1,3);
\draw (2,3) circle (1);
\draw (3,3)--(4,3);
\draw (5,3) circle (1);
\draw (6,3)--(7,3);
\draw [ball color=black] (-3,3) circle (0.75mm);
\draw [ball color=black] (-0.13,3.5) circle (0.75mm);
\draw [ball color=white] (3,3) circle (0.75mm);
\draw [ball color=white] (4,3) circle (0.75mm);
\draw [ball color=black] (4.13,3.5) circle (0.75mm);
\draw [ball color=black] (7,3) circle (0.75mm);

\draw (-3.25,4) node {$D_1$};

\draw (-2.5,2.75) node {\tiny 0};
\draw (0.5,2.75) node {\tiny 0};
\draw (3.5,2.75) node {\tiny 1};
\draw (6.5,2.75) node {\tiny 1};
\draw (-1,1.75) node {\tiny 0};
\draw (2,1.75) node {\tiny 0};
\draw (5,1.75) node {\tiny 1};
\draw (-1,4.25) node {\tiny 0};
\draw (2,4.25) node {\tiny 0};
\draw (5,4.25) node {\tiny 0};
\draw (0.15,3.3) node {\tiny 0};
\draw (3.85,3.3) node {\tiny 1};

\end{scope}

\begin{scope}[grow=right, baseline, shift={(0,2)}]
\draw (-1,0) circle (1);
\draw (-3,0)--(-2,0);
\draw (0,0)--(1,0);
\draw (2,0) circle (1);
\draw (3,0)--(4,0);
\draw (5,0) circle (1);
\draw (6,0)--(7,0);
\draw [ball color=white] (-3,0) circle (0.75mm);
\draw [ball color=white] (-0.13,0.5) circle (0.75mm);
\draw [ball color=white] (3,0) circle (0.75mm);
\draw [ball color=white] (4,0) circle (0.75mm);
\draw [ball color=black] (1.13,0.5) circle (0.75mm);
\draw [ball color=black] (4.5,0.86) circle (0.75mm);
\draw [ball color=black] (7,0) circle (0.75mm);
\draw (-3.25,1) node {$D_2$};

\draw (-2.5,-.25) node {\tiny 1};
\draw (0.5,-.25) node {\tiny 2};
\draw (3.5,-.25) node {\tiny 2};
\draw (6.5,-.25) node {\tiny 2};
\draw (-1,-1.25) node {\tiny 1};
\draw (2,-1.25) node {\tiny 1};
\draw (5,-1.25) node {\tiny 2};
\draw (-1.5,1.15) node {\tiny 0};
\draw (2.5,1.15) node {\tiny 0};
\draw (5.75,1) node {\tiny 0};
\draw (0.15,0.3) node {\tiny 1};
\draw (0.85,0.3) node {\tiny 1};
\draw (4,0.65) node {\tiny 1};

\end{scope}

\end{tikzpicture}
\caption{The divisors $D_0$, $D_1$, and $D_2$, and the slopes of $\varphi_0$, $\varphi_1$, and $\varphi_2$}
\label{Fig:CanonicalExample}
\end{figure}

Each edge in the illustration of $D_i$ is labeled with the rightward slope of $\varphi_i$.
\end{example}

\section{The vertex avoiding case}
\label{Sec:VertexAvoiding}

We continue the notation from the previous section.  In particular, $D$ is a vertex avoiding break divisor of degree $d$ and rank $r$ on $\Gamma$, which is a chain of $g$ loops with bridges whose edge lengths are $C$-admissible for some $C > 12dg$.  For $0 \leq i \leq r$, let $D_i$ be the unique divisor such that $D_i \sim D$ and $D_i - i w_0 - (r-i) v_{g+1}$ is effective, and let $\varphi_i \in \PL(\Gamma)$ be the unique function such that $\varphi_i(w_0) = 0$ and  $D + \ddiv(\varphi_i) = D_i$.

We use the functions $\varphi_i$ on $\Gamma$ to study ranks of multiplication maps on algebraic curves.  Set
\[
\varphi_{i,j} := \varphi_i + \varphi_j.
\]

\begin{proposition} \label{prop:rankmu2}
Let $X$ be a smooth projective curve of genus $g$ over $K$ with skeleton $\Gamma$, let $D_X$ be a divisor on $X$ such that $\trop(D_X) = D$, and let $V \subset H^0(X, \OO(D_X))$ be a linear series of rank $r$.  Then the rank of the multiplication map
\[
\mu_2 \colon \Sym^2 V \to H^0(X, \OO(2D_X))
\]
is bounded below by the size of the largest tropically independent subset of $\{ \varphi_{i,j} \mid 0 \leq i \leq j \leq r \}$.
\end{proposition}

\begin{proof}
First, $\trop(V) \subset R(D)$ is a tropical linear series of rank $r$ and hence contains $\varphi_0, \ldots, \varphi_r$.  Choose $f_i \in V$ such that $\trop(f_i) = \varphi_i$.  Then $\varphi_{i,j}$ is the tropicalization of $\mu_2(f_i \otimes f_j)$. The proposition follows, since the tropicalization of the image of $\mu_2$ is a tropical linear series of rank equal to $\mathrm{rank}(\mu_2) - 1$.
\end{proof}

Motivated by this proposition, we present an algorithm for constructing a certificate of independence for subsets of $\{ \varphi_{i,j} \mid 0 \leq i \leq j \leq r\}$.

\subsection{Algorithm for the vertex avoiding case} \label{sec:basicalg}

We present our algorithm in greater generality than needed for the main results of this paper, to make the underlying ideas more transparent and readily adaptable to other situations.  (A minor variant has already been applied to a different problem of similar flavor in genus 13  \cite{M13}.)

Recall that $\Gamma$ and $D$ are fixed.
The remaining input for the algorithm is as follows:
\begin{itemize}
\item A subset $\cB \subset \{ \varphi_{i,j} \mid 0 \leq i \leq j \leq r \}$, and
\item A non-increasing sequence of integers $\sigma := (\sigma_1, \ldots, \sigma_{g+1})$.
\item A positive real number $\epsilon < \frac{1}{144 dr^2}$.
\end{itemize}

The output is a collection of coefficients $\{ c(\psi) \in \RR \cup \infty \mid \psi \in \cB\}$ and a function $$\alpha \colon \cB \to \{ \beta_k \} \cup \{ \gamma_\ell \} \cup \{ \emptyset \}$$ that ``assigns'' functions to a bridge or loop.  We say that $\psi$ is ``assigned'' to $\alpha(\psi)$ if $\alpha(\psi) \in \{ \beta_k \} \cup \{ \gamma_\ell \}$. If $\alpha(\psi) = \emptyset$, we say that $\psi$ is ``unassigned.'' The output has the following properties:
\begin{itemize}
\item The function
$
\theta := \min \{ \psi + c(\psi) \mid \alpha(\psi) \neq \emptyset \}
$
is a certificate of independence for the collection of functions in $\cB$ that are assigned to some bridge or loop.
\item  More precisely, if $\alpha(\psi) \neq \emptyset$ then $\psi + c(\psi)$ achieves the minimum uniquely on an open subset of the bridge or loop $\alpha(\psi)$ to which $\psi$ is assigned.
\item If $\gamma_\ell$ is to the right of $\alpha(\psi)$ and $\alpha^{-1}(\gamma_\ell) \neq \emptyset$, then $\psi + c(\psi)$ does not achieve the minimum anywhere on $\gamma_\ell$.
\end{itemize}
Note that the collection of functions in $\cB$ that are assigned to some bridge or loop depends on $\sigma$. In the setting of Proposition~\ref{prop:rankmu2}, the size of this collection is a lower bound for the rank of $\mu_2$.  The proofs of our main results depend on a choice of $\sigma$ that maximizes the size of this collection. The coefficients in the algorithm's output depend on $\epsilon$, but the assignment function $\alpha$ does not.

\begin{remark} \label{rem:casesofinterest}
In the cases of primary interest for our main result (when $g \geq 21$, $d = g + 3$, and $r = 6$), cf. \S\ref{sec:VArelevant}, we take $\cB$ to be the sumset $2 \cA := \{ \varphi + \varphi' \mid \varphi, \varphi' \in \cA \}$ and choose $\sigma$ so that:
\begin{itemize}
\item Every function in $\cB$ is assigned to a bridge or loop.
\item One function is assigned to each non-lingering loop.
\item Two functions are assigned to $\beta_1$ and three are assigned to $\beta_{g+1}$.
\item One function is assigned to each of the two bridges $\beta_k$, for $2 \leq k \leq g$, where $\sigma_k < \sigma_{k-1}$.
\end{itemize}
Moreover, in these cases, the slope of $\theta$ equals $\sigma_k$ on a sub-interval of $\beta_k$ of length greater than $(1-\epsilon) \ell_k$, and the average slope of $\theta$ on $\beta_k$ is between $\sigma_k - \epsilon$ and $\sigma_k+\epsilon$, for all $k$.
\end{remark}

The algorithm for constructing the coefficients and assignment function works from left to right across the graph.  At the beginning, we set all coefficients $c(\psi)$ to be infinite, and all functions are unassigned, i.e., $c(\psi) = \infty$ and $\alpha(\psi) = \emptyset$ for all $\psi$.    Once a coefficient $c(\psi)$ has a finite value, this value will never decrease.  It may increase as we progress through the algorithm, as long as $\psi$ is unassigned.  However, when $\psi$ is assigned to a bridge or loop, its coefficient $c(\psi)$ is fixed and never changes again.  From that point forward, $\psi + c(\psi)$ achieves the minimum uniquely on an open subset of the bridge or loop $\alpha(\psi)$ to which it is assigned.

\bigskip

Fix a subset $\cB \subset \{ \varphi_{i,j} \mid 0 \leq i \leq j \leq r \}$ and a non-increasing integer sequence $\sigma = (\sigma_1, \ldots, \sigma_{g+1})$.

\begin{definition} \label{Def:VAPermissible}
A function $\psi \in \cB$ is $\sigma$-\emph{permissible} on the loop $\gamma_k$ if
\[
s_{k} (\psi) \leq \sigma_k \leq s_{k+1} (\psi).
\]
\end{definition}

\begin{remark} \label{rem:permissible}
Suppose $\theta = \min \{ \psi + c(\psi) \mid \psi \in \cB \}$ and the average slope of $\theta$ on $\beta_k$ is in the range $(\sigma_k - \epsilon, \sigma_k + \epsilon)$ for all $k$, as it will be in the cases of interest for our main results (cf. Remark~\ref{rem:casesofinterest}).  Then, since the edge lengths are $C$-admissible, the sequence $\sigma_k$ is non-increasing, and the slopes $s_k(\varphi)$ are non-decreasing, the inequalities $s_{k} (\psi) \leq \sigma_k \leq s_{k+1} (\psi)$ are a necessary condition for $\psi + c(\psi)$ to achieve the minimum on $\gamma_k$.  This motivates Definition~\ref{Def:VAPermissible}, and the algorithm below assigns functions to loops only when they are $\sigma$-permissible.
\end{remark}

When $\sigma$ is fixed and no confusion seems possible, we refer to $\sigma$-permissible functions as \emph{permissible}.

\begin{lemma}
\label{Lem:VAEverythingIsPermissible}
Let $\psi \in \cB$. Then either
$s_1 (\psi) > \sigma_1$,
$s_{g+1} (\psi) < \sigma_g$, or
there is a $k$ such that $\psi$ is permissible on $\gamma_k$.
\end{lemma}

\begin{proof}
Suppose $s_1 (\psi) \leq \sigma_1$ and $s_{g+1} (\psi) \geq \sigma_g$.  If $s_k (\psi) \leq \sigma_k$ for all $k$, then $\psi$ is permissible on $\gamma_{g}$.  Otherwise, consider the smallest $k$ such that $s_k (\psi) > \sigma_k$.  Then, since $(\sigma_1, \ldots, \sigma_{g+1})$ is non-increasing and $(s_1(\psi), \ldots, s_{g+1}(\psi))$ is non-decreasing, $\psi$ is permissible on $\gamma_{k-1}$.
\end{proof}

Our algorithm is organized around keeping track of which functions in $\cB$ are permissible on each loop, as we move from left to right across the graph.  The loops on which $\psi$ is permissible are consecutive and we pay particular attention to the last loop on which each function is permissible.

\begin{lemma} \label{lem:lastloop}
Suppose $\psi$ is permissible on $\gamma_k$.  Then $\gamma_k$ is the last loop on which $\psi$ is permissible if and only if either $s_{k+1}(\psi) > \sigma_k$, $\sigma_k > \sigma_{k+1}$, or $k = g$.
\end{lemma}

\begin{proof}
This follows immediately from Definition~\ref{Def:VAPermissible}, since $(\sigma_1, \ldots, \sigma_{g+1})$ is non-increasing and $(s_1(\psi), \ldots, s_{g+1}(\psi))$ is non-decreasing.
\end{proof}

The last two conditions apply to all functions that are permissible on $\gamma_k$. The first condition is specific to $\psi$ and permissible functions that satisfy this condition are prioritized for assignment to $\gamma_k$ in the algorithm.

\begin{definition}
\label{Def:VADeparting}
A permissible function $\psi$ is \emph{departing} if $s_{k+1}(\psi) > \sigma_k$.
\end{definition}

\begin{lemma}
\label{Lem:VAAtMost1}
There is at most one departing permissible function on each loop.
\end{lemma}

\begin{proof}
Let $\psi \in \cB$ be a departing permissible function on $\gamma_k$.  By definition,
\[
s_k (\psi) \leq \sigma_k < s_{k+1} (\psi) .
\]
By Proposition~\ref{Prop:VASlopes}, there is at most one value of $i$ such that $s_k (\varphi_i) < s_{k+1} (\varphi_i)$, and it satisfies
\[
s_{k+1} (\varphi_i) = s_k (\varphi_i) + 1.
\]
(This is the unique $i$ such that $k \in c_i(T)$.) Then $\psi = \varphi_{i,j}$ for some $j$.  There is at most one such $j$ with $s_k(\psi) = \sigma_k$.  Also, if there is a departing function $\psi'$ such that $s_k(\psi') < \sigma_k$, then it must be $\psi' = 2 \varphi_i$, and $2s_k(\varphi_i) = \sigma_k - 1$.  It remains to show that both cannot occur on the same loop.

Suppose $2s_k(\varphi_i) = \sigma_k - 1$ and also $s_k(\varphi_{i,j}) = \sigma_k$ for some $j \neq i$.  Then $s_k(\varphi_{j}) = s_k(\varphi_i) + 1$ and hence $s_{k+1} (\varphi_j) = s_{k+1}(\varphi_i)$.  This is impossible, because the functions $\varphi_0, \ldots, \varphi_r$ have distinct slopes on every bridge (Corollary~\ref{Cor:DistinctSlopes}).
\end{proof}

\begin{remark} \label{rem:atmostone}
A similar argument shows that at most one function $\psi \in \cB$ is permissible on $\gamma_k$ and satisfies  $s_k(\psi) < \sigma_k$.  This observation is the essence of Lemma~\ref{lem:onenew}, below.
\end{remark}

\noindent We now provide the algorithm for producing the coefficients $c(\psi)$ and assignment function $\alpha$.

\bigskip

\noindent \textbf{Initialize.} Start by setting $c(\psi) = \infty$ and $\alpha(\psi) = \emptyset$ for all $\psi \in \cB$.
Define $$\theta' := \min\{ \psi + c(\psi) \mid \psi \in \cB \} \mbox{ \ \  and \ \ } \theta := \min \{ \psi + c(\psi) \mid \alpha(\psi) \neq \emptyset \}.$$

\medskip

\noindent \textbf{Start at the first bridge.}
Start at $\beta_1$.  Consider the set of slopes $\{ s_1(\psi) \mid \psi \in \cB \}$.  For each such slope that is strictly greater than $\sigma_1$, choose a function $\psi \in \cB$ with this slope, and give it a finite coefficient $c(\psi)$ so that $\vartheta'(w_0) = 0$ and each such $\psi + c(\psi)$ achieves the minimum in $\theta'$ on a subinterval of $\beta_1$ of length $\epsilon^2 \cdot \ell_1$.  Assign each of these functions to $\beta_1$, i.e., set $\alpha(\psi) = \beta_1$ for each chosen function.  Proceed to the first loop.

\medskip

\noindent \textbf{Loop subroutine.}
Each time we arrive at a loop $\gamma_k$, check whether there are any unassigned permissible functions. If not, proceed to $\beta_{k+1}$.  Otherwise, apply the following steps.

\medskip



\noindent \textbf{Loop subroutine, Step 1:  Align the unassigned permissible functions at $w_k$.} If $\psi$ is an unassigned permissible function, then either $c(\psi) = \infty$ or $\psi(w_k) + c(\psi)$ is strictly less than $\psi'(w_k) + c(\psi')$ for any previously assigned or non-permissible function $\psi'$.   (See Lemma~\ref{Lem:VANotToRight}.) If $c(\psi) = \infty$, then set a finite coefficient to that $\psi + c(\psi)$ is equal to $\theta'$ at $v_k$.  Then adjust the coefficient of each unassigned permissible function upward, the smallest amount possible, so that all of these terms are equal to $\theta'$ at $w_k$.

\medskip

\noindent \textbf{Loop subroutine, Step 2: Skip lingering loops.}
If $\gamma_k$ is a lingering loop, proceed to $\beta_{k+1}$.

\medskip

\noindent \textbf{Loop subroutine, Step 3:  Assign departing functions.}
Suppose there is an unassigned permissible function $\psi$ on $\gamma_k$ that is \emph{departing}, in the sense of Definition~\ref{Def:VADeparting}. (There is at most one, by Lemma~\ref{Lem:VAAtMost1}.)  For each unassigned, non-departing permissible function $\psi'$, adjust the coefficient $c(\psi')$ upward so that $\psi + c(\psi) = \psi' + c(\psi')$ at a point on $\beta_{k+1}$ at distance $\epsilon \cdot \ell_{k+1}$ to the right of $w_k$, and $\psi + c(\psi)$ achieves the minimum in $\theta'$ uniquely at $w_k$.  Assign the departing function $\psi$ to the loop, i.e., set $\alpha(\psi) = \gamma_k$. Proceed to $\beta_{k+1}$.

\medskip

\noindent \textbf{Loop subroutine, Step 4: Assign a function that achieves the minimum uniquely, if possible.}
If at least one unassigned permissible function $\psi$ achieves the minimum uniquely at some point in $\gamma_k$, then choose one such function and assign it to $\gamma_k$.  Increase its coefficient $c(\psi)$ by $\frac{1}{3} m_k$.  Note that $\psi + c(\psi)$ still achieves the minimum uniquely on a (smaller) open subset of $\gamma_k$; see Lemma~\ref{Lem:VAConfig}.  Proceed to $\beta_{k+1}$.

\medskip

\noindent \textbf{Internal bridge subroutine:} Upon arrival at $\beta_k$, for $1 < k < g + 1$, apply the following steps.

\medskip

 \noindent \textbf{Internal bridge subroutine, Step 1: If the slopes are steady, carry on.} If $\sigma_k = \sigma_{k-1}$ then proceed to the next loop $\gamma_k$.

\medskip

\noindent \textbf{Internal bridge subroutine, Step 2: Otherwise, be greedy.} If $2 \leq k \leq g$ and $\sigma_k < \sigma_{k-1}$, then assign as many functions to $\beta_k$ as possible.  Consider the slopes of unassigned functions $\psi \in \cB$ with $\sigma_k > s_k(\psi) \geq \sigma_{k-1}$, and choose one with each such slope.  Set the coefficients of the chosen functions so that each is greater than $\theta'$ on $\gamma_{k-1}$ and each achieves the minimum uniquely on a subinterval of $\beta_{k}$ of length $\epsilon^2 \cdot \ell_k$. Assign these functions to $\beta_{k}$.  Proceed to $\gamma_k$.

\medskip

\noindent \textbf{Final bridge subroutine:} When we arrive at the final bridge $\beta_{g+1}$, be greedy and assign as many functions to the bridge as possible.  Consider the slopes of unassigned functions $\psi \in \cB$ with $\sigma_{g+1} \geq  s_k(\psi) \geq \sigma_{g}$, and choose one with each such slope.  Set the coefficients of the chosen functions so that each is greater than $\theta'$ on $\gamma_{k-1}$ and achieves the minimum uniquely on a subinterval of $\beta_{g+1}$ of length $\epsilon^2 \cdot \ell_{g+1}$. Assign each of these functions to $\beta_{g+1}$.  Output the assignment function $\alpha$ and the certificate of independence $\theta = \min \{ \psi + c(\psi) \mid \alpha(\psi) \neq 0 \}$.

\subsection{Examples and verification} In Lemma~\ref{Lem:VAConfig} and Corollary~\ref{Cor:VAIndependence}, we verify that the tropical linear combination $\theta = \min \{ \psi + c(\psi) \mid \alpha(\psi) \neq \emptyset \}$ is indeed a certificate of independence with the desired properties, i.e., $\psi + c(\psi)$ achieves the minimum uniquely on an open subset of the bridge or loop $\alpha(\psi)$ and does not achieve the minimum on any loop to the right of $\alpha(\psi)$.   Before proving that the output of the algorithm has these properties, we illustrate with an example.

\medskip

\begin{example}
\label{Ex:CanonicalIndependence}
Suppose $g = 3$, and let $D$ be the break divisor in the canonical class $[K_\Gamma]$, as discussed in Example~\ref{Ex:Genus3}.  We apply the algorithm with $\cB = \{ \varphi_{i,j} \mid 0 \leq i \leq j \leq 2 \}$ and $\sigma_k = 0$ for $1 \leq k \leq 4$. For simplicity, we denote $c_{i,j} := c(\varphi_{i,j})$.

Using Proposition~\ref{Prop:VASlopes},
we compute the slopes of the functions $\varphi_i$ along each of the bridges.  These slopes are shown in Figure~\ref{Fig:CanonicalExample} and listed in the following table.

\medskip

\begin{figure}[h!]
\begin{tabular}{|c|c|c|c|c|}
\hline
 & $\beta_1$ & $\beta_2$ & $\beta_3$ & $\beta_4$ \\
\hline
$\varphi_2$ & 1 & 2 & 2 & 2 \\
\hline
$\varphi_1$ & 0 & 0 & 1 & 1 \\
\hline
$\varphi_0$& -1 & -1 & -1 & 0 \\
\hline
\end{tabular}
\caption{The slopes $\{s_k(\varphi_i)\}$ of the functions $\varphi_i$ on the each bridge $\beta_k$.}
\end{figure}

We then use this table to determine which functions are permissible on each loop.

\medskip

\begin{figure}[h!]
\begin{tabular}{|c|c|c|}
\hline
$\gamma_1$ & $\gamma_2$ & $\gamma_3$ \\
\hline
$\varphi_{1,1}$ & $\varphi_{1,1}$ &
$\varphi_{0,1}$ \\
$\varphi_{0,2}$ & $\varphi_{0,1}$ & $\varphi_{0,0}$ \\
\hline
\end{tabular}
\caption{The functions $\varphi_{i,j}$ that are permissible on each loop $\gamma_k$.}
\end{figure}

On the first bridge, there are 2 functions with strictly positive slope: $\varphi_{2,2}$ and $\varphi_{1,2}$.  Since $\varphi_{2,2}$ has the highest slope 2 on $\beta_1$, if $\varphi_{2,2} + c_{2,2}$ does not achieve the minimum at the left endpoint $w_0$, then it will never achieve the minimum.  So we set $c_{2,2} = 0$ and $\alpha (\varphi_{2,2}) = \beta_1$.  Similarly, $\varphi_{1,2}$ is the unique function with slope 1 on $\beta_1$.  We choose $c_{1,2}$ so that $\varphi_{1,2} + c_{1,2}$ is equal to $\varphi_{2,2}$ at a point a short distance to the right of $w_0$ and assign $\alpha (\varphi_{1,2}) = \beta_1$.

On the first loop $\gamma_1$, there are 2 permissible functions: $\varphi_{0,2}$ and $\varphi_{1,1}$.  The function $\varphi_{0,2}$ is departing (Definition~\ref{Def:VADeparting}), so we set $\alpha (\varphi_{0,2}) = \gamma_1$.  We choose coefficients $c_{0,2}$ and $c_{1,1}$ so that $\varphi_{0,2} + c_{0,2}$ achieves the minimum at $w_1$, and $\varphi_{1,1} + c_{1,1}$ achieves the minimum a short distance to the right of $w_1$.

On the second loop $\gamma_2$ there are again 2 permissible functions: $\varphi_{1,1}$ and $\varphi_{0,1}$.  Now, $\varphi_{1,1}$ is departing, so we set $\alpha (\varphi_{1,1}) = \gamma_2$ and choose the coefficient $c_{1,1}$ accordingly, so that $\varphi_{1,1}+ c_{1,1}$ achieves the minimum on $\gamma_2$.  Similarly, on the third loop $\gamma_3$ the 2 unassigned functions $\varphi_{0,1}$ and $\varphi_{0,0}$ are both permissible.  The function $\varphi_{0,1}$ is departing, so we set $\alpha (\varphi_{0,1}) = \gamma_3$ and set $c_{0,1}$ accordingly.  Finally, we set $\alpha (\varphi_{0,0}) = \beta_4$ and choose the coefficient $c_{0,0}$ so that $\varphi_{0,0} + c_{0,0}$ achieves the minimum uniquely on $\beta_4$, start from a point a short distance to the right of $w_3$.

Note that each $\varphi_{i,j}$ is assigned to a bridge or loop.  The resulting certificate of independence $\theta = \min_{i,j} \{ \varphi_{i,j} + c_{i,j} \}$ is schematically illustrated in Figure~\ref{Fig:CanonicalCertificate}.  The points in the support of $2D + \ddiv(\theta)$ are marked in black. (The point on $\beta_3$ appears with multiplicity 2, as indicated, and all others have multiplicity 1.)

\begin{figure}[H]
\begin{tikzpicture}[thick, scale=0.8]
\begin{scope}[grow=right, baseline]
\draw (-1,0) circle (1);
\draw (-4,0)--(-2,0);
\draw (0,0)--(1,0);
\draw (2,0) circle (1);
\draw (3,0)--(4,0);
\draw (5,0) circle (1);
\draw (6,0)--(7,0);
\draw [ball color=black] (-3.5,0) circle (0.75mm);
\draw [ball color=black] (-2.5,0) circle (0.75mm);
\draw [ball color=black] (-0.29,0.71) circle (0.75mm);
\draw [ball color=black] (0.3,0) circle (0.75mm);
\draw [ball color=black] (3.3,0) circle (0.75mm);
\draw (3.3,-0.3) node {\footnotesize $2$};
\draw [ball color=black] (4.29,0.71) circle (0.75mm);
\draw [ball color=black] (6.3,0) circle (0.75mm);

\draw (-3.9,0.2) node {\tiny $2,\!2$};
\draw (-3,0.2) node {\tiny $1,\!2$};
\draw (-1,1.2) node {\tiny $0,\!2$};
\draw (2,1.2) node {\tiny $1,\!1$};
\draw (5,1.2) node {\tiny $0,\!1$};
\draw (6.7,0.2) node {\tiny $0,\!0$};

\end{scope}
\end{tikzpicture}
\caption{A certificate of independence $\theta = \min_{i,j} \{ \varphi_{i,j} + c_{i,j} \}$ on a chain of 3 loops. The function $\varphi_{i,j} + c_{i,j}$ achieves the minimum uniquely on the region labeled $i,j$ in the complement of the support of $2D + \ddiv(\theta)$.}
\label{Fig:CanonicalCertificate}
\end{figure}
\end{example}

\begin{example}  \label{ex:randomtableau}
We now explain how the algorithm proceeds and illustrate the output in an example that is relevant to our main results, with $(g,r,d) = (22,6,25)$.
Let $\Gamma$ be a chain of $22$ loops with admissible edge lengths, and let $D$ be a vertex avoiding break divisor of degree $25$ and rank $6$ associated to the tableau $T$ in Figure~\ref{Fig:RandomTableau}. \begin{figure}[h!]
\begin{ytableau}
1 & 3 & 6 & 9 & 10 & 13 & 15 \\
2 & 5 & 7 & 12 & 16 & 19 & 20 \\
4 & 8 & 11 & 14 & 17 &21 & 22
\end{ytableau}
\caption{A randomly generated $3 \times 7$ tableau $T$.}
\label{Fig:RandomTableau}
\end{figure}

The additional input is $\cB = \{ \varphi_{i,j} \mid 0 \leq i \leq j \leq 6 \}$ and the slope function $\sigma$ given by:
\begin{displaymath}
\sigma_k = \left\{ \begin{array}{ll}
4 & \textrm{if $k \leq 7$,} \\
3 & \textrm{if $7 < k \leq 15$,}\\
2 & \textrm{if $16 < k \leq 23$.}
\end{array} \right.
\end{displaymath}
The tableau was chosen at random; the slope function $\sigma$ is specified according to a general rule given in Definition~\ref{def:z}, below.
In preparation for running the algorithm, it is helpful to pre-compute the slopes of the functions $\varphi_i$ along each of the bridges, using Proposition~\ref{Prop:VASlopes}. These slopes are shown in Figure~\ref{fig:g22slopes}.
\begin{figure}[h!]
\scalebox{.78}{
\begin{tabular}{|c||c|c|c|c|c|c|c|c|c|c|c|c|c|c|c|c|c|c|c|c|c|c|c|}
\hline
& $\beta_1$ & $\beta_2$ & $\beta_3$ & $\beta_4$ & $\beta_5$ & $\beta_6$ & $\beta_7$ & $\beta_8$ & $\beta_9$ & $\beta_{10}$ & $\beta_{11}$ & $\beta_{12}$ & $\beta_{13}$ & $\beta_{14}$ & $\beta_{15}$ & $\beta_{16}$ & $\beta_{17}$ & $\beta_{18}$ & $\beta_{19}$ & $\beta_{20}$ & $\beta_{21}$ & $\beta_{22}$ & $\beta_{23}$ \\
\hline
$\varphi_6$ &3 & 4 & 5 & 5 & 6 & 6 & 6 & 6 & 6 & 6 & 6 & 6 & 6 & 6 & 6 & 6 & 6 & 6 & 6 & 6 & 6 & 6 & 6 \\
\hline
$\varphi_5$ & 2 & 2 & 2 & 3 & 3 & 4 & 4 & 4 & 5 & 5 & 5 & 5 & 5 & 5 & 5 & 5 & 5 & 5 & 5 & 5 & 5 & 5 & 5 \\
\hline
$\varphi_4$ & 1 & 1 & 1 & 1 & 1 & 1 & 2 & 3 & 3 & 3 & 3 & 4 & 4 & 4 & 4 & 4 & 4 & 4 & 4 & 4 & 4 & 4 & 4\\
\hline
$\varphi_3$ & 0 & 0 & 0 & 0 & 0 & 0 & 0 & 0 & 0 & 1 & 1 & 1 & 2 & 2 & 3 & 3 & 3 & 3 & 3 & 3 & 3 & 3 & 3\\
\hline
$\varphi_2$ & -1 & -1 & -1 & -1 & -1 & -1 & -1 & -1 & -1 & -1 & 0 & 0 & 0 & 0 & 0 & 0 & 1 & 2 & 2 & 2 & 2 & 2 & 2 \\
\hline
$\varphi_1$ & -2 & -2 & -2 & -2 & -2 & -2 & -2 & -2 & -2 & -2 & -2 & -2 & -2 & -1 & -1 & -1 & -1 & -1 & -1 & 0 & 0 & 1 & 1\\
\hline
$\varphi_0$ & -3 & -3 & -3 & -3 & -3 & -3 & -3 & -3 & -3 & -3 & -3 & -3 & -3 & -3 & -3 & -2 & -2 & -2 & -2 & -2 & -1 & -1 & 0\\
\hline
\end{tabular}
}
\caption{The slopes $s_k(\varphi_i)$ of the distinguished functions $\varphi_i$.}
\label{fig:g22slopes}
\end{figure}
We can then use these slopes to determine the permissible functions $\varphi_{i,j}$ on each loop, as shown in Figure~\ref{fig:permiss22}.
\begin{figure}[h!]
\scalebox{.7}{
\begin{tabular}{|c|c|c|c|c|c|c|c|c|c|c|c|c|c|c|c|c|c|c|c|c|c|c|c|}
\hline
$\gamma_1$ & $\gamma_2$ & $\gamma_3$ & $\gamma_4$ & $\gamma_5$ & $\gamma_6$ & $\gamma_7$ & $\gamma_8$ & $\gamma_9$ & $\gamma_{10}$ & $\gamma_{11}$ & $\gamma_{12}$ & $\gamma_{13}$ & $\gamma_{14}$ & $\gamma_{15}$ &  $\gamma_{16}$ & $\gamma_{17}$ & $\gamma_{18}$ & $\gamma_{19}$ & $\gamma_{20}$ & $\gamma_{21}$ & $\gamma_{22}$ \\
\hline
 $\varphi_{5,5}$ & $\varphi_{5,5}$ & $\varphi_{5,5}$ & $\varphi_{4,5}$ & $\varphi_{4,5}$ & $\varphi_{4,4}$ & $\varphi_{4,4}$ & $\varphi_{3,4}$ & $\varphi_{3,4}$ & $\varphi_{2,4}$ & $\varphi_{2,4}$ & $\varphi_{3,3}$ & $\varphi_{1,5}$ & $\varphi_{1,5}$ & $\varphi_{2,3}$ & $\varphi_{2,2}$ & $\varphi_{2,2}$ & $\varphi_{1,3}$ & $\varphi_{1,3}$ & $\varphi_{1,2}$ & $\varphi_{1,2}$ & $\varphi_{1,1}$ \\
  $\varphi_{4,6}$ & $\varphi_{3,6}$ & $\varphi_{4,5}$ & $\varphi_{2,6}$ & $\varphi_{3,5}$ & $\varphi_{3,5}$ & $\varphi_{3,5}$ & $\varphi_{2,5}$ & $\varphi_{1,5}$ & $\varphi_{1,5}$ & $\varphi_{1,5}$ & $\varphi_{1,5}$ & $\varphi_{1,4}$ & $\varphi_{1,4}$ & $\varphi_{0,6}$ & $\varphi_{1,3}$ & $\varphi_{1,3}$ & $\varphi_{0,4}$ & $\varphi_{1,2}$ & $\varphi_{0,4}$ & $\varphi_{1,1}$ & $\varphi_{0,3}$ \\
    $\varphi_{3,6}$ & $\varphi_{2,6}$ & $\varphi_{2,6}$ & $\varphi_{1,6}$ & $\varphi_{1,6}$ & $\varphi_{1,6}$ & $\varphi_{1,6}$ & $\varphi_{1,5}$ & $\varphi_{0,6}$ & $\varphi_{0,6}$ & $\varphi_{0,6}$ & $\varphi_{0,6}$ & $\varphi_{0,6}$ & $\varphi_{0,6}$ & $\varphi_{0,5}$ & $\varphi_{0,4}$ & $\varphi_{0,4}$ &  & $\varphi_{0,4}$ & $\varphi_{0,3}$ & $\varphi_{0,3}$ & $\varphi_{0,2}$ \\
 &&&&&&& $\varphi_{3,4}$ & & & & & & &&&&&&&&\\
\hline
 \end{tabular}
 }
 \caption{The functions $\varphi_{i,j}$ that are permissible on each loop.}
  \label{fig:permiss22}
  \end{figure}
The output of the algorithm, i.e., the certificate of independence $\theta = \min_{i,j} \{ \varphi_{i,j} + c_{i,j} \}$ together with the assignment function $\alpha \colon \{ \varphi_{i,j} \} \to \{ \beta_k \} \cup \{ \gamma_\ell \}$ is depicted schematically in Figure~\ref{Fig:Config}.

\begin{figure}[H]

\begin{center}
\scalebox{.8}{
\begin{tikzpicture}
\draw (-1.5,12)--(0,12);
\draw (-1.25,12.2) node {\footnotesize $66$};
\draw [ball color=black] (-1,12) circle (0.55mm);
\draw (-0.75,12.2) node {\footnotesize $56$};
\draw [ball color=black] (-0.5,12) circle (0.55mm);

\draw (0.5,12) circle (0.5);
\draw (0.5,12.7) node {\footnotesize $46$};
\draw [ball color=black] (0.75,12.43) circle (0.55mm);

\draw (1,12)--(2,12);
\draw [ball color=black] (1.15,12) circle (0.55mm);
\draw (2.5,12) circle (0.5);
\draw (2.5,12.7) node {\footnotesize $36$};
\draw [ball color=black] (2.75,12.43) circle (0.55mm);

\draw (3,12)--(4,12);
\draw [ball color=black] (3.15,12) circle (0.55mm);
\draw (4.5,12) circle (0.5);
\draw (4.5,12.7) node {\footnotesize $55$};

\draw (5,12)--(6,12);
\draw [ball color=black] (5.15,12) circle (0.55mm);
\draw (5.15,11.8) node {\tiny $2$};
\draw (6.5,12) circle (0.5);
\draw (6.5,12.7) node {\footnotesize $26$};
\draw [ball color=black] (6.75,12.43) circle (0.55mm);

\draw (7,12)--(8,12);
\draw [ball color=black] (7.15,12) circle (0.55mm);
\draw (8.5,12) circle (0.5);
\draw (8.5,12.7) node {\footnotesize $45$};
\draw [ball color=black] (8.75,12.43) circle (0.55mm);
\draw (9,12)--(10,12);

\draw [ball color=black] (9.15,12) circle (0.55mm);
\draw (10.5,12) circle (0.5);
\draw (10.5,12.7) node {\footnotesize $44$};
\draw [ball color=black] (10.75,12.43) circle (0.55mm);
\draw [ball color=black] (10.25,12.43) circle (0.55mm);

\draw (11,12)--(12,12);
\draw (12.5,12) circle (0.5);
\draw (12.5,12.7) node {\footnotesize $16$};
\draw (13.25,12.2) node {\footnotesize $35$};
\draw [ball color=black] (12.75,12.43) circle (0.55mm);
\draw [ball color=black] (12.25,12.43) circle (0.55mm);
\draw (13,12)--(14,12);
\draw [ball color=black] (13.5,12) circle (0.55mm);
\draw (14.5,12) node {$\cdots$};
\end{tikzpicture}
}

\bigskip

\scalebox{.78}{
\begin{tikzpicture}
\draw (-7,6) node {$\cdots$};
\draw (-6.5,6)--(-6,6);
\draw (-5.5,6) circle (0.5);
\draw (-5.5,6.7) node {\footnotesize $25$};
\draw [ball color=black] (-5.25,6.43) circle (0.55mm);

\draw (-5,6)--(-4,6);
\draw [ball color=black] (-4.85,6) circle (0.55mm);
\draw (-3.5,6) circle (0.5);
\draw (-3.5,6.7) node {\footnotesize $34$};
\draw [ball color=black] (-3.75,6.43) circle (0.55mm);

\draw (-3,6)--(-2,6);
\draw [ball color=black] (-2.85,6) circle (0.55mm);
\draw (-1.5,6) circle (0.5);
\draw (-1.5,6.7) node {\footnotesize $24$};
\draw [ball color=black] (-1.75,6.43) circle (0.55mm);
\draw [ball color=black] (-1.25,6.43) circle (0.55mm);

\draw (-1,6)--(0,6);
\draw (0.5,6) circle (0.5);
\draw (0.5,6.7) node {\footnotesize $15$};
\draw [ball color=black] (0.75,6.43) circle (0.55mm);
\draw [ball color=black] (0.25,6.43) circle (0.55mm);

\draw (1,6)--(2,6);
\draw (2.5,6) circle (0.5);
\draw (2.5,6.7) node {\footnotesize $33$};

\draw (3,6)--(4,6);
\draw [ball color=black] (3.15,6) circle (0.55mm);
\draw (4.5,6) circle (0.5);
\draw (4.5,6.7) node {\footnotesize $14$};
\draw (3.15,5.8) node {\tiny $2$};
\draw [ball color=black] (4.75,6.43) circle (0.55mm);
\draw [ball color=black] (4.25,6.43) circle (0.55mm);

\draw (5,6)--(6,6);
\draw (6.5,6) circle (0.5);
\draw (6.5,6.7) node {\footnotesize $23$};
\draw [ball color=black] (6.25,6.43) circle (0.55mm);
\draw [ball color=black] (6.75,6.43) circle (0.55mm);

\draw (7,6)--(8,6);
\draw (8.5,6) circle (0.5);
\draw (8.5,6.7) node {\footnotesize $06$};
\draw [ball color=black] (8.25,6.43) circle (0.55mm);
\draw (9,6)--(9.5,6);
\draw [ball color=black] (9.15,6) circle (0.55mm);
\draw (9.35,6.2) node {\footnotesize $05$};
\draw (10,6) node {$\cdots$};
\end{tikzpicture}
}
\bigskip

\scalebox{.8}{
\begin{tikzpicture}
\draw (-4.5,3) node {$\cdots$};
\draw (-4,3)--(-3,3);
\draw [ball color=black] (-3.5,3) circle (0.55mm);
\draw (-2.5,3) circle (0.5);
\draw (-2.5,3.7) node {\footnotesize $22$};
\draw [ball color=black] (-2.75,3.43) circle (0.55mm);
\draw [ball color=black] (-2.25,3.43) circle (0.55mm);

\draw (-2,3)--(-1,3);
\draw (-.5,3) circle (0.5);
\draw (-.5,3.7) node {\footnotesize $04$};
\draw [ball color=black] (-.75,3.43) circle (0.55mm);
\draw [ball color=black] (-.25,3.43) circle (0.55mm);

\draw (0,3)--(1,3);
\draw (1.5,3) circle (0.5);
\draw [ball color=black] (1.25,3.43) circle (0.55mm);
\draw [ball color=black] (1.75,3.43) circle (0.55mm);

\draw (2,3)--(3,3);
\draw (3.5,3) circle (0.5);
\draw (3.5,3.7) node {\footnotesize $13$};
\draw [ball color=black] (3.25,3.43) circle (0.55mm);

\draw (4,3)--(5,3);
\draw [ball color=black] (4.15,3) circle (0.55mm);
\draw [ball color=black] (5.75,3.43) circle (0.55mm);
\draw (5.5,3) circle (0.5);
\draw (5.5,3.7) node {\footnotesize $12$};
\draw [ball color=black] (5.25,3.43) circle (0.55mm);

\draw (6,3)--(7,3);
\draw [ball color=black] (7.75,3.43) circle (0.55mm);
\draw (7.5,3) circle (0.5);
\draw (7.5,3.7) node {\footnotesize $11$};
\draw [ball color=black] (7.25,3.43) circle (0.55mm);

\draw (8,3)--(9,3);
\draw (9.5,3) circle (0.5);
\draw (9.5,3.7) node {\footnotesize $03$};
\draw [ball color=black] (9.25,3.43) circle (0.55mm);
\draw [ball color=black] (10.15,3) circle (0.55mm);

\draw (10,3)--(11.75,3);
\draw [ball color=black] (10.65,3) circle (0.55mm);
\draw [ball color=black] (11.2,3) circle (0.55mm);
\draw (10.4,3.2) node {\footnotesize $02$};
\draw (10.95,3.2) node {\footnotesize $01$};
\draw (11.5,3.2) node {\footnotesize $00$};
\end{tikzpicture}
}
\end{center}
\caption{The divisor $D' = 2D + \ddiv (\theta)$.  The function $\varphi_{i,j}$ is assigned to the bridge or loop directly under the label $ij$, and $\varphi_{i,j} + c_{i,j}$ achieves the minimum uniquely on the labeled region in $\Gamma \smallsetminus \mathrm{Supp}(D')$.}
\label{Fig:Config}
\end{figure}

The graph should be read from left to right and top to bottom, so the first 7 loops appear in the first row, with $\gamma_1$ on the left and $\gamma_7$ on the right, and $\gamma_{22}$ is the last loop in the third row.   The function $\varphi_{i,j}$ is assigned to the bridge or loop directly under the label $ij$. The 48 filled dots indicate the support of the divisor $D' = 2D + \ddiv (\theta)$. Note that $\deg(D') = 50$; the points on the bridges $\beta_4$ and $\beta_{13}$ appear with multiplicity 2, as marked.  Each of the 28 functions $\varphi_{i,j} + c_{i,j}$ achieves the minimum uniquely on the connected component of the complement of $\mathrm{Supp}(D')$ labeled $ij$.

This example has several interesting features that do not appear in Example~\ref{Ex:CanonicalIndependence}. Several functions, such as $\varphi_{35}$ and $\varphi_{13}$, achieve the minimum on parts of some loops to the left of the bridge or loop to which they are assigned. Nevertheless, no function achieves the minimum on any loop to the right of where it is assigned. There are loops with no permissible departing functions, the first of which is $\gamma_6$.  There are also loops with a departing permissible function that is assigned to an earlier loop. The first of these is $\gamma_{11}$; the function $\varphi_{2,4}$ is departing on $\gamma_{11}$, but it is assigned to $\gamma_{10}$. On such loops, we follow Step 4 in the loop subroutine to adjust the coefficients and assign a function.  Note also that there are functions assigned to the internal bridges $\beta_8$ and $\beta_{16}$.  These are exactly the bridges $\beta_k$, with $2 \leq k \leq g$, for which $\sigma_k < \sigma_{k-1}$.
\end{example}

\begin{remark} \label{rem:variants}
Many natural variants of this algorithm are possible, even in the vertex avoiding case. For instance, our algorithm for constructing a template from pairwise sums of building blocks in the general case (\S\ref{sec:template}), although closely modeled on this algorithm, varies slightly in its treatment of certain special loops. In particular, when applied to the 28 functions $\varphi_{i,j}$ in Example~\ref{ex:randomtableau}, the template algorithm produces an assignment function and a certificate of independence but not necessarily the same one constructed here.  More specifically, depending on $x_{18}(D)$, the lingering loop $\gamma_{18}$ may or may not be ``skippable'' in the sense of Definition~\ref{Def:Skippable}.  When $\gamma_{18}$ is not skippable, the general algorithm will assign a function to achieve the minimum on this loop.

In our work on curves of genus 13, we introduced a different variant of the basic algorithm, in which functions are systematically assigned to lingering loops \cite{M13}. In view of such applications, we see the flexibility of the basic construction as a desirable feature.
\end{remark}

We now resume our discussion of the basic algorithm in the vertex avoiding case and prove that the output
$
\theta =  \min \{ \psi + c(\psi) \}
$
is a certificate of independence.

\begin{lemma}
\label{Lem:VANotToRight}
Suppose that $\psi$ is assigned to the loop $\gamma_k$ or the bridge $\beta_k$, and let $\ell > k$.  If there is an unassigned permissible function on $\gamma_\ell$, then $\psi + c(\psi)$ is strictly greater than $\theta$ on $\gamma_\ell$.
\end{lemma}

\begin{proof}
Let $v \in \gamma_{\ell}$.  Let $\psi'$ be an unassigned permissible function on $\gamma_{\ell}$, and let $\ell'$ be the smallest integer in the range $\ell \leq \ell' \leq k$ such that $\psi'$ is permissible on $\gamma_{\ell'}$.  If $s_{k'} (\psi) > s_{k'} (\psi')$ for some $k'$ in the range $\ell' < k' \leq \ell$, then because $\Gamma$ has $C$-admissible edge lengths, we see that $\psi (v) + c(\psi) > \psi' (v) + c(\psi') \geq \theta (v)$.

Otherwise, by construction, we have
\[
\psi (v) + c(\psi) - \psi'(v) + c(\psi') \geq \frac{1}{3}m_k - d \sum_{t=\ell'}^{\ell} m_t .
\]
Since $\Gamma$ has $C$-admissible edge lengths, this expression is positive.
\end{proof}

\begin{lemma}
\label{Lem:VAConfig}
Suppose that $\psi$ is assigned to the loop $\gamma_k$.  Then there is a point $v \in \gamma_k$ where $\psi + c(\psi)$ achieves the minimum uniquely.
\end{lemma}

\begin{proof}
If there is an unassigned departing function $\psi$ on $\gamma_k$, then by construction $\psi + c(\psi)$ is the only function that achieves the minimum at $w_k$.  Otherwise, by Lemma~\ref{Lem:HalfInteger}, any two permissible functions differ by an integer multiple of $\frac{1}{2}m_k$ at points whose distance from $w_k$ is a half-integer multiple of $m_k$.  By construction, there is such a point where $\psi + c(\psi)$ achieves the minimum uniquely and, after increasing the coefficient by $\frac{1}{3}m_k$, it still does.
\end{proof}

\begin{corollary}
\label{Cor:VAIndependence}
The tropical linear combination $\theta = \min \{ \psi + c(\psi) \mid \alpha(\psi) \neq \emptyset \}$ is a certificate of independence for $\{ \psi \in \cB\mid \alpha(\psi) \neq \emptyset \}$.
\end{corollary}

\begin{proof}
By Lemma~\ref{Lem:VAConfig}, each function $\psi$ assigned to a loop achieves the minimum uniquely at some point of that loop.  Similarly, since the set of functions assigned to a bridge have distinct slopes along that bridge, each achieves the minimum uniquely at some point of that bridge.  It follows that $\theta = \min \{ \psi + c(\psi) \mid \alpha(\psi) \neq \emptyset \}$ is a certificate of independence for $\{ \psi \in \cB\mid \alpha(\psi) \neq \emptyset \}$.
\end{proof}

\subsection{The cases where $r = 6$ and $g = 21 + \rho$}  \label{sec:VArelevant}
We now address how the algorithm applies in cases specific to the main results of this paper.  We continue the assumption that $\Gamma$ is a chain of $g$ loops, $D$ is a fixed vertex avoiding break divisor of degree $d$ and rank $r$, and the edge lengths of $\Gamma$ are $C$-admissible for some $C > 12 dg$.

Now we also assume  that $r = 6$, $g = 21 + \rho$, and $d = 24 + \rho$, for some non-negative integer $\rho$. We explain how to choose the function $\sigma$ so that the algorithm produces an independence among all 28 functions $\varphi_{i,j}$, for $0 \leq i \leq j \leq 6$.  By Corollary~\ref{Cor:VAIndependence}, it suffices to show that, for our choice of $\sigma$, every function $\varphi_{i,j}$ is assigned to some bridge or loop.

\medskip

Let $T$ be the tableau associated to $D$, as discussed in \S\ref{Sec:VADivisors}, above.

\begin{definition}\label{def:z}
Let $z$ be the 6th smallest entry appearing in the first two rows of $T$, and let $z'$ be the 10th smallest entry appearing in the last two rows.  We then set
\begin{displaymath}
\sigma_k = \left\{ \begin{array}{ll}
4 & \textrm{if $1 \leq k \leq z$,} \\
3 & \textrm{if $z + 1 \leq k \leq z'-2$,}\\
2 & \textrm{if $z' -1 \leq k \leq g+1$.}
\end{array} \right.
\end{displaymath}
\end{definition}

\begin{proposition} \label{Prop:VAVerification}
When applied to $\Gamma$, $D$, and $\sigma$ as above, with $\cB = \{ \varphi_{i,j} \mid 0 \leq i \leq j \leq 6 \}$, the algorithm outputs a certificate of independence together with a function $\alpha$ that assigns exactly one function to each of the 21 non-lingering loops.  The remaining 7 functions are assigned to the bridges $\beta_1$, $\beta_{z+1}$, $\beta_{z'-1}$, and $\beta_{g+1}$.  More precisely,
\begin{equation} \label{eq:assignments}
|\alpha^{-1}(\beta_k)| = \left\{ \begin{array}{ll}
2 & \textrm{if $k = 1$,} \\
1 & \textrm{if $k \in \{ z + 1, z' -1 \}$}, \\
3 & \textrm{if $k = g+1$,} \\
0 & \textrm{otherwise.}
\end{array} \right.
\end{equation}
\end{proposition}

\noindent In Example~\ref{ex:randomtableau}, we have $z = 7$ and $z' = 17$, and the functions assigned to bridges are:
\[
\alpha^{-1}(\beta_1) = \{ \varphi_{6,6}, \varphi_{5,6} \}, \quad \alpha^{-1}(\beta_8) = \{ \varphi_{3,5} \}, \quad \alpha^{-1}(\beta_{16}) = \{ \varphi_{0,5} \}, \quad \alpha^{-1}(\beta_{23}) = \{\varphi_{0,2}, \varphi_{0,1}, \varphi_{0,0} \}.
\]

We now state and prove several lemmas in preparation for the proof of Proposition~\ref{Prop:VAVerification}.

\begin{lemma}
\label{Lem:VAAtMostThree}
There are at most 3 non-departing permissible functions on each loop.
\end{lemma}

\begin{proof}
Fix $k$.  If $\varphi_{i,j}$ is a non-departing permissible function on $\gamma_k$, then $s_{k+1} (\varphi_{i,j}) = \sigma_k$.  For each $i$, this equality holds for at most one $j$.
It follows that there are most $\big \lceil \frac{r+1}{2} \big \rceil = 4$ non-departing permissible functions on $\gamma_k$, and 4 is possible only if $2s_{k+1} (\varphi_{3}) = \sigma_k$.  We claim that this never happens. Indeed, if $k \leq z+1$, then $2s_{k+1} (\varphi_{3}) \leq 2 < \sigma_k.$   If $z+1 < k \leq z'-1$, then $\sigma_k$ is odd.  And if $k \geq z'-1$ then $2s_{k+1}(\varphi_3) \geq 4 > \sigma_k$.  This proves the claim, and the lemma follows.
\end{proof}

\begin{lemma}  \label{lem:threeshape}
Let $S \subset \{ \varphi_{i,j} \}$ be a set of two or three non-departing permissible functions on $\gamma_k$. Suppose the coefficients $c(\psi)$ are chosen so that the functions $\{ \psi + c(\psi) \mid \psi \in S \}$ are all equal at $w_k$.  Then there is a point of $\gamma_k$ at which one of these functions is strictly less than the others.
\end{lemma}

\begin{proof}
We consider the case where $|S| = 3$.  The case where $|S| = 2$ is similar, but simpler. Since each $\psi \in S$ is permissible on $\gamma_k$ and non-departing, we have $s_{k} (\psi) \leq \sigma_k = s_{k+1}(\psi)$.

Assume the coefficients $c(\psi)$, for $\psi \in S$, are chosen so that the functions $\psi + c(\psi)$ all agree at $w_k$.  Let $\Upsilon = \min \{ \psi + c(\psi) \mid \psi \in S \}$. Note that $s_k(\Upsilon) \leq \sigma_k = s_{k+1}(\Upsilon)$.  By Lemma~\ref{Lem:DegreeOneLoop}, $\deg\big(\ddiv(\Upsilon)_{|\gamma_k}\big) = s_k(\Upsilon) - s_{k+1}(\Upsilon)$, and it follows that the restriction $(2D + \ddiv (\Upsilon))_{|{\gamma_k}}$ has degree at most 2.  Hence $\gamma_k \smallsetminus \mathrm{Supp} (2D + \ddiv (\Upsilon))$ consists of at most two connected components.  By \cite[Lemma~3.4]{tropicalGP}, any boundary point of a region where one of the tropical summands in $\Upsilon$ achieves the minimum is contained in $\mathrm{Supp} (2D + \ddiv (\Upsilon))$.  Therefore, the region where any summand achieves the minimum is either one of these connected components, or the union of both.

Since all three summands agree at $w_k$, and no two agree on the whole loop $\gamma_k$, we can narrow down the combinatorial possibilities as follows:  either all three agree on one region that contains $w_k$ and one of the three achieves the minimum uniquely on the other region, or two different pairs agree on the two different regions, and $w_k$ is in the boundary of both.  These two possibilities are illustrated in Figure~\ref{Fig:ThreeFunctions}.

\begin{figure}[H]
\begin{tikzpicture}
\matrix[column sep=0.5cm] {

\begin{scope}[grow=right, baseline]
\draw (-1,0) circle (1);
\draw (-3,0)--(-2,0);
\draw (0,0)--(1,0);
\draw [ball color=black] (-0.29,0.71) circle (0.55mm);
\draw [ball color=black] (-1.71,0.71) circle (0.55mm);
\draw (-1,-1.25) node {{\tiny $\psi, \psi', \psi''$}};
\draw (-1,1.25) node {{\tiny $\psi$}};

\draw (4,0) circle (1);
\draw (2,0)--(3,0);
\draw (5,0)--(6,0);
\draw [ball color=black] (5,0) circle (0.55mm);
\draw [ball color=black] (3.29,0.71) circle (0.55mm);
\draw (4,-1.25) node {{\tiny $\psi, \psi'$}};
\draw (4,1.25) node {{\tiny $\psi, \psi''$}};

\end{scope}

\\};
\end{tikzpicture}

\caption{Two possibilities for where 3 functions may achieve the minimum.}
\label{Fig:ThreeFunctions}
\end{figure}
In either case, there is one summand $\psi$ that achieves the minimum on all of $\gamma_k$.  Furthermore, all three have the same slope along the bridge $\beta_{k+1}$, so $\psi + c(\psi)$ also achieves the minimum on $\beta_{k+1}$.  Therefore $\Upsilon$ is equal to $\psi$ in a neighborhood of $w_k$.  Since $D$ is vertex avoiding, $2D+ \ddiv (\psi)$ does not contain $w_k$ (Proposition~\ref{Prop:VAiff}).  Therefore, $2D+ \ddiv ( \Upsilon )$ does not contain $w_k$ either.  This rules out the second case; we conclude that all three summands achieve the minimum on a region that includes $w_k$, and one achieves the minimum uniquely on the nonempty complementary region, as shown on the left in Figure~\ref{Fig:ThreeFunctions}.  This proves the lemma.
\end{proof}

Recall that the loops $\gamma_k$ on which $\psi \in \cB$ is permissible are consecutive.

\begin{definition} \label{def:new}
We say that $\psi \in \cB$ is a \emph{new permissible function} on $\gamma_k$ if $\psi$ is permissible on $\gamma_k$ and is not permissible on $\gamma_j$ for $j < k$.
\end{definition}

\begin{lemma}
\label{lem:onenew}
If $k \not \in \{ 1, z+1, z'-1 \}$ then there is at most one new permissible function on $\gamma_k$.  If, furthermore, $\gamma_k$ is lingering then there are none.
\end{lemma}

\begin{proof}
Recall that $\psi \in \cB$ is permissible on $\gamma_k$ if and only if $s_{k} (\psi) \leq \sigma_k \leq s_{k +1}(\psi)$.  Suppose $k \not \in \{ 1, z+1, z'-1 \}$.  Then $\sigma_k = \sigma_{k-1}$, so if $s_k (\psi) = \sigma_k$, then $\psi$ is permissible on $\gamma_{k-1}$.  Thus, if $\psi$ is a new permissible function, then $s_k(\psi) < \sigma_k \leq s_{k+1}(\psi)$.  Hence, if $\psi = \varphi_{i,j}$ then $k$ must be in the $i$th or $j$th column of $T$.

Suppose $k$ is in the $i$th column of $T$.  Then there is at most one $j$ (possibly equal to $i$) such that $s_{k}(\varphi_{i,j}) < \sigma_k$ and $s_{k+1}(\varphi_{i,j}) \geq \sigma_k$, and hence at most one new permissible function $\varphi_{i,j}$.
\end{proof}

In order to prove Proposition~\ref{Prop:VAVerification}, we must keep track of the non-lingering loops where there are no new permissible functions. These will be the loops numbered $b$ and $b'$, characterized as follows.

\begin{definition} \label{def:b}
Let $b$ be the 7th smallest entry appearing in the first two rows of the tableau $T$ and let $b'$ be the 8th smallest symbol appearing in the union of the first and third row.
\end{definition}

\noindent We note that $z < b < b' \leq z'-2.$  The first two inequalities are straightforward.  To see the last inequality, recall from Definition~\ref{def:z} that $z'$ is the 10th smallest entry that appears in the union of the second and third row.  Therefore, the 9th smallest symbol appearing in the union of the second and third row must be strictly between $b'$ and $z'$.

\begin{example}
In Example~\ref{ex:randomtableau}, we have $b = 9$ and $b' = 11$.   Note that the number of unassigned permissible functions dropped from $4$ to $3$ and from $3$ to $2$ on $\gamma_9$ and $\gamma_{11}$, respectively.
\end{example}

\begin{proposition}  \label{prop:nonew}
If $b \neq z+1$, then the non-lingering loops with no new permissible functions are exactly $\gamma_z$, $\gamma_b$, $\gamma_{b'}$, and $\gamma_{z'-2}$ and there are exactly 4 permissible functions on $\gamma_{z+1}$.  Otherwise, if $b = z+1$, the non-lingering loops with no new permissible functions are exactly $\gamma_z$, $\gamma_{b'}$, and $\gamma_{z'-2}$, and there are exactly 3 permissible functions on $\gamma_{z+1}$.
\end{proposition}

\begin{proof}
We begin by showing that there are no new permissible functions on $\gamma_z$.    Suppose $\varphi_{i,j}$ is a new permissible function on $\gamma_z$, i.e., $s_{z} (\varphi_{i,j}) < \sigma_z = 4 \leq s_{z+1} (\varphi_{i,j})$.  We show that this is impossible.

Recall that $z$ is the 6th smallest entry appearing in the first two rows of $T$ (Definition~\ref{def:z}).  There are 4 possibilities for the location of these entries, corresponding to partitions of 6 into no more than 2 parts.  We consider the case where the partition is $(4,2)$; the other three cases are similar.
\begin{figure}[H]
\begin{tikzpicture}
\matrix[column sep=0.7cm, row sep = 0.7cm] {
\begin{scope}[node distance=0 cm,outer sep = 0pt]
	      \node[bsq] (11) at (2.5,0) {};
	      \node[bsq] (21) [below = of 11] {};
	      \node[bsq] (12) [right = of 11] {};
	      \node[bsq] (22) [below = of 12] {};
	      \node[bsq] (13) [right = of 12] {};
	      \node[bsq] (14) [right = of 13] {};
\end{scope}
\\};
\end{tikzpicture}
\caption{The Young diagram corresponding to the partition $(4,2)$.}
\label{Fig:Partition}
\end{figure}
When the 6 smallest entries in the first two rows occupy the Young diagram corresponding to $(4,2)$, then $z$ is either the 4th entry in the first row, or the 2nd entry in the second row.  Suppose $z$ is the $2$nd entry in the second row. Then $s_{z+1} (\Sigma) = s_z (\Sigma) + (0,0,0,0,0,1,0)$, and $s_{z+1}(\Sigma)$ is either
\[
(-3,-2,-1,1,2,4,5) \mbox{ \  or \ } (-3,-2,-1,1,2,4,6),
\]
depending on whether the 3rd box in the first column is greater than $z$.  By inspection, there is no pair of indices $(i,j)$ such that $s_z[i] + s_z[j] < 4$ and $s_{z+1}[i] + s_{z+1}[j] \geq 4$, so there is no new permissible function on $\gamma_z$.  Similarly, if $z$ is the $4$th entry in the first row, then $s_{z+1} (\Sigma)= s_z + (0,0,0,1,0,0,0)$, and $s_{z+1}(\Sigma)$ is one of the following:
\[
(-3,-2, -1,1, 2,4,5), \ (-3,-2,-1,1,2,4,6), \mbox{ \ or \ } (-3,-2,-1,1,2,5,6),
\]
depending on whether the 3rd box in each of the first two columns is greater than $x$.  Once again, by inspection, there is no new permissible function on $\gamma_z$.

The proofs that $\gamma_{b'}$ and $\gamma_{z'-2}$ have no new permissible functions are similar, as is the proof that $\gamma_b$ has no new permissible functions if $b \neq z+1$.  If $b = z+1$, then a similar argument shows that there is no permissible function $\varphi_{i,j}$ with $s_{b} (\varphi_{i,j}) < 3$, and a case-by-case examination shows that there are only 3 permissible functions on $\gamma_b$.

It remains to prove that these are the only loops with no new permissible functions.  We do so by a counting argument.  There are 4 functions that are not permissible on any loop: $\varphi_{0,0}$, $\varphi_{0,1}$, $\varphi_{5,6}$, and $\varphi_{6,6}$.  There are exactly 3 permissible functions on the first non-lingering loop: $\varphi_{3,6}$, $\varphi_{4,6}$, and $\varphi_{5,5}$. If $b \neq z+1$ and $k$ is the smallest integer such that $\gamma_k$ is not a lingering loop and $k \geq z+1$, then there are 4 permissible functions on $\gamma_{k}$.  If $b=z+1$, there are 3.  Finally, $k'$ is the smallest integer such that $\gamma_{k'}$ is not a lingering loop and $k' \geq z-1$, then there are 3 permissible functions on $\gamma_{k'}$.  Thus, if $b \neq z+1$, there are $28-4-3-4-3 = 14$ functions that are each permissible on some loop, but not permissible on any of these three.  Similarly, if $b = z+1$, there are $28-4-3-3-3 = 15$ such functions.  By Lemma~\ref{lem:onenew}, each of these functions must be new on a distinct non-lingering loop.  The number of non-lingering loops is 21.  None of these is new on $\gamma_1$, $\gamma_k$, or $\gamma_{k'}$, and no function is new on $\gamma_z$, $\gamma_b$, $\gamma_{b'}$, or $\gamma_{z'}$.  Thus, the number non-lingering loops in the complement of this set is 14 when $b \neq z+1$ and 15 when $b=z+1$.  Hence each has a new permissible function, as required.
\end{proof}

\begin{proof}[Proof of Proposition~\ref{Prop:VAVerification}.]
 By Corollary~\ref{Cor:VAIndependence}, it suffices to show that, for our choice of $\sigma$, every function $\varphi_{i,j}$ is assigned to some bridge or loop.  By construction, $\alpha(\varphi_{6,6}) = \alpha(\varphi_{5,6}) = \beta_1$, and $\alpha(\varphi_{0,1}) = \alpha(\varphi_{0,0}) = \beta_{g+1}$.  On the first non-lingering loop, there are 3 permissible functions by Proposition~\ref{prop:nonew}.  On each non-lingering loop $\gamma_k$ for $k<z$, there is one new permissible function.  To each such loop, we assign a function $\psi \in \cB$.  Moreover, if there is an unassigned departing permissible function on $\gamma_k$, we assign it to $\gamma_k$.  It follows that there are 3 unassigned permissible functions on each non-lingering loop $\gamma_k$ with $k<z$, and on $\gamma_z$, there are 2.  These two functions are assigned to the loop $\gamma_z$ and the bridge $\beta_{z+1}$.

A similar analysis, using Proposition~\ref{prop:nonew}, shows that we assign a function to every non-lingering loop $\gamma_k$ with $k > z$, and we assign one function to $\beta_{z'-1}$ and one additional function (aside from $\varphi_{0,1}$ and $\varphi_{0,0}$) to $\beta_{g+1}$. Thus the number of functions assigned to each bridge or loop is as specified in \eqref{eq:assignments}. In particular, since there are precisely 21 non-lingering loops, the total number of functions assigned to a bridge or loop is 28. Hence, the output of the algorithm is a certificate of independence among all 28 functions in $\cB = \{ \varphi_{i,j} \mid 0 \leq i \leq j \leq 6\}$.
\end{proof}

\section{Beyond the vertex avoiding case}

As in the previous sections, we consider a break divisor $D$ of degree $d$ on a chain of loops $\Gamma$ with $C$-admissible edge lengths for some $C > 12dg$, and a tropical linear series $\Sigma \subset R(D)$ of rank $r$.  Here, we consider the case where $D$ is not necessarily vertex avoiding.  Let $2\Sigma$ denote the sumset $2 \Sigma := \{ \varphi + \varphi' \mid \varphi, \varphi' \in \Sigma \}$.

\begin{theorem} \label{thm:generalv1}
Assume $g = 22$ or $23$, $d = g+3$, and $r = 6$.  Then there is a tropically independent set $\cT \subset 2\Sigma$ of size $|\cT| = 28$.
\end{theorem}

We have already proved this in the special case where $D$ is vertex avoiding by algorithmically producing a certificate of independence among the 28 pairwise sums of distinguished functions $\varphi_{i,j} = \varphi_i + \varphi_j$.  Our proof in the general case follows that construction as much as possible, but is necessarily more technical. Note that Theorem~\ref{Thm:MainThm} is an immediate consequence of Theorem~\ref{thm:generalv1}.

\medskip

In this section, in preparation for the proof of Theorem~\ref{thm:generalv1}, we:
\begin{itemize}
\item introduce multiplicities of loops that quantify how the behavior of functions in $\Sigma$, on each loop of $\Gamma$, differs from that on typical (non-lingering) loops in the vertex-avoiding case
\item introduce multiplicities of bridges that similarly quantify how the behavior of functions in $\Sigma$, on each bridge of $\Gamma$, differ from the vertex-avoiding case
\item define ``switching bridges'' and ``switching loops'' exhibiting one essential new phenomenon that does not appear in the vertex-avoiding case
\item classify switching bridges and  switching loops of multiplicity at most 2.
\end{itemize}

In \S\ref{sec:adjustedBN}, we also define ramification weights of tropical linear series at the endpoints of $\Gamma$ and adjusted Brill-Noether numbers that take these weights into account. We show that the adjusted Brill-Noether number of a tropical linear series on $\Gamma$ is equal to the sum of the multiplicities of all loops and bridges.  We then state a variant of Theorem~\ref{thm:generalv1} for certain tropical linear series on chains of loops of smaller genus with prescribed ramification, which is needed for our proof of Theorem~\ref{thm:genfinite}.  The tropical linear series that appear have adjusted Brill-Noether at most 2, and the classification of loops and bridges of multiplicity at most 2 is used in the proofs of these results. The section concludes  in \S\ref{sec:strategy} with an overview of these proofs.

\subsection{Slope vectors}
Recall that we write $s_k(\varphi)$ and $s'_k(\varphi)$ for the rightward slopes of $\varphi \in \PL(\Gamma)$ along the bridges incident to $v_k$ and $w_k$, respectively, as in Figure~\ref{Fig:Slopes}.

\begin{definition} \label{def:slopevectors}
Let
\[
s_k(\Sigma) := (s_k[0], \ldots, s_k[r]) \mbox{ \ \ and \ \ } s'_k(\Sigma) := (s'_k[0], \ldots, s'_k[r] )
\]
be the vectors of slopes in $\{ s_k(\varphi) \mid \varphi \in \Sigma\}$ and $\{ s'_k(\varphi) \mid \varphi \in \Sigma\}$, respectively, ordered so that  $s_k [0] <  \cdots < s_k[r]$ and $s'_k [0] <  \cdots < s'_k[r]$.
\end{definition}

We denote the changes in these slopes, as one moves from left to right across the graph, by
\[
\delta_k[i] := s'_k[i] - s_k[i] \quad \quad \mbox{and} \quad \quad \delta'_k[i] := s_k[i]-s'_{k-1}[i].
\]
These changes in slopes are bounded above as follows.

\begin{proposition} \label{Prop:LingeringLatticePath}
Let $D$ be a break divisor of degree $d$ on $\Gamma$ with coordinates $x_1, \ldots, x_g$, and let $\Sigma \subset R(D)$ be a tropical linear series of rank $r$.  Then $\delta_k[i] \leq 1$ and $\delta'_k[i] \leq 0$, for all $k$ and $i$.  Moreover, if $\delta_k[i] = 1$ then $x_k(D) = (s_{k}[i]+1)m_k$.
\end{proposition}

\begin{proof}
First, by Lemma~\ref{Lemma:Existence}, there is a function $\varphi \in \Sigma$ such that $s_k(\varphi) \leq s_k[i]$ and $s'_k(\varphi) \geq s_k[i]$.  Let $\delta_k(\varphi) = s'_k(\varphi) - s_k(\varphi)$.  Then $\delta_k(\varphi) \geq \delta_k[i]$, and $\deg(D + \ddiv(\varphi))_{|\gamma_k} = - \delta_k(\varphi)$.  Since $D + \ddiv(\varphi)$ is effective, we conclude that $\delta_k[i] \leq \delta_k(\varphi) \leq 1$.  The proof that $\delta'_k(\varphi) \leq 0$ is similar.  If $\delta_k[i] = 1$, then $\delta_k(\varphi) = 1$ and hence, by Lemma~\ref{Lem:PointsOfSlopeIncrease}, we have $x_k(D) = (s_{k}[i]+1)m_k$, as required.
\end{proof}

\subsection{Multiplicities of loops and bridges}
It follows from Proposition~\ref{Prop:LingeringLatticePath} that  $\sum_{i=0}^r \delta_k[i] \leq 1$ and $\sum_{i=0}^r \delta'_k[i] \leq 0$.  We define the \emph{multiplicities} of $\gamma_k$ and $\beta_k$ to be the amount by which these sums differ from their respective upper bounds.

\begin{definition}\label{def:multiplicities}
The multiplicity of $\gamma_k$ and $\beta_k$ are defined, respectively, to be
\[
\mu (\gamma_k) := 1 - \sum_{i=0}^r \delta_k[i] \mbox{ \ and \ }
\mu (\beta_k) := -\sum_{i=0}^r \delta'_k[i].
\]
\end{definition}

\begin{example}
If $D$ is vertex avoiding then $\mu(\beta_k) = 0$ for all $k$ and $\mu(\gamma_k) = 0$ unless $\gamma_k$ is lingering, in which case $\mu(\gamma_k) = 1$.
\end{example}

\subsection{Ramification weights}

The slopes of functions in $\Sigma$ at the left and right endpoints of $\Gamma$ are bounded as follows.

\begin{lemma}
The slope vectors at the left and right endpoints of $\Gamma$ are bounded by $s'_1[i] \leq (d-g-r + i)$ and $s_{g+1}[i] \geq i$, respectively.
\end{lemma}

\begin{proof}
Since $D$ is a break divisor, we have $D(w_0) = d - g$. By Lemma~\ref{Lemma:InductiveExistence}, $\{ \varphi \in \Sigma \mid s'_1(\varphi) \geq s'_1[i] \}$ contains a tropical linear series of rank $r - i$. Then, by the definition of a tropical linear series of rank $r-i$, there is a function $\varphi \in \Sigma$ such that $(D + \ddiv(\varphi))(w_0) \geq r -i $. Hence $s'_1[i] \leq d-g- r + i$.  The bound on $s_{g+1}[i]$ is proved similarly.
\end{proof}

\begin{definition} \label{def:ramificationweights}
The \emph{ramification weights} of $\Sigma$ at $w_0$ and $v_{g+1}$ are
\[
\mathrm{wt} (w_0) := \sum_{i=0}^r (d-g -r + i - s'_0 [i]) \mbox{ \ \ and \ \ } \mathrm{wt} (v_{g+1}) := \sum_{i=0}^r (s_{g+1} [i] - i).
\]
\end{definition}

\begin{example}
If $D$ is vertex avoiding, then $\mathrm{wt}(w_0)$ and $\mathrm{wt}(v_{g+1})$ are both 0.
\end{example}

\begin{example} \label{ex:ramified}
Suppose $X$ is a curve with skeleton $\Gamma$ and $\Sigma$ is the tropicalization of a linear series $V \subset H^0(X, \OO(D_X))$ of degree $d$ and rank $r$. Recall that the nonzero sections in $V$ have exactly $r +1$ distinct orders of vanishing at any given point $p \in X$, denoted $a^V_0(p) < \cdots < a^V_r(p)$.  If $p$ specializes to $v_{g+1}$, then an induction on $r$ shows that $s_{g+1} [i] \geq a^V_i (p),$ for all $i$.  It follows that $\mathrm{wt}(v_{g+1})$ is greater than or equal to the ramification weight $\sum_i a^V_i(p) - i$ of $V$ at $q$.  A similar argument shows that $\mathrm{wt}(w_0)$ is bounded below by the ramification weight of any point of $X$ specializing to $w_0$.
\end{example}

\subsection{The adjusted Brill-Noether number} \label{sec:adjustedBN}

If $D$ is vertex avoiding then both ramification weights are $0$, as is the multiplicity of every bridge and every non-lingering loop, and there are exactly $\rho$ lingering loops, each of multiplicity 1. The analogous statement in the general case is as follows.

\begin{proposition}
\label{Thm:BNThm}
The sum of the multiplicities of all loops and bridges plus the ramification weights at $w_0$ and $v_{g+1}$ is equal to the Brill-Noether number $\rho = g-(r+1)(g-d+r)$.
\end{proposition}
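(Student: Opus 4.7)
My plan is to prove the identity by a direct telescoping calculation, organized by slope index $i \in \{0, \ldots, r\}$. For each such $i$, I list the $i$-th slope of $\Sigma$ as one traverses the backbone of $\Gamma$ from $w_0$ to $v_{g+1}$:
\[
s'_0[i],\; s_1[i],\; s'_1[i],\; s_2[i],\; \ldots,\; s'_g[i],\; s_{g+1}[i].
\]
Each consecutive difference of the form $s_k[i] - s'_{k-1}[i]$ is, up to sign, a summand of $\mu(\beta_k)$, and each difference $s'_k[i] - s_k[i]$ is, up to sign, a summand of $\mu(\gamma_k)$, while the constant $+1$ in the definition of $\mu(\gamma_k)$ contributes an additional $+1$ per loop. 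Summing over all bridges $k = 1, \ldots, g+1$ and all loops $k = 1, \ldots, g$ and telescoping the inner alternating sum, the combined contribution of loops and bridges becomes
\[
\sum_{k}\mu(\beta_k) \;+\; \sum_{k}\mu(\gamma_k) \;=\; g \;+\; \sum_{i=0}^{r}\bigl(s'_0[i] - s_{g+1}[i]\bigr).
\]

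The next step is to add in the two endpoint ramification weights, which precisely kill off the surviving slope variables. Expanding the definitions,
\[
\mathrm{wt}(w_0) + \mathrm{wt}(v_{g+1}) \;=\; (r+1)(d-g-r) \;-\; \sum_{i=0}^{r}\bigl(s'_0[i] - s_{g+1}[i]\bigr),
\]
(the binomial correction terms $\binom{r+1}{2}$ coming from $\sum_i i$ in each weight cancel against each other). Adding this to the previous display makes the slope variables disappear entirely, leaving $g + (r+1)(d-g-r) = \rho(g,r,d)$, as required.

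There is no serious obstacle here: the content of the proposition is really a bookkeeping identity. Proposition~\ref{Prop:LingeringLatticePath} and Lemma~\ref{Lem:NumberOfSlopes} are needed only to ensure that $s_k(\Sigma)$ and $s'_k(\Sigma)$ are well-defined ordered vectors in $\mathbb{Z}^{r+1}$ and that the individual multiplicities are nonnegative; the final equality itself holds independently of these positivity facts. The only delicate point is to keep the indexing of bridges straight, treating the final bridge from $w_g$ to $v_{g+1}$ (contributing the boundary term $s_{g+1}[i]$ at its right endpoint) on equal footing with the others.
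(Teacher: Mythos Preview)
Your proposal is correct and follows essentially the same approach as the paper's own proof: both telescope the definitions of $\mu(\gamma_k)$ and $\mu(\beta_k)$ to obtain $g + \sum_{i}(s'_0[i]-s_{g+1}[i])$, then add the two ramification weights to cancel the remaining slope terms and arrive at $\rho$. The only cosmetic difference is that the paper expands $\mathrm{wt}(w_0)$ and $\mathrm{wt}(v_{g+1})$ separately (each producing a $\binom{r+1}{2}$), whereas you combine them at once and observe the cancellation of the $\sum_i i$ terms directly.
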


\begin{proof}
Starting from the definitions of the multiplicities of the loops and bridges and then collecting and canceling terms, we have
\[
\sum_{k=1}^{g} \mu (\gamma_k) + \sum_{k=1}^{g+1} \mu (\beta_k) =
g + \sum_{i=0}^r (s'_0 [i] - s_{g+1} [i]) .
\]
Moreover,
\[
\mathrm{wt} (w_0) = (r+1)(d-g) - \binom{r+1}{2} - \sum_{i=0}^r s'_0 [i], \mbox { \ \ and \ \ }
\mathrm{wt} (v_{g+1}) = \sum_{i=0}^r s_{g+1}[i] - \binom{r+1}{2}.
\]
Adding these together and again collecting and canceling terms gives $g-(r+1)(g-d+r)$. \end{proof}

\begin{definition} \label{def:adjustedBN}
The adjusted Brill-Noether number of a linear series $\Sigma \subset R(D)$ on $\Gamma$ is
\[
\rho' := g - (r+1)(g-d+r) - \wt(w_0) - \wt(v_{g+1}).
\]
\end{definition}

This is a two-pointed tropical analogue of the adjusted Brill-Noether number of a linear series on a pointed curve, considered in \S\ref{virtualis_divizorok}. By Proposition~\ref{Thm:BNThm}, the adjusted Brill-Noether number $\rho'$ is equal to the sum of the multiplicities of the loops and bridges.

\begin{theorem} \label{thm:generalv2}
Let $g = 22$ or $23$, let $d = g + 3$ and let $g' \in \{ g, g-1, g-2 \}$.  Let $\Gamma'$ be a chain of $g'$ loops with $C$-admissible edge lengths for some $C > 12dg$, and let $D$ be a break divisor on $\Gamma'$ with $\Sigma \subset R(D)$ a tropical linear series of rank $6$.  Assume furthermore that
\begin{enumerate}
\item if $g' = g-1$ then $s'_0[5] \leq 2$;
\item if $g' = g-2$ then either $s'_0[5] \leq 2$ or $s'_0[6] + s'_0[4] \leq 5$.
\end{enumerate}
Then there is a tropically independent subset $\cT \subset 2 \Sigma$ of size $|\cT| = 28$.
\end{theorem}

\noindent In all of the cases covered by Theorem~\ref{thm:generalv2}, the adjusted Brill-Noether number $\rho'$ is at most 2. Theorem~\ref{thm:generalv1} is the special case of Theorem~\ref{thm:generalv2} where $g' = g$.  This generalization is used in the proof of Theorem~\ref{thm:genfinite}; the proof is essentially the same, with just a little more bookkeeping. The additional cases arise when studying the tropicalization of a limit linear series of degree $d$ and rank $6$ on a 2-component curve of genus $g$, with one component of genus $1$ or $2$ and another component $X$ of genus $g'$.  The $X$ aspect of this limit linear series is required to have some ramification at the node, which induces inequalities on the slopes of its tropicalization at one endpoint.  Then, just as Theorem~\ref{Thm:MainThm} follows from Theorem~\ref{thm:generalv1}, the analogous statements for linear series with ramification on pointed curves of genus $g'$ follow from Theorem~\ref{thm:generalv2}, and are used to prove Theorem~\ref{thm:genfinite} in \S\ref{sec:genfinite}.  The overall strategy of proof is outlined in \S\ref{sec:strategy}. Before presenting this outline, we introduce the essential notation and terminology that will be used throughout the argument.

\subsection{Slope indices}
\label{sec:switch}

In the vertex avoiding case, we focused our attention on distinguished functions $\varphi_i$, for $0 \leq i \leq r$, with the property that $s_k(\varphi_i) = s_k[i]$ and $s'_k(\varphi_i) = s'_k[i]$ for all $k$.  In the general cases that we need to consider for our main results, there typically exist functions with this property for some, but not all, values of $i$.  We will use the following notation to discuss how the slopes of functions in $\Sigma$ vary, relative to the slope vectors $s_k(\Sigma)$ and $s'_k(\Sigma)$ (Definition~\ref{def:slopevectors}).

\begin{definition}
The \emph{slope index} of $\varphi \in \Sigma$ at $v_k$, denoted $\iota_k(\varphi) \in \{0, \ldots, r \}$, is characterized by $s_k(\varphi) = s_k[ \iota_k(\varphi)]$.  Similarly, the slope index $\iota'_k(\varphi)$ of $\varphi$ at $w_k$, is characterized by $s'_k(\varphi) = s'_k[\iota'_k(\varphi)]$.
\end{definition}

\begin{remark}
For any $i$ and $k$, by Lemma~\ref{Lemma:Existence}, there is a function $\varphi_{i,k} \in \Sigma$ such that $\iota_k[\varphi_{i,k}] \leq i$ and $\iota'_k[\varphi_{i,k}] \geq i$.  If $\mathrm{mult}(\gamma_k) = 0$ then $(\varphi_{i,k})_{|\gamma_k}$ is unique, up to an additive constant.  The analogous statement holds for bridges as well.
\end{remark}

\begin{proposition} \label{prop:non-increasing}
If $D$ is vertex avoiding, then the slope indices of any $\varphi \in \Sigma$ form a non-increasing sequence: $\iota'_0(\varphi) \geq \iota_1(\varphi) \geq \iota'_1(\varphi) \geq \cdots\geq \iota'_g(\varphi) \geq \iota_{g+1}(\varphi)$.
\end{proposition}
\begin{proof}
This follows from the single loop analysis in \S\ref{sec:singleloop}, and the fact that the the restriction of $\varphi \in \Sigma$ to each bridge is convex.
\end{proof}

In the general case, slope indices can and do increase.  We have already seen this in Example~\ref{Ex:Interval}, on a chain of zero loops.

\begin{example}
Let $\Sigma$ be the tropical linear series of degree $2$ and rank $1$ on the interval $[v,w]$ in Case 2 of Example~\ref{Ex:Interval}.  Assume the distance $t$ is positive, and that $v$ is at distance $t$ from $x$.  In particular, the rightward slope of $\psi_A$ at $v$ is $1$.  Note that the interval is a chain of zero loops with bridges; the entire graph consists of the bridge $\beta_1$, and the multiplicity of the bridge is 2.  The slope of $\psi_A$ is $1$ on the entire bridge, but its slope index increases:
\[
\iota'_0(\psi_A) = 0 \quad \quad \mbox{ and } \quad \quad \iota_1(\psi_A) = 1.
\]
\end{example}

\subsection{Switching loops and bridges} \label{sec:switchingclassification}

In the vertex avoiding case, the slope indices of functions in $\Sigma$ form a non-increasing sequence (Proposition~\ref{prop:non-increasing}).  In the general case, functions in $\Sigma$ can have slopes that switch from a lower index to a higher index.

\begin{definition}
A loop $\gamma_k$ is a $j$-\emph{switching loop} if there is some $\varphi \in \Sigma$ such that
\[
\iota_k(\varphi) = j \quad \mbox{ and } \quad \iota'_k(\varphi) > j.
\]
Similarly, $\beta_k$ is a $j$-\emph{switching bridge} if there is some $\varphi \in \Sigma$ such that
\[
\iota'_{k-1}(\varphi) = j \quad \mbox{ and } \quad \iota_k(\varphi) > j,
\]
\end{definition}

We will say that $\gamma_k$ is a switching loop if it is a $j$-switching loop for some $j$, and similarly $\beta_k$ is a switching bridge if it is a $j$-switching bridge for some $j$.

\begin{example}
\label{Ex:SwitchLoop}
Suppose $g = 1$, and let $X$ be a curve of genus 1 with skeleton $\Gamma$.  Let $p,q \in X$ be points specializing to $w_0$ and $v_2$, respectively, and consider the tropicalization $\Sigma$ of the complete linear series $\vert \cO_X (p+q) \vert$.  Figure~\ref{Fig:BreakSwitchLoop} depicts the divisor $D_0 = \Trop (p+q)$ in black and the break divisor $D$ equivalent to $D_0$ in white.

\begin{figure}[H]
\begin{tikzpicture}[thick, scale=0.8]
\begin{scope}[grow=right, baseline]
\draw (-1,0) circle (1);
\draw (-3,0)--(-2,0);
\draw (0,0)--(1,0);
\draw [ball color=white] (-3,0) circle (0.75mm);
\draw [ball color=white] (0,0) circle (0.75mm);

\draw (3,0)--(4,0);
\draw (5,0) circle (1);
\draw (6,0)--(7,0);
\draw [ball color=black] (3,0) circle (0.75mm);
\draw [ball color=black] (7,0) circle (0.75mm);

\end{scope}
\end{tikzpicture}
\caption{The divisors $D$ (in white) and $D_0$ (in black) of Example~\ref{Ex:SwitchLoop}.}
\label{Fig:BreakSwitchLoop}
\end{figure}

\noindent The tropical linear series $\Sigma$ has rank 1, and we have
\[
s'_0 (\Sigma) = s_1 (\Sigma) = s'_1 (\Sigma) = s_2 (\Sigma) = (0,1) .
\]
By construction, there is a function $\varphi \in \Sigma$ with $\ddiv (\varphi) = D_0 - D$.  This function is unique up to an additive constant and has
\[
s_1 (\varphi) = 0 = s_1 [0] \mbox{ and } s'_1 (\varphi) = 1 = s'_1 [1] .
\]
It follows that $\iota_1 (\varphi) = 0$ and $\iota'_1 (\varphi) = 1$, so $\gamma_1$ is a $0$-switching loop.

\end{example}

We now classify switching loops and bridges of multiplicity at most 2; this is all that will be needed for the proof of our main results. We start with the classification of switching loops.

\begin{proposition} \label{Prop:SwitchLoopMult1}
There are no switching loops of multiplicity 0.  If $\gamma_k$ is an $h$-switching loop of multiplicity 1 then $\delta_k[i] = 0$ for all $i$, $x_k(D) = (s_k[h] + 1) m_k$, and $s_k[h+1] = s_k[h] + 1$.
\end{proposition}

\begin{proof}
Suppose $\mu(\gamma_k) = 0$.  By the single loop analysis in \S\ref{sec:singleloop}, there is a unique index $i$ such that $\delta_k[i] = 1$, and $\delta_k[j] = 0$ for $j \neq i$.  This index $i$ is characterized by $x_k(D) = (s_k[i] + 1)m_k$.  Furthermore, if $\iota(\varphi) \neq i$ then $\delta_k(\varphi) \leq 0$. It follows that $\iota'_k(\varphi) \leq \iota_k(\varphi)$ for all $\varphi \in \Sigma$, i.e., $\gamma_k$ is not a switching loop for $\Sigma$.

Suppose $\mu(\gamma_k) = 1$ and $\delta_k[i] = 1$ for some $i$.  Then there is a unique $j \neq i$ such that $\delta_k[j] = -1$.  A similar argument to the multiplicity 0 case then shows that $\gamma_k$ is not a switching loop.

It remains to consider the case where $\mu(\gamma_k) = 1$ and $\delta_k[i] = 0$ for all $i$. Suppose $\delta_k[i] =0$ for all $i$ and  there is some $\varphi \in \Sigma$ with $s_k(\varphi) = s_k[h]$ and $s'_k(\varphi) > s_k[h]$.  By the single loop analysis in \S\ref{sec:singleloop}, it follows that $s'_k(\varphi) = s_k(\varphi) + 1$, $x_k(D) = (s_k[h] + 1) m_k$, and $s_k[h+1] = s_k[h] + 1$.
\end{proof}

In Example~\ref{Ex:SwitchLoop}, the loop $\gamma_1$ is a $0$-switching loop of multiplicity 1.  Note that, in that example $\delta_1[i] = 0$ for all $i$ and $s_1[1] = s_1[0] + 1$, in accordance with Proposition~\ref{Prop:SwitchLoopMult1}.

\begin{proposition}
\label{Prop:SwitchLoopMult2}
Suppose $\gamma_k$ is a switching loop of multiplicity 2.  Then there is a unique $h$ such that $\gamma_k$ is an $h$-switching loop.  Moreover, exactly one of the following holds:
\begin{enumerate}
\item there is a unique $i \neq h, h+1$ such that $\delta_k[i] = 1$, $\delta_k[h] = \delta_k[h+1] = -1$, and $s'_k[h+1] = s_k[h]$;
\item $\delta_k[i] \leq 0$  for all $i$, there is a unique $j$ such that $\delta_k[j] = -1$, and $s'_k[h+1] = s_k[h] + 1$.
\end{enumerate}
\end{proposition}

\noindent Note that, in case (1), $\delta_k[i] = 0$ for $i \neq h, h+1$.  In case (2), $j$ may be equal to $h$ or $h +1$.

\begin{proof}
Suppose $\delta_k[i] = 1$ for some $i$.  By Lemma~\ref{Lem:Slope++}, this $i$ is unique.  If there is a unique index $h$ such that $s'_k[h] < s_{k}[h]$, then an argument similar to the proof of Proposition~\ref{Prop:SwitchLoopMult1} shows that $\gamma_k$ is not a switching loop.  It follows that there are two values $h < h'$ such that $s'_k[h] = s_{k}[h]-1$ and $s'_k[h'] = s_{k}[h']-1$.  The same argument as in multiplicity 1 (Proposition~\ref{Prop:SwitchLoopMult1}) shows that $\gamma_k$ is not a $j$-switching loop for any $j \neq h$.  Moreover, if $\gamma_k$ is an $h$-switching loop then $s'_k[h+1] \leq s_{k}[h]$ and it follows that $h' = h+1$ and $s'_k[h+1] = s_k[h]$, as required.

Otherwise, $\delta_k[i] \leq 0 $ for all $i$.  Since $\gamma_k$ has multiplicity 2, there is a unique $j$ such that $s'_k[j] = s_k[j]-1$.  Let $\varphi \in \Sigma$ satisfy $\iota'_k (\varphi) > \iota_k (\varphi)$.  By Lemma~\ref{Lem:PointsOfSlopeIncrease}, we see that  $x_k(D) = s_k[\iota_k(\varphi)] + 1$, which uniquely determines $\iota_k(\varphi)$.
\end{proof}

\begin{figure}[h!]
\begin{tabular}{|c|c|}
\hline
$s_k $ & $s'_k$  \\
\hline
3 & 4 \\
\hline
2 & 1 \\
\hline
1 & 0 \\
\hline
-1 & -1 \\
\hline
\end{tabular}
\quad \quad
\begin{tabular}{|c|c|}
\hline
$s_k $ & $s'_k$  \\
\hline
3 & 2 \\
\hline
1 & 1 \\
\hline
0 & 0 \\
\hline
-1 & -1 \\
\hline
\end{tabular}
\quad \quad
\begin{tabular}{|c|c|}
\hline
$s_k $ & $s'_k$  \\
\hline
3 & 3 \\
\hline
1 & 1 \\
\hline
0 & 0 \\
\hline
-1 & -2 \\
\hline
\end{tabular}
\quad \quad
\begin{tabular}{|c|c|}
\hline
$s_k $ & $s'_k$  \\
\hline
3 & 3 \\
\hline
2 & 1 \\
\hline
0 & 0 \\
\hline
-1 & -1 \\
\hline
\end{tabular}
\quad \quad
\begin{tabular}{|c|c|}
\hline
$s_k $ & $s'_k$  \\
\hline
3 & 3 \\
\hline
2 & 2 \\
\hline
1 & 0 \\
\hline
-1 & -1 \\
\hline
\end{tabular}
\caption{Some possible tables of slopes $s_k[i]$ and $s'_k[i]$ when $\Sigma$ has rank 3 and $\gamma_k$ is a $1$-switching loop of multiplicity 2. The leftmost table reflects case (1) in Proposition~\ref{Prop:SwitchLoopMult2}, the rest reflect case (2). (In each case, the first column of the table has $s_k[0]$ in the bottom row, $s_k[1]$ in the next row, and so on.)}
\end{figure}

\begin{proposition}
\label{Prop:SwitchBridgeComputation}
There are no switching bridges of multiplicity less than 2.  If $\beta_k$ is a switching bridge of multiplicity 2 then there is a unique $h$ such that $\beta_k$ is an $h$-switching bridge.  Moreover,
\[
\delta'_k[h] = \delta'_k[h+1] = -1,  \quad \delta'_k[i] = 0 \mbox{ for } i \neq h, h+1, \quad  \mbox{ and } \quad s'_k[h+1] = s_k[h] + 1.
\]
\end{proposition}

\begin{proof}
The proof is similar to the classification of switching loops, except that the single loop analysis from \S\ref{sec:singleloop} is replaced with the observation that the restriction of any $\varphi \in \Sigma$ to a bridge $\beta_k$ is convex, and hence $s_k(\varphi) \leq s'_{k-1}(\varphi)$.  This immediately rules out switching on a bridge of multiplicity 0.

Suppose $\beta_k$ is a bridge of multiplicity 1.  Then there is a unique index $h$ such that $\delta'_k[h] = -1$. The proof that $\beta_k$ is not $j$-switching for any $j \neq h$ is the same as in multiplicity 0.  We now show that it is not $h$-switching. Since the multiplicity is exactly 1, we have $\delta'_k[i] = 0$ for $i \neq h$ and hence there is no function $\varphi$ in $\Sigma$ with $s_k(\varphi) = s'_{k-1}[h]$.  It follows that if $\iota'_{k-1}(\varphi) = h$ then $s_k(\varphi)$ is strictly less than $s'_{k-1}[h]$, so $\beta_k$ is not $h$-switching.

Now, let $\beta_k$ be a bridge of multiplicity 2.  If there is a unique value $h$ such that $s_k[h] < s'_{k-1}[h]$, then as in the multiplicity 1 argument, $\beta_k$ is not a switching bridge.  It follows that there are two values $h < h'$ such that $s_k[h] = s'_{k-1}[h]-1$, $s_k[h'] = s'_{k-1}[h']-1$, and $s_k[i] = s'_{k-1}[i]$ for all $i \neq h, h'$. The remainder of the proof is just as for switching loops (case (1) of Proposition~\ref{Prop:SwitchLoopMult2}).
\end{proof}

\begin{corollary}\label{Cor:IndexIncrease}
Suppose $\Sigma$ is a tropical linear series on $\Gamma$ with adjusted Brill-Noether number $\rho' \leq 2$ and $\varphi \in \Sigma$.  Then, for all $k$,
\[
\iota_k(\varphi) \leq \iota'_{k-1}(\varphi) + 1 \quad  \mbox{ and } \quad \iota'_k(\varphi) \leq \iota_k(\varphi) + 1.
\]
\end{corollary}

\subsection{Functions with constant slope index}

 In the vertex avoiding case, we studied functions $\varphi_i \in R(D)$, for $0 \leq i \leq r$, with \emph{constant slope index} $i$, i.e., with the property that
 \[
\iota_k(\varphi_i) = \iota'_{k-1}(\varphi_i) = i \mbox { \  for \ } 1 \leq k \leq g+1.
 \]
 In the general case, it is useful to keep track of the indices $i$ such that there exists a function $\varphi_i$ with constant slope index $i$.

\begin{lemma}
\label{Lem:GenericFns}
Let $\Sigma \subset R(D)$ be a tropical linear series on $\Gamma$ with adjusted Brill-Noether number $\rho' \leq 2$.  Then, for each $0 \leq i \leq r$, either there is a function $\varphi_i \in \Sigma$ such that
\[
\iota_k(\varphi_i) = \iota'_{k-1}(\varphi_i) = i \mbox { \  for \ } 1 \leq k \leq g+1,
\]
or there is a loop or bridge that is either $i$-switching or $(i-1)$-switching.
\end{lemma}

\begin{proof}
By Lemma~\ref{Lemma:Existence}, there is a function $\varphi_i \in \Sigma$ with $\iota'_0(\varphi_i) \leq i$ and $\iota_{g+1} (\varphi_i) \geq i$.  By Corollary~\ref{Cor:IndexIncrease}, the sequence $(\iota'_0(\varphi_i),  \iota_1(\varphi_i), \ldots, \iota_{g+1}(\varphi_i))$ increases by at most 1 at each step.  By assumption, the first term in this sequence is at most $i$ and the final term is at least $i$. Thus, either the sequence is constant and equal to $i$, or there is a step where it increases from $i-1$ to $i$, or there is a step where it increases from $i$ to $i+1$.
\end{proof}

\noindent Note that this is an existence statement.  If $D$ is not vertex avoiding, then these functions $\varphi_i$ with constant slope index $i$ need not be unique, even after normalizing so that $\varphi_i(w_0) = 0$.

\subsection{Overview of the proofs of Theorems~\ref{thm:generalv1} and \ref{thm:generalv2}} \label{sec:strategy}

Our proofs of these theorems are completed via case-by-case considerations in \S\ref{Sec:Generic},  according to the switching loops and bridges of the tropical linear series $\Sigma$.  The proof in each case involves three steps.  First, we algorithmically construct a ``template'' $\theta \in R(2D)$.  Then, we choose a collection $\cT \subset 2\Sigma$ of size $|\cT| = 28$.  Finally, we show that the ``best approximation of $\theta$ by $\cT$'' is a certificate of independence.

\subsubsection{The template}
The template is a function in $R(2D)$ (but typically not in $2\Sigma$) that is constructed in \S\ref{sec:template} via an algorithm closely analogous to the algorithm for the vertex avoiding case in \S\ref{sec:basicalg}.  The input for the algorithm is once again a collection $\cB$ of functions in $R(2D)$, each with constant slope along every bridge, and a non-increasing integer sequence $\sigma = (\sigma_1, \ldots, \sigma_{g+1})$.  The output is again assignment function $\alpha \colon \cB \to \{ \beta_k \} \cup \{ \gamma_\ell \} \cup \{ \emptyset \}$ and a collection of coefficients $\{ c(\psi) \mid \psi \in \cB \}$. The template is the tropical linear combination
\[
\theta := \min \{ \psi + c(\psi) \mid \alpha(\psi) \neq \emptyset \},
\]
and each term $\psi + c(\psi)$ achieves the minimum on an open subset of the loop or bridge to which it is assigned.  Unlike the vertex avoiding case, $\psi + c(\psi)$ may not achieve the minimum uniquely on the loop or bridge to which it is assigned. However, there is an open subset of the assigned bridge or loop where all of the functions that achieve the minimum are assigned to that bridge or loop, and all of them agree on the entire bridge or loop.  (The template is a technical tool for building a certificate of independence, but is itself typically far from being such a certificate.)

The functions in $\cB$ are not necessarily in $2\Sigma$, but they are all of a special form.  We introduce a finite set $\cA \subset R(D)$ consisting of certain functions with constant slope along each bridge that we call ``building blocks.''  The building blocks model the behavior of the distinguished functions $\varphi_0, \ldots, \varphi_r$ in the vertex avoiding case.  Then we choose $\cB$ from the sumset $2\cA$.  In order to guarantee that the template $\theta$ has certain desirable technical properties, which are needed for the final step (best approximation from above) we require that $\cB$ satisfies two technical properties that we denote $(\mathbf B)$ and $(\mathbf B')$.  See Definition~\ref{def:properties} and Theorem~\ref{Thm:ExtremalConfig}.

\subsubsection{The collection $\cT$ of 28 functions in $2\Sigma$}
Just as the functions $\cB$ used to form the template are all in the sumset $2\cA$, the functions $\cT$ that appear in the certificate of independence are in a sumset $2\cS$ for a small finite subset $\cS \subset \Sigma$.  This set $\cS$ includes one function $\varphi_i$ with constant slope index $i$, for each $i$ such that $\Sigma$ has no $i$-switching or $(i-1)$-switching loops or bridges.  (Such functions exist, by Corollary~\ref{Cor:IndexIncrease}.) To this collection of functions with constant slope index, we add additional functions that reflect the switching patterns of $\Sigma$.  For instance, in the case where $\Sigma$ has an $h$-switching bridge $\beta_k$ (\S\ref{Sec:SwitchBridgeCase}), we identify a tropical linear subseries $\Sigma' \subset \Sigma$ of rank $1$ such that
\begin{itemize}
\item Every function $\varphi \in \Sigma'$ has $s_k(\varphi) \in \{ s_k[h], s_k[h+1] \}$, and
\item The bridge $\beta_k$ is a $0$-switching bridge for $\Sigma'$.
\end{itemize}
We then identify three functions $\varphi_A, \varphi_B, \varphi_C$ in $\Sigma'$, analogous to the functions $\psi_A$, $\psi_B$, and $\psi_C$ in Example~\ref{Ex:Interval}, and define $\cA$ to be the union of this set of three functions together with the $5$ functions $\varphi_i$ for $i \neq h, h+1$. Then $2\cA$ has size $\binom{9}{2} = 36$, and from these we choose a subset of size $|\cT| = 28$ that includes all 15 functions of the form $\varphi_i + \varphi_j$, plus 13 more that involve one or two of the functions from the tropical pencil $\Sigma'$.  Each $\varphi_i$, for $i \neq h, h+1$ is a building block, the set $\cB$ contains all of the pairwise sums $\varphi_i + \varphi_j$, and the template is constructed so that $\varphi_i + \varphi_j$ achieves the minimum uniquely on an open subset of the bridge or loop to which it is assigned in the template algorithm.  Thus, the essential difference between the construction in this case and the construction in the vertex avoiding case is how to choose and handle the 13 functions that involve the three functions $\varphi_A, \varphi_B, \varphi_C$.

The way these 13 functions are chosen and handled depends on a parameter analogous to the parameter $t$ in Example~\ref{Ex:Interval}.  Note that construction of the template does not depend on this parameter.  Thus, a single template is used to construct many different certificates of independence, for many different tropical linear series.  For this reason, we present the construction of the template separately, in \S\ref{sec:template}, in advance of the case-by-case analysis.

\subsubsection{The best approximation from above} \label{sec:bestapprox}

Once we have the template and the set $\cT \subset 2\Sigma$, the certificate of independence is obtained via ``best approximation from above.''  Let us explain the idea of this construction.  Fix a real-valued function $\theta$ on $\Gamma$.  Imagine trying to approximate $\theta$ by tropical linear combinations of functions in a finite set $\cT$, while imposing the condition that this approximation is greater than or equal to $\theta$.  We claim that there is a best possible such approximation.  Indeed, for $\psi \in \cT$, the function $\psi-\theta$ is continuous and bounded, so it achieves its minimum $b(\psi)$ on $\Gamma$.  Then $\psi - b(\psi) \geq \theta$, with equality at some point $v \in \Gamma$, and
\[
\Upsilon = \min \{ \psi - b(\psi) \mid \psi \in \cT \}
\]
is the smallest tropical linear combination of functions in $\cT$ that is greater than or equal to $\theta$, i.e., the best approximation of $\theta$ from above.

Note that there are no choices to be made in this final step. All of the hard work is already done constructing the template $\theta$, proving that it has the desired technical properties, and then choosing the set $\cT$ based on the switching properties of the linear series $\Sigma$.  The final verification that $\Upsilon$ is a certificate of independence is relatively easy in each case, using the technical properties of the template established in Theorem~\ref{Thm:ExtremalConfig}.

\section{The template algorithm}
\label{Sec:Construction}

Here we carry out the first step in the proof of Theorems~\ref{thm:generalv1} and \ref{thm:generalv2}, presenting an algorithmic construction of the template $\theta$ as a tropical linear combination of pairwise sums of certain functions in $R(D)$ with constant slope along each bridge that we call building blocks.

As in the vertex avoiding case, our algorithm for constructing the template requires some additional input, namely a collection $\cB$ of pairwise sums of building blocks, and a non-increasing sequence of integers $\sigma = (\sigma_1, \ldots, \sigma_{g'+1})$.
We first present the general algorithm, and then we explain how to choose the input in the specific cases needed for the proof of Theorems \ref{thm:generalv1} and \ref{thm:generalv2}. Finally we verify that, with this input, the algorithm produces a tropical linear combination with a few key technical properties that will streamline our case-by-case construction of certificates of independence in \S\ref{Sec:Generic}; see Theorem~\ref{Thm:ExtremalConfig}.

\medskip

\noindent{\textbf{Notation.}} We maintain the notation from the preceding section: $D$ is a break divisor of degree $d$ on a chain of $g$ loops $\Gamma$ with $C$-admissible edge lengths for some $C > 12dg$, and $\Sigma \subset R(D)$ is a tropical linear series of rank $r$. The break divisor $D$ is not necessarily vertex avoiding and the linear series $\Sigma$ may have ramification at the endpoints $w_0$ and $v_{g+1}$.

From now on, we assume furthermore that the adjusted Brill-Noether $\rho'(\Sigma)$ (Definition~\ref{def:adjustedBN}) is at most 2.  Thus, any switching loops or bridges for $\Sigma$ are as classified in \S\ref{sec:switchingclassification}.

\subsection{Building blocks}
\label{Sec:Extremals}

We introduce a class of functions in $R(D)$ with constant slope along bridges that behave sufficiently similarly to the distinguished functions $\varphi_i$ in the vertex avoiding case so that we can run an algorithm analogous to the one in \S\ref{Sec:VertexAvoiding} using them in place of $\{ \varphi_0, \ldots, \varphi_r \}$.  The definition of these \emph{building blocks} is motivated by the extremals of \cite{HMY12} and by Example~\ref{Ex:Interval}.

We write $D(v)$ for the coefficient of a point $v \in \Gamma$ in the divisor $D$.  Then, we define
\[
d_v (\varphi) := \ord_v (\varphi) + D(v),
\]
for $\varphi \in \PL(\Gamma)$.  By definition, $\varphi \in R(D)$ if and only if $d_v (\varphi) \geq 0$ for all $v$.

We fix finitely many possibilities for the restriction to each loop, as follows.  For each $k$ in the range $1 \leq k \leq g$ and each $i$ in the range $0 \leq i \leq r$, choose a function $f_{k,i} \in \PL(\gamma_k)$ such that
\begin{equation} \label{eq:fki}
d_{w_k}(f_{k,i}) \geq s'_k [i], \quad  \quad d_{v_k}(f_{k,i}) \geq \begin{cases} -s_k[i-1] & \mbox{ if $\gamma_k$ is an $(i-1)$-switching loop} \\ -s_k[i] & \mbox{ otherwise, } \end{cases}
\end{equation}
and
\begin{equation} \label{eq:eff}
D_{|\gamma_k} + \ddiv(f_{k,i}) \mbox{ is effective on } \gamma_k \smallsetminus \{ v_k, w_k \}.
\end{equation}
Note that $f_{k,i}$ is uniquely determined by \eqref{eq:fki} and \eqref{eq:eff}, up to an additive constant, except in the cases where $s'_k[i] < s_k[i]$; this follows from Lemma~\ref{Lem:EquivOneLoop}.

\begin{definition} \label{def:buildingblock}
A sequence of non-negative integers $\tau = (\tau'_0 , \tau_1 , \tau'_1 , \ldots , \tau'_g , \tau_{g+1})$ is a \emph{building sequence} if it satisfies $0 \leq \tau'_0 \leq \tau_1 \leq \cdots \leq \tau_{g+1} \leq r$,
\[
\tau'_k = \begin{cases} \tau_{k} \mbox{ or } \tau_{k} + 1 & \mbox{if $\gamma_k$ is $\tau_{k}$-switching} \\  \tau_{k}  & \mbox{otherwise.} \end{cases}
\]
and
\[
\tau_k = \begin{cases} \tau'_{k-1} \mbox{ or } \tau'_{k-1} + 1 & \mbox{if $\beta_k$ is $\tau'_{k-1}$-switching} \\  \tau'_{k-1}  & \mbox{otherwise.} \end{cases}
 \]
Given a building sequence $\tau$ and a sequence of integers $s = (s_1 , \ldots , s_{g+1})$ satisfying
\begin{equation} \label{eq:xibounds}
s_k[\tau_k] \leq s_k \leq s'_{k-1}[\tau'_{k-1}],
\end{equation}
the associated $(\Sigma, \{f_{k,i}\})$-\emph{building block} is the unique function $\varphi = \varphi_{\tau,s} \in \PL (\Gamma)$ such that
\[
\varphi (w_0) = 0,  \ \ \  \varphi - f_{k,\tau'_k} \mbox{ is constant on $\gamma_k$}, \ \ \mbox{ and } \ \  \varphi \mbox{ has constant slope } s_k \mbox{ on } \beta_k.
\]
 \end{definition}
\noindent Note that, by construction, we have $\varphi_{\tau,s} \in R(D)$.

\begin{remark}
In comparison with the vertex avoiding case, the requirement that $\tau$ is nondecreasing replaces the fact that the slopes of each distinguished function $s_k(\varphi_i)$ are nondecreasing as a function of $k$.  The requirement that $\tau$ increases by at most 1 at each step reflects the classification of switching bridges and loops from \S\ref{sec:switchingclassification}; since all bridges and loops have multiplicity at most 2, the slope index of a function in $\Sigma$ can never increase by more than 1 at any step.
\end{remark}

When $\Sigma$ and $\{f_{k,i}\}$ are fixed, we refer to the functions $\varphi_{\tau,s}$ as \emph{building blocks}.  Given a building block $\varphi$, we write $\tau_k (\varphi)$ and $\tau'_k (\varphi)$ for the terms in the corresponding building sequence.

\begin{example}
\label{Ex:VABB}
Suppose $D$ is vertex avoiding.  Then $s'_{k-1}[i] = s_k[i]$ and hence each $f_{k,i}$ is uniquely determined, up to an additive constant, by \eqref{eq:fki}.  It follows that $\varphi_i - f_{k,i}$ is constant on $\gamma_k$, and hence the distinguished functions $\{ \varphi_i \}$ are precisely the $(\Sigma, \{f_{k,i}\})$-building blocks.
\end{example}

\begin{example}
\label{Ex:Genus5}
Suppose $g = 5$ and let $D$ be the break divisor of degree $7$ on $\Gamma$ with $x_1 (D) = 3m_1$, $x_2 (D) = 2m_2$, and $x_3 (D) = m_3$, $x_4(D) = 2m_4$, and $x_5 (D) = 0$.  Note that $D$ has rank $r = 3$, $\rho(g,r,d) = 1$, and  $[D]$ is in the locus of rank 3 divisor classes associated to the following tableau $T$.
\begin{figure}[h!]
\begin{ytableau}
1 & 2 & 3 & 5
\end{ytableau}
\caption{Tableau $T$ corresponding to the divisor $D$ of Example~\ref{Ex:Genus5}.}
\label{Fig:SwitchingTableau}
\end{figure}

Let $\Sigma \subset R(D)$ be a tropical linear series of rank $3$. Since $4$ is not in $T$, the loop $\gamma_4$ has positive multiplicity.  Since $\rho(g,r,d) = 1$, if follows that $\mu(\gamma_4) = 1$ and all other loops and bridges have multiplicity 0.  It follows that $s'_{k-1}[i] = s_{k}[i]$ for all $i$ and $k$ and the slope vectors $s_i(\Sigma)$ are as shown in Figure~\ref{fig:slopesg5}.
\begin{figure}[h!]
\begin{tabular}{|c|c|c|c|c|c|}
\hline
 $s_1$ & $s_2$ & $s_3$ & $s_4$ & $s_5$ & $s_6$ \\
\hline
2 & 3 & 3 & 3 & 3 & 3 \\
\hline
1 & 1 & 2 & 2 & 2 & 2 \\
\hline
0 & 0 & 0 & 1 & 1 & 1 \\
\hline
-1 & -1 & -1 & -1 & -1 & 0 \\
\hline
\end{tabular}
\caption{The slope vectors $s_i(\Sigma)$ of Example~\ref{Ex:Genus5}.} \label{fig:slopesg5}
\end{figure}
Note that there is a function $\varphi \in R(D)$ with $s_4(\varphi) = 1$ and $s'_4(\varphi) = 2$; indeed, Figure~\ref{Fig:SwitchExample} schematically illustrates such a function $\varphi_\tau$ with constant slope along each bridge, such that $(s_1(\varphi_\tau), \ldots, s_6(\varphi_\tau)) = (0,0,0,1,2,2)$.

Let $X$ be a curve of genus 5 with skeleton $\Gamma$, and let $p,q \in X$ be points specializing to $w_0$ and $v_6$, respectively.  We briefly show that there exists a divisor $D_X$ on $X$ of degree 7 and rank 3 such that $\Trop (D_X) = D$ and $\gamma_4$ is $1$-switching for $\Sigma = \trop (R(D_X))$.  To see this, note that $K_X$ has rank 4, so $K_X - 2p - 2q$ is equivalent to an effective divisor $E_X$.  Additionally, $K_{\Gamma} - 2w_0 - 2v_6$ is equivalent to a unique effective divisor $E$, so we must have $\Trop (E_X) = E$.  Let $x \in \Gamma$ be the point on $\gamma_4$ of distance $2m_4$ counterclockwise from $v_4$.  Then $x \in \mathrm{Supp} (E)$, and there exists $y \in \mathrm{Supp} (E_X)$ such that $\Trop (y) = x$.  By construction, $D_X = K_X - y$ is a divisor of degree 7 and rank 3 such that $\Trop(D_X) = D$, and $D_X - 2p - 2q$ is effective.  It follows that there is a function $\varphi \in \Sigma$ that vanishes to order at least 2 at both $w_0$ and $v_6$.  Up to tropical scaling, there is a unique such function in $R(D)$; it is the function $\varphi_{\tau}$ depicted in Figure~\ref{Fig:SwitchExample}.  Note that $s_4 (\varphi_{\tau}) = s_4 [1]$ and $s_5 (\varphi_{\tau}) = s_5 [2]$, hence $\gamma_4$ is $1$-switching.

In this example, there are 5 building sequences: the constant sequences $i$ for $0 \leq i \leq 3$, and the sequence $\tau$ given by
\[
\tau_k = \tau'_{k-1} = \begin{cases} 1 & \mbox{if $k \leq 4$} \\  2 & \mbox{if $k \geq 5$.} \end{cases}
\]
Since $s'_{k-1}[i]  = s_k[i]$ for all $i$ and $k$, there is a unique building block for each building sequence; we denote them by $\varphi_0 , \varphi_1 , \varphi_2 , \varphi_3$, and $\varphi_{\tau}$.  The functions $\varphi_1$ and $\varphi_{\tau}$ are illustrated in Figure~\ref{Fig:SwitchExample}.

\begin{figure}[H]
\begin{tikzpicture}[thick, scale=0.63]
\begin{scope}[grow=right, baseline, shift={(-6,0)}]
\draw (-3.25,7) node {$\varphi_1$};
\draw (-3,6)--(-2,6);
\draw (-1,6) circle (1);
\draw (0,6)--(1,6);
\draw (2,6) circle (1);
\draw (3,6)--(4,6);
\draw (5,6) circle (1);
\draw (6,6)--(7,6);
\draw (8,6) circle (1);
\draw (9,6)--(10,6);
\draw (11,6) circle (1);
\draw (12,6)--(13,6);
\draw [ball color=black] (-3,6) circle (0.75mm);
\draw (-3.5,6) node {\small 2};
\draw [ball color=black] (-.5,6.87) circle (0.75mm);
\draw [ball color=black] (2.87,6.5) circle (0.75mm);
\draw [ball color=white] (6,6) circle (0.75mm);
\draw [ball color=white] (8.87,6.5) circle (0.75mm);
\draw [ball color=white] (10,6) circle (0.75mm);
\draw [ball color=black] (9,6) circle (0.75mm);
\draw [ball color=black] (10.13,6.5) circle (0.75mm);
\draw [ball color=black] (13,6) circle (0.75mm);

\draw (6.5,5.75) node {\tiny 1};
\draw (9.5,5.75) node {\tiny 1};
\draw (12.5,5.75) node {\tiny 1};
\draw (8,4.75) node {\tiny 1};
\draw (11,4.75) node {\tiny 1};
\draw (9.1,6.3) node {\tiny 1};
\draw (9.9,6.3) node {\tiny 1};
\draw (8,7.25) node {\tiny 0};
\draw (11,7.25) node {\tiny 0};
\end{scope}

\begin{scope}[grow=right, baseline, shift={(-6,-4)}]
\draw (-3.25,7) node {$\varphi_{\tau}$};
\draw (-3,6)--(-2,6);
\draw (-1,6) circle (1);
\draw (0,6)--(1,6);
\draw (2,6) circle (1);
\draw (3,6)--(4,6);
\draw (5,6) circle (1);
\draw (6,6)--(7,6);
\draw (8,6) circle (1);
\draw (9,6)--(10,6);
\draw (11,6) circle (1);
\draw (12,6)--(13,6);
\draw [ball color=black] (-3,6) circle (0.75mm);
\draw (-3.5,6) node {\small 2};
\draw [ball color=black] (-.5,6.87) circle (0.75mm);
\draw [ball color=black] (2.87,6.5) circle (0.75mm);
\draw [ball color=white] (6,6) circle (0.75mm);
\draw [ball color=white] (8.87,6.5) circle (0.75mm);
\draw [ball color=white] (10,6) circle (0.75mm);
\draw [ball color=black] (10.5,6.87) circle (0.75mm);
\draw [ball color=black] (13,6) circle (0.75mm);
\draw (13.5,6) node {\small 2};

\draw (6.5,5.75) node {\tiny 1};
\draw (9.5,5.75) node {\tiny 2};
\draw (12.5,5.75) node {\tiny 2};
\draw (8,4.75) node {\tiny 1};
\draw (11,4.75) node {\tiny 2};
\draw (9.1,6.3) node {\tiny 1};
\draw (10.05,6.7) node {\tiny 1};
\end{scope}

\end{tikzpicture}
\caption{Schematic illustration of the functions $\varphi_1$ and $\varphi_{\tau}$ of Example~\ref{Ex:Genus5}}
\label{Fig:SwitchExample}
\end{figure}
\noindent In Figure~\ref{Fig:SwitchExample}, the divisors $D + \ddiv(\varphi_1)$ and $D + \ddiv(\varphi_\tau)$ are shown in black.  The point $w_0$ has multiplicity 2 in $D + \ddiv(\varphi_1)$ and both $w_0$ and $v_{g+1}$ have multiplicity 2 in $D + \ddiv(\tau)$, as indicated; all other points in the support of these divisors have multiplicity 1. The white dots indicate the points in the support of $D$ on $\gamma_k$ for $k \geq 3$.  Note that both of the building blocks are identically zero to the left of $w_3$ and are equal to each other to the left of $w_4$.  The slopes along segments to the right of $w_3$ are as indicated.

We will use this as a relatively simple running example to illustrate the technical steps in our construction of the template.  See Examples~\ref{Ex:Genus5Properties} and \ref{Ex:Genus5Template}.
\end{example}

\subsection{Input for the template algorithm}
\label{Sec:Input}

As discussed in \S\ref{sec:strategy}, we prove Theorems~\ref{thm:generalv1} and \ref{thm:generalv2} by first constructing a template $\theta$ and then finding a certificate of independence for 28 functions in $2\Sigma$ via best approximation of $\theta$ from above.  Here, we present the algorithm for constructing the template, closely following our algorithm for the vertex avoiding case (\S\ref{sec:basicalg}).

Recall that the graph $\Gamma$, break divisor $D$, and tropical linear series $\Sigma \subset R(D)$ are fixed. We also fix  functions $f_{k,i}$, as in \S\ref{Sec:Extremals}. Let $\cA \subset R(D)$ be the set of $(\Sigma, \{f_{k,i}\})$-building blocks (Definition~\ref{def:buildingblock}). We write $2\cA$ for the sumset
$$2 \cA = \{ \varphi + \varphi' \mid \varphi, \varphi' \in \cA \}.$$
The additional input to run the algorithm is:
\begin{itemize}
\item A nonempty subset $\cB \subset 2 \cA$.
\item A non-increasing sequence of integers $(\sigma_1, \ldots, \sigma_{g+1})$.
\item A positive real number $\epsilon < \frac{1}{144dr^2}$.
\end{itemize}
The output is a collection of coefficients $\{c(\psi) \mid \psi \in \cB\}$ and an assignment function
\[
\alpha \colon \cB \to \{ \beta_k \} \cup \{ \gamma_\ell \} \cup \{ \emptyset \}.
\]
As in the vertex avoiding case, the assignment function $\alpha$ depends on $\sigma$ but not $\epsilon$.

As in \S\ref{sec:basicalg}, we consider the tropical linear combinations
\[
\theta' := \min\{ \psi + c(\psi) \mid \psi \in \cB \} \mbox{ \ \  and \ \ } \theta := \min \{ \psi + c(\psi) \mid \alpha(\psi) \neq \emptyset \}.
\]

\begin{definition} \label{def:equivalentS}
Let $\varphi, \varphi' \in \PL(\Gamma)$. We say that $\varphi$ is \emph{equivalent} to $\varphi'$ on a subset $S \subset \Gamma$, and write $\varphi \sim_S \varphi'$, if $(\varphi - \varphi')_{|S}$ is a constant function on $S$.
\end{definition}
\noindent We most often consider this equivalence relation when $S = \gamma_k$ is a single loop. In a few places (notably in  Definition~\ref{def:properties}) we also consider equivalence on larger subgraphs.

\subsection{Properties of the template} \label{sec:templateprops} The tropical linear combination $\theta$, which we call the ``template'' has several notable properties, analogous to the properties of the output of the algorithm from \S\ref{sec:basicalg} in the vertex avoiding case.

\begin{enumerate}[label=(P\arabic*)]
\item \label{it:achieves} If $\alpha(\psi) = \beta_k$ then $\psi + c(\psi)$ achieves the minimum on a nonempty open subset of $\beta_k$.
\item \label{it:achieveloop} If $\alpha^{-1}(\gamma_k) \neq \emptyset$ then there is a nonempty open subset of $\gamma_k$ where $\psi + c(\psi)$ achieves the minimum if and only if $\alpha(\psi) = \gamma_k$.
\item \label{it:equivalent} If $\psi, \psi' \in \alpha^{-1}(\gamma_k)$ then $\psi \sim_{\gamma_k} \psi'$.
\item \label{it:nottoright} If $\alpha(\psi) \neq \emptyset$ then $\psi + c(\psi)$ does not achieve the minimum on any loop to the right of $\alpha(\psi)$ to which a function is assigned.
\end{enumerate}

The algorithm proceeds from left to right across the graph.  At the beginning, we initialize $c(\psi) = \infty$ and $\alpha(\psi) = \emptyset$ for all $\psi \in \cB$.  Once a coefficient $c(\psi)$ has a finite value, it may be increased (but never decreased) as long as $\alpha(\psi) = \emptyset$.  When $\psi$ is assigned to a loop or bridge, $c(\psi)$ is fixed and never changes again, and $\psi + c(\psi)$ achieves the minimum on an open subset of the loop or bridge to which it is assigned.  If $\psi$ is assigned to a loop then all functions assigned to that loop are equivalent on the loop and hence achieve the minimum together on the same open set.

For $\psi \in \cB$ the slopes $s_k(\psi)$ are not necessarily non-decreasing. Because these slopes may decrease, the natural extension of Definition~\ref{Def:VAPermissible} from the vertex avoiding case to the general case is as follows.

\begin{definition} \label{def:permissible}
Let $\psi \in \cB$.  We say that $\psi$ is \emph{$\sigma$-permissible} on $\gamma_k$ if
\begin{itemize}
\item  $s_{j} (\psi) \leq \sigma_j$ for all $j\leq k$,
\item $s_{k+1}(\psi) \geq \sigma_k$, and
\item  if $s_{\ell} (\psi) < \sigma_\ell$ for some $\ell > k$, then $s_{k'} (\psi) > \sigma_{k'}$ for some $k'$ such that $k < k' < \ell$.
\end{itemize}
\end{definition}

\noindent When $\sigma$ is fixed, we simply say that $\psi$ is permissible.
Note that Definition~\ref{def:permissible} agrees with Definition~\ref{Def:VAPermissible} when $D$ is vertex avoiding. Also, if $\theta$ has average slope in $(\sigma_j - \epsilon, \sigma_j + \epsilon)$ on each bridge $\beta_j$, and if $\psi$ achieves the minimum at some point of $\gamma_k$, then $\psi$ is permissible on $\gamma_k$ (cf. Remark~\ref{rem:permissible}).

\medskip

If $\psi \in \cB$ is permissible on $\gamma_k$ then $\delta_k (\psi) \geq 0$.
The proofs of Lemmas~\ref{Lem:VAEverythingIsPermissible} and \ref{lem:lastloop} go through essentially without change: if $\psi \in \cB$ then either $s_1(\psi) < \sigma_1$, $s_{g+1}(\psi) > \sigma_{g+1}$, or $\psi$ is permissible on some loop $\gamma_k$. We retain Definition~\ref{Def:VADeparting}: a permissible function on $\gamma_k$ is \emph{departing} if $s_{k+1}(\psi) > \sigma_k$.    If $\psi$ is departing on $\gamma_k$, then $\gamma_k$ is the last loop on which $\psi$ is permissible.

\begin{lemma}
\label{Lem:OneDepartingFunction}
All departing permissible functions are equivalent on $\gamma_k$.
\end{lemma}

\begin{proof}
Let $\psi \in R(2D)$ be a departing permissible function on $\gamma_k$.  By Lemma~\ref{Lem:RestrictOneLoop},
\[
\ddiv(\psi_{|\gamma_k}) = (\ddiv(\psi))_{|\gamma_k} - s_k(\psi) \cdot v_k + s'_k(\psi) \cdot w_k.
\]
Since $\psi$ is departing, we have
\[
s_k (\psi) \leq \sigma_k \leq s'_k (\psi) - 1,
\]
hence $\ddiv(\psi_{|\gamma_k}) + 2D_{|\gamma_k} + \sigma_k v_k - (\sigma_k +1) w_k$ is effective.  But $2D_{|\gamma_k} + \sigma_k v_k - (\sigma_k +1) w_k$ is a divisor on $\gamma_k$ of degree 1, so by Lemma~\ref{Lem:EquivOneLoop} it is equivalent to a unique effective divisor.  It follows that $\psi_{|\gamma_k}$ is uniquely determined up to an additive constant.
\end{proof}

\subsection{Skippable loops} In the vertex avoiding case, the loops with positive multiplicity are precisely the lingering loops. The algorithm presented in \S\ref{sec:basicalg} skips over these loops without assigning any functions,.  In the general case, there is a closely related class of loops of positive multiplicity that we skip over without assigning a function, characterized as follows.

\begin{definition}
\label{Def:Skippable}
We say that the loop $\gamma_k$ is \emph{skippable} if there are no unassigned departing functions, not all unassigned permissible functions are equivalent on $\gamma_k$, and there is an unassigned permissible function $\psi = \varphi + \varphi' \in \cB$ such that $D + \ddiv(\varphi)$ contains either:
\begin{itemize}
\item  more than one point of $\gamma_k \smallsetminus \{ v_k \}$, or
\item  the vertex $w_k$, or
\item a point on $\gamma_k$ whose distance from $w_k$ is not an integer multiple of $m_k$.
\end{itemize}
\end{definition}
\noindent In the vertex avoiding case, the first two conditions are never satisfied, and the last is satisfied only on lingering loops.  In this case, if $D_{|\gamma_k}$ is a point whose distance from $w_k$ is a non-zero integer multiple of $m_k$, but this integer is not equal to $s_k[i] + 1$ for any $i$, then $\gamma_k$ is a lingering loop but not a skippable loop.  We could have written the algorithm in \S\ref{sec:basicalg} to assign functions to these loops, but it is simpler to skip all of the lingering loops (see Remark~\ref{rem:variants}.)  Note that, if $\gamma_k$ is skippable, then $\gamma_k$ or $\beta_{k+1}$ has positive multiplicity.  Also, whether a loop is skippable depends on which functions have been previously assigned.  In particular, if there is an unassigned departing function on $\gamma_k$, then $\gamma_k$ is not skippable.

\subsection{The template algorithm} \label{sec:template}
The procedure that we follow is closely modeled on the vertex avoiding case, but the details are somewhat different.  In particular, since we are only concerned with the template function $\theta$ and do not care whether it is a certificate of independence, we may assign multiple functions to achieve the minimum together on a loop $\gamma_k$.  In all cases, if $\psi$ and $\psi'$ are both assigned to $\gamma_k$, then $\psi \sim_{\gamma_k} \psi'$.  We also sometimes assign multiple functions with the same slope on a bridge to achieve the minimum together on that bridge. The details are as follows.

\medskip

\noindent \textbf{Start at the first bridge.}
Start at $\beta_1$.  Consider the set of slopes $\{s_1(\psi) \mid \psi \in \cB \}$.  For each such slope $s$ that is strictly greater than $\sigma_1$, choose all of the functions $\psi \in \cB$ with this slope and give each of them a finite coefficient $c(\psi)$ so that $\{ \psi + c(\psi) \mid s_1(\psi) = s \}$ achieve the minimum together on an open subset of $\beta_1$ of length $\epsilon^2 \cdot \ell_1$.  Assign all of these functions to $\beta_1$, i.e., set $\alpha(\psi) = \beta_1$.  Proceed to the first loop.

\medskip

\noindent \textbf{Loop subroutine.}
Each time we arrive at a loop $\gamma_k$, check whether there are any unassigned permissible functions. If not, proceed to $\beta_{k+1}$.  Otherwise, apply the following steps.



\medskip

\noindent \textbf{Loop subroutine, Step 1:  Align the unassigned permissible functions at $w_k$}
If $\psi$ is an unassigned permissible function, then either $c(\psi) = \infty$ or $\psi(w_k) + c(\psi)$ is less than $\psi'(w_k) + c(\psi')$ for any previously assigned or non-permissible function $\psi'$.  If $c(\psi) = \infty$, then set a finite coefficient to that $\psi + c(\psi)$ is equal to $\theta'$ at $v_k$.  Then adjust the coefficient of each unassigned permissible function upward, the smallest amount possible, so that all of these terms are equal to $\theta'$ at $w_k$.

\medskip

\noindent \textbf{Loop subroutine, Step 2:  Assign departing functions.}
If there are departing functions on $\gamma_k$, assign all of them to $\gamma_k$.  Note that any two departing functions are equivalent, by Lemma~\ref{Lem:OneDepartingFunction}.  Adjust the coefficients of the non-departing unassigned permissible functions upward so that they are all equal to the departing function with the smallest slope on $\beta_{k+1}$ at a point at distance $\epsilon \ell_k$ to the right of $w_k$. Then the departing functions are the only ones to achieve the minimum at $w_k$.  
Proceed to $\beta_{k+1}$.

\medskip

\noindent \textbf{Loop subroutine, Step 3: Skip skippable loops.} If $\gamma_k$ is skippable then proceed to $\beta_{k+1}$.

\medskip

\noindent \textbf{Loop Subroutine, Step 4: Assign an equivalence class of functions that achieves the minimum uniquely.}  Otherwise, there are unassigned permissible functions, none are departing, and the loop is not skippable.  We have aligned the unassigned permissible functions at $w_k$, so if $\psi \sim_{\gamma_k} \psi'$ are equivalent permissible functions, then $\psi + c(\psi)$ and $\psi' + c(\psi')$ achieve the minimum in $\theta$ on the same subset of $\gamma_k$.  Suppose there is an open subset $U$ where all of the functions that achieve the minimum are equivalent on $\gamma_k$.  Then choose one such subset, assign all of the functions in this equivalence class to $\gamma_k$, and increase the coefficients of these functions by $\frac{1}{3} m_k$. Note that they still achieve the minimum together on a smaller open subset of $\gamma_k$.  If there is no such subset $U$, do not assign any function to $\gamma_k$.  Proceed to $\beta_{k+1}$.

\medskip

\noindent \textbf{Internal bridge subroutine:} Upon arrival at $\beta_k$, for $1 < k < g + 1$, apply the following steps.

\medskip

 \noindent \textbf{Internal bridge subroutine, Step 1: If the slopes are steady, carry on.} If $\sigma_k = \sigma_{k-1}$ then proceed to the next loop $\gamma_k$.

\medskip

\noindent \textbf{Internal bridge subroutine, Step 2: Otherwise, be greedy.} If $1 < k < g + 1$ and $\sigma_k < \sigma_{k-1}$, then assign every possible function to $\beta_k$. More precisely, assign every unassigned function $\psi \in \cB$ with $\sigma_k < s_k(\psi) \leq \sigma_{k-1}$ to $\beta_k$.  Set the coefficients of the assigned functions so that each is greater than $\theta$ on $\gamma_{k-1}$ and achieves the minimum, together with all of the other assigned functions of the same slope, on a subinterval of $\beta_{k}$ of length $\epsilon \cdot \ell_k$s. Proceed to $\gamma_k$.

\medskip

\noindent \textbf{Final bridge subroutine.} When we arrive at the final bridge $\beta_{g+1}$, assign every function $\psi \in \cB$ with $\sigma_{g+1} < s_k(\psi) \leq \sigma_g$.  Set the coefficients of the assigned functions so that each is greater than $\theta$ on $\gamma_g$ and achieves the minimum, together with all of the other assigned functions of the same slope, on a subinterval of $\beta_{g+1}$ of length $\epsilon^2 \cdot \ell_{g+1}$.  Output the assignment function $\alpha$ and the template $\theta = \min \{ \psi + c(\psi) \mid \alpha(\psi) \neq 0 \}$.

\subsection{Choosing the slopes of the template}  \label{sec:choosingslopes}

For the rest of this section, we adopt the hypotheses of Theorem~\ref{thm:generalv2}.  Specifically, we let $g = 22$ or $23$, let $d = g + 3$ and let $g' \in \{ g, g-1, g-2 \}$.  Let $\Gamma'$ be a chain of $g'$ loops with $C$-admissible edge lengths for some $C > 12dg$, and let $D$ be a break divisor on $\Gamma'$ with $\Sigma \subset R(D)$ a tropical linear series of rank $6$.  Assume furthermore that
\begin{enumerate}
\item if $g' = g-1$ then $s'_0[5] \leq 2$;
\item if $g' = g-2$ then either $s'_0[5] \leq 2$ or $s'_0[6] + s'_0[4] \leq 5$.
\end{enumerate}

\medskip

We choose the integer sequence $\sigma = (\sigma_1, \ldots, \sigma_{g+1})$ to input into the template algorithm as follows.  First, we define a sequence of partitions (with possibly negative parts)
\[
\lambda'_0, \lambda_1, \lambda'_1 \ldots, \lambda'_{g'}, \lambda_{g'+1},
\]
each with at most $r + 1$ columns, numbered from 0 to $r$, that depends only on the slope vectors $s_k(\Sigma) = (s_k[0], \ldots, s_k[r])$.  The $(r-i)$th column of $\lambda_k$ contains $(g-d+r)+ s_k[i]-i$ boxes.  (Note that this depends on $g$, not $g'$.)  Similarly, the $(r-i)$th column of $\lambda'_k$ contains $(g-d+r) + s'_k[i]-i$ boxes.  For more on partitions with negative parts, also known as signed partitions, see \cite{Andrews07}.

As an example, consider the case where $g'=g$.  If $\wt(w_0) = 0$, then $s'_0[i] \geq (g-d+r)-i$ for all $i$, so $\lambda'_0$ has no negative parts.  On the other hand, if $\wt (w_0) \geq 1$, then $s'_0[0] \leq -4$.  The last column of $\lambda'_0$ therefore contains $-1$ boxes.  In this case, the partition $\lambda'_0$ has a negative part.

By Proposition~\ref{Prop:LingeringLatticePath}, $\lambda_k$ is a subset of $\lambda'_{k-1}$, and $\lambda'_k$ contains at most one box that is not contained in $\lambda_k$.  In particular, $\mu (\beta_k) = \vert \lambda'_{k-1} \vert - \vert \lambda_k \vert$, and similarly, $\mu (\gamma_k) = \vert \lambda_k \vert - \vert \lambda'_k \vert - 1$.   Moreover, $\lambda_{g'+1}$ contains the $(r+1) \times (g-d+r)$ rectangle.  In the vertex avoiding case, each partition contains the partition that precedes it, and the sequence corresponds to the associated tableau discussed in \S\ref{Sec:VertexAvoiding}.

Next, as in the vertex avoiding case, we specify four indices $z$, $z'$, $b$, and $b'$ (cf. Definitions~\ref{def:z} and \ref{def:b}).  These indices depend only on the sequence of partitions.

\begin{definition}
Let $z$ be the largest integer such that $\lambda'_z$ contains exactly 6 boxes in the union of the first two rows, and $\lambda_z$ does not.  Similarly, let $z'$ be the largest integer such that $\lambda'_{z'}$ contains exactly 10 boxes in the union of the second and third row, and $\lambda_{z'}$ does not.  Let $b$ be the largest integer such that $\lambda'_b$ contains exactly 7 boxes in its first two rows, and $\lambda_b$ does not.  Similarly, let $b'$ be the largest integer such that $\lambda'_{b'}$ contains exactly 8 boxes in the union of its first and third row, and $\lambda_{b'}$ does not.
\end{definition}

\noindent Since each partition in the sequence contains at most 1 box not contained in the previous partition, such indices exist.  In the vertex avoiding case, this definition agrees with Definitions~\ref{def:z} and \ref{def:b}.

As in \S\ref{Sec:VertexAvoiding}, the slopes of the master template $\theta$ are given in terms of $z$ and $z'$ by:

\begin{equation} \label{eq:sigma}
\sigma_k = \left\{ \begin{array}{ll}
4 & \textrm{if $1 \leq k \leq z$} \\
3 & \textrm{if $z+1 \leq k \leq z'-2$} \\
2 & \textrm{if $z'-1 \leq k \leq g'+1$.}
\end{array} \right.
\end{equation}

\subsection{Shiny functions} \label{sec:shiny}
Our study of the template algorithm in the cases needed to prove Theorem~\ref{thm:generalv2} depends heavily on the following technical notion.  The functions in $2\cA$ that we call \emph{shiny} play a role analogous to the new permissible functions on loops $\gamma_k$ for $k \not \in \{ 1, z + 1, z'-1 \}$ in the vertex avoiding case, cf. Definition~\ref{def:new}, Lemma~\ref{lem:onenew} and Proposition~\ref{prop:nonew}.

The slopes $s_k[\tau_k(\varphi)]$ and $s'_k[\tau'_k(\varphi)]$ appeared once earlier, in the definition of building blocks. 
They appear frequently in the arguments that follow, so we introduce the  separate notation:
\[
\xi_k(\varphi) := s_k[\tau_k(\varphi)] \quad \quad \mbox{ and } \quad \quad \xi'_k(\varphi) := s'_k[\tau'_k(\varphi)].
\]
With this notation, formula \eqref{eq:xibounds} of Definition~\ref{def:buildingblock} says that if $\varphi$ is a building block then
\begin{equation} \label{eq:xi-inequality}
\xi_k(\varphi) \leq s_k(\varphi) \leq \xi'_{k-1}(\varphi).
\end{equation}

\begin{definition}
We say that $\psi = \varphi + \varphi' \in 2\cA$ is \emph{shiny} on $\gamma_k$ if it is permissible on $\gamma_k$, and
\[
\xi_k (\varphi) + \xi_k (\varphi') < \sigma_k.
\]
\end{definition}

\begin{lemma} \label{lem:smallslopeshiny}
Let $\psi = \varphi + \varphi' \in 2\cA$.  Assume that $\psi$ is permissible on $\gamma_k$, and $s_k (\varphi) > \xi_k (\varphi)$.  Then $\psi$ is shiny on $\gamma_k$.
\end{lemma}

\begin{proof}
If $\psi$ is permissible on $\gamma_k$, then
\[
\sigma_k \geq s_k (\psi) = s_k (\varphi) + s_k (\varphi') > \xi_k (\varphi) + \xi_k (\varphi'),
\]
so $\psi$ is shiny.
\end{proof}

We retain Definition~\ref{def:new}; a function $\varphi \in \cB$ is a \emph{new permissible function} on $\gamma_k$ if it is permissible on $\gamma_k$ and is not permissible on $\gamma_j$ for any $j < k$.

\begin{lemma}
\label{Lem:NewIsShiny}
If $k \notin \{ 1, z+1, z'-1 \}$, then any new permissible function on $\gamma_k$ is shiny.
\end{lemma}

\begin{proof}
Let $\psi = \varphi + \varphi' \in 2\cA$ be a new function on $\gamma_k$.  If $k \notin \{ 1, z+1, z'-1 \}$, we have $\sigma_{k-1} = \sigma_k$.  It follows that
\[
\sigma_k > s_k (\psi) = s_k (\varphi) + s_k (\varphi') \geq \xi_k (\varphi) + \xi_k (\varphi'),
\]
hence $\psi$ is shiny.
\end{proof}

In the remainder of this subsection, we prove two technical propositions about shiny functions.  We start by establishing a lemma about the divisors associated to building blocks.

\begin{lemma}
\label{Lem:VDegree}
If $\varphi$ is a $(\Sigma, \{f_{k,i}\})$-building block, then
\[
d_{v_k} (\varphi) \geq s_k (\varphi) - \xi_k (\varphi),
\]
and similarly,
\[
d_{w_k} (\varphi) \geq \xi'_k  (\varphi) - s'_k (\varphi).
\]
\end{lemma}

\begin{proof}
By definition, the functions $\varphi$ and $f_{k,\tau'_k(\varphi)}$ are equivalent on the loop $\gamma_k$.  It follows from (\ref{eq:fki}) that
\[
d_{v_k} (\varphi) = d_{w_k} (f_{k,\tau'_k(\varphi)}) - s'_k (\varphi) \geq \xi'_k (\varphi) - s'_k (\varphi).
\]
Similarly,
\[
d_{v_k} (\varphi) = d_{v_k} (f_{k,\tau'_k}) + s_k (\varphi) .
\]
If $\tau'_k (\varphi) = \tau_k (\varphi)$, then by (\ref{eq:fki}), $d_{v_k} (f_{k,\tau'_k}) \geq - \xi_k (\varphi)$, and the result follows.  On the other hand, if $\tau'_k (\varphi) = \tau_k (\varphi) +1$, then $\gamma_k$ is a $\tau_k (\varphi)$-switching loop.  Again, by (\ref{eq:fki}), $d_{v_k} (f_{k,\tau'_k}) \geq - \xi_k (\varphi)$, and the result follows.
\end{proof}

We now show that all shiny functions on $\gamma_k$ are equivalent.  Lemma~\ref{Lem:NewIsShiny} and Proposition~\ref{prop:equivtonew} together give a strong  analogue of Lemma~\ref{lem:onenew} from the vertex avoiding case.

\begin{proposition} \label{prop:equivtonew}
Any two shiny functions on $\gamma_k$ are equivalent on $\gamma_k$.  Moreover, if $\psi \in 2\cA$ is shiny on $\gamma_k$, then the restriction of $2D+\ddiv(\psi)$ to $\gamma_k \smallsetminus \{ v_k \}$ has degree at most 1.
\end{proposition}

\begin{proof}
Let $\psi = \varphi_1 + \varphi_2 \in 2\cA$ be shiny on $\gamma_k$.  Consider the function $\varphi'_1 \in \PL(\Gamma)$ that is equivalent to $\varphi_1$ on each of the two connected components of $\Gamma \smallsetminus \beta_k$, and with constant slope $\xi_k (\varphi_1)$ on $\beta_k$.  By Lemma~\ref{Lem:VDegree}, $\xi_k (\varphi_1) \geq s_k (\varphi_1) - d_{v_k} (\varphi_1)$, so $\varphi'_1 \in R(D)$.  Similarly, the function $\varphi'_2 \in \PL(\Gamma)$ that is equivalent to $\varphi_2$ on each of the two connected components of $\Gamma \smallsetminus \beta_k$, and with constant slope $\xi_k (\varphi_2)$ on $\beta_k$, is contained in $R(D)$.  Let $\psi' = \varphi'_1 + \varphi'_2$.

By defintion, $\psi \sim_{\gamma_k} \psi'$, and
\[
s_k (\psi') = \xi_k (\varphi_1) + \xi_k (\varphi_2) \leq \sigma_k - 1.
\]
By Lemma~\ref{Lem:RestrictOneLoop},
\[
\ddiv(\psi'_{|\gamma_k}) = (\ddiv(\psi'))_{|\gamma_k} - s_k(\psi') \cdot v_k + s'_k(\psi') \cdot w_k.
\]
Hence $\ddiv(\psi'_{|\gamma_k}) + 2D_{|\gamma_k} + (\sigma_k -1) v_k - \sigma_k w_k$ is effective.  But $2D_{|\psi'_k} + (\sigma_k -1) v_k - \sigma_k w_k$ is a divisor on $\gamma_k$ of degree 1, so by Lemma~\ref{Lem:EquivOneLoop} it is equivalent to a unique effective divisor.  It follows that $\psi'_{|\gamma_k}$, and hence $\psi_{|\gamma_k}$, is uniquely determined up to an additive constant.

Finally, note that the restriction of $2D+\ddiv(\psi)$ to $\gamma_k$ differs from the restriction of $2D+\ddiv(\psi')$ to $\gamma_k$ only at $v_k$, and the latter has degree at most 1.  It follows that the restriction of $2D+\ddiv(\psi)$ to $\gamma_k \smallsetminus \{ v_k \}$ has degree at most 1.
\end{proof}

\begin{proposition}
\label{Prop:Shiny}
If $\psi = \varphi + \varphi' \in 2\cA$ is shiny on $\gamma_k$, then the restriction of either $D+\ddiv (\varphi)$ or $D+\ddiv (\varphi')$ to $\gamma_k \smallsetminus \{ v_k \}$ has degree 0.  Moreover, either
\[
s_{k+1} (\varphi) > \xi_k (\varphi) \mbox{ or } s_{k+1} (\varphi') > \xi_k (\varphi').
\]
\end{proposition}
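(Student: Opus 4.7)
My plan is to deduce both conclusions from a single inequality chain that compares $s'_k(\varphi)+s'_k(\varphi')$ to $s_k[\tau_k(\varphi)]+s_k[\tau_k(\varphi')]$.

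First I will record the slope accounting on a single loop. Since $\varphi$ is a building block, it has constant slope along every bridge, so $s_{k+1}(\varphi)=s'_k(\varphi)$. Because $D$ is a break divisor with exactly one point on $\gamma_k$, a standard summation of $\ord_v(\varphi)$ over $v\in\gamma_k$ (where $v_k$ and $w_k$ are counted with contributions from the bridge and both loop edges) telescopes to
\[
\deg\bigl((D+\ddiv(\varphi))_{|\gamma_k}\bigr)=1+s_k(\varphi)-s'_k(\varphi),
\]
and subtracting the contribution at $v_k$ gives
\[
\deg\bigl((D+\ddiv(\varphi))_{|\gamma_k\setminus\{v_k\}}\bigr)=1+s_k(\varphi)-s'_k(\varphi)-d_{v_k}(\varphi).
\]
Since $\varphi\in R(D)$, this quantity is a nonnegative integer.

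Next I will exploit the definition of $\tau_k$: by construction, $s_k[\tau_k(\varphi)]\geq s_k(\varphi)-d_{v_k}(\varphi)$. Suppose, toward a contradiction, that the restriction of $D+\ddiv(\varphi)$ to $\gamma_k\setminus\{v_k\}$ has \emph{positive} degree, and likewise for $\varphi'$. Combining the positivity with the identity above yields $s'_k(\varphi)\leq s_k(\varphi)-d_{v_k}(\varphi)\leq s_k[\tau_k(\varphi)]$, and the same inequality for $\varphi'$. Summing,
\[
s'_k(\varphi)+s'_k(\varphi')\;\leq\;s_k[\tau_k(\varphi)]+s_k[\tau_k(\varphi')]\;<\;s_k(\theta),
\]
where the strict inequality is the shininess hypothesis. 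But $\psi=\varphi+\varphi'$ is permissible on $\gamma_k$, so $s_{k+1}(\psi)\geq s_k(\theta)$, and using constancy of slope on $\beta_{k+1}$ this reads $s'_k(\varphi)+s'_k(\varphi')\geq s_k(\theta)$, a contradiction. Hence one of the two restrictions has degree $0$, proving the first assertion.

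For the ``moreover'' clause, I will use the same comparison directly. Shininess and permissibility give
\[
s_k[\tau_k(\varphi)]+s_k[\tau_k(\varphi')]\;<\;s_k(\theta)\;\leq\;s_{k+1}(\varphi)+s_{k+1}(\varphi'),
\]
so at least one summand on the right must strictly exceed the corresponding summand on the left, which is exactly the stated alternative. The main (mild) obstacle is simply organizing the slope bookkeeping on $\gamma_k$ correctly; once the identity $\deg((D+\ddiv(\varphi))_{|\gamma_k})=1+s_k(\varphi)-s'_k(\varphi)$ is in hand, both conclusions fall out of the same three-term inequality.
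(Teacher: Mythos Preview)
Your proof is correct and takes a genuinely different, more streamlined route than the paper's. The paper proceeds by a case split: if $s_k(\varphi)=s_k[\tau_k(\varphi)]$ and $s_k(\varphi')=s_k[\tau_k(\varphi')]$, it invokes the argument of Lemma~\ref{Lem:OneNewFunction} to bound $\deg(2D+\ddiv\psi)_{|\gamma_k}\le 1$; otherwise it applies Corollary~\ref{cor:equivtonew} to see that $2D+\ddiv(\psi)$ contains $v_k$, and then bounds the degree on $\gamma_k\smallsetminus\{v_k\}$ by $1$. You bypass both the case analysis and the references to prior results by going straight to the degree identity $\deg\bigl((D+\ddiv\varphi)_{|\gamma_k\smallsetminus\{v_k\}}\bigr)=1+s_k(\varphi)-s'_k(\varphi)-d_{v_k}(\varphi)$ and the defining inequality $s_k[\tau_k(\varphi)]\ge s_k(\varphi)-d_{v_k}(\varphi)$; the contradiction with permissibility then falls out of a single inequality chain.

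The trade-off is that the paper's argument actually proves more than the proposition states: it shows that the \emph{same} summand whose restriction to $\gamma_k\smallsetminus\{v_k\}$ has degree~$0$ is the one satisfying $s_{k+1}>s_k[\tau_k]$. This correspondence is recorded separately in Remark~\ref{Rem:Shiny} (together with its converse) and is used later, e.g.\ in the proof of Proposition~\ref{Prop:Lingering}. Your ``moreover'' argument, which just compares sums, does not yield this alignment. So your approach is cleaner for the proposition as stated, while the paper's case-by-case computation buys the extra structural information that the surrounding arguments rely on.
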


\begin{proof}
By Proposition~\ref{prop:equivtonew}, the restriction of $2D+\ddiv(\psi)$ to $\gamma_k \smallsetminus \{ v_k \}$ has degree at most 1.  It follows that the restriction of either $D+\ddiv(\varphi)$ or $D+\ddiv(\varphi')$ to $\gamma_k \smallsetminus \{ v_k \}$ must have degree 0.

Now, since $\psi = \varphi + \varphi'$ is permissible, we have
\[
s_{k+1} (\varphi) + s_{k+1} (\varphi') \geq \sigma_k .
\]
By definition, since $\psi$ is shiny, we have
\[
\xi_k (\varphi) + \xi_k (\varphi') < \sigma_k.
\]
It follows that either $
s_{k+1} (\varphi) > \xi_k (\varphi) \mbox{ or } s_{k+1} (\varphi') > \xi_k (\varphi')$, as required.
\end{proof}

\subsection{Two technical conditions on the set of functions $\cB$}  \label{sec:BB'}

We write $\Gamma_{\leq k}$ for the subgraph of $\Gamma$ to the left of $w_k$, i.e., $\Gamma_{\leq k}$ is the union of the loops $\gamma_i$ and bridges $\beta_j$ for $1 \leq i,j \leq k$.

\begin{definition}  \label{def:properties}
Let $\cA$ be a set of building blocks, and let $\cB \subseteq 2\cA$.  We consider the following two properties:
\begin{itemize}
\item[$(\mathbf B)$]  Whenever there is a permissible function $\psi = \varphi + \varphi' \in \cB$ on $\gamma_k$ such that $2D+\ddiv(\psi)$ contains $w_k$, and either $\gamma_k$ is $\tau_k (\varphi)$-switching or $s_{k+1} (\varphi) < \xi'_k  (\varphi)$, then there is some permissible function $\psi' \in \cB$ that is equal to $\psi$ on $\Gamma_{\leq k}$ such that $s_{k+1} (\psi') > s_{k+1} (\psi)$.
\item[$(\mathbf B')$]  Whenever there are permissible functions in $\cB$ that are equivalent on $\gamma_k$ with different slopes on $\beta_{k+1}$, and either $\gamma_k$ is a switching loop or $\beta_{k+1}$ is a switching bridge, then no function in $\cB$ is shiny on $\gamma_k$.
\end{itemize}
\end{definition}

\begin{example}
\label{Ex:Genus5Properties}
Consider the set of functions $\cA = \{ \varphi_0 , \varphi_1 , \varphi_2, \varphi_3, \varphi_{\tau} \}$ from Example~\ref{Ex:Genus5}.  Let $\sigma_k = 2$ for all $k$.  We show that the set $\cB = 2\cA$ satisfies property $(\mathbf B)$.  To see this, note that all $\varphi \in \cA$ satisfy $s_{k+1} (\varphi) = \xi'_k (\varphi)$ for all $k$, so it suffices to check property $(\mathbf B)$ on the switching loop $\gamma_4$.  The functions $\psi = 2\varphi_1$ and $\psi' = \varphi_1 + \varphi_{\tau}$ are the permissible functions on $\gamma_4$ such that $2D + \ddiv (\psi), 2D + \ddiv(\psi')$ contain $w_4$.  Both functions are equivalent to the function $\psi'' = 2\varphi_{\tau}$ on $\Gamma_{\leq 4}$, and $s_5 (\psi'') > s_5 (\psi') > s_5 (\psi)$.  Thus, $\cB$ satisfies property $(\mathbf B)$.  (This contrasts with the subsets $\cB \smallsetminus \{\psi''\}$ and $\cB \smallsetminus \{ \psi' \}$, which do not satisfy property $(\mathbf B)$.)

The set $\cB$ also satisfies property $(\mathbf B')$, because no functions are shiny on $\gamma_4$.  To see this, note that all $\varphi \in \cA$ satisfy $s_4 (\varphi) = \xi_4 (\varphi)$, so a shiny function must have slope strictly less than 2 on $\beta_4$.  The only such functions are $2\varphi_0 , \varphi_0 + \varphi_1 , \varphi_0 + \varphi_2$, and $\varphi_0 + \varphi_{\tau}$.  By inspection, if $\psi$ is one of these functions, then $s_5 (\psi) \leq 1$, so $\psi$ is not permissible on $\gamma_4$.
\end{example}

\begin{example}
\label{Ex:Genus22Properties}
We consider the following modification of Example~\ref{ex:randomtableau}, which we will use as a second example (following the simpler genus 5 example above) to illustrate the template algorithm and its output in a situation relevant to the proof of our main results; see Example~\ref{Ex:Genus22Template}.  Let $\Sigma$ be a tropical linear series of rank 6 on the chain of 22 loops $\Gamma$, with $s_k[i] = s'_{k-1}[i]$ the same as in Example~\ref{ex:randomtableau} for all $i$ and $k$, but where the loop $\gamma_{18}$ is 3-switching.  As in Example~\ref{ex:randomtableau}, we set
\begin{displaymath}
\sigma_k = \left\{ \begin{array}{ll}
4 & \textrm{if $k \leq 7$,} \\
3 & \textrm{if $7 < k \leq 15$,}\\
2 & \textrm{if $16 < k \leq 23$.}
\end{array} \right.
\end{displaymath}
There are 8 building sequences:  the constant sequence $i$ for $0 \leq i \leq 6$, and the sequence
\[
\tau_k = \tau'_{k-1} = \begin{cases} 3 & \mbox{if $k \leq 18$} \\  4 & \mbox{if $k \geq 19$.} \end{cases}
\]
As in Example~\ref{Ex:Genus5}, because $s_k[i] = s'_{k-1}[i]$ for all $i$ and $k$, each building sequence corresponds to a unique building block.  We denote the building block corresponding to the constant sequence $i$ by $\varphi_i$, and the building block corresponding to $\tau$ by $\varphi_{\tau}$.

The set $\cB = 2\cA$ does not satisfy property $(\mathbf B')$, because the two permissible functions $\varphi_1 + \varphi_3$ and $\varphi_1 + \varphi_{\tau}$ are equivalent on $\gamma_{18}$ and have different slopes along $\beta_{19}$, and $\varphi_0 + \varphi_{\tau}$ is shiny on $\gamma_{18}$.  However, the sets
\[
\cB' = \cB \smallsetminus \{ \varphi_0 + \varphi_{\tau} \} \quad \quad \cB'' = \cB \smallsetminus \{ \varphi_1 + \varphi_3 \}
\]
do satisfy property $(\mathbf B')$.  Both sets also satisfy property $(\mathbf B)$ (as does $\cB$ itself), because the only permissible function $\psi \in \cB$ such that $2D + \ddiv (\psi)$ contains $w_{18}$ is $\psi = \varphi_1 + \varphi_3$.  The function $\psi' = \varphi_1 + \varphi_{\tau} \in \cB' \cap \cB''$ is equivalent to $\psi$ on $\Gamma_{\leq 18}$, and $s_{19} (\psi') > s_{19} (\psi)$.  The fact that $\cB'$ and $\cB''$ satisfy both properties contrasts, e.g., with $\cB' \smallsetminus \{ \psi'\}$, which satisfies $(\mathbf B')$ but not $(\mathbf B)$.
\end{example}

\subsection{Properties of the template, revisited}

We now state the main result of this section, which gives the essential technical properties of the output of template algorithm in the specific cases needed for the proof of our main results.  The rest of this section will be devoted to proving it.

\begin{theorem}
\label{Thm:ExtremalConfig}
Let $g = 22$ or $23$, let $d = g + 3$ and let $g' \in \{ g, g-1, g-2 \}$.  Let $\Gamma'$ be a chain of $g'$ loops with $C$-admissible edge lengths for some $C > 12dg$, and let $D$ be a break divisor on $\Gamma'$ with $\Sigma \subset R(D)$ a tropical linear series of rank $6$.  Assume furthermore that
\begin{itemize}
\item if $g' = g-1$ then $s'_0[5] \leq 2$, and
\item if $g' = g-2$ then either $s'_0[5] \leq 2$ or $s'_0[6] + s'_0[4] \leq 5$.
\end{itemize}
Let $\cB \subseteq 2\cA$ satisfy properties $(\mathbf B)$ and $(\mathbf B')$, and define $\sigma_k$ for all $k$ as in (\ref{eq:sigma}).  Then:
\begin{enumerate}
\item  \label{it:everyfunction} every function in $\cB$ is assigned to a loop $\gamma_k$ or a bridge $\beta_k$;
\item  \label{it:assignment} the function $\psi + c(\psi)$ achieves the minimum on an open susbet of the loop or bridge $\alpha (\psi)$;
\item  \label{it:sameloop} if $\varphi_1 + \varphi'_1$ and $\varphi_2 + \varphi'_2$ are assigned to the same loop $\gamma_k$, then, after possibly reordering, $\varphi_1 \sim_{\gamma_k} \varphi_2$ and $\varphi'_1 \sim_{\gamma_k} \varphi'_2$,
\item \label{it:oneloop} if $\varphi_1 + \varphi'_1$ and $\varphi_2 + \varphi'_2$ are assigned to the same loop $\gamma_k$ and $\gamma_k$ is not $i$-switching for $i \in \{ \tau_k (\varphi_1), \tau_k (\varphi_2), \tau_k (\varphi'_1) , \tau_k (\varphi'_2) \}$, then $\{ \tau'_k (\varphi_1) , \tau'_k (\varphi'_1) \} = \{ \tau'_k (\varphi_2) , \tau'_k (\varphi'_2) \}$, and
\item  \label{it:samebridge} if $\varphi_1 + \varphi'_1$ and $\varphi_2 + \varphi'_2$ are assigned to the same bridge $\beta_k$ and have the same slope on $\beta_k$, for $k > 1$, then $\{ \tau'_{k-1} (\varphi_1), \tau'_{k-1} (\varphi'_1) \} = \{ \tau'_{k-1} (\varphi_2), \tau'_{k-1} (\varphi'_2) \}$.
\end{enumerate}
Moreover, if $k = 1$, then \eqref{it:samebridge} holds as long as $\mu (\beta_1) < 2$.
\end{theorem}

Before we begin the proof of Theorem~\ref{Thm:ExtremalConfig}, we describe the template constructed by our algorithm in Examples~\ref{Ex:Genus5Properties} and~\ref{Ex:Genus22Properties}. We illustrate first with a simpler case, in genus 5.

\begin{example}
\label{Ex:Genus5Template}
Consider the set $\cB$ of Example~\ref{Ex:Genus5Properties}.  We saw above that, when $\sigma_k =2$ for all $k$, then the set $\cB$ satisfies properties $(\mathbf B)$ and $(\mathbf B')$.  We describe the sets of functions assigned to each loop and bridge by the template algorithm.

Since $s_1 (2\varphi_3) > s_1 (\varphi_2 + \varphi_3) > 2$, we set $\alpha (2\varphi_3) = \alpha (\varphi_2 + \varphi_3) = \beta_1$.  Proceeding to $\gamma_1$, we see that $\varphi_1 + \varphi_3 \sim_{\gamma_1} \varphi_{\tau} + \varphi_3$ are departing, so we set $\alpha (\varphi_1 + \varphi_3) = \alpha (\varphi_{\tau} + \varphi_3) = \gamma_1$.  On $\gamma_2$, the function $2\varphi_2$ is departing, so $\alpha (2\varphi_2) = \gamma_2$.  On $\gamma_3$, $\varphi_1 + \varphi_2 \sim_{\gamma_3} \varphi_{\tau} + \varphi_2$ are departing, so $\alpha (\varphi_1 + \varphi_2) = \alpha (\varphi_{\tau} + \varphi_2) = \gamma_3$.  On $\gamma_4$, $2\varphi_1 \sim_{\gamma_4} \varphi_1 + \varphi_{\tau} \sim_{\gamma_4} 2\varphi_{\tau}$, but only the latter two functions are departing.  Hence, $\alpha ( \varphi_1 + \varphi_{\tau} ) = \alpha (2\varphi_{\tau}) = \gamma_4$.  On $\gamma_5$, $\varphi_0 + \varphi_3$ is departing, so $\alpha (\varphi_0 + \varphi_3) = \gamma_5$.  The remaining functions are assigned to the final bridge $\beta_6$.  The template $\theta$ is pictured in Figure~\ref{Fig:Genus5Template}.

\begin{figure}[H]
\begin{tikzpicture}[thick, scale=0.63]
\begin{scope}[grow=right, baseline, shift={(-6,0)}]
\draw (-3.25,7) node {$\theta$};
\draw (-5,6)--(-2,6);
\draw (-1,6) circle (1);
\draw (0,6)--(1,6);
\draw (2,6) circle (1);
\draw (3,6)--(4,6);
\draw (5,6) circle (1);
\draw (6,6)--(7,6);
\draw (8,6) circle (1);
\draw (9,6)--(10,6);
\draw (11,6) circle (1);
\draw (12,6)--(15,6);
\draw [ball color=black] (-4,6) circle (0.75mm);
\draw (-4.5,5.75) node {\tiny 33};
\draw [ball color=black] (-3,6) circle (0.75mm);
\draw (-3.5,5.75) node {\tiny 23};
\draw [ball color=black] (-1,7) circle (0.75mm);
\draw [ball color=black] (0.2,6) circle (0.75mm);
\draw (-1,4.75) node {\tiny 13, $\tau$3};
\draw [ball color=black] (3.2,6) circle (0.75mm);
\draw (3.2,5.75) node {\tiny 2};
\draw (2,4.75) node {\tiny 22};
\draw [ball color=black] (4.13,6.5) circle (0.75mm);
\draw [ball color=black] (6.2,6) circle (0.75mm);
\draw (5,4.75) node {\tiny 12, $\tau$2};
\draw [ball color=black] (9,6) circle (0.75mm);
\draw [ball color=black] (9.2,6) circle (0.75mm);
\draw (8,4.75) node {\tiny 1$\tau$, $\tau \tau$};
\draw [ball color=black] (10.5,6.87) circle (0.75mm);
\draw [ball color=black] (12.2,6) circle (0.75mm);
\draw (11,4.75) node {\tiny 03};
\draw (12.5,4.75) node {\tiny 11, 02, 0$\tau$};
\draw (12.5,5.5) node {$\uparrow$};
\draw [ball color=black] (13,6) circle (0.75mm);
\draw (13.5,5.75) node {\tiny 01};
\draw [ball color=black] (14,6) circle (0.75mm);
\draw (14.5,5.75) node {\tiny 00};
\end{scope}

\end{tikzpicture}
\caption{The template $\theta$ of Example~\ref{Ex:Genus5Template}.}
\label{Fig:Genus5Template}
\end{figure}

The functions assigned to the bridge $\beta_6$ satisfy the following inequalities on slopes:
\[
s_6 (2\varphi_1) = s_6 (\varphi_0 + \varphi_2) = s_6 (\varphi_0 + \varphi_{\tau}) > s_6 (\varphi_0 + \varphi_1) > s_6 (2\varphi_0) .
\]
Thus, the functions $2\varphi_1$, $\varphi_0 + \varphi_2$, and $\varphi_0 + \varphi_{\tau}$ achieve the minimum on the same open subset of the bridge $\beta_6$.  It follows that the template $\theta$ does not satisfy part (\ref{it:samebridge}) of Theorem~\ref{Thm:ExtremalConfig}.  It does satisfy the other parts, however:  every function in $\cB$ is assigned to a loop or a bridge, each function achieves the minimum on an open subset of the loop or bridge to which it is assigned, and functions assigned to the same loop have equivalent summands on that loop.
\end{example}

\begin{example}
\label{Ex:Genus22Template}
Consider the sets $\cB'$ and $\cB''$ from Example~\ref{Ex:Genus22Properties}.  We describe the template $\theta$ for each of the two sets by explicitly comparing it to the template constructed in Example~\ref{ex:randomtableau}.  Using either $\cB'$ or $\cB''$, the functions assigned to loops $\gamma_k$ or bridges $\beta_k$ with $k \leq 18$ are the same.  Specifically, to each loop $\gamma_k$ with $k \leq 17$ or bridge $\beta_k$ with $k \leq 18$, we assign the function $\varphi_i + \varphi_j$ if and only if it is assigned to that loop or bridge in Example~\ref{ex:randomtableau}.  If $\varphi_3 + \varphi_i$ is assigned to a loop $\gamma_k$ with $k \leq 17$ or a bridge $\beta_k$ with $k \leq 18$, then because $\varphi_{\tau}$ is equivalent to $\varphi_3$ on $\Gamma_{\leq 18}$, we assign the function $\varphi_{\tau} + \varphi_i$ to that loop or bridge as well.  Because $\varphi_1 + \varphi_{\tau} \in \cB' \cap \cB''$ is the only departing function on $\gamma_{18}$, we have $\alpha (\varphi_1 + \varphi_{\tau}) = \gamma_{18}$.

We first consider the set $\cB'$.  Every function $\varphi_i + \varphi_j$ that is assigned to a loop $\gamma_k$ or bridge $\beta_k$ with $k \geq 19$ in Example~\ref{ex:randomtableau} is assigned to that same loop or bridge.  If $\varphi_4 + \varphi_i$ is assigned to a loop $\gamma_k$ or bridge $\beta_k$ with $k \geq 19$ in Example~\ref{ex:randomtableau}, then $\varphi_{\tau} + \varphi_1$ is assigned to that same loop or bridge.

Next, consider the set $\cB''$.  Since $\varphi_1 + \varphi_3 \notin \cB''$, it is not assigned to $\gamma_{19}$.  Instead, $\varphi_0 + \varphi_{\tau}$ is assigned to $\gamma_{19}$.  Every function $\varphi_i + \varphi_j$ that is assigned to a loop $\gamma_k$ or bridge $\beta_k$ with $k \geq 20$ in Example~\ref{ex:randomtableau} is then assigned to that same loop or bridge.  If $\varphi_4 + \varphi_i$ is assigned to a loop $\gamma_k$ or bridge $\beta_k$ with $k \geq 20$ in Example~\ref{ex:randomtableau}, then $\varphi_{\tau} + \varphi_1$ is assigned to that same loop or bridge.  In both cases, we see that the template $\theta$ and assignment function $\alpha$ satisfies Theorem~\ref{Thm:ExtremalConfig}.

Recall that the set $\cB$ of Example~\ref{Ex:Genus22Properties} does not satisfy property $(\mathbf B')$.  We show that, if we run the algorithm on this set $\cB$, the resulting template $\theta$ and assignment function $\alpha$ do not satisfy Theorem~\ref{Thm:ExtremalConfig}.  For every $k \geq 19$, there is an unassigned departing function in $\cB$ on $\gamma_k$.  Since departing functions are always assigned, we have $\alpha (\varphi_1 + \varphi_3) = \gamma_{19}$, $\alpha (\varphi_0 + \varphi_{\tau}) = \gamma_{20}$, $\alpha (\varphi_1 + \varphi_2) = \gamma_{21}$, and $\alpha (\varphi_0 + \varphi_3) = \gamma_{22}$.  Thus, when we reach the last bridge, the functions $\varphi_0 + \varphi_2$ and $2\varphi_1$ have not been assigned.  They are therefore both assigned to the last bridge $\beta_{23}$, but since they have the same slope along $\beta_{23}$, they both achieve the minimum on the same open subset of this bridge, contradicting part (\ref{it:samebridge}) of Theorem~\ref{Thm:ExtremalConfig}
\end{example}

For the rest of this section, we assume the hypotheses of the theorem; in particular, $\cA$ is a set of building blocks and $\cB \subseteq 2\cA$ satisfies $(\mathbf B)$ and $(\mathbf B')$.  Part (\ref{it:assignment}) of Theorem~\ref{Thm:ExtremalConfig} was proved at the end of Section~\ref{sec:template}.

The main difficulty is proving that every function is assigned to a bridge or loop.  This is a counting argument and is somewhat more intricate than in the vertex avoiding case.  The proofs of properties \eqref{it:sameloop}, \eqref{it:oneloop}, and \eqref{it:samebridge} are shorter; we prove \eqref{it:sameloop} and \eqref{it:oneloop} at the beginning and \eqref{it:samebridge} at the end.

\subsubsection{Functions assigned to the same loop}
\begin{proof}[Proof of Theorem~\ref{Thm:ExtremalConfig}\eqref{it:sameloop}]
Since $\psi_1 = \varphi_1 + \varphi'_1$ and $\psi_2 = \varphi_2 + \varphi'_2$ are assigned to the same loop $\gamma_k$, we have $\psi_1 \sim_{\gamma_k} \psi_2$.  It therefore suffices to show that, after possibly reordering, $\varphi_1 \sim_{\gamma_k} \varphi_2$.

The fact that $\psi_1$ is equivalent to $\psi_2$ on $\gamma_k$ is equivalent to the statement that the restrictions of $2D + \ddiv(\psi_1)$ and $2D + \ddiv(\psi_2)$ to $\gamma_k \smallsetminus \{ v_k , w_k \}$ are the same.  Thus, if $D+\ddiv(\varphi_1)$ contains a point $v \in \gamma_k \smallsetminus \{ v_k , w_k \}$, then one of $D+\ddiv(\varphi_2)$ or $D+\ddiv(\varphi'_2)$ must contain $v$ as well.  If $v$ is the only point of $\gamma_k \smallsetminus \{ v_k , w_k \}$ contained in both $D+\ddiv(\varphi_1)$ and $D+\ddiv(\varphi_2)$, then $\varphi_1 \sim_{\gamma_k} \varphi_2$.  Note that, since a function is assigned to $\gamma_k$, it is not a skippable loop.  Thus, the restrictions of $D+\ddiv(\varphi_1), D+\ddiv(\varphi'_1), D+\ddiv(\varphi_2)$, and $D+\ddiv(\varphi'_2)$ to $\gamma_k \smallsetminus \{ v_k , w_k \}$ all have degree at most 1, and the conclusion holds.
\end{proof}

\begin{proof}[Proof of Theorem~\ref{Thm:ExtremalConfig}\eqref{it:oneloop}]
Let $\psi_1 = \varphi_1 + \varphi'_1 , \psi_2 = \varphi_2 + \varphi'_2$.  By Theorem~\ref{Thm:ExtremalConfig}\eqref{it:sameloop}, after possibly reordering, we have $\varphi_1 \sim_{\gamma_k} \varphi_2$ and $\varphi'_1 \sim_{\gamma_k} \varphi'_2$.  Suppose that $\tau'_k (\varphi_1) \neq \tau'_k (\varphi_2)$.   Without loss of generality, suppose that $\tau'_k (\varphi_1) < \tau'_k (\varphi_2)$.  We first show that either $d_{w_k} (\varphi_1) \geq 1$ or $d_{w_k} (\varphi_2) \geq 1$.  If either $s'_k (\varphi_1) < \xi'_k (\varphi_1)$ or $s'_k (\varphi_2) < \xi'_k (\varphi_2)$, this follows from Lemma~\ref{Lem:VDegree}.  Otherwise, we have $s'_k (\varphi_1) < s'_k (\varphi_2)$.  Since $\varphi_1 \sim_{\gamma_k} \varphi_2$, it follows that $d_{w_k} (\varphi_1) > d_{w_k} (\varphi_2) \geq 0$.

Without loss of generality, assume that $d_{w_k} (\varphi_1) \geq 1$.  Since $\alpha (\psi_1) = \gamma_k$, the loop $\gamma_k$ is not skippable.  Since $2D + \ddiv(\psi_1)$ contains $w_k$, there must be unassigned departing functions on $\gamma_k$.  Because $\alpha (\psi_1) = \alpha (\psi_2) = \gamma_k$, $\psi_1$ and $\psi_2$ must be departing functions.  This implies that $(2D + \ddiv(\psi_1))_{|\gamma_k}$ has degree at most 1, hence $(2D + \ddiv(\psi_1))_{|\gamma_k} = w_k$.  Hence, $\varphi_1 \sim_{\gamma_k} \varphi'_1$, and since $d_{v_k} (\varphi_1) = d_{v_k} (\varphi'_1) = 0$, we have $\xi_k (\varphi_1) = s_k (\varphi_1) = s_k (\varphi'_1) = \xi_k (\varphi'_1)$.  It follows that $\tau_k (\varphi_1) = \tau_k (\varphi'_1)$, and since $\gamma_k$ is not $\tau_k(\varphi_1)$-switching, $\tau'_k (\varphi_1) = \tau'_k (\varphi'_1)$ as well.

We have now demonstrated the equivalences: $\varphi_1 \sim_{\gamma_k} \varphi'_1$, $\varphi_1 \sim_{\gamma_k} \varphi_2,$ and $\varphi'_1 \sim_{\gamma_k} \varphi'_2$.  It follows that $\varphi_2 \sim_{\gamma_k} \varphi'_2$ as well.  Since $\psi_2$ is departing, either $d_{w_k} (\psi_2) = 0$ or $d_{v_k} (\psi_2) = 0$.  If $d_{v_k} (\psi_2) = 0$, then $\tau'_k (\varphi_2) = \tau'_k (\varphi'_2)$ by the same argument as the previous paragraph.  If $d_{w_k} (\psi_2) = 0$, then $\xi'_k (\varphi_2) = s'_k (\varphi_2) = s'_k (\varphi'_2) = \xi'_k (\varphi'_2)$, hence $\tau'_k (\varphi_2) = \tau'_k (\varphi'_2)$.  Finally, since $\psi_2$ is departing, we see that either $(D + \ddiv(\varphi_2))_{|\gamma_k} = 0$ or $(D + \ddiv(\varphi'_2))_{|\gamma_k} = 0$.  Since $(D + \ddiv(\varphi'_1))_{|\gamma_k} = 0$ as well, we see that $\tau'_k (\varphi_2) = \tau'_k (\varphi'_1)$, and the result follows.
\end{proof}

\subsubsection{Preparation for the counting argument}

In preparation for the counting argument used to prove Theorem~\ref{Thm:ExtremalConfig}, we show that, whenever we arrive at Step 4 in the loop subroutine, we are able to assign an equivalence class of functions to the given loop.

\begin{lemma}
\label{Lem:AtMostThree}
If there is no unassigned departing permissible function on $\gamma_k$, then the number of equivalence classes of permissible functions on $\gamma_k$ is at most 3.
\end{lemma}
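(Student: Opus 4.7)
The plan is to adapt the proof of the vertex avoiding analogue, Lemma~\ref{Lem:VAAtMostThree}. By hypothesis, every unassigned permissible $\psi = \varphi + \varphi' \in \cB$ on $\gamma_k$ is non-departing, so permissibility forces $s_{k+1}(\psi) = s_k(\theta)$.

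First I would observe that the equivalence class of $\psi = \varphi + \varphi'$ on $\gamma_k$ is determined by the unordered pair of outgoing slope indices $\{\tau'_k(\varphi), \tau'_k(\varphi')\} \subseteq \{0,\ldots,r\}$. Indeed, given any other $\phi + \phi' \in \cB$ with $\{\tau'_k(\phi), \tau'_k(\phi')\} = \{\tau'_k(\varphi), \tau'_k(\varphi')\}$, property $(\mathbf A)$ applied to each matched pair of summands forces the corresponding building blocks to agree on $\gamma_k$, hence $\varphi + \varphi'$ and $\phi + \phi'$ agree on $\gamma_k$. It therefore suffices to bound the number of unordered pairs $\{i,j\}$ that actually arise from some non-departing permissible $\psi \in \cB$.

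Next I would show that the assignment $i \mapsto j$ is a well-defined partial involution on $\{0,\ldots,r\}$. In the absence of any switching or decreasing behavior adjacent to $\gamma_k$, Definition~\ref{def:buildingblock} pins $s_{k+1}(\phi) = s_{k+1}[\tau'_k(\phi)]$ for every building block $\phi \in \cA$, and the equation $s_{k+1}[i] + s_{k+1}[j] = s_k(\theta)$, combined with strict monotonicity of $i \mapsto s_{k+1}[i]$, determines $j$ uniquely from $i$. In the remaining cases, Lemma~\ref{Lem:MultBound} bounds the total slack in slopes by $\rho \leq 2$, and Lemmas~\ref{Lem:Rho2} and~\ref{Lem:SameSlope}, together with the multiplicity~$2$ classification of switching bridges and loops in \S\ref{sec:switch}, restrict the possibilities enough that a short case analysis still yields the claim.

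A partial involution on $\{0,\ldots,r\}$ has at most $\lceil (r+1)/2 \rceil = 4$ unordered orbits, with equality requiring a fixed point, i.e., an index $i$ with $2 s_{k+1}[i] = s_k(\theta)$. On the second block $s_k(\theta) = 3$ is odd, so no such $i$ exists. On the first and third blocks, I would use the definitions of $z$ and $z'$ together with the tableau-theoretic description of $s_{k+1}[\cdot]$ from \S\ref{sec:partitions} to rule out any index $i$ with $s_{k+1}[i] = s_k(\theta)/2$, extending the bounds $s_{k+1}[3] \leq 1$ (first block) and $s_{k+1}[3] \geq 2$ (third block) from the vertex avoiding warm-up to all indices by strict monotonicity. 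Eliminating the fixed point then reduces the count to at most~$3$, as required. The main obstacle is the enumeration in the second step in the presence of switching or decreasing edges; the global multiplicity bound $\rho \leq 2$ makes this tractable but demands careful bookkeeping through the switching patterns classified in~\S\ref{sec:switch}.
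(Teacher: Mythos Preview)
Your outline captures the overall shape of the argument, but there is a genuine gap in the second step, and it is exactly the step where the paper's proof differs from yours.

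You claim that the assignment $i \mapsto j$ (sending $\tau'_k(\varphi)$ to $\tau'_k(\varphi')$) is a well-defined partial involution, and you propose to handle the cases where $\gamma_k$ or $\beta_{k+1}$ has switching or decreasing behavior by a ``short case analysis'' using the multiplicity bound and the classification in \S\ref{sec:switch}. But this map is \emph{not} well defined at the level of building blocks: when $\beta_{k+1}$ is a decreasing bridge, two building blocks $\varphi, \phi \in \cA$ with the same $\tau'_k$ (hence agreeing on $\gamma_k$ by property~$(\mathbf A)$) can have different slopes $s_{k+1}(\varphi) \neq s_{k+1}(\phi)$ --- this is exactly what happens with $\varphi^0_h$ and $\varphi^\infty_h$ in \S\ref{sec:noswitching}. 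Since the constraint on the partner is $s_{k+1}(\varphi) + s_{k+1}(\varphi') = s_k(\theta)$, different slopes force different partners, so $i$ does not determine $j$. Your invocation of Lemmas~\ref{Lem:Rho2} and~\ref{Lem:SameSlope} does not resolve this; those lemmas go in the other direction (equal slope or equal slope index implies agreement), and do not prevent two agreeing building blocks from having distinct outgoing slopes.

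The paper closes this gap not by case analysis but by invoking property~$(\mathbf B)$, which you never use. The argument is: suppose some summand $\varphi$ of an unassigned permissible $\varphi + \varphi' \in \cB$ has $s_{k+1}(\varphi) < s'_k[\tau'_k(\varphi)]$. Then property~$(\mathbf B)$ supplies a $\psi \in \cB$ agreeing with $\varphi + \varphi'$ on all of $\Gamma_{\leq k}$ and with $s_{k+1}(\psi) > s_k(\theta)$. This $\psi$ is departing; if unassigned, it violates the hypothesis; if already assigned to some earlier loop, then $\varphi + \varphi'$ (which agrees with it there) would have been assigned too. Either way, contradiction. So for every relevant summand $s_{k+1}(\varphi) = s'_k[\tau'_k(\varphi)]$, and now the pairing $(i,j)$ with $s'_k[i] + s'_k[j] = s_k(\theta)$ is well defined and the vertex-avoiding count applies verbatim. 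Note how this step uses both the word ``unassigned'' in the hypothesis and the specific structure of property~$(\mathbf B)$ (agreement on $\Gamma_{\leq k}$, not just on $\gamma_k$); your sketch uses neither.
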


\begin{proof}
Consider the set of building blocks $\varphi \in \cA$ such that there exists $\varphi' \in \cA$ with $\varphi + \varphi' \in \cB$ an unassigned permissible function on $\gamma_k$.  If every such building block satisfies $s_{k+1} (\varphi) = \xi'_k (\varphi)$, then the proof of Lemma~\ref{Lem:VAAtMostThree} goes through essentially unchanged.  Indeed, in this case any two functions in $\cA$ with the same slope along $\beta_{k+1}$ are equivalent on $\gamma_k$.  Thus, the number of equivalence classes of permissible functions on $\gamma_k$ is bounded above by the number of pairs $(i,j)$ such that $s'_k[i]+s'_k[j] = \sigma_k$.  This number of pairs is at most 3, exactly as in the proof of Lemma~\ref{Lem:VAAtMostThree}.

On the other hand, suppose that there is an unassigned permissible function $\psi = \varphi + \varphi' \in \cB$ such that $s_{k+1} (\varphi) < \xi'_k (\varphi)$.  By property $(\mathbf B)$, there is a function $\psi' \in \cB$ that agrees with $\psi$ on $\Gamma_{\leq k}$, with the property that
\[
s_{k+1} (\psi') > s_{k+1} (\psi) \geq \sigma_k .
\]
Since $\psi'$ is a departing function, it must have been assigned to a loop $\gamma_{\ell}$ with $\ell < k$ or a bridge $\beta_{\ell}$ with $\ell \leq k$.  But this is impossible, because $\psi$ is equivalent to $\psi'$ on all previous loops and bridges and $\psi$ is unassigned.
\end{proof}

\begin{proposition}
\label{Prop:NewThreeShape}
Consider a set of at most three equivalence classes of functions in $\cB$ on a non-skippable loop $\gamma_k$.  If all of the functions take the same value at $w_k$, then there is an open subset of $\gamma_k$ on which one of these equivalence classes is strictly less than the others.
\end{proposition}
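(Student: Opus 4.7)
The plan is to follow closely the template of Proposition~\ref{prop:threeshape} in the vertex avoiding case, upgraded to equivalence classes and supplemented by the non-skippability hypothesis. I will pick one representative $\psi_i = \varphi_i + \varphi'_i$ from each of the (at most three) equivalence classes and, after adjusting coefficients so that all representatives take the same value at $w_k$, form the pointwise minimum $\vartheta = \min_i \psi_i$. The conclusion to prove is that one of the $\psi_i$ is strictly less than the others on some open subset of $\gamma_k$.

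The starting point is a degree count. By the hypothesis that there are no unassigned departing permissible functions on $\gamma_k$, each $\psi_i$ satisfies $s_{k+1}(\psi_i) = s_k(\theta)$, so $s_{k+1}(\vartheta) = s_k(\theta)$ as well; since each $\psi_i$ is permissible, the slope of $\vartheta$ coming into $v_k$ from the left is at most $s_k(\theta)$. Combined with $\deg(2D|_{\gamma_k}) = 2$, this forces
\[
\deg \bigl( (2D + \ddiv(\vartheta))_{|{\gamma_k}} \bigr) \le 2,
\]
so that $\gamma_k \smallsetminus \mathrm{Supp}(2D + \ddiv(\vartheta))$ has at most two connected components. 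By the Shape Lemma for Minima, the locus where each equivalence class achieves the minimum is the closure of a union of these components.

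Combinatorial enumeration then leaves two possibilities, directly analogous to the two configurations displayed in Figure~\ref{Fig:ThreeFunctions}. Either one equivalence class achieves the minimum uniquely on a nonempty open subset (the desired conclusion), or else the two components in the complement of the support are the loci where two \emph{distinct} pairs of classes coincide, with $w_k$ lying on the boundary of both. In the second scenario, one class, say $\psi_1$, achieves the minimum on all of $\gamma_k$, and since the three classes share slope $s_k(\theta)$ on $\beta_{k+1}$, $\vartheta$ agrees with $\psi_1$ also on an initial segment of $\beta_{k+1}$.

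The crux of the argument will be to show that this second scenario forces $\gamma_k$ to be skippable, contradicting the hypothesis. Comparing $D + \ddiv(\psi_1)$ (which, as $\psi_1$ achieves the minimum on a neighborhood of $w_k$ and on an initial piece of $\beta_{k+1}$, must agree with $D + \ddiv(\vartheta)$ near $w_k$) with the two divisor points of $(2D + \ddiv(\vartheta))_{|{\gamma_k}}$ forced by the two interior regions, one obtains that either $2D + \ddiv(\psi_1)$ contains $w_k$, or contains a point at a non-integer multiple of $m_k$ from $w_k$, or that $D + \ddiv(\varphi'_1)$ carries two distinct points of $\gamma_k \smallsetminus \{v_k\}$; any one of these conclusions triggers a clause of Definition~\ref{Def:Skippable}. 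The main obstacle will be the bookkeeping in distributing the $\le 2$ degree of $(2D + \ddiv(\psi_1))_{|{\gamma_k}}$ between its two summands $D + \ddiv(\varphi_1)$ and $D + \ddiv(\varphi'_1)$: one must argue by cases on which of $\gamma_k$, $\beta_k$, $\beta_{k+1}$ carry the multiplicity budget (bounded by $\rho \leq 2$ via Lemma~\ref{Lem:MultBound}), using Proposition~\ref{Prop:Shiny} whenever one of the $\psi_i$ is shiny and the switching-pattern classification of \S\ref{sec:switch} to eliminate distributions incompatible with the slope data of $\Sigma$.
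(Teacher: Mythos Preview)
Your overall structure is right and matches the paper: reproduce the degree count and Shape Lemma argument from Proposition~\ref{prop:threeshape}, reduce to the two configurations of Figure~\ref{Fig:ThreeFunctions}, and rule out the second by invoking non-skippability. But you have badly over-engineered the final step.

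In the second configuration, $w_k$ is a boundary point between the two regions, so $w_k \in \mathrm{Supp}(2D + \ddiv(\vartheta))$. You already observed that $\vartheta$ agrees with $\psi_1$ in a neighborhood of $w_k$ (since $\psi_1$ achieves the minimum on all of $\gamma_k$ and all representatives share slope $s_k(\theta)$ on $\beta_{k+1}$). Hence $w_k \in \mathrm{Supp}(2D + \ddiv(\psi_1))$ directly. This is exactly clause~(ii) of Definition~\ref{Def:Skippable}, so $\gamma_k$ would be skippable, contradiction. There is no ``either \dots\ or \dots\ or \dots'': the first alternative you list is always the one that fires, and it fires for free. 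The bookkeeping you propose---distributing degree between $D+\ddiv(\varphi_1)$ and $D+\ddiv(\varphi'_1)$, casing on which of $\gamma_k,\beta_k,\beta_{k+1}$ carries multiplicity, invoking Proposition~\ref{Prop:Shiny}, and appealing to the switching-pattern classification---is entirely unnecessary.

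The paper's proof is accordingly two sentences: it just notes that the argument of Proposition~\ref{prop:threeshape} depends only on the restrictions to $\gamma_k$ and the single fact that $2D+\ddiv(\psi)$ does not contain $w_k$, which non-skippability supplies.
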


\begin{proof}
The proof of Lemma~\ref{lem:threeshape} depends only on the restrictions of the functions to the loop $\gamma_k$, and the fact that, if $\psi$ is one of these functions, then $2D+\ddiv(\psi)$ does not contain $w_k$.  This latter fact is guaranteed by our assumption that $\gamma_k$ is not skippable.  The conclusion therefore continues to hold if we replace the functions with equivalence classes of functions.
\end{proof}

\subsubsection{Shiny functions and skippable loops}
The next two propositions are analogues of Lemma~\ref{lem:onenew} and Proposition~\ref{prop:nonew}, respectively, with shiny functions and skippable loops in place of new permissible functions and lingering loops.

\begin{proposition}
\label{Prop:Lingering}
If $\gamma_k$ is skippable, then no permissible function is shiny on $\gamma_k$.
\end{proposition}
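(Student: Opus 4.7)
The plan is to argue by contradiction using property $(\mathbf{B}')$: assume some $\psi_0 = \varphi_0 + \varphi'_0 \in \cB$ is shiny on $\gamma_k$, and use the skippability hypothesis to force both hypotheses of $(\mathbf{B}')$, namely (a) two permissible functions in $\cB$ that agree on $\gamma_k$ but have different slopes on $\beta_{k+1}$, and (b) a switching condition on $\gamma_k$ or on $\beta_{k+1}$. The conclusion of $(\mathbf{B}')$ then forbids shiny functions on $\gamma_k$, contradicting the assumption on $\psi_0$.

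First I would extract the structural content of shininess. Since $\gamma_k$ is not the first loop of a block, Corollary~\ref{cor:equivtonew} shows that $\psi_0$ agrees on $\gamma_k$ with every new permissible function in $\cB$, and Proposition~\ref{Prop:Shiny} gives, after reordering, that $D+\ddiv(\varphi_0)$ has degree zero on $\gamma_k\setminus\{v_k\}$ and $s_{k+1}(\varphi_0) > s_k[\tau_k(\varphi_0)]$; in particular, $\psi_0$ has slope along $\beta_{k+1}$ strictly larger than $s_k[\tau_k(\varphi_0)] + s_k[\tau_k(\varphi'_0)]$. This $\psi_0$ will be one of the two functions required in (a). Next I would unpack each clause of Definition~\ref{Def:Skippable}. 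In case (ii), the condition that $2D+\ddiv(\psi_1)$ contains $w_k$, combined with the nondeparting permissibility of $\psi_1$ and Lemma~\ref{Lem:MultBound}, forces $\beta_{k+1}$ to be a switching bridge of multiplicity $2$ via the classification in \S\ref{sec:switchbridgepattern}, and pins the slope of $\psi_1$ on $\beta_{k+1}$ to be strictly smaller than that of $\psi_0$. In cases (i) and (iii), the analogous divisorial information about $\psi_1$ on $\gamma_k$ forces $\gamma_k$ itself to realize one of the switching patterns from \S\ref{sec:switch}, with the same slope comparison on $\beta_{k+1}$.

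To produce the second function in $\cB$ required by (a), I would apply Lemma~\ref{Lem:ShinyEq} to match the summands of $\psi_0$ with those of the skippable witness $\psi_1$ up to reordering, and then invoke property $(\mathbf{A})$ together with Lemma~\ref{Lem:SameSlope} to upgrade coincidence of slope indices on $\gamma_k$ to genuine agreement of the pairwise sums there. The main obstacle I anticipate is case (i), where the non-integer position of the divisorial point of $2D+\ddiv(\psi_1)$ relative to $m_k$ has to be translated into an explicit switching diagram for $\gamma_k$; this requires comparing $x_k$ with the restricted possibilities for the divisor $D+\ddiv(\varphi_0)$ coming from Proposition~\ref{Prop:Shiny}, and is the step where one genuinely needs that either $\gamma_k$ or $\beta_{k+1}$ has positive multiplicity. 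Once (a) and (b) are verified in all three cases, property $(\mathbf{B}')$ applies and directly contradicts the shininess of $\psi_0$, completing the proof.
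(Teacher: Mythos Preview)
Your plan has a genuine gap: the two hypotheses of property $(\mathbf B')$ cannot be extracted from skippability in the way you describe.

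First, your claim that case (ii) of Definition~\ref{Def:Skippable} forces $\beta_{k+1}$ to be a switching bridge is false. The condition that $2D+\ddiv(\psi_1)$ contains $w_k$ only says that one summand $\varphi$ satisfies $s_{k+1}(\varphi) < s'_k[\tau'_k(\varphi)]$, i.e.\ $\beta_{k+1}$ is a \emph{decreasing} bridge; decreasing bridges of multiplicity~$1$ are not switching. Likewise, cases (i) and (iii) only force $\gamma_k$ to have positive multiplicity, not to be a switching loop. So hypothesis (b) of $(\mathbf B')$ is not available. Second, your production of the pair required by hypothesis (a) misapplies Lemma~\ref{Lem:ShinyEq}: that lemma requires \emph{both} functions to be shiny, and the skippability witness $\psi_1$ need not be. Finally, even if $\psi_0$ and $\psi_1$ did agree on $\gamma_k$, both are non-departing on a skippable loop and hence have the \emph{same} slope $s_k(\theta)$ on $\beta_{k+1}$ by Lemma~\ref{Lem:Skippable1}, so they cannot serve as the required pair with different slopes.

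The paper's proof goes through property $(\mathbf B)$, not $(\mathbf B')$. The argument first invokes Lemma~\ref{Lem:Skippable2} (itself an application of $(\mathbf B)$) to put the skippable witness in the specific form $\varphi+\varphi'$ with $s_{k+1}(\varphi)=s_k(\varphi)+1$ and $s_{k+1}(\varphi')=s_k(\varphi')-1$. Comparing $\varphi$ with the shiny summand $\phi$ (both have empty divisor on $\gamma_k\smallsetminus\{v_k\}$) and using Lemma~\ref{Lem:Skippable1} forces $\varphi'$ and $\phi'$ to agree on $\gamma_k$ with equal slopes on $\beta_{k+1}$; a degree count then pins down $[D+\ddiv(\varphi')]_{|\gamma_k}=v_k+w_k$. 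Comparing the slope indices of $\varphi$ and $\varphi'$ now shows $\tau_k(\varphi)\le\tau_k(\varphi')\le\tau'_k(\varphi')<\tau'_k(\varphi)$, which by Definition~\ref{def:buildingblock}(i) forces $\gamma_k$ to switch slope $\tau_k(\varphi)=\tau_k(\phi)$. Only at this point does one know $\gamma_k$ is switching; one then applies $(\mathbf B)$ to $\phi+\phi'$ to produce a departing function agreeing with it on $\Gamma_{\le k}$, contradicting skippability (or the unassigned status of $\phi+\phi'$). The switching conclusion is thus the \emph{output} of a delicate divisorial comparison, not an input one can read off from Definition~\ref{Def:Skippable}.
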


\begin{proposition}
\label{Prop:GammaA}
The loops $\gamma_z, \gamma_b, \gamma_{b'},$ and $\gamma_{z'-2}$ are all non-skippable, and no permissible function is shiny on any of them.
\end{proposition}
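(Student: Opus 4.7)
The plan is to generalize the proof of Lemma~\ref{lem:nonew} from the vertex avoiding case, replacing the notion of a \emph{new} permissible function with that of a \emph{shiny} one. The defining properties of $z$, $b$, $b'$, and $z'+2$ constrain the local partition shape at the corresponding loops: $\lambda'_z$ has exactly $6$ boxes in the first two rows, $\lambda'_b$ has exactly $7$ boxes there, $\lambda'_{b'}$ has exactly $8$ boxes in rows $1$ and $3$, and $\lambda'_{z'+2}$ has exactly $10$ boxes in rows $2$ and $3$, with $\lambda_k$ having one fewer in each case. Each such constraint limits $s_k(\Sigma)$ to a finite list of possibilities, indexed by partitions of $6$, $7$, $8$, or $10$ into at most two parts (or into parts with a specified row skip), each part bounded by the column count.

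For the no-shiny-function claim, I would proceed loop by loop. A permissible function $\psi = \varphi + \varphi' \in \cB$ on $\gamma_k$ is shiny when $s_k[\tau_k(\varphi)] + s_k[\tau_k(\varphi')] < s_k(\theta)$, while permissibility together with Definition~\ref{def:buildingblock}(3) forces $s_k(\psi) \geq s_k[\tau_k(\varphi)] + s_k[\tau_k(\varphi')]$ and $s_{k+1}(\psi) \geq s_k(\theta)$. So the existence of a shiny function on $\gamma_k$ requires a pair of indices $(i,j)$ with $s_k[i] + s_k[j] < s_k(\theta)$ whose combined slopes can reach $s_k(\theta)$ after passage across $\gamma_k$ and $\beta_{k+1}$. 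Because Lemma~\ref{Lem:MultBound} bounds the total multiplicity by $\rho \leq 2$, the slope increment from $s_k(\Sigma)$ to $s_{k+1}(\Sigma)$ across $\gamma_k$ and $\beta_{k+1}$ can add at most $2$, distributed in one of a small number of ways (including switching patterns as classified in \S\ref{sec:switch}). A direct enumeration, parallel to the one in Lemma~\ref{lem:nonew} but with the additional switching cases, verifies that no such pair exists at $\gamma_z$, $\gamma_b$, $\gamma_{b'}$, or $\gamma_{z'+2}$.

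For the non-skippability claim, I would combine the no-shiny conclusion with the definition of permissibility. If no permissible function on $\gamma_k$ is shiny, then every permissible $\psi = \varphi + \varphi'$ on $\gamma_k$ satisfies the equality $s_k[\tau_k(\varphi)] + s_k[\tau_k(\varphi')] = s_k(\theta)$; combined with $s_k(\varphi) \geq s_k[\tau_k(\varphi)]$ and the upper bound on $s_k(\psi)$ from permissibility, this forces $s_k(\varphi) = s_k[\tau_k(\varphi)]$ and likewise for $\varphi'$. In turn, this pins down $(D + \ddiv(\varphi))_{|\gamma_k}$ and $(D + \ddiv(\varphi'))_{|\gamma_k}$ to be supported at integer multiples of $m_k$ away from $w_k$, and precludes extra points on $\gamma_k \smallsetminus \{v_k\}$. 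These constraints rule out conditions (i)--(iii) of Definition~\ref{Def:Skippable}, yielding non-skippability.

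The main obstacle will be the combinatorial enumeration for $\gamma_{z'+2}$, where the ten-box constraint on rows $2$ and $3$ admits the largest number of partition shapes, and where the possible switching patterns on loops and bridges (subject to total multiplicity at most $2$) must each be matched against every viable pair of slope indices. However, since the slope vectors in each non-vertex-avoiding case differ from the vertex avoiding ones only through a single switching loop or switching bridge, the verification reduces to a finite combinatorial check that can be tabulated explicitly, just as in the proof of Lemma~\ref{lem:nonew}.
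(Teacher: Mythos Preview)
Your approach reverses the order of the two claims relative to the paper, and the non-skippability argument has a genuine gap. You deduce correctly that, once no permissible function is shiny, every permissible $\psi=\varphi+\varphi'$ has $s_k(\varphi)=s_k[\tau_k(\varphi)]$ and $s_k(\varphi')=s_k[\tau_k(\varphi')]$. But this only controls the incoming slopes at $v_k$; it does not determine the support of $(D+\ddiv\varphi)_{|\gamma_k}$ on $\gamma_k\smallsetminus\{v_k\}$. For instance, if $\gamma_k$ is a decreasing loop (and nothing in the definition of $z,b,b',z'+2$ forbids this when $\mu(\gamma_k)\geq 1$), Example~\ref{Ex:Decrease} shows that $(D+\ddiv\varphi)_{|\gamma_k}$ can have two points at positions that are \emph{not} integer multiples of $m_k$ from $w_k$. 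Even granting your support claim, it would not rule out condition~(ii) of Definition~\ref{Def:Skippable}, since $w_k$ itself sits at distance $0$ from $w_k$; you would need to control $s'_k(\varphi)$ and $d_{w_k}(\varphi)$, which your hypothesis does not.

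The paper proceeds in the opposite order and uses a different mechanism. It first observes that the defining property of each $k\in\{z,b,b',z'+2\}$ guarantees an index $i$ with $s_k[i]<s'_k[i]$, and that for this $i$ there is no $j$ with $s'_k[i]+s'_k[j]=s_k(\theta)$ (the direct analogue of Lemma~\ref{lem:nonew}). If $\gamma_k$ were skippable, Lemmas~\ref{Lem:Skippable1}--\ref{Lem:Skippable2} (both of which rely on property~$(\mathbf B)$) force a permissible $\varphi+\varphi'$ with $s_{k+1}(\varphi)=s_k(\varphi)+1$ and $s_{k+1}(\varphi)+s_{k+1}(\varphi')=s'_k[\tau'_k(\varphi)]+s'_k[\tau'_k(\varphi')]=s_k(\theta)$; the first equation forces $\tau'_k(\varphi)=i$, contradicting the second. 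The no-shiny argument then uses the same index $i$: any shiny $\psi$ would have to agree with $2\varphi$ on $\gamma_k$ with $s_{k+1}(2\varphi)=s_k(\theta)+1$, forcing $s_k(\theta)$ odd, and then the specific placement of the new box (row~1 versus rows~2--3) in the definition of $b,b'$ is incompatible with $2s'_k[i]=s_k(\theta)+1=4$. Your enumeration for the no-shiny claim is workable in principle, but the non-skippability half needs the property-$(\mathbf B)$ machinery; the purely local slope constraint you extract is not enough.
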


The proofs of these propositions rely heavily on property $(\mathbf B)$, and use the following two technical lemmas about permissible functions on skippable loops.

\begin{lemma}
\label{Lem:Skippable1}
Let $\gamma_k$ be a skippable loop and let $\psi = \varphi + \varphi' \in \cB$ be an unassigned permissible function on $\gamma_k$.  Then $s_{k+1} (\psi) = \sigma_k$, $s_{k+1} (\varphi) = \xi'_k  (\varphi)$, and $s_{k+1} (\varphi') = \xi'_k  (\varphi')$.
\end{lemma}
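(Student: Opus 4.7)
The plan for (1) is immediate. Permissibility of $\psi$ on $\gamma_k$ gives $s_{k+1}(\psi) \geq s_k(\theta)$. Since $\gamma_k$ is skippable, Definition \ref{Def:Skippable} excludes unassigned departing permissible functions on $\gamma_k$, so $\psi$ is not departing, whence $s_{k+1}(\psi) \leq s_k(\theta)$. Combining the two inequalities gives $s_{k+1}(\psi)=s_k(\theta)$.

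For (2) I will argue by contradiction, assuming $s_{k+1}(\varphi) < s'_k[\tau'_k(\varphi)]$; the argument for (3) is symmetric. Since $\varphi$ is a building block, it has constant slope along $\beta_{k+1}$, so $s_{k+1}(\varphi) = s'_k(\varphi)$. By the definition
\[
\tau'_k(\varphi) = \max\{\,i : s'_k[i] \leq s'_k(\varphi) + d_{w_k}(\varphi)\,\},
\]
the strict inequality $s'_k(\varphi) < s'_k[\tau'_k(\varphi)]$ can only hold if $d_{w_k}(\varphi) > 0$. Thus $D+\ddiv(\varphi)$, and hence $2D+\ddiv(\psi)$, contains $w_k$. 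The hypothesis of property $(\mathbf B)$ is therefore satisfied, and it yields some $\psi^* \in \cB$ which agrees with $\psi$ on $\Gamma_{\leq k}$ and satisfies $s_{k+1}(\psi^*) > s_{k+1}(\psi) = s_k(\theta)$. In particular $\psi^*$ is permissible and departing on $\gamma_k$.

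Skippability forbids unassigned departing permissible functions on $\gamma_k$, so $\psi^*$ must already have been assigned by the algorithm at some earlier loop or bridge. The crux is then to derive a contradiction from this fact. Because $\psi$ and $\psi^*$ agree on $\Gamma_{\leq k}$, they coincide on every loop $\gamma_j$ with $j \leq k$ up to an additive constant, and they have identical slopes along every bridge $\beta_j$ with $j \leq k+1$. Consequently, on each such earlier loop or bridge, $\psi$ and $\psi^*$ lie in the same equivalence class and share the same permissibility, newness and departing status. I plan to run through the branches of the loop subroutine (the initial assignment to $\beta_1$, Step 2 for new-and-agreeing functions, Step 4 for departing functions, Step 6 via Proposition \ref{Prop:NewThreeShape}, and the end-of-block assignment to the bridge between blocks) and observe in each case that the algorithm assigns an entire equivalence class at a time. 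Therefore any earlier assignment of $\psi^*$ would have simultaneously assigned $\psi$, contradicting the hypothesis that $\psi$ is unassigned on $\gamma_k$.

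The mathematical heart of the argument is the deduction that $s_{k+1}(\varphi) < s'_k[\tau'_k(\varphi)]$ forces $w_k$ into the support of $2D+\ddiv(\psi)$, which in turn unlocks property $(\mathbf B)$. The anticipated obstacle is not this step but rather the careful bookkeeping needed to verify that the algorithm's several assignment branches each act on full equivalence classes, so that no earlier assignment of $\psi^*$ can leave $\psi$ stranded; this is a routine but somewhat tedious inspection of the loop subroutine as laid out in \S\ref{Sec:Alg}.
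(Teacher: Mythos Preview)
Your proposal is correct and follows the same route as the paper's proof: part~(1) from the definition of skippable, parts~(2)--(3) by contradiction via property~$(\mathbf B)$, with the resulting departing $\psi^*$ either contradicting skippability (if unassigned) or forcing $\psi$ to have been assigned alongside it (if assigned earlier, since the algorithm assigns whole equivalence classes). You are in fact more explicit than the paper in verifying the hypothesis of $(\mathbf B)$ that $2D+\ddiv(\psi)$ contains $w_k$; note however the minor slip that $\psi$ and $\psi^*$ have identical slopes on bridges $\beta_j$ only for $j\le k$, not $j\le k+1$ (indeed $s_{k+1}(\psi^*)>s_{k+1}(\psi)$ by construction), though this does not affect your argument since $\psi^*$ was assigned strictly before reaching $\gamma_k$.
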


\begin{proof}
By definition, no unassigned permissible function is departing on a skippable loop.  Therefore,
\[
s_{k+1} (\psi) = \sigma_k .
\]
It remains to show that $s_{k+1}(\varphi) = \xi'_k(\varphi)$.  Suppose not.  Then
\[
s_{k+1} (\varphi) < \xi'_k  (\varphi),
\]
and, by property $(\mathbf B)$, there is a function $\psi' \in \cB$ that agrees with $\psi$ on $\Gamma_{\leq k}$, with the property that
\[
s_{k+1} (\psi') > s_{k+1} (\psi) = \sigma_k .
\]
We claim that this is impossible.  Indeed, if $\psi'$ is unassigned on $\gamma_k$, then it would be a departing function, contradicting the hypothesis that $\gamma_k$ is skippable.  On the other hand, if $\psi'$ is assigned to a previous loop or bridge then $\psi$ would have been assigned to that loop or bridge as well.

We conclude that $s_{k+1} (\varphi) = \xi'_k  (\varphi)$, and, similarly,  $s_{k+1} (\varphi') = \xi'_k  (\varphi')$, as required.
\end{proof}

\begin{lemma}
\label{Lem:Skippable2}
Suppose $\gamma_k$ is a skippable loop, and there is a building block $\varphi_1$ such that $s_{k+1} (\varphi_1) > \xi_k (\varphi_1)$.  Then there is a permissible function $\psi = \varphi + \varphi' \in \cB$ such that
\begin{enumerate}
\item  $s_{k+1} (\varphi) = s_k (\varphi) + 1$,
\item  $s_{k+1} (\varphi') = s_k (\varphi') - 1$, and
\item $D + \ddiv(\varphi')$ contains either $w_k$, a point of $\gamma_k$ whose distance from $w_k$ is not an integer multiple of $m_k$, or two points of $\gamma_k \smallsetminus \{ v_k \}$.
\end{enumerate}
\end{lemma}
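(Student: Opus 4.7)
The plan is to produce $\psi$ by starting from any unassigned permissible function witnessing that $\gamma_k$ is skippable, then forcing the required slope pattern through a combination of Lemma~\ref{Lem:Skippable1} and the hypothesis on $\phi$. So I would begin with an unassigned permissible $\psi = \varphi + \varphi' \in \cB$ satisfying one of conditions (i), (ii), (iii) of Definition~\ref{Def:Skippable}. By Lemma~\ref{Lem:Skippable1} such a $\psi$ is non-departing, with $s_{k+1}(\psi) = s_k(\theta)$, and each summand satisfies $s_{k+1}(\varphi) = s'_k[\tau'_k(\varphi)]$ and $s_{k+1}(\varphi') = s'_k[\tau'_k(\varphi')]$. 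Since individual building blocks change slope by at most $1$ across a loop, conditions (1) and (2) will hold exactly when the restriction of $D + \ddiv(\varphi)$ to $\gamma_k$ has degree $0$ while that of $D + \ddiv(\varphi')$ has degree $2$.

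The case (iii) of Definition~\ref{Def:Skippable} is essentially immediate: if $D + \ddiv(\varphi')$ contains two points of $\gamma_k \smallsetminus \{v_k\}$, then $s_{k+1}(\varphi') = s_k(\varphi') - 1$, and combining with non-departure $s_{k+1}(\psi) - s_k(\psi) \geq 0$ and the slope-change bound, we are forced into $s_{k+1}(\varphi) = s_k(\varphi) + 1$. So the bulk of the work is in cases (i) and (ii), where the defining feature is the position of a point of $2D + \ddiv(\psi)$ on $\gamma_k$, rather than the degree distribution between $\varphi$ and $\varphi'$. Here I would use the hypothesis on $\phi$: by Remark~\ref{Rem:Shiny}, $s_{k+1}(\phi) > s_k[\tau_k(\phi)]$ is equivalent to $D + \ddiv(\phi)$ having degree $0$ on $\gamma_k \smallsetminus \{v_k\}$, which forces either $\gamma_k$ to switch slope $\tau_k(\phi)$, or the bridge $\beta_{k+1}$ to switch slope $\tau_k(\phi)$; in either scenario Lemma~\ref{Lem:MultBound} and the classification of switching patterns in \S\ref{sec:switch} pin down the multiplicity structure to one of a few explicit possibilities, all of which force the degree of $2D + \ddiv(\psi)$ on $\gamma_k$ to be exactly $2$.

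Having located those two points, I would read off their positions from condition (i) or (ii): in case (i), one point is in the interior of an edge of $\gamma_k$ not cut by multiples of $m_k$, which rules out a symmetric $(1,1)$ distribution between $\varphi$ and $\varphi'$ and hence yields a $(0,2)$ split after swapping names; in case (ii), the presence of $w_k$ in $2D + \ddiv(\psi)$ combined with the slope matching $s_{k+1}(\varphi) = s'_k[\tau'_k(\varphi)]$ from Lemma~\ref{Lem:Skippable1} and the parity forced by the switching pattern again gives the $(0,2)$ split. If the initial $\psi$ happens not to split this way, I would invoke property $(\mathbf{B})$ to modify it: $(\mathbf{B})$ supplies a companion $\psi' \in \cB$ agreeing with $\psi$ on $\Gamma_{\leq k}$ but strictly larger slope on $\beta_{k+1}$, and the failure of such a $\psi'$ to be assigned earlier (as in the proof of Lemma~\ref{Lem:Skippable1}) would contradict skippability unless the summand split is of the required form.

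The main obstacle will be the bookkeeping in cases (i) and (ii): ensuring that the $(0,2)$ degree distribution across the summands can always be realized by a \emph{single} element of $\cB$, rather than merely by a pair of building blocks in $\cA$ whose sum might fail to lie in $\cB$. This is precisely where property $(\mathbf{B})$ must be leveraged carefully, because $\cB$ is not assumed to contain all pairwise sums of elements of $\cA$. I expect the argument to proceed by a short case check against the switching-pattern diagrams in Figures~\ref{Fig:SwitchingBridge} and \ref{Fig:SwitchingLoops}, combined with Lemma~\ref{Lem:SameSlope} to transfer equalities of slope indices to actual agreement on $\gamma_k$, guaranteeing that the desired summands and their sum all sit inside $\cA$ and $\cB$ respectively.
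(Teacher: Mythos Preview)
Your overall architecture is right: start from an unassigned permissible $\psi=\varphi+\varphi'$ witnessing skippability, then use the hypothesis on $\phi$ together with property $(\mathbf B)$. Your handling of condition~(iii) of Definition~\ref{Def:Skippable} is essentially correct. But there is a genuine gap in how you use the hypothesis on~$\phi$.

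You claim that $D+\ddiv(\phi)$ having degree $0$ on $\gamma_k\smallsetminus\{v_k\}$ ``forces either $\gamma_k$ to switch slope $\tau_k(\phi)$, or the bridge $\beta_{k+1}$ to switch slope $\tau_k(\phi)$.'' This is false. A building block $\phi$ with constant slope index $i=\tau_k(\phi)$ satisfies $s_{k+1}(\phi)>s_k[i]$ whenever $s'_k[i]>s_k[i]$, i.e.\ whenever a box is added in column $r-i$ at loop $\gamma_k$; no switching is required. What the hypothesis on $\phi$ actually buys you is purely geometric: since $(D+\ddiv(\phi))|_{\gamma_k\smallsetminus\{v_k\}}=0$, the unique point of $D$ on $\gamma_k$ must sit at an integer multiple of $m_k$ from $w_k$. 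This is the fact you need, and it is not about switching.

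The paper's dichotomy is also different from yours, and cleaner. Instead of splitting on conditions (i)/(ii)/(iii), split on whether one of $\varphi,\varphi'$ has $s_{k+1}<s_k$. If one does, say $\varphi'$, then permissibility forces $s_{k+1}(\varphi)=s_k(\varphi)+1$ and $s_{k+1}(\varphi')=s_k(\varphi')-1$, so $(D+\ddiv(\varphi))|_{\gamma_k}=0$ and $(D+\ddiv(\varphi'))|_{\gamma_k}$ has degree $2$; whichever of (i), (ii), (iii) held for $\psi$ now lands entirely on $\varphi'$, and the lemma is proved. If neither summand's slope decreases, then each $(D+\ddiv(\varphi))|_{\gamma_k}$, $(D+\ddiv(\varphi'))|_{\gamma_k}$ has degree at most $1$; combined with the integer-multiple position of $D$ (from $\phi$), this rules out conditions (i) and (iii), so condition (ii) must hold. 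One then shows, from $w_k\in\operatorname{supp}(D+\ddiv(\varphi))$ and Lemma~\ref{Lem:Skippable1}, that $\gamma_k$ switches slope $\tau_k(\varphi)$; only now does switching enter. Property $(\mathbf B)$ then produces a departing $\psi'$ agreeing with $\psi$ on $\Gamma_{\le k}$, and exactly as in the proof of Lemma~\ref{Lem:Skippable1} this contradicts either skippability or the fact that $\psi$ was unassigned. So property $(\mathbf B)$ is used to \emph{eliminate} the bad case, not to ``modify'' $\psi$ into a better one; your concern about whether the modified sum lies in $\cB$ does not arise.
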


\begin{proof}
Since $\gamma_k$ is skippable, there is an unassigned permissible function $\psi = \varphi + \varphi' \in \cB$ such that $2D+\ddiv(\psi)$ contains either $w_k$, a point whose distance from $w_k$ is a non-integer multiple of $m_k$, or two points of $\gamma_k \smallsetminus \{ v_k \}$.
We first consider the case where one of the two functions $\varphi, \varphi'$ has smaller slope on $\beta_{k+1}$ than on $\beta_{k}$.  Suppose without loss of generality that
\[
s_{k+1} (\varphi') < s_k (\varphi').
\]
Since $\psi$ is permissible, we must have $s_{k+1} (\psi) \geq s_k (\psi)$.  It follows that
$
s_{k+1} (\varphi) > s_k (\varphi).
$
Since, by Lemma~\ref{Lem:Slope++}, the slope of a building block can increase by at most 1 from one bridge to the next, we see that
\[
s_{k+1} (\varphi) = s_k (\varphi) + 1 \mbox{ and } s_{k+1} (\varphi') = s_k (\varphi') -1.
\]
It follows that the restriction of $D + \ddiv(\varphi)$ to $\gamma_k$ is zero, and $D + \ddiv(\varphi')$ contains either $w_k$, a point of $\gamma_k$ whose distance from $w_k$ is not an integer multiple of $m_k$, or two points of $\gamma_k \smallsetminus \{ v_k \}$.

To complete the proof, we will rule out the possibility that neither function $\varphi, \varphi'$ has smaller slope on $\beta_{k+1}$ than on $\beta_{k}$.  Assume that
\[
s_{k+1} (\varphi) \geq s_k (\varphi) \mbox{ and } s_{k+1} (\varphi') \geq s_k (\varphi').
\]
By Lemma~\ref{Lem:DegreeOneLoop}, this immediately rules out the possibility that $D + \ddiv(\varphi')$ contains more than one point of $\gamma_k$.  We will reach a contradiction by showing that $\gamma_k$ is a switching loop and then applying property $(\mathbf B)$.  As a first step in this direction, we claim that the restriction of $D+\ddiv(\varphi_1)$ to $\gamma_k \smallsetminus \{ v_k \}$ has degree 0.  By Lemma~\ref{Lem:VDegree},
\[
d_{v_k} (\varphi_1) \geq s_k (\varphi_1) - \xi_k (\varphi_1).
\]
By Lemma~\ref{Lem:DegreeOneLoop},
\[
\deg\big(\ddiv(\varphi_1)_{|\gamma_k}\big) = s_k(\varphi_1) - s_{k+1}(\varphi_1) .
\]
It follows that
\[
\xi_k (\varphi_1) - s_{k+1} (\varphi_1) \geq \deg\big(\ddiv(\varphi_1)_{|\gamma_k}\big)  - d_{v_k} (\varphi_1) .
\]
The right hand side is the degree of the restriction of $\ddiv(\varphi_1)$ to $\gamma_k \smallsetminus \{ v_k \}$.  Since the left hand side is negative, the restriction of $D+\ddiv(\varphi_1)$ to $\gamma_k \smallsetminus \{ v_k \}$ has degree 0.

We now claim that $2D+ \ddiv \psi$ contains $w_k$.  Since there exists a building block $\varphi_1$ such that the restriction of $D+\ddiv(\varphi_1)$ to $\gamma_k \smallsetminus \{ v_k \}$ has degree 0, the shortest distance from the point of $D$ on $\gamma_k$ to $w_k$ is an integer multiple of $m_k$ by Lemma~\ref{Lem:EquivOneLoop}. Combined with our assumption that the slopes of $\varphi$ and $\varphi'$ do not decrease from $\beta_{k}$ to $\beta_{k+1}$, we see that the shortest distances from $w_k$ to the point of $D+\ddiv(\varphi)$ on $\gamma_k$ and the point of $D+\ddiv(\varphi')$ on $\gamma_k$ are integer multiples of $m_k$ as well.  Therefore, $2D+\ddiv(\psi)$ cannot contain a point whose shortest distance from $w_k$ is a non-integer multiple of $m_k$, and must therefore contain $w_k$, as claimed.

Without loss of generality, we assume that $D+\ddiv(\varphi)$ contains $w_k$.  Since the restriction of $D+\ddiv(\varphi)$ to $\gamma_k$ has degree at most 1, we see that this restriction is equal to $w_k$.  It follows that
\[
s_{k+1} (\varphi) = s_k (\varphi).
\]
Since the restrictions of $D+\ddiv(\varphi)$ and $D+\ddiv(\varphi_1)$ to $\gamma_k$ are supported at the vertices $v_k$ and $w_k$, we must have $\varphi \sim_{\gamma_k} \varphi_1$.  However, since $D+\ddiv(\varphi)$ contains $w_k$ and $D+\ddiv(\varphi_1)$ does not, we must have $s_{k+1} (\varphi) = s_{k+1}(\varphi_1) - 1$.

We now show that $\gamma_k$ switches slope $\tau_k (\varphi_1)$.  By assumption, $s_{k+1} (\varphi_1) > \xi_k (\varphi_1)$.  Combining this with the two equations above, we see that
\[
s_k (\varphi) \geq \xi_k (\varphi_1).
\]
This implies $\tau_k (\varphi) \geq \tau_k (\varphi_1)$.  By Lemma~\ref{Lem:Skippable1}, however, we have
\[
s_{k+1} (\varphi) = \xi'_k  (\varphi),
\]
so $\tau'_k (\varphi) < \tau'_k (\varphi_1)$.  Combining these inequalities with the fact that slope index sequences are nondecreasing, we see that
\[
\tau_k (\varphi_1) \leq \tau_k (\varphi) \leq \tau'_k (\varphi) < \tau'_k (\varphi_1).
\]
Since $\varphi_1$ is a building block, by Definition~\ref{def:buildingblock}, it follows that $\gamma_k$ switches slope $\tau_k (\varphi_1)$.

We now apply property $(\mathbf B)$ again, in a similar way to the proof of Lemma~\ref{Lem:Skippable1}.  Specifically, there is a function $\psi' \in \cB$ that agrees with $\psi$ on $\Gamma_{\leq k}$, with the property that
\[
s_{k+1} (\psi') > s_{k+1} (\psi) = \sigma_k.
\]
If $\psi'$ has not been assigned to a previous loop or bridge, then it is an unassigned departing function on $\gamma_k$, and for this reason $\gamma_k$ is not skippable.  If $\psi'$ has been assigned to a previous loop or bridge, then since $\psi$ is equivalent to $\psi'$ on this previous loop or bridge, we see that $\psi$ must have been assigned to this previous loop or bridge as well.  We therefore arrive at a contradiction, which rules out the possibility that neither $\varphi$ nor $\varphi'$ has smaller slope on $\beta_{k+1}$ than on $\beta_{k}$ and completes the proof of the lemma.
\end{proof}

\begin{proof}[Proof of Proposition~\ref{Prop:Lingering}]
By Proposition~\ref{Prop:Shiny}, any shiny permissible function on $\gamma_k$ has a summand $\varphi_1 \in \cA$ satsifying $s_{k+1} (\varphi_1) > \xi_k (\varphi_1)$.  We will assume that such a function $\varphi_1$ exists, and consider unassigned permissible functions of the form $\varphi_1 + \varphi'_1 \in \cB$.  Since $\varphi_1$ exists, by Lemma~\ref{Lem:Skippable2}, there is an unassigned permissible function $\varphi + \varphi' \in \cB$ such that
\[
s_{k+1} (\varphi) = s_k (\varphi) + 1, \ \ \ \   s_{k+1} (\varphi') = s_k (\varphi') -1,
\]
and $D + \ddiv(\varphi')$ contains either $w_k$, or a point of $\gamma_k$ whose distance from $w_k$ is not an integer multiple of $m_k$, or two points of $\gamma_k \smallsetminus \{ v_k \}$.

Both $D+\ddiv(\varphi_1)$ and $D+\ddiv(\varphi)$ contain no points of $\gamma_k \smallsetminus \{ v_k \}$.  Any two functions with this property are equivalent on $\gamma_k$, and have the same slope along the bridge $\beta_{k+1}$.  It follows that $s_{k+1} (\varphi) = s_{k+1} (\varphi_1)$.  By Lemma~\ref{Lem:Skippable1}, we also have
\[
s_{k+1} (\varphi) + s_{k+1} (\varphi') = s_{k+1} (\varphi_1) + s_{k+1} (\varphi'_1) = \sigma_k .
\]
Subtracting, we see that
$
s_{k+1} (\varphi') = s_{k+1} (\varphi'_1).
$
Moreover, by Lemma~\ref{Lem:Skippable1}, we have
\[
s_{k+1} (\varphi') = \xi'_k  (\varphi') \mbox{ and } s_{k+1} (\varphi'_1) = \xi'_k  (\varphi'_1),
\]
so $\tau'_k (\varphi') = \tau'_k (\varphi'_1)$, which implies that $\varphi'$ and $\varphi'_1$ are equivalent on $\gamma_k$.

Since $\varphi'$ and $\varphi'_1$ are equivalent on $\gamma_k$ and their slopes on $\beta_{k+1}$ are equal, the difference between $\ddiv (\varphi')$ and $\ddiv (\varphi'_1)$ on $\gamma_k$ must be supported at $v_k$.  Since $s_{k+1} (\varphi') = s_k (\varphi') -1$, the restriction of $D+\ddiv(\varphi')$ to $\gamma_k$ has degree 2. Since $\varphi_1 + \varphi'_1$ is shiny, by Proposition~\ref{prop:equivtonew} the restriction of $2D+\ddiv(\varphi_1+\varphi'_1)$ to $\gamma_k \smallsetminus \{ v_k \}$ has degree at most 1.  It follows that $D+\ddiv(\varphi')$ contains $v_k$.  By Lemma~\ref{Lem:EquivOneLoop}, this forces $D+\ddiv(\varphi')$ to be supported at points whose shortest distance to $w_k$ is an integer multiple of $m_k$.  Recall, however, that $\varphi'$ was chosen so that $D + \ddiv(\varphi')$ contains either $w_k$ or a point of $\gamma_k$ whose distance from $w_k$ is not an integer multiple of $m_k$.  We therefore see that $D+\ddiv(\varphi')$ contains $w_k$, and hence
\[
\left[ D+\ddiv(\varphi') \right]_{\vert{\gamma_k}} = v_k + w_k .
\]
Thus, $\varphi'$ is equivalent to $\varphi$ and $\varphi_1$ on $\gamma_k$, but since $D+\ddiv(\varphi')$ contains $v_k$ and $D+\ddiv(\varphi)$ does not, we have
$
s_k (\varphi) < s_k (\varphi').
$
It follows that $\tau_k (\varphi) \leq \tau_k (\varphi')$.  Similarly, since $D+\ddiv(\varphi')$ contains $w_k$ and $D+\ddiv(\varphi)$ does not, we have
\[
s_{k+1} (\varphi) > s_{k+1} (\varphi').
\]
By Lemma~\ref{Lem:Skippable1}, however, we have
\[
s_{k+1} (\varphi) = \xi'_k  (\varphi) \mbox{ and } s_{k+1} (\varphi') = \xi'_k  (\varphi') .
\]
Thus, $\tau'_k (\varphi) > \tau'_k (\varphi')$.  Combining these inequalities with the fact that slope index sequences are nondecreasing, we see that
\[
\tau_k (\varphi) \leq \tau_k (\varphi') \leq \tau'_k (\varphi') < \tau'_k (\varphi).
\]
Since $\varphi$ is a building block, by Definition~\ref{def:buildingblock}, it follows that $\gamma_k$ switches slope $\tau_k (\varphi)$.

By property $(\mathbf B)$, there is a function $\psi' \in \cB$ that is equivalent to $\varphi_1+\varphi'_1$ on $\Gamma_{\leq k}$, with the property that
\[
s_{k+1} (\psi') > s_{k+1} (\varphi_1 + \varphi'_1) = \sigma_k .
\]
If $\psi'$ has not been assigned to a previous loop or bridge, then it is an unassigned departing function on $\gamma_k$, and for this reason $\gamma_k$ is not skippable.  If $\psi'$ has been assigned to a previous loop or bridge, then since $\varphi_1 + \varphi'_1$ is equivalent to $\psi'$ on this previous loop or bridge, we see that $\varphi_1+\varphi'_1$ must have been assigned to this previous loop as well.  This contradicts our assumption that $\varphi_1+\varphi'_1$ was unassigned.  We conclude that there are no shiny functions on $\gamma_k$, as required.
\end{proof}

\begin{proof}[Proof of Proposition~\ref{Prop:GammaA}]
Let $k \in \{ z,b,b',z'-2 \}$.  As in Proposition~\ref{prop:nonew}, these four choices for $k$ guarantee that there is an index $i$ such that $s_k [i] < s'_k [i]$, and there does not exist a value of $j$ such that $s'_k [i] + s'_k [j] = \sigma_k$.  We must show that $\gamma_k$ is not skippable, and that no permissible function is shiny on $\gamma_k$.  We begin by showing that $\gamma_k$ is not skippable.

Suppose $\gamma_k$ is skippable.  Then there is an unassigned permissible function $\psi = \varphi + \varphi'$ on $\gamma_k$ such that $2D + \ddiv(\psi)$ contains either $w_k$ or a point whose distance from $w_k$ is not an integer multiple of $m_k$, or $D + \ddiv(\varphi')$ contains two points of $\gamma_k \smallsetminus \{ v_k \}$.  By Lemma~\ref{Lem:Skippable1}, we have
\begin{equation} \label{eq:skippable}
s_{k+1} (\varphi) + s_{k+1}(\varphi') = \xi'_k  (\varphi) + \xi'_k  (\varphi') = \sigma_k .
\end{equation}
Moreover, by Lemma~\ref{Lem:Skippable2}, we have
$
s_{k+1} (\varphi) = s_k (\varphi) + 1,
$
hence the restriction of $D+\ddiv(\varphi)$ to $\gamma_k$ must have degree 0, which forces $\tau'_k (\varphi) = i$.  As in Proposition~\ref{prop:nonew}, the choice of $z$, $b$, $b'$, and $z'$ ensures that there does not exist a value of $j$ such that $s'_k [i] + s'_k [j] = \sigma_k$, contradicting \eqref{eq:skippable}.  Thus, $\gamma_k$ cannot be skippable.

It remains to show that there are no shiny permissible functions on $\gamma_k$.  Let $\varphi$ be a function satisfying $s_k (\varphi) = s_k [i]$ and $s_{k+1} (\varphi) = s'_k [i]$.  Any function that is shiny on $\gamma_k$ must be equivalent on $\gamma_k$ to a function of the form $\psi = \varphi + \varphi'$ with $s_{k+1} (\varphi + \varphi') = \sigma_k$.  Again, because there is no $j$ such that $s'_k [i] + s'_k [j] = \sigma_k$, we see that $s_{k+1}(\varphi')$ cannot equal $s'_k[j]$ for any $j$.  This means that
\[
s_{k+1} (\varphi') < \xi'_k (\varphi').
\]
Hence by Lemma~\ref{Lem:VDegree}, $D+\ddiv(\varphi')$ contains $w_k$.  Since $\psi$ is shiny, by Proposition~\ref{prop:equivtonew}, the restriction of $2D+\ddiv(\psi)$ to $\gamma_k \smallsetminus \{ v_k \}$ has degree at most 1, so this restriction is exactly $w_k$.  It follows that $\psi$ is equivalent to $2\varphi$ on $\gamma_k$, and
\[
s_{k+1} (\psi) = \sigma_k = s_{k+1} (2\varphi) -1.
\]
This implies that $\sigma_k$ is odd, so $k$ is either $b$ or $b'$ and $\sigma_k = 3$.  However, $b$ and $b'$ were chosen so that $s_{k+1}(\varphi)$ is at most 1 if the box contained in $\lambda'_k$ but not $\lambda_k$ is in the first row, and $s_{k+1}(\varphi)$ is at least 3 if this box is contained in the second or third row.  In the first case, we have $s_{k+1} (2\varphi) \leq 2$, and in the second case, we have $s_{k+1} (2\varphi) \geq 6$.  In either case, we obtain a contradiction to the displayed equation above, so there cannot be a shiny function on $\gamma_k$.
\end{proof}

\subsubsection{The counting argument}

In the vertex avoiding case, Proposition~\ref{Prop:VAVerification} follows from a counting argument.  Specifically, we count the number of permissible functions on $\gamma_1$, $\gamma_{z+1}$, and $\gamma_{z'-1}$.   Together with the fact that there is at most one new permissible function on every non-lingering loop, no new permissible functions on lingering loops, and no new permissible functions on $\gamma_k$ for $k \in \{z,b,b',z'-2 \}$, we derive the number of permissible functions on each loop.

In the general case, we may assign several functions to the same loop, so instead of counting individual unassigned permissible functions, we count certain equivalence classes of such functions, that we call \emph{cohorts}.  These cohorts are defined using the following auxiliary function $\Sha$.

\begin{definition}
Let $\Sha \colon \cB \to \{1, \ldots, g' \} \cup \{ \infty \}$ be the function taking $\psi$ to the maximal $k$ such that $\psi$ is a shiny or new permissible function on $\gamma_k$ and $\psi$ is not assigned to any $\gamma_j$ for $j < k$, or to $\infty$ if there is no such $k$.
\end{definition}

Note that $\Sha$ depends on which functions are assigned to which loops, so it is something that we compute after the algorithm has run.

In the vertex avoiding case, $\Sha (\psi) = k \neq \infty$ if and only if $\psi$ is a new permissible function on $\gamma_k$, and $\Sha (\psi) = \infty$ if and only if $\psi$ is not permissible on any loop.

We define an equivalence relation $\equiv_{\Sha}$ on $\cB$, as follows.

\begin{definition}
We define $\psi \equiv_\Sha \psi'$ if $\Sha(\psi) = \Sha(\psi') = k$ for some $k \in \{1, \ldots, g'\}$ and $\psi \sim_{\gamma_k} \psi'$.
\end{definition}

\begin{definition}
A \emph{cohort} on $\gamma_\ell$ is a $\equiv_{\Sha}$-equivalence class of unassigned permissible functions $\psi$ with $\Sha(\psi) \leq \ell$.
\end{definition}

A cohort is \emph{new} on $\gamma_k$ if it consists of functions $\psi$ with $\Sha(\psi) = k$.

\begin{example}
\label{Ex:Genus5Cohorts}
In Example~\ref{Ex:Genus5Template}, we list the distinct cohorts on each loop $\gamma_{\ell}$:
\begin{figure}[h!]
\scalebox{.9}{
\begin{tabular}{|c|c|c|c|c|}
\hline
$\gamma_1$ & $\gamma_2$ & $\gamma_3$ & $\gamma_4$ & $\gamma_5$ \\
\hline
$\{ \varphi_1 + \varphi_3, \varphi_{\tau} + \varphi_3 \}$ & $2\varphi_2$ &
$\varphi_0 + \varphi_3$  & $\varphi_0 + \varphi_3$ & $\varphi_0 + \varphi_3$ \\
$2\varphi_2$ & $\varphi_0 + \varphi_3$ &  $\{ \varphi_1 + \varphi_2 , \varphi_{\tau} + \varphi_2 \}$  & $\{ 2\varphi_1 , \varphi_1 + \varphi_{\tau} , 2\varphi_{\tau} \}$ & $2\varphi_1$ \\
$\varphi_0 + \varphi_3$ & $\{ \varphi_1 + \varphi_2 , \varphi_{\tau} + \varphi_2 \}$ & $\{ 2\varphi_1 , \varphi_1 + \varphi_{\tau} , 2\varphi_{\tau} \}$ &  & $\{ \varphi_0 + \varphi_2 , \varphi_0 + \varphi_{\tau} \}$ \\
\hline
\end{tabular}
}
\caption{Cohorts on each loop $\gamma_{\ell}$ in Example~\ref{Ex:Genus5Template}.}
\end{figure}

In particular, if $\varphi_1 + \varphi_i$ is permissible on $\gamma_{\ell}$ for some $\ell \leq 4$, then $\varphi_{\tau} + \varphi_i$ is in the same cohort on $\gamma_{\ell}$.  Similarly, if $\varphi_2 + \varphi_i$ is permissible on $\gamma_{\ell}$ for $\ell \geq 4$, then $\varphi_{\tau} + \varphi_i$ is in the same cohort on $\gamma_{\ell}$.

On loops $\gamma_2$, $\gamma_3$, and $\gamma_5$, there is one new cohort, and there is a cohort such that every function in the cohort is assigned to the loop.  On $\gamma_4$, there is no new cohort, and there exists a cohort such that some elements of the cohort are assigned to $\gamma_4$ and others are not.  Because of this, the number of cohorts on $\gamma_{\ell}$ such that some element of the cohort is not assigned to $\gamma_{\ell}$ remains constant.  In particular, this number is 2 for all $\ell$.
\end{example}

\begin{example}
\label{Ex:Genus22Cohorts}
Similarly, in Example~\ref{Ex:Genus22Template}, if $\varphi_3 + \varphi_i$ is permissible on $\gamma_{\ell}$ for $\ell \leq 18$, then $\varphi_{\tau} + \varphi_i$ is in the same cohort on $\gamma_{\ell}$.  If $\varphi_4 + \varphi_i$ is permissible on $\gamma_{\ell}$ for $\ell \geq 18$, then $\varphi_{\tau} + \varphi_i$ is in the same cohort on $\gamma_{\ell}$.  When we run the alogrithm on the set $\cB'$, there is no new cohort on $\gamma_{18}$.  The functions $\varphi_1 + \varphi_3$ and $\varphi_1 + \varphi_{\tau}$ are in the same cohort on $\gamma_{18}$, and only one of them assigned to $\gamma_{18}$.  As in Example~\ref{Ex:Genus5Cohorts}, the number of cohorts on $\gamma_{18}$ such that some element of the cohort is not assigned to $\gamma_{18}$ is equal to the number of cohorts on $\gamma_{17}$ such that some element of the cohort is not assigned to $\gamma_{17}$.

If we run the algorithm on the set $\cB''$, then the cohort consisting of $\varphi_0 + \varphi_{\tau}$ is new on $\gamma_{18}$.  In this example, every time a function is assigned to a loop, every function in the same cohort is also assigned to the loop.

On the other hand, recall that the set $\cB$ does not satisfy property $(\mathbf B')$, and the corresponding assignment function $\alpha$ does not satisfy part (\ref{it:samebridge}) of Theorem~\ref{Thm:ExtremalConfig}.  This can be seen by counting the cohorts.  Note that $\varphi_1 + \varphi_3$ and $\varphi_1 + \varphi_{\tau}$ are in the same cohort on $\gamma_{18}$, and exactly one of these two functions is assigned to $\gamma_{18}$.  In addition, the cohort consisting of $\varphi_0 + \varphi_{\tau}$ is new on $\gamma_{18}$.  In this case, the number of cohorts on $\gamma_{18}$ such that some element of the cohort is not assigned to $\gamma_{18}$ is greater than the number of cohorts on $\gamma_{17}$ such that some element of the cohort is not assigned to $\gamma_{17}$.  For this reason, property $(\mathbf B')$ is essential for the proof of Proposition~\ref{Prop:Counting} below.
\end{example}

By Proposition~\ref{prop:equivtonew}, all shiny functions on $\gamma_k$ are equivalent, and by Lemma~\ref{Lem:NewIsShiny}, on any loop other than $\gamma_1$, $\gamma_{z+1}$, or $\gamma_{z'-1}$, all new functions are shiny.  Thus, on any loop other than $\gamma_1$, $\gamma_{z+1}$, or $\gamma_{z'-1}$, there is at most one new cohort.  This is one essential way in which counting cohorts is like counting permissible functions in \S\ref{Sec:VertexAvoiding}; there may be several new cohorts on $\gamma_k$ for $k \in \{ 1, z+1, z'-1 \}$, and then at most one new cohort on each subsequent loop.

Furthermore, there are no shiny functions on $\gamma_k$, for $k \in \{z,b,b',z'-2 \}$, so there are no new cohorts on these loops.  This is analogous to Proposition~\ref{prop:nonew} in the vertex avoiding case.

The next proposition says that, on each loop where a new cohort is created, and also on the special loops $\gamma_k$, for $k \in \{z,b,b',z'-2 \}$, an entire cohort is assigned.  This is essential for the proof of Theorem~\ref{Thm:ExtremalConfig}, where we bound the number of cohorts on each loop while moving from left to right to show that every function in $\cB$ is assigned to some loop or bridge.

\begin{remark}
On a non-skippable loop where no new cohort is created, the functions that are assigned may form a proper subset of a cohort.  This is visible on the loop $\gamma_4$ of Example~\ref{Ex:Genus5Cohorts}, or on the loop $\gamma_{18}$ in Example~\ref{Ex:Genus22Cohorts}, when we run the algorithm on the set $\cB'$.  In this way, cohorts may lose members as we move from left to right across a block, but the number of cohorts on each loop behaves just as predictably as the number of unassigned permissible functions in the vertex avoiding case.

In the vertex avoiding case, each cohort consists of a single unassigned permissible function, and the argument for counting cohorts in the proof of Theorem~\ref{Thm:ExtremalConfig} specializes to the argument for counting permissible functions in \S\ref{Sec:VertexAvoiding}
\end{remark}

\begin{proposition}
\label{Prop:Counting}
Suppose that $\Sha^{-1}(\ell) \neq \emptyset$ or $\ell \in \{ z,b,b',z'-2 \}$.  If $\psi \in \cB$ is assigned to $\gamma_{\ell}$, then any other function in the same cohort with $\psi$ is also assigned to $\gamma_{\ell}$.
\end{proposition}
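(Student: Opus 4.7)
The plan is to reduce the assertion to two subclaims: (i) every function $\psi'$ in the same cohort as $\psi$ on $\gamma_\ell$ is permissible on $\gamma_\ell$ and was unassigned on every loop $\gamma_{\ell'}$ with $\ell' < \ell$; and (ii) $\psi'$ and $\psi$ agree on $\gamma_\ell$. Given (i) and (ii), inspection of Steps 2, 4, and 6 of the loop subroutine shows that each actually operates on a full agreement class of permissible functions on $\gamma_\ell$. Combined with Corollary~\ref{cor:equivtonew} and Lemma~\ref{Lem:ShinyEq} (which imply that being new, being departing, or being selected by Proposition~\ref{Prop:NewThreeShape} are all properties preserved within an agreement class on $\gamma_\ell$), this forces $\psi'$ to be assigned whenever $\psi$ is. In particular, for $\ell \in \{z, b, b', z'+2\}$ we will use Proposition~\ref{Prop:GammaA} to rule out Step 2, leaving only Steps 4 and 6, both of which assign an entire equivalence class.

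Claim (i) is straightforward: by the definition of ``cohort'', $\psi'$ has already left its shine by $\gamma_\ell$ and has not been assigned to any earlier loop. Permissibility on $\gamma_\ell$ then follows, once claim (ii) is in hand, because permissibility only depends on the slopes of $\psi'$ along the bridges through $\beta_{\ell+1}$, and these slopes are determined by the class of agreement on $\gamma_\ell$ together with agreement of $\psi$ with $\psi'$ on some earlier loop.

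The main work is claim (ii), propagation of agreement. Let $k \leq \ell$ be the largest index such that $\psi$ and $\psi'$ agree on $\gamma_k$; I would then induct on $\ell - k$, showing that in fact agreement extends to every intermediate loop. Write $\psi = \varphi + \varphi'$ and $\psi' = \phi + \phi'$. Lemma~\ref{Lem:OneEqLoop} pairs up the summands on $\gamma_k$ so that, after relabelling, $\varphi$ agrees with $\phi$ and $\varphi'$ with $\phi'$ on $\gamma_k$; in particular, all four summands have matching slopes on $\beta_{k+1}$. When $\beta_{k+1}$ is not a switching bridge and $\gamma_{k+1}$ is not a switching loop, Lemmas~\ref{Lem:Rho2} and~\ref{Lem:SameSlope} immediately propagate agreement to $\gamma_{k+1}$ and match outgoing slopes on $\beta_{k+2}$, closing the induction step.

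The hard part is the switching case, where either $\beta_{k+1}$ has multiplicity $2$ or $\gamma_{k+1}$ switches some slope. Here property $(\mathbf B')$ is essential: if two permissible functions in $\cB$ were to agree on $\gamma_{k'}$ but disagree in their slopes along $\beta_{k'+1}$ at such a switching location, then $(\mathbf B')$ forbids any function in $\cB$ from being shiny on $\gamma_{k'}$. Since the classification of switching patterns in \S\ref{sec:switch} together with $\rho \leq 2$ (and hence Lemma~\ref{Lem:MultBound}) allows at most one switching location between $\gamma_k$ and $\gamma_\ell$, this combined with the assumption that some cohort member leaves its shine on $\gamma_\ell$ (or that $\ell \in \{z,b,b',z'+2\}$, where Proposition~\ref{Prop:GammaA} again excludes shiny functions) yields the needed contradiction, so agreement must in fact propagate. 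The main obstacle I anticipate is the careful bookkeeping in this switching case analysis---verifying, for each configuration allowed by the switching patterns of \S\ref{sec:switch}, that the hypothesis of $(\mathbf B')$ is triggered whenever a cohort would otherwise split before reaching $\gamma_\ell$.
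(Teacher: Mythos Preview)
Your strategy rests on two claims, and both fail at the same point: you assert that ``being departing\ldots\ are all properties preserved within an agreement class on $\gamma_\ell$.'' This is false. Two functions $\psi,\psi'$ can agree on $\gamma_\ell$ (same restriction up to an additive constant) while having different outgoing slopes $s_{\ell+1}(\psi)\neq s_{\ell+1}(\psi')$, because the bend at $w_\ell$ is not determined by the restriction to the loop. In particular, $\psi$ can be departing while $\psi'$ is not; then Step~4 of the loop subroutine assigns $\psi$ alone and merely raises the coefficient of $\psi'$. So even if you succeeded in proving claim~(ii), it would not yield the conclusion. The paper's proof confronts exactly this situation: after establishing agreement on $\gamma_\ell$ (via Lemma~\ref{Lem:SameSlope} and property~$(\mathbf A)$), it still has to carry out a substantial slope-index argument to show that, in the departing/non-departing split, no function can leave its shine on $\gamma_\ell$ and $\ell\notin\{z,b,b',z'+2\}$. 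That analysis is the heart of the proof and is entirely absent from your plan.

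There are secondary issues as well. Your invocation of Lemma~\ref{Lem:OneEqLoop} to pair up summands is misplaced: that lemma concerns functions \emph{assigned to the same loop}, whereas cohort members are merely shiny and agreeing on $\gamma_{k_0}$; the correct tool is Lemma~\ref{Lem:ShinyEq}. More seriously, your switching-case argument conflates $\gamma_{k'}$ and $\gamma_\ell$: property~$(\mathbf B')$ forbids shiny functions on the switching loop $\gamma_{k'}$, but your hypothesis concerns $\gamma_\ell$, so no contradiction arises. The paper instead uses the dedicated Lemma~\ref{lem:slopeindexchange}, which shows that at the first loop where slope indices diverge, one of the two functions is actually assigned---forcing the divergence, if any, to occur at $\gamma_\ell$ itself, where the lemma also supplies the needed conclusions about $\{z,b,b',z'+2\}$ and shininess.
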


\noindent To prove Proposition~\ref{Prop:Counting}, we will use property $(\mathbf B')$ along with some preliminary lemmas.  We start with a relatively simple one.

\begin{lemma}
\label{Lem:SameSlope}
Let $\varphi, \varphi'$ be building blocks such that
\[
\tau_k(\varphi) = \tau_k(\varphi') \mbox{ and } \tau_{k+1}(\varphi) = \tau_{k+1}(\varphi').
\]
Then
$
\tau'_k(\varphi) = \tau'_k(\varphi')
$
and $\varphi \sim_{\gamma_k} \varphi'$.
\end{lemma}

\begin{proof}
Since $\rho'(\Sigma) \leq 2$, a switching loop has multiplicity at least 1, and a switching bridge has multiplicity at least 2, we cannot have both that $\gamma_k$ is a switching loop and $\beta_{k+1}$ is a switching bridge.  It follows that either
\[
\tau'_k(\varphi) = \tau_k(\varphi) \mbox{ and } \tau'_k(\varphi') = \tau_k(\varphi')
\]
or
\[
\tau'_k(\varphi) = \tau_{k+1}(\varphi) \mbox{ and } \tau'_k(\varphi') = \tau_{k+1}(\varphi').
\]
Thus,
$
\tau'_k(\varphi) = \tau'_k(\varphi'),
$
hence $\varphi \sim_{\gamma_k} \varphi'$.
\end{proof}

\begin{lemma}
\label{Lem:ShinyEq}
Let $\psi_1 = \varphi_1 + \varphi'_1$ and $\psi_2 = \varphi_2 + \varphi'_2$.  Suppose $\Sha(\psi_1) = \Sha(\psi_2) = k$  and $\psi_1 \sim_{\gamma_k} \psi_2$.  Then $\{ \tau'_k (\varphi_1) , \tau'_k (\varphi'_1) \} = \{ \tau'_k (\varphi_2) , \tau'_k (\varphi'_2) \}$.
\end{lemma}

\begin{proof}
By Proposition~\ref{Prop:Shiny}, we may assume without loss of generality that the restrictions of both $D+\ddiv(\varphi_1)$ and $D+\ddiv(\varphi_2)$ to $\gamma_k \smallsetminus \{ v_k \}$ have degree zero.  By Lemma~\ref{Lem:PointsOfSlopeIncrease}, it follows that $\varphi_1$ and $\varphi_2$ are equivalent on $\gamma_k$.  Since $\psi_1 \sim_{\gamma_k} \psi_2$, it follows that $\varphi'_1 \sim_{\gamma_k} \varphi'_2$ as well.

Since $d_{w_k} (\varphi_1) = 0$, $s_{k+1} (\varphi_1)$ is equal to the sum of the slopes of $\varphi_1$ along the two tangent vectors coming into $w_k$ on $\gamma_k$.  Similarly, since $d_{w_k} (\varphi_2) = 0$, $s_{k+1} (\varphi_2)$ is equal to the sum of the slopes of $\varphi_2$ along the two tangent vectors coming into $w_k$ on $\gamma_k$.  Since $\varphi_1 \sim_{\gamma_k} \varphi_2$, it follows that $s_{k+1} (\varphi_1) = s_{k+1} (\varphi_2)$.  By Lemma~\ref{Lem:VDegree}, since $d_{w_k} (\varphi_1) = d_{w_k} (\varphi_2) = 0$, we have $s_{k+1} (\varphi_1) = \xi'_k  (\varphi_1)$ and $s_{k+1} (\varphi_2) = \xi'_k  (\varphi_2)$.  Thus, $\tau'_k (\varphi_1) = \tau'_k (\varphi_2)$.

If $\tau'_k (\varphi'_1) \neq \tau'_k (\varphi'_2)$, assume without loss of generality that $\tau'_k (\varphi'_1) < \tau'_k (\varphi'_2)$.  Since $\psi_1$ and $\psi_2$ are permissible on $\gamma_k$, we have
\[
\sigma_k \leq \xi'_k (\varphi_1) + \xi'_k (\varphi'_1) < \xi'_k (\varphi_2) + \xi'_k (\varphi'_2) .
\]
But since $\psi_1$ and $\psi_2$ are shiny, we have
\[
\xi_k (\varphi_1) + \xi_k (\varphi'_1) < \xi_k (\varphi_2) + \xi_k (\varphi'_2) < \sigma_k .
\]
Hence, both
\begin{align*}
\xi'_k (\varphi_1) + \xi'_k (\varphi'_1) & \geq \xi_k (\varphi_1) + \xi_k (\varphi'_1) + 2 \mbox{ and } \\
\xi'_k (\varphi_2) + \xi'_k (\varphi'_2) & \geq \xi_k (\varphi_2) + \xi_k (\varphi'_2) + 2.
\end{align*}
Recall that there is at most one value $j$ such that $s'_k[j] \geq s_k[j] +1$, and when this $j$ exists, we have $s'_k[j] = s_k[j] + 1$.  Thus, if $\gamma_k$ is not a switching loop, the inequalities above imply that $\tau_k (\varphi_i) = \tau'_k (\varphi_i) = \tau_k (\varphi'_i) = \tau'_k (\varphi'_i) = j$ for $i=1,2$.

Finally, suppose that $\gamma_k$ is a switching loop.  If $s'_k (\varphi'_1) = s'_k (\varphi_2)$, then
\[
\xi'_k  (\varphi'_2) > \xi'_k  (\varphi'_1) \geq s'_k (\varphi'_1) = s'_k (\varphi'_2) \geq \xi_{k+1} (\varphi'_2) \geq s_{k+1} [\tau'_k (\varphi'_2)] > s_{k+1} [\tau'_k (\varphi'_1)] .
\]
It follows that $\mu (\beta_{k+1}) \geq 2$.  Since $\gamma_k$ is a switching loop, $\mu (\gamma_k) \geq 1$.  Since $\rho \leq 2$, this is impossible, hence $s'_k (\varphi'_1) \neq s'_k (\varphi_2)$.  It follows that $\psi_1$ and $\psi_2$ are permissible functions that are equivalent on $\gamma_k$ with different slopes on $\beta_{k+1}$.  By property $(\mathbf B')$, no function is shiny on $\gamma_k$, contradicting our assumption that $\Sha(\psi_1) = \Sha(\psi_2) = k$.  Thus, $\{ \tau'_k(\varphi_1), \tau'_k(\varphi'_1) \} = \{ \tau'_k(\varphi_2), \tau'_k(\varphi'_2) \}$.
\end{proof}

If two functions $\psi , \psi' \in \cB$ are in the same cohort, and $\Sha (\psi) = \Sha (\psi') = k_0$, then by definition, $\psi \sim_{\gamma_{k_0}} \psi'$.  There may be a loop $\gamma_k$, with $k > k_0$, such that $\psi \not\sim_{\gamma_k} \psi'$.  The next lemma considers the smallest such $k$, and examines which functions are assigned to $\gamma_k$.

\begin{lemma}  \label{lem:slopeindexchange}
Let $\psi = \varphi_1 + \varphi_2$ and $\psi' = \varphi'_1 + \varphi'_2$ be in $\cB$.  Suppose $\Sha(\psi) = \Sha(\psi') = k_0$ and  $\psi \sim_{\gamma_{k_0}} \psi'$.  Let $k$ be the smallest integer such that  $k \geq k_0$ and the sets of slope indices $\{ \tau_{k+1}(\varphi_1), \tau_{k+1}(\varphi_2) \}$ and $\{ \tau_{k+1}(\varphi'_1), \tau_{k+1}(\varphi'_2) \}$ are different.  Suppose, furthermore, that $\psi$ and $\psi'$ are unassigned and permissible on $\gamma_k$.  Then
\begin{enumerate}
\item either $\gamma_k$ is a switching loop or $\beta_{k+1}$ is a switching bridge,
\item $k \notin \{ z,b,b',z'-2 \}$, and
\item either $\psi$ or $\psi'$ is assigned to $\gamma_k$.
\end{enumerate}
Furthermore, if one of $\psi$, $\psi'$ is assigned to $\gamma_k$ and the other is not, then no function is shiny on $\gamma_k$.
\end{lemma}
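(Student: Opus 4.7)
Start by aligning the summands of $\psi$ and $\psi'$ so that their slope index sequences can be compared directly. Since $\psi$ and $\psi'$ both leave their shine and agree on $\gamma_{k_0}$, Lemma~\ref{Lem:ShinyEq} lets us reorder so that $\tau_{k_0}(\varphi) = \tau_{k_0}(\phi)$ and $\tau_{k_0}(\varphi') = \tau_{k_0}(\phi')$. Because the slope index sets $\{\tau_j(\varphi), \tau_j(\varphi')\}$ and $\{\tau_j(\phi), \tau_j(\phi')\}$ coincide for $k_0 \leq j \leq k$, slope index sequences are nondecreasing by Definition~\ref{def:buildingblock}, and they jump by at most one across any loop or bridge, an induction via Lemma~\ref{Lem:SameSlope} chooses the pairing so that $\tau_j(\varphi) = \tau_j(\phi)$, $\tau_j(\varphi') = \tau_j(\phi')$, and the pairs $(\varphi, \phi)$, $(\varphi', \phi')$ agree on every loop $\gamma_j$ with $k_0 \leq j < k$.

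Part (1) is then immediate: since the slope index sets first differ at $k+1$, at least one of the four slope index sequences must increase by one across $\gamma_k$ or $\beta_{k+1}$, and Definition~\ref{def:buildingblock}(i)--(ii) forces the corresponding loop or bridge to be switching. For (3), I split into two cases according to whether $s_{k+1}(\psi) = s_{k+1}(\psi')$. If the two sums are unequal, then one of $\psi, \psi'$ has $s_{k+1} > s_k(\theta)$, hence is departing and is assigned in Step~4 of the loop subroutine. If the sums are equal, then the slope index sets can differ only through a realignment of slopes at a vertex rather than on the bridge itself; checking against the switching patterns of \S\ref{sec:switch} shows this forces $\tau'_k(\varphi) = \tau'_k(\phi)$ and $\tau'_k(\varphi') = \tau'_k(\phi')$, so by property $(\mathbf A)$, $\psi$ and $\psi'$ agree on $\gamma_k$, lie in a single equivalence class, and are assigned together in Step~2 or Step~6.

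For the final claim, if exactly one of $\psi, \psi'$ is assigned, we are necessarily in the first case of the dichotomy above: $\psi$ and $\psi'$ lie in distinct equivalence classes because their slopes on $\beta_{k+1}$ differ, yet the alignment argument together with property $(\mathbf A)$ (applied once the relevant $\tau'_k$'s are matched in the remaining subcases) shows that $\psi$ and $\psi'$ still agree on $\gamma_k$ itself. Part (1) has established that $\gamma_k$ or $\beta_{k+1}$ is switching, so property $(\mathbf B')$ applies and rules out any shiny function on $\gamma_k$.

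For part (2), assume for contradiction that $k \in \{z, b, b', z'+2\}$. The plan is a case-by-case check closely paralleling the proof of Proposition~\ref{Prop:GammaA}: at each of these four loops, the difference $\lambda'_k \smallsetminus \lambda_k$ is a single box in a prescribed row, and Lemma~\ref{Lem:MultBound} together with the switching classification in \S\ref{sec:switch} restricts the admissible switching indices at $\gamma_k$ or $\beta_{k+1}$ to a handful of possibilities. For each such possibility one perturbs a summand of $\psi$ or $\psi'$ along the lines of Corollary~\ref{cor:equivtonew}, by dropping one slope by $1$ on the appropriate bridge, to produce a permissible function in $\cB$ that is shiny on $\gamma_k$, contradicting Proposition~\ref{Prop:GammaA}. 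This partition-theoretic enumeration is the principal obstacle; it is straightforward but tedious, and is essentially the same calculation as in Lemma~\ref{lem:nonew} and the second half of the proof of Proposition~\ref{Prop:GammaA}.
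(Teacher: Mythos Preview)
Your treatment of part (1) and the initial alignment via Lemma~\ref{Lem:ShinyEq} and Lemma~\ref{Lem:SameSlope} matches the paper. The difficulties are in (3) and, especially, the final ``no shiny function'' claim.

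For (3), your dichotomy on whether $s_{k+1}(\psi) = s_{k+1}(\psi')$ misses the key mechanism. The paper argues by contradiction: if neither $\psi$ nor $\psi'$ is assigned to $\gamma_k$, then neither is departing, and since both left their shine on $\gamma_{k_0}$, neither is shiny on $\gamma_{k+1}$. This forces $s_{k+1}(\varphi) = s_{k+1}[\tau_{k+1}(\varphi)]$ for all four summands, and then the slope-index inequality $\tau_{k+1}(\varphi) > \tau_{k+1}(\phi)$, $\tau_{k+1}(\varphi') \geq \tau_{k+1}(\phi')$ (the latter because a loop or bridge can switch at most one slope) gives $s_{k+1}(\psi) > s_{k+1}(\psi')$ strictly, so $\psi$ is departing after all. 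Your equal-slope case is thus vacuous in the relevant situation, but you do not see this because you never invoke the ``not shiny on $\gamma_{k+1}$'' condition; your handling of that case via ``realignment of slopes at a vertex'' is not a proof.

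The more serious gap is in the final claim. You assert that $\psi$ and $\psi'$ agree on $\gamma_k$ in order to invoke property $(\mathbf B')$ directly. But this need not hold: we know only $\tau_k(\varphi) = \tau_k(\phi)$, and if $\gamma_k$ is a switching loop the outgoing indices $\tau'_k$ may differ, so property $(\mathbf A)$ does not apply. The paper treats the case where $\psi$ and $\psi'$ agree on $\gamma_k$ in one line (this is exactly the $(\mathbf B')$ argument you give), but then devotes the bulk of the proof to the case where they do \emph{not} agree. There one shows that $\varphi$ and $\phi$ disagree on $\gamma_k$, that $\gamma_k$ is a decreasing switching loop, and then rules out any shiny $\eta + \eta'$ by a direct analysis comparing $\eta$ to $\phi'$ and using that a loop can switch only one slope. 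Your proposal skips this case entirely.

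Finally, for (2): the paper obtains $k \notin \{z,b,b',z'+2\}$ immediately from (1), since those four loops are defined by the addition of a box to the partition sequence, which is incompatible with the switching patterns of \S\ref{sec:switch}. Your plan of manufacturing a shiny function to contradict Proposition~\ref{Prop:GammaA} is circuitous and, as sketched, does not explain why the perturbed function lies in $\cB$ or is permissible.
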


\begin{proof}
By assumption, $\psi \sim_{\gamma_{k_0}} \psi'$.  By Lemma~\ref{Lem:ShinyEq}, $\{ \tau'_{k_0}(\varphi_1), \tau'_{k_0}(\varphi_2) \} = \{ \tau'_{k_0}(\varphi'_1), \tau'_{k_0}(\varphi'_2) \}$.  By Lemma~\ref{Lem:SameSlope}, $\psi \sim_{\gamma_t} \psi'$ for all $t$ in the range $k_0 \leq t < k$.

Without loss of generality, we may assume that $\tau_k(\varphi_1) = \tau_k(\varphi'_1)$ and $\tau_{k}(\varphi_2) = \tau_{k}(\varphi'_2)$, and that
$
\tau_{k+1}(\varphi_1) > \tau_{k+1}(\varphi'_1).
$
Since slope indices of building blocks only change due to switching (Definition~\ref{def:buildingblock}), it follows that either $\gamma_k$ switches slope $\tau_k(\varphi_1)$ or $\beta_{k+1}$ switches slope $\tau'_k(\varphi_1)$.  It follows that $k \notin \{ z,b,b',z'-2 \}$.
Since a switching loop or bridge can switch at most one slope, we also have
$
\tau_{k+1}(\varphi_2) \geq \tau_{k+1}(\varphi'_2),
$
with equality if $\tau_k (\varphi_1) \neq \tau_k (\varphi_2)$.

\medskip

 \emph{Claim 1:  Either $\psi$ or $\psi'$ is assigned to $\gamma_k$.}  Suppose that neither $\psi$ nor $\psi'$ is assigned to $\gamma_k$.  Then $\psi$ and $\psi'$ are both permissible and not shiny on $\gamma_{k+1}$. Therefore, by Lemma~\ref{lem:smallslopeshiny},
\[
s_{k+1} (\varphi_1) = \xi_{k+1}(\varphi_1), \ \ s_{k+1} (\varphi_2) = \xi_{k+1}(\varphi_2),
\]
\[
s_{k+1} (\varphi'_1) = \xi_{k+1}(\varphi'_1), \ \ s_{k+1} (\varphi'_2) = \xi_{k+1}(\varphi'_2).
\]
Summing these, we obtain
\[
s_{k+1} (\psi) = \xi_{k+1}(\varphi_1) + \xi_{k+1}(\varphi_2) > \xi_{k+1} (\varphi'_1) + \xi_{k+1}(\varphi'_2) = s_{k+1} (\psi').
\]
It follows that $\psi$ is departing on $\gamma_k$.  This contradicts the supposition that neither $\psi$ nor $\psi'$ is assigned to $\gamma_k$ and proves the claim.

\medskip

It remains to show that if one of $\psi$, $\psi'$ is not assigned to $\gamma_k$, then no function is shiny on $\gamma_k$.

\medskip

\emph{Claim 2: If $\psi \sim_{\gamma_k} \psi'$, then no function is shiny on $\gamma_k$.}  This is straightforward.  Indeed, if $\psi$ and $\psi'$ are equivalent and one is assigned while the other is not, then the one that is assigned is departing and the other is not.  In this case, no function is shiny on $\gamma_k$ by property $(\mathbf B')$.

\medskip

For the remainder of the proof of the lemma, we assume $\psi$ and $\psi'$ are not equivalent on $\gamma_k$, and show that no function is shiny.  Note that this assumption implies that $k$ is strictly greater than $k_0$, because $\psi \sim_{k_0} \psi'$.

\medskip

\emph{Claim 3: The functions $\varphi_1$ and $\varphi'_1$ are not equivalent on $\gamma_k$.}  Since $\psi$ and $\psi'$ are not equivalent on $\gamma_k$, either $\varphi_1$ and $\varphi'_1$ are not equivalent on $\gamma_k$, or $\varphi_2$ and $\varphi'_2$ are not equivalent on $\gamma_k$.  If $\varphi_2$ and $\varphi'_2$ are not equivalent, then $\tau'_k (\varphi_2) \neq \tau'_k (\varphi'_2)$.  This implies that $\gamma_k$ switches slope $\tau_k (\varphi_2)$.  Since a switching loop can switch at most one slope, it follows that $\varphi_2 \sim_{\gamma_k} \varphi_1$, $\tau_k (\varphi_2) = \tau_k (\varphi_1)$, and $\tau_{k+1} (\varphi_2) = \tau_{k+1} (\varphi_1)$.  In this case, we may relabel $\varphi_1$ and $\varphi_2$ without loss of generality, and the result follows.

\medskip

\emph{Claim 4: The loop $\gamma_k$ switches slope $\tau_k(\varphi_1)$, and $s_{k+1} (\varphi'_1) = s_k (\varphi'_1) - 1$.}  To see this, note that neither $\psi$ nor $\psi'$ is shiny on $\gamma_k$, hence by Lemma~\ref{lem:smallslopeshiny},
\[
s_k (\varphi_1) = \xi_k(\varphi_1), \ \ s_k (\varphi_2) = \xi_k(\varphi_2), \  \  s_k (\varphi'_1) = \xi_k(\varphi'_1), \mbox{ \ and \ } s_k (\varphi'_2) = \xi_k(\varphi'_2).
\]
It follows that
$
s_k (\varphi_1) = s_k (\varphi'_1) \mbox{ and } s_k (\varphi_2) = s_k (\varphi'_2).
$

Recall that, on a loop, every divisor of degree 1 is equivalent to a unique effective divisor.  Thus, since $\varphi_1$ and $\varphi'_1$ have the same incoming slope, if the restrictions of $D+\ddiv(\varphi_1)$ and $D+\ddiv(\varphi'_1)$ to $\gamma_k \smallsetminus \{ w_k \}$ each have degree at most 1, then $\varphi_1\sim_{\gamma_k} \varphi'_1$.  Because we showed, in the previous claim, that $\varphi_1$ and $\varphi'_1$ are not equivalent on $\gamma_k$, the restriction of $D+\ddiv(\varphi'_1)$ to $\gamma_k \smallsetminus \{ w_k \}$ has degree 2.  Equivalently,
\[
s_{k+1} (\varphi'_1) = s_k (\varphi'_1) - 1,
\]
so $\gamma_k$ is a decreasing loop and hence has positive multiplicity.  We already showed that either $\gamma_k$ switches slope $\tau_k(\varphi_1)$ or $\beta_k$ is a switching bridge.  Since switching bridges have multiplicity 2 and the sum of all multiplicities is at most 2, it follows that $\gamma_k$ switches slope $\tau_k (\varphi_1)$, as claimed.

\medskip

We now complete the proof that no function is shiny on $\gamma_k$.  Suppose $\varphi''_1 + \varphi''_2$ is shiny on $\gamma_k$.  By Proposition~\ref{Prop:Shiny}, without loss of generality, the restriction of $D+\ddiv(\varphi''_2)$ to $\gamma_k \smallsetminus \{ v_k \}$ has degree 0.  Because $\psi'$ is permissible on $\gamma_k$, and $s_{k+1} (\varphi'_1) < s_k (\varphi'_1)$, as shown above, we must have
\[
s_{k+1} (\varphi'_2) = s_k (\varphi'_2) + 1.
\]
Thus the restriction of $D + \ddiv(\varphi'_2)$ to $\gamma_k \smallsetminus \{ v_k \}$ also has degree 0, hence $\varphi''_2 \sim_{\gamma_k} \varphi'_2$, and
\[
s_{k+1} (\varphi''_2) = s_{k+1} (\varphi'_2).
\]
Since $\varphi''_1 + \varphi''_2$ is permissible on $\gamma_k$, we have
$
s_{k+1} (\varphi''_1) \geq s_{k+1} (\varphi'_1).
$
Also, since $\varphi''_1 + \varphi''_2$ is shiny,
\[
\xi_k (\varphi''_1) + \xi_k (\varphi''_2) < \sigma_k = \xi_k (\varphi'_1) + \xi_k (\varphi'_2).
\]
It follows that either $\tau_k (\varphi''_1) < \tau_k (\varphi'_1)$, and $\gamma_k$ switches slope $\tau_k (\varphi''_1)$, or $\tau_k (\varphi''_2) < \tau_k (\varphi'_2)$, and $\gamma_k$ switches slope $\tau_k (\varphi''_2)$.  We will show that neither of these is possible.  Indeed, the first is impossible because $\gamma_k$ switches slope $\tau_k (\varphi'_1)$, and a loop can switch at most 1 slope.  The second requires
\[
\tau_k (\varphi''_2) = \tau_k (\varphi'_2)-1 = \tau_k (\varphi'_1).
\]
However, since
\[
\xi_k (\varphi''_2) < \xi_k (\varphi'_2) < \xi'_k  (\varphi'_2),
\]
we see that $\xi_k (\varphi''_2) \leq s'_k [\tau_k (\varphi''_2)+1]+2$.  Since the slope of a function in $R(D)$ can increase by at most one from the left side of $\gamma_k$ to the right side, we see that there is no function $\eta$ with $s_k (\eta) \leq \xi_k (\varphi''_2)$ and $s'_k (\eta) \geq s'_k [\tau_k (\varphi''_2)+1]$.  This shows that it is impossible for $\gamma_k$ to switch slope $\tau_k (\varphi'_2) - 1$, and completes the proof of the lemma.
\end{proof}

\begin{proof}[Proof of Proposition~\ref{Prop:Counting}]
Suppose that $\psi = \varphi_1 + \varphi_2$ and $\psi' = \varphi'_1 + \varphi'_2$ are in the same cohort on $\gamma_\ell$, and that the functions assigned to $\gamma_\ell$ include $\psi$ but not $\psi'$.  We must show that $\ell \not \in \{ z,b,b',z'-2 \}$ and $\Sha^{-1}(\ell) = \emptyset$.

\emph{Step 1:  Reduce to the case where the slope indices are the same.}  Suppose $\Sha(\psi) = \Sha(\psi') = k_0$.  By Lemma~\ref{lem:slopeindexchange}, if the set of slope indices $\{ \tau_{k+1}(\varphi_1), \tau_{k+1}(\varphi_2) \}$ and $\{ \tau_{k+1}(\varphi'_1), \tau_{k+1}(\varphi'_2) \}$ are different for some $k_0 \leq k < \ell$, then one of $\psi$ or $\psi'$ is assigned to $\gamma_k$, contradicting our assumption that they are in the same cohort on $\gamma_{\ell}$.  Furthermore, if $\{ \tau_{\ell+1}(\varphi_1), \tau_{\ell+1}(\varphi_2) \}$ and $\{ \tau_{\ell+1}(\varphi'_1), \tau_{\ell+1}(\varphi'_2) \}$ are different, then by Lemma~\ref{lem:slopeindexchange}, $\ell \notin \{ z,b,b',z'-2 \}$ and $\Sha^{-1}(\ell) = \emptyset$.  We may therefore assume that $\{ \tau_{k}(\varphi_1), \tau_{k}(\varphi_2) \} = \{ \tau_{k}(\varphi'_1), \tau_{k}(\varphi'_2) \}$ for $k_0 \leq k \leq \ell + 1$.   By Lemma~\ref{Lem:SameSlope}, we then have
\[
\tau'_{\ell}(\varphi_1) = \tau'_{\ell}(\varphi'_1) \mbox{ and } \tau'_{\ell}(\varphi_2) = \tau'_{\ell}(\varphi'_2)
\]
and $\varphi_1 \sim_{\gamma_{\ell}} \varphi'_1$, $\varphi_2 \sim_{\gamma_{\ell}} \varphi'_2$.

\emph{Step 2:  $\psi$ is departing on $\gamma_{\ell}$.}  Since $\psi \sim_{\gamma_{\ell}} \psi'$, and the functions assigned to $\gamma_\ell$ include $\psi$ but not $\psi'$, we see that $\psi$ is a departing function, but $\psi'$ is not.  Because $\psi$ is departing, either $\varphi_1$ or $\varphi_2$ must have higher slope on $\beta_{\ell + 1}$ than on $\beta_{\ell}$.  Without loss of generality we may assume that $s_{\ell+1} (\varphi_1) > s_{\ell} (\varphi_1)$.  Since $\Sha(\psi') = k_0$, we see that $\psi'$ is not shiny on $\gamma_{\ell+1}$, hence we must have
\[
s_{\ell+1} (\varphi'_1) = \xi_{\ell+1}(\varphi'_1) \mbox{ and } s_{\ell+1} (\varphi'_2) = \xi_{\ell+1}(\varphi'_2).
\]

\emph{Step 3:  Show that $\ell \notin \{ z,b,b',z'-2 \}$.}  If $\gamma_{\ell}$ is a switching loop or $\beta_{\ell}$ is a switching bridge, then $\ell \notin \{ z,b,b',z'-2 \}$, hence we may assume that $\tau_{\ell} = \tau'_{\ell} = \tau_{\ell+1}$ for each of the functions $\varphi_1, \varphi_2, \varphi'_1,$ and $\varphi'_2$.

If $\xi_{\ell+1}(\varphi'_1) < \xi'_{\ell} (\varphi'_1)$ or $\xi_{\ell+1}(\varphi'_2) < \xi'_{\ell} (\varphi'_2)$, then $\lambda_{\ell+1}$ is contained in $\lambda_{\ell}$, hence $\ell \notin \{ z,b,b',z'-2 \}$.   We may therefore assume that $\xi_{\ell+1}(\varphi'_1) = \xi'_{\ell} (\varphi'_1)$ and $\xi_{\ell+1}(\varphi'_2) = \xi'_{\ell} (\varphi'_2)$.  It follows that
\[
s'_{\ell} (\psi') = \xi'_{\ell} (\varphi_1) + \xi'_{\ell} (\varphi'_2).
\]
Since $\psi'$ is not departing, we have $\sigma_{\ell} = s'_{\ell} (\psi')$.  But $\xi'_{\ell} (\varphi'_1) = \xi'_{\ell} (\varphi_1)$, and $z,b,b',$ and $z'$ are chosen so that there is no integer $j$ such that $\sigma_{\ell} = \xi'_{\ell} (\varphi_1) + s'_{\ell} [j]$, so again $\ell \notin \{ z,b,b',z'-2 \}$.

\medskip

It remains to show that $\Sha^{-1}(\ell) = \emptyset$.  Assume that $\varphi''_1 + \varphi''_2 \in \cB$ is shiny on $\gamma_{\ell}$.  We will show that $\Sha(\varphi''_1 + \varphi''_2) \neq \ell$.

\medskip

\emph{Step 4:  Show that $\varphi''_1+\varphi''_2$ cannot be equivalent to a departing function on $\gamma_{\ell}$.}  Any function that is both departing and shiny on $\gamma_{\ell}$ is equivalent to $2\varphi_1$, and such a function exists only if $s_{\ell+1} (2\varphi_1) = \sigma_{\ell} +1$.  From this we see that both
\[
s_{\ell+1} (\varphi_1) > \xi_{\ell+1}(\varphi_1) \mbox{ and } s_{\ell+1} (\varphi_2) > \xi_{\ell+1}(\varphi_2).
\]
But, because
\[
s_{\ell+1} (\varphi'_1) = \xi_{\ell+1}(\varphi'_1) \mbox{ and } s_{\ell+1} (\varphi'_2) = \xi_{\ell+1}(\varphi'_2),
\]
we see that $s_{\ell+1} (\psi') = \sigma_{\ell} -1$.  This implies that $\psi'$ is not permissible on $\gamma_{\ell}$, a contradiction.

\emph{Step 5:  Reduce to inequalities on slopes.}
We will prove by case analysis that one of the following four inequalities holds:
\[
s_{\ell+1} (\varphi''_1) < \xi'_{\ell} (\varphi''_1), \ \ s_{\ell+1} (\varphi''_2) < \xi'_{\ell} (\varphi''_2), \ \
s_{\ell+1} (\varphi''_1 ) > \xi_{\ell+1}(\varphi''_1), \mbox{ \, or \, } s_{\ell+1} (\varphi''_2) > \xi_{\ell+1}(\varphi''_2).
\]
We claim that $\Sha(\varphi''_1 + \varphi''_2) \neq \ell$. To see this, note that if one of the first two inequalities holds, then $2D+\ddiv(\varphi''_1 +\varphi''_2)$ contains $w_{\ell}$, hence $\varphi''_1+\varphi''_2$ is equivalent to a departing function on $\gamma_{\ell}$, a contradiction.  If one of the second two inequalities holds, we see that $\varphi''_1 +\varphi''_2$ is shiny on $\gamma_{\ell+1}$, and the claim follows.

\emph{Step 6:  Case analysis.}  It remains to show that one of the inequalities above holds.  By Proposition~\ref{Prop:Shiny}, we may assume that the restriction of $D+\ddiv(\varphi''_1)$ to $\gamma_{\ell} \smallsetminus \{ v_{\ell} \}$ has degree 0.  It follows that $\varphi''_1 \sim_{\gamma_{\ell}} \varphi_1$, and $s_{\ell+1} (\varphi''_1) = s_{\ell+1} (\varphi_1)$.  Since $\varphi''_1 +\varphi''_2$ is not departing on $\gamma_{\ell}$, we have $s_{\ell+1} (\varphi''_1) = s_{\ell+1} (\varphi_2)-1$.  By property $(\mathbf B')$, the bridge $\beta_{\ell}$ is not a switching bridge, so $\tau_{\ell+1} (\varphi''_1) = \tau'_{\ell} (\varphi''_1)$ and $\tau_{\ell+1} (\varphi''_2) = \tau'_{\ell} (\varphi''_2)$.

We now consider several cases.  Our assumption that the slope indices of $\varphi_1$ and $\varphi_2$ agree with those of $\varphi'_1$ and $\varphi'_2$ implies that either
\[
s_{\ell+1} (\varphi_1) > \xi_{\ell+1}(\varphi_1) \mbox{ or } s_{\ell+1} (\varphi_2) > \xi_{\ell+1}(\varphi_2).
\]
First, suppose that $s_{\ell+1} (\varphi_1) > \xi_{\ell+1} (\varphi_1)$.  If $\tau'_{\ell} (\varphi''_1) > \tau'_{\ell} (\varphi_1)$, then
\[
\xi'_{\ell}  (\varphi''_1) > \xi'_{\ell}  (\varphi_1) \geq s_{\ell+1} (\varphi_1) = s_{\ell+1} (\varphi''_1).
\]
On the other hand, if $\tau'_{\ell} (\varphi''_1) \leq \tau'_{\ell} (\varphi_1)$, then
\[
s_{\ell+1} (\varphi''_1) = s_{\ell+1} (\varphi_1) > \xi_{\ell+1} (\varphi_1) \geq \xi_{\ell+1} (\varphi''_1).
\]
Next, suppose that $s_{\ell+1} (\varphi_2) > \xi_{\ell+1} (\varphi_2)$.  If $\tau'_{\ell} (\varphi''_2) \geq \tau'_{\ell} (\varphi_2)$, then
\[
\xi'_{\ell}  (\varphi''_2) \geq \xi'_{\ell}  (\varphi_2) \geq s_{\ell+1} (\varphi_2) > s_{\ell+1} (\varphi''_2).
\]
On the other hand, if $\tau'_{\ell} (\varphi''_2) < \tau'_{\ell} (\varphi_2)$, then
\[
s_{\ell+1} (\varphi''_2) = s_{\ell+1} (\varphi_2)-1 > \xi_{\ell+1} (\varphi_2) - 1 \geq \xi_{\ell+1} (\varphi''_2). \vspace{-15 pt}
\]
\end{proof}

We now complete the proof of Theorem~\ref{Thm:ExtremalConfig}\eqref{it:everyfunction}.

\begin{proposition}
\label{Prop:EverythingAssigned}
Every function in $\cB$ is assigned to a bridge or a loop.  Moreover, if $\psi_1$ and $\psi_2$ are assigned to the bridge $\beta_k$ for $k>1$ and $s_k (\psi_1 ) = s_k (\psi_2) = \sigma_k$, then $\psi_1$ and $\psi_2$ are in the same cohort on $\gamma_k$.
\end{proposition}

\begin{proof}
If $\psi \in \cB$ is not permissible on any loop, then $s_1 (\psi) >4$ or $s_{g'+1} (\psi) < 2$, and $\psi + c_\psi$ achieves the minimum on the first or last bridge, respectively.

On the first non-skippable loop, there are at most 3 cohorts.  To see this, it suffices to show that there are at most two pairs $(i,j)$, with $i \leq j$, such that $s_1 [i] + s_1 [j] = 4$.  Recall that, by assumption, we have either $s_1 [5] \leq 2$ or $s_1 [4] + s_1 [6] \leq 5$.  If $s_1 [5] \leq 2$, then $s_1 [i] + s_1 [j] < 4$ for all $i < j \leq 5$.  It follows that if $s_1 [i] + s_1 [j] = 4$, then either $i=j=5$, or $j=6$ and $i$ is uniquely determined.  On the other hand, if $s_1 [4] + s_1 [6] \leq 5$, then $s_1 [i] + s_1 [j] \leq 4$ for all pairs $i < j \leq 5$, with equality only if $i=4$, $j=5$.  It follows that if $s_1 [i] + s_1 [j] = 4$, then either $i=4$, $j=5$, or $j=6$ and $i$ is uniquely determined.

We show that if $\Sha(\varphi) \leq z$ then $\varphi$ is assigned to a loop $\gamma_{\ell}$ with $\ell \leq z$, or to the bridge $\beta_{z+1}$.  If there is a permissible function on the first non-skippable loop $\gamma_k$, then there is some $\psi \in \cB$ that is assigned to this loop.  Then $\Sha(\psi)= k$, so by Proposition~\ref{Prop:Counting}, any function in the same cohort is also assigned to $\gamma_k$.  It follows that there are at most 2 cohorts such that some function $\psi$ in the cohort is not assigned to $\gamma_k$.

As we proceed from left to right for $k \leq z$, every time we reach a new loop, there are two possibilities.  One possibility is that $\Sha^{-1}(k) = \emptyset$. In this case, there are no more cohorts on $\gamma_k$ than there are on $\gamma_{k-1}$.  The other possibility is that $\Sha(\psi) = k$  for some $\psi \in \cB$.  In this case, by assumption, there are permissible functions on $\gamma_k$, and $\gamma_k$ is not skippable, so some function $\psi'$ is assigned to $\gamma_k$.  By Proposition~\ref{Prop:Counting}, any function in the same cohort as $\psi'$ is also assigned to $\gamma_k$.  It follows that the number of cohorts on $\gamma_k$ such that some function in the cohort is not assigned to $\gamma_k$ is equal to the number of cohorts on $\gamma_{k-1}$ such that some function in the cohort is not assigned to $\gamma_{k-1}$.  Specifically, as we proceed from $\gamma_{k-1}$ to $\gamma_k$, we introduce the cohort of $\psi$, but we remove the cohort of $\psi'$.  By induction, therefore, the number of cohorts on $\gamma_k$ with the property that some function in the cohort is not assigned to $\gamma_k$ is at most 2.

By Proposition~\ref{Prop:GammaA}, no function is shiny on $\gamma_z$, and $\gamma_z$ is not skippable.  Combining this with our enumeration of cohorts in the preceding paragraph, we see that there are at most 2 cohorts on $\gamma_z$.  By assumption, there is a function $\psi \in \cB$ that is assigned to $\gamma_z$, and by Proposition~\ref{Prop:Counting}, any function in the same cohort on $\gamma_z$ is also assigned to $\gamma_z$.  After assigning this cohort, there is at most one cohort left.  Everything in the remaining cohort is assigned to the bridge $\beta_{z+1}$.

A similar analysis holds on the remaining two intervals where $\sigma$, as defined in \eqref{eq:sigma}, is constant.  In particular, every function $\varphi \in \cB$ with $z + 1 \leq \Sha(\varphi) \leq z'-2$ is assigned to a loop $\gamma_\ell$, with $z+1 \leq \ell \leq z'-2$ or to the bridge $\beta_{z'-1}$.  Likwewise, if $z'-1 \leq \Sha(\varphi) \leq g'$ then $\varphi$ is assigned to a loop $\gamma_m$ with $m \geq z'-1$ or to the final bridge $\beta_{g'+1}$.
\end{proof}

\subsection{Functions assigned to the same bridge}

In this subsection, we prove part (3) of Theorem~\ref{Thm:ExtremalConfig}.  We begin with the observation that, if the multiplicity of a bridge is at most 1, then the slope of a building block determines its slope index.

\begin{lemma}
\label{Lem:Rho2}
Let $\varphi, \varphi'$ be building blocks satisfying $s_{k+1} (\varphi) = s_{k+1} (\varphi')$, and suppose $\mu (\beta_{k+1}) \leq 1$.  Then $\tau'_k (\varphi) = \tau'_k (\varphi')$.
\end{lemma}

\begin{proof}
If $\tau'_k(\varphi) \neq \tau'_k(\varphi')$, then there is some $j \neq j'$ such that
\[
s'_k [j] \geq s_{k+1} (\varphi) \geq s_{k+1} [j] \mbox{ and } s'_k [j'] \geq s_{k+1} (\varphi') \geq s_{k+1} [j'].
\]
Assume without loss of generality that $j'>j$.  Then
\[
s'_k [j'] > s'_k [j] \geq s_{k+1} (\varphi) = s_{k+1} (\varphi') \geq s_{k+1} [j'] > s_{k +1} [j].
\]
It follows that the multiplicity of $\beta_{k+1}$ is at least 2.
\end{proof}

We now check that, if two functions in $\cB$ are assigned to the first bridge $\beta_1$ and have the same slope on $\beta_1$, then the slope indices of their summands in $\cA$ are the same.

\begin{lemma}
\label{lem:firstbridge}
Suppose $\varphi_1 + \varphi'_1$, and $\varphi_2 + \varphi'_2$ are elements of $\cB$ such that
$
s_1 (\varphi_1 + \varphi'_1) = s_1 (\varphi_2 + \varphi'_2) > 4.
$
Then $\{ \tau'_0 (\varphi_1), \tau'_0 (\varphi'_1) \} = \{ \tau'_0 (\varphi_2), \tau'_0 (\varphi'_2) \}$.
\end{lemma}

\begin{proof}
Recall that $\mu(\beta_1) < 2$.  By Lemma~\ref{Lem:Rho2}, it suffices to show that, if
\[
s'_0 [i] + s'_0 [i'] = s'_0 [j] + s'_0 [j'] > 4,
\]
then $(i,i') = (j,j')$.  Recall that either $s'_0 [5] \leq 2$ or $s'_0 [4] + s'_0 [6] \leq 5$.  If $s'_0 [5] \leq 2$, then $s'_0 [i] + s'_0 [i'] \leq 4$ for all pairs $i \leq i' \leq 5$.  Note that $s'_0 [6] + s'_0 [i] = s'_0 [6] + s'_0 [j]$ if and only if $i=j$, so the conclusion follows in this case.

On the other hand, if $s'_0 [5] \geq 3$ and $s'_0 [4] + s'_0 [6] \leq 5$, then $s'_0 [i] + s'_0 [i'] \leq 4$ for all pairs $i < i' \leq 5$.  It therefore suffices to show that there is no $i$ such that $s'_0 [i] + s'_0 [6] = 2s'_0 [5]$.  By assumption, however, we have
\[
s'_0 [5] + s'_0 [6] > 2s'_0 [5] \geq 6,
\]
which is greater than $s'_0 [4] + s'_0 [6] \geq s'_0 [i] + s'_0 [6]$ for all $i < 4$.
\end{proof}

For the last bridge, the argument is simpler.

\begin{lemma}
\label{lem:lastbridge}
Suppose $\varphi_1 + \varphi'_1$, and $\varphi_2 + \varphi'_2$ are elements of $\cB$ such that
$
s_{g'+1} (\varphi_1 + \varphi'_1) = s_{g'+1} (\varphi_2 + \varphi'_2) < 2.
$
Then $\{ \tau'_{g'} (\varphi_1), \tau'_{g'} (\varphi'_1) \} = \{ \tau'_{g'} (\varphi_2), \tau'_{g'} (\varphi'_2) \}$.
\end{lemma}

\begin{proof}
Since $\mu (\beta_{g'+1}) + \wt(v_{g'+1}) \leq 2$, we see that $s'_{g'} [i] = s_{g'+1} [i] = i$ for all $i \leq 3$.  It follows that, if $\varphi \in \cA$ satisfies $s_{g'+1} (\varphi) \leq 3$, then $\tau'_{g'} (\varphi) = s_{g'+1} (\varphi)$.  Hence, $s_{g'+1} (\varphi_1 + \varphi'_1) = 1$ if and only if $\{ \tau'_{g'} (\varphi_1), \tau'_{g'} (\varphi'_1) \} = \{ 0,1 \}$, and $s_{g'+1} (\varphi_1 + \varphi'_1) = 0$ if and only if $\{ \tau'_{g'} (\varphi_1), \tau'_{g'} (\varphi'_1) \} = \{ 0,0 \}$.
\end{proof}

\begin{lemma}
\label{Lem:OneEq}
Suppose that both $\psi_1 = \varphi_1 + \varphi'_1$ and $\psi_2 = \varphi_2 + \varphi'_2$ are assigned to the bridge $\beta_{k+1}$, and $s_{k+1} (\psi_1) = s_{k+1} (\psi_2)$.  Then $\{ \tau'_k (\varphi_1) , \tau'_k (\varphi'_1)  \} = \{  \tau'_k (\varphi_2) , \tau'_k (\varphi'_2) \}$.
\end{lemma}

\begin{proof}
If $k=0$, then this follows from Lemma~\ref{lem:firstbridge}.  If $k=g'$ and $s_{k+1} (\psi_1 ) = s_{k+1} (\psi_2) < 2$, then this follows from Lemma~\ref{lem:lastbridge}.

Otherwise, $k \in \{z,z'-2,g\}$ and $s_{k+1} (\psi_1 ) = s_{k+1} (\psi_2) = \sigma_k$.  By Proposition~\ref{Prop:EverythingAssigned}, $\psi_1$ and $\psi_2$ are in the same cohort on $\gamma_k$.  If $\{ \tau'_k (\varphi_1) , \tau'_k (\varphi'_1) \} \neq \{ \tau'_k (\varphi_2) , \tau'_k (\varphi'_2) \}$, then by Lemma~\ref{lem:slopeindexchange} either $\psi_1$ or $\psi_2$ is assigned to $\gamma_k$, contradicting our assumption that they are both assigned to $\beta_{k+1}$.  Thus, $\{ \tau'_k (\varphi_1) , \tau'_k (\varphi'_1) \} = \{ \tau'_k (\varphi_2) , \tau'_k (\varphi'_2) \}$.
\end{proof}

This completes the proof of Theorem~\ref{Thm:ExtremalConfig}.

\section{Constructing the tropical independence}
\label{Sec:Generic}

In this section we prove the following theorem.

\begin{theorem}
\label{thm:independence}
Let $g = 22$ or $23$, $d = g + 3$, and $g' \in \{ g, g-1, g-2 \}$.  Let $\Gamma'$ be a chain of $g'$ loops with $C$-admissible edge lengths for some $C > 12dg$, and let $D$ be a break divisor on $\Gamma'$ with $\Sigma \subset R(D)$ a tropical linear series of rank $6$. We assume furthermore that
\begin{itemize}
\item if $g' = g-1$ then $s'_0[5] \leq 2$, and
\item if $g' = g-2$ then either $s'_0[5] \leq 2$ or $s'_0[6] + s'_0[4] \leq 5$.
\end{itemize}
Then there is a tropically independent subset $\cT \subset 2\Sigma$ of size $|\cT| = 28$.
\end{theorem}

\noindent Theorem~\ref{Thm:MainThm} follows immediately from the case $g' = g$.  When $g'$ is $g-1$ or $g-2$, we have analogous consequences for multiplication maps for linear series with ramification on a general pointed curve of genus $g'$.  See Theorems~\ref{thm:MRCg-1} and \ref{thm:MRCg-2}, respectively.  All three are used in the proof of Theorem~\ref{thm:genfinite}.

\subsection{Overview of the proof} We prove Theorem~\ref{thm:independence} by considering cases depending on the properties of the bridges and loops of positive multiplicity.  Our strategy is the same in each case:

\begin{itemize}
\item Identify a finite set $\cS \subset \Sigma$ that contains functions with constant slope index $i$, when such functions exist, plus additional functions that account for switching bridges and loops.
\item Use $\cS$ to specify functions $f_{k,i}$, satisfying \eqref{eq:fki}-\eqref{eq:eff}, as in \S\ref{Sec:Extremals}, and consider the set $\cA$ of $(\Sigma, \{f_{k,i}\})$-building blocks (Definition~\ref{def:buildingblock}).
\item Choose a set $\cB \subset 2 \cA$ satisfying the technical conditions $(\mathbf B)$ and $(\mathbf B')$ (Definition~\ref{def:properties}).
\item Run the template algorithm using the set $\cB$ and the sequence $\sigma$ specified in \eqref{eq:sigma}.
\end{itemize}

By Theorem~\ref{Thm:ExtremalConfig}, the template algorithm assigns every function in $\cB$ to a bridge or loop.  In particular, the output is an assignment function $\alpha \colon \cB \to \{\beta_k\} \cup \{\gamma_k\}$ and a collection of coefficients $\{c(\psi) \mid \psi \in \cB \}$ such that the template $\theta = \min \{\psi + c(\psi) \mid \psi \in \cB \}$ has the following properties:
\begin{enumerate} [label=(T\arabic*)]
\item \label{T1} For each $\psi \in \cB$, the function $\psi + c(\psi)$ achieves the minimum in $\theta$ on a nonempty open subset of the bridge or loop $\alpha(\psi)$.
\item \label{T2} If $\alpha^{-1}(\gamma_k) \neq \emptyset$ then there is an open subset of $\gamma_k$ on which $\psi + c(\psi)$ achieves the minimum if and only if $\alpha(\psi) = \gamma_k$.
\item \label{T3} Writing elements of $\cB$ as sums of two building blocks, if $\varphi_1 + \varphi'_1$ and $\varphi_2 + \varphi'_2$ are assigned to the same loop $\gamma_k$, then, after possibly reordering, $\varphi_1 \sim_{\gamma_k} \varphi_2$ and $\varphi'_1 \sim_{\gamma_k} \varphi'_2$.
\item \label{T4} If $\varphi_1 + \varphi'_1$ and $\varphi_2 + \varphi'_2$ are assigned to the same loop $\gamma_k$, and $\gamma_k$ is not $i$-switching for $i \in \{ \tau_k (\varphi_1), \tau_k (\varphi_2), \tau_k (\varphi'_1) , \tau_k (\varphi'_2) \}$, then $\{ \tau'_k (\varphi_1) , \tau'_k (\varphi'_1) \} = \{ \tau'_k (\varphi_2) , \tau'_k (\varphi'_2) \}$.
\item \label{T5} If  $\varphi_1 + \varphi'_1$ and $\varphi_2 + \varphi'_2$ are assigned to the same bridge $\beta_k$ and have the same slope on $\beta_k$, for $k > 1$, then $\{ \tau'_{k-1} (\varphi_1), \tau'_{k-1} (\varphi'_1) \} = \{ \tau'_{k-1} (\varphi_2), \tau'_{k-1} (\varphi'_2) \}$.
\end{enumerate}

Given $\alpha$, $\{c(\psi)\}$, and $\theta$ with these properties, the final steps in the proof are relatively straightforward. In each case, we:

\begin{itemize}
\item Identify a subset $\cT \subset 2\cS$ of size $|\cT| = 28$, define $\Upsilon = \min\{ \psi - b(\psi) \mid \psi \in \cT \}$ to be the best approximation of $\theta$ from above (\S\ref{sec:bestapprox}), and use the properties \ref{T1}-\ref{T5} to show that $\Upsilon$ is a certificate of independence.
\end{itemize}

\noindent The underlying idea of this final step is that we choose a set $\cT$ of 28 functions in $2\cS$ that can ``replace'' equivalence classes of functions in $\cB$ in the template $\theta$ on the bridges and loops to which they are assigned.  More formally, in each case, we give a ``replacement'' function $R \colon \cT \to \cB$ with the following properties:
\begin{enumerate} [label=(R\arabic*)]
\item \label{it:R1} For each $k$, there is at most one $\psi \in \cT$ such that $\alpha(R(\psi)) = \gamma_k$.
\item \label{it:R2} For each $\beta_\ell$ and each slope $s$, there is at most one $\psi \in \cT$ such that $\alpha(R(\psi)) = \beta_\ell$ and $s_\ell(R(\varphi)) = s$.
\item \label{it:R3} Each $\psi \in \cT$ achieves the minimum in $\Upsilon$ on the entire region in $\alpha(R(\psi))$ where $R(\psi)$ achieves the minimum in $\theta$, and does not achieve the minimum on the interior of the region in $\alpha(R(\psi'))$ where $R(\psi')$ achieves the minimum in $\theta$, for $\psi' \neq \psi$.
\end{enumerate}
It follows that $\psi$ achieves the minimum in $\Upsilon$ uniquely on the interior of the region in $\alpha(R(\psi))$ where $R(\psi)$ achieves the minimum in $\theta$. In particular, $\Upsilon$ is a certificate of independence, as required.

When verifying \ref{it:R1}-\ref{it:R3}, we use the following lemma about best approximations of tropical linear combinations from above.

\begin{lemma}
\label{Lem:Replace}
Let $\theta = \min_{\psi \in \cB} \{ \psi + c_\psi \}$. Suppose $\varphi = \min_{\psi' \in \cB'} \{ \psi' + b_{\psi'} \}$, where $\cB' \subset \cB$.  Then the best approximation of $\theta$ by $\varphi$ from above achieves equality on the entire region where $\psi' + c_{\psi'}$ achieves the minimum in $\theta$, for some $\psi' \in \cB'$.
\end{lemma}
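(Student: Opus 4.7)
The plan is to verify the statement by a direct pointwise argument that isolates the role of the subset inclusion $\cB' \subseteq \cB$. The best approximation $\vartheta_{\{\varphi\}} = \varphi - c(\varphi,\theta)$ is invariant under shifting $\varphi$ by a global constant, since such a shift changes $c(\varphi,\theta)$ by the same amount. So I would begin by normalizing: replace each $b_{\psi'}$ by $b_{\psi'} - c_0$, where $c_0 = \min_{\psi' \in \cB'}(b_{\psi'} - a_{\psi'})$. The natural interpretation in context is that $\varphi$ arises from $\theta$ by restriction to $\cB'$, so that after this normalization $b_{\psi'} = a_{\psi'}$ for each $\psi' \in \cB'$.

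Next I would establish $\varphi \geq \theta$ pointwise on $\Gamma$: for any $v$ and any minimizer $\psi''$ of $\varphi$ at $v$, we have $\varphi(v) = \psi''(v) + b_{\psi''} = \psi''(v) + a_{\psi''} \geq \theta(v)$, using $\psi'' \in \cB' \subseteq \cB$ in the last step. Consequently $c(\varphi, \theta) \geq 0$.

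Now suppose $v \in \Gamma$ is a point at which some $\psi'_0 \in \cB'$ achieves the minimum in $\theta$, so that $\theta(v) = \psi'_0(v) + a_{\psi'_0}$. Since $\psi'_0$ is one of the candidates defining $\varphi$, I would bound
\[
\varphi(v) \leq \psi'_0(v) + b_{\psi'_0} = \psi'_0(v) + a_{\psi'_0} = \theta(v).
\]
Combined with $\varphi \geq \theta$, this yields $\varphi(v) = \theta(v)$. In particular the global minimum $c(\varphi,\theta)$ equals $0$, and the best approximation satisfies $\vartheta_{\{\varphi\}}(v) = \varphi(v) - c(\varphi,\theta) = \theta(v)$ at every such $v$, which is exactly the claim.

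The argument has no real obstacle beyond aligning the coefficient conventions between $\varphi$ and $\theta$; once that alignment is made, the lemma is essentially immediate from $\cB' \subseteq \cB$. The main use of the lemma in Section~\ref{Sec:Generic} is as a bookkeeping tool to certify, case by case, that the best approximation $\vartheta_\cT$ agrees with the master template $\theta$ on large prescribed regions of $\Gamma$, which in turn will be used to conclude that $\vartheta_\cT$ is an independence.
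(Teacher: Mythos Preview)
Your argument contains a genuine gap at the normalization step. Replacing each $b_{\psi'}$ by $b_{\psi'} - c_0$ with $c_0 = \min_{\psi' \in \cB'}(b_{\psi'} - a_{\psi'})$ only guarantees $b_{\psi'} \geq a_{\psi'}$ for all $\psi' \in \cB'$, with equality for the particular $\psi'$ attaining the minimum; it does \emph{not} force $b_{\psi'} = a_{\psi'}$ for every $\psi' \in \cB'$. Your sentence ``so that after this normalization $b_{\psi'} = a_{\psi'}$ for each $\psi' \in \cB'$'' is therefore unjustified, and the subsequent bound $\varphi(v) \leq \psi'_0(v) + b_{\psi'_0} = \psi'_0(v) + a_{\psi'_0}$ fails for a general $\psi'_0 \in \cB'$. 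In fact the stronger conclusion you are heading toward---that equality holds wherever \emph{any} element of $\cB'$ achieves the minimum in $\theta$---is false: take $\Gamma = [0,1]$, $\cB' = \{\psi_2, \psi_3\}$ with $\psi_2(x) = x$, $\psi_3(x) = 1-x$, and choose $b_{\psi_3} \gg b_{\psi_2}$; then the best approximation matches $\theta$ only on the region where $\psi_2$ achieves the minimum.

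The fix is exactly what the paper does: set $c = \min_{\psi' \in \cB'}\{b_{\psi'} - a_{\psi'}\}$, pick a single $\psi'_0$ attaining this minimum, and observe that $\varphi - c \geq \theta$ with equality on the region where that particular $\psi'_0$ achieves the minimum in $\theta$. The lemma only asserts existence of \emph{some} such $\psi' \in \cB'$, not all of them. Also, your contextual assumption that ``$\varphi$ arises from $\theta$ by restriction to $\cB'$'' is not how the lemma is applied in \S\ref{Sec:Generic}: there the functions $\varphi_A, \varphi_B, \varphi_C$ (and their sums with $\varphi_j$) are tropical linear combinations of building blocks with coefficients dictated by the geometry of $\Sigma$ (cf.\ Lemma~\ref{lem:TLC}), not by the master template coefficients $a_\psi$.
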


\begin{proof}
Let $c = \min_{\psi' \in \cB'} \{ b_{\psi'} - c_{\psi'}\}$. Choose $\psi' \in \cB'$ such that $c = b_{\psi'} - c_{\psi'}$.  Then $\varphi - c \geq \theta$, with equality at points where $\psi' + c_{\psi'}$ achieves the minimum in $\theta$.
\end{proof}

\begin{remark} \label{rem:choosef}
The choice of $\cS$ is the most important step in each case.  Given the hard work that has already been done in proving Theorem~\ref{Thm:ExtremalConfig}, the rest of the argument is more-or-less mechanical. This choice is more delicate when $\Sigma$ has two switching loops, and in particular when there is an $h$-switching loop followed by an $h'$-switching loop for $h' \in \{h-1, h, h+1\}$.

In the second step, we specify $f_{k,i}$ to be the restriction to $\gamma_k$ of a function in $\cS$. For instance, if there is a function $\varphi_i \in \cS$ with constant slope index $i$ then we set $f_{k,i} = (\varphi_i)_{|\gamma_k}$.  In the special case where there are no switching loops or bridges, then, for each $0 \leq i \leq 6$ there is a function $\varphi_i$ with constant slope index $i$ (Lemma~\ref{Lem:GenericFns}). In this case, we set $\cS = \{\varphi_0, \ldots, \varphi_6\}$ and prove that $\cT = 2\cS$ is tropically independent.

In the case where there is a single $h$-switching loop, then there is a function $\varphi_i$ with constant slope index $i$ for $i \not \in \{h, h+1\}$, and we choose 3 additional functions $\varphi_A$, $\varphi_B$, and $\varphi_C$ in $\Sigma$, whose slope indices are all in $\{h, h+1\}$, analogous to the functions $\psi_A$, $\psi_B$ and $\psi_C$ in Example~\ref{Ex:Interval}.  We then set $\cS = \{\varphi_i \mid i \neq h, h+1\} \cup \{ \varphi_A, \varphi_B, \varphi_C \}$ and find an independent set $\cT \subset 2\cS$ of size $|\cT| = 28$.  This set $\cT$ contains every function of the form $\varphi_i + \varphi_j$ for $i,j \not \in \{h, h+1\}$, two functions from the set $\{ \varphi_i + \varphi_A, \varphi_i + \varphi_B, \varphi_i + \varphi_C \}$ for $i \not \in \{h, h+1\}$, and 3 from the sumset $2 \{ \varphi_A, \varphi_B, \varphi_C \}$.
\end{remark}

We conclude this overview by outlining the cases and subcases to be considered in our proof of Theorem~\ref{thm:independence}.  Recall that switching loops have positive multiplicity and the sum of the multiplicities of all loops and bridges is at most 2. Therefore, $\Sigma$ falls into one of the following cases:
\renewcommand{\theenumi}{\arabic{enumi}}
\begin{enumerate}
\item  There are no switching loops and no bridges of multiplicity 2.
\begin{enumerate}
\item There are no bridges of multiplicity 1.
\item There is 1 bridge of multiplicity 1.
\item There are 2 bridges of multiplicity 1.
\end{enumerate}
\item  There is a bridge of multiplicity 2.
\begin{enumerate}
\item The bridge of multiplicity 2 is $\beta_1$.
\item The bridge of multiplicity 2 is not $\beta_1$ and is not a switching bridge.
\item The bridge of multiplicity 2 is not $\beta_1$ and is a switching bridge.
\end{enumerate}
\item  There is one switching loop.
\begin{enumerate}
\item There are no bridges of multiplicity 1.
\item There is 1 bridge of multiplicity 1.
\end{enumerate}
\item  There are two switching loops.
\end{enumerate}
In the case of two switching loops, there are no bridges of positive multiplicity, and we consider subcases depending on the relationship between the two switching loops. More precisely, when there is an $h$-switching loop and an $h'$ switching loop, we consider separate cases for $h' \in \{h-1, h, h+1\}$.

\medskip

In all cases, when we run the template algorithm, the input $\sigma$ is specified by \eqref{eq:sigma}.

\medskip

For the reader's convenience, we recall the statement of the technical properties $(\mathbf B)$ and $(\mathbf B')$ to be verified in each case (from Definition~\ref{def:properties}):

\begin{itemize}
\item[$(\mathbf B)$]  Whenever there is a permissible function $\psi = \varphi + \varphi' \in \cB$ on $\gamma_k$ such that $2D+\ddiv(\psi)$ contains $w_k$, and either $\gamma_k$ is $\tau_k (\varphi)$-switching or $s_{k+1} (\varphi) < \xi'_k  (\varphi)$, then there is some permissible function $\psi' \in \cB$ that is equal to $\psi$ on $\Gamma_{\leq k}$ such that $s_{k+1} (\psi') > s_{k+1} (\psi)$.
\item[$(\mathbf B')$]  Whenever there are permissible functions in $\cB$ that are equivalent on $\gamma_k$ with different slopes on $\beta_{k+1}$, and either $\gamma_k$ is a switching loop or $\beta_{k+1}$ is a switching bridge, then no function in $\cB$ is shiny on $\gamma_k$.
\end{itemize}

\subsection{Case 1:  no switching loops and no bridges of multiplicity 2} \label{sec:noswitching}

Suppose there are no switching loops or bridges.  By Lemma~\ref{Lem:GenericFns}, for each $0 \leq i \leq 6$, there is a function $\varphi_i \in \Sigma$ such that
\[
s_k (\varphi_i) = s_k [i] \mbox{ and } s'_k (\varphi_i) = s'_k [i], \mbox{ for all } k.
\]
We set $\cS = \{ \varphi_i \mid 0 \leq i \leq 6 \}$ and $f_{k,i} = (\varphi_i)_{|\gamma_k}$.  We will show that $\cT = 2 \cS$ is tropically independent.  In each case where there are no switching loops or bridges, we will use the following lemma.

\begin{lemma}
\label{Lem:EasyReplace}
Let $\cA$ be the set of $(\Sigma, \{f_{k,i}\})$-building blocks, let $\theta = \min \{\psi + c(\psi) \mid \psi \in 2\cA \}$ and let $\alpha \colon 2\cA \to \{\beta_k\} \cup \{\gamma_k\}$ be an assignment function satisfying \ref{T1}-\ref{T5}.  If each function $\varphi_i \in \cS$ is a tropical linear combination of building blocks with constant building sequence $i$, then the best approximation of $\theta$ by $\cT$ from above is a certificate of independence.
\end{lemma}

\begin{proof}
Consider the best approximation $\Upsilon = \min\{ \psi - b(\psi) \mid \psi \in \cT \}$ of $\theta$ by $\cT$ from above.  By Lemma~\ref{Lem:Replace}, each function $\varphi_i + \varphi_j \in \cT$ achieves equality on the region where $\varphi'_i + \varphi'_j + c_{\varphi'_i + \varphi'_j}$ achieves the minimum in $\theta$, where $\varphi'_i \in \cA$ has constant building sequence $i$ and $\varphi'_j \in \cA$ has constant building sequence $j$.  We define the replacement function $R \colon \cT \to 2\cA$ by $\varphi_i + \varphi_j \mapsto \varphi'_i + \varphi'_j$.  (If there exists more than one function $\psi \in 2\cA$ such that $\varphi_i + \varphi_j \in \cT$ achieves equality on the region where $\psi + c_{\psi}$ achieves the minimum in $\theta$, simply choose one such $\psi$.)  It suffices to show that the replacement function $R$ satisfies the conditions \ref{it:R1}-\ref{it:R3}.

If $\varphi_1 + \varphi'_1$ and $\varphi_2 + \varphi'_2$ are assigned to the same loop $\gamma_k$, then by \ref{T4}, we have $\{ \tau'_k (\varphi_1) , \tau'_k (\varphi'_1) \} = \{ \tau'_k (\varphi_2) , \tau'_k (\varphi'_2) \}$.  If $\{ \tau'_k (\varphi_1) , \tau'_k (\varphi'_1) \} = \{ i,j \}$, then by construction there is at most one function $\psi \in \cT$ such that $\alpha(R(\psi)) = \gamma_k$, namely, $\psi = \varphi_i + \varphi_j$.  It follows that $R$ satisfies condition \ref{it:R1}.  Similarly, if two functions in $2\cA$ are assigned to the same bridge $\beta_k$ and have the same slope along that bridge, then by the same argument using \ref{T5}, there is at most one function $\psi \in \cT$ such that $\alpha(R(\psi)) = \beta_k$.  It follows that $R$ satisfies condition \ref{it:R2}.  By Lemma~\ref{Lem:Replace}, the region where $\psi - b(\psi)$ achieves equality in $\Upsilon$ contains the region where $R(\psi) + c_{R(\psi)}$ achieves the minimum in $\theta$.  By \ref{T1}, each function in $\cB$ obtains the minimum on an open subset of the bridge or loop to which it is assigned, and by \ref{T2}, no other function obtains the minimum on this open subset.  Therefore, $R$ satisfies condition \ref{it:R3}.
\end{proof}

\subsubsection{Case 1a:  no bridges of positive multiplicity}
If there are no bridges of positive multiplicity then each $\varphi_i$ has constant slope along each bridge, and the slope index sequence $(\iota'_0(\varphi_i), \ldots, \iota_{g+1}(\varphi_i))$ is the constant sequence $i$.  It follows that the set of $(\Sigma, \{f_{k,i}\})$-building blocks is $\cA = \{\varphi_0, \ldots, \varphi_6\}$.  Let $\cB = 2 \cA$.  Properties $(\mathbf B)$ and $(\mathbf B')$ are satisfied vacuously. Therefore, the output of the template algorithm satisfies \ref{T1}-\ref{T5}.

We set $\cT = 2 \cS$. Note that $\cT = \cB$ and hence $\Upsilon = \theta$.  The replacement function that we consider is the identification $\cT = \cB$.  By Lemma~\ref{Lem:EasyReplace}, since each element of $\cS$ is a building block with constant building sequence, $\theta$ is a certificate of independence.
\qed

\subsubsection{Case 1b: one bridge of multiplicity one}  In this case, there is one index $h$ and one bridge $\beta_\ell$ such that the slope of $\varphi_h$ decreases on $\beta_{\ell}$, and it decreases by exactly 1, i.e., $\delta'_\ell(\varphi_h) = 1$. There are building blocks $\varphi_h^0$ and $\varphi_h^\infty$ with constant building sequence $(\tau'_0, \ldots, \tau_{g+1}) = (h, \ldots, h)$, and
\[
s_\ell(\varphi_h^0) = s_\ell[h]; \quad  \quad s_\ell(\varphi_h^\infty) = s_\ell[h] + 1.
\]
Note that $\varphi_h$ is a tropical linear combination of $\varphi_h^0$ and $\varphi_h^\infty$, as in Example~\ref{Ex:Interval}.

The set of $(\Sigma, \{f_{k,i})$-building blocks is $\cA = \{ \varphi_0, \ldots, \widehat \varphi_h, \ldots, \varphi_6 \} \cup \{\varphi_h^0, \varphi_h^\infty \}$. We claim that $\cB = 2\cA$ satisfies properties $(\mathbf B)$ and $(\mathbf B')$.  The property $(\mathbf B')$ holds vacuously.
To see that $\cB$ has property $(\mathbf B)$, note that $\varphi^0_h$ is the only function in $\cA$ satisfying $s_{k} (\varphi^0_h) < \xi'_{k-1}  (\varphi^0_h)$, and then only when $k = \ell$.  Then $(\mathbf B)$ holds because $\varphi^{\infty}_h$ is equal to $\varphi^0_h$ to the left of $\beta_{\ell}$ and has larger slope on $\beta_{\ell}$.  Therefore, the output of the template algorithm satisfies \ref{T1}-\ref{T5}.

For $i \neq h$, $\varphi_i \in \cS$ is a building block with constant building sequence $i$.  By the above, $\varphi_h$ is a tropical linear combination of $\varphi_h^0$ and $\varphi_h^\infty$.  Therefore, by Lemma~\ref{Lem:EasyReplace}, we see that the best approximation of $\theta$ by $\cT$ from above is a certificate of independence.
\qed

\subsubsection{Case 1c:  two bridges of multiplicty one}

In this case, there are two indices $h, h'$ and two bridges $\beta_\ell , \beta_{\ell'}$ such that the slope of $\varphi_h$ decreases by 1 on $\beta_{\ell}$, and the slope of $\varphi_{h'}$ decreases by 1 on $\beta_{\ell'}$.  It is possible that $h=h'$, but by assumption, $\ell \neq \ell'$.  We again let $\cA$ be the set of all $(\Sigma, \{f_{k,i}\})$-building blocks, and let $\cB = 2\cA$.  The set $\cB$ satisfies property $(\mathbf B')$ vacuously.   It satisfies property $(\mathbf B)$ because there is only one function $\varphi \in \cA$ satisfying $s_{\ell} (\varphi) < \xi'_{\ell-1}  (\varphi)$, and only one function $\varphi' \in \cA$ satisfying $s_{\ell'} (\varphi') < \xi'_{\ell-1}  (\varphi')$.  As in Case 1b, there is a function that is equal to $\varphi$ to the left of $\beta_{\ell}$ and has higher slope on $\beta_{\ell}$, and a function that is equal to $\varphi'$ to the left of $\beta_{\ell'}$ and has higher slope on $\beta_{\ell'}$.  The function $\varphi_h \in \cS$ is a tropical linear combination of the $(\Sigma, \{f_{k,i}\})$-building blocks in $\cA$ with constant building sequence $h$, and the function $\varphi_{h'} \in \cS$ is a tropical linear combination of the building blocks in $\cA$ with constant building sequence $h'$.  By Lemma~\ref{Lem:EasyReplace}, therefore, the best approximation of $\theta$ by $\cT$ from above is a certificate of independence.
\qed

\subsection{Case 2:  there is a bridge of multiplicity two}  \label{Sec:SwitchBridgeCase}

\subsubsection{Case 2a:  the bridge of multiplicity two is $\beta_1$}

Let $x \in \beta_1$ be a point such that $s_{\zeta} [i] = s_1 [i]$ for all rightward tangent vectors $\zeta$ on $\beta_1$ to the right of $x$.  Let $\Gamma'' \subset \Gamma'$ be the subgraph consisting of all points to the right of $x$.  The graph $\Gamma''$ is a chain of loops that may not have $C$-admissible edge lengths, only because the first bridge may be too short.  On $\Gamma''$, by Lemma~\ref{Lem:GenericFns}, for each $0 \leq i \leq 6$, there is a function $\varphi_i \in \Sigma$ such that
\[
s_k (\varphi_i) = s_k [i] \mbox{ and } s'_k (\varphi_i) = s'_k [i], \mbox{ for all } k.
\]
We set $\cS = \{ \varphi_i \mid 0 \leq i \leq 6 \}$ and $f_{k,i} = (\varphi_i)_{|\gamma_k}$.  We will show that $\cT = 2 \cS$ is tropically independent.

On $\Gamma''$, the multiplicity of every loop $\gamma_k$ and every bridge $\beta_k$ is zero.  Each function $\varphi_i$ therefore has constant slope along each bridge, and the slope index sequence $(\iota'_0(\varphi_i), \ldots, \iota_{g+1}(\varphi_i))$ is the constant sequence $i$.  It follows that the set of $(\Sigma, \{f_{k,i}\})$-building blocks is $\cA = \{\varphi_0, \ldots, \varphi_6\}$.  Let $\cB = 2 \cA$.  Properties $(\mathbf B)$ and $(\mathbf B')$ are satisfied vacuously.  Although $\Gamma''$ might not have $C$-admissible edge lengths, only because the first bridge may be too short, the output of the template algorithm still satisfies \ref{T1}-\ref{T5}.  This is because the proof of Theorem~\ref{Thm:ExtremalConfig} uses the length of $\beta_1$ in only one case: when there exists $\psi \in \cB$ such that $s_1 (\psi) \geq 5$, and $s_k (\psi) \leq 4$ for some $k>1$.  Because the loops and bridges of $\Gamma''$ all have multiplicity zero, no such $\psi \in \cB$ exists.

As in case 1a, we set $\cT = 2 \cS$.  Note that $\cT = \cB$ and hence $\Upsilon = \theta$, and we again consider the replacement function to be the identification $\cT = \cB$.  The template $\theta$ is a certificate of independence exactly as in Case 1a.  This proves Theorem~\ref{thm:independence} in Case 2a. \qed

\subsubsection{Case 2b:  the bridge of multiplicity two is not $\beta_1$ and is not a switching bridge}

In this case, there are no switching loops or bridges. Therefore, by Lemma~\ref{Lem:GenericFns}, for each $0 \leq i \leq 6$, there is a function $\varphi_i \in \Sigma$ such that
\[
s_k (\varphi_i) = s_k [i] \mbox{ and } s'_k (\varphi_i) = s'_k [i], \mbox{ for all } k.
\]

Let $\beta_{\ell}$ be the bridge of multiplicity two. Either there is one index $h$ such that the slope of $\varphi_h$ decreases by 2 on $\beta_{\ell}$, or there are two indices $h$ and $h'$ such that the slopes of both $\varphi_h$ and $\varphi_{h'}$ decrease by 1 on $\beta_{\ell}$. We set $\cS = \{ \varphi_i \mid 0 \leq i \leq 6 \}$ and $f_{k,i} = (\varphi_i)_{|\gamma_k}$.  We again show that $\cT = 2 \cS$ is tropically independent.

We again let $\cA$ be the set of all $(\Sigma, \{f_{k,i}\})$-building blocks, and let $\cB = 2\cA$.  The set $\cB$ satisfies property property $(\mathbf B')$ vacuously, and it satisfies property $(\mathbf B)$ just as in Case 1b.  The function $\varphi_h \in \cS$ is a tropical linear combination of the building blocks in $\cA$ with constant building sequence $h$, and the function $\varphi_{h'} \in \cS$ is a tropical linear combination of the building blocks in $\cA$ with constant building sequence $h'$.   Therefore,  by Lemma~\ref{Lem:EasyReplace}, the best approximation of $\theta$ by $\cT$ from above is a certificate of independence.
\qed

\subsubsection{Case 2c:  the bridge of multiplicity two is not $\beta_1$ and is a switching bridge}

This is the first case that does not follow from Lemma~\ref{Lem:EasyReplace}.  Suppose $\beta_\ell$ is a switching bridge.  By Proposition~\ref{Prop:SwitchBridgeComputation}, there is a unique index $h$ such that $\beta_{\ell}$ switches slope $h$.  Moreover, $\beta_\ell$ has multiplicity 2 and
\begin{align}
\label{Eq:BridgeSwitch}
s'_{\ell-1} [h+1] = s'_{\ell-1} [h] +1 = s_{\ell} [h+1] + 1 = s_{\ell} [h] + 2.
\end{align}
By Lemma~\ref{Lem:GenericFns}, for $j \notin \{h,h+1\}$, there is $\varphi_j \in \Sigma$ with
$
s_k (\varphi_j) = s_k [j] \mbox{ and } s'_k (\varphi_j) = s'_k [j] \mbox{ for all } k.
$

\begin{lemma}
\label{Lem:BridgeSwitch}
There is a unique point $x \in \beta_\ell$ where the incoming and outgoing slopes, denoted $s_x$ and $s'_x$, respectively, satisfy $
s_x [i] = s'_{\ell-1} [i] \mbox{ and } s'_x [i] = s_{\ell} [i] \mbox{ for all } i.
$
\end{lemma}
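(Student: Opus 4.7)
The plan is to track the multiset of slopes of functions in $\Sigma$ at a tangent vector based at $t$, denoted $\Sigma_t$, as $t$ ranges over the interior of $\beta_\ell$. By Lemma~\ref{Lem:NumberOfSlopes}, $\Sigma_t$ is a set of $r+1$ distinct integers; by the slope formula, the slope of any single $\varphi \in \Sigma$ is non-increasing in $t$ along $\beta_\ell$, since $D$ has no support in the bridge interior. In particular, for every integer $k$, the level-count
\[
N_k(t) := |\{a \in \Sigma_t : a \geq k\}|
\]
is a non-increasing function of $t$, decreasing by $1$ precisely at each bend where some function's slope crosses from $\geq k$ to $< k$.

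Set $m := s'_{\ell-1}[h]$. The switching hypothesis at $\beta_\ell$ produces a function $C \in \Sigma$ of constant slope $m$ on the bridge: its slope is at most $s'_{\ell-1}[h] = m$ at $w_{\ell-1}$ and at least $s_\ell[h+1] = m$ at $v_\ell$, and monotonicity forces equality throughout. By (\ref{Eq:BridgeSwitch}) and the multiplicity-two switching pattern recalled in \S\ref{sec:switchbridgepattern}, $\Sigma_{0^+} = \{s'_{\ell-1}[i]\}$ and $\Sigma_{n_\ell^-} = \{s_\ell[i]\}$ agree except that the former contains $m+1$ where the latter contains $m-1$, and all other entries lie in $\{\ldots,m-2\} \cup \{m\} \cup \{m+2,\ldots\}$. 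A direct count then gives $N_k(0^+) = N_k(n_\ell^-)$ for every $k \neq m, m+1$, while $N_m$ and $N_{m+1}$ each drop by exactly $1$ across the bridge.

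The main step, and the main obstacle, is to identify all bends on $\beta_\ell$. Since $N_k$ is non-increasing and constant for $k \notin \{m, m+1\}$, no function's slope can cross such a $k$; equivalently, every bend on $\beta_\ell$ must have source in $\{m, m+1\}$ and target in $\{m-1, m\}$. Distinctness of slopes in $\Sigma_t$ combined with the presence of $C$ forbids any function other than $C$ from taking the value $m$ in the bridge interior, ruling out source $m$ or target $m$. Hence each bend sends slope $m+1$ to slope $m-1$. Because the total drops of $N_m$ and $N_{m+1}$ are each $1$, there is exactly one such bend, at a unique point $x \in \beta_\ell$.

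From this, $\Sigma_t$ equals $\Sigma_{0^+}$ for $t \in (0, x)$ and equals $\Sigma_{n_\ell^-}$ for $t \in (x, n_\ell)$, so at $x$ the incoming slope vector $s_x$ agrees with $s'_{\ell-1}$ and the outgoing slope vector $s'_x$ agrees with $s_\ell$, as claimed, and uniqueness of $x$ is automatic. The delicate point is the third paragraph, where the level-count identities, the monotonicity of individual slopes, and the obstruction provided by the constant-slope function $C$ must be combined to pin down both the source and the target of the unique bend.
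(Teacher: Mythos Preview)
Your argument is essentially correct and supplies the details that the paper omits (the paper simply points to Case~2 of Example~\ref{Ex:Interval}). The key insight --- that the switching hypothesis yields a function $C\in\Sigma$ of constant slope $m$ on $\beta_\ell$, forcing $m\in\Sigma_t$ at every interior point and thereby excluding the intermediate state $(s_t[h],s_t[h+1])=(m-1,m+1)$ --- is exactly right, and is the substance behind the paper's reference.

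Two points of imprecision are worth tightening. First, the passage from ``each individual $\varphi\in\Sigma$ has non-increasing slope'' to ``$N_k$ is non-increasing'' is not automatic: one must know that functions realizing $n$ distinct slopes $\geq k$ at $t_2$ are linearly independent (Lemma~\ref{Lem:NumberOfSlopes}), so that the $n$-dimensional span again has $n$ distinct slopes at $t_1<t_2$, each still $\geq k$ by monotonicity. Equivalently and more directly, Lemma~\ref{Lemma:Existence} applied to the two rightward tangent vectors gives $s_{t_2}[i]\leq s_{t_1}[i]$ for every $i$, which is what you actually use. Second, the sentence ``forbids any function other than $C$ from taking the value $m$'' is false as written --- many functions in $\Sigma$ can have slope $m$ at some interior point --- and your subsequent reasoning does not need it. What you need, and what the presence of $C$ actually gives, is that $m$ lies in the slope \emph{set} $\Sigma_t$ for every $t$; since the entries $s_t[i]$ for $i\neq h,h+1$ are constant and distinct from $m$, this forces $m\in\{s_t[h],s_t[h+1]\}$, ruling out $(m-1,m+1)$. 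Phrasing the ``bends'' consistently as transitions of the ordered slope vector $s_t[\cdot]$, rather than of individual functions, removes the ambiguity.
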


\begin{proof}
The argument is similar to Case 2 of Example~\ref{Ex:Interval}.
\end{proof}

We now identify a subset $\cS \subset \Sigma$.  It will consist of the functions $\varphi_j$ for $j \notin \{h,h+1\}$, plus three more functions that are contained in a rank 1 tropical linear subseries and characterized in Proposition~\ref{Prop:BridgeFns}.  They are closely analogous to the functions $\psi_A$, $\psi_B$ and $\psi_C$ in Example~\ref{Ex:Interval}.

\begin{proposition}
\label{Prop:BridgeFns}
There is a rank 1 tropical linear subseries $\Sigma' \subset \Sigma$ and functions $\varphi_A , \varphi_B,$ and $\varphi_C$ in $\Sigma'$ with the following properties:
\begin{enumerate}[label=(\roman*)]
\item  $s'_k (\varphi_A) = s'_k [h]$ for all $k < \ell$, and $s_x (\varphi_A) = s_x [h]$;
\item  $s'_x (\varphi_B) = s'_x [h+1]$, and $s_k (\varphi_B) = s_k[h+1]$ for all $k \geq \ell$;
\item  $s'_k (\varphi_C) = s'_k [h+1]$ for all $k < \ell$, and $s_k (\varphi_C) = s_k [h]$ for all $k \geq \ell$;
\item   $s_k (\varphi_\bullet) \in \{ s_k [h], s_k [h+1]\}$ and $s'_k (\varphi_\bullet) \in \{ s'_k [h], s'_k [h+1]\}$ for all $k$.
\end{enumerate}
\end{proposition}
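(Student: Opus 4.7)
The plan is to construct the pencil $W$ by choosing two sections whose tropicalizations realize the index $h$ and $h+1$ slopes on $\beta_\ell$, and then to extract $\varphi_A, \varphi_B, \varphi_C$ by mimicking the pencil analysis in Case~2 of Example~\ref{Ex:Interval}. First, by Lemma~\ref{Lemma:Existence} applied to a rightward tangent vector at $w_{\ell-1}$ and a rightward tangent vector at $v_\ell$ with index $i = h$, there is a nonzero $f_h \in V$ with
\[
s'_{\ell-1}(\trop f_h) \leq s'_{\ell-1}[h] \quad \text{and} \quad s_\ell(\trop f_h) \geq s_\ell[h].
\]
Since slopes of PL functions in $R(D)$ cannot increase along a bridge and since the only slope options on $\beta_\ell$ are drawn from the slope vectors in \eqref{Eq:BridgeSwitch}, these inequalities must be equalities. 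A parallel application with $i = h+1$ yields $f_{h+1}$ realizing slope $s'_{\ell-1}[h+1]$ on $\beta_{\ell-1}$ and $s_\ell[h+1]$ on $\beta_\ell$. Set $W := \langle f_h, f_{h+1} \rangle$; this is two-dimensional because $\trop f_h$ and $\trop f_{h+1}$ have different slopes on $\beta_\ell$.

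Because $\beta_\ell$ already absorbs all of the admissible multiplicity (Lemma~\ref{Lem:MultBound} together with the hypothesis $\rho \leq 2$), no other loop or bridge is switching. Consequently the slope index sequences of $\trop f_h$ and $\trop f_{h+1}$ stay constantly $h$ and $h+1$ across every bridge of $\Gamma$, and by re-applying Lemma~\ref{Lemma:Existence} at other tangent vectors if necessary we may assume $s'_k(\trop f_h) = s'_k[h]$ and $s'_k(\trop f_{h+1}) = s'_k[h+1]$ for every $k$. Set $\varphi_A := \trop f_h$ and $\varphi_B := \trop f_{h+1}$; these satisfy (i), (ii), and (iv) essentially by construction, with the transition point $x$ of Lemma~\ref{Lem:BridgeSwitch} being precisely the unique point on $\beta_\ell$ where each of $\varphi_A,\varphi_B$ bends.

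For $\varphi_C$, consider the one-parameter family $\{\trop(f_h + a f_{h+1}) : a \in K^*\}$. Because $\trop f_h$ and $\trop f_{h+1}$ have distinct slopes at every tangent vector in $\beta_\ell$, no cancellation occurs and
\[
\trop(f_h + a f_{h+1}) = \min\{\trop f_h,\ \trop f_{h+1} + \val(a)\}
\]
on $\beta_\ell$, so the bending point on $\beta_\ell$ is a monotone function of $\val(a)$ that sweeps across the bridge. Exactly as in the construction of $\psi_C$ in Case~2 of Example~\ref{Ex:Interval}, there is a unique value of $a$ for which this bending point lands at $x$; for that choice the resulting function has outgoing slope $s'_{\ell-1}[h+1]$ on the left half of $\beta_\ell$ and incoming slope $s_\ell[h]$ on the right half. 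Declare $\varphi_C := \trop(f_h + a f_{h+1})$ for this $a$; because there are no further switching loops or bridges, these slopes propagate to all of $\Gamma$ and give (iii), while (iv) follows from Lemma~\ref{Lem:NumberOfSlopes} applied to the pencil $W$, which forces exactly two slope options at each tangent vector.

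The main obstacle is verifying (iv) globally, i.e., that at no loop or bridge far from $\beta_\ell$ does any $\varphi_\bullet$ acquire a slope outside $\{s_k[h], s_k[h+1]\}$ or $\{s'_k[h], s'_k[h+1]\}$. This is where one uses the classification of switching loops and bridges and the fact that $\beta_\ell$ has exhausted the available multiplicity: every other bridge transmits the slope index unchanged, every other loop is either lingering or adjusts a slope index incompatible with $h$ and $h+1$, and therefore the slope index sequences of the three functions cannot drift out of $\{h,h+1\}$ at any tangent vector.
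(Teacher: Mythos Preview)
Your overall strategy matches the paper's: use Lemma~\ref{Lemma:Existence} to produce two functions generating a pencil $W$, then extract $\varphi_A,\varphi_B,\varphi_C$ from $\trop(W)$. However, the execution has a genuine gap in how you obtain \emph{global} control of the slope indices.

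You apply Lemma~\ref{Lemma:Existence} at tangent vectors on $\beta_\ell$ itself (at $w_{\ell-1}$ and $v_\ell$), and then assert that the slope index of $\trop f_h$ ``stays constantly $h$'' across all of $\Gamma$. This does not follow. Outside of switching loops and bridges, the slope index of a function in $\Sigma$ is \emph{nonincreasing} from left to right, not constant. So knowing the slope index is $h$ at $w_{\ell-1}$ allows it to be \emph{larger} than $h$ at any $w_k$ with $k<\ell-1$; knowing it is $h$ at $v_\ell$ allows it to be \emph{smaller} than $h$ at any $v_k$ with $k>\ell$. Either possibility violates (iv). Your remedy ``by re-applying Lemma~\ref{Lemma:Existence} at other tangent vectors if necessary'' does not work: each application produces a new function, and there is no mechanism to make one function satisfy all the local constraints simultaneously. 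The paper avoids this by applying Lemma~\ref{Lemma:Existence} at the \emph{endpoints} $w_0$ and $v_{g'+1}$, so that $s'_0(\varphi_A)\le s'_0[h]$ and $s_{g'+1}(\varphi_A)\ge s_{g'+1}[h]$; combined with nonincreasing slope index away from $\beta_\ell$, this squeezes the slope index to lie in $\{h,h+1\}$ everywhere and pins it to $h$ on the left side.

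Two smaller points: the assertion that $\varphi_A$ and $\varphi_B$ bend precisely at $x$ is unjustified (and indeed false in general---see Lemma~\ref{lem:TLC}, where the bend of $\varphi_A$ is at a point $y$ to the right of $x$); and your construction of $\varphi_C$ as $\trop(f_h+af_{h+1})$ for a tuned $a$ requires the identity $\trop(f_h+af_{h+1})=\min\{\trop f_h,\trop f_{h+1}+\val(a)\}$ to hold on all of $\Gamma$, which you only argued on $\beta_\ell$. The paper sidesteps this by building $\varphi_C$ as a tropical minimum $\min\{\varphi,\varphi'\}$ of two elements of $\trop(W)$ chosen for their slopes at $x$; since $\trop(W)$ is a tropical module, this minimum lies in $\trop(W)$ automatically.
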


We find it helpful to illustrate the essential properties of these functions in Figure~\ref{Fig:BridgeDependence}, which provides a ``zoomed out'' view in which the chain of loops looks like an interval.   A region labeled $h$ in this interval indicates that $\varphi$ has $s_k (\varphi) = s_k [h]$ and $s'_k (\varphi) = s'_k [h]$  for all $k$ in the given region.  More precisely, the interval depicted in Figure~\ref{Fig:BridgeDependence} is the subgraph $\Gamma_L \subset \Gamma$ formed by the union of all bridges and the lower half of each loop. For each function $\varphi \in \Sigma$ and rightward tangent vector $\zeta$ along this subgraph, there exists an $i$ such that $s_{\zeta} (\varphi) = s_{\zeta} [i]$.  The illustration to the right of a function $\varphi$ depicts the function $\zeta \mapsto i$ for the given $\varphi$.

\begin{proof}
By Lemma~\ref{Lemma:Existence}, there is $\varphi_A \in \Sigma$ such that $s'_0 (\varphi_A) \leq s'_0 [h]$, and $s_{g'+1} (\varphi_A) \geq s_{g'+1} [h]$.  Since $\beta_{\ell}$ is the only switching bridge, and there are no switching loops, we have $s'_k (\varphi_A) \leq s'_k [h]$ for $k < \ell$, and $s_k (\varphi_A) \geq s_k [h]$ for $k \geq \ell$.  In particular, $s_{\ell} (\varphi_A) \geq s_{\ell} [h]$, so $s_x (\varphi_A) \geq s_x [h]$, and it follows that $s_x (\varphi_A) = s_x [h]$.  This proves that $\varphi_A$ satisfies (i), because there are no switching loops or bridges to the left of $\beta_{\ell}$.  The construction of $\varphi_B$ satisfying (ii) is similar.

We now construct $\varphi_C$ satisfying (iii).
By Definition~\ref{Def:TLS}\eqref{DefItem:Recursive} there is a rank $1$ tropical linear subseries $\Sigma' \subset \Sigma$ that contains $\{ \varphi_A, \varphi_B\}$.   Arguments similar to the proof of (i) above show that $s_k(\Sigma') = (s_k[h], s_k[h+1])$, for all $k$, and $s_x(\Sigma')) = (s_x[h], s_x[h+1])$.  Choose $\varphi \in \Sigma'$ such that $s_x (\varphi) = s_x [h+1]$.  Then $s'_k (\varphi) = s'_k [h+1]$ for $k < \ell$.  Similarly, choose $\varphi' \in \Sigma'$ such that $s'_x (\varphi') = s'_x [h]$, and $s_k (\varphi') = s_k [h]$ for $k > \ell$.  By adding a scalar to $\varphi'$, we may assume that $\varphi(x) = \varphi'(x)$ and set $\varphi_C = \min \{ \varphi, \varphi' \}$.
\end{proof}

\begin{figure}[H]
\begin{tikzpicture}

\draw (-0.2,1) node {{\tiny $\varphi_C$}};
\draw (0,1)--(6,1);
\draw [ball color=white] (3,1) circle (0.55mm);

\draw (1.5,1.2) node {{\tiny $h+1$}};
\draw (4.5,1.2) node {{\tiny $h$}};

\draw (-0.2,3) node {{\tiny $\varphi_A$}};
\draw (0,3)--(6,3);
\draw (3,2.8) node {{\tiny $x$}};
\draw [ball color=white] (3,3) circle (0.55mm);
\draw [ball color=black] (4.5,3) circle (0.55mm);
\draw (1.5,3.2) node {{\tiny $h$}};
\draw (3.75,3.2) node {{\tiny $h+1$}};
\draw (5.25,3.2) node {{\tiny $h$}};

\draw (-0.2,2) node {{\tiny $\varphi_B$}};
\draw (0,2)--(6,2);
\draw [ball color=white] (3,2) circle (0.55mm);
\draw [ball color=black] (1.5,2) circle (0.55mm);
\draw (0.75,2.2) node {{\tiny $h+1$}};
\draw (2.25,2.2) node {{\tiny $h$}};
\draw (4.5,2.2) node {{\tiny $h+1$}};

\end{tikzpicture}
\caption{A schematic depiction of the three functions $\varphi_A$, $\varphi_B$, $\varphi_C$ from Proposition~\ref{Prop:BridgeFns}, restricted to the subgraph $\Gamma_L \subset \Gamma$ formed by the bridges and the lower half of each loop.}
\label{Fig:BridgeDependence}
\end{figure}

Let $\cS = \{ \varphi_j \mid j \neq h, h+1 \} \cup \{ \varphi_A, \varphi_B, \varphi_C \}$.  For $i \neq h,h+1$, let $f_{k,i} = (\varphi_i)_{|\gamma_k}$.  For $k < \ell$, let $f_{k,h} = (\varphi_A)_{|\gamma_k}$ and $f_{k,h+1} = (\varphi_C)_{|\gamma_k}$.  For $k \geq \ell$, let $f_{k,h} = (\varphi_C)_{|\gamma_k}$ and $f_{k,h+1} = (\varphi_B)_{|\gamma_k}$.  We now describe the set of building blocks $\cA$.  It will include $\{ \varphi_j \mid j \neq h, h+1 \}$ along with three additional functions, as follows.

\begin{lemma}
\label{Lem:BuildingBlocksBridge}
There are building blocks $\varphi^0_h, \varphi^0_{h+1},$ and $\varphi^{\infty}_h$ in $R(D)$ such that
\begin{enumerate}[label=(\roman*)]
\item  $s_k (\varphi^0_h) = s_k [h]$ for all $k$;
\item  $s_k (\varphi^0_{h+1}) = s'_{k-1} [h+1]$ for all $k$;
\item  $s_k (\varphi^{\infty}_h) = s_k[h]$ for all $k < \ell$, and $s_k (\varphi^{\infty}_h) = s_k [h+1]$ for all $k \geq \ell$.
\end{enumerate}
\end{lemma}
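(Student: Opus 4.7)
The plan is to construct each of the three functions $\varphi^0_h$, $\varphi^0_{h+1}$, and $\varphi^\infty_h$ as explicit piecewise linear functions in $R(D)$, and then verify that they satisfy the three conditions in Definition~\ref{def:buildingblock}. Since $\beta_\ell$ is the unique switching bridge in this case and there are no switching loops, all loops $\gamma_k$ and bridges $\beta_k$ with $k\neq \ell$ have multiplicity zero, so the slope vector is locally determined there, and the behavior of our candidate functions is essentially forced by their slopes on the bridges together with the condition $D+\ddiv(\varphi)\geq 0$.

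The construction proceeds loop-by-loop. For $\varphi^0_h$, we prescribe slope $s_k[h]$ on each bridge $\beta_k$. On a loop $\gamma_k$ with $k\neq \ell$, the slope on the incoming and outgoing bridges agree, so the restriction of $D+\ddiv(\varphi^0_h)$ to $\gamma_k$ has degree equal to the restriction of $D$ to $\gamma_k$ (degree $0$ or $1$, depending on whether $\gamma_k$ is lingering for the break divisor); any such effective divisor of degree at most one in a prescribed class on a loop is unique, so $\varphi^0_h$ is determined on each loop up to an additive constant. At the bridge $\beta_\ell$, the equation~\eqref{Eq:BridgeSwitch} gives $s'_{\ell-1}[h]-s_\ell[h]=1$, and this net drop of slope is absorbed by requiring that $D+\ddiv(\varphi^0_h)$ pick up the point $v_\ell$; standard Riemann--Roch on $\gamma_{\ell-1}$ and $\gamma_\ell$ shows that such a function can be fit together globally. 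The construction of $\varphi^0_{h+1}$ is entirely symmetric: we prescribe slope $s'_{k-1}[h+1]$ on $\beta_k$, and now the slope \emph{jumps up} by one across $\gamma_{\ell-1}$, from $s_{\ell-1}[h+1]=s'_{\ell-2}[h+1]$ to $s'_{\ell-1}[h+1]=s_{\ell-1}[h+1]+1$, which is compensated by placing the extra point of $D+\ddiv(\varphi^0_{h+1})$ at $w_{\ell-1}$.

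The function $\varphi^\infty_h$ is built similarly but now the slope index itself changes at $\beta_\ell$: it is $h$ on every bridge to the left of $\beta_\ell$ and $h+1$ on every bridge from $\beta_\ell$ onward. Since $s_\ell[h+1]=s_\ell[h]+1=s_{\ell-1}[h]+1$, the slope increases by one across $\gamma_{\ell-1}$, which again is absorbed by a single point of $D+\ddiv(\varphi^\infty_h)$ on that loop. On every other loop the same local Riemann--Roch argument as above produces the function uniquely up to an additive constant.

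It remains to verify Definition~\ref{def:buildingblock}. Each of our three functions has constant slope on every bridge by construction. The slope index sequences are $(h,h,\ldots,h)$ for $\varphi^0_h$, $(h{+}1,h{+}1,\ldots,h{+}1)$ for $\varphi^0_{h+1}$, and $(h,\ldots,h,h{+}1,\ldots,h{+}1)$ for $\varphi^\infty_h$, all nondecreasing; conditions (i) and (ii) are satisfied vacuously for the first two, while for $\varphi^\infty_h$ the unique jump occurs at $\beta_\ell$, which is permitted by condition (ii) because $\beta_\ell$ switches slope $h$. Condition (iii) reduces in each case to checking that our prescribed slope on $\beta_k$ lies between $s_k[\tau_k]$ and $s'_{k-1}[\tau'_{k-1}]$, which follows directly from the slope identities in \eqref{Eq:BridgeSwitch}. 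I do not anticipate any serious obstacle; the main task is bookkeeping, analogous to what we carried out in Case~1b of \S\ref{sec:noswitching}, where the same loop-by-loop extension procedure produced $\varphi^0_h$ and $\varphi^\infty_h$ in the simpler setting of a single decreasing bridge of multiplicity one.
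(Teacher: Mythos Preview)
Your approach is correct in outline but differs from the paper's, and contains a few detail-level slips worth flagging.

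The paper's proof is a one-line gluing argument: since $\varphi_A$ and $\varphi_B$ (already constructed in Proposition~\ref{Prop:BridgeFns}) both have slope $s'_{\ell-1}[h]=s_\ell[h+1]$ at the distinguished point $x\in\beta_\ell$ (Lemma~\ref{Lem:BridgeSwitch}), the function that agrees with $\varphi_A$ to the left of $x$ and with $\varphi_B$ to the right of $x$ is automatically in $R(D)$ and has constant slope on $\beta_\ell$; this is $\varphi^\infty_h$. The other two functions are obtained by the analogous splice. Your loop-by-loop construction is a legitimate alternative, but it forfeits the economy of reusing Proposition~\ref{Prop:BridgeFns}, and it forces you to redo on $\gamma_{\ell-1}$ and $\gamma_\ell$ the local Riemann--Roch analysis that is already implicit in the existence of $\varphi_A,\varphi_B,\varphi_C$.

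Two concrete slips in your write-up: first, the extra point of $D+\ddiv(\varphi^0_h)$ must sit at $w_{\ell-1}$ (not $v_\ell$), and the extra point of $D+\ddiv(\varphi^0_{h+1})$ must sit at $v_\ell$ (not $w_{\ell-1}$); you have these exchanged. This matters because, for instance, $\tau'_{\ell-1}(\varphi^0_h)=h$ requires $d_{w_{\ell-1}}(\varphi^0_h)\geq 1$, and without it condition~(ii) of Definition~\ref{def:buildingblock} fails (the sequence would jump from $h-1$ to $h$ across $\beta_\ell$, which switches slope $h$, not $h-1$). Second, your assertions that $s'_{\ell-1}[h+1]=s_{\ell-1}[h+1]+1$ and $s_\ell[h]=s_{\ell-1}[h]$ presuppose that $h+1$ (respectively $h$) is the increasing index on $\gamma_{\ell-1}$; since $\gamma_{\ell-1}$ has multiplicity zero, exactly one index increases there, but it need not be $h$ or $h+1$. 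The construction still goes through in either case---the degree of $(D+\ddiv\varphi)\vert_{\gamma_{\ell-1}}$ is then $1$ or $2$ and an effective representative with a point at the required vertex always exists on a loop---but your argument as written covers only one of the two possibilities.
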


\begin{proof}
The function $\varphi^0_h$ is the building block with constant building sequence $h$ and $s_k = s_k[h]$ for all $k$.  Similarly, $\varphi^0_{h+1}$ is the building block with constant building sequence $h+1$ and $s_k = s'_{k-1}[h+1]$ for all $k$.  Finally, $\varphi^{\infty}_h$ is the unique building block with building sequence
\[
\tau_k = \tau'_k = \begin{cases} h & \mbox{if $k < \ell$} \\  h+1 & \mbox{if $k \geq \ell$.} \end{cases}
\]
\vskip -18pt
\end{proof}

Set $\cA := \{ \varphi_i \, \mid \, i \neq h,h+1 \} \cup \{ \varphi^0_h , \varphi^0_{h+1} , \varphi^{\infty}_h \}$.
Note that the slope of the function $\varphi^0_h$ along $\beta_{\ell}$ is $s_{\ell} [h]$, which is not in $s'_{\ell-1} (\Sigma)$.  Hence $\varphi^0_h$ cannot be in $\Sigma$.  Similarly, the function $\varphi^0_{h+1}$ cannot be in $\Sigma$.  However, the functions $\varphi_A$, $\varphi_B$, and $\varphi_C$ can be written as tropical linear combinations of $\varphi^0_h, \varphi^0_{h+1},$ and $\varphi^{\infty}_h$, as follows.

\begin{lemma}
\label{lem:TLC}
The restrictions of the functions $\varphi_A$, $\varphi_B$, and $\varphi_C$ to $\Gamma_L$ can be written as tropical linear combinations of the restrictions of the building blocks $\varphi^0_h, \varphi^0_{h+1},$ and $\varphi^{\infty}_h$, as follows:
\begin{enumerate}[label=(\roman*), leftmargin=.75cm]
\item  The function $(\varphi_A)_{|\Gamma_L}$ is uniquely expressible as a tropical linear combination of $\varphi^0_h$ and $\varphi^{\infty}_h$;
\item  The function $(\varphi_B)_{|\Gamma_L}$ is uniquely expressible as a tropical linear combination of $\varphi^0_{h+1}$ and $\varphi^{\infty}_h$;
\item  The function $(\varphi_C)_{\Gamma_L}$ is uniquely expressible as a tropical linear combination of  $\varphi^0_h$ and $\varphi^0_{h+1}$.
\end{enumerate}
\end{lemma}

\begin{proof}
This is very similar to Example~\ref{Ex:Interval}.  We will prove the claim about $\varphi_A$ in detail; the functions $\varphi_B$ and $\varphi_C$ are handled similarly.  We first show that $\varphi^{\infty}_h$ has slope $s_{\zeta} [h+1]$ on all tangent vectors $\zeta$ in $\Gamma_L$ to the right of $x$, and $\varphi^0_h$ has slope $s_{\zeta} [h]$ on all such tangent vectors.  For tangent vectors $\zeta$ in the bridges, this statement follows from the definition of $\varphi^0_h$ and $\varphi^{\infty}_h$.  It suffices to prove the statement for tangent vectors along the bottom edges of loops.  Since $\mu (\beta_{\ell}) = 2$, we have $\mu (\gamma_k) = 0$ for all $k$.  By Lemma~\ref{Lem:EquivOneLoop}, therefore, if a function $\varphi \in R(D)$ satisfies $s_k (\varphi) = s_k[i]$ and $s'_k (\varphi) = s'_k [i]$, the restriction $\varphi_{|\gamma_k}$ is unique up to tropical scaling.  In particular, it must be one of the functions pictured in Figure~\ref{Fig:FnShapes}, which satisfy $s_{\zeta} (\varphi) = s_{\zeta} [i]$ for all rightward tangent vectors $\zeta$ along the bottom edge of $\gamma_k$.

By definition of $f_{k,h}$, $\varphi_A$ is equivalent to $\varphi^{\infty}_h$ on the portion of $\Gamma$ to the left of $\beta_{\ell}$.  To the right of $\beta_{\ell}$, since the edge lengths of $\Gamma$ are $C$-admissible, for all tangent vectors $\zeta$ along the bottom edge of $\gamma_k$, we have
\[
s_{\zeta} [h] \leq s_{\zeta} (\varphi_A) \leq s_{\zeta} [h+1] .
\]
Since $\varphi_A \in \Sigma$, it must therefore have either slope $s_{\zeta}[h]$ or $s_{\zeta}[h+1]$ on tangent vectors $\zeta$ in $\Gamma_L$ to the right of $x$.  Since there are no other switching loops or bridges, and the restriction of $\varphi_A$ to each bridge or bottom edge is convex, there is a unique point in $\Gamma_L$ to the right of $x$ such that $\varphi_A$ has slope $s_{\zeta}[h+1]$ to the left of this point and slope $s_{\zeta}[h]$ to the right.  Since $\varphi^{\infty}_h$ has slope $s_{\zeta} [h+1]$ on all tangent vectors $\zeta$ in $\Gamma_L$ to the right of $x$, and $\varphi^0_h$ has slope $s_{\zeta} [h]$ on all such tangent vectors, it follows that $(\varphi_A)_{\Gamma_L}$ is a tropical linear combination of these two functions.
\end{proof}

\begin{lemma}
\label{Lem:TLC2}
In the expression of $(\varphi_A)_{\Gamma_L}$ as a tropical linear combination of $\varphi^0_h$ and $\varphi^{\infty}_h$, there is a unique point $y$ to the right of $x$ where the two summands are equal.  Similarly, in the expression of $(\varphi_B)_{\Gamma_L}$ as a tropical linear combination of $\varphi^0_{h+1}$ and $\varphi^{\infty}_h$, there is a unique point $y'$ to the left of $x$ where the two summands are equal.  Moreover, up to tropical scaling there is a unique tropical dependence among $\varphi_A$, $\varphi_B$, and $\varphi_C$ in which the three terms are all equal at $y$ and $y'$.
\end{lemma}

\begin{proof}
At every rightward tangent vector $\zeta$ in $\Gamma_L$ to the right of $x$, we have $s_{\zeta} (\varphi^{\infty}_h) > s_{\zeta} (\varphi^0_h)$.  Thus, in any tropical linear combination of $\varphi^{\infty}_h$ and $\varphi^0_h$, there is a unique point $y \in \Gamma_L$ to the right of $x$ where the two summands are equal.  Similarly, in any tropical linear combination of $\varphi^{\infty}_h$ and $\varphi^0_{h+1}$, there is a unique point $y' \in \Gamma_L$ to the left of $x$ where the two summands are equal.

Since $\varphi_A$, $\varphi_B$, and $\varphi_C$ are contained in the rank 1 tropical linear series $\Sigma'$, they must be tropically dependent.  If we consider the point $y$ at which the function $\varphi_A$ is equivalent to $\varphi^{\infty}_h$ to the left and equivalent to $\varphi^0_h$ to the right, we see that locally in a neighborhood of this point, $\varphi_B$ is equivalent to $\varphi^{\infty}_h$ and $\varphi_C$ is equivalent to $\varphi^0_h$.  Thus, in the tropical dependence between these three functions, all three must achieve the minimum at this point.  This uniquely determines the tropical dependence up to tropical scaling.  By the same reasoning, all three functions must also achieve the minimum at $y'$.  This dependence is illustrated in Figure~\ref{Fig:GammaLDependence}.
\end{proof}

\begin{figure}[H]
\begin{tikzpicture}
\draw (0,-.5)--(6,-.5);
\draw [ball color=white] (3,-.5) circle (0.55mm);
\draw [ball color=black] (1.5,-.5) circle (0.55mm);
\draw [ball color=black] (4.5,-.5) circle (0.55mm);
\draw (0.75,-0.3) node {{\tiny $BC$}};
\draw (3,-0.3) node {{\tiny $AB$}};
\draw (5.25,-0.3) node {{\tiny $AC$}};

\draw [<->] (2.95,-.6) -- (1.55,-.6);
\draw [<->] (3.05,-.6) -- (4.45,-.6);
\draw (2.25,-.8) node {{\tiny $t'$}};
\draw (3.75,-.8) node {{\tiny $t$}};
\draw (1.5,-.8) node {{\tiny $y'$}};
\draw (4.5,-.8) node {{\tiny $y$}};
\end{tikzpicture}
\caption{A schematic depiction of the tropical dependence among $\varphi_A$, $\varphi_B$, and $\varphi_C$ on $\Gamma_L$, analogous to the bottom line in Figure~\ref{Fig:IntervalDependence}}
\label{Fig:GammaLDependence}
\end{figure}

The expressions of $\varphi_A$, $\varphi_B$, and $\varphi_C$ as tropical linear combinations of building blocks on $\Gamma_L$ are also valid on loops that do not contain $y$ or $y'$.  For future reference we record the details.

\begin{lemma}
\label{Lem:DependenceAwayFromGammaL}
The tropical linear combinations above extend to loops that do not contain $y$ or $y'$:
\begin{enumerate}[label=(\roman*), leftmargin=.75cm]
\item  If $y \notin \gamma_k$, then $\varphi_A \sim_{\gamma_k} \varphi^{\infty}_h$ if $\gamma_k$ is to the left of $y$, and $\varphi_A \sim_{\gamma_k} \varphi^0_h$ if $\gamma_k$ is to the right of $y$.
\item  If $y' \notin \gamma_k$, then $\varphi_B \sim_{\gamma_k} \varphi^0_{h+1}$ if $\gamma_k$ is to the left of $y'$ and $\varphi_B \sim_{\gamma_k} \varphi^{\infty}_h$ if $\gamma_k$ is to the right of $y'$.
\item  $\varphi_C \sim_{\gamma_k} \varphi^0_h$ for all $k \leq \ell $ and $\varphi_C \sim_{\gamma_k} \varphi^0_{h+1}$ for all $k > \ell$.
\end{enumerate}
\end{lemma}

\begin{proof}
Recall that $\mu (\gamma_k) = 0$ for all $k$.  Thus, if a function $\varphi \in R(D)$ satisfies $s_k (\varphi) = s_k[i]$ and $s'_k (\varphi) = s'_k [i]$, the restriction $\varphi_{|\gamma_k}$ is unique up to tropical scaling.  The result follows.
\end{proof}

\begin{definition}
Let $t$ be the distance, measured along the bridges and bottom edges, from $x$ to $y$.  Similarly, let $t'$ be the distance, measured along the bridges and bottom edges, from $x$ to $y'$.  
\end{definition}

As in Example~\ref{Ex:Interval}, the tropical dependence among $\{ \varphi_A, \varphi_B, \varphi_C\}$ induces a relation between the parameters $t$ and $t'$.

\begin{proposition}
\label{Prop:BridgeDependence}
The distance $t'$ is an increasing piecewise affine function in $t$.
\end{proposition}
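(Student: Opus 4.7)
The plan is to imitate the analysis in Case~2 of Example~\ref{Ex:Interval}, using the fact that a pencil in $V$ tropicalizes to a set of three tropically dependent functions, and then to track how the dependence forces a piecewise affine relationship between $t$ and $t'$. Since $\varphi_A, \varphi_B, \varphi_C$ lie in $\trop(W)$ for a pencil $W\subseteq V$, any three of them are tropically dependent, so there exist real constants $a,b,c$ such that
\[
\vartheta = \min\{\varphi_A+a,\ \varphi_B+b,\ \varphi_C+c\}
\]
achieves the minimum at least twice at every point of $\Gamma$.

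First I would pin down the combinatorial type of $\vartheta$. By Proposition~\ref{Prop:BridgeFns}, on each bridge and on each loop the slopes of $\varphi_A, \varphi_B, \varphi_C$ take values in $\{s_k[h], s_k[h+1]\}$ and $\{s'_k[h], s'_k[h+1]\}$, and the slope profiles along the graph are precisely those depicted schematically in Figure~\ref{Fig:BridgeDependence}. In particular, far to the left of $y'$ both $\varphi_B$ and $\varphi_C$ use slope $s_k[h+1]$ while $\varphi_A$ uses slope $s_k[h]$; between $y'$ and $y$, $\varphi_A$ and $\varphi_B$ share slope $s_k[h]$ on bridges to the left of $\beta_\ell$ and $s_k[h+1]$ to the right; to the right of $y$, $\varphi_A$ and $\varphi_C$ share slope $s_k[h]$. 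Since the minimum in a tropical dependence must be attained on a pair with the same slope at every rightward tangent, this combinatorial type is forced: $BC$ achieves the minimum on the region to the left of $y'$, $AB$ on the middle region, and $AC$ to the right of $y$, with common values exactly at $y$ and $y'$.

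Next, I would extract two equations from the identity $\vartheta(y')=\vartheta(x)=\vartheta(y)$. Using that $\varphi_A$ agrees with $\varphi^\infty_h$ on $[x,y]$ and with $\varphi^0_h$ beyond $y$, while $\varphi_C$ is the minimum of $\varphi^0_h$ and $\varphi^0_{h+1}$ glued at $x$, the difference $(\varphi_A+a)-(\varphi_C+c)$ on the segment $[x,y]$ is a linear function of arclength with a slope equal to $s[h+1]-s[h]=1$ (in the normalized sense of Example~\ref{Ex:Interval}), and is zero at $y$; evaluating at $x$ gives an expression of $c-a$ in terms of $t$. Symmetrically, $(\varphi_B+b)-(\varphi_C+c)$ on $[y',x]$ gives $c-b$ in terms of $t'$. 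The third relation, that $\vartheta$ is continuous across the common value $\vartheta(x)$, yields a linear equation of the form $\alpha(t)+\beta(t')=0$ with $\alpha,\beta$ locally positive constants depending only on the edge lengths. Solving expresses $t'$ as an affine function of $t$ with positive slope, on each combinatorial stratum. The strata are cut out by the condition that neither $y$ nor $y'$ cross a vertex of $\Gamma$ (since crossing a vertex alters which bridges and loop-edges are traversed, and hence changes the coefficients $\alpha,\beta$), so the global dependence is piecewise affine and increasing, as required.

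The main obstacle I anticipate is bookkeeping as $y$ or $y'$ crosses a vertex: the slope contributions pick up or drop edge-length factors and one must check that the formula for $t'(t)$ glues continuously across these walls and remains strictly increasing. Continuity follows from continuity of the constants $a,b,c$ under deformation of the dependence, and monotonicity follows from the fact that on every stratum the comparison of $\varphi_A-\varphi_C$ along $[x,y]$ increases strictly as $y$ moves right, forcing $y'$ to move left to preserve the equality $\vartheta(x)=\vartheta(y')$. These two facts together give the conclusion.
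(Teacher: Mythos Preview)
Your approach is essentially the same as the paper's: use the tropical dependence among $\varphi_A,\varphi_B,\varphi_C$ (coming from the pencil $W$), identify the combinatorial type $BC\mid AB\mid AC$ with triple-coincidence points at $y$ and $y'$, and solve the resulting equality conditions for $t'$ in terms of $t$; the paper's proof is just a terser version of this. Two small notational slips to clean up: the displayed equality $\vartheta(y')=\vartheta(x)=\vartheta(y)$ is not what you mean (these are values at different points), and the relation ``$\alpha(t)+\beta(t')=0$ with $\alpha,\beta$ locally positive'' is inconsistent as written---you want something of the form $\alpha\,t = \beta\,t' + \gamma$ with $\alpha,\beta>0$, which is exactly what the paper extracts from the simultaneous equalities at $y$ and $y'$ (and which Corollary~\ref{Cor:BoundOnT} makes explicit in the simplest stratum, where $t'=t$).
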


\begin{proof}
By Lemma~\ref{Lem:TLC2}, there is a tropical dependence among $\varphi_A$, $\varphi_B$, and $\varphi_C$ in which the three terms are all equal at $y$ and $y'$.  The condition that all three functions are equal at these two points yields a system of equations, and by solving for $t'$, we obtain an expression for $t'$ as an increasing piecewise affine function in $t$.
\end{proof}

Note that $\varphi_A$ is linear with slope $s'_{\ell - 1}[h] = s_{\ell}[h+1]$ on a subinterval of $\beta_{\ell}$.  This subinterval extends from the left endpoint $w_{\ell - 1}$ of $\beta_{\ell}$ to the point to the right of $x$ of distance $\min \{ t, d(x,v_{\ell}) \}$.

\begin{definition}
Let $I \subset \beta_\ell$ be the interval where $\varphi_A$ has slope $s_{\ell}[h+1]$.
\end{definition}

\begin{corollary}
\label{Cor:BoundOnT}
If $I$ has length less than $m_{\ell-1}$, then $t'=t$.
\end{corollary}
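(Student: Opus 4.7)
The argument will follow the general strategy of the proof of Proposition~\ref{Prop:BridgeDependence}, carried out explicitly under the additional hypothesis $|I| < m_{\ell-1}$. By Proposition~\ref{Prop:BridgeFns}, the functions $\varphi_A,\varphi_B,\varphi_C$ lie in $\trop(W)$ for the pencil $W\subseteq V$. Since $\dim W=2$, they come from three linearly dependent rational functions, so they are tropically dependent. Fix constants $c_A, c_B, c_C$ so that $\theta := \min\{\varphi_A+c_A,\varphi_B+c_B,\varphi_C+c_C\}$ has its minimum attained at least twice at every point. As recorded in the proof of Proposition~\ref{Prop:BridgeDependence} (and Figure~\ref{Fig:BridgeDependence}), the combinatorial type forces the graph to split into regions $BC$, $AB$, $AC$ meeting only at the boundary points $y'$ and $y$, and at these two points all three translated functions coincide.

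The first step is to read off the three pairwise-agreement constants using Lemma~\ref{lem:TLC}. On $AB$ both $\varphi_A$ and $\varphi_B$ agree (up to an additive constant) with $\varphi^\infty_h$, so $\varphi_A-\varphi_B = \Delta_{AB}$ is constant there; likewise $\varphi_B-\varphi_C = \Delta_{BC}$ on $BC$ (both equal $\varphi^0_{h+1}$) and $\varphi_A-\varphi_C = \Delta_{AC}$ on $AC$ (both equal $\varphi^0_h$). The condition that all three functions agree at $y$ and at $y'$ yields the single additive relation
\[
\Delta_{AC}\;=\;\Delta_{AB}+\Delta_{BC}.
\]

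The next step is to express $\Delta_{AC}-\Delta_{AB}$ and $\Delta_{BC}-\Delta_{AB}$ as signed integrals of slope differences along short paths. The function $\varphi^\infty_h-\varphi^0_h$ has slope $+1$ on the portion of $\beta_\ell$ to the right of $x$ (since there $\varphi^\infty_h$ has slope $s_\ell[h+1] = s_\ell[h]+1$ while $\varphi^0_h$ has slope $s_\ell[h]$) and slope $0$ everywhere to the left of $x$; this turns the difference $\Delta_{AC}-\Delta_{AB}$ into a signed length measured from $x$ to $y$ along $\beta_\ell$, namely $+t$, up to a global additive constant. Similarly $\varphi^\infty_h-\varphi^0_{h+1}$ has slope $-1$ on the portion of $\beta_\ell$ to the left of $x$ and slope $0$ elsewhere, so $\Delta_{BC}-\Delta_{AB}$ equals $-t'$ up to a global additive constant.

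Finally, I will use the hypothesis $|I|<m_{\ell-1}$ to show that these two global additive constants coincide, whence the relation above collapses to $t=t'$. By admissibility, $m_{\ell-1}\ll n_\ell$, so $|I|<m_{\ell-1}$ forces both $d(w_{\ell-1},x)<m_{\ell-1}$ and $t<m_{\ell-1}$; hence $y$ lies in the interior of $\beta_\ell$. Because $t'$ is an increasing piecewise affine function of $t$ (Proposition~\ref{Prop:BridgeDependence}) and vanishes when $t$ does, we also have $t'<m_{\ell-1}$ in this regime, so $y'$ lies either on $\beta_\ell$ or at a distance $\le m_{\ell-1}$ from $x$ along the adjacent bottom edge of $\gamma_{\ell-1}$; in either situation the slopes of $\varphi^\infty_h$, $\varphi^0_h$, $\varphi^0_{h+1}$ are constant along the relevant path, and the two global constants produced in the previous step are the same. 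I expect the main technical obstacle to be this last bookkeeping of additive constants, particularly verifying agreement of the two constants when $y'$ transitions from $\beta_\ell$ onto the bottom edge of $\gamma_{\ell-1}$; this is where admissibility of edge lengths (which makes the short detour onto the loop contribute no net slope difference between $\varphi^\infty_h$ and $\varphi^0_{h+1}$ on $\gamma_{\ell-1}$, since these two building blocks agree on $\gamma_{\ell-1}$ by property~$(\mathbf{A})$) is essential.
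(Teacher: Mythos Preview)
Your overall strategy coincides with the paper's: exploit the tropical dependence among $\varphi_A,\varphi_B,\varphi_C$ and track slope differences along the path from $x$ to $y$ and from $x$ to $y'$. However, the final step of your argument contains a genuine error. You claim that $\varphi^\infty_h$ and $\varphi^0_{h+1}$ agree on $\gamma_{\ell-1}$ by property~$(\mathbf A)$, so that the detour onto the bottom edge contributes ``no net slope difference.'' But property~$(\mathbf A)$ applies only when the outgoing slope indices coincide, and here $\tau'_{\ell-1}(\varphi^\infty_h)=h$ while $\tau'_{\ell-1}(\varphi^0_{h+1})=h+1$; these two building blocks do \emph{not} agree on $\gamma_{\ell-1}$. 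In fact the argument requires precisely the opposite: the paper uses that $\mu(\gamma_{\ell-1})=0$ forces $\varphi^0_{h+1}$ to have slope exactly one greater than $\varphi^0_h$ (equivalently, than $\varphi^\infty_h$, which does agree with $\varphi^0_h$ on $\gamma_{\ell-1}$) along the bottom edge. With your claimed slope difference of zero, the equation would collapse to $t=d(x,w_{\ell-1})$ rather than $t=t'$ once $y'$ enters the loop.

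There is also a smaller inaccuracy in your slope bookkeeping on $\beta_\ell$ itself: since building blocks have constant slope on bridges, $\varphi^\infty_h-\varphi^0_h$ has slope $+1$ on \emph{all} of $\beta_\ell$, not just to the right of $x$, and likewise $\varphi^\infty_h-\varphi^0_{h+1}$ has slope $-1$ on all of $\beta_\ell$. This does not affect your computation of $t$, but it is the same confusion about where these functions agree. The paper sidesteps all of your additive-constant bookkeeping by reading off directly from the dependence the relation $\bigl(s'_{\ell-1}[h+1]-s'_{\ell-1}[h]\bigr)t'=\bigl(s_\ell[h+1]-s_\ell[h]\bigr)t$ when $y'\in\beta_\ell$, with both coefficients equal to $1$ by~(\ref{Eq:BridgeSwitch}); the bottom-edge case is handled by the same relation once one knows the slope difference there is again $1$.
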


\begin{proof}
If $y$ is not contained in the bridge $\beta_{\ell}$, then $\varphi_A$ has slope $s'_{\ell-1}[h]$ on the entire bridge $\beta_{\ell}$.  The assumption therefore implies that $y$ is contained in the bridge $\beta_{\ell}$.  The point $y'$ is contained either in the bridge $\beta_{\ell}$ or the bottom edge of the loop $\gamma_{\ell-1}$.  We consider the case where $y'$ is contained in the bridge first.  Examining the tropical dependence described in Proposition~\ref{Prop:BridgeDependence}, we see that
\[
\left( s'_{\ell-1}[h+1]- s'_{\ell-1} [h] \right) t' = \left( s_{\ell} [h+1]- s_{\ell} [h] \right) t.
\]
But, by equation~\ref{Eq:BridgeSwitch}, we have $s'_{\ell-1}[h+1]- s'_{\ell-1} [h] = s_{\ell} [h+1]- s_{\ell} [h] = 1$, and the result follows.

We now consider the case where $y'$ is contained in the bottom edge of the loop $\gamma_{\ell-1}$.  Recall that $\mu(\gamma_{\ell-1}) = 0$.  It follows that $\varphi^0_{h+1}$ has slope one greater than $\varphi^0_h$ along this bottom edge.  The result then follows by the same argument as the previous case.
\end{proof}

\begin{definition} \label{Def:BBOnBridge}
Suppose
$
s_{\ell-1}[h] < s'_{\ell-1}[h],
$
and there are functions $\varphi, \varphi' \in \cA$ such that
$\sigma_{\ell} = s_{\ell} [h] + s_{\ell} (\varphi) = s_{\ell} [h] + s_{\ell} (\varphi') +1.$  Then
\begin{enumerate} [label=(\roman*)]
\item  if $I$ has length less than $m_{\ell-1}$, let
$
\cB = 2\cA \smallsetminus \{ \varphi^{\infty}_h + \varphi' \};
$
\item   if $I$ has length at least $m_{\ell-1}$, let
$
\cB = 2\cA \smallsetminus \{ \varphi^0_h + \varphi \} .
$
\end{enumerate}
Otherwise, let
$
\cB = 2\cA .
$
\end{definition}

\begin{lemma}
\label{Lem:BBOnBridge}
This set $\cB$ satisfies properties $(\mathbf B)$ and $(\mathbf B')$.
\end{lemma}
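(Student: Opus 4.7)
The plan is to verify the two properties separately, exploiting the fact that $\beta_\ell$ is the unique switching bridge and that there are no switching loops, so both properties are localized to behavior at $\gamma_{\ell-1}$ and $\beta_\ell$. I will first dispose of property $(\mathbf B)$, which is not obstructed by the removals in Definition~\ref{Def:BBOnBridge}, and then tackle property $(\mathbf B')$, which is the entire reason for those removals.

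For property $(\mathbf B)$: the only building block in $\cA$ exhibiting a slope shortfall $s_{k+1}(\varphi) < s'_k[\tau'_k(\varphi)]$ is $\varphi^0_h$, and only at $k = \ell-1$, where equation (\ref{Eq:BridgeSwitch}) gives $s_\ell(\varphi^0_h) = s_\ell[h] < s_\ell[h] + 1 = s'_{\ell-1}[h]$. Since there are no switching loops, the loop-switching trigger in $(\mathbf B)$ never applies. Therefore it suffices to check, for every permissible $\varphi^0_h + \varphi'$ on $\gamma_{\ell-1}$ with $2D + \ddiv(\varphi^0_h + \varphi')$ containing $w_{\ell-1}$, that the candidate replacement $\varphi^\infty_h + \varphi'$ lies in $\cB$. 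This candidate agrees with $\varphi^0_h + \varphi'$ on $\Gamma_{\leq \ell-1}$ and has $s_\ell(\varphi^\infty_h + \varphi') = s_\ell(\varphi^0_h + \varphi') + 2$, hence serves as the desired $\psi$. It is removed only in case (i) of Definition~\ref{Def:BBOnBridge}, for the specific $\varphi'$ with $s_\ell(\varphi') = s_{\ell-1}(\theta) - s_\ell[h] - 1$. For this $\varphi'$, direct computation gives $s_\ell(\varphi^0_h + \varphi') = s_{\ell-1}(\theta) - 1 < s_{\ell-1}(\theta)$, so $\varphi^0_h + \varphi'$ fails condition (ii) of permissibility at $\gamma_{\ell-1}$, and the hypothesis of $(\mathbf B)$ is vacuous for this pair.

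For property $(\mathbf B')$: only $k = \ell - 1$ is relevant. I will first show that the hypothesis ``permissible functions in $\cB$ agreeing on $\gamma_{\ell-1}$ with different slopes on $\beta_\ell$'' is satisfied precisely when the precondition of Definition~\ref{Def:BBOnBridge} holds, the witnessing pair being $\{\varphi^0_h + \varphi,\, \varphi^\infty_h + \varphi\}$ for the $\varphi$ specified there (these agree on $\gamma_{\ell-1}$ because $\varphi^0_h$ and $\varphi^\infty_h$ do, and their slopes on $\beta_\ell$ differ by $2$ by (\ref{Eq:BridgeSwitch})). Next I will classify the shiny permissible pairs in $\widetilde \cB$ on $\gamma_{\ell-1}$: by Proposition~\ref{Prop:Shiny} one summand $\eta$ must satisfy $s_\ell(\eta) > s_{\ell-1}[\tau_{\ell-1}(\eta)]$, which forces $\eta \in \{\varphi^0_h, \varphi^\infty_h\}$, and the arithmetic of slopes then pins down the other summand up to one of at most two choices. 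The key step is to use Corollary~\ref{Cor:BoundOnT} and the geometry of the interval $I$ to match the two subcases: when $I$ has length less than $m_{\ell-1}$, the relation $t' = t$ forces the unique shiny pair to be the one containing $\varphi^\infty_h$, namely $\varphi^\infty_h + \varphi'$, which is exactly the function removed in case (i); when $I$ has length at least $m_{\ell-1}$, the analogous analysis identifies $\varphi^0_h + \varphi$ as the shiny pair, matching the removal in case (ii). In all other configurations, no function in $\widetilde{\cB}$ is shiny on $\gamma_{\ell-1}$, and so no removal is needed.

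The main obstacle will be the case analysis in $(\mathbf B')$: correctly identifying, in each subcase of Definition~\ref{Def:BBOnBridge}, the unique shiny function in $\widetilde \cB$ that would otherwise violate the property, and matching it to the geometric condition on $I$. This requires carefully tracking how the divisors $D + \ddiv(\varphi^0_h)$ and $D + \ddiv(\varphi^\infty_h)$ are supported relative to $w_{\ell-1}, v_\ell$ and the interior points $y, y'$ of Lemma~\ref{lem:TLC}, and combining Corollary~\ref{Cor:BoundOnT} with the slope equalities in the precondition of Definition~\ref{Def:BBOnBridge} to single out the correct shiny pair.
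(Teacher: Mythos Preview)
Your argument for property $(\mathbf B)$ is essentially correct and matches the paper, modulo the slip that by~(\ref{Eq:BridgeSwitch}) the slopes of $\varphi^0_h$ and $\varphi^\infty_h$ on $\beta_\ell$ differ by~$1$, not~$2$.

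The plan for $(\mathbf B')$ has a real gap. The length of the interval~$I$, the distances $t,t'$, and Corollary~\ref{Cor:BoundOnT} play no role in this lemma; shininess on $\gamma_{\ell-1}$ is determined entirely by slope data at $\gamma_{\ell-1}$. When the precondition of Definition~\ref{Def:BBOnBridge} holds, a direct computation gives $s_\ell[h] = s_{\ell-1}[h]$, so $\varphi^0_h + \varphi$ is \emph{never} shiny there; the only shiny permissible function is $\varphi^\infty_h + \varphi'$, independent of~$I$. Your application of Proposition~\ref{Prop:Shiny} is also off: the summand with increased slope is only $\varphi^\infty_h$, not $\varphi^0_h$. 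The two removals in Definition~\ref{Def:BBOnBridge} establish $(\mathbf B')$ by \emph{different} mechanisms: case~(i) removes the shiny function $\varphi^\infty_h + \varphi'$, whereas case~(ii) removes $\varphi^0_h + \varphi$, one member of the only agreeing pair, so that the \emph{hypothesis} of $(\mathbf B')$ no longer holds. The dichotomy on~$I$ is made for the subsequent proof of Theorem~\ref{thm:independence}, not here. Finally, your ``iff'' misses a case the paper handles separately: when $s_{\ell-1}[h] = s'_{\ell-1}[h]$ but some other $\varphi \in \cA$ has $s_{\ell-1}(\varphi) < s_\ell(\varphi)$, the hypothesis of $(\mathbf B')$ can still hold with agreeing pair $\{\varphi^0_h + \varphi,\ \varphi^\infty_h + \varphi\}$, yet no removal is made; here one must show directly that the candidate new function $\varphi + \varphi^0_h$ is not actually new, using that $s_\ell(\varphi^0_h) = s_{\ell-1}[h] - 1 < s_{\ell-1}(\varphi^0_h)$.
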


\begin{proof}
The function $\varphi^0_h$ is the only element of $\cA$ satisfying $s_k (\varphi^0_h) < \xi'_{k-1} (\varphi^0_h)$,  and then only when $k = \ell$.  If $\varphi^0_h + \varphi \in \cB$ is permissible on $\gamma_{\ell-1}$ for some $\varphi \in \cA$, then by Definition~\ref{Def:BBOnBridge} we see that $\varphi^{\infty}_h + \varphi$ is also in $\cB$.  But $\varphi^0_h + \varphi$ is equivalent to $\varphi^{\infty}_h + \varphi$ to the left of $\beta_{\ell}$, and the latter function has higher slope along $\beta_{\ell}$, so $\cB$ satisfies property $(\mathbf B)$.

To establish property $(\mathbf B')$, we must show that if two permissible functions in $\cB$ agree on $\gamma_{\ell-1}$ and have different slopes on $\beta_{\ell}$, then no function is shiny on $\gamma_{\ell-1}$.  We first show that every shiny function on $\gamma_{\ell-1}$ is in fact new.  Since both $\beta_{\ell-1}$ and $\gamma_{\ell-1}$ have multiplicity zero, the restriction of $D + \ddiv (\varphi)$ to $\gamma_{\ell-1} \smallsetminus \{ w_{\ell-1} \}$ has degree at most 1 for every function $\varphi \in \cA$.  Moreover, none of these divisors contain $v_{\ell-1}$ in their support, which implies that every shiny function on $\gamma_{\ell-1}$ is new.

Now, if two functions in $\cB$ are permissible on $\gamma_{\ell-1}$ and have different slopes on $\beta_{\ell}$, then the one with higher slope must be departing.  In addition, if the two functions are equivalent on $\gamma_{\ell-1}$, then they must be $\varphi^0_h + \varphi$ and $\varphi^{\infty}_h + \varphi$ for some $\varphi \in \cA$.  We may therefore assume that $\varphi^{\infty}_h + \varphi$ is departing on $\gamma_{\ell-1}$ and
\begin{equation} \label{eq:SlopeBeforeBridge}
\sigma_{\ell} = s_{\ell} [h] + s_{\ell} (\varphi) .
\end{equation}
Since $\varphi^{\infty}_h + \varphi$ is departing, either (i) $s_{\ell-1} (\varphi^{\infty}_h) < s_{\ell} (\varphi^{\infty}_h)$,  or (ii) $s_{\ell-1} (\varphi) < s_{\ell} (\varphi)$ and $\varphi \neq \varphi^\infty_h$.
If $s_{\ell-1} (\varphi^{\infty}_h) < s_{\ell} (\varphi^{\infty}_h)$, then any new function must be of the form $\varphi^{\infty}_h + \varphi'$, where $\sigma_{\ell} = s_{\ell} [h] + s_{\ell} (\varphi') + 1$.
Definition~\ref{Def:BBOnBridge} ensures that either $\varphi^{\infty}_h + \varphi'$ is not in $\cB$, in which case no function is shiny on $\gamma_{\ell-1}$, or $\varphi^0_h + \varphi$ is not in $\cB$, in which case no two permissible functions are equivalent on $\gamma_{\ell-1}$.

It remains to consider the case where $s_{\ell-1} (\varphi) < s_{\ell} (\varphi)$, and $\varphi \neq \varphi^\infty_h$. Then any new function must be of the form $\varphi + \varphi'$, where $\sigma_{\ell} = s_{\ell} (\varphi) + s_{\ell} (\varphi')$.
Combining this with \eqref{eq:SlopeBeforeBridge}, we see that $s_{\ell} (\varphi') = s_{\ell} [h]$.  The only function in $\cA$ with this slope is $\varphi^0_h$, so $\varphi' = \varphi^0_h$.  Since $\varphi \neq \varphi^\infty_h$, we have $s_{\ell-1} (\varphi^{\infty}_h) = s_{\ell} (\varphi^{\infty}_h)$.  It follows that $s_{\ell-1} (\varphi^0_h) > s_{\ell} (\varphi^0_h)$. Hence the function $\varphi + \varphi^0_h$ is not new, and no function is shiny on $\gamma_{\ell -1}$.
\end{proof}

Since the set $\cB$ satisfies properties $(\mathbf B)$, and $(\mathbf B')$, the output of the template algorithm satisfies \ref{T1}-\ref{T5}.  The next step in our argument is to describe the set $\cT \subset 2\cS$  from which we will construct a certificate of independence.  If $i, j \not \in \{ h, h+1\}$, then $\varphi_{i,j} \in \cT$.  The remaining functions in $\cT$ will be chosen depending on where the best approximation of $\theta$ by $\varphi_C + \varphi_j$ achieves equality, as follows.  For $j \not \in \{h , h+1 \}$, we denote $\varphi^0_{h,j} = \varphi^0_h + \varphi_j$, and similarly for $\varphi^0_{h+1,j}$ and $\varphi^\infty_{h,j}$.

\begin{lemma} \label{Lem:TwoRegions}
The best approximation of $\theta$ by $\varphi_C + \varphi_j$ from above achieves equality on the region where either $\varphi^0_{h,j} + c^0_{hj}$ or $\varphi^0_{h+1,j} + c^0_{h+1,j}$ achieves the minimum.
\end{lemma}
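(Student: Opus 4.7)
The plan is to realize $\varphi_C+\varphi_j$ explicitly as a tropical linear combination of $\varphi^0_{h,j}$ and $\varphi^0_{h+1,j}$ and then appeal to Lemma~\ref{Lem:Replace}. First, by Lemma~\ref{lem:TLC}(iii), there exist real constants $c_1, c_2$ so that
\[
\varphi_C \;=\; \min\bigl\{\varphi^0_h + c_1,\; \varphi^0_{h+1} + c_2\bigr\},
\]
with the two summands achieving the minimum simultaneously at $x$. Adding $\varphi_j$ to both sides gives the identity $\varphi_C+\varphi_j = \min\{\varphi^0_{h,j}+c_1,\; \varphi^0_{h+1,j}+c_2\}$. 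Moreover, from the description of $\varphi_C$ in Proposition~\ref{Prop:BridgeFns}(iii), $\varphi_C$ agrees with $\varphi^0_{h+1}$ (up to the constant $c_2$) on the portion of $\Gamma$ to the left of $x$, and with $\varphi^0_h$ (up to $c_1$) on the portion to the right of $x$.

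In the generic subcase of Definition~\ref{Def:BBOnBridge}, where both $\varphi^0_{h,j}$ and $\varphi^0_{h+1,j}$ lie in $\cB$, I would apply Lemma~\ref{Lem:Replace} directly, taking $\cB' = \{\varphi^0_{h,j},\,\varphi^0_{h+1,j}\} \subseteq \cB$ and $\varphi = \varphi_C+\varphi_j$. The conclusion of that lemma is exactly the statement of Lemma~\ref{Lem:TwoRegions} in this subcase.

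The main obstacle is handling the two exceptional subcases of Definition~\ref{Def:BBOnBridge}, in which one of $\varphi^0_h+\varphi$ or $\varphi^{\infty}_h+\varphi'$ is removed from $\cB$, so that when $j$ happens to match the excluded summand one of $\varphi^0_{h,j}$, $\varphi^0_{h+1,j}$ is not in $\cB$ and Lemma~\ref{Lem:Replace} does not apply verbatim. In each such situation I would argue that the excluded function does not achieve the minimum in $\theta$ at any point (this is precisely why it was removed in Definition~\ref{Def:BBOnBridge}), so the region referenced in the statement reduces to the locus where the other summand achieves the minimum in $\theta$. On that locus, the left/right-of-$x$ description of $\varphi_C$ recalled above forces $\varphi_C+\varphi_j$ to coincide with the corresponding shifted copy of $\varphi^0_{h+1,j}$ or $\varphi^0_{h,j}$, which is $\geq \theta$ everywhere by construction of $\theta$; together these give the required equality of $\vartheta = \varphi_C+\varphi_j - c(\varphi_C+\varphi_j,\theta)$ with $\theta$ on that region. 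The delicate verification here is that the algorithm of \S\ref{Sec:Alg} actually places the assignment of the surviving function on the appropriate side of $x$, and this is where the length comparison between $I$ and $m_{\ell-1}$ in Definition~\ref{Def:BBOnBridge} does the work; I would unpack this by a short case check using Corollary~\ref{Cor:BoundOnT} to control the position of $y$ (and hence the combinatorial type of the best approximation) relative to $v_\ell$.
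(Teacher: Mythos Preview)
Your generic case is exactly right and matches the paper: when both $\varphi^0_{h,j}$ and $\varphi^0_{h+1,j}$ lie in $\cB$, Lemma~\ref{lem:TLC}(iii) expresses $\varphi_C+\varphi_j$ as a tropical combination of them, and Lemma~\ref{Lem:Replace} finishes.

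The exceptional analysis, however, is misdirected. First, subcase (i) of Definition~\ref{Def:BBOnBridge} removes $\varphi^{\infty}_h+\varphi'$, which touches neither $\varphi^0_{h,j}$ nor $\varphi^0_{h+1,j}$; and $\varphi^0_{h+1}$ is never removed at all. The only obstruction to applying Lemma~\ref{Lem:Replace} is subcase (ii) with $\varphi=\varphi_j$, where $\varphi^0_{h,j}=\varphi^0_h+\varphi_j$ is excluded. So there is a single exceptional case, not two, and Corollary~\ref{Cor:BoundOnT} (which concerns subcase~(i), where $I$ is short) is the wrong tool.

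Second, in that one exceptional case your proposed argument has a gap. Knowing that $\varphi_C+\varphi_j$ agrees with a shift of $\varphi^0_{h+1,j}$ on the locus where the latter achieves the minimum in $\theta$ does not by itself show that the global minimum of $(\varphi_C+\varphi_j)-\theta$ is attained there: to the right of $x$ one has $\varphi_C+\varphi_j=\varphi^0_{h,j}+c_1$, and since $\varphi^0_{h,j}\notin\cB$ there is no a priori inequality $\varphi^0_{h,j}+c_1\ge\theta$. The paper closes this gap with a direct slope computation: in subcase~(ii) one has $s_{\ell-1}(\theta)=s_\ell[h]+s_\ell(\varphi_j)$, while on the portion of $\beta_\ell$ to the left of $x$ the slope of $\varphi_C+\varphi_j$ is $s'_{\ell-1}[h+1]+s_\ell(\varphi_j)=s_{\ell-1}(\theta)+2$. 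Hence $(\varphi_C+\varphi_j)-\theta$ strictly increases across that portion of $\beta_\ell$, so the best approximation cannot achieve equality to the right of $\gamma_{\ell-1}$; it must achieve equality to the left, where $\varphi_C+\varphi_j$ agrees with $\varphi^0_{h+1,j}$. That one-line slope observation replaces your proposed ``delicate verification'' entirely.
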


\begin{proof}
If $\cB$ contains both $\varphi^0_{h,j}$ and $\varphi^0_{h+1,j}$ then this follows from Lemmas~\ref{Lem:Replace}, \ref{lem:TLC}(iii) and \ref{Lem:DependenceAwayFromGammaL}(iii).  Otherwise, we are in the subcase of Definition~\ref{Def:BBOnBridge}(ii) where $\varphi = \varphi_j$. Then Lemma~\ref{Lem:Replace} does not apply, since $\varphi_C + \varphi_j$ is not a tropical combination of functions in $\cB$.  In this case, $\varphi_C + \varphi_j$ has slope greater than $\sigma_{\ell}$ on $\beta_{\ell}$, and so the best approximation cannot achieve equality to the right of $\gamma_{\ell-1}$.  Hence it must achieve equality on or to the left of $\gamma_{\ell-1}$, where $\varphi_C + \varphi_j$ is equivalent to $\varphi^0_{h+1,j}$.
\end{proof}

\noindent We note that, a priori, it is possible for this best approximation to achieve equality  on \emph{both} regions.  However, in our construction of the master template $\theta$, if we perturb the coefficients of all functions in $\cB$ that are assigned to the same loop or bridge by a sufficiently small value, this does not change the conclusion of Theorem~\ref{Thm:ExtremalConfig}.  We may therefore assume that it achieves equality on exactly one of these two regions.  If the best approximation of $\theta$ by $\varphi_C + \varphi_j$ achieves equality where $\varphi^0_{h,j} + c(\varphi^0_{h,j})$ achieves the minimum, then we put $ \varphi_B + \varphi_j, \varphi_C + \varphi_j$ in $\cT$.  Otherwise, it achieves equality where $\varphi^0_{h+1,j} + c(\varphi^0_{h+1,j})$ achieves the minimum, and we put $\varphi_A + \varphi_j, \varphi_C + \varphi_j$ in $\cT$.

Similarly, we add to $\cT$ three pairwise sums of elements of $\{ \varphi_A , \varphi_B, \varphi_C \}$.  In all cases, we put $\varphi_C + \varphi_C$ in $\cT$.  If the best approximation of $\theta$ by $\varphi_C + \varphi_C$ achieves equality on a region to the left of $x$, then we put $\varphi_A + \varphi_C$ in $\cT$.  Otherwise, we put $\varphi_B + \varphi_C$ in $\cT$.  If $\varphi_A + \varphi_C \in \cT$ and the best approximation of $\theta$ by $\varphi_A + \varphi_C$ achieves equality on a region to the left of $x$, then we put $\varphi_A + \varphi_A$ in $\cT$.  If it achieves equality on a region to the right of $x$, then we put $\varphi_A + \varphi_B$ in $\cT$.  Similarly, if  $\varphi_B + \varphi_C \in \cT$ and the best approximation of $\theta$ by $\varphi_B + \varphi_C$ achieves equality on a region to the left of $x$, then we put $\varphi_A + \varphi_B$ in $\cT$.  If it achieves equality on a region to the right of $x$, then we put $\varphi_B + \varphi_B$ in $\cT$.  These choices are made so that the three chosen pairwise sums of elements of $\{ \varphi_A , \varphi_B, \varphi_C \}$ are not equivalent on the regions where they achieve the minimum.  By an argument similar to that of Lemma~\ref{Lem:TwoRegions}, in the best approximation of $\theta$ by $\cT$, each of these functions will achieve equality on the region where one of the building blocks in $\cB$ achieves the minimum in $\theta$.

\begin{proof}[Proof of Theorem~\ref{thm:independence}, case 2c]
We define the replacement function $R \colon \cT \to \cB$ as follows.  Let $R(\varphi_{i,j}) = \varphi_{i,j}$, for $i,j \neq h$.  If the best approximation of $\theta$ by $\varphi_C + \varphi_j$ achieves equality on the region where $\varphi^0_{h,j} + c^0_{hj}$ achieves the minimum, we define $R(\varphi_C + \varphi_j) = \varphi^0_{h,j}$.  Otherwise, let $R(\varphi_C + \varphi_j) = \varphi^0_{h+1,j}$.  Similarly, if $\varphi_B + \varphi_j \in \cT$ and the best approximation of $\theta$ by $\varphi_B + \varphi_j$ achieves equality where $\varphi^0_{h+1,j} + c_{h+1,j}$ achieves the minimum, let $R(\varphi_B + \varphi_j) = \varphi^0_{h+1,j}$.  Otherwise, let $R(\varphi_B + \varphi_j) = \varphi^{\infty}_{h,j}$.  If $\varphi_A + \varphi_j \in \cT$ and the best approximation of $\theta$ by $\varphi_A + \varphi_j$ achieves equality where $\varphi^0_{h,j} + c_{h,j}$ achieves the minimum, let $R(\varphi_A + \varphi_j) = \varphi^0_{h,j}$.  Otherwise, let $R(\varphi_A + \varphi_j) = \varphi^{\infty}_{h,j}$.  We now show that the best approximation $\Upsilon$ is a certificate of independence.

We first consider the case where the set $\cB = 2\cA$.  If the point $y$ is contained in the loop $\gamma_k$, then either $s_{k-1} (\varphi_A + \varphi_j) > \sigma_k$ or $s_k (\varphi_A + \varphi_j) < \sigma_k$.  It follows that the best approximtion of $\theta$ by $\varphi_A + \varphi_j$ does not achieve equality at any point of $\gamma_k$.  Similarly, if $y'$ is contained in the loop $\gamma_k$, then the best approximtion of $\theta$ by $\varphi_B + \varphi_j$ does not achieve equality at any point of $\gamma_k$.  Hence, by Lemmas~\ref{Lem:Replace}, \ref{lem:TLC} and~\ref{Lem:DependenceAwayFromGammaL}, each of the 28 functions in $\cT$ achieves the minimum on a region where one of the functions in $\cB$ achieves the minimum in the template $\theta$.  We show that each function achieves the minimun \emph{uniquely} at some point of $\Gamma$.  By \ref{T4} and \ref{T5}, if two functions $\psi, \psi' \in \cB$ are assigned to the same loop $\gamma_{k-1}$ or bridge $\beta_k$, then $\psi = \varphi^{\infty}_h + \varphi$ for some $\varphi \in \cA$, and either
\[
\psi' = \varphi^0_h + \varphi, k \leq \ell, \mbox{ or } \psi' = \varphi^0_{h+1} + \varphi, k > \ell .
\]
Assume for simplicity that $\varphi = \varphi_j$ for some $j$.  The other cases are similar.

By construction, the best approximation of $\theta$ by $\varphi_C + \varphi_j$ achieves equality on the region where $R(\varphi_C + \varphi_j) + c(R(\varphi_C) + \varphi_j)$ achieves the minimum in $\theta$.  Suppose it does so on the region where $\varphi^0_{h,j} + c(\varphi^0_{h,j})$ achieves the minimum.  (The other case is similar.) In this case, by construction, the set $\cT$ does not contain $\varphi_A + \varphi_j$.  Since $\varphi_C + \varphi_j$ is not equivalent to any other pairwise sum of functions in $\cS$ on $\alpha (\varphi^0_{h,j})$, it must achieve the minimum uniquely.  A similar argument shows that $\varphi_B + \varphi_j$ achieves the minimum uniquely on $\alpha (R(\varphi_B + \varphi_j))$.  This completes the proof that every function in $\cT$ achieves the minimum uniquely, and hence $\Upsilon$ is a certificate of independence, when $\cB = 2\cA$.

We now turn to the cases where $\cB$ is strictly contained in $2\cA$.  In these cases it suffices to show that the best approximation of $\theta$ by $\psi \in \cT$ achieves equality on $\alpha (R(\psi))$ for each $\psi \in \cT$.  Fix functions $\varphi$ and $\varphi'$ as in Definition~\ref{Def:BBOnBridge}.  Suppose that $I$ has length greater than or equal to $m_{\ell-1}$.  In this case Lemma~\ref{Lem:Replace} does not apply, since the functions $\varphi_A + \varphi$ and $\varphi_C + \varphi$ are not tropical linear combinations of functions in $\cB$.   By Lemma~\ref{Lem:TwoRegions}, however, the best approximation of $\theta$ by $\varphi_C + \varphi_j$ achieves equality on the region where $R(\varphi_C + \varphi_j) = \varphi^0_{h+1} + \varphi_j$ achieves the minimum.  By an identical argument, the best approximation of $\theta$ by $\varphi_A + \varphi_j$ achieves equality where $R(\varphi_A + \varphi_j) = \varphi^{\infty}_h + \varphi_j$ achieves the minimum.

Now, suppose that $I$ has length less than $m_{\ell-1}$, so $\cB = 2\cA \smallsetminus \{\varphi^{\infty}_h + \varphi'\}$.  We will consider the case where $\varphi_A + \varphi' \in \cT$; the case where $\varphi_B + \varphi' \in \cT$ is similar.  Note that Lemma~\ref{Lem:Replace} does not apply, since the function $\varphi_A + \varphi'$ is not a tropical linear combination of functions in $\cB$.  The assumption that $I$ has length less than $m_{\ell-1}$ implies that $\varphi_A + \varphi'$ has smaller slope than $\theta$ on a large subinterval of $\beta_{\ell}$, and slope smaller than or equal to that of $\theta$ on every bridge to the left of $\beta_{\ell}$.  Thus, in the best approximation, $\varphi_A + \varphi'$ must obtain the minimum to the right of $\beta_{\ell}$.  The assumption on the length of of $I$ also implies that $\varphi_A + \varphi'$ is equivalent to $\varphi^0_h + \varphi'$ to the right of $\beta_{\ell}$, hence $\varphi_A + \varphi'$ achieves the minimum on $\alpha(R(\varphi_A + \varphi')) = \alpha(\varphi^0_h + \varphi')$.
\end{proof}

\subsection{Case 3: one switching loop}
\label{Sec:Switch}
We now consider the case where there is only one switching loop $\gamma_{\ell}$, which switches slope $h$. By Lemma~\ref{Lem:GenericFns}, for all $j \notin \{ h,h+1\}$, there is $\varphi_j \in \Sigma$ with
\[
s_k (\varphi_j) = s_k [j] \mbox{ and } s'_k (\varphi_j) = s'_k [j] \mbox{ for all } k.
\]

Once again, we work with a set $\cS \subset\Sigma$ consisting of the functions $\varphi_j$ for $j \notin \{ h,h+1 \}$, plus three more functions that are contained in a tropical linear subseries of rank 1.

\begin{proposition}
\label{Prop:LoopFns}
There is a rank 1 tropical linear subseries $\Sigma' \subset \Sigma$ and functions $\varphi_A , \varphi_B,$ and $\varphi_C \in \Sigma'$ with the following properties:
\begin{enumerate}
\item  $s'_k (\varphi_A) = s'_k [h]$ for all $k < \ell$;
\item  $s_k (\varphi_B) = s_k [h+1]$ for all $k > \ell$;
\item  $s_k (\varphi_C) = s_k [h+1]$ for all $k\leq\ell$, and $s'_k (\varphi_C) = s_k [h]$ for all $k \geq \ell$;
\item   $s_k (\varphi_\bullet) \in \{ s_k [h], s_k [h+1]\}$ and $s'_k (\varphi_\bullet) \in \{ s'_k [h], s'_k [h+1]\}$ for all $k$.
\end{enumerate}
\end{proposition}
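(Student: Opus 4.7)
\medskip

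\noindent \emph{Proof plan.} The strategy parallels the proof of Proposition~\ref{Prop:BridgeFns}, with the switching loop $\gamma_\ell$ playing the role formerly played by the switching bridge. First I would construct $\varphi_A$ by applying Lemma~\ref{Lemma:Existence} to obtain $\varphi_A \in \Sigma$ with $s'_0(\varphi_A) \leq s'_0[h]$ and $s_{g'+1}(\varphi_A) \geq s_{g'+1}[h]$. Since $\gamma_\ell$ is the only switching loop and there are no switching bridges, the slope index sequence of a function in $\Sigma$ can only decrease as one moves from left to right, except possibly at $\gamma_\ell$ where it can jump from $h$ to $h+1$. The initial inequality forces $\tau'_k(\varphi_A) \leq h$ for all $k < \ell$, while the terminal inequality forces $\tau_k(\varphi_A) \geq h$ for all $k > \ell$. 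Because the only permitted jump at $\gamma_\ell$ raises the slope index by at most one (and only from $h$ to $h+1$), these two constraints together pin the slope index to exactly $h$ on every bridge strictly to the left of $\gamma_\ell$, yielding property~(i). A symmetric argument, interchanging the roles of $w_0$ and $v_{g'+1}$, produces $\varphi_B \in \Sigma$ satisfying property~(ii).

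Next I would lift $\varphi_A$ and $\varphi_B$ to sections $f_A,f_B \in V$ and let $W \subseteq V$ be the pencil they span. The tropicalization $\trop(W)$ is a tropical submodule, and Lemma~\ref{Lem:NumberOfSlopes} applied to $W$ says that at each tangent vector in $\Gamma$ there are exactly two slopes realized by functions in $\trop(W)$. Repeating the monotonicity argument of Step~1 for the pencil, these two slopes along bridge $\beta_k$ must be exactly $(s_k[h], s_k[h+1])$ for every $k$, giving property~(iv) for any function in $\trop(W)$.

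To construct $\varphi_C$, I would pick $\varphi \in \trop(W)$ with $s_\ell(\varphi) = s_\ell[h+1]$; such a $\varphi$ exists since $\varphi_B$ already has this property somewhere (or can be replaced inside the pencil so that it does). Monotonicity of slope indices to the left of the switching loop then forces $s_k(\varphi) = s_k[h+1]$ for all $k \leq \ell$. Dually, I would pick $\varphi' \in \trop(W)$ with $s'_\ell(\varphi') = s'_\ell[h]$, and monotonicity to the right of $\gamma_\ell$ forces $s'_k(\varphi') = s'_k[h] = s_k[h]$ for all $k \geq \ell$. After adding a constant so that $\varphi$ and $\varphi'$ take the same value at a convenient point of $\gamma_\ell$ (say at $w_\ell$, so that the higher-slope branch dominates to the left and the lower-slope branch dominates to the right), I set $\varphi_C := \min\{\varphi,\varphi'\}$. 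Because $\trop(W)$ is closed under pointwise minimum and scalar translation, $\varphi_C \in \trop(W)$, and by construction it satisfies property~(iii).

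The main obstacle I anticipate is justifying the inductive monotonicity of slope indices carefully: one must rule out transient excursions in slope indices on the non-switching loops and bridges and then check that the existence claim used to produce $\varphi$ and $\varphi'$ with prescribed slopes at $\gamma_\ell$ is consistent with the pencil slope vector $(s_k[h], s_k[h+1])$ obtained from $\varphi_A,\varphi_B$. This is a careful bookkeeping exercise using only Lemma~\ref{Lemma:Existence}, Lemma~\ref{Lem:NumberOfSlopes}, and the multiplicity-$\leq 2$ classification of switching patterns from \S\ref{sec:switch}; once arranged, it produces all four properties simultaneously.
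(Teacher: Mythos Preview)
Your proposal is correct and follows essentially the same approach as the paper, which simply states that the argument is identical to that of Proposition~\ref{Prop:BridgeFns}. You correctly adapt that proof by replacing the distinguished point $x$ on the switching bridge with the switching loop $\gamma_\ell$: construct $\varphi_A$ and $\varphi_B$ via Lemma~\ref{Lemma:Existence}, let $W$ be the pencil spanned by lifts $f_A,f_B$, verify that $\trop(W)$ has slope vector $(s_k[h],s_k[h+1])$ at every $k$, and build $\varphi_C$ as the minimum of two functions in $\trop(W)$ with prescribed slopes on either side of $\gamma_\ell$.
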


\begin{proof}
The argument is identical to the proof of Proposition~\ref{Prop:BridgeFns}.
\end{proof}

Let $\cS = \{ \varphi_j \mid j \neq h, h+1 \} \cup \{ \varphi_A, \varphi_B, \varphi_C \}$.  For $i \neq h,h+1$, let $f_{k,i} = (\varphi_i)_{|\gamma_k}$.  For $k \leq \ell$, let $f_{k,h} = (\varphi_A)_{|\gamma_k}$ and $f_{k,h+1} = (\varphi_C)_{|\gamma_k}$.  For $k > \ell$, let $f_{k,h} = (\varphi_C)_{|\gamma_k}$ and $f_{k,h+1} = (\varphi_B)_{|\gamma_k}$.  We let $\cA$ be the set of all $(\Sigma, \{ f_{k,i} \})$-building blocks.

\subsubsection{Case 3a:  there are no bridges of multiplicity one}

As in the previous case, the functions $\varphi_A, \varphi_B$, and $\varphi_C$ can be written as tropical linear combinations of simpler functions in $R(D)$.  We have the following analogue of Lemmas~\ref{Lem:BuildingBlocksBridge} and \ref{lem:TLC}.

\begin{lemma}
\label{lem:LoopTLC}
There are building blocks $\varphi^0_h, \varphi^0_{h+1},$ and $\varphi^{\infty}_h \in R(D)$ with the following properties:
\begin{enumerate} 
\item  $s_k (\varphi^0_h) = s_k [h]$ and $s'_k (\varphi^0_h) = s'_k[h]$ for all $k$;
\item  $s_k (\varphi^0_{h+1}) = s_k [h+1]$ and $s'_k (\varphi^0_{h+1}) = s'_k [h+1]$ for all $k$;
\item  $s_k (\varphi^{\infty}_h) = s_k [h]$, $s'_{k-1} (\varphi^{\infty}_h) = s'_{k-1} [h]$ for all $k\leq\ell$, and $s_k (\varphi^{\infty}_h) = s_k [h+1]$, $s'_{k-1} (\varphi^{\infty}_h) = s'_{k-1} [h+1]$ for all $k > \ell$.
\end{enumerate}
\end{lemma}

\begin{proof}
The construction of these three functions is identical to that of Lemma~\ref{Lem:BuildingBlocksBridge}.  In particular, the function $\varphi^0_h$ is the unique building block with constant building sequence $h$ and the function $\varphi^0_{h+1}$ is the unique building block with constant building sequence $h+1$.  The function $\varphi^{\infty}_h$ is the unique building block with building sequence
\[
\tau_k = \tau'_k = \begin{cases} h & \mbox{if $k < \ell$} \\  h+1 & \mbox{if $k \geq \ell$.} \end{cases}
\]
\vskip -22pt
\end{proof}

\begin{lemma}
The restrictions of the functions $\varphi_A$, $\varphi_B$, and $\varphi_C$ to $\Gamma_L$ can be written as tropical linear combinations of the restrictions of the building blocks $\varphi^0_h, \varphi^0_{h+1},$ and $\varphi^{\infty}_h$, as follows:
\begin{enumerate} [label=(\roman*), leftmargin = .75cm]
\item  The function $(\varphi_A)_{|\Gamma_L}$ is uniquely expressible as a tropical linear combination of $\varphi^0_h$ and $\varphi^{\infty}_h$;
\item  The function $(\varphi_B)_{|\Gamma_L}$ is uniquely expressible as a tropical linear combination of $\varphi^0_{h+1}$ and $\varphi^{\infty}_h$;
\item  The function $(\varphi_C)_{|\Gamma_L}$ is uniquely expressible as a tropical linear combination of $\varphi^0_h$ and $\varphi^0_{h+1}$.
\end{enumerate}
\end{lemma}

\begin{proof}
The proof differs from that of Lemma~\ref{lem:TLC} only on loops of positive multiplicity.  If $\mu (\gamma_k) = 1$ and $\gamma_k$ is not a switching loop, then the conclusion remains the same.  Specifically, suppose that $\varphi \in R(D)$ is a building block satisfying $s_k (\varphi) = s_k [i]$ and $s'_k (\varphi) = s'_k[i]$.  There is at most one value of $i$ for which $\varphi_{|\gamma_k}$ is not one of the functions pictured in Figure~\ref{Fig:FnShapes}.  If such an $i$ exists, we have $s'_k[i] = s_k[i]-1$.  The slopes of $\varphi$ along the bottom edge of $\gamma_k$ are bounded between $s_k[i]-1$ and $s_k[i]$.  Since $\gamma_k$ is not a switching loop, for $j \neq i$ we have $s_k[i] \neq s_k[j]$ and $s_k[i]-1 \neq s_k[j]$.  Since $\varphi$ is defined to be equivalent to a function in $\Sigma$ on $\gamma_k$, it follows that $s_{\zeta} (\varphi) = s_{\zeta}[i]$ for all rightward tangent vectors $\zeta$ along the bottom edge.

It remains to consider the switching loop $\gamma_{\ell}$.  By the classification of switching loops in Section~\ref{sec:switchingclassification}, however, we see that each of $\delta_{\ell} (\varphi_A)$, $\delta_{\ell} (\varphi_B)$, and $\delta_{\ell} (\varphi_C)$ are at most 1.  It follows from Lemma~\ref{Lem:EquivOneLoop}, therefore, that $\varphi_A$, $\varphi_B$, and $\varphi_C$ are equivalent on $\gamma_{\ell}$.
\end{proof}

Now, we have
\[
\cA = \{ \varphi_i \, \mid \, i \neq h,h+1 \} \cup \{ \varphi^0_h , \varphi^0_{h+1} , \varphi^{\infty}_h \} ,
\]
and the argument is identical to Case 2c.  Specifically, we let $I \subseteq \beta_{\ell+1}$ be the interval where $\varphi_A$ has slope $s_{\ell+1}[h+1]$.  As in Definition~\ref{Def:BBOnBridge}, suppose $s_{\ell}[h] < s'_{\ell}[h]$, and there are functions $\varphi, \varphi' \in \cA$ such that $\sigma_{\ell+1} = s_{\ell+1} [h] + s_{\ell+1} (\varphi) = s_{\ell+1} [h] + s_{\ell+1} (\varphi') +1.$  Then
\begin{enumerate}
\item  if $I$ has length less than $m_{\ell}$, let $\cB = 2\cA \smallsetminus \{ \varphi^{\infty}_h + \varphi' \}$;
\item   if $I$ has length at least $m_{\ell}$, let $\cB = 2\cA \smallsetminus \{ \varphi^0_h + \varphi \}$.
\end{enumerate}
Otherwise, let $\cB = 2\cA$.  The set $\cB$ satisfies properties $(\mathbf B)$ and $(\mathbf B')$, exactly as in Lemma~\ref{Lem:BBOnBridge}.

If $i, j \not \in \{ h, h+1\}$, then $\varphi_{ij} \in \cT$.  As in Lemma~\ref{Lem:TwoRegions}, the best approximation of $\theta$ by $\varphi_C + \varphi_j$ achieves equality on the region where either $\varphi^0_{h,j} + c^0_{hj}$ or $\varphi^0_{h+1,j} + c^0_{h+1,j}$ achieves the minimum.  If the best approximation by $\varphi_C + \varphi_j$ achieves equality where $\varphi^0_{h,j}$ achieves the minimum, then we put $ \varphi_B + \varphi_j, \varphi_C + \varphi_j$ in $\cT$.  Otherwise, it achieves equality where $\varphi^0_{h+1,j}$ achieves the minimum, and we put $\varphi_A + \varphi_j, \varphi_C + \varphi_j$ in $\cT$.  Similarly, we add to $\cT$ three pairwise sums of elements of $\{ \varphi_A , \varphi_B, \varphi_C \}$, exactly as in Case 2c.  We define the replacement function $R$ exactly as in Case 2c, and the proof that the best approximation $\Upsilon$ is a certificate of independence is the same.

\subsubsection{Case 3b:  there is one bridge of multiplicity one}

Let $\beta_{\ell'}$ be a bridge with multiplicity 1.  In this case, we combine the construction of Case 3a with that from Case 1b.  Define functions $\varphi^0_h$, $\varphi^0_{h+1}$, and $\varphi^{\infty}_h$, as in Lemma~\ref{lem:LoopTLC} such that, for all rightward tangent vectors $\zeta$ in $\beta_{\ell'}$,
\begin{itemize}
\item  $s_{\zeta} (\varphi^0_h) = s_{\zeta} [h]$;
\item  $s_{\zeta} (\varphi^0_{h+1}) = s_{\zeta} [h+1]$;
\item  $s_{\zeta} (\varphi^{\infty}_h) = s_{\zeta} [h]$ if $\ell' < \ell$, and $s_{\zeta} (\varphi^{\infty}_h) = s_{\zeta} [h+1]$ if $\ell' \geq \ell$.
\end{itemize}
Let
\[
\cA' = \{ \varphi_i \, \mid \, i \neq h,h+1 \} \cup \{ \varphi^0_h , \varphi^0_{h+1} , \varphi^{\infty}_h \} .
\]
Note that the functions in $\cA'$ may not be $(\Sigma, \{ f_{k,i} \})$-building blocks because they do not have constant slope along $\beta_{\ell'}$.  However, each function in $\cA'$ is a tropical linear combination of functions in $\cA$.  In particular, for $i \notin \{ h, h+1 \}$, $\varphi_i$ is a tropical linear combination of building blocks with constant building sequence $i$.  The function $\varphi^0_h$ is a a tropical linear combination of building blocks with constant buidling sequence $h$, the function $\varphi^0_{h+1}$ is a tropical linear combination of building blocks with constant building sequence $h+1$, and the function $\varphi^{\infty}_h$ is a tropical linear combination of building blocks with building sequence
\[
\tau_k = \tau'_{k-1} = \begin{cases} h & \mbox{if $k \leq \ell$} \\  h+1 & \mbox{if $k > \ell$.} \end{cases}
\]
Let $\cB' \subseteq 2\cA'$ be the subset denoted $\cB$ in Case 3a.  Finally, let $\cB \subseteq 2\cA$ be the set of sums $\varphi_{\tau_1,s_1} + \varphi_{\tau_2,s_2}$ with the property that there exists $\varphi_1 + \varphi_2 \in \cB'$ such that $\varphi_i$ is a tropical linear combination of building blocks with building sequence $\tau_i$.

Combining Lemma~\ref{Lem:BBOnBridge} with the proof of property $(\mathbf B)$ in Case 1b, we see that $\cB$ satisfies properties $(\mathbf B)$ and $(\mathbf B')$.  By Theorem~\ref{Thm:ExtremalConfig}, there exists a template $\theta$ and an assignment function $\alpha$ satisfying \ref{T1}-\ref{T5}.
 We define $\cT$ exactly as in Case 3a.  We let $\theta'$ be the best approximation of $\theta$ by $\cB'$, and then let $\Upsilon$ be the best approximation of $\theta'$ by $\cT$.  As in Case 3a, $\theta'$ is a certificate of independence, in which each function $\psi \in \cB'$ obtains the minimum uniquely on $\alpha (R(\psi))$.  Then, by Lemma~\ref{Lem:EasyReplace} we see that $\Upsilon$ is a certificate of independence as well.  \qed

\subsection{Case 4: two switching loops}
We now consider the case where there are two switching loops, $\gamma_{\ell}$ and $\gamma_{\ell'}$, with $\ell < \ell'$.  We write $h$ and $h'$ for the slopes that are switched by $\gamma_{\ell}$ and $\gamma_{\ell'}$, respectively.  Note that both loops must have multiplicity 1.  By our classification of switching loops in \S\ref{sec:switch}, we have
\[
s'_{\ell} [i] = s_{\ell} [i] \mbox{ and } s'_{\ell'} [i] = s_{\ell'} [i] \text{ for all } i.
\]
Moreover,
\[
s_{\ell} [h+1] = s_{\ell} [h]+1 \mbox{ and } s_{\ell'} [h'+1] = s_{\ell'} [h']+1.
\]
Since $\rho=2$ and we have two loops with positive multiplicity, by Proposition~\ref{Thm:BNThm} there are no decreasing loops or bridges.  By Lemma~\ref{Lem:EquivOneLoop}, up to an additive constant, the functions $f_{k,i}$ are uniquely determined for all $k$ and $i$ by \eqref{eq:fki} and \eqref{eq:eff}.

We break our analysis into several subcases, depending on the relationship between $h$ and $h'$.  By Lemma~\ref{Lem:GenericFns}, for all $j \notin \{ h,h+1,h',h'+1\}$, there is a function $\varphi_j \in \Sigma$ with
\[
s_k (\varphi_j) = s_k [j] \mbox{ and } s'_k (\varphi_j) = s'_k [j] \mbox{ for all } k.
\]

\medskip

\subsubsection{Case 4a:  $h' \notin \{ h-1, h, h+1\}$}

This is the simplest subcase because, roughly speaking, the two switching loops do not interact with one another.  More precisely, there are functions $\varphi_A, \varphi_B$, and $\varphi_C$ in $\Sigma$ with slopes as defined in Proposition~\ref{Prop:LoopFns}, and similarly, replacing $\ell$ with $\ell'$ and $h$ with $h'$, there are analogous functions $\varphi'_A, \varphi'_B$, and $\varphi'_C$ in $\Sigma$.  We may then have $(\Sigma, \{ f_{k,i} \})$-building blocks $\varphi^0_{h}, \varphi^0_{h+1}, \varphi^{\infty}_{h}, \varphi^0_{h'}, \varphi^0_{h'+1}$, and $\varphi^{\infty}_{h'}$ as in Case 3a, and set
\[
\cA = \{ \varphi_i \, : \, i \neq h,h+1,h',h'+1 \} \cup \{ \varphi^0_h , \varphi^0_{h+1} , \varphi^{\infty}_h , \varphi^0_{h'} , \varphi^0_{h'+1} , \varphi^{\infty}_{h'} \}.
\]
Our construction of the set $\cB$ and the independence $\Upsilon$ now follow the exact same steps as in Case 3a, treating each switching loop separately.

\medskip

\subsubsection{Case 4b:  $h' = h$}

We first identify a subset $\cS \subset \Sigma$.  It will consist of the functions $\varphi_i$ for $i \not \in \{ h,h+1 \}$, together with a subset of the functions illustrated in Figure~\ref{Fig:Case2}.

\begin{figure}[H]
\begin{tikzpicture}

\draw (-0.2,4) node {{\tiny $\varphi_A$}};
\draw (0,4)--(6,4);
\draw [ball color=white] (2,4) circle (0.55mm);
\draw [ball color=white] (4,4) circle (0.55mm);
\draw [ball color=black] (3,4) circle (0.55mm);
\draw [ball color=black] (5,4) circle (0.55mm);
\draw (1,4.2) node {{\tiny $h$}};
\draw (2.5,4.2) node {{\tiny $h+1$}};
\draw (3.5,4.2) node {{\tiny $h$}};
\draw (4.5,4.2) node {{\tiny $h+1$}};
\draw (5.5,4.2) node {{\tiny $h$}};

\draw (-0.2,3) node {{\tiny $\varphi_B$}};
\draw (0,3)--(6,3);
\draw [ball color=white] (2,3) circle (0.55mm);
\draw [ball color=white] (4,3) circle (0.55mm);
\draw [ball color=black] (1,3) circle (0.55mm);
\draw [ball color=black] (3,3) circle (0.55mm);
\draw (0.5,3.2) node {{\tiny $h+1$}};
\draw (1.5,3.2) node {{\tiny $h$}};
\draw (2.5,3.2) node {{\tiny $h+1$}};
\draw (3.5,3.2) node {{\tiny $h$}};
\draw (5,3.2) node {{\tiny $h+1$}};

\draw (-0.2,2) node {{\tiny $\varphi_C$}};
\draw (0,2)--(6,2);
\draw [ball color=white] (2,2) circle (0.55mm);
\draw [ball color=white] (4,2) circle (0.55mm);
\draw [ball color=black] (5,2) circle (0.55mm);
\draw (1,2.2) node {{\tiny $h+1$}};
\draw (3,2.2) node {{\tiny $h$}};
\draw (4.5,2.2) node {{\tiny $h+1$}};
\draw (5.5,2.2) node {{\tiny $h$}};

\draw (-0.2,1) node {{\tiny $\varphi_D$}};
\draw (0,1)--(6,1);
\draw [ball color=white] (2,1) circle (0.55mm);
\draw [ball color=white] (4,1) circle (0.55mm);
\draw [ball color=black] (1,1) circle (0.55mm);
\draw (0.5,1.2) node {{\tiny $h+1$}};
\draw (1.5,1.2) node {{\tiny $h$}};
\draw (3,1.2) node {{\tiny $h+1$}};
\draw (5,1.2) node {{\tiny $h$}};

\draw (-0.2,0) node {{\tiny $\varphi_E$}};
\draw (0,0)--(6,0);
\draw [ball color=white] (2,0) circle (0.55mm);
\draw [ball color=white] (4,0) circle (0.55mm);
\draw [ball color=black] (3,0) circle (0.55mm);
\draw (1,0.2) node {{\tiny $h+1$}};
\draw (2.5,0.2) node {{\tiny $h+1$}};
\draw (3.5,0.2) node {{\tiny $h$}};
\draw (5,0.2) node {{\tiny $h$}};

\draw (2,-0.25) node {{\tiny $\gamma_{\ell}$}};
\draw (4,-0.25) node {{\tiny $\gamma_{\ell'}$}};

\end{tikzpicture}
\caption{A schematic depiction of the five functions of Proposition~\ref{Prop:Case2Fns}.}
\label{Fig:Case2}
\end{figure}

\begin{proposition}
\label{Prop:Case2Fns}
There is a rank $1$ tropical linear subseries $\Sigma' \subset \Sigma$ containing functions $\varphi_A , \varphi_B , \varphi_C , \varphi_D , \varphi_E$ with the following properties:
\begin{enumerate} 
\item  $s'_k (\varphi_A) = s'_k [h]$ for all $k <\ell$;
\item  $s_k (\varphi_B) = s_k [h+1]$ for all $k > \ell'$;
\item  $s_k (\varphi_C) = s_k [h+1]$ for all $k \leq \ell$ and $s'_k (\varphi_C) = s'_k [h]$ for all $\ell \leq k \leq \ell'$;
\item  $s_k (\varphi_D) = s_k [h+1]$ for all $\ell < k \leq \ell'$ and $s'_k (\varphi_D) = s'_k [h]$ for all all $k \geq \ell'$;
\item  $s_k (\varphi_E) = s_k [h+1]$ for all $k\leq\ell$ and $s'_k (\varphi_E) = s'_k [h]$ for all $k \geq \ell'$;
\item $s_k (\varphi_\bullet) \in \{ s_k [h], s_k [h+1]\}$ and $s'_k (\varphi_\bullet) \in \{ s'_k [h], s'_k [h+1]\}$, for all $k$.
\end{enumerate}

\end{proposition}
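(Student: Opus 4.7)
\textbf{Proof plan for Proposition~\ref{Prop:Case2Fns}.}
My strategy mirrors the proofs of Propositions~\ref{Prop:BridgeFns} and~\ref{Prop:LoopFns}: first construct two extremal functions $\varphi_A, \varphi_B \in \Sigma$ with slope $h$ (resp.\ $h+1$) on one side of the graph, then take the pencil they span and extract $\varphi_C, \varphi_D, \varphi_E$ from inside it. The new wrinkle is the middle segment between $\gamma_\ell$ and $\gamma_{\ell'}$, where slopes must be specified for $\varphi_C$ and $\varphi_D$, and the ``double-switch'' function $\varphi_E$.

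I would begin by applying Lemma~\ref{Lemma:Existence} with the tangent vectors at $w_0$ and $v_{g'+1}$ and index $i = h$ to obtain $\varphi_A \in \Sigma$ with $s'_0(\varphi_A) \leq s'_0[h]$ and $s_{g'+1}(\varphi_A) \geq s_{g'+1}[h]$. Because $\rho = 2$ is saturated by the two switching loops (by Lemma~\ref{Lem:MultBound}), there are no decreasing loops or bridges and no other switching, so Proposition~\ref{Prop:LingeringLatticePath} forces $s'_k(\varphi_A) = s'_k[h]$ for every $k<\ell$ (the slope cannot decrease across any non-switching structure, and since $\varphi_A \in \Sigma$ its slopes must occur in the slope vector at each bridge). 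Apply Lemma~\ref{Lemma:Existence} analogously with index $h+1$ to produce $\varphi_B \in \Sigma$ with $s_k(\varphi_B) = s_k[h+1]$ for all $k > \ell'$. Let $f_A, f_B \in V$ tropicalize to these functions, and set $W \subset V$ to be the pencil they span. Exactly as in the proofs of Propositions~\ref{Prop:BridgeFns} and~\ref{Prop:LoopFns}, one checks that $s_k(\trop W) = (s_k[h], s_k[h+1])$ for every $k$; so every element of $\trop W$ has slope in $\{s_k[h], s_k[h+1]\}$ on every bridge, which gives condition~(vi).

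Next, I would extract $\varphi_C$ and $\varphi_D$ from $\trop W$. For $\varphi_C$, choose $f_C \in W$ whose tropicalization has slope $s_1[h+1]$ on the leftmost bridge; since slopes in $\trop W$ take only the two values $s_k[h], s_k[h+1]$ and cannot decrease outside a switching loop, this forces slope $h+1$ throughout $\Gamma_{\leq \ell}$, and then the pencil structure allows a downward transition at $\gamma_\ell$ (the reverse of the switching direction, routed through the unique point of $D$ on $\gamma_\ell$) after which $f_C$ can be taken to have slope $h$ on all of $\Gamma_{\ell\leq \cdot \leq \ell'}$. This yields $\varphi_C$ satisfying~(iii). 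By an entirely symmetric argument on the right end, produce $\varphi_D$ satisfying~(iv). Finally, to construct $\varphi_E$ I would follow the recipe of Example~\ref{Ex:Interval} Case~2 and Proposition~\ref{Prop:BridgeFns}(iii): choose a function $f_E^{(1)} \in W$ whose tropicalization has slope $h+1$ on $\Gamma_{\leq \ell}$ and a function $f_E^{(2)} \in W$ whose tropicalization has slope $h$ on $\Gamma_{\geq \ell'}$, and rescale so that $\trop f_E^{(1)}$ and $\trop f_E^{(2)}$ agree at a point between $\gamma_\ell$ and $\gamma_{\ell'}$; then set $\varphi_E = \min\{\trop f_E^{(1)}, \trop f_E^{(2)}\}$, which lies in $\trop W$ because $\trop W$ is closed under tropical linear combination.

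The main obstacle is the middle segment. For Proposition~\ref{Prop:LoopFns}, there is no middle segment and the switching behavior of any single function in the pencil is governed by a single switching loop. Here I need to verify that one can actually find functions in $\trop W$ that maintain the prescribed slope on \emph{both} the left segment and the middle segment simultaneously; the delicate point is to rule out a forced secondary transition somewhere in the middle. This amounts to showing that the pencil $\trop W$, viewed as a one-parameter family indexed by $\val(t)$ where $t = f_A/f_B$, has a nontrivial sub-interval of parameter values realizing each of the desired slope patterns. This is the direct analogue of the parameters $t$ and $t'$ in Proposition~\ref{Prop:BridgeDependence} and its proof, and I expect the verification to proceed in parallel: the three-function dependences among $\varphi_A, \varphi_C, \varphi_E$ and among $\varphi_B, \varphi_D, \varphi_E$ (on the left and right halves of the graph respectively) impose the required consistency, and the combinatorial classification of switching loops in \S\ref{sec:switch} guarantees that the transitions can occur at the correct locations within $\gamma_\ell$ and $\gamma_{\ell'}$.
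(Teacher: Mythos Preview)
Your overall strategy matches the paper's: construct $\varphi_A, \varphi_B$ via Lemma~\ref{Lemma:Existence}, let $W$ be the pencil they span, verify $s_k(\trop W) = (s_k[h], s_k[h+1])$ for all $k$ (which gives (vi)), and then extract the remaining functions from $\trop(W)$. Your construction of $\varphi_E$ as a tropical minimum of two elements of $\trop(W)$ is exactly the paper's method.

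The gap is in your construction of $\varphi_C$ (and symmetrically $\varphi_D$). You propose choosing a single $f_C \in W$ with slope $s_1[h+1]$ on the leftmost bridge, and then assert that ``$f_C$ can be taken to have slope $h$'' on the middle segment. But the element of $W$ with the higher slope $s'_0[h+1]$ at $w_0$ is unique up to scalar (since having the larger order of vanishing in the reduction is a codimension-one condition in the pencil), so there is no remaining freedom: its behaviour on the middle segment is already determined and need not be what you want. Your ``main obstacle'' paragraph correctly identifies this as the crux, but the proposed fix---appealing to dependences among $\varphi_A, \varphi_C, \varphi_E$---is circular, since $\varphi_C$ is what you are trying to construct.

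The paper dissolves this obstacle by applying to $\varphi_C$ and $\varphi_D$ the \emph{same} min-of-two trick you already use for $\varphi_E$. For $\varphi_C$: choose $\varphi \in \trop(W)$ with $s_\ell(\varphi) = s_\ell[h+1]$ (this forces slope $h+1$ on all of $\Gamma_{\leq \ell}$, as you argued), choose $\varphi' \in \trop(W)$ with $s'_\ell(\varphi') = s'_\ell[h]$ (this forces slope $h$ between $\gamma_\ell$ and $\gamma_{\ell'}$, since there is no switching in that range), rescale so they agree on $\gamma_\ell$, and set $\varphi_C = \min\{\varphi, \varphi'\}$. The constructions of $\varphi_D$ and $\varphi_E$ are then declared ``similar''---the gluing happens at $\gamma_{\ell'}$ for $\varphi_D$, and at $\gamma_\ell$ on one side and $\gamma_{\ell'}$ on the other for $\varphi_E$. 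No parameter analysis in the style of Proposition~\ref{Prop:BridgeDependence} is needed at this stage; that comes only later, in Lemmas~\ref{Lem:Case2Dependence} and~\ref{Lem:ReduceToOne}.
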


\begin{proof}
Applying Lemma~\ref{Lemma:InductiveExistence} twice, we see that there is a rank $1$ tropical linear subseries
\[
\Sigma' \subset \{ \varphi \in \Sigma \mid s'_0(\varphi) \leq s'_0[h+1] \mbox{ and } s_{g+1}(\varphi) \geq s_{g+1}[h] \} .
\]
  If $\varphi \in \Sigma'$ then, for each $k$, $s_k (\varphi)$ is equal to either $s_k[h]$ or $s_k [h+1]$, exactly as in Proposition~\ref{Prop:BridgeFns}.

Choose $\varphi_A$ and $\varphi_B$ as in Proposition~\ref{Prop:BridgeFns}.  Next, choose  $\varphi \in \Sigma'$ such that $s_{\ell} (\varphi) = s_{\ell} [h+1]$, and $\varphi' \in \Sigma'$ such that $s'_{\ell} (\varphi') = s'_{\ell} [h]$.  By adding a scalar to $\varphi'$, we may assume that $\varphi$ and $\varphi'$ are equal on $\gamma_{\ell}$. Set $\varphi_C = \min \{ \varphi, \varphi' \}$.  The constructions of $\varphi_D$ and $\varphi_E$ are similar to that of $\varphi_C$.
\end{proof}

We now characterize two more functions in $R(D)$, depicted schematically in Figure~\ref{Fig:TwoOptions}.

\begin{lemma}
\label{lem:TwoOptions}
There are functions $\psi, \psi' \in R(D)$, unique up to additive constants, with the following properties:
\begin{enumerate}
\item  $s_k (\psi) = s_k [h+1]$ for all $k \leq \ell'$, and $s'_k (\psi) = s'_k [h]$ for all $k \geq \ell'$;
\item  $s_k (\psi') = s_k [h+1]$ for all $k \leq \ell$, and $s'_k (\psi') = s'_k [h]$ for all $k \geq \ell$;
\item  $\mathrm{supp} (D + \ddiv(\psi))$ contains $v_{\ell'}$ and $w_{\ell'}$;
\item $\mathrm{supp} (D + \ddiv(\psi'))$ contains $v_{\ell}$ and $w_{\ell}$.
\end{enumerate}
\end{lemma}

\begin{figure}[H]
\begin{tikzpicture}
\tikzmath{ \y = .7;}

\draw (0,4*\y)--(6,4*\y);
\draw (-0.5,4*\y) node {{\small $\psi$}};
\draw [ball color=white] (2,4*\y) circle (0.55mm);
\draw [ball color=white] (4,4*\y) circle (0.55mm);
\draw (1,4.2*\y) node {{\tiny $h+1$}};
\draw (3,4.2*\y) node {{\tiny $h+1$}};
\draw (5,4.2*\y) node {{\tiny $h$}};

\draw (0,3*\y)--(6,3*\y);
\draw (-0.5,3*\y) node {{\small $\psi'$}};
\draw [ball color=white] (2,3*\y) circle (0.55mm);
\draw [ball color=white] (4,3*\y) circle (0.55mm);
\draw (1,3.2*\y) node {{\tiny $h+1$}};
\draw (3,3.2*\y) node {{\tiny $h$}};
\draw (5,3.2*\y) node {{\tiny $h$}};

\end{tikzpicture}
\caption{A schematic depiction of the functions $\psi$ and $\psi'$ from Lemma~\ref{lem:TwoOptions}.}
\label{Fig:TwoOptions}
\end{figure}

\begin{lemma}
\label{Lem:Case2Dependence}
Either $\psi$ or $\psi'$ is in $\Sigma'$.
\end{lemma}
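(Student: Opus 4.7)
The plan is to produce a function in $W$ whose tropicalization coincides with either $\psi$ or $\psi'$ up to an additive constant, exploiting the fact that $\trop(W)$ is a tropical line and that the only switching loops available between $\beta_\ell$ and $\beta_{\ell'}$ are the loops $\gamma_\ell$ and $\gamma_{\ell'}$ themselves.

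First I would pick $f_C, f_D \in W$ tropicalizing to $\varphi_C$ and $\varphi_D$ respectively. Since $\varphi_C$ and $\varphi_D$ disagree on the bridges between the two switching loops (by parts (iii)--(iv) of Proposition~\ref{Prop:Case2Fns}), the sections $f_C, f_D$ are linearly independent and span $W$. For each $f = \alpha f_C + \beta f_D \in W$ we have
\[
\trop(f)(v) \;\geq\; \min\{\val(\alpha) + \varphi_C(v),\; \val(\beta) + \varphi_D(v)\},
\]
with equality unless the leading terms cancel at the corresponding type-$2$ point. Varying the ratio $\val(\alpha)-\val(\beta)$ yields a one-parameter family inside $\trop(W)$; the combinatorial type (in particular, the location where the slope switches from $s_k[h+1]$ to $s_k[h]$) changes at discrete values of this parameter.

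Next I would analyze the extreme combinatorial types in this family. Because there are no switching loops or switching bridges strictly between $\gamma_\ell$ and $\gamma_{\ell'}$, any function in $\trop(W)$ whose bridge slopes match those of $\psi$ (equivalently $\psi'$) outside this range must execute its transition from $s_k[h+1]$ to $s_k[h]$ at one of the two switching loops, by Proposition~\ref{Prop:LingeringLatticePath}. The two extreme members of the family then correspond to the transition occurring as far left as possible (at $\gamma_\ell$) or as far right as possible (at $\gamma_{\ell'}$). Comparing with the multiplicity-$1$ switching pattern described in \S\ref{sec:switch} and illustrated in Figure~\ref{Fig:SwitchingLoopMult1}, the ``extreme'' behavior at $\gamma_{\ell'}$ forces the restriction of $D+\ddiv(\trop(f))$ to $\gamma_{\ell'}$ to contain both $v_{\ell'}$ and $w_{\ell'}$, which pins down $\trop(f) = \psi$ up to an additive constant; symmetrically, the extreme behavior at $\gamma_\ell$ yields $\trop(f) = \psi'$.

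The main obstacle is showing that at least one of these two extremes is actually realized in $\trop(W)$, rather than only appearing as a limit. I would address this by exhibiting an explicit $K$-linear combination $f = \alpha f_C + \beta f_D$ for which cancellation in the reduction at one of the switching loops forces the divisor $D+\ddiv(\trop(f))$ to acquire the degenerate support demanded by Definition~\ref{def:TwoOptions}. Concretely, the reductions of $f_C$ and $f_D$ on the irreducible component corresponding to $\gamma_{\ell'}$ (or $\gamma_\ell$) are two sections of a line bundle on a genus-$1$ component, and a unique ratio $\alpha/\beta$ (up to sign) makes their combination vanish on that component to the required higher order. Verifying that this choice yields $\trop(f) = \psi$ or $\psi'$, rather than an even more degenerate combination, is the step that will require the most care, but the bookkeeping is governed entirely by the local slope analysis around the single switching loop where cancellation occurs.
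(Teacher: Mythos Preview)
Your approach is significantly more complicated than necessary, and the part you flag as requiring ``the most care'' is both incomplete and contains an error. The paper's proof is a short case analysis using the function $\varphi_E$ from Proposition~\ref{Prop:Case2Fns}, which you never invoke: if $s_\ell(\varphi_D) = s_\ell[h+1]$, then by the defining properties of $\varphi_D$ one immediately has $\varphi_D = \psi$. Otherwise, since $W$ is a pencil, the three functions $\varphi_C, \varphi_D, \varphi_E$ are tropically dependent; analyzing this dependence at $v_\ell$ and $w_\ell$ forces $s'_\ell(\varphi_E) = s'_\ell[h]$, whence $\varphi_E = \psi'$. No new elements of $W$ need to be constructed.

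By contrast, you try to manufacture the required element by hand from $f_C$ and $f_D$, and your sketch of how to choose the ratio $\alpha/\beta$ rests on reductions to a ``genus-$1$ component.'' But the irreducible components of the special fiber here are all rational: each loop $\gamma_k$ corresponds to a pair of $\mathbb{P}^1$'s meeting at two nodes, not a single elliptic component. So the local analysis you describe does not match the actual geometry. Even granting the correct local model, you would still need to verify that the resulting tropicalization has exactly the support conditions of Definition~\ref{def:TwoOptions} rather than something more degenerate---which is precisely the work the tropical-dependence argument with $\varphi_E$ sidesteps.
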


\begin{proof}
If $s_{\ell} (\varphi_D) = s_{\ell} [h+1]$, then $s_k (\varphi_D) = s_k [h+1]$ for all $k \leq \ell$, and we see that $\varphi_D = \psi$.  Now, suppose that $s_{\ell} (\varphi_D) \neq s_{\ell} [h+1]$.  Because $\Sigma'$ has rank $1$, the functions $\varphi_C, \varphi_D, $ and $\varphi_E$ from Proposition~\ref{Prop:Case2Fns} are tropically dependent.  Since $s_{\ell} (\varphi_D) \neq s_{\ell} [h+1]$, in this dependence the functions $\varphi_C$ and $\varphi_E$ must achieve the minimum at $v_{\ell}$.  All three functions agree on the loop $\gamma_{\ell}$, and since $s'_{\ell} (\varphi_C) = s'_{\ell} [h]$, it follows that one of the other two functions must also have slope $s'_{\ell} [h]$ along the bridge $\beta_{\ell+1}$.  By definition, this function cannot be $\varphi_D$, so we must have $s'_{\ell} (\varphi_E) = s'_{\ell} [h]$.  This implies that $s'_k (\varphi_E) = s'_k [h]$ for all $k \geq \ell$, hence $\varphi_E = \psi'$.
\end{proof}

\begin{lemma}
\label{Lem:ReduceToOne}
If $\psi \in \Sigma'$, then $\gamma_\ell$ is not a switching loop for $\Sigma'$. Similarly, if $\psi' \in \Sigma'$, then $\gamma_{\ell'}$ is not a switching loop for $\Sigma'$.
\end{lemma}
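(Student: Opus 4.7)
My plan is to argue by contradiction, exploiting the rank-one (pencil) structure of $\trop(W)$ and the theory of reductions of $W$ at type-$2$ points.  I sketch the argument for the statement about $\psi$; the $\psi'$ statement follows by the symmetric argument obtained by swapping $\ell\leftrightarrow\ell'$ and the roles of the vertices $v_{\ell'}$ and $w_\ell$.  Suppose $\psi\in\trop(W)$ and, for contradiction, that $\gamma_\ell$ is a switching loop for $\trop(W)$.  By Lemma~\ref{Lem:NumberOfSlopes} together with Proposition~\ref{Prop:Case2Fns}(iv), $\trop(W)$ realizes exactly the two slope values $s_k[h]$ and $s_k[h+1]$ at each tangent vector.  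The switching hypothesis produces $\phi\in\trop(W)$ with $s_\ell(\phi)=s_\ell[h]$ and $s'_\ell(\phi)=s_\ell[h+1]$.  Using that slopes of functions in $R(D)$ are non-increasing going right on bridges (where $D$ has no support) and that only two slope values are available in $\trop(W)$, I would pin down $s_k(\phi)=s_k[h]$ for all $k\le\ell$ and $s_{\ell+1}(\phi)=s_\ell[h+1]=s_{\ell+1}(\psi)$; in particular $\psi$ and $\phi$ have identical slope on the bridge $\beta_{\ell+1}$.

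The key step is to pass to the two-dimensional reduction space $W_{w_\ell}\subseteq\kappa(X_{w_\ell})$.  Taking lifts $\tilde\psi,\tilde\phi\in W$ normalized so that $\val_{w_\ell}(\tilde\psi)=\val_{w_\ell}(\tilde\phi)=0$, the reductions $\bar{\tilde\psi}$ and $\bar{\tilde\phi}$ both vanish to the maximal order $s_\ell[h+1]$ at the point $x_+\in X_{w_\ell}$ corresponding to the outgoing tangent into $\beta_{\ell+1}$.  Since $W_{w_\ell}$ realizes only the two orders $s_\ell[h]<s_\ell[h+1]$ at $x_+$, the subspace of $W_{w_\ell}$ vanishing to the higher order at $x_+$ is one-dimensional, so $\bar{\tilde\psi}$ and $\bar{\tilde\phi}$ must be proportional over $\kappa$.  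Lifting the proportionality constant to $\tilde c\in R^\times$, the section $a=\tilde\psi-\tilde c\,\tilde\phi\in W$ is nonzero---because $\psi$ and $\phi$ have distinct slopes on $\beta_\ell$---yet satisfies $\val_{w_\ell}(a)>0$.

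To conclude I would analyze $\rho=\trop(a)\in\trop(W)$.  By construction $\delta:=\rho(w_\ell)>0=\psi(w_\ell)=\phi(w_\ell)$, while $\rho\ge\min\{\psi,\phi\}$ pointwise.  Since $\psi$ and $\phi$ agree on $\beta_{\ell+1}$ and the reductions of $\tilde\psi$ and $\tilde\phi$ at $w_\ell$ are proportional, a standard analysis of subleading reductions shows that the cancellation defining $\delta$ persists a positive distance into $\beta_{\ell+1}$, forcing $\rho$ to sit strictly above $\min\{\psi,\phi\}=\psi$ on an initial segment of $\beta_{\ell+1}$ and then to bend down to merge with $\psi$ further along.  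Since the slopes of $\rho$ must lie in $\{s_k[h],s_k[h+1]\}$ at every tangent vector, this strict descent on a bridge without $D$-support must subsequently be compensated by an ascent; such an ascent is impossible on bridges and loops that are neither switching nor of positive multiplicity, and the only remaining switching loop $\gamma_{\ell'}$ has its switch already ``absorbed'' by $\psi$.  Tracking these slope transitions carefully yields the contradiction.  The hard part will be making the subleading reduction analysis rigorous enough to pin down the slope of $\rho$ on $\beta_{\ell+1}$; an alternative, potentially cleaner approach is to identify $\psi$ with $\varphi_D$ via Lemma~\ref{Lem:Case2Dependence} and then derive the contradiction combinatorially from the tropical dependence among $\varphi_A,\ldots,\varphi_E$ in $\trop(W)$.
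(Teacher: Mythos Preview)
Your main approach has a genuine gap. You correctly observe that the reductions of $\tilde\psi$ and $\tilde\phi$ at $w_\ell$ are proportional (both lie in the one-dimensional subspace of $W_{w_\ell}$ vanishing to order $s'_\ell[h+1]$ at $x_+$), so $a=\tilde\psi-\tilde c\,\tilde\phi$ has $\rho(w_\ell)=\delta>0$. But the rest of the argument does not close. On $\beta_{\ell+1}$ you have $\psi=\phi$ (same value at $w_\ell$, same slope $s'_\ell[h+1]$), so $\min\{\psi,\phi\}=\psi$ there, and $\rho\ge\psi$. If $s'_\ell(\rho)=s'_\ell[h]$ you would indeed get $\rho<\psi$ after distance $\delta$, a contradiction; but that only forces $s'_\ell(\rho)=s'_\ell[h+1]$, i.e.\ $\rho=\psi+\delta$ on $\beta_{\ell+1}$. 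There is no ``bend down to merge with $\psi$'': such a bend is exactly what you have just ruled out. From here you have no control on $\phi$ to the right of $\gamma_\ell$ (you only pinned down $s'_\ell(\phi)$), so $\min\{\psi,\phi\}$ is not determined, and the ``subsequent ascent'' obstruction you sketch does not materialize. The subleading-reduction analysis you flag as ``the hard part'' is not merely hard; it does not lead anywhere.

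The paper's proof is a two-line tropical-dependence argument that bypasses all of this by bringing in a third explicit function, namely $\varphi_C\in\trop(W)$, with $s_\ell(\varphi_C)=s_\ell[h+1]$ and $s'_\ell(\varphi_C)=s'_\ell[h]$. Take any $\varphi\in\trop(W)$ with $s_\ell(\varphi)=s_\ell[h]$. Since $W$ is a pencil, $\{\varphi,\psi,\varphi_C\}$ is tropically dependent. On $\beta_\ell$ only $\psi$ and $\varphi_C$ share a slope, so they achieve the minimum there; carrying this across $\gamma_\ell$, $\varphi_C$ achieves the minimum at $w_\ell$. On $\beta_{\ell+1}$ the function tying $\varphi_C$ must share its slope $s'_\ell[h]$; since $s'_\ell(\psi)=s'_\ell[h+1]$, this forces $s'_\ell(\varphi)=s'_\ell[h]$. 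Thus $\gamma_\ell$ does not switch for $\trop(W)$. Your alternative suggestion (work with the dependence among $\varphi_A,\ldots,\varphi_E$) is closer in spirit, but the specific triple $\varphi,\psi,\varphi_C$ is what makes the slope bookkeeping on the two bridges adjacent to $\gamma_\ell$ immediate.
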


\begin{proof}
Suppose that $\psi \in \Sigma'$, and let $\varphi \in \Sigma'$ be a function with $s_{\ell} (\varphi) = s_{\ell} [h]$.  Because $\Sigma'$ has rank $1$, the functions $\varphi, \psi,$ and $\varphi_C$ are tropically dependent.  Because $s_{\ell} (\varphi) = s_{\ell} [h]$, we see that in this dependence $\varphi_C$ and $\psi$ must achieve the minimum at $w_{\ell}$.  Since $s'_{\ell} (\varphi_C) = s'_{\ell} [h]$, it follows that one of the other two functions must also have slope $s'_{\ell} [h]$ along the bridge $\beta_{\ell+1}$.  By definition, this function cannot be $\psi$, so it must be $\varphi$.  The other case, where $\psi' \in \Sigma'$, is similar.
\end{proof}

\begin{proof}[Proof of Theorem~\ref{thm:independence}, Case 4b]
If $\psi' \in \Sigma'$, we construct our certificate of independence $\Upsilon$ as though $\gamma_{\ell'}$ is not a switching loop.  Specifically, let $\cS = \{ \varphi_j \mid j \neq h, h+1 \} \cup \{ \varphi_A, \varphi_B, \psi' \}$.  We set
\[
\cA = \{ \varphi_i \, \mid \, i \neq h,h+1 \} \cup \{ \varphi^0_h , \varphi^0_{h+1} , \varphi^{\infty}_h \} ,
\]
The argument is then the same as Case 3a.
Similarly, if $\psi \in \Sigma'$, we construct our certificate of independence $\Upsilon$ as though $\gamma_{\ell}$ is not a switching loop.
\end{proof}

\subsubsection{Case 4c:  $h' =h+1$}

We first identify a subset $\cS \subseteq \Sigma$, consisting of the functions $\varphi_i$, for $i \notin \{ h, h+1, h + 2 \}$, together with the functions $\varphi_A, \ldots, \varphi_E$ illustrated in Figure~\ref{Fig:Case3}.

\begin{figure}[H]
\scalebox{.92}{
\begin{tikzpicture}

\draw (-0.2,4) node {{\tiny $\varphi_A$}};
\draw (0,4)--(6,4);
\draw [ball color=white] (2,4) circle (0.55mm);
\draw [ball color=white] (4,4) circle (0.55mm);
\draw [ball color=black] (4.75,4) circle (0.55mm);
\draw [ball color=black] (5.6,4) circle (0.55mm);
\draw (1,4.2) node {{\tiny $h$}};
\draw (3,4.2) node {{\tiny $h+1$}};
\draw (4.4,4.2) node {{\tiny $h+2$}};
\draw (5.2,4.2) node {{\tiny $h+1$}};
\draw (5.8,4.2) node {{\tiny $h$}};

\draw (-0.2,3) node {{\tiny $\varphi_B$}};
\draw (0,3)--(6,3);
\draw [ball color=white] (2,3) circle (0.55mm);
\draw [ball color=white] (4,3) circle (0.55mm);
\draw [ball color=black] (0.75,3) circle (0.55mm);
\draw [ball color=black] (1.6,3) circle (0.55mm);
\draw (0.4,3.2) node {{\tiny $h+2$}};
\draw (1.2,3.2) node {{\tiny $h+1$}};
\draw (1.8,3.2) node {{\tiny $h$}};
\draw (3,3.2) node {{\tiny $h+1$}};
\draw (5,3.2) node {{\tiny $h+2$}};

\draw (-0.2,2) node {{\tiny $\varphi_C$}};
\draw (0,2)--(6,2);
\draw [ball color=white] (2,2) circle (0.55mm);
\draw [ball color=white] (4,2) circle (0.55mm);
\draw (1,2.2) node {{\tiny $h+1$}};
\draw (3,2.2) node {{\tiny $h$}};
\draw (5,2.2) node {{\tiny $h$}};

\draw (-0.2,1) node {{\tiny $\varphi_D$}};
\draw (0,1)--(6,1);
\draw [ball color=white] (2,1) circle (0.55mm);
\draw [ball color=white] (4,1) circle (0.55mm);
\draw (1,1.2) node {{\tiny $h+2$}};
\draw (3,1.2) node {{\tiny $h+2$}};
\draw (5,1.2) node {{\tiny $h+1$}};

\draw (-0.2,0) node {{\tiny $\varphi_E$}};
\draw (0,0)--(6,0);
\draw [ball color=white] (2,0) circle (0.55mm);
\draw [ball color=white] (4,0) circle (0.55mm);
\draw [ball color=black] (1.6,0) circle (0.55mm);

\draw (0.5,0.2) node {{\tiny $h+1$}};
\draw (1.8,0.2) node {{\tiny $h$}};
\draw (3,0.2) node {{\tiny $h+1$}};
\draw (5,0.2) node {{\tiny $h+2$}};

\draw (2,-0.25) node {{\tiny $\gamma_{\ell}$}};
\draw (4,-0.25) node {{\tiny $\gamma_{\ell'}$}};

\end{tikzpicture}
}
\caption{A schematic illustration of the five functions of Proposition~\ref{Prop:Case3Fns}.}
\label{Fig:Case3}
\end{figure}

\begin{proposition}
\label{Prop:Case3Fns}
There is a rank 2 tropical linear subseries $\Sigma' \subseteq \Sigma$, rank $1$ tropical linear subseries $\Sigma_1 , \Sigma_2 \subseteq \Sigma'$, and functions $\varphi_A, \varphi_C \in \Sigma_1$, $\varphi_B , \varphi_D \in \Sigma_2$, and $\varphi_E \in \Sigma_1 \cap \Sigma_2$ with the following properties:
\begin{enumerate}
\item  $s'_k (\varphi_A) = s'_k [h]$ for all $k <\ell$;
\item  $s_k (\varphi_B) = s_k [h+2]$ for all $k > \ell'$;
\item  $s_k (\varphi_C) = s_k [h+1]$ for all $k \leq \ell$ and $s'_k (\varphi_C) = s'_k [h]$ for all $k \geq \ell$;
\item  $s_k (\varphi_D) = s_k [h+2]$ for all $k \leq \ell'$, and $s'_k (\varphi_D) = s'_k [h+1]$ for all all $k \geq \ell'$;
\item  $s'_{k-1} (\varphi_E) = s_k (\varphi_E) = s_k [h+1]$ for all $\ell < k \leq \ell'$, and $s_k (\varphi_E) = s_k[h+2]$ for all $k > \ell'$;
\item$s_k (\varphi_\bullet) \in \{ s_k [h], s_k [h+1], s_k [h+2]\}$ and $s'_k (\varphi_\bullet) \in \{ s'_k [h], s'_k [h+1], s'_k [h+2]\}$ for all $k$.
\end{enumerate}
\end{proposition}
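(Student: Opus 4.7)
The plan is to construct $W$ as a $3$-dimensional subspace of $V$ cut out by vanishing conditions at two points specializing to the endpoints of $\Gamma$, then extract $W_1$ and $W_2$ as pencils inside $W$ by imposing one additional ramification condition at each end. This approach is a natural three-dimensional analogue of the arguments used in Propositions~\ref{Prop:BridgeFns}, \ref{Prop:LoopFns}, and \ref{Prop:Case2Fns}, with $W_1$ playing the role of the pencil governing the switching loop $\gamma_{\ell}$ and $W_2$ that of the pencil governing $\gamma_{\ell'}$.

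More precisely, let $p, p' \in X(K)$ specialize to $w_0$ and $v_{g'+1}$, respectively. Using (\ref{eq:ramified}) and (\ref{eq:rightramified}) together with the standing hypotheses on the ramification sequence of $V$ at $p$, and noting that neither decreasing loops nor decreasing bridges occur in this subcase (since $\rho = 2$ is absorbed entirely by the two switching loops), I would set
\[
W := \bigl\{f \in V : \mathrm{ord}_p(f) \geq a^V_{r-h-2}(p) \text{ and } \mathrm{ord}_{p'}(f) \geq a^V_h(p')\bigr\}.
\]
A standard Schubert-intersection count shows $\dim W \geq 3$, and an analogue of Lemma~\ref{Lem:NumberOfSlopes} applied to $W$ (exploiting that the minimal and maximal allowed slopes are attained at the two endpoints) then forces every $\varphi \in \trop(W)$ to satisfy $s_\zeta(\varphi) \in \{s_\zeta[h], s_\zeta[h+1], s_\zeta[h+2]\}$ at every tangent vector $\zeta$, which yields property (vi).

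Next I would define the pencils
\[
W_1 := \bigl\{f \in W : \mathrm{ord}_{p'}(f) \geq a^V_{h+1}(p')\bigr\}, \qquad W_2 := \bigl\{f \in W : \mathrm{ord}_p(f) \geq a^V_{r-h-1}(p)\bigr\}.
\]
The same slope-counting argument restricted to $W_1$ shows that $\trop(W_1)$ realises only the slopes $\{s_\zeta[h], s_\zeta[h+1]\}$ along every tangent vector, so that Proposition~\ref{Prop:LoopFns} applied to $W_1$ and the switching loop $\gamma_{\ell}$ produces $\varphi_A, \varphi_C \in \trop(W_1)$ with the required slope profiles on and to the left of $\gamma_{\ell}$; analogously, applying the same proposition to $W_2$ and $\gamma_{\ell'}$ yields $\varphi_B, \varphi_D \in \trop(W_2)$. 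The properties of the two remote switching loops with respect to each pencil (namely that $\gamma_{\ell'}$ is not a switching loop for $W_1$, and $\gamma_{\ell}$ is not one for $W_2$) propagate the slope information across the graph to give properties (i)--(iv).

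Finally, since $W_1, W_2 \subseteq W$ are each of codimension at most $1$ in a $3$-dimensional space, their intersection contains a nonzero section $f_E$, and I put $\varphi_E := \trop(f_E)$. Membership in $\trop(W_1)$ forces the slopes of $\varphi_E$ to lie in $\{s_\zeta[h], s_\zeta[h+1]\}$ on a region surrounding $\gamma_\ell$, while membership in $\trop(W_2)$ forces them to lie in $\{s_\zeta[h+1], s_\zeta[h+2]\}$ surrounding $\gamma_{\ell'}$; combined with the switching classification of \S\ref{sec:switch}, this leaves only the profile in (v) as a possibility for a generic choice of $f_E$. The main obstacle I anticipate is precisely this last step: a priori several slope configurations are compatible with membership in $W_1 \cap W_2$, and pinning down the correct one requires careful book-keeping of the tropical dependences among the three-element tropicalized pencils $\{\varphi_A, \varphi_C, \varphi_E\} \subseteq \trop(W_1)$ and $\{\varphi_B, \varphi_D, \varphi_E\} \subseteq \trop(W_2)$, exactly as in the analysis of Lemmas~\ref{Lem:Case2Dependence} and~\ref{Lem:ReduceToOne}, to rule out the degenerate cases where $\varphi_E$ fails to climb from slope $h+1$ to $h+2$ across $\gamma_{\ell'}$ or from $h$ to $h+1$ across $\gamma_\ell$.
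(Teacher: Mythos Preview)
Your overall strategy matches the paper's, but the ramification conditions defining $W_1$ and $W_2$ are interchanged. With your $W_1 = \{f \in W : \mathrm{ord}_{p'}(f) \geq a^V_{h+1}(p')\}$, every function in $\trop(W_1)$ has $s_{g'+1} \geq s_{g'+1}[h+1]$, whereas $\varphi_A$ and $\varphi_C$ both terminate with slope index $h$ on the right (see Figure~\ref{Fig:Case3}), so neither lies in your $\trop(W_1)$; likewise $\varphi_B$ and $\varphi_D$ both begin with slope index $h+2$ on the left and hence do not lie in your $W_2$. The paper makes the opposite choice: $W_1$ carries the extra vanishing condition at $p$ (forcing $s'_0 \leq s'_0[h+1]$, so that $\varphi_A,\varphi_C \in \trop(W_1)$), while $W_2$ carries it at $p'$ (forcing $s_{g'+1} \geq s_{g'+1}[h+1]$, so that $\varphi_B,\varphi_D \in \trop(W_2)$). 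Even after correcting the swap, your assertion that $\trop(W_1)$ realizes only the slopes $\{s_\zeta[h],s_\zeta[h+1]\}$ along \emph{every} tangent vector is too strong: since $\gamma_{\ell'}$ switches slope $h+1$, a function in $\trop(W_1)$ may acquire slope index $h+2$ to the right of $\gamma_{\ell'}$, and indeed $\varphi_E$ does. The restriction to $\{h,h+1\}$ holds only for $k \leq \ell'$, and dually the restriction on $\trop(W_2)$ to $\{h+1,h+2\}$ holds only for $k > \ell$; it is the overlap on $\ell < k \leq \ell'$ that pins $\varphi_E$ to slope index exactly $h+1$ in the middle.

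For $k > \ell'$ the slope of $\varphi_E$ is genuinely not determined by membership in $W_1 \cap W_2$ alone, and there is no ``generic choice of $f_E$'' available since $\dim(W_1 \cap W_2) = 1$. The paper resolves this via the tropical dependence among $\varphi_B,\varphi_D,\varphi_E \in \trop(W_2)$: along the bridge $\beta_{\ell+1}$ the function $\varphi_D$ has slope index $h+2$ while $\varphi_B$ and $\varphi_E$ have slope index $h+1$, so by admissibility of the edge lengths $\varphi_D$ cannot achieve the minimum anywhere to the right of this bridge. Hence $\varphi_B$ and $\varphi_E$ coincide (up to constant) on that whole region, forcing $s_k(\varphi_E) = s_k(\varphi_B) = s_k[h+2]$ for all $k > \ell'$.
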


\begin{proof}
As in Proposition~\ref{Prop:Case2Fns}, we apply Lemma~\ref{Lemma:InductiveExistence} to construct a rank $2$ tropical linear series
\[
\Sigma' \subset \{ \varphi \in \Sigma \mid s'_0(\varphi) \leq s'_0[h+2] \mbox{ and } s_{g+1}(\varphi) \geq s_{g+1}[h] \}.
\]
By choosing functions in $\Sigma'$ with specified slopes at $v_{g+1}$ and $w_0$, respectively, and applying Definition~\ref{Def:TLS}\eqref{DefItem:Intersection}, we obtain rank 1 tropical linear subseries
\[
\Sigma_1\subset \{ \varphi \in \Sigma' \mid s_{g+1} \geq s_{g+1}[h+1] \}, \mbox{ \ \ and \ \ }
\Sigma_2\subset \{ \varphi \in \Sigma' \mid s'_0 \leq s'_0[h+1] \},
\]
with nontrivial intersection.  Choose $\varphi_A$ and $\varphi_B$ as in Proposition~\ref{Prop:BridgeFns}.  Then choose $\varphi_C$ and $\varphi_D$ as in Proposition~\ref{Prop:Case2Fns}.  Finally, let $\varphi_E$  be a function in $\Sigma_1 \cap \Sigma_2$.

By arguments analogous to the proofs of Propositions~\ref{Prop:BridgeFns} and~\ref{Prop:Case2Fns}, the functions $\varphi_A , \varphi_B , \varphi_C$, and $\varphi_D$ have the required slopes.  We now describe the slopes of $\varphi_E$.  Since $\varphi_E \in \Sigma_1$, we have $s_k (\varphi_E) \in \{ s_k [h], s_k[h+1] \}$ for all $\ell < k \leq \ell'$, and since $\varphi_E \in \Sigma_2$, we have $s_k (\varphi_E) \in \{ s_k [h+1], s_k [h+2] \}$ for all $\ell < k \leq \ell'$.  It follows that $s_k (\varphi_E) = s_k [h+1]$ for all $\ell < k \leq \ell'$.  The same argument shows that $s'_k(\varphi_E) = s'_k [h+1]$ for all $\ell \leq k < \ell'$.  Moreover, the three functions $\varphi_B, \varphi_D,$ and $\varphi_E$ in $\Sigma_2$ are tropically dependent, and the dependence is illustrated schematically in Figure~\ref{Fig:BisTLC}.  A priori, one might expect there to be a region to the right of $\gamma_{\ell'}$ where $\varphi_D$ and $\varphi_E$ agree in this dependence, but our assumptions on edge lengths preclude this.  Specifically, since $\varphi_D$ has higher slope than $\varphi_B$ and $\varphi_E$ along the bridge $\beta_{\ell+1}$, it cannot obtain the minimum to the right of this bridge.  It follows that $s_k (\varphi_E) = s_k[h+2]$ for all $k > \ell'$.
\end{proof}

\begin{figure}[H]
\begin{tikzpicture}

\draw (0,-1)--(6,-1);
\draw [ball color=white] (2,-1) circle (0.55mm);
\draw [ball color=white] (4,-1) circle (0.55mm);
\draw [ball color=black] (1,-1) circle (0.55mm);

\draw (2,-1.45) node {{\tiny $\gamma_{\ell}$}};
\draw (4,-1.45) node {{\tiny $\gamma_{\ell'}$}};

\draw (0.5,-0.7) node {{\tiny $BD$}};
\draw (3,-0.7) node {{\tiny $BE$}};

\end{tikzpicture}
\caption{The function $\varphi_B$ is a tropical linear combination of $\varphi_D$ and $\varphi_E$.}
\label{Fig:BisTLC}
\end{figure}

\begin{lemma}
\label{Lem:Case3Inequivalent}
The functions $\varphi_C$ and $\varphi_D$ are not equivalent on any loop.  Moreover, for any pair $k' \leq k$, with $k \neq \ell$ and $k' \neq \ell'$, either $\varphi_A$ or $\varphi_E$ is not equivalent to $\varphi_C$ on $\gamma_{k}$, and is not equivalent to $\varphi_D$ on $\gamma_{k'}$.
\end{lemma}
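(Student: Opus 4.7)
The plan is to verify both parts by direct comparison of slope indices on the bridges adjacent to each loop, invoking the principle that two functions in $R(D)$ whose incoming slopes at $v_k$ (on $\beta_k$) differ cannot agree on $\gamma_k$. Indeed, since $D$ is a break divisor, no chip of $D$ sits at $v_k$ or $w_k$, so for any $\varphi \in R(D)$ the sum of the slopes of $\varphi$ on the two loop-edges emanating from $v_k$ equals the incoming bridge slope plus the nonnegative quantity $\ddiv(\varphi)(v_k)$. Coupled with the permissibility of the functions at hand, distinct incoming bridge slopes force distinct restrictions to $\gamma_k$.

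For part~(1), I would simply tabulate from Proposition~\ref{Prop:Case3Fns} that $\varphi_C$ has slope index $h+1$ on $\beta_k$ for $k \le \ell$ and index $h$ for $k > \ell$, while $\varphi_D$ has slope index $h+2$ on $\beta_k$ for $k \le \ell'$ and index $h+1$ for $k > \ell'$. On every loop $\gamma_k$ the slope indices on $\beta_k$ differ, so $\varphi_C$ and $\varphi_D$ can never agree on a loop.

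For part~(2), I first pin down the slope indices of $\varphi_A$ and $\varphi_E$. The pencil $\trop(W_1)$ has slope values $\{s_k[h], s_k[h+1]\}$ on bridges to the left of $\gamma_\ell$ (realized by $\varphi_A$ and $\varphi_C$), whence $\varphi_A$ has slope index $h$ on $\beta_k$ for $k \le \ell$. Similarly $\trop(W_2)$ has slope values $\{s_k[h+1], s_k[h+2]\}$ on bridges to the right of $\gamma_{\ell'}$, and by propagating through bridges (where the two slopes of a pencil are locally constant) these pairs extend to every bridge. Since $\varphi_E \in \trop(W_1 \cap W_2)$, the intersection $\{h, h+1\} \cap \{h+1, h+2\} = \{h+1\}$ forces $\varphi_E$ to have slope index $h+1$ on $\beta_k$ for $k \le \ell'$, and the explicit condition gives slope index $h+2$ for $k > \ell'$.

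Noting that $\varphi_E$'s slope index differs from $\varphi_D$'s on every bridge ($h+1$ versus $h+2$ to the left of $\beta_{\ell'+1}$, and $h+2$ versus $h+1$ to its right), $\varphi_E$ never agrees with $\varphi_D$ on any loop. I then split on $k$:
\emph{If $k > \ell$}, then $\varphi_E$ has slope index at least $h+1$ on $\beta_k$, while $\varphi_C$ has index $h$, so $\varphi_E$ disagrees with $\varphi_C$ on $\gamma_k$, and $\varphi_E$ is the required witness.
\emph{If $k < \ell$}, so $k' \le k < \ell$, then $\varphi_A$ has slope index $h$ on both $\beta_k$ and $\beta_{k'}$, which differs from $\varphi_C$'s index $h+1$ on $\beta_k$ and from $\varphi_D$'s index $h+2$ on $\beta_{k'}$; hence $\varphi_A$ is the required witness. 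The excluded case $k = \ell$ does not arise.

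The main obstacle is rigorously extracting the slope indices of $\varphi_A$ and $\varphi_E$ on \emph{all} bridges from the proposition's conditions together with the pencil structure of $W_1$, $W_2$, and $W_1 \cap W_2$, particularly away from the switching loops where the proposition is silent. Once the slope index tables are in hand, the remainder is a short case check on the relative position of $k$ and $k'$ with respect to $\ell$ and $\ell'$.
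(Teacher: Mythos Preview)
Your approach is essentially the paper's: both read the slope-index labels from Figure~\ref{Fig:Case3} and verify by a short case check that the candidate witness has a different index from $\varphi_C$ on $\gamma_k$ and from $\varphi_D$ on $\gamma_{k'}$. The paper organizes the check as six region-pairs (the three intervals cut out by $\gamma_\ell,\gamma_{\ell'}$, with $k'\le k$); you streamline this by first noting that $\varphi_E$ never agrees with $\varphi_D$ and then splitting only on whether $k>\ell$ or $k<\ell$.

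One imprecision worth flagging: the pencil-intersection step ``$\{h,h+1\}\cap\{h+1,h+2\}=\{h+1\}$'' for the slope index of $\varphi_E$ on bridges $\beta_k$ with $k\le\ell$ does not go through as stated. The slope pair $\{h+1,h+2\}$ of $\trop(W_2)$ does not propagate leftward across the switching loop $\gamma_{\ell'}$ (and then $\gamma_\ell$); indeed $\varphi_B\in\trop(W_2)$ already carries slope index $h$ just to the left of $\gamma_\ell$, and Figure~\ref{Fig:Case3} shows $\varphi_E$ itself drops to index $h$ there. This does not damage your argument: membership $\varphi_E\in\trop(W_1)$ alone forces its slope index into $\{h,h+1\}$ for $k\le\ell$, which is still distinct from $\varphi_D$'s index $h+2$, so the conclusion that $\varphi_E$ never agrees with $\varphi_D$ survives and the remainder of your case split is unaffected.
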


\begin{proof}
Note that the white dots in Figure~\ref{Fig:Case3} representing $\gamma_{\ell}$ and $\gamma_{\ell'}$ divide the graph into 3 regions.  Identify the regions containing $\gamma_{k'}$ and $\gamma_k$.  For each of the 6 possibilities, one of the functions $\varphi_A$ or $\varphi_E$ is not equivalent to $\varphi_D$ on the region containing $\gamma_{k'}$ and not equivalent to $\varphi_C$ on the region containing $\gamma_k$.  For example, if $k' \leq k \leq \ell$, then $s'_{k'} (\varphi_A) \neq s'_{k'} (\varphi_D)$, so $\varphi_A$ is not equivalent to $\varphi_D$ on $\gamma_{k'}$, and $s'_k (\varphi_A) \neq s'_k (\varphi_C)$, so $\varphi_A$ is not equivalent to $\varphi_C$ on $\gamma_k$.  The other 5 cases are similar.
\end{proof}

Note that the three functions $\varphi_A, \varphi_C,$ and $\varphi_E$ in $\Sigma_1$ are tropically dependent; the dependence is illustrated schematically in Figure~\ref{Fig:Case4cDependence}.  Let $y$ and $y'$ be points, in $\Gamma_L$ to the right and left of $\gamma_{\ell}$, respectively, where all three functions simultaneously achieve the minimum in this dependence.

\begin{figure}[H]
\begin{tikzpicture}

\draw (0,0)--(6,0);
\draw [ball color=white] (2,0) circle (0.55mm);
\draw [ball color=white] (4,0) circle (0.55mm);
\draw [ball color=black] (1,0) circle (0.55mm);
\draw [ball color=black] (5,0) circle (0.55mm);

\draw (2,-0.45) node {{\tiny $\gamma_{\ell}$}};
\draw (4,-0.45) node {{\tiny $\gamma_{\ell'}$}};
\draw [<->] (1.05,-0.15) -- (1.95,-0.15);
\draw [<->] (2.05,-0.15) -- (4.95,-0.15);
\draw (1.5,-0.4) node {{\tiny $t'$}};
\draw (3.5,-0.4) node {{\tiny $t$}};
\draw (1,-0.4) node {{\tiny $y'$}};
\draw (5,-0.4) node {{\tiny $y$}};

\draw (0.5,0.3) node {{\tiny $CE$}};
\draw (3,0.3) node {{\tiny $AE$}};
\draw (5.5,0.3) node {{\tiny $AC$}};

\end{tikzpicture}
\caption{The dependence satisfied by $\varphi_A$, $\varphi_C$, and $\varphi_E$ in Case 4c.}
\label{Fig:Case4cDependence}
\end{figure}

\begin{definition}
Let $t$ be the distance, measured along the bridges and bottom edges, from $w_{\ell}$ to $y$, and similarly let $t'$ be the distance from $v_{\ell}$ to $y'$.
\end{definition}

Just as in Case 2c, $t'$ is an increasing piecewise affine function of $t$.  We now describe how to choose the set $\cB$, depending on the parameter $t$, in a manner similar to Definition~\ref{Def:BBOnBridge}.

\begin{definition}
Let $\widetilde \cB$ be the set of pairwise sums of elements of $\cA$.  Suppose that there are two indices $j$ and $j'$ such that
$\sigma_{\ell} = s'_{\ell} [h] + s'_{\ell} [j] = s'_{\ell} [h] + s'_{\ell} [j'] +1$.  Then
\begin{enumerate}
\item  if $t_1 < m_{\ell}$, let $\widehat{\cB} = \widetilde\cB \smallsetminus \{ \varphi + \varphi_{j'} \mid s_{\ell+1} (\varphi) = s_{\ell} (\varphi) + 1 = s_{\ell+1} [h+1] \}$;
\item   if $t_1 \geq m_{\ell}$, let $\widehat{\cB} = \widetilde \cB \smallsetminus \{ \varphi + \varphi_j \mid s_{\ell+1} (\varphi) = s_{\ell} (\varphi) = s_{\ell+1} [h] \}$.
\end{enumerate}
Otherwise, let $\widehat{\cB} = \widetilde \cB$.

Now, if there are indices $i, i'$ such that
$\sigma_{\ell'} = s'_{\ell'} [h+1] + s'_{\ell'} [i] = s'_{\ell'} [h+1] + s'_{\ell'} [i'] +1$, let $\cB = \widehat{\cB} \smallsetminus \{ \varphi + \varphi_i \mid s_{\ell'+1} (\varphi) = s_{\ell'} (\varphi) = s_{\ell'+1} [h+1] \}$.
Otherwise, let $\cB = \widehat{\cB}$.
\end{definition}

Note that the point where $\varphi_B, \varphi_D$, and $\varphi_E$ simultaneously achieve the minimum in Figure~\ref{Fig:BisTLC} is to the left of $\gamma_{\ell}$.  The distance from $v_{\ell'}$ to this point is therefore larger than $m_{\ell'}$, and the construction of $\cB$ from $\widehat{\cB}$ is analogous to the construction of $\widehat{\cB}$ from $\widetilde{\cB}$ in case (2).

The set $\cB$ satisfies properties $(\mathbf B)$ and $(\mathbf B')$ just as in Lemma~\ref{Lem:BBOnBridge}.  Therefore, there exists a template $\theta$ and an assignment function $\alpha$ satisfying \ref{T1}-\ref{T5}.  Our choice of $\cT$ is very similar to Case 2c.  Specifically, if $i,j \notin \{ h,h+1,h+2 \}$, then we put $\varphi_{ij}, \varphi_C + \varphi_j,$ and $\varphi_D + \varphi_j$ in $\cT$.  We then put one of $\varphi_A + \varphi_j$ or $\varphi_E + \varphi_j$ in $\cT$, depending on where the best approximation of $\theta$ by $\varphi_C + \varphi_j$ and $\varphi_D + \varphi_j$ achieves equality.  We use Lemma~\ref{Lem:Case3Inequivalent} to choose this function, as follows.  By Lemma~\ref{Lem:Replace}, the function $\varphi_C + \varphi_j$ achieves equality on the region where some pairwise sum of building blocks $\psi \in \cB$ achieves the minimum, and $\varphi_D + \varphi_j$ achieves equality on the region where some pairwise sum of building blocks $\psi' \in \cB$ achieves the minimum.  By Lemma~\ref{Lem:Case3Inequivalent}, $\psi$ and $\psi'$ are not assigned to the same loop or bridge, and one of the functions $\varphi_A + \varphi_j$ or $\varphi_E + \varphi_j$ is not equivalent to $\varphi_C + \varphi_j$ on the loop or bridge where $\psi$ is assigned, and is not equivalent to $\varphi_D + \varphi_j$ on the loop or bridge where $\psi'$ is assigned.  We put this function in $\cT$.

Similarly, we include six pairwise sums of elements of $\{ \varphi_A , \varphi_C , \varphi_D , \varphi_E \}$ in $\cT$.  In all cases, we put $\varphi_C + \varphi_C , \varphi_C + \varphi_D$, and $\varphi_D + \varphi_D$ in $\cT$.  Then, depending on where the best approximation of $\theta$ by these functions achieves equality, we put one of $\varphi_A + \varphi_C$ or $\varphi_C + \varphi_E$ in $\cT$, and one of $\varphi_A + \varphi_D$ or $\varphi_D + \varphi_E$ in $\cT$.  Finally, depending on where the best approximation of $\theta$ by these two functions achieves equality, we put one of $\varphi_A + \varphi_A , \varphi_A + \varphi_E$, or $\varphi_E + \varphi_E$ in $\cT$.

\begin{proof}[Proof of Theorem~\ref{thm:independence}, Case 4c]
The proof of this subcase is very similar to that of Case 2c.  The construction of $\cB$ guarantees that, in the best approximation, each function in $\cT$ achieves equality on a region where some function in $\cB$ achieves the minimum in $\theta$.  Lemma~\ref{Lem:Case3Inequivalent} then shows that no two of these functions achieve the minimum on the same loop or bridge.
\end{proof}

\medskip

\subsubsection{Case 4d:  $h'=h-1$}  \label{sec:4d}

In the previous three cases, our analysis reduced to the study of certain rank 1 tropical linear subseries.  In this last case, we instead reduce to a rank 2 linear series.  Nevertheless, the arguments are of a similar flavor, with just a few more combinatorial possibilities.  As in the previous cases, we begin by describing the subset $\cS \subset \Sigma$.  It consists of the functions $\varphi_i$, for $i \notin \{h-1, h, h+1 \}$, together with functions $\varphi_A, \ldots, \varphi_H$ illustrated in Figures~\ref{Fig:Case4} and~\ref{Fig:Case4More}.

\begin{figure}[h]
\begin{tikzpicture}

\draw (-0.2,4) node {{\tiny $\varphi_A$}};
\draw (0,4)--(6,4);
\draw [ball color=white] (2,4) circle (0.55mm);
\draw [ball color=white] (4,4) circle (0.55mm);
\draw [ball color=black] (5,4) circle (0.55mm);
\draw (1,4.2) node {{\tiny $h-1$}};
\draw (3,4.2) node {{\tiny $h-1$}};
\draw (4.5,4.2) node {{\tiny $h$}};
\draw (5.5,4.2) node {{\tiny $h-1$}};

\draw (-0.2,3) node {{\tiny $\varphi_B$}};
\draw (0,3)--(6,3);
\draw [ball color=white] (2,3) circle (0.55mm);
\draw [ball color=white] (4,3) circle (0.55mm);
\draw [ball color=black] (1,3) circle (0.55mm);
\draw (0.5,3.2) node {{\tiny $h+1$}};
\draw (1.5,3.2) node {{\tiny $h$}};
\draw (3,3.2) node {{\tiny $h+1$}};
\draw (5,3.2) node {{\tiny $h+1$}};

\draw (-0.2,2) node {{\tiny $\varphi_C$}};
\draw (0,2)--(6,2);
\draw [ball color=white] (2,2) circle (0.55mm);
\draw [ball color=white] (4,2) circle (0.55mm);
\draw [ball color=black] (5,2) circle (0.55mm);
\draw (1,2.2) node {{\tiny $h+1$}};
\draw (3,2.2) node {{\tiny $h-1$}};
\draw (4.5,2.2) node {{\tiny $h$}};
\draw (5.5,2.2) node {{\tiny $h-1$}};

\draw (-0.2,1) node {{\tiny $\varphi_D$}};
\draw (0,1)--(6,1);
\draw [ball color=white] (2,1) circle (0.55mm);
\draw [ball color=white] (4,1) circle (0.55mm);
\draw [ball color=black] (1,1) circle (0.55mm);
\draw (0.5,1.2) node {{\tiny $h+1$}};
\draw (1.5,1.2) node {{\tiny $h$}};
\draw (3,1.2) node {{\tiny $h+1$}};
\draw (5,1.2) node {{\tiny $h-1$}};

\draw (-0.2,0) node {{\tiny $\varphi_E$}};
\draw (0,0)--(6,0);
\draw [ball color=white] (2,0) circle (0.55mm);
\draw [ball color=white] (4,0) circle (0.55mm);
\draw [ball color=black] (1,0) circle (0.55mm);
\draw [ball color=black] (5,0) circle (0.55mm);
\draw (0.5,0.2) node {{\tiny $h+1$}};
\draw (1.5,0.2) node {{\tiny $h$}};
\draw (3,0.2) node {{\tiny $h$}};
\draw (4.5,0.2) node {{\tiny $h$}};
\draw (5.5,0.2) node {{\tiny $h-1$}};

\draw (2,-0.25) node {{\tiny $\gamma_{\ell}$}};
\draw (4,-0.25) node {{\tiny $\gamma_{\ell'}$}};

\end{tikzpicture}
\caption{A schematic depiction of the five functions of Proposition~\ref{Prop:Case4Fns}.}
\label{Fig:Case4}
\end{figure}

\begin{proposition}
\label{Prop:Case4Fns}
There is a rank $2$ tropical linear series $\Sigma' \subset \Sigma$ that contains functions $\varphi_A , \varphi_B , \varphi_C , \varphi_D , \varphi_E$ with the following properties:
\begin{enumerate}
\item  $s'_k (\varphi_A) = s'_k [h-1]$ for all $k <\ell'$;
\item  $s_k (\varphi_B) = s_k [h+1]$ for all $k > \ell$;
\item  $s_k (\varphi_C) = s_k [h+1]$ for all $k \leq \ell$ and $s'_k (\varphi_C) = s'_k [h-1]$ for all $\ell \leq k < \ell'$;
\item  $s_k (\varphi_D) = s_k [h+1]$ for all $\ell < k \leq \ell'$, and $s'_k (\varphi_D) = s'_k [h-1]$ for all all $k \geq \ell'$;
\item  $s'_{k-1} (\varphi_E) = s_k (\varphi_E) = s_k [h]$ for all $\ell < k \leq \ell'$;
\item  $s_k (\varphi_\bullet) \in \{ s_k [h-1], s_k [h], s_k [h+1]\}$ and $s'_k (\varphi_\bullet) \in \{ s'_k [h-1], s'_k [h], s'_k [h+1]\}$, for all $k$.
\end{enumerate}
\end{proposition}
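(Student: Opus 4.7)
The plan is to mimic the constructions in Propositions~\ref{Prop:BridgeFns}, \ref{Prop:Case2Fns}, and \ref{Prop:Case3Fns}, but now with a three-dimensional subspace $W \subseteq V$ cut out by ramification at two points, and with an additional function $\varphi_E$ whose slope sits at the ``middle'' value $h$ between the two switching loops. The subspace $W$ will be defined as the space of sections of $V$ vanishing to order at least $a^V_{r-(h+1)}(p)$ at a point $p$ of $X$ specializing to $w_0$ and to order at least $h-1$ at a point $p'$ specializing to $v_{g'+1}$. The ramification bounds \eqref{eq:ramified} and \eqref{eq:rightramified} give $\dim W = 3$, and together with Lemma~\ref{Lem:NumberOfSlopes} force the three values of $s_k(\trop(W))$ on every bridge to lie in $\{s_k[h-1], s_k[h], s_k[h+1]\}$, which is property (vi).

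Once $W$ is in hand, I would construct $\varphi_A$ and $\varphi_B$ directly from Lemma~\ref{Lemma:Existence}: for $\varphi_A$, choose a function in $\trop(W)$ satisfying $s'_0(\varphi_A) \leq s'_0[h-1]$ and $s_{g'+1}(\varphi_A) \geq s_{g'+1}[h-1]$, and observe that as one moves left-to-right, the slope index can rise only at a switching loop or switching bridge that switches the current slope. Since $\gamma_\ell$ switches slope $h$ and $\gamma_{\ell'}$ switches slope $h-1$, the slope of $\varphi_A$ cannot exceed $s'_k[h-1]$ for $k<\ell'$, giving (i); the symmetric argument starting from the right gives (ii) for $\varphi_B$. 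Then $\varphi_C$ and $\varphi_D$ will be constructed as pointwise minima of two functions in $\trop(W)$ that agree on $\gamma_\ell$ (resp.\ $\gamma_{\ell'}$), exactly as in part (vi) of Lemma~\ref{lem:LoopTLC} and in the constructions of $\varphi_C$ and $\varphi_D$ in Proposition~\ref{Prop:Case2Fns}.

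The main obstacle, and the feature that distinguishes this subcase from the earlier ones, is the construction of $\varphi_E$. Because $\dim \trop(W) = 3$, Lemma~\ref{Lem:NumberOfSlopes} guarantees that on each individual bridge $\beta_k$ in the range $\ell < k \leq \ell'$ some function in $\trop(W)$ has slope exactly $s_k[h]$, but I need a single function with this slope on \emph{every} such bridge that also drops to $s_k[h+1]$ on the far left and to $s_k[h-1]$ on the far right, as required in Figure~\ref{Fig:Case4}. To produce it, I will apply Lemma~\ref{Lemma:Existence} twice. First, applying it to a pair of tangent vectors at $\gamma_\ell$ yields $\varphi' \in \trop(W)$ with $s_\ell(\varphi') \leq s_\ell[h]$ and $s_{\ell+1}(\varphi') \geq s'_\ell[h] = s_\ell[h]$; equality is forced, and the absence of other switching phenomena between $\gamma_\ell$ and $\gamma_{\ell'}$ (Lemma~\ref{Lem:MultBound}) then pins the slope of $\varphi'$ to $s_k[h]$ on every bridge $\beta_k$ with $\ell < k \leq \ell'$. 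Second, applying the lemma to a pair of tangent vectors at $\gamma_{\ell'}$ yields $\varphi'' \in \trop(W)$ with $s'_{\ell'}(\varphi'') \leq s'_{\ell'}[h]$ and $s_{g'+1}(\varphi'') \geq s_{g'+1}[h-1]$, forcing slope $h$ on the middle bridges and slope $h-1$ on all bridges after $\gamma_{\ell'}$. After rescaling one of the two functions so that they agree on the middle region, $\varphi_E := \min\{\varphi', \varphi''\}$ lies in $\trop(W)$ and realizes (v) together with the slope-$h-1$ behaviour to the right of $\gamma_{\ell'}$; the symmetric construction on the left (or a further pointwise min with a function satisfying $s'_0 \leq s'_0[h+1]$) gives the slope-$h+1$ region to the left of $\gamma_\ell$ shown in Figure~\ref{Fig:Case4}.

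Property (vi) is inherited from the defining ramification conditions on $W$, and (i)–(v) follow from the slope-tracking arguments sketched above, which in each case are of the same type as the verifications in Propositions~\ref{Prop:BridgeFns} and~\ref{Prop:Case2Fns}. The only genuinely new input is the rigidity argument for the middle slope of $\varphi_E$, which relies critically on the fact that $\rho \leq 2$ and that there are no decreasing loops or bridges, so that the only places the slope index of a building block can move are the two switching loops $\gamma_\ell$ and $\gamma_{\ell'}$.
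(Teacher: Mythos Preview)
Your construction of $W$ and of $\varphi_A, \varphi_B, \varphi_C, \varphi_D$ matches the paper exactly and is correct. The problem is with $\varphi_E$.

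Your claim that ``equality is forced'' for $\varphi'$ is not justified. You apply Lemma~\ref{Lemma:Existence} with tangent vectors on either side of $\gamma_\ell$ to get $s_\ell(\varphi') \leq s_\ell[h]$ and $s_{\ell+1}(\varphi') \geq s_{\ell+1}[h]$. But $\gamma_\ell$ is precisely the loop that switches slope $h$, so a function entering with slope $s_\ell[h]$ can exit with slope $s'_\ell[h+1]=s_\ell[h]+1$; nothing in your inequalities rules out $s_{\ell+1}(\varphi') = s_{\ell+1}[h+1]$. Once the slope index is $h+1$ on $\beta_{\ell+1}$, it can remain $h+1$ all the way to $\gamma_{\ell'}$ (which switches $h-1$, not $h+1$), so $\varphi'$ need not satisfy (v). Your $\varphi''$ has the analogous defect: the conditions $s'_{\ell'}(\varphi'') \leq s'_{\ell'}[h]$ and $s_{g'+1}(\varphi'') \geq s_{g'+1}[h-1]$ constrain only the region to the right of $\gamma_{\ell'}$ and say nothing about the middle bridges. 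Taking a pointwise minimum of two functions, neither of which is known to have slope $s_k[h]$ on the middle bridges, does not produce one that does.

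The paper's construction is both simpler and avoids this gap: apply Lemma~\ref{Lemma:Existence} \emph{once}, with the two tangent vectors placed at the \emph{ends} of the middle region rather than both at the same switching loop, to obtain $\varphi_E \in \trop(W)$ with $s_{\ell+1}(\varphi_E) \leq s_{\ell+1}[h]$ and $s_{\ell'}(\varphi_E) \geq s_{\ell'}[h]$. Since there are no switching loops or bridges strictly between $\gamma_\ell$ and $\gamma_{\ell'}$, the slope index of $\varphi_E$ is nonincreasing on that stretch; starting at most $h$ and ending at least $h$ forces it to equal $h$ throughout, which is exactly (v). Note also that the proposition does not ask you to control $\varphi_E$ to the left of $\gamma_\ell$ or to the right of $\gamma_{\ell'}$; that behavior is pinned down afterwards via the tropical dependences in Lemmas~\ref{Lem:Case4Dependence1} and~\ref{Lem:Case4Dependence2}, so your attempt to build it into $\varphi_E$ directly is unnecessary.
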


\begin{proof}
Apply Lemma~\ref{Lemma:InductiveExistence} twice to obtain a rank 2 tropical linear series
\[
\Sigma' \subset \{ \varphi \in \Sigma \mid s'_0(\varphi) \leq s'_0[h+1] \mbox{ and } s_{g+1}(\varphi) \geq s_{g+1}[h-1] \} .
\]
Choose $\varphi_A$ and $\varphi_B$ as in Proposition~\ref{Prop:BridgeFns}.  The functions $\varphi_C$ and $\varphi_D$ are constructed as in Proposition~\ref{Prop:Case2Fns}.  Finally, let $\varphi_E$ be a function in $\Sigma'$ with $s_{\ell+1} (\varphi_E) \leq s_{\ell+1} [h]$ and $s_{\ell'} (\varphi_E) \geq s_{\ell'} [h]$.
\end{proof}

\begin{lemma}
\label{Lem:Case4Dependence1}
Either $s'_k (\varphi_A) = s'_k [h-1]$ for all $k \geq \ell'$, or $s'_k (\varphi_E) = s'_k [h-1]$ for all $k \geq \ell'$.
\end{lemma}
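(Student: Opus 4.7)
The plan is to analyze the pencil $W' := \langle f_A, f_E \rangle \subseteq W$, where $f_A, f_E \in V$ are lifts of $\varphi_A, \varphi_E$. Since these two tropicalizations have different slopes on $\beta_{\ell'}$, the sections $f_A, f_E$ are linearly independent, so $W'$ is 2-dimensional. By Lemma~\ref{Lem:NumberOfSlopes}, $\trop(W')$ has exactly two distinct slopes at every tangent vector; on $\beta_{\ell'}$ these two slopes are $s_{\ell'}[h-1]$ and $s_{\ell'}[h]$, realized by $\varphi_A$ and $\varphi_E$, respectively.

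I would next identify the two slopes of $\trop(W')$ on $\beta_{\ell'+1}$. The loop $\gamma_{\ell'}$ has multiplicity one in $\Sigma$ and switches slope $h-1$, so (up to tropical equivalence) there is a unique switching function $\varphi^{\mathrm{sw}} \in \Sigma$ with incoming slope $s_{\ell'}[h-1]$ and outgoing slope $s_{\ell'}[h]$. If $\varphi^{\mathrm{sw}} \in \trop(W')$, then $\gamma_{\ell'}$ is also a switching loop for the pencil $W'$ (switching slope $0$), and the situation mirrors Case~2 of Example~\ref{Ex:Interval}: the two slopes of $\trop(W')$ on $\beta_{\ell'+1}$ remain $\{s_{\ell'+1}[h-1], s_{\ell'+1}[h]\}$. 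If $\varphi^{\mathrm{sw}} \notin \trop(W')$, then $\gamma_{\ell'}$ is not a switching loop for $W'$; the upper slope still cannot exceed $s_{\ell'+1}[h]$, and one can leverage the ambient function $\varphi_D \in W$ of slope $s_{\ell'+1}[h-1]$ together with an appropriate tropical linear combination inside $W'$ to conclude that the lower slope must equal $s_{\ell'+1}[h-1]$ as well.

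Having established that $s_{\ell'+1}[h-1]$ is one of the two slopes of $\trop(W')$ on $\beta_{\ell'+1}$, choose $\psi \in \trop(W')$ realizing this slope and write $\psi = \min\{\varphi_A + a, \varphi_E + b\}$. Exactly one of the two summands achieves the minimum in the interior of $\beta_{\ell'+1}$, and that summand has slope $s_{\ell'+1}[h-1]$ there. This immediately gives the desired conclusion at $\beta_{\ell'+1}$: either $\varphi_A$ or $\varphi_E$ already has slope $s_{\ell'+1}[h-1]$.

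Finally, to extend the conclusion from $\beta_{\ell'+1}$ to every $\beta_k$ with $k \geq \ell'$, I would iterate the same pencil analysis across each subsequent loop. Because Lemma~\ref{Lem:MultBound} has already exhausted the total multiplicity budget at $\gamma_\ell$ and $\gamma_{\ell'}$, no loop or bridge to the right of $\ell'$ is switching; hence the two-slope structure of $\trop(W')$ propagates with its lower slope index pinned at $h-1$, since that index can neither jump up (no switches remain) nor drop further without forcing excess multiplicity into $\Sigma$, contradicting Proposition~\ref{Thm:BNThm}. The main obstacle will be making this propagation fully rigorous, particularly in the subcase where $\varphi^{\mathrm{sw}} \notin \trop(W')$: one must carefully exclude anomalous drops of the distinguished function's slope below $s_k[h-1]$ on each subsequent bridge, and the crucial input for ruling these out is the joint use of the existence of $\varphi_D$ and the multiplicity bound from Lemma~\ref{Lem:MultBound}.
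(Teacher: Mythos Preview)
Your approach differs from the paper's and contains a genuine gap. The paper does not work with the pencil $W' = \langle f_A, f_E \rangle$ at all. Instead, it uses the tropical dependence among the \emph{four} functions $\varphi_A, \varphi_B, \varphi_D, \varphi_E$, which must exist because $W$ is $3$-dimensional. Along $\beta_{\ell'}$ only $\varphi_B$ and $\varphi_D$ share a slope (namely $s_{\ell'}[h+1]$), so they achieve the minimum at $v_{\ell'}$; hence $\varphi_D$ also achieves the minimum at $w_{\ell'}$. Since $s'_{\ell'}(\varphi_D) = s'_{\ell'}[h-1]$ by construction, some second function must match this slope on $\beta_{\ell'+1}$, and $\varphi_B$ (with slope $s'_{\ell'}[h+1]$) is excluded, leaving $\varphi_A$ or $\varphi_E$. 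The extension to all $k \geq \ell'$ is then immediate from property~(vi) and the absence of switching loops or bridges to the right of $\gamma_{\ell'}$.

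The gap in your argument is Step~4. You assert that the function $\psi \in \trop(W')$ realizing the lower slope on $\beta_{\ell'+1}$ can be written as $\min\{\varphi_A + a, \varphi_E + b\}$, and hence one of $\varphi_A$, $\varphi_E$ has that slope. But $\trop(W')$ is in general \emph{not} generated as a tropical module by any two of its elements; this is precisely the phenomenon in Case~2 of Example~\ref{Ex:Interval}, where three functions $\psi_A, \psi_B, \psi_C$ are needed. Concretely, it is entirely consistent with the pencil axioms that both $\varphi_A$ and $\varphi_E$ have outgoing slope $s'_{\ell'}[h]$ while a third function $\psi \in \trop(W')$ (analogous to $\psi_C$) carries the lower slope $s'_{\ell'}[h-1]$. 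Your subcase analysis in Step~3 does not rule this out; the appeal to $\varphi_D$ and ``an appropriate tropical linear combination inside $W'$'' is too vague to close the gap, since $\varphi_D \notin W'$ and you have not related its tropicalization to the two-slope structure of $\trop(W')$. The paper's use of $\varphi_D$ is effective precisely because it is placed inside a single tropical dependence with $\varphi_A$ and $\varphi_E$, forcing one of them to match $\varphi_D$'s slope directly.
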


\begin{proof}
Because $\Sigma'$ has rank $2$ the functions $\varphi_A, \varphi_B, \varphi_D, $ and $\varphi_E$ are tropically dependent.  Consider a dependence among them.  Only $\varphi_B$ and $\varphi_D$ have the same slope along $\beta_{\ell'}$, thus these two achieve the minimum at $v_{\ell'}$.  Because of this, $\varphi_D$ must also achieve the minimum at $w_{\ell'}$.  Since it has slope $s_{\ell'}[h-1]$ along $\beta_{\ell'+1}$, there must be a second function among these four with this same slope along $\beta_{\ell'+1}$.  This function can only be $\varphi_A$ or $\varphi_E$.
\end{proof}

\begin{lemma}
\label{Lem:Case4Dependence2}
Either $s_k (\varphi_B) = s_k [h+1]$ for all $k \leq \ell$, or $s_k (\varphi_E) = s_k [h+1]$ for all $k \leq \ell$.
\end{lemma}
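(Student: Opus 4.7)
The plan is to mirror the proof of Lemma~\ref{Lem:Case4Dependence1}, now applied on the left side of the switching loop $\gamma_\ell$ and using $\varphi_C$ in place of $\varphi_D$. First, I consider the four functions $\varphi_A, \varphi_B, \varphi_C, \varphi_E \in \trop(W)$; since $W$ is $3$-dimensional by Proposition~\ref{Prop:Case4Fns}, these four functions are tropically dependent. Reading off Proposition~\ref{Prop:Case4Fns}, the slopes on the bridge $\beta_{\ell+1}$ immediately to the right of $\gamma_\ell$ satisfy
\[
s_{\ell+1}(\varphi_A) = s_{\ell+1}(\varphi_C) = s_{\ell+1}[h-1], \quad s_{\ell+1}(\varphi_B) = s_{\ell+1}[h+1], \quad s_{\ell+1}(\varphi_E) = s_{\ell+1}[h].
\]
So $\{\varphi_A,\varphi_C\}$ is the unique pair sharing a common slope on $\beta_{\ell+1}$, and hence must achieve the minimum at every point of $\beta_{\ell+1}$ in the dependence.

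The heart of the argument will be to trace this pair backward across $\gamma_\ell$ and conclude that $\varphi_C$ continues to achieve the minimum at $v_\ell$. This is the direct mirror of the corresponding step in Lemma~\ref{Lem:Case4Dependence1}: $\varphi_A$ has constant slope $s_k[h-1]$ on both $\beta_\ell$ and $\beta_{\ell+1}$ and carries only the standard divisor from $D$ on $\gamma_\ell$, while $\varphi_C$'s slope jumps from $s_{\ell+1}[h-1]$ to $s_\ell[h+1]$ across $\gamma_\ell$. This jump forces $\ddiv(\varphi_C) + D$ to have strictly larger total degree on $\gamma_\ell$ than $\ddiv(\varphi_A) + D$, so that $\varphi_C$ rises by less than $\varphi_A$ going from $w_\ell$ to $v_\ell$. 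Since the two agree at $w_\ell$, the function $\varphi_C$ then lies strictly below $\varphi_A$ at $v_\ell$, so $\varphi_C$ remains at the minimum while $\varphi_A$ splits off. With $\varphi_C$ at the minimum at $v_\ell$ and having slope $s_\ell[h+1]$ on $\beta_\ell$, the tropical dependence forces a second function among $\varphi_A, \varphi_B, \varphi_E$ to share this slope on $\beta_\ell$. Because $\varphi_A$ has slope $s_\ell[h-1]$ there, the matching function must be $\varphi_B$ or $\varphi_E$; since there are no switching loops or switching bridges to the left of $\gamma_\ell$, that chosen function retains slope $s_k[h+1]$ on every bridge $\beta_k$ with $k \leq \ell$, yielding the desired dichotomy.

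The main obstacle will be making rigorous the step where we conclude that $\varphi_C$ (rather than $\varphi_A$) stays at the minimum at $v_\ell$. This requires bookkeeping of slopes along the two edges of $\gamma_\ell$, since the precise placement of divisor points on the loop determines on which edge $\varphi_C$'s slope actually drops and by how much. This is exactly the ingredient needed in the analogous step of Lemma~\ref{Lem:Case4Dependence1}, and once it is handled there, the mirror-image argument applies here with only notational changes; the remaining propagation of the slope leftward is then routine, using that no decreasing bridges or switching phenomena interfere to the left of $\gamma_\ell$.
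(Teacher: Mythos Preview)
Your proposal is correct and follows essentially the same approach as the paper. The paper's proof is a single sentence: it is the mirror of Lemma~\ref{Lem:Case4Dependence1} using the functions $\varphi_A, \varphi_B, \varphi_C, \varphi_E$, which is exactly what you do---identify $\{\varphi_A,\varphi_C\}$ as the unique pair sharing a slope on $\beta_{\ell+1}$, cross $\gamma_\ell$ to see that $\varphi_C$ still achieves the minimum at $v_\ell$, and then match its slope $s_\ell[h+1]$ on $\beta_\ell$ with one of $\varphi_B,\varphi_E$.

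One small remark: your heuristic for the loop-crossing step (``$\varphi_C$ rises by less than $\varphi_A$ because $D+\ddiv(\varphi_C)$ has larger degree on $\gamma_\ell$'') is not quite the right justification; degree alone does not control the value difference $\varphi(v_\ell)-\varphi(w_\ell)$. A cleaner way to see it is that $\varphi_C$ has the \emph{highest} possible slope $s_\ell[h+1]$ on $\beta_\ell$, so once it achieves the minimum at $v_\ell$ it continues to do so along all of $\beta_\ell$ (moving left, it decreases at least as fast as any other function), and the second function achieving the minimum there must share this slope. You correctly flag this step as the delicate one and note it is identical to the corresponding step in Lemma~\ref{Lem:Case4Dependence1}, which the paper also asserts without elaboration.
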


\begin{proof}
This is similar to the proof of Lemma~\ref{Lem:Case4Dependence1}, using the functions $\varphi_A, \varphi_B, \varphi_C,$ and $\varphi_E$.
\end{proof}

Lemmas~\ref{Lem:Case4Dependence1} and~\ref{Lem:Case4Dependence2} together produce 4 possible cases.  In all but one of these cases, $\Sigma'$ has only one switching loop.

\begin{lemma}
\label{Lem:Case4ReduceToOne}
If $s'_k (\varphi_A) = s'_k [h-1]$ for all $k \geq \ell'$, then $\gamma_{\ell'}$ is not a switching loop for $\Sigma'$.  Similarly, if $s_k (\varphi_B) = s_k [h+1]$ for all $k \leq \ell$, then $\gamma_{\ell}$ is not a switching loop for $\Sigma'$.
\end{lemma}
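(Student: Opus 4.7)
The argument follows the template of Lemma~\ref{Lem:ReduceToOne}, using a four-function tropical dependence in place of the three-function one available in that case. My plan is to prove the first statement in detail; the second then follows by a symmetric argument, interchanging the roles of $\gamma_\ell$ and $\gamma_{\ell'}$, of $\varphi_A$ and $\varphi_B$, and of $\varphi_C$ and $\varphi_D$, and reversing the orientation of $\Gamma$.

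Assume $s'_k(\varphi_A) = s'_k[h-1]$ for all $k \geq \ell'$, and suppose for contradiction that $\gamma_{\ell'}$ is a switching loop for $\trop(W)$. Then there is some $\varphi \in \trop(W)$ with $s_{\ell'}(\varphi) = s_{\ell'}[h-1]$ and $s'_{\ell'}(\varphi) = s'_{\ell'}[h]$. Since $\dim W = 3$, the four functions $\varphi_A, \varphi_D, \varphi_B, \varphi$ are tropically dependent, so there exist constants $c_A, c_D, c_B, c$ such that $\min\{\varphi_A + c_A,\ \varphi_D + c_D,\ \varphi_B + c_B,\ \varphi + c\}$ achieves its minimum at least twice at every point of $\Gamma$. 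On each bridge, two of the four shifted functions must agree throughout, and hence must have equal slope along that bridge.

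The slopes on $\beta_{\ell'}$ are $s_{\ell'}[h-1]$ for $\varphi_A, \varphi$ and $s_{\ell'}[h+1]$ for $\varphi_D, \varphi_B$, and the only minimum pair compatible with the dependence extending far to the right is $(\varphi_A, \varphi)$. The slopes on $\beta_{\ell'+1}$ are $s'_{\ell'}[h-1]$ for $\varphi_A, \varphi_D$, $s'_{\ell'}[h]$ for $\varphi$, and $s'_{\ell'}[h+1]$ for $\varphi_B$, and the only viable minimum pair is $(\varphi_A, \varphi_D)$. As the minimum pair switches from $(\varphi_A,\varphi)$ on $\beta_{\ell'}$ to $(\varphi_A,\varphi_D)$ on $\beta_{\ell'+1}$, the transition must occur on the loop $\gamma_{\ell'}$, and at the transition point $y \in \gamma_{\ell'}$ the three shifted functions $\varphi_A + c_A$, $\varphi + c$, and $\varphi_D + c_D$ must coincide.

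The main obstacle will be ruling out this triple coincidence. I plan to do so by unpacking the combinatorics of the restrictions of $\varphi_A$, $\varphi_D$, and $\varphi$ to $\gamma_{\ell'}$. Since $\gamma_{\ell'}$ has multiplicity $1$ and is non-decreasing, $s_{\ell'}[i] = s'_{\ell'}[i]$ for all $i$, $s_{\ell'}[h] = s_{\ell'}[h-1] + 1$, and the unique point of $D$ on $\gamma_{\ell'}$ lies at counterclockwise distance $(s_{\ell'}[h-1] + 1)m_{\ell'}$ from $v_{\ell'}$. Computing the degree of $D + \ddiv(\cdot)|_{\gamma_{\ell'}}$ in each case shows that $\varphi_A|_{\gamma_{\ell'}}$ has no interior bends, that $\varphi|_{\gamma_{\ell'}}$ is the unique function with the prescribed in/out slopes bending upward by $1$ at $x_{\ell'}$, and that $\varphi_D|_{\gamma_{\ell'}}$ absorbs a slope drop of $2$ with two additional bend points. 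Tracking the signs of the differences $\varphi + c - \varphi_A - c_A$ and $\varphi_D + c_D - \varphi_A - c_A$ along $\gamma_{\ell'}$, using the equality on $\beta_{\ell'}$ at $v_{\ell'}$ and the equality on $\beta_{\ell'+1}$ at $w_{\ell'}$ respectively, one finds that the subsets of $\gamma_{\ell'}$ on which these differences vanish are disjoint closed arcs. They therefore cannot meet at any common point $y$, yielding the desired contradiction and completing the proof.
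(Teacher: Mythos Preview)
Your approach has a genuine gap in the choice of the fourth function for the dependence. You use $\varphi_A, \varphi_B, \varphi_D, \varphi$, but on $\beta_{\ell'}$ the slopes of $\varphi_B$ and $\varphi_D$ are both $s_{\ell'}[h+1]$, while on $\beta_{\ell'+1}$ the slopes of $\varphi_A$ and $\varphi_D$ are both $s'_{\ell'}[h-1]$. Hence nothing prevents the tropical dependence from being realized entirely by the triple $\{\varphi_A,\varphi_B,\varphi_D\}$, with $\varphi$ never achieving the minimum. In that scenario your assertion that the minimum pair on $\beta_{\ell'}$ must be $(\varphi_A,\varphi)$ fails: the pair $(\varphi_B,\varphi_D)$ is equally compatible with the pair $(\varphi_A,\varphi_D)$ to the right, and you have not ruled it out. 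Even if you were in Case~1, your triple-coincidence claim on $\gamma_{\ell'}$ ignores the possibility that $\varphi_B$ participates in the minimum on part of the loop, so the sketch in your final paragraph does not close.

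The paper avoids this by replacing $\varphi_D$ with $\varphi_E$, which has slope $s_k[h]$ on both $\beta_{\ell'}$ and $\beta_{\ell'+1}$. Then $\varphi_A,\varphi_B,\varphi_E$ have three \emph{pairwise distinct} slopes $h-1,\,h+1,\,h$ on each of these bridges, so any tropical dependence among $\{\varphi_A,\varphi_B,\varphi_E,\varphi\}$ is forced to involve $\varphi$. On $\beta_{\ell'+1}$ the only possible pair is $(\varphi,\varphi_E)$; tracing this back across $\gamma_{\ell'}$ shows the minimum has incoming slope at least $s_{\ell'}[h]$ on $\beta_{\ell'}$, which forces $s_{\ell'}(\varphi)\ge s_{\ell'}[h]$ directly, without any loop-level coincidence analysis. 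Swapping $\varphi_D$ for $\varphi_E$ is exactly the missing idea.
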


\begin{proof}
We consider the case where $s'_k (\varphi_A) = s'_k [h-1]$ for all $k \geq \ell'$.  The other case is similar.  Let $\varphi \in \Sigma'$ be a function with $s_{\ell'+1} (\varphi) = s_{\ell'+1} [h]$.  Because $\Sigma'$ has rank $2$, the functions $\varphi, \varphi_A, \varphi_B$ and $\varphi_E$ are tropically dependent.  Because only $\varphi$ and $\varphi_E$ have the same slope on $\beta_{\ell'+1}$, in this dependence they must achieve the minimum at $w_{\ell'}$.  Because of this, $\varphi_E$ achieves the minimum at $v_{\ell'}$ as well, hence the minimum has slope at least $s_{\ell'} [h]$ along $\beta_{\ell'}$.  Because this slope must be obtained twice, and the three functions $\varphi_A , \varphi_B,$ and $\varphi_E$ have distinct slopes there, we see that $s_{\ell'} (\varphi) \geq s_{\ell'} [h]$.
\end{proof}

If $\Sigma'$ has only one switching loop, then the argument is essentially identical to Case 3.

\medskip

For the remainder of this section, we assume that there exists $k' \geq \ell'$ such that $s'_{k'} (\varphi_A) = s'_{k'} [h]$, and there exists $k \leq \ell$ such that $s_k (\varphi_B) = s_k [h+1]$.  By Lemmas~\ref{Lem:Case4Dependence1} and~\ref{Lem:Case4Dependence2}, this implies that the slopes of $\varphi_E$ are as pictured in Figure~\ref{Fig:LastCase}.

\begin{figure}[H]
\begin{tikzpicture}

\draw (0,4)--(6,4);
\draw [ball color=white] (2,4) circle (0.55mm);
\draw [ball color=white] (4,4) circle (0.55mm);
\draw (1,4.2) node {{\tiny $h+1$}};
\draw (3,4.2) node {{\tiny $h$}};
\draw (5,4.2) node {{\tiny $h-1$}};

\end{tikzpicture}
\caption{A schematic depiction of the function $\varphi_E$ when $\Sigma'$ has two switching loops.}
\label{Fig:LastCase}
\end{figure}

We now describe additional functions in $\Sigma'$.  These functions are illustrated in Figure~\ref{Fig:Case4More}.

\begin{proposition}
\label{Prop:MoreFns}
There exist functions $\varphi_F, \varphi_G, \varphi_H \in \Sigma'$ with the following properties:
\begin{enumerate}
\item  $s'_k (\varphi_F) = s'_k [h]$ for all $k \leq \ell$, and $s_{\ell} (\varphi_F) = s_{\ell}[h+1]$;
\item  $s'_{\ell'} (\varphi_G) = s'_{\ell'}[h-1]$ and $s_k (\varphi_G) = s_k [h]$ for all $k > \ell'$;
\item  either
\begin{enumerate}
\item  $s'_k (\varphi_H) = s'_k[h]$ for all $k \leq \ell$ and $s_k (\varphi_H) = s_k[h]$ for all $k > \ell'$, or
\item  $s'_k (\varphi_H) = s'_k[h]$ for all $k \leq \ell$ and $s_k (\varphi_H) = s_k[h+1]$ for all $\ell < k \leq \ell'$, or
\item  $s'_k (\varphi_H) = s'_k[h-1]$ for all $\ell \leq k < \ell'$, and $s_k (\varphi_H) = s_k[h]$ for all $k > \ell'$.
\end{enumerate}
\end{enumerate}
\end{proposition}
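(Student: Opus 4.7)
I will construct each of the three functions using the same template as the proofs of Propositions~\ref{Prop:BridgeFns}, \ref{Prop:Case2Fns}, and \ref{Prop:Case4Fns}: identify an appropriate sub-pencil of $W$ by imposing vanishing conditions at the marked points $p$ and $p'$, apply the analogue of Lemma~\ref{Lemma:Existence} to that pencil to exhibit two functions in its tropicalization with prescribed slopes at the extreme tangent vectors, and then assemble the desired function as the tropical minimum of the two after a suitable additive translation.

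For $\varphi_F$, I take $W_F \subseteq W$ to be the pencil of functions vanishing to order at least $a_{r-h}(p)$ at $p$. The tropical linear series $\trop(W_F)$ then has two slopes at each tangent vector, drawn from $\{s_k[h], s_k[h+1]\}$. Since the only switching loop for slope $h$ is $\gamma_\ell$ and there are no switching bridges, two extremal shapes occur in $\trop(W_F)$: a function $\phi_0$ with constant slope index $h$ everywhere, and a function $\phi_1$ with slope index $h$ on bridges left of $\gamma_\ell$ and slope index $h+1$ on bridges right of $\gamma_\ell$. Adjusting constants so that $\phi_0$ and $\phi_1$ coincide at a suitable point on $\beta_\ell$ or $\gamma_\ell$ and taking their pointwise minimum produces $\varphi_F$ with the prescribed slopes. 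The function $\varphi_G$ is built symmetrically from the pencil $W_G \subseteq W$ of functions vanishing to order at least $h$ at $p'$, whose tropical slopes lie in $\{s_k[h-1], s_k[h]\}$ and whose only relevant switching loop (enabling $h-1 \to h$) is $\gamma_{\ell'}$.

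The construction of $\varphi_H$ is the most delicate step, since the proposition asserts only a trichotomy. My plan is to examine the line $W_H := W_F \cap W_G$, which, when nonzero, provides a distinguished function whose slope indices on the leftmost and rightmost bridges are both forced to be $h$, but whose behavior between $\gamma_\ell$ and $\gamma_{\ell'}$ is a priori unconstrained beyond lying in $\{s_k[h-1], s_k[h], s_k[h+1]\}$. The three cases in the statement correspond to three possible combinatorial types: in case (a), $W_H$ is nonzero and its tropicalization maintains slope index $h$ throughout the middle region; in case (b), $\varphi_H$ must be assembled as a tropical minimum using only the switch at $\gamma_\ell$; in case (c), the roles of $\gamma_\ell$ and $\gamma_{\ell'}$ are reversed. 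The hard part will be showing that under the standing hypothesis that both loops are switching for $\trop(W)$, at least one of these three alternatives always occurs. This will follow from a tropical dependency analysis among the five functions of Proposition~\ref{Prop:Case4Fns} paralleling Lemmas~\ref{Lem:Case4Dependence1} and \ref{Lem:Case4Dependence2}, together with the observation that any putative fourth alternative, in which $\varphi_H$ would switch slope index twice in the middle region, is precluded by the multiplicity bound $\rho \leq 2$.
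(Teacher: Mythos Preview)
Your construction of $\varphi_F$ contains a concrete error in the slope analysis of $W_F$. Imposing vanishing order $\geq a_{r-h}(p)$ at the left marked point $p$ forces the outgoing slope at $w_0$ to satisfy $s'_0 \leq s'_0[h]$, so the two slopes of $\trop(W_F)$ at $w_0$ lie in $\{s'_0[h-1],\,s'_0[h]\}$, not $\{s'_0[h],\,s'_0[h+1]\}$ as you claim. The same sign error afflicts your description of $W_G$: vanishing $\geq h$ at $p'$ forces $s_{g'+1} \geq s_{g'+1}[h]$, so the slopes of $\trop(W_G)$ at the right endpoint are $\{h, h+1\}$, not $\{h-1, h\}$. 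Beyond the indexing issue, your assertion that ``extremal shapes'' $\phi_0$ (constant slope index $h$) and $\phi_1$ actually occur in $\trop(W_F)$ is precisely what fails at a switching loop: we are in the sub-case of 4d where $\gamma_\ell$ genuinely switches slope $h$ for $\trop(W)$, so a function of constant slope index $h$ need not lie in the tropicalization of any sub-pencil.

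The paper's route is different and avoids both problems. Instead of pencils cut out by vanishing conditions, it uses the pencils $\langle f_B, f_E\rangle$ and $\langle f_A, f_E\rangle$ spanned by lifts of the already-constructed functions $\varphi_A,\varphi_B,\varphi_E$ of Proposition~\ref{Prop:Case4Fns}. One then takes $\varphi_F$ to be the tropicalization of a function in $\langle f_B, f_E\rangle$ with $s'_0(\varphi_F)\neq s'_0[h+1]$, and reads off its slopes from the tropical dependence among $\varphi_B,\varphi_E,\varphi_F$ (Figure~\ref{Fig:LastCaseDependence}); similarly for $\varphi_G$. Your plan for $\varphi_H$ via $W_F\cap W_G$ does coincide with the paper's choice of a function with $s'_0\leq s'_0[h]$ and $s_{g'+1}\geq s_{g'+1}[h]$, but the trichotomy is not handled by a ``multiplicity bound'' argument: it comes from analyzing the three possible shapes of the tropical dependence among $\varphi_A,\varphi_B,\varphi_E,\varphi_H$ (Figure~\ref{Fig:HDependence}).
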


\begin{proof}
By Definition~\ref{Def:TLS}\eqref{DefItem:Recursive}, there are rank 1 tropical linear subseries $\Sigma_1$ and $\Sigma_2$ of $\Sigma'$ that contain $\{\varphi_B, \varphi_E \}$ and $\{ \varphi_A, \varphi_E \}$, respectively. Let $\varphi_F$ be a function in $\Sigma_1$ with $s'_0 (\varphi_F) \neq s'_0 [h+1]$.  Similarly, let $\varphi_G$ be a function in $\Sigma_2$ with $s_{g'+1} (\varphi_G) \neq s_{g'+1} [h-1]$.  We let $\varphi_H$ be a function in $\trop (W)$ such that $s'_0 (\varphi_H) \leq s'_0[h]$ and $s_{g'+1} (\varphi_H) \geq s_{g'+1}[h]$.

To see that the functions have the required slopes, we make use of various dependences between them and the functions $\varphi_A , \varphi_B, \varphi_E$.  Specifically, because the functions $\varphi_B, \varphi_E$, and $\varphi_F$ are contained in a tropical linear subseries of rank 1, they are tropically dependent.  The dependence between them is very similar to the dependence between $\varphi_A, \varphi_B$, and $\varphi_C$ in Case 2, and is depicted in the top line of Figure~\ref{Fig:LastCaseDependence}.  In this dependence, $\varphi_B$ and $\varphi_F$ agree in a neighborhood of $\gamma_{\ell}$, which determines the slopes of $\varphi_F$ on the bridges to either side of this loop.

Similarly, the functions $\varphi_A, \varphi_E$, and $\varphi_G$ are tropically dependent, and the dependence between them is illustrated in the bottom line of Figure~\ref{Fig:LastCaseDependence}.  The functions $\varphi_A, \varphi_B, \varphi_E$, and $\varphi_H$ also satisfy a dependence.  There are three possibilities for this dependence, as shown in Figure~\ref{Fig:HDependence}.
\end{proof}

\begin{figure}[h]
\begin{tikzpicture}

\draw (-0.4,.7) node {{\footnotesize $\varphi_F$}};
\draw (0,.7)--(6,.7);
\draw [ball color=white] (2,.7) circle (0.55mm);
\draw [ball color=white] (4,.7) circle (0.55mm);
\draw [ball color=black] (3,.7) circle (0.55mm);
\draw (1,.9) node {{\tiny $h$}};
\draw (2.5,.9) node {{\tiny $h+1$}};
\draw (3.5,.9) node {{\tiny $h$}};
\draw (5,.9) node {{\tiny $h-1$}};

\draw (-0.2,-.3) node {{\tiny or}};
\draw (0,-.3)--(6,-.3);
\draw [ball color=white] (2,-.3) circle (0.55mm);
\draw [ball color=white] (4,-.3) circle (0.55mm);
\draw [ball color=black] (5,-.3) circle (0.55mm);
\draw (1,-.1) node {{\tiny $h$}};
\draw (3,-.1) node {{\tiny $h+1$}};
\draw (4.5,-.1) node {{\tiny $h+1$}};
\draw (5.5,-.1) node {{\tiny $h-1$}};

\draw (-0.4,-1.6) node {{\footnotesize $\varphi_G$}};
\draw (0,-1.6)--(6,-1.6);
\draw [ball color=white] (2,-1.6) circle (0.55mm);
\draw [ball color=white] (4,-1.6) circle (0.55mm);
\draw [ball color=black] (3,-1.6) circle (0.55mm);
\draw (1,-1.4) node {{\tiny $h$}};
\draw (2.5,-1.4) node {{\tiny $h$}};
\draw (3.5,-1.4) node {{\tiny $h-1$}};
\draw (5,-1.4) node {{\tiny $h$}};

\draw (-0.2,-2.6) node {{\tiny or}};
\draw (0,-2.6)--(6,-2.6);
\draw [ball color=white] (2,-2.6) circle (0.55mm);
\draw [ball color=white] (4,-2.6) circle (0.55mm);
\draw [ball color=black] (1,-2.6) circle (0.55mm);
\draw (0.5,-2.4) node {{\tiny $h+1$}};
\draw (1.5,-2.4) node {{\tiny $h-1$}};
\draw (3,-2.4) node {{\tiny $h-1$}};
\draw (5,-2.4) node {{\tiny $h$}};

\draw (-0.4,-4) node {{\footnotesize $\varphi_H$}};
\draw (0,-4)--(6,-4);
\draw [ball color=white] (2,-4) circle (0.55mm);
\draw [ball color=white] (4,-4) circle (0.55mm);
\draw [ball color=black] (2.66,-4) circle (0.55mm);
\draw [ball color=black] (3.33,-4) circle (0.55mm);
\draw (1,-3.8) node {{\tiny $h$}};
\draw (2.5,-3.8) node {{\tiny $h+1$}};
\draw (3,-3.8) node {{\tiny $h$}};
\draw (3.5,-3.8) node {{\tiny $h-1$}};
\draw (5,-3.8) node {{\tiny $h$}};

\draw (-0.2,-5) node {{\tiny or}};
\draw (0,-5)--(6,-5);
\draw [ball color=white] (2,-5) circle (0.55mm);
\draw [ball color=white] (4,-5) circle (0.55mm);
\draw [ball color=black] (5,-5) circle (0.55mm);
\draw (1,-4.8) node {{\tiny $h$}};
\draw (3,-4.8) node {{\tiny $h+1$}};
\draw (4.5,-4.8) node {{\tiny $h+1$}};
\draw (5.5,-4.8) node {{\tiny $h$}};

\draw (-0.2,-6) node {{\tiny or}};
\draw (0,-6)--(6,-6);
\draw [ball color=white] (2,-6) circle (0.55mm);
\draw [ball color=white] (4,-6) circle (0.55mm);
\draw [ball color=black] (1,-6) circle (0.55mm);
\draw (0.5,-5.8) node {{\tiny $h$}};
\draw (1.5,-5.8) node {{\tiny $h-1$}};
\draw (3,-5.8) node {{\tiny $h-1$}};
\draw (5,-5.8) node {{\tiny $h$}};

\end{tikzpicture}
\caption{A schematic depiction of the three functions of Proposition~\ref{Prop:MoreFns}.}
\label{Fig:Case4More}
\end{figure}

\begin{figure}[h]
\begin{tikzpicture}

\draw (0,6.3)--(6,6.3);
\draw [ball color=white] (2,6.3) circle (0.55mm);
\draw [ball color=white] (4,6.3) circle (0.55mm);
\draw [ball color=black] (1,6.3) circle (0.55mm);
\draw [ball color=black] (3,6.3) circle (0.55mm);
\draw (0.5,6.5) node {{\tiny $BE$}};
\draw (2,6.5) node {{\tiny $BF$}};
\draw (4,6.5) node {{\tiny $EF$}};

\draw (0,5.3)--(6,5.3);
\draw [ball color=white] (2,5.3) circle (0.55mm);
\draw [ball color=white] (4,5.3) circle (0.55mm);
\draw [ball color=black] (3,5.3) circle (0.55mm);
\draw [ball color=black] (5,5.3) circle (0.55mm);
\draw (2,5.5) node {{\tiny $EG$}};
\draw (4,5.5) node {{\tiny $AG$}};
\draw (5.5,5.5) node {{\tiny $AE$}};

\end{tikzpicture}
\caption{Dependences between the functions $\varphi_A , \varphi_B , \varphi_E, \varphi_F$, and $\varphi_G$.}
\label{Fig:LastCaseDependence}
\end{figure}

\begin{figure}[h]
\begin{tikzpicture}

\draw (0,4)--(6,4);
\draw [ball color=white] (2,4) circle (0.55mm);
\draw [ball color=white] (4,4) circle (0.55mm);
\draw [ball color=black] (1,4) circle (0.55mm);
\draw [ball color=black] (2.75,4) circle (0.55mm);
\draw [ball color=black] (3.25,4) circle (0.55mm);
\draw [ball color=black] (5,4) circle (0.55mm);
\draw (0.5,4.2) node {{\tiny $BE$}};
\draw (2,4.2) node {{\tiny $BH$}};
\draw (3,4.2) node {{\tiny $EH$}};
\draw (4,4.2) node {{\tiny $AH$}};
\draw (5.5,4.2) node {{\tiny $AE$}};

\draw (-0.2,3) node {{\tiny or}};
\draw (0,3)--(6,3);
\draw [ball color=white] (2,3) circle (0.55mm);
\draw [ball color=white] (4,3) circle (0.55mm);
\draw [ball color=black] (0.75,3) circle (0.55mm);
\draw [ball color=black] (1.25,3) circle (0.55mm);
\draw [ball color=black] (5,3) circle (0.55mm);
\draw (0.35,3.2) node {{\tiny $BE$}};
\draw (1,3.2) node {{\tiny $BH$}};
\draw (3,3.2) node {{\tiny $AH$}};
\draw (5.5,3.2) node {{\tiny $AE$}};

\draw (-0.2,2) node {{\tiny or}};
\draw (0,2)--(6,2);
\draw [ball color=white] (2,2) circle (0.55mm);
\draw [ball color=white] (4,2) circle (0.55mm);
\draw [ball color=black] (1,2) circle (0.55mm);
\draw [ball color=black] (4.75,2) circle (0.55mm);
\draw [ball color=black] (5.25,2) circle (0.55mm);
\draw (0.5,2.2) node {{\tiny $BE$}};
\draw (3,2.2) node {{\tiny $BH$}};
\draw (5,2.2) node {{\tiny $AH$}};
\draw (5.5,2.2) node {{\tiny $AE$}};

\end{tikzpicture}
\caption{Possibilities for the dependence between $\varphi_A , \varphi_B , \varphi_E$, and $\varphi_H$.}
\label{Fig:HDependence}
\end{figure}

\begin{lemma}
\label{Lem:Case4Inequivalent}
The functions $\varphi_A$ and $\varphi_B$ are not equivalent on any loop.  Moreover, for any pair $k' \leq k$ with $k' \neq \ell'$ and $k \neq \ell$, one of the four functions $\varphi_E, \varphi_F, \varphi_G, \varphi_H$ is not equivalent to $\varphi_B$ on $\gamma_{k'}$, and is not equivalent to $\varphi_A$ on $\gamma_k$.
\end{lemma}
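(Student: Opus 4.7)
The proof is a case analysis on the positions of $\gamma_{k'}$ and $\gamma_k$ relative to the two switching loops $\gamma_\ell$ and $\gamma_{\ell'}$. For the first assertion, observe that on every loop $\gamma_j$ the outgoing slope index $\tau'_j(\varphi_A)$ lies in $\{h-1, h\}$ while $\tau'_j(\varphi_B)$ lies in $\{h, h+1\}$. Reading the slopes from Figure~\ref{Fig:Case4}, the only loops on which $\tau'_j(\varphi_A) = h$ lie in a segment of the graph strictly to the right of $\gamma_{\ell'}$ (between $\gamma_{\ell'}$ and the bend of $\varphi_A$), while the only loops on which $\tau'_j(\varphi_B) = h$ lie in a segment strictly to the left of $\gamma_\ell$. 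Since these segments are disjoint, $\tau'_j(\varphi_A) \neq \tau'_j(\varphi_B)$ for every $j$, and property $(\mathbf{A})$ then yields that $\varphi_A$ and $\varphi_B$ do not agree on any loop.

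For the main assertion, let $L$, $M$, $R$ denote the sets of loops $\gamma_j$ with $j < \ell$, $\ell < j < \ell'$, and $j > \ell'$, respectively. Under the constraints $k \neq \ell$ and $k' \neq \ell'$, the loop $\gamma_k$ lies in $L \cup M \cup \{\gamma_{\ell'}\} \cup R$, while $\gamma_{k'}$ lies in $L \cup \{\gamma_\ell\} \cup M \cup R$, and the hypothesis $k' \leq k$ yields a finite list of region-pair configurations. For each test function $\varphi \in \{\varphi_E, \varphi_F, \varphi_G, \varphi_H\}$, the plan is to tabulate the values of $\tau'_j(\varphi)$ on each region and each switching loop, reading them from Propositions~\ref{Prop:Case4Fns} and~\ref{Prop:MoreFns} together with Figures~\ref{Fig:LastCase}, \ref{Fig:Case4More}, and~\ref{Fig:HDependence}, and accounting for the two or three combinatorial variants in which $\varphi_F$, $\varphi_G$, $\varphi_H$ may appear. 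In each configuration, the goal is to exhibit some $\varphi$ with $\tau'_{k'}(\varphi) \neq \tau'_{k'}(\varphi_B)$ and $\tau'_k(\varphi) \neq \tau'_k(\varphi_A)$.

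The function $\varphi_E$, with slope indices $(h+1, h, h-1)$ on $(L, M, R)$ and $\tau'_\ell(\varphi_E) = h$, absorbs the majority of configurations: whenever $\gamma_{k'} \in M \cup \{\gamma_\ell\}$ and $\gamma_k \in L \cup M \cup \{\gamma_\ell\}$, both required inequalities hold because the relevant $\tau'$ values are neither $h+1$ nor $h-1$. When $\gamma_{k'} \in L$, the substitute is $\varphi_F$, whose $\tau'$ equals $h$ throughout $L$ in both variants; when $\gamma_k \in R$ and $\tau'_k(\varphi_A) = h-1$, the substitute is $\varphi_G$ in whichever variant places $\tau' = h$ on the relevant subregion of $R$; and when $\gamma_{k'} \in L$ with $\gamma_k \in R \cup \{\gamma_{\ell'}\}$, neither $\varphi_F$ nor $\varphi_G$ alone suffices and one uses $\varphi_H$, which Proposition~\ref{Prop:MoreFns} constructs with $\tau' = h$ at both ends of the graph in one of three variants classified by Figure~\ref{Fig:HDependence}.

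The main obstacle is the variant bookkeeping. Each of $\varphi_F$, $\varphi_G$, $\varphi_H$ has multiple variants, and in any one variant some configurations may fail. The key observation is that the tropical dependences of Figures~\ref{Fig:LastCaseDependence} and~\ref{Fig:HDependence} couple the variants in a controlled way: if the variant of $\varphi_F$ places $\tau'$ at the wrong index on some loop in $R$, then the same dependence forces a complementary variant of $\varphi_G$ or $\varphi_H$ in which $\tau'$ is placed correctly, and vice versa. I expect the most delicate subcase to be $\gamma_{k'} \in L$ with $\gamma_k = \gamma_{\ell'}$, where $\tau'_{\ell'}(\varphi_A) = h$ and so one needs $\tau'_{\ell'}(\varphi) \neq h$ rather than $\neq h-1$; here the switching pattern at $\gamma_{\ell'}$ (classified via Definition~\ref{def:buildingblock}) forces $\tau'_{\ell'}(\varphi)$ to lie in $\{h-1, h\}$ for any building block, and one must use the variant structure of $\varphi_H$ together with the constraints of Figure~\ref{Fig:HDependence} to select a witness with $\tau'_{\ell'} = h-1$ and $\tau'_{k'} \neq h+1$. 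Once this variant coupling is verified, the remaining configurations are a routine, if lengthy, inspection of the tables.
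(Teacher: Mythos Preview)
Your approach is the same region-based case analysis the paper intends (the paper's proof is literally ``similar to Lemma~\ref{Lem:Case3Inequivalent}''), and the plan of tabulating slopes on $L,M,R$ and the two switching loops is correct.  Two corrections will tighten it.

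First, your appeal to property~$(\mathbf{A})$ in the first paragraph is backwards and misapplied: $(\mathbf{A})$ concerns building blocks in $\cA$, not elements of $\Sigma$, and it says that \emph{equal} $\tau'_k$ implies agreement, not the converse.  The correct (and simpler) criterion, used in the proof of Lemma~\ref{Lem:Case3Inequivalent}, is that if two PL functions agree on $\gamma_k$ then they have the same outgoing slope $s'_k$; hence $s'_k(\varphi)\neq s'_k(\varphi')$ already forces non-agreement.  Since in subcase~4d there are no decreasing loops or bridges, comparing slope indices is equivalent to comparing slopes, so you should phrase the entire argument in terms of $s'_k$ rather than $\tau'_k$ and drop any mention of $(\mathbf{A})$.

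Second, the ``variant coupling'' you anticipate is more elaborate than needed.  With the $s'_k$ criterion in hand, each region pair has a witness by direct inspection, uniformly across the variants of $\varphi_F,\varphi_G,\varphi_H$.  For instance, in your flagged subcase $k'\in L$, $k=\ell'$: from Figure~\ref{Fig:Case4} one has $s'_{\ell'}(\varphi_A)=s'_{\ell'}[h]$, while $s'_{\ell'}(\varphi_E)=s'_{\ell'}[h-1]$ and (in either variant) $s'_{\ell'}(\varphi_F)\in\{s'_{\ell'}[h-1],s'_{\ell'}[h+1]\}$, so both $\varphi_E$ and $\varphi_F$ disagree with $\varphi_A$ at $\gamma_{\ell'}$.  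On the $k'$ side, $s'_{k'}(\varphi_F)=s'_{k'}[h]$ while $s'_{k'}(\varphi_E)=s'_{k'}[h+1]$; since $s'_{k'}(\varphi_B)$ is one of these two values on $L$, whichever of $\varphi_E,\varphi_F$ differs from $\varphi_B$ at $\gamma_{k'}$ serves as the witness.  No dependence between variants is required.  The remaining region pairs are handled the same way, and the check is routine.
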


\begin{proof}
The proof is similar to that of Lemma~\ref{Lem:Case3Inequivalent}.
\end{proof}

We choose the set $\cA$ and the set $\cB$ satisfying properties $(\mathbf B)$ and $(\mathbf B')$ exactly as in 4c.  Then there exists a template $\theta$ and an assigment function $\alpha$ satisfying \ref{T1}-\ref{T5}.  The choice of $\cT$ is also similar to Case 4c.  First, if $i,j \notin \{ h-1,h,h+1 \}$, then we put $\varphi_{ij}, \varphi_A + \varphi_j$, and $\varphi_B + \varphi_j$ in $\cT$.  Then $\varphi_A + \varphi_j$ achieves equality on the region where some pairwise sum of building blocks $\psi \in \cB$ achieves the minimum, and $\varphi_B + \varphi_j$ achieves equality on the region where some pairwise sum of building blocks $\psi'$ achieves the minimum.  By Lemma~\ref{Lem:Case4Inequivalent}, $\psi$ and $\psi'$ are not assigned to the same loop or bridge, and one of the four functions $\varphi_E + \varphi_j , \varphi_F + \varphi_j , \varphi_G + \varphi_j$, or $\varphi_H + \varphi_j$ is not equivalent to $\varphi_A + \varphi_j$ on the loop or bridge where $\psi$ is assigned, and not equivalent to $\varphi_B + \varphi_j$ on the loop or bridge where $\psi'$ is assigned.  We put this function in $\cT$.

\begin{proof}[Proof of Theorem~\ref{thm:independence}, Case 4d]
The proof is identical to that of Case 4c, using Lemma~\ref{Lem:Case4Inequivalent} in place of Lemma~\ref{Lem:Case3Inequivalent}.
\end{proof}

\section{Effectivity of the virtual classes} \label{sec:genfinite}

We fix $g = 22$ or $23$, $d = g+3$, and study linear series of rank $r = 6$.  In \S\ref{virtualis_divizorok} we defined an open substack $\widetilde{\mathfrak{M}}_g$ of the moduli stack of stable curves, a stack $\Grd$ of generalized limit linear series of rank $r$ and degree $d$ over $\widetilde{\mathfrak{M}}_g$, and a morphism of vector bundles $\phi:\mbox{Sym}^2(\E)\rightarrow \F$ over $\Grd$, whose degeneracy locus is denoted by $\fU$.

In \S\ref{Sec:Generic} we used the method of tropical independence to prove Theorems~\ref{thm:independence} and \ref{Thm:MainThm}, establishing the Strong Maximal Rank Conjecture for $g$, $r$, and $d$.  As a consequence, we know that the push forward $\sigma_*[\fU]^\vir$ under the proper forgetful map $\sigma:\Grd\rightarrow \widetilde{\mathcal{M}}_g$ is a divisor, not just a divisor class.  We now proceed to prove Theorem~\ref{thm:genfinite}, which says that $\fU$ is generically finite over each component of this divisor.  This implies that $\sigma_*[\fU]^\vir$ is effective.  By Theorem~\ref{thm:slopes}, the slope of this effective divisor is less than $\frac{13}{2}$, and it follows that $\overline{\mathcal{M}}_{22}$ and $\overline{\mathcal{M}}_{23}$ are of general type.

\medskip

\subsection{Multiplication maps with ramification} To study the fibers of $\sigma_{\vert\fU}$ over singular curves, we consider linear series $\ell = (L,V)$ of degree $d$ and rank $r$ on a pointed curve $(X, p)$ of genus $g' \leq g$ that satisfy a ramification condition at $p$.  More precisely, we consider the cases where
\begin{enumerate}
\item $g' = g$,
\item $g' = g-1$ and $a_1^\ell(p) \geq 2$, or
\item $g' = g-2$ and either $a_1^\ell(p) \geq 3$ or $a_0^\ell(p) + a_2^\ell(p) \geq 5$.
\end{enumerate}
We deduced Theorem~\ref{Thm:MainThm} from the case of Theorem~\ref{thm:independence} where $g' = g$.  The cases where $g'$ is equal to $g-1$ or $g-2$ have the following analogous consequences involving multiplication maps for linear series with ramification on a general pointed curve of genus $g'$.

\begin{theorem}
\label{thm:MRCg-1}
Let $X$ be a general curve of genus $g' = 20+\rho$ and let $p \in X$ be a general point.  Then the multiplication map
\[
\phi_{\ell} : \Sym^2 V \to H^0(X, L^{\otimes 2})
\]
is injective for all linear series $\ell=(L,V)\in G^6_{24+\rho}(X)$ satisfying the vanishing condition
$
a_1^{\ell}(p) \geq 2$.
\end{theorem}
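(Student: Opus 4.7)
The plan is to deduce Theorem~\ref{thm:MRCg-1} directly from Theorem~\ref{thm:independence} in the case $g' = g-1$, together with a standard semicontinuity argument. First, I would replace the general complex pointed curve by a smooth curve $X$ of genus $g' = 20+\rho$ defined over a spherically complete algebraically closed nonarchimedean field $K$ of characteristic zero, chosen so that the skeleton of $X^{\mathrm{an}}$ is a chain of $g'$ loops with admissible edge lengths in the sense of Definition~\ref{Def:Admissible}, together with a $K$-point $p \in X$ specializing to the leftmost vertex $w_0 \in \Gamma$. Such a pointed curve exists by smoothing a $0$-dimensional stratum of $\overline{\mathcal{M}}_{g',1}$ that parametrizes a chain of $2g'-2$ rational components, as described in the introduction.

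Next, I take any linear series $\ell = (L,V) \in G^6_{24+\rho}(X)$ satisfying $a_1^\ell(p)\geq 2$, and set $\Sigma=\trop(V) \subseteq \PL(\Gamma)$. The hypothesis $a_1^\ell(p)\geq 2$ is exactly condition (i) of Theorem~\ref{thm:independence} in the case $g' = g-1$. That theorem then furnishes a tropical independence among $28 = \binom{7+1}{2} = \dim \Sym^2 V$ pairwise sums of functions in $\Sigma$. Each such pairwise sum is the tropicalization of a product $f_i f_j \in H^0(X, L^{\otimes 2})$, and tropical independence of a set of PL functions on $\Gamma$ implies linear independence over $K$ of any lifts of those functions to $K(X)^*$ (see \S\ref{sec:tropicalindependence}). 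Hence $28$ elements in the image of $\phi_\ell$ are $K$-linearly independent, and since $\dim \Sym^2 V = 28$, the multiplication map $\phi_\ell$ must be injective.

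Finally, to transfer the conclusion from the nonarchimedean $(X,p)$ to a general classical smooth pointed curve, I would invoke upper semicontinuity. Inside the universal Brill--Noether stack of $\mathfrak{g}^6_{24+\rho}$'s over $\overline{\mathcal{M}}_{g',1}$, the locus parametrizing triples $([Y,q],\ell)$ for which $a_1^{\ell}(q) \geq 2$ and $\phi_\ell$ is non-injective is closed, being cut out by the ramification condition together with the determinantal degeneracy of $\phi$. Its image under the proper forgetful map to $\overline{\mathcal{M}}_{g',1}$ is therefore closed. The pointed nonarchimedean curve $(X,p)$ constructed above provides a $K$-valued point outside this image, whose associated stable reduction lies on the boundary stratum of chains of rational curves; since $\overline{\mathcal{M}}_{g',1}$ is irreducible, the image is a proper closed substack, and its complement is an open dense subset of $\mathcal{M}_{g',1}$.

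The main obstacle is already absorbed into Theorem~\ref{thm:independence}, whose proof occupies \S\ref{Sec:Construction}--\S\ref{Sec:Generic} and requires the delicate case analysis on switching loops and switching bridges in $\Sigma$. After that theorem is in hand, the remaining work here is only to match hypotheses (which is immediate) and to run the semicontinuity argument (which is standard); no further tropical construction is needed.
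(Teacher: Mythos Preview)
Your proposal is correct and is essentially the paper's own argument: the paper does not give a separate proof of Theorem~\ref{thm:MRCg-1}, but simply observes (immediately after stating Theorem~\ref{thm:independence}) that the case $g'=g-1$ yields Theorem~\ref{thm:MRCg-1} as a direct consequence. Your write-up just makes explicit the passage from tropical independence to linear independence (\S\ref{sec:tropicalindependence}) and the openness/semicontinuity step, both of which the paper takes for granted.
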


\begin{theorem}
\label{thm:MRCg-2}
Let $X$ be a general curve of genus $g' = 19+ \rho$ and let $p \in X$ be a general point.  Then the multiplication map
\[
\phi_{\ell} : \Sym^2 V \to H^0(X,L^{\otimes 2})
\]
is injective for all linear series $\ell=(L,V)\in G_{24+\rho}^6(X)$ satisfying either of the vanishing conditions:
\[
a_1^{\ell}(p) \geq 3 \ \mbox{ or } \ a_0^{\ell}(p) + a_2^{\ell}(p) \geq 5.
\]
\end{theorem}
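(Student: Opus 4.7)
The plan is to deduce Theorem~\ref{thm:MRCg-2} directly from case (iii) of Theorem~\ref{thm:independence}, by the same mechanism used to deduce Theorem~\ref{Thm:MainThm} from case (i), and Theorem~\ref{thm:MRCg-1} from case (ii). First, I would choose $K$ to be a spherically complete algebraically closed nonarchimedean field of characteristic zero with value group $\mathbb{R}$, isomorphic to $\mathbb{C}$ as an abstract field, as in \S\ref{sec:tropicalprelim}. Then I would take $X$ to be a smooth curve over $K$ of genus $g' = 19 + \rho$ whose skeleton is a chain of $g'$ loops $\Gamma$ with admissible edge lengths, together with a point $p \in X(K)$ specializing to the leftmost vertex $w_0$. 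Such a curve $X$ with such a point $p$ exists via standard constructions of semistable models of chain-of-loops type, and, under the identification $K \cong \mathbb{C}$, gives rise to a smooth pointed curve $(X, p)$ of genus $g'$ over $\mathbb{C}$.

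Given any linear series $\ell = (L, V) \in G^6_{24+\rho}(X)$ satisfying $a_1^{\ell}(p) \geq 3$ or $a_0^{\ell}(p) + a_2^{\ell}(p) \geq 5$, the hypotheses of Theorem~\ref{thm:independence}(iii) are exactly met. That theorem produces an independence among 28 pairwise sums of functions in $\Sigma = \trop V$. By the standard sufficient criterion for linear independence via tropical independence (\cite[Lemma~3.2]{tropicalGP}, recalled in the introduction), the 28 corresponding products of sections in $H^0(X, L^{\otimes 2})$ are linearly independent over $K$. Since $\dim_K \Sym^2 V = \binom{8}{2} = 28$, this forces $\phi_\ell\colon \Sym^2 V \to H^0(X, L^{\otimes 2})$ to be injective.

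Finally, to pass from this specific pointed curve to a general pointed smooth curve of genus $g'$ over $\mathbb{C}$, I would invoke a standard semicontinuity argument, parallel to the deduction of Theorem~\ref{Thm:MainThm}. Working on a proper moduli stack parametrizing triples $(X_0, p_0, \ell_0)$, with $\ell_0$ a limit linear series of degree $24+\rho$ and rank $6$ satisfying one of the specified ramification conditions at $p_0$ (compactified using limit linear series of \cite{EisenbudHarris86, Osserman06}), the locus where $\ker \phi_{\ell_0} \neq 0$ is closed; hence its image in $\mathcal{M}_{g',1}$ is closed as well. Since this image misses the point corresponding to our specific $(X, p)$ above, and $\mathcal{M}_{g',1}$ is irreducible, the image is a proper closed substack. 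Thus the conclusion holds on a dense open substack of $\mathcal{M}_{g',1}$, i.e., for a general pointed curve.

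The main obstacle is essentially absent once the tropical work in \S\S\ref{Sec:Slopes}--\ref{Sec:Generic} is in place: the argument here is a direct check that the hypotheses of Theorem~\ref{thm:independence}(iii) match those of Theorem~\ref{thm:MRCg-2}, combined with the field-isomorphism and semicontinuity arguments that are by now standard in tropical lifting. The only subtlety worth flagging is the translation between the algebraic ramification of $V$ at $p$ and the combinatorial ramification of $\Sigma$ at $w_0$ entering the hypotheses of Theorem~\ref{thm:independence}; but this translation is precisely the content of equation~\eqref{eq:ramified} and Lemma~\ref{Lem:MultBound}, both of which have already been established.
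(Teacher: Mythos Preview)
Your proposal is correct and matches the paper's approach exactly. The paper does not give a separate explicit proof of Theorem~\ref{thm:MRCg-2}; immediately after stating Theorem~\ref{thm:independence} it remarks that the case $g'=g-2$ yields Theorem~\ref{thm:MRCg-2} as an ``analogous consequence'' of the same mechanism used to deduce Theorem~\ref{Thm:MainThm}, which is precisely the argument you have written out.
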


\subsection{Effectivity via numerical vanishing} For the remainder of the section, suppose $Z \subseteq \mm_g$ is an irreducible divisor and that $\sigma_{\vert \fU}$ has positive dimensional fibers over the generic point of $Z$.  Our strategy for proving Theorem~\ref{thm:genfinite} is to show, using the vanishing criterion from \S\ref{sec:vanishingcondition}, that $[Z] = 0$ in $CH^1(\mm_g)$.  This is impossible, since $\mm_g$ is projective, and hence no such $Z$ exists.  To apply the vanishing criterion, we must show:
\begin{enumerate} [label=(V\arabic*)]
\item \label{it:closure} $Z$ is the closure of a divisor in $\cM_g$,
\item \label{it:j2} $\jmath_2^*(Z) = 0$,
\item \label{it:codim2} $Z$ does not contain any codimension $2$ stratum $\Delta_{2,j}$, and
\item \label{it:j3} if $g$ is even then $\jmath_3^*(Z)$ is a nonnegative combination of the classes $\big [\overline{\mathcal{W}}_3 \big]$ and $\big [\overline{\mathcal{H}}_3 \big]$ on $\mm_{3,1}$.
\end{enumerate}

The only irreducible divisors on $\widetilde{\mathcal M}_g$ in the complement of $\mathcal{M}_g$ are $\Delta_0^\circ$ and $\Delta_1^\circ$.  Therefore, \ref{it:closure} is a consequence of the following proposition.

\begin{proposition}
\label{prop:noboundary}
The image of the degeneracy locus $\fU$ does not contain $\Delta_0^\circ$ or $\Delta_1^\circ$.
\end{proposition}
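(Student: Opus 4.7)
The plan is to exhibit a single point $t_0 \in \widetilde{\Delta}_0 \cap \widetilde{\Delta}_1$ with $\sigma^{-1}(t_0) \cap \fU = \emptyset$. Since $\Delta_0^\circ$ and $\Delta_1^\circ$ are irreducible and both contain $t_0$, this is enough to rule out both. I would take $t_0 := [X \cup_q E_\infty]$ with $X$ a general smooth curve of genus $g-1 = 20+\rho$, $q \in X$ general, and $E_\infty$ a rational nodal elliptic curve attached at $q$. To confirm $t_0 \in \widetilde{\mathfrak{M}}_g$: the Brill--Noether numbers $\rho(g-1,r+1,d)$ and $\rho(g-1,r,d-2)$ are both negative, so $G^{r+1}_d(X) = G^r_{d-2}(X) = \emptyset$ for such general $X$ and hence $t_0 \notin \mathcal{T}_0 \cup \mathcal{T}_1$; likewise $\rho(g,r,d-1) < 0$, so by \cite[Theorem~1.1]{EisenbudHarris89} the locus $\mm_{g,d-1}^r$ has codimension at least two in $\mm_g$ and cannot contain a boundary divisor.

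The central step is to establish the cusp condition $a_1^{\ell_X}(q) \geq 2$, equivalently $h^0(X, L_X(-2q)) \geq 6$, for every generalized limit linear series $\ell = (\ell_X, \ell_{E_\infty}) \in \sigma^{-1}(t_0)$ with $X$-aspect $\ell_X = (L_X, V_X)$. I expect to obtain this from a direct dimension count at the smooth point $q$ of $E_\infty$: computing the semigroup of pole orders at $q$ via the normalization $\nu\colon \mathbf{P}^1 \to E_\infty$ (and the single linear gluing condition at the node) produces exactly the same gap $\{1\}$ as for a smooth elliptic curve, so no section of any $L_{E_\infty} \in \Pic^d(E_\infty)$ vanishes at $q$ to order exactly $d-1$. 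This forces $a_5^{\ell_{E_\infty}}(q) \leq d - 2$, and the compatibility inequality $a_1^{\ell_X}(q) + a_5^{\ell_{E_\infty}}(q) \geq d$ at the node then yields $a_1^{\ell_X}(q) \geq 2$. A parallel two-case analysis for the smooth elliptic tail case is already implicit in the proof of Proposition~\ref{vectorbundles}.

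With the cusp condition in hand, Theorem~\ref{thm:MRCg-1} applies to the general pointed curve $(X,q)$ of genus $g-1 = 20+\rho$ and gives injectivity of $\phi_{\ell_X}\colon \mathrm{Sym}^2 V_X \to H^0(X, L_X^{\otimes 2})$ for every such $\ell_X$. The local description of $\E$ and $\F$ over $\sigma^{-1}(\widetilde{\Delta}_1)$ from Corollary~\ref{cor:vectorbundles}(ii) realizes $\E_t$ as a subspace of $H^0(X, L_X)$ and factors $\phi_t\colon \mathrm{Sym}^2 \E_t \to \F_t$ through the restriction of $\phi_{\ell_X}$, followed by the identification of the image of $\F_t$ in $H^0(X, L_X^{\otimes 2})$; injectivity therefore transfers to $\phi_t$, and so $(t_0, \ell) \notin \fU$. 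The main obstacle I expect is the uniform verification of the cusp condition across every irreducible component of the space of generalized limit linear series $\sigma^{-1}(t_0)$, together with extending the factorization in Corollary~\ref{cor:vectorbundles}(ii) to the case where the elliptic tail is rational nodal (which lies in the closure of $\widetilde{\Delta}_1$ inside $\widetilde{\mathfrak{M}}_g$).
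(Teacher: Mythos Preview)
Your proposal is correct and follows exactly the paper's approach: exhibit $[X \cup_q E_\infty] \in \Delta_0^\circ \cap \Delta_1^\circ$, check that every $X$-aspect satisfies $a_1^{\ell_X}(q) \geq 2$, and apply Theorem~\ref{thm:MRCg-1}. One small correction: the claim that no section of $L_{E_\infty}$ vanishes to order exactly $d-1$ at $q$ is false in general (it holds only for $L_{E_\infty} \cong \OO_{E_\infty}(dq)$); the correct statement, which still yields $a_5^{\ell_{E_\infty}}(q) \leq d-2$, is that $h^0\bigl(L_{E_\infty}(-(d-1)q)\bigr) \leq 1$ since this is a degree-$1$ torsion-free sheaf on an arithmetic-genus-$1$ curve. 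Your concerns at the end are not obstacles---the Mayer--Vietoris description of $\E_t$ and $\F_t$ in the proof of Proposition~\ref{vectorbundles} works verbatim for the nodal tail.
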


\begin{proof}
Let $[X,p]\in \cM_{g-1,1}$ be a general pointed curve and consider the curve $Y$ obtained by gluing a nodal rational curve $E_{\infty}$ to $X$ at the point $p$.  Note that $[Y] \in \Delta_0^{\circ}\cap \Delta_1^{\circ}$.  The $X$-aspect of a generalized limit linear series of type $\mathfrak g^r_d$ on $Y$ is a linear series $\ell \in G^r_d(X)$ satisfying the condition $a_1^{\ell}(p) \geq 2$.  Then  Theorem~\ref{thm:MRCg-1} implies that $[Y] \not \in \sigma(\fU)$.
\end{proof}

In the proofs of \ref{it:j2}-\ref{it:j3}, we use the following lemma.

\begin{lemma}
\label{lem:ramify}
If $[X] \in Z$ and $p \in X$ then there is a linear series  $\ell \in G^r_d (X)$ that is ramified at $p$ such that $\phi_\ell$ is not injective.
\end{lemma}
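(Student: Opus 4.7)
The plan is the following. For $[X] \in Z$ generic, the hypothesis that $\sigma|_\fU$ has positive-dimensional fibers over $Z$ guarantees a positive-dimensional component of $\fU_X := \sigma^{-1}([X]) \cap \fU$, and hence an irreducible curve $C \subseteq \fU_X \subseteq W^r_d(X)$. I would form the universal ramification divisor $R \subset X \times C$, defined as the zero locus of the relative Wronskian section of $\mathcal L^{\otimes(r+1)} \otimes \pi_X^* K_X^{\otimes \binom{r+1}{2}} \otimes \pi_C^*(\det \pi_{C*}\mathcal L)^{-1}$, where $\mathcal L$ is the universal line bundle on $X \times C$.  Since every $L \in C$ is base-point free with total ramification $(r+1)d + \binom{r+1}{2}(2g-2)$, the projection $R \to C$ is finite surjective, so $R$ is a one-dimensional effective divisor on $X \times C$.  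If $p$ lies in the image of the other projection $R \to X$, then some $L \in C \subseteq \fU_X$ is ramified at $p$; the complete linear series $\ell = (L, H^0(X,L))$ is then a $\mathfrak g^r_d$ ramified at $p$ with $\phi_\ell$ not injective, proving the lemma.  Everything therefore reduces to showing that $R \to X$ is surjective.

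I would prove surjectivity by contradiction.  Suppose the image of $R$ in $X$ is a finite subset $\{p_1, \ldots, p_N\} \subsetneq X$.  Then every irreducible component of $R$ must be one of the vertical curves $\{p_i\} \times C$, so $R = \sum_{i=1}^N n_i (\{p_i\} \times C)$ as divisors on $X \times C$.  Restricting to any slice $X \times \{L\}$ shows that every $L \in C$ has the same ramification divisor $\sum_i n_i p_i$, with $\sum_i n_i = (r+1)d + \binom{r+1}{2}(2g-2)$.  The Wronskian section then exhibits a line bundle isomorphism
\[
L^{\otimes(r+1)} \otimes K_X^{\otimes \binom{r+1}{2}} \ \cong \ \mathcal O_X\Bigl(\sum_{i=1}^N n_i p_i\Bigr),
\]
forcing $L^{\otimes(r+1)}$ to be a single fixed element of $\Pic(X)$ for every $L \in C$.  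But the fiber of the $(r+1)$-th power map $\Pic^d(X) \to \Pic^{(r+1)d}(X)$ is a torsor for the finite group $\Jac(X)[r+1]$, so $C$ would have to be contained in a finite set of order $(r+1)^{2g}$---contradicting that $C$ is an irreducible one-dimensional curve.

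This yields the lemma for $[X]$ generic in $Z$.  The case of arbitrary $[X] \in Z$ follows by a specialization argument, since the condition ``some $\ell \in G^r_d(X)$ ramified at $p$ satisfies $\ell \in \fU$'' is closed on the universal pointed curve over $Z$ and holds on a dense open by the previous step.  The principal difficulty overcome here is the step of passing from the positive-dimensional-fiber hypothesis on $\sigma|_\fU$ to the existence of a ramified $L$ at an arbitrary prescribed $p$; the Plücker-rigidity observation in the displayed isomorphism is the essential tool, and it allows us to sidestep dimension bounds on pointed Brill--Noether loci with large prescribed ramification weight---which would otherwise be delicate for $X$ only general in a codimension-one subvariety of $\cM_g$.
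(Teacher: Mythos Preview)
Your argument is correct and amounts to an inline proof of the result the paper simply cites.  The paper's own proof is two lines: it notes that for every $[X]\in Z$ the fiber $\fU_X$ is infinite (this follows from upper semicontinuity of fiber dimension for the proper map $\sigma|_{\fU}$), and then invokes \cite[Lemma~2.a]{Schubert91}, which says exactly that a positive-dimensional family of $\mathfrak g^r_d$'s on a fixed smooth curve contains a member ramified at any prescribed point.  Your Wronskian argument---if every $L\in C$ had the same ramification divisor then $L^{\otimes(r+1)}$ would be constant along $C$, contradicting $\dim C\geq 1$---is the classical proof of that statement, so the two approaches coincide in substance.

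One remark: your specialization step is superfluous.  Upper semicontinuity already yields $\dim\fU_X\geq 1$ for \emph{every} $[X]\in Z\cap\widetilde{\mathcal M}_g$, not only the generic one, and the hypotheses your Wronskian argument uses---base-point freeness of each $L$ and $h^0(X,L)=r+1$---are consequences of $W^r_{d-1}(X)=\emptyset$, which holds throughout $\widetilde{\mathcal M}_g$ by construction.  So the direct argument already covers every smooth $[X]$ to which the lemma is later applied.
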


\begin{proof}
If $[X]\in Z$, then there are infinitely many linear series $\ell \in G^r_d (X)$ for which $\phi_\ell$ fails to be injective. By \cite[Lemma~2.a]{Schubert91}, at least one such linear series is ramified at the point $p$.
\end{proof}

\subsection{Pulling back to $\mm_{2,1}$} In order to verify \ref{it:j2}, we now consider the preimage of $Z$ under the map $\jmath_2\colon \mm_{2,1} \rightarrow \mm_g$ obtained by attaching an arbitrary pointed curve of genus $2$ to a fixed general pointed curve $(X, p)$ of genus $g -2$.

\begin{lemma}
\label{lem:Weierstrass}
The preimage $\jmath_2^{-1}(Z)$ is contained in the Weierstrass divisor $\overline{\mathcal{W}}_2$ in $\mm_{2,1}$.
\end{lemma}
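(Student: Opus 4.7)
I propose to prove Lemma~\ref{lem:Weierstrass} by contraposition: if $q \in C$ is not a Weierstrass point, then $[C \cup_q X] \notin Z$. The argument combines a Brill--Noether analysis of vanishing sequences at $q$ on the genus $2$ aspect with an application of Theorem~\ref{thm:MRCg-2} to the $X$-aspect of a limit linear series arising from a smoothing.

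The central input is a vanishing sequence dichotomy on $C$. Assume $q$ is not a Weierstrass point, so $h^0(2q) = 1$ and $K_C \not\sim 2q$. I claim that for any $\ell_C = (L_C, V_C) \in G^6_d(C)$ with vanishing sequence $b = (b_0 < \cdots < b_6)$ at $q$, one has either $b_5 \leq d-3$ or $b_4 + b_6 \leq 2d-5$. To verify this, suppose both inequalities fail. Since every line bundle on $C$ of degree $\leq 1$ has $h^0 \leq 1$ and every line bundle on $C$ of degree $2$ has $h^0 \leq 2$ with equality only for $K_C$, the inequality $h^0(L_C(-b_5 q)) \geq 2$ with $b_5 \geq d-2$ forces $b_5 = d-2$ and $L_C \sim K_C + (d-2)q$. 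Next, $h^0(L_C(-b_6 q)) \geq 1$ with $b_6 \geq d-1$ allows $b_6 = d-1$ (since $h^0(K_C - q) = 1$ always), while $b_6 = d$ would force $K_C \sim 2q$, contradicting $q$ non-Weierstrass; hence $b_6 = d-1$. Then $b_4 + b_6 \geq 2d-4$ and $b_4 < b_5 = d-2$ force $b_4 = d-3$, but Riemann--Roch yields $h^0(L_C(-(d-3)q)) = h^0(K_C + q) = 2$, contradicting $h^0(L_C(-b_4 q)) \geq 3$.

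Now suppose toward contradiction that $[C \cup_q X] \in Z$. Since $Z$ lies in the closure of $\sigma(\fU)$, we may choose a one-parameter smoothing $\mathcal{C}/T$ with special fiber $C \cup_q X$ and linear series $\ell_t \in \fU$ on the generic fibers; after a base change and semistable reduction these limit to a generalized Eisenbud--Harris limit linear series $\ell = (\ell_C, \ell_X)$ of type $\mathfrak{g}^6_d$ on the compact-type curve $C \cup_q X$. The Eisenbud--Harris compatibility $a_i^{\ell_X}(p) + a_{6-i}^{\ell_C}(q) \geq d$ combined with the dichotomy above yields either $a_1^{\ell_X}(p) \geq 3$ or $a_0^{\ell_X}(p) + a_2^{\ell_X}(p) \geq 5$. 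Since $(X, p)$ is a fixed general pointed curve of genus $g - 2 = 19 + \rho$, Theorem~\ref{thm:MRCg-2} forces $\phi_{\ell_X}$ to be injective.

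The main obstacle is then to ensure that the non-injectivity of $\phi_{\ell_t}$ on the smoothing degenerates to a non-trivial kernel of $\phi_{\ell_X}$ on the $X$-aspect, rather than to one of $\phi_{\ell_C}$ on the genus $2$ aspect. I would handle this by a local analysis of the vector bundles $\E$ and $\F$ of \S\ref{virtualis_divizorok} near the boundary stratum $\Delta_2$: extending the construction across the smoothing $\mathcal{C}/T$ provides a natural identification of the central-fibre kernel with a kernel element on one of the two aspects (after appropriate twisting), and a dimension count of compatible pairs $(\ell_C, \ell_X)$ on $C \cup_q X$ rules out a divisorial family of such pairs supported on the locus where $\phi_{\ell_C}$ is non-injective---indeed, for $C$ a general genus $2$ curve and any $V_C \subset H^0(L_C)$ of dimension $7$, injectivity of $\phi_{\ell_C}: \mathrm{Sym}^2 V_C \to H^0(L_C^{\otimes 2})$ is generic because $\dim \mathrm{Sym}^2 V_C = 28 < 2d - 1 = \dim H^0(L_C^{\otimes 2})$. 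Combining these observations with Theorem~\ref{thm:MRCg-2} yields the contradiction, completing the proof that $\jmath_2^{-1}(Z) \subseteq \overline{\mathcal{W}}_2$.
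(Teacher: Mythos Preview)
Your first three paragraphs match the paper's proof exactly: the vanishing-sequence dichotomy on the genus~$2$ aspect (which you phrase on the $C$-side and the paper phrases dually on the $X$-side) combined with the Eisenbud--Harris compatibility inequalities is precisely how the paper forces the $X$-aspect $\ell_X$ to satisfy one of the two ramification conditions in Theorem~\ref{thm:MRCg-2}.

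The gap is in your fourth paragraph. The concern you raise---that the kernel of $\phi_{\ell_t}$ might specialize to the $C$-aspect rather than the $X$-aspect---is not a genuine obstacle, and your proposed resolution (extending $\E,\F$ across $\Delta_2$, plus a dimension count relying on $C$ being general) is both vague and unnecessary. The point you are missing is that a non-zero kernel element specializes to a non-zero kernel element on \emph{each} aspect simultaneously. Concretely: choose the twisting of the family line bundle so that the degree is concentrated on $X$; then the flat limit of $V_t$ is the $X$-aspect $V_X$, and hence the flat limit of $\Sym^2 V_t$ is $\Sym^2 V_X$. A non-zero $q_t\in\ker\phi_{\ell_t}$, after scaling so that its limit $q_0\in\Sym^2 V_X$ is non-zero, satisfies $\phi_{\ell_X}(q_0)=0$ since it is the restriction to $X$ of the limit of $\phi_{\ell_t}(q_t)=0$. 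The same argument with the $C$-twisting gives non-injectivity on the $C$-aspect as well. This is exactly what the paper asserts in one line: ``there is a limit linear series $\ell$ on $Y$ such that the multiplication map on each aspect of $\ell$ is not injective.'' With this in hand, your third paragraph already finishes the proof; the fourth paragraph should be deleted. Note also that your dimension-count argument would only treat general $C$, whereas the lemma requires the containment for every $[C,q]\notin\overline{\mathcal{W}}_2$.
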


\begin{proof}
Let $C$ be an arbitrary curve of genus 2 and (abusing notation slightly) let $p \in C$ be a non-Weierstrass point.  If $[Y] := [X \cup_p C]$ is in $Z$, then it is in the closure of the generic point $[Y_t]$ of a one-parameter family in $\sigma(\fU)$.  Since $[Y_t]$ is in $\sigma (\fU)$, there is a linear series $\ell_t$ on $Y_t$ for which the multiplication map $\phi_{\ell_t}$ is not injective.  Hence there is a limit linear series $\ell$ on $Y$ such that the multiplication map on each aspect of $\ell$ is not injective.

We claim that the $X$-aspect $\ell_X$ of any limit linear series $\ell$ on $X \cup_p C$ satisfies one of the ramification conditions
$
a_1^{\ell_X}(p) \geq 3 \ \mbox{ or } a_0^{\ell_X}(p) + a_2^{\ell_X}(p) \geq 5$.
Suppose both inequalities fail.  By failure of the first inequality and the definition of a limit linear series, we have $a_{r-1}^{\ell_C}(p) \geq d-2$.  Since $p \in C$ is not a Weierstrass point, this forces $a_r^{\ell_C}(p) = d-1$.  By the definition of a limit linear series, this gives $a_0^{\ell_X}(p) \geq 1$ and hence $a_2^{\ell_X}(p) \geq 3$.  By failure of the second inequality, we have $a_2^{\ell_X}(p) = 3$, and hence $a_{r-2}^{\ell_C}(p) \geq d-3$.  Then $\mbox{dim } \bigl| \ell_C(-(d-3)p) \bigr| \geq 2$, which contradicts Riemann-Roch.  This proves the claim, and the result then follows from Theorem~\ref{thm:MRCg-2}.
\end{proof}

In the proof of the next proposition, and for the remainder of the paper, our arguments use tropical and nonarchimedean analytic geometry.  All of the curves and maps that appear are defined over our fixed nonarchimedean field $K$.

\begin{proposition}
\label{prop:j2}
We have $\jmath_2^*(Z) = 0$.
\end{proposition}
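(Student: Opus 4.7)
The plan is to exhibit a single $[C,p]\in\overline{\mathcal{W}}_2$ with $[X\cup_p C]\notin Z$; combined with Lemma~\ref{lem:Weierstrass}, this yields $\jmath_2^*(Z)=0$.

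First, I would follow the template of Lemma~\ref{lem:Weierstrass}. Assuming $[X\cup_p C]\in Z$, Lemma~\ref{lem:ramify} supplies a limit linear series $\ell=(\ell_X,\ell_C)$ on $X\cup_p C$ with non-injective multiplication on each aspect and $\ell_X$ ramified at $p$. The refined compatibility $a_i^{\ell_X}(p)+a_{r-i}^{\ell_C}(p)=d$ together with the failure of both ramification inequalities of Theorem~\ref{thm:MRCg-2} on $\ell_X$ forces $a_{r-1}^{\ell_C}(p)\geq d-2$ and $a_r^{\ell_C}(p)+a_{r-2}^{\ell_C}(p)\geq 2d-4$. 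When $p$ is non-Weierstrass these inequalities give $a_r^{\ell_C}(p)=d-1$ and $a_{r-2}^{\ell_C}(p)\geq d-3$, which contradicts Riemann--Roch on $C$ just as in the last step of Lemma~\ref{lem:Weierstrass}. When $p$ is Weierstrass, however, the new possibility $a_r^{\ell_C}(p)=d$ (i.e.\ $L_{\ell_C}\cong\mathcal{O}_C(dp)$) is consistent with the Weierstrass semigroup at $p$, and the corresponding vanishing sequence of $\ell_X$ at $p$ begins $(0,2,4,\ldots)$, which does not meet either hypothesis of Theorem~\ref{thm:MRCg-2}.

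To eliminate this remaining case, degenerate $C$ further to $C=E_1\cup_q E_2$, a transverse union of two elliptic curves meeting at a node $q$, with $p\in E_1$ chosen so that $2p\sim 2q$ on $E_1$ (i.e.\ a $2$-torsion point with respect to the origin $q$). A Mayer--Vietoris computation gives $h^0(C,\mathcal{O}(2p))=2$, so $p$ is a Weierstrass point of $C$ and $[C,p]\in\overline{\mathcal{W}}_2$. For $Y=X\cup_p E_1\cup_q E_2$, a refined limit linear series $\ell$ with non-injective multiplication on each aspect must satisfy compatibility at both nodes. The Brill--Noether theory on elliptic curves with imposed ramification at two points, together with the Picard relation $2p\sim 2q$ on $E_1$, forbids precisely the $L_{\ell_{E_1}}\cong\mathcal{O}_{E_1}(dp)$-type aspects that survived in the smooth analysis; running through the finitely many remaining vanishing sequences of $\ell_{E_1}$ at $(p,q)$ and of $\ell_{E_2}$ at $q$ forces the $X$-aspect to satisfy $a_1^{\ell_X}(p)\geq 3$ or $a_0^{\ell_X}(p)+a_2^{\ell_X}(p)\geq 5$, at which point Theorem~\ref{thm:MRCg-2} delivers the contradiction.

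The main obstacle is this last enumeration: one must list the vanishing sequences of $\ell_{E_1}$ at $(p,q)$ compatible with the adjusted Brill--Noether bound, the $2$-torsion Picard relation on $E_1$, and the refined compatibility at $q$ with $\ell_{E_2}$, and verify case-by-case that each surviving configuration forces one of the two ramification inequalities required to invoke Theorem~\ref{thm:MRCg-2} on $\ell_X$ at $p$.
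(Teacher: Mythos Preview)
Your approach is genuinely different from the paper's, and the main idea you punt on—the elliptic enumeration—is where the proposal breaks down.

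The paper does not degenerate $C$ to elliptic components or use limit linear series on the genus-$2$ side at all. Instead, it works tropically: it takes a genus-$2$ curve $Y'$ whose skeleton is a chain of two loops, picks a Weierstrass point $p\in Y'$ whose tropicalization is known, sets $Y''=Y\cup_p Y'$ with $Y$ a genus-$(g-2)$ curve whose skeleton is a chain of $g-2$ loops, and then argues by contradiction. If $[Y'']\in Z$, there are nearby \emph{smooth} curves $X\in Z$ whose skeletons are chains of $g$ loops with one very long bridge $\beta_3$. On such an $X$, Lemma~\ref{lem:ramify} gives a linear series ramified at a point specializing to the \emph{right} endpoint $v_{g+1}$, and the paper then constructs an explicit tropical independence among $28$ pairwise sums, adapting the machinery of \S\S\ref{Sec:Construction}--\ref{Sec:Generic}. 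The hard case (when the slope vector along $\beta_3$ does not satisfy the hypotheses of Theorem~\ref{thm:independence} restricted to the right-hand subgraph) is handled by building five functions on the two-loop piece and twenty-three on the rest, and gluing.

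Your plan has two concrete problems. First, the invocation of Lemma~\ref{lem:ramify} is misplaced: that lemma applies to smooth $[X]\in Z$ and produces ramification at a \emph{chosen} point, whereas you apply it to a nodal curve to obtain ramification at the node—what you actually need there is the limiting argument from Lemma~\ref{lem:Weierstrass}, and no extra ramification enters. Second, and more seriously, your key assertion that the relation $2p\sim 2q$ on $E_1$ ``forbids precisely the $L_{\ell_{E_1}}\cong\mathcal{O}_{E_1}(dp)$-type aspects'' is not justified and appears to be backwards: the $2$-torsion condition is exactly what makes $p$ a limit Weierstrass point, so it \emph{enables} rather than excludes the aspect $L_{E_1}\cong\mathcal{O}(dp)$. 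On an elliptic curve the gap sequence at $p$ is $\{1\}$, not $\{1,3\}$ as on a smooth genus-$2$ curve, so the enumeration on $E_1$ admits \emph{more} vanishing sequences (e.g.\ $a_4^{\ell_{E_1}}(p)\in\{21,22\}$ and a further case $a_6^{\ell_{E_1}}(p)=24$), not fewer. Ruling these out via compatibility at $q$ with $\ell_{E_2}$ and the adjusted Brill--Noether inequality would require a substantial case analysis that you have not carried out, and there is no evident mechanism by which the $2$-torsion alone closes the gap.
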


\begin{proof}
Since the Weierstrass divisor $\overline{\mathcal{W}}_2$ is irreducible, we only need to show that $\jmath_2^{-1}(Z)$ does not contain $\overline{\mathcal{W}}_2$.  To do this, we exhibit a point in the Weierstrass divisor that does not lie in $\jmath_2^{-1}(Z)$, as follows.  Let $\Gamma$ be a chain of $g-2$ loops with bridges whose edge lengths are admissible in the sense of Definition~\ref{Def:Admissible}, and let $Y$ be a smooth curve of genus $g-2$ over $K$ whose skeleton is $\Gamma$.  Let $p \in Y$ be a point specializing to the left endpoint of $\Gamma$.  We consider the map $\jmath_2 \colon \mm_{2,1} \to \mm_g$ obtained by attaching the pointed curve $(Y,p)$ to an arbitrary stable pointed curve of genus $2$.

Let $Y'$ be a smooth curve of genus $2$ over $K$ whose skeleton $\Gamma'$ is a chain of $2$ loops connected by a bridge.  The tropicalization of the Weierstrass points on $Y'$ are known, and do not depend on the choice of curve with this skeleton.  See, e.g., \cite{Amini14} or \cite[Theorem~1.1]{JensenLen18}.  In particular, there is a Weierstrass point $p \in Y'(K)$ whose specialization is a $2$-valent point on the right loop.  Let $Y'' :=  Y \cup_p Y'$.  The skeleton of $Y''$ is obtained from $\Gamma$ and $\Gamma'$ by attaching infinitely long bridges at the specializations of $p$, and then gluing the infinitely far endpoints to each other, as in Figure~\ref{Fig:LongBridge}.\footnote{We recall that the topological space $Y^\an$ is obtained from its skeleton $\Gamma$ by attaching an $\RR$-tree rooted at each point.  The $K$-points of $Y$ naturally correspond to the leaves of these $\RR$-trees, and each leaf is infinitely far from the skeleton $\Gamma$, in the natural metric on $Y^\an \smallsetminus Y(K)$.  Hence, the analytification of the nodal curve $Y \cup Y'$ contains a skeleton which is the union of $\Gamma$, $\Gamma'$, and the infinite length paths from $\Gamma$ and $\Gamma'$, respectively, to the node $p$. See, for instance, \cite[\S8.3]{acp}.}  Note that $[Y''] \in \jmath_2 (\overline{\mathcal{W}}_2)$.  We will show that $[Y''] \not \in Z$.

\begin{figure}[H]
\begin{tikzpicture}[thick, scale=0.8]

\begin{scope}[grow=right, baseline]
\draw (-5,0)--(-4,0);
\draw (-3.5,0) circle (0.5);
\draw (-3,0)--(-2,0);
\draw (-2.5,-1) node {$\Gamma'$};
\draw (-1.5,0) circle (0.5);
\draw (-1,0)--(0.6,0);
\draw (2.4,0)--(4,0);
\draw (2.1,0) node {$\cdots$};
\draw (1,0) node {$\cdots$};

\filldraw (1.5,0) circle (0.05);
\draw (1.58,.4) node {{\small $p$}};

\draw (4.5,0) circle (0.5);
\draw (5,0)--(6,0);
\draw (6.5,0) circle (0.5);
\draw (7.5,-1) node {$\Gamma$};
\draw (7,0)--(7.5,0);

\draw (8,0) node {$\cdots$};
\draw (8.5,0)--(9,0);
\draw (9.5,0) circle (0.5);
\draw (10,0)--(11,0);

\draw (-5,0.25) node {\small $w_0$};
\draw (-4.2,0.25) node {\small $v_1$};
\draw (-2.8,0.25) node {\small $w_1$};
\draw (-2.2,0.25) node {\small $v_2$};
\draw (-0.8,0.25) node {\small $w_2$};

\end{scope}
\end{tikzpicture}

\caption{The skeleton of $Y''$ is the union of the skeletons $\Gamma'$ and $\Gamma$ of $Y'$ and $Y$, respectively, and the unique embedded paths from these skeletons to $p$.}
\label{Fig:LongBridge}
\end{figure}

If $[Y''] \in Z$, then $Z$ contains smooth curves $X$ whose skeletons are arbitrarily close to the skeleton of $Y''$.
Here, the topology is as in \cite[\S4]{acp}. We topologize $\RR_{>0} \cup \{\infty\}$ as an open subspace of the one point compactification of $\RR_{\geq 0}$.  Then the space of skeletons with underlying graph $G$ is identified with $(\RR_{>0} \cup \{\infty\})^{E(G)} / \mathrm{Aut}(G)$ by specifying the positive (and possibly infinite) length of each edge.
In particular, for each integer $N > 0$, there is an $[X] \in Z$ with skeleton a chain of loops $\widetilde \Gamma_X$ with edge lengths as follows.

Label the vertices and edges of $\widetilde \Gamma_X$, as in Figure~\ref{Fig:TheGraph}.  Then the bridge $\beta_3$ has length greater than $N$, and each other edge has length within $\frac{1}{N}$ of the corresponding edge in $\Gamma$ and $\Gamma'$. The metric graph $\widetilde \Gamma_X$ is similar to the skeleton pictured in Figure~\ref{Fig:LongBridge}, except that the doubly infinite bridge containing $p$ is replaced by an ordinary finite bridge that is much longer than all other edges.  We divide $\widetilde \Gamma_X$ into two subgraphs $\widetilde \Gamma'$ and $\widetilde \Gamma$, to the left and right, respectively, of the midpoint of the long bridge $\beta_3$.  (These subgraphs are arbitrarily close to $\Gamma'$ and $\Gamma$, respectively.)  Let $q \in X$ be a point specializing to $v_{g+1}$.  Since $[X] \in Z$, by Lemma~\ref{lem:ramify} there is a linear series in the degeneracy locus over $X$ that is ramified at $q$.  We now show that this impossible.

Let $V \subset H^0(X,L)$ be a linear series of degree $d$ and rank $6$ ramified at $q$.  We may assume that $L = \mathcal{O}(D_X)$, where $D = \Trop (D_{X})$ is a break divisor, and consider $\Sigma = \trop (V)$.  We will show that there are 28 tropically independent pairwise sums of functions in $\Sigma$ using a variant of the arguments in \S\S\ref{Sec:Construction}-\ref{Sec:Generic}.  It follows that the multiplication map $\Sym^2 V \to H^0(X, L^{\otimes 2})$ is injective, and hence $[X]$ is not  in $Z$.

To produce 28 tropically independent pairwise sums of functions in $\Sigma$, following the methods of \S\S\ref{Sec:Construction}-\ref{Sec:Generic}, we first consider the slope sequence along the long bridge $\beta_3$.  First, suppose that either $s_3 [5] \leq 2$ or $s_3 [4] + s_3 [6] \leq 5$.  The restriction of $\Sigma$ to $\widetilde \Gamma$ is a tropical linear series of rank $6$ with ramification at the left endpoint.  The proof of Theorem~\ref{thm:independence} then goes through verbatim, yielding a tropical linear combination of 28 functions in $\Sigma$ such that each function achieves the minimum uniquely at some point of $\widetilde \Gamma \subset \widetilde \Gamma_X$.

For the remainder of the proof, we therefore assume that $s_3[5] \geq 3$ and $s_3 [4] + s_3 [6] \geq 6$.  Since $\deg D_{\vert {\widetilde \Gamma'}} = 5$, we see that $s_3[6] \leq 5$.  Moreover, since the divisor $D_{\vert {\widetilde \Gamma'}} - s_3[5]w_2$ has positive rank on $\widetilde \Gamma'$, and no divisor of degree 1 on $\widetilde \Gamma'$ has positive rank, $s_3[5]$ must be exactly 3.  Since the canonical class is the only divisor class of degree 2 and rank 1 on $\widetilde \Gamma'$, we see that $D_{\vert{\widetilde \Gamma'}} \sim K_{\widetilde \Gamma'} + 3w_2$.  This yields an upper bound on each of the slopes $s_3[i]$, and these bounds determine the slopes for $i \geq 3$:
\[
s_3[6] = 5, \ s_3[5] = 3, \ s_3 [4] = 1, \ s_3 [3] = 0.
\]
Moreover, we must have $s'_2 [i] = s_3 [i]$ for $3 \leq i \leq 6$.   Since the linear series $V$ is ramified at $q$, we also have $s_{g+1}[6] \geq 7$. By Proposition~\ref{Thm:BNThm}, these conditions together imply that the sum of the multiplicities of all loops and bridges on $\widetilde \Gamma$ is at most 2.

To construct a certificate of independence on $\widetilde \Gamma_X$, we first construct a certificate of independence for 5 functions on $\widetilde \Gamma'$.  The construction is analogous to that in \S\ref{Sec:Switch}, with the second loop of $\widetilde \Gamma'$ playing the role of a switching loop.  The details are as follows.

For $i = 5,6$, there is a function $\varphi_i \in \Sigma$ such that
\[
s_k (\varphi_i) = s_k[i] \mbox{ for all } k \leq 3 \mbox{ and } s'_k (\varphi_i) = s'_k[i] \mbox{ for all } k \leq 2.
\]
We also have $\varphi_B, \varphi_C$ in $\Sigma$ (analogous to the similarly labeled functions in \S\ref{Sec:Switch}) satisfying:
\[
s'_0 (\varphi_C) = s_1 (\varphi_C) = s'_1 (\varphi_C) = s_2 (\varphi_C) = s_1[4] = 1, s'_2 (\varphi_C) = s_3 (\varphi_C) = s_3[3] = 0,
\]
\[
s'_2 (\varphi_B) = s_3 (\varphi_B) = s_3[4] = 1.
\]
Moreover, the slope of $\varphi_B$ at any point along the first 3 bridges is either 0 or 1.  Note in particular that all of the functions $\psi$ in the set $\{ \varphi_{66}, \varphi_{56}, \varphi_{55}, \varphi_B + \varphi_6 \}$ satisfy $s_3 (\psi) \geq 6$, and $s_3 (\varphi_C + \varphi_6) = 5$.

On the first bridge and first loop, we build a certificate of independence for the functions $\varphi_{66}, \varphi_{56}, \varphi_{55},$ and $\varphi_B + \varphi_6$ as in Figure~\ref{Fig:NoSwitch}.  Since all 4 of these functions have slope at least 6 along the very long bridge $\beta_3$, and $\varphi_C + \varphi_6$ has slope 5, we may set the coefficient of $\varphi_C + \varphi_6$ so that it obtains the minimum at some point of the very long bridge, but not at any point of the first two loops or bridges.

\begin{figure}[H]
\begin{tikzpicture}[thick, scale=0.8]

\begin{scope}[grow=right, baseline]
\draw (-5,0)--(-3,0);
\draw (-2,0) circle (1);
\draw (-1,0)--(1,0);
\draw (3,0)--(7,0);
\draw (2,0) circle (1);
\draw [ball color=black] (-1.29,0.71) circle (0.55mm);
\draw [ball color=black] (-2,1) circle (0.55mm);
\draw [ball color=black] (-4.5,0) circle (0.55mm);
\draw [ball color=black] (-3.5,0) circle (0.55mm);
\draw [ball color=black] (5.5,0) circle (0.55mm);
\draw (-5,0.3) node {$\varphi_{66}$};
\draw (-4,0.3) node {$\varphi_{56}$};
\draw (-.9,1.25) node {$\varphi_{B} + \varphi_6$};
\draw (0,0.3) node {$\varphi_{55}$};
\draw (6.5,0.3) node {$\varphi_{C} + \varphi_6$};

\end{scope}
\end{tikzpicture}

\caption{a certificate of independence on $\widetilde \Gamma'$}
\label{Fig:NoSwitch}
\end{figure}

We now construct a certificate of independence for 23 pairwise sums of functions in $\Sigma$ restricted to $\widetilde \Gamma$.  By Theorem~\ref{Thm:BNThm}, our computation of the slopes $s'_2 (\Sigma)$, together with the fact that the linear series $V$ is ramified at $q$, imply that the sum of the multiplicities of all loops and bridges on $\widetilde \Gamma$ is at most 2.  Just as in \S\ref{sec:choosingslopes}, but restricting to $\widetilde \Gamma$, we associate a sequence of partitions to $\Sigma$, use these partitions to characterize integers $z$ and $z'$, and thereby define a non-decreasing integer sequence $\sigma = (\sigma_3, \ldots, \sigma_{g+1})$, given by
\begin{equation} 
\sigma_k = \left\{ \begin{array}{ll}
4 & \textrm{if $3 \leq k \leq z$} \\
3 & \textrm{if $z+1 \leq k \leq z'-2$} \\
2 & \textrm{if $z'-1 \leq k \leq g'+1$.}
\end{array} \right.
\end{equation}
We then follow \S\ref{Sec:Generic} to identify a set $\cA$ of building blocks on $\widetilde \Gamma$ and a set $\cB \subset 2\cA$ satisfying properties $(\mathbf{B})$ and $(\mathbf{B'})$.  We proceed to construct a template $\theta$ exactly as in \S\ref{Sec:Construction}, except that we skip the step named ``Start at the First Bridge''.  Instead, we initialize the coefficients of the permissible functions on $\gamma_3$ in $\cB$ so that they agree with $\varphi_C + \varphi_6$ at the midpoint of $\beta_3$.  We then apply the loop subroutine on $\gamma_3$ and follow the algorithm until it terminates.

The arguments in \S\ref{Sec:Construction} go through without change, except for Lemma~\ref{lem:firstbridge}.  Specifically, since $2s'_2[5] = s'_2[6]+s'_2[4] = 6$, it is possible that two functions in $\cB$ have identical slopes greater than or equal to 5 along the bridge $\beta_3$.  In \S\ref{Sec:Construction}, Lemma~\ref{lem:firstbridge} is used only to guarantee that no two functions assigned to the first bridge of $\widetilde \Gamma$ agree on that bridge, and to count the number of cohorts on the first loop.  Here, we have not assigned any functions to the bridge $\beta_3$.  By arguments identical to those in \S\ref{Sec:Construction}, there are at most 3 cohorts on $\gamma_3$, and at most 2 if $\gamma_3$ is skippable.  We define the sets $\cS$ and $\cT$ exactly as in \S\ref{Sec:Generic}, and let $\cT' = \{ \psi \in \cT \mid s'_2 (\psi) \leq 4 \}$.  In each of the cases in \S\ref{Sec:Generic}, the number of functions in $\cT \smallsetminus \cT'$ is equal to the number of pairs $(i,j)$ such that $s'_2 [i] + s'_2 [j] \geq 5$.  Since there are precisely 5 such pairs, we see that $\vert \cT' \vert = 23$.  Then we show that the best approximation of the master template on $\widetilde \Gamma$ by $\cT'$ is a certificate of independence on $\widetilde \Gamma$, exactly as in \S\ref{Sec:Generic}.

Finally, note that any function $\psi$ that obtains the minimum on $\widetilde \Gamma$ satisfies $s'_2 (\psi) \leq 4$.  Similarly, each of the functions $\psi$ that obtains the minimum on $\widetilde \Gamma'$ satisfies $s_3 (\psi) \geq 5$.  Since the bridge $\beta_3$ is very long, it follows that no function that obtains the minimum on one of the two subgraphs can obtain the minimum on the other.  Thus, we have constructed a constructed tropical linear combination of 28 pairwise sums of functions in $\Sigma$ in which 5 achieve the minimum uniquely at some point of $\widetilde \Gamma'$ and 23 achieve the minimum uniquely at some point of $\widetilde \Gamma$.  In particular, this is a certificate of independence, as required.
\end{proof}

\subsection{Higher codimension boundary strata}

In order to verify \ref{it:codim2}, we now consider the intersection of $Z$ with the boundary strata $\Delta_{2,j}$, each of which has codimension 2 in $\mm_g$.

\begin{proposition}
\label{prop:nocodim2}
The component $Z$ does not contain any codimension 2 stratum $\Delta_{2,j}$.
\end{proposition}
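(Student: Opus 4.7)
Since the boundary stratum $\Delta_{2,j}$ is irreducible, it suffices to exhibit a single point of $\Delta_{2,j}$ that is not in $Z$. Following the template of Proposition~\ref{prop:j2}, I would construct such a point by specifying the tropicalization of nearby smooth curves in a moduli-theoretic way. Concretely, I plan to take $X_0 = C_1 \cup_{p_1} Y \cup_{p_2} C_2$, where $Y$ is a smooth curve of genus $g-2-j$ whose skeleton is a chain of $g-2-j$ loops with admissible edge lengths, $C_1$ is a smooth genus $2$ curve whose skeleton is a chain of $2$ loops, and (for $j \geq 1$) $C_2$ is a smooth curve of genus $j$ whose skeleton is a chain of $j$ loops. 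The attaching points $p_1, p_2 \in Y$ are chosen to specialize to the two endpoints of the skeleton of $Y$, with $q_1 \in C_1$ and $q_2 \in C_2$ chosen as generic (non-Weierstrass) points specializing to appropriate endpoints of their respective skeletons.

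Assume for contradiction that $[X_0] \in Z$. Just as in the proof of Proposition~\ref{prop:j2}, this forces the existence, for every $N$, of a smooth curve $[X] \in Z$ whose skeleton is a chain of $g$ loops with admissible edge lengths \emph{except} for two bridges of length at least $N$: one separating the $2$ loops arising from $C_1$ from the $g-2-j$ loops arising from $Y$, and a second separating those from the $j$ loops arising from $C_2$. Let $q \in X$ be a point specializing to the right endpoint. By Lemma~\ref{lem:ramify}, we obtain a linear series $\ell=(L,V) \in G^6_{g+3}(X)$ ramified at $q$ such that $\phi_\ell$ fails to be injective, and it will suffice to contradict this by constructing $28$ tropically independent pairwise sums of functions in $\Sigma = \trop(V)$.

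The strategy is to analyze the slope sequence $s(\Sigma)$ along each of the two long bridges. Just as in Proposition~\ref{prop:j2}, where the slope vector along the long bridge was pinned down because the restriction $D_{|\widetilde{\Gamma}'}$ lay in the canonical class (yielding $s_3 = (0,0,0,0,1,3,5)$ up to the top entry), here the analogous restriction of $D$ to each extremal subgraph must be (up to a multiple of the endpoint) a canonical divisor on a genus-$2$ or genus-$j$ chain. This pins $s(\Sigma)$ along both long bridges to a short list of prescribed values, and combined with the ramification at $q$ gives, via Proposition~\ref{Thm:BNThm}, an upper bound of $2$ for the total multiplicity of all loops and bridges of the middle subgraph $\widetilde\Gamma^M$. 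With that combinatorial control in place, I would partition $\widetilde\Gamma_X$ into the left subgraph $\widetilde\Gamma^L$ (the $2$ loops from $C_1$), the middle $\widetilde\Gamma^M$, and the right $\widetilde\Gamma^R$ (the $j$ loops from $C_2$), and construct the desired $28$ tropically independent pairwise sums as a disjoint union of three sub-independences, one supported on each piece. On $\widetilde\Gamma^M$ we run essentially the algorithm of \S\ref{Sec:Generic} with slight modifications at the boundary bridges, exactly as in the proof of Proposition~\ref{prop:j2}; on $\widetilde\Gamma^L$ and $\widetilde\Gamma^R$ we produce the remaining independences from the pairwise sums whose bridge slopes are too high (resp.\ too low) to achieve the minimum outside $\widetilde\Gamma^L$ (resp.\ $\widetilde\Gamma^R$). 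The very long bridges ensure that no function that achieves the minimum on one subgraph can achieve it on another.

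The main obstacle will be the bookkeeping on the right piece $\widetilde\Gamma^R$ for large $j$: the canonical-style constraint on $s(\Sigma)$ along the right long bridge must be combined with the ramification condition at $q$ to give the right count of tropically independent pairwise sums supported on $\widetilde\Gamma^R$. The analogous count on $\widetilde\Gamma^L$ (yielding $5$ functions, as in Figure~\ref{Fig:NoSwitch}) is essentially carried out in Proposition~\ref{prop:j2}, and a symmetric argument, handling separately the cases where the slope sequence at the right long bridge falls into the ``vertex avoiding" or one of the ``switching" regimes of \S\ref{Sec:Generic}, should produce enough independent sums on the right together with sufficiently many on the middle to reach the total of $28$.
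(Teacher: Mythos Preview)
Your three-piece plan is a natural extension of Proposition~\ref{prop:j2}, but the paper takes a genuinely different route that sidesteps the obstacle you flag on $\widetilde\Gamma^R$. Instead of cutting at both long bridges, the paper cuts only at $\beta_3$ and works on the single chain $\Gamma$ of $g-2$ loops to its right---which still \emph{contains} the second long bridge $\beta_\ell$. The key new idea is a modified notion of \emph{$\ell$-permissibility} (Definition~\ref{def:ellpermiss}): a function is $\ell$-permissible on $\gamma_k$ if it satisfies the usual permissibility conditions together with the extra constraint that its slope on $\beta_\ell$ is not below $s_{\ell}(\theta)$ when $k<\ell$ nor above it when $k\geq\ell$. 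This is precisely what is needed for Lemmas~\ref{Lem:NotToRight} and~\ref{Lem:Config} to survive one anomalously long bridge, so the master-template construction and the best-approximation argument of \S\S\ref{Sec:Construction}--\ref{Sec:Generic} run on all of $\Gamma$ and produce the full set of 28 independent pairwise sums there, uniformly in $j$. No separate analysis of a right piece is ever needed.

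Two further points where your sketch diverges. First, the ramification at $q$ gives $s_{g+1}[6]\geq 7$, so Proposition~\ref{Thm:BNThm} bounds the total multiplicity over $\widetilde\Gamma_X$ by $\rho-1\leq 1$, not $2$; hence only cases~1 and~3 of \S\ref{Sec:Generic} arise, and the paper checks directly that the single bridge-length-sensitive step in case~3 (the length-of-$I$ argument around Lemma~\ref{Lem:TwoRegions}) still goes through with $\ell$-permissibility. Second, the ``canonical-style constraint'' you invoke for the right long bridge is not available for general $j$: in Proposition~\ref{prop:j2} that argument worked because the tail had genus~$2$ and the canonical class is the unique $g^1_2$, which pins the slopes. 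A right tail of arbitrary genus $j$ has no such rigidity, so carrying out your plan would require a genuinely new count for each $j$---exactly the work the paper's $\ell$-permissibility trick avoids.
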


\begin{proof}
The proof is again a variation on the independence constructions from the proof of Theorem~\ref{thm:independence}.  We fix $\ell = g-j-2$.  Let $Y_1$ be a smooth curve of genus 2 over $K$ whose skeleton $\Gamma_1$ is a chain of 2 loops with bridges, and let $p \in Y_1$ be a point specializing to the right endpoint of $\Gamma_1$.  Similarly, let $Y_2$ and $Y_3$ be smooth curves of genus $\ell$ and $j$, respectively, whose skeletons $\Gamma_2$ and $\Gamma_3$, are chains of $\ell$ loops and $j$ loops with admissible edge lengths.  Suppose further that the edges in the final loop of $\Gamma_2$ are much longer than those in the first loop of $\Gamma_3$.  Let $p, q \in Y_2$ be points specializing to the left and right endpoints of $\Gamma_2$, respectively, and let $q \in Y_3$ be a point specializing to the left endpoint of $\Gamma_3$.  We show that  $[Y'] = [Y_1 \cup_{p} Y_2 \cup_{q }Y_3] \in \Delta_{2,j}$ is not contained in $Z$.

As in the proof of Proposition~\ref{prop:j2}, if $[Y'] \in Z$, then $Z$ contains points $[X]$ corresponding to smooth curves whose skeletons are arbitrarily close to the skeleton of $Y'$ in the natural topology on $\M_g^{\trop}$.  In particular, there is an $X \in Z$ whose skeleton is a chain of loops $\Gamma_X$ whose edge lengths satisfy all of the conditions in Definition~\ref{Def:Admissible}, except that the bridges $\beta_3$ and $\beta_{\ell}$ are exceedingly long in comparison to the other edges.

Let $\Gamma$ be the subgraph of $\Gamma_X$ to the right of the midpoint of the bridge $\beta_3$. Note that $\Gamma$ is a chain of $g-2$ loops, labeled $\gamma_3, \ldots, \gamma_g$, with bridges labeled $\beta_3, \ldots, \beta_{g+1}$.

By Lemma~\ref{lem:ramify}, there is a linear series $V \subset H^0(X, L)$ of degree $g+3$ and rank $6$ on $X$ that is ramified at a point $x$ specializing to the righthand endpoint $v_{g+1}$, and such that the multiplication map $\Sym^2 V \to H^0(X, L^{\otimes 2})$ is not injective.  We will show that this is not possible, by adapting the tropical independence constructions from \S\S\ref{Sec:Construction}-\ref{Sec:Generic}.  We define building blocks as PL functions on $\Gamma$ exactly as in \S\ref{Sec:Construction}, and then, to account for the length of $\beta_\ell$, we use the following variant on the definition of permissible functions (Definition~\ref{def:permissible}).

\begin{definition}
\label{def:ellpermiss}
Let $\sigma = (\sigma_3, \ldots, \sigma_{g+1})$ be a non-increasing integer sequence and let $\psi \in \PL (\Gamma)$ be a function with constant slope along each bridge.  We say that $\psi$ is $(\sigma, \ell)$-\emph{permissible} on $\gamma_k$ if
\begin{enumerate} [label=(\roman*)]
\item  either $s_j (\psi) \leq \sigma_k$ for all $j \leq k$; or $s_{\ell} (\psi) < \sigma_\ell$ and $s_j (\psi) \leq \sigma_j$ for all $\ell < j \leq k$,
\item  $s_{k+1} (\psi) \geq \sigma_k$, and
\item  if $s_j (\psi) < \sigma_k$ for some $j>k$, then $j \neq \ell$ and $s_{k'}(\psi) > \sigma_{k'}$ for some $k'$ such that $k < k' < j$.
\end{enumerate}
\end{definition}

\noindent This notion of $(\sigma,\ell)$-permissibility is the natural analog of $(\sigma)$-permissibility when the bridge $\beta_\ell$ is much longer than all of the other edges. In particular, if $\theta$ is a PL function whose average slope on $\beta_j$ is very close to $\sigma_j$, for all $j$, and if the best approximation from above of $\theta$ by $\psi$ achieves the minimum on $\gamma_k$, then $\psi$ must be $(\sigma, \ell)$-permissible on $\gamma_k$, c.f. Remark~\ref{rem:permissible}.  Also, if $\psi$ has constant slope along each bridge then either $s_3 (\psi) > s_3 (\theta)$, $s_{g+1} (\psi) < s_{g+1} (\theta)$, or $\psi$ is $\ell$-permissible on some loop $\gamma_k$, c.f. Lemma~\ref{Lem:VAEverythingIsPermissible}.

Let $\Sigma = \trop(V)$. Since Proposition~\ref{Thm:BNThm} does not depend on the lengths of the bridges, we have that either $s'_2 [5] \leq 2$ or $s'_2 [4] + s'_2 [6] \leq 5$.  Also, since $V$ is ramified at $x$, we have $s_{g+1}[6] > 6$. The restriction of $\Sigma$ to $\Gamma$ is a tropical linear series of rank $6$, and we proceed to apply the arguments from \S\S\ref{Sec:Construction}-\ref{Sec:Generic}.

We construct the master template exactly as in \S\ref{Sec:Construction}, using $(\sigma, \ell)$-permissible functions in place of permissible functions.  Definition~\ref{def:ellpermiss} ensures that we only assign a function $\psi$ to the left of $\beta_{\ell}$ if $s_{\ell} (\psi) \geq \sigma_{\ell}$, and we only assign it to the right of $\beta_{\ell}$ if $s_{\ell} (\psi) \leq \sigma_{\ell}$.  Specifically, if $\psi$ is permissible on $\gamma_k$ for $k < \ell$, then by Definition~\ref{def:ellpermiss}(iii), $s_{\ell} (\psi) \geq \ell$, and if $\psi$ is permissible on $\gamma_k$ for $k \geq \ell$, then by Definition~\ref{def:ellpermiss}(i), $s_{\ell} (\psi) \leq \sigma_{\ell}$.

Next, with the template fixed, we specify a set $\cS$ of elements of $\Sigma$ and a set $\cT$ of pairwise sums of elements of $\cS$, using precisely the same algorithm as in \S\ref{Sec:Generic}.  In order to prove that the best approximation $\Upsilon$ of $\theta$ by $\cT$ is a certificate of independence, some care must be taken to account for the length of $\beta_\ell$, and we explain the details as follows.

The ramification conditions imply that the sum of the multiplicities of all the bridges and loops is at most 1.  Hence, there are no switching bridges, and at most one switching loop.  Moreover, if there is a switching loop, it has multiplicity 1, and there are no decreasing loops or bridges.  Hence the choice of $\cS$ and $\cT$ follows either Case 1 or Case 3, from \S\ref{sec:noswitching} or \S\ref{Sec:Switch}, respectively.

Among these cases, there is only one situation where the proof that $\vartheta_\cT$ is a certificate of independence uses the assumption that the bridges decrease in length from left to right: when there is a loop $\gamma_{\ell'}$ that switches slope $h$, the interval $I$ has length at least $m_{\ell'}$, and there are indices $j$ and $j'$ such that
\[
\sigma_{\ell'} = s'_{\ell'}[h] + s'_{\ell'}[j] = s'_{\ell'}[h] + s'_{\ell'} [j'] + 1.
\]
In this situation, we must show that the best approximation of $\theta$ by $\varphi_C + \varphi_j$ achieves equality on the region where $\varphi^0_{h+1} + \varphi_j$ achieves the minimum, and the best approximation of $\theta$ by $\varphi_A + \varphi_j$ achieves equality on the region where $\varphi^{\infty}_h + \varphi_j$ achieves the minimum.  In Lemma~\ref{Lem:TwoRegions} and related arguments, this is done by noting that both functions have slope larger than that of $\theta$ on intervals of length $t \geq m_{\ell'}$.  In the present case, this is insufficient, because we may have $\ell' < \ell$, and the bridge $\beta_{\ell}$ is very long.

However, since there are no decreasing loops or bridges, we have
\[
s_k [h] + s_k[j] \geq s'_{\ell'}[h] + s'_{\ell'}[j] \mbox{ for all } k \geq \ell'.
\]
It follows that $s_k (\varphi_A + \varphi_j) \geq \sigma_k$ and $s_k (\varphi_C + \varphi_j) \geq \sigma_k$ for all $k \geq \ell'$, and the result follows.  Therefore, the construction yields a certificate of independence for 28 pairwise sums of functions in $\Sigma$, and the proposition follows.
\end{proof}

Propositions~\ref{prop:noboundary}, \ref{prop:j2}, and~\ref{prop:nocodim2} show that $Z$ satisfies conditions \ref{it:closure}-\ref{it:codim2}.  For $g = 23$, we conclude that $\fU\subseteq \widetilde{\mathcal{G}}^6_{26}$ is generically finite over each codimension one component of its image in $\mm_{23}$, and hence $\mm_{23}$ is of general type.

\medskip

For $g = 22$, we proceed to verify \ref{it:j3} by studying the pull back of $Z$ to $\mm_{3,1}$.

\subsection{Pulling back to $\mm_{3,1}$}

Recall that $\jmath_3 : \mm_{3,1} \rightarrow \mm_g$ is the map obtained by attaching a fixed general pointed curve of genus $g-3$ to an arbitrary stable pointed curve of genus $3$.

\begin{proposition} \label{prop:j3}
The preimage $\jmath_3^{-1}(Z)$ is contained in the union of the Weierstrass locus $\overline{\mathcal{W}}_{3}$ and the hyperelliptic locus $\overline{\mathcal{H}}_3$ in $\mm_{3,1}$.
\end{proposition}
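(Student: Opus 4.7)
The plan is to adapt the argument of Proposition~\ref{prop:j2} from the $\jmath_2$ setting to $\jmath_3$, now with a chain of 3 loops on the glued genus 3 component. Since $\mathcal{M}_{3,1} \setminus (\overline{\mathcal{W}}_3 \cup \overline{\mathcal{H}}_3)$ is irreducible and $Z$ is closed in $\mm_g$, it suffices to produce a single pointed genus 3 curve $(C_0, q_0)$ in this open locus such that $[X \cup_{q_0} C_0] \notin Z$, where $(X,p)$ is the fixed general pointed curve of genus $g-3$ used to define $\jmath_3$.

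I would take $C_0$ to be a smooth curve of genus 3 over $K$ whose skeleton $\Gamma_C$ is a chain of 3 loops with generic admissible edge lengths, and $q_0 \in C_0(K)$ a point specializing to the right endpoint $v_4$ of $\Gamma_C$. For generic edge lengths, $C_0$ is non-hyperelliptic and $q_0$ is not a Weierstrass point, since both conditions correspond to tropical open conditions on $\Gamma_C$ and its divisor theory. Form the nodal curve $Y'' := X \cup_{q_0} C_0$, whose skeleton consists of $\Gamma_X$, $\Gamma_C$, and a doubly infinite path between them, exactly as in the proof of Proposition~\ref{prop:j2}. Assuming $[Y''] \in Z$, Proposition~\ref{prop:noboundary} together with tropical continuity supplies smooth curves $[X'] \in Z$ whose skeleton $\widetilde{\Gamma}$ is a chain of $g$ loops with admissible edge lengths, except that the bridge $\beta_4$ is very long. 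By Lemma~\ref{lem:ramify}, choose $\ell = (L,V) \in G^6_{g+3}(X')$ ramified at a point specializing to $v_{g+1}$ with $\phi_\ell$ not injective; we aim to contradict this by producing 28 tropically independent pairwise sums of functions in $\Sigma := \trop(V)$.

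Mirroring the proof of Proposition~\ref{prop:j2}, split $\widetilde{\Gamma}$ at the midpoint of $\beta_4$ into the subgraphs $\widetilde{\Gamma}_C$ (the first 3 loops) and $\widetilde{\Gamma}_X$ (the remaining $g-3$). The key quantities to control are the slopes $s_4[i]$ along the long bridge. When the slopes lie in a range compatible with a direct application of Theorem~\ref{thm:independence} to the restriction of $\Sigma$ to $\widetilde{\Gamma}_X$, regarded as the tropicalization of a linear series on a genus $g-3$ pointed curve with ramification at $w_3$, the algorithm of \S\ref{Sec:Construction}--\S\ref{Sec:Generic} produces 28 independent sums concentrated on $\widetilde{\Gamma}_X$ and the argument terminates. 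Otherwise, the slopes $s_4[i]$ are forced into an extremal configuration by Riemann--Roch on $\widetilde{\Gamma}_C$, and I would build a small sub-independence on $\widetilde{\Gamma}_C$ (analogous to the 5-function sub-independence on $\widetilde{\Gamma}'$ in Proposition~\ref{prop:j2}) together with a complementary sub-independence on $\widetilde{\Gamma}_X$. Because $\beta_4$ is extremely long, functions achieving the minimum on one side cannot achieve it on the other, so the two sub-independences combine to an independence of total size 28.

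The main obstacle will be the combinatorial case analysis on $\widetilde{\Gamma}_C$. In Proposition~\ref{prop:j2}, the chain of 2 loops admits essentially one extremal slope sequence, forcing $D|_{\widetilde{\Gamma}'} \sim K_{\widetilde{\Gamma}'} + 3w_2$ by a one-line rank argument. A chain of 3 loops supports a substantially richer set of divisor classes of rank $\geq 5$ and of corresponding extremal slope sequences, so the enumeration of break divisors on $\widetilde{\Gamma}_C$ compatible with a given $s_4[i]$ and with ramification at $v_4$ is significantly more involved. The non-hyperelliptic and non-Weierstrass hypotheses on $(C_0, q_0)$ are encoded tropically as the absence of a $\mathfrak{g}^1_2$ on $\Gamma_C$ and the fact that $v_4$ is not in the tropical Weierstrass locus, respectively; these are precisely the tropical conditions that rule out the most delicate extremal configurations. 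In each remaining extremal case, one must explicitly construct the requisite sub-independence on $\widetilde{\Gamma}_C$ and then verify that the algorithm of \S\ref{Sec:Construction}--\S\ref{Sec:Generic}, applied to the ramified restriction on $\widetilde{\Gamma}_X$, supplies a complementary sub-independence of the right size.
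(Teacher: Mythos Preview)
Your reduction in the first paragraph is wrong. Finding a single pointed curve $[C_0,q_0]$ in the open set $U:=\mathcal{M}_{3,1}\smallsetminus(\overline{\mathcal{W}}_3\cup\overline{\mathcal{H}}_3)$ with $\jmath_3([C_0,q_0])\notin Z$ shows only that $\jmath_3^{-1}(Z)\cap U$ is a \emph{proper} closed subset of $U$; it does not show that this intersection is empty. The preimage $\jmath_3^{-1}(Z)$ could well be a divisor in $\mm_{3,1}$ entirely unrelated to $\overline{\mathcal{W}}_3\cup\overline{\mathcal{H}}_3$, and your argument would not detect this. You appear to be mimicking Proposition~\ref{prop:j2}, but there the containment $\jmath_2^{-1}(Z)\subseteq\overline{\mathcal{W}}_2$ was established separately in Lemma~\ref{lem:Weierstrass} (for an \emph{arbitrary} non-Weierstrass point), and only afterwards did a single Weierstrass example suffice to rule out equality with the irreducible divisor $\overline{\mathcal{W}}_2$.

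The paper's proof proceeds quite differently: it fixes the genus $g-3$ curve $X$ with chain-of-loops skeleton and takes an \emph{arbitrary} pointed genus $3$ curve $(X',q)$ with $X'$ non-hyperelliptic and $q$ non-Weierstrass. These two hypotheses are converted, via the compatibility of limit linear series aspects, into the ramification bound $\sum_{i=0}^3 a_i^{\ell_X}(q)\geq 13$ on the $X$-aspect. All further work happens on the $19$-loop skeleton of $X$ alone; the genus $3$ side never needs a tropical model. The ramification forces every loop and bridge to have multiplicity zero, so one is in a vertex-avoiding situation governed by one of three skew shapes, and the paper handles these by a $5$-block refinement of the algorithm of \S\ref{Sec:VertexAvoiding}. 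Your plan to tropicalize the genus $3$ component as a chain of $3$ loops cannot be made to work for arbitrary $(X',q)$, which is what the statement requires.
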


We prove this proposition using a variation of the arguments from the vertex avoiding case in \S\ref{Sec:VertexAvoiding}, as follows.  Let $X$ be a curve of genus $19$ over $K$ whose skeleton $\Gamma$ is a chain of $19$ loops with bridges, with admissible edge lengths. Let $q \in X$ be a point specializing to the left endpoint $w_0$ of $\Gamma$, and let $\jmath_3:\mm_{3,1}\rightarrow \mm_g$ be the map obtained by attaching an arbitrary stable pointed curve $(X',q)$ of genus $3$ to $(X,q)$.  We now show that the curve $[Y] = [X \cup_q X']$ is not in $Z$ when $X'$ is not hyperelliptic and $q \in X'$ is not a Weierstrass point.

As in Lemma~\ref{lem:Weierstrass}, if $[Y] \in Z$, then there is a limit linear series $\ell$ of degree 25 and rank $6$ on $Y$ such that the multiplication map on each aspect of $\ell$ is not injective.  Let $\ell_X \subseteq H^0(X, \mathcal{O}(D_X))$ be the $X$-aspect of such a limit linear series. As in Lemma~\ref{lem:ramify}, we may assume that $\ell_X$ is ramified at a point $p$ specializing to the right endpoint $v_{20}$ of $\Gamma$.  To complete the proof of the proposition, we use a variation on the arguments from \S\ref{Sec:VertexAvoiding} to show that there are 28 tropically independent pairwise sums of functions in $\Sigma := \trop(\ell_X)$.

We may assume that $D = \Trop(D_X)$ is a break divisor.  We claim that
\[
\sum_{i=0}^3 a_i^{\ell_X} (q) \geq 13.
\]
Since $X'$ is not hyperelliptic, we have $a_{r-1}^{\ell_{X'}} (q) \leq d-3$.  Furthermore, if equality holds, then since $q' \in X'$ is not a Weierstrass point, we have $a_r^{\ell_{X'}} (q) \leq d-1$.  The claim then follows from the definition of a limit linear series.

It follows that  $\sum_{i=3}^6 s'_0[i] \leq 9$ (see Example~\ref{ex:ramified}).  The ramification condition at the point specializing to $v_{20}$ implies $s_{20}[6] \geq 7$.  By Proposition~\ref{Thm:BNThm}, it follows that all of the bridges and loops have multiplicity zero, and the inequalities on slopes must in fact be equalities:
\[
\sum_{i=3}^6 s'_0[i] = 9, \mbox{ and } s_{20}[6] = 7.
\]
Because of this, we treat this case in a similar manner to the vertex avoiding case of \S\ref{Sec:VertexAvoiding}.  There are finitely many such classes; each corresponds to a standard Young tableaux on one of the three shapes depicted in Figure~\ref{Fig:ThreeShapes}.  The particular shape is determined by the sequence of slopes along the first bridge $\beta_1$.  More precisely, the three missing boxes from the upper left corner form the partition $\lambda'_0$.  That this partition consists of precisely 3 boxes corresponds to the fact that $\sum_{i=3}^6 s'_0[i] = 9$.  We refer to these three shapes as Case A, Case B, and Case C, respectively.

\begin{figure}[h]
\begin{tikzpicture}[thick, scale=0.8]

\begin{scope}[grow=right, baseline]
\draw (1,5)--(3,5);
\draw (-0.5,4.5)--(3,4.5);
\draw (-0.5,4)--(3,4);
\draw (-0.5,3.5)--(3,3.5);
\draw (-0.5,3)--(0,3);
\draw (-0.5,4.5)--(-0.5,3);
\draw (0,4.5)--(0,3);
\draw (.5,4.5)--(.5,3.5);
\draw (1,5)--(1,3.5);
\draw (1.5,5)--(1.5,3.5);
\draw (2,5)--(2,3.5);
\draw (2.5,5)--(2.5,3.5);
\draw (3,5)--(3,3.5);
\draw (1.5,2.85) node {Case A};

\draw (5,5)--(7.5,5);
\draw (4.5,4.5)--(7.5,4.5);
\draw (4,4)--(7.5,4);
\draw (4,3.5)--(7.5,3.5);
\draw (4,3)--(4.5,3);
\draw (4,4)--(4,3);
\draw (4.5,4.5)--(4.5,3);
\draw (5,5)--(5,3.5);
\draw (5.5,5)--(5.5,3.5);
\draw (6,5)--(6,3.5);
\draw (6.5,5)--(6.5,3.5);
\draw (7,5)--(7,3.5);
\draw (7.5,5)--(7.5,3.5);
\draw (6,2.85) node {Case B};

\draw (9,5)--(12,5);
\draw (9,4.5)--(12,4.5);
\draw (9,4)--(12,4);
\draw (8.5,3.5)--(12,3.5);
\draw (8.5,3)--(9,3);
\draw (8.5,3.5)--(8.5,3);
\draw (9,5)--(9,3);
\draw (9.5,5)--(9.5,3.5);
\draw (10,5)--(10,3.5);
\draw (10.5,5)--(10.5,3.5);
\draw (11,5)--(11,3.5);
\draw (11.5,5)--(11.5,3.5);
\draw (12, 5)--(12,3.5);
\draw (10.5,2.85) node {Case C};

\end{scope}
\end{tikzpicture}

\caption{Skew Partitions Corresponding to the Given Slope Conditions}
\label{Fig:ThreeShapes}
\end{figure}

In Case C, when $s'_0 [6] = 6$ and $s'_0 [i] = i-3$ for all $i<6$,  the functions $\varphi_{66}, \varphi_{56}, \varphi_{46}, \varphi_{36},$ and $\varphi_{26}$ all have distinct slopes greater than or equal to $5$ on $\beta_1$.   Let $z$ be the 4th smallest entry appearing in the first two rows of $T$, and let $z'$ be the 8th smallest entry appearing in the union of the second and third row.  Define $\sigma$ as in Definition~\ref{def:z}.  Applying the algorithm from \S\ref{Sec:VertexAvoiding} then produces a certificate of independence, with all 5 of these functions assigned to the first bridge.

In the other two cases, however, we see that
\[
s_1 [4] + s_1[6] = 2s_1[5] = 6 \geq 5.
\]
The algorithm from \S\ref{Sec:VertexAvoiding} still works to produce a certificate of independence, but we must choose the input $\sigma = (\sigma_1, \ldots, \sigma_{19})$ differently in this case.  Let $z_1$ be the smallest symbol appearing in the first two rows of the tableau.  (Note that, because this is a skew tableau, $z_1$ is not necessarily equal to 1.)  Similarly, let $z_2$ be the second smallest symbol in the first two rows of the tableau.  In Case A, let $z_3$ be the 4th smallest symbol appearing in the union of the first and third row, and in Case B, let $z_3$ be the 5th smallest symbol appearing in the union of the first and third row.  Finally, in Case A, let $z_4$ be the 9th smallest symbol appearing in the union of the second and third row, and in Case B, let $z_4$ be the 8th smallest symbol appearing the union of the second and third row.

We then define
\begin{displaymath}
\sigma_k = \left\{ \begin{array}{ll}
6 & \textrm{if $k \leq z_1$,} \\
5 & \textrm{if $z_1 < k \leq z_2$,}\\
4 & \textrm{if $z_2 < k \leq z_3$,} \\
3 & \textrm{if $z_3 < k \leq z_4$,}\\
2 & \textrm{if $z_4 < k \leq 19$.}
\end{array} \right.
\end{displaymath}

We now count the number of $\sigma$-permissible functions on each region of the graph where $\sigma$ is constant, as in \S\ref{Sec:VertexAvoiding}.  We first show the following.

\begin{lemma}
\label{lem:AtMostThreeGenus19}
For any loop $\gamma_k$, there are at most 3 non-departing $\sigma$-permissible functions on $\gamma_k$.  Moreover, there are at most 3 $\sigma$-permissible functions on the loops $\gamma_1 , \gamma_{z_1+1},$ and $\gamma_{z_4+1}$.
\end{lemma}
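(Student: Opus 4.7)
The proof will parallel that of Lemma~\ref{Lem:VAAtMostThree}, adapted to the five-block decomposition of $\Gamma$ used here. For the first claim, I will argue that if $\varphi_{ij}$ is a non-departing permissible function on $\gamma_k$, then $s_{k+1}[i]+s_{k+1}[j]=s_k(\theta)$. Since the seven values $s_{k+1}[0]<\cdots<s_{k+1}[6]$ are strictly increasing, for each fixed $i$ at most one $j$ satisfies this equation, giving at most four unordered pairs. To rule out a fourth pair one must exclude the self-pair $(3,3)$, i.e.\ show $2s_{k+1}[3]\neq s_k(\theta)$ for every $k$. The blocks with $s_k(\theta)\in\{5,3\}$ are immediate by parity. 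For $s_k(\theta)=6$, I will use the fact that $s_0'[3]=0$ in all three Cases A, B, C, together with the observation that as $k$ ranges over this first block the slope $s_{k+1}[3]$ grows by at most the number of intervening lingering loops, which is bounded by the tableau shape so that $s_{k+1}[3]<3$. For $s_k(\theta)=2$, the ramification inequality $s_{20}[i]\ge i+1$ at the right endpoint, combined with the definition of $z_4$, forces $s_{k+1}[3]\ge 2$. For $s_k(\theta)=4$, the definitions of $z_2$ and $z_3$ in each of Cases A and B separate the value of $s_{k+1}[3]$ away from $2$.

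For the second claim, I will count \emph{all} permissible functions on the three transition loops $\gamma_1$, $\gamma_{z_1+1}$, $\gamma_{z_4+1}$ directly, via Lemma~\ref{Lem:VAPermissible}: a pair $(i,j)$ is permissible iff $s_k[i]+s_k[j]\le s_k(\theta)\le s_{k+1}[i]+s_{k+1}[j]$. By Proposition~\ref{Thm:BNThm}, the imposed ramification absorbs the Brill--Noether number, so there are no switching loops or bridges and slope vectors change in the expected way; in particular at most one $s[i]$ increases across a single loop. Hence the permissibility condition forces $s_k[i]+s_k[j]\in\{s_k(\theta),s_k(\theta)-1\}$. I then enumerate the pairs satisfying this constraint at each of the three loops in each of Cases A, B, C, reading the slope vectors directly off the three skew shapes of Figure~\ref{Fig:ThreeShapes}; each enumeration yields at most three pairs.

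The main obstacle is bookkeeping: the argument reduces to verifying a moderate number of explicit arithmetic constraints on integer pairs $(i,j)\in\{0,\dots,6\}^2$ across the (case, block, loop) combinations. The most delicate of these occurs at $\gamma_1$ in Case A, where $s_0'[5]=3$ makes the self-pair $(5,5)$ compete with the cross-pair $(4,6)$ for the same sum $6$, and one must confirm that no additional permissible pair appears from the slope bump on the first lingering loop in this block. Once this explicit verification is carried out, the same reasoning dispatches the remaining transition loops $\gamma_{z_1+1}$ and $\gamma_{z_4+1}$ and completes the proof.
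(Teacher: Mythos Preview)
Your overall plan parallels the paper's proof, and the parity arguments for blocks with $s_k(\theta)\in\{3,5\}$ as well as the bounds in blocks $1$ and $5$ are correct. The genuine gap is in your treatment of the third block, where $s_k(\theta)=4$. You assert that ``the definitions of $z_2$ and $z_3$ separate the value of $s_{k+1}[3]$ away from $2$,'' but this is false. In Case~A, take the tableau
\[
\begin{array}{l}
\text{Row 1 (cols 3--6): } 1,\,7,\,8,\,9\\
\text{Row 2 (cols 0--6): } 2,\,3,\,4,\,5,\,10,\,11,\,12\\
\text{Row 3 (cols 0--6): } 6,\,13,\,14,\,15,\,16,\,17,\,18\\
\text{Row 4 (col 0): } 19
\end{array}
\]
Here $z_2=2$ and $z_3=8$, so $k=5$ lies in block~3, yet column~$3$ already contains the entries $1$ and $5$, giving $s_6[3]=2$. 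Thus ruling out the self-pair $(3,3)$ alone is not possible on block~3.

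What rescues the argument is that having \emph{four} non-departing permissible functions requires not just $2s_{k+1}[3]=4$ but the full system $s_{k+1}[i]+s_{k+1}[6-i]=4$ for every $i$ (the pairing $\{0,1,2\}\leftrightarrow\{4,5,6\}$ must be the order-reversing one, since the slopes are strictly increasing). This is what the paper uses: translated into column heights of the skew tableau of entries $\le k$, it says each complementary pair of columns has total height~$4$, and one then checks this forces $k>z_3$. In the counterexample above, $s_6=(-3,-2,-1,2,3,4,5)$, and only the pairs $(3,3)$ and $(2,6)$ sum to $4$; the full system fails, consistent with the paper's argument.

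A smaller issue: for the second claim you propose to ``read the slope vectors directly off the three skew shapes'' at $\gamma_{z_1+1}$ and $\gamma_{z_4+1}$. But the slope vectors at those loops depend on the specific tableau, not just the shape, so a shape-by-shape enumeration does not suffice there; you need to argue via the definitions of $z_1$ and $z_4$ to bound the possible slope vectors.
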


\begin{proof}
The proof of Lemma~\ref{Lem:VAAtMostThree} holds in all cases, except when $z_2 < k \leq z_3$. This last case is handled as follows.  Suppose there are 4 non-departing permissible functions on $\gamma_k$. Then, as in the proof of Lemma~\ref{Lem:AtMostThree}, we must have
\[
s_{k+1} (\varphi_i) + s_{k+1} (\varphi_{6-i}) = 4 \mbox{ for all } i.
\]
In other words, if we consider the skew tableau consisting of symbols less than or equal to $k$, we see that the sum of heights of the $i$th column and the $(6-i)$ith column must be equal to 4.  We therefore see that $k > z_3$, and the lemma follows.
\end{proof}

We now define 3 more loops.  Each will be to the right of $\gamma_{z_2}$.  Let $b_3$ be the third smallest symbol in the first two rows of the tableau.  In Case A, let $b_4$ be the 5th smallest symbol appearing in the union of the first and third row, and in Case B, let $b_4$ be the 6th smallest symbol appearing in the union of the first and third row.  Finally, in Case A, let $b_5$ be the 10th smallest symbol appearing the union of the second and third row, and in Case B, let $b_5$ be the 9th smallest symbol appearing in the union of the second and third row.  Note the following inequalities:
\[
z_1 < z_2 < b_3 < z_3 < b_4 < z_4 < b_5 .
\]

\begin{lemma}
\label{Lem:CountingGenus19}
If $\ell \in \{ z_i , b_i \}$ and $\ell \notin \{ 1, z_i+1 \mid 1 \leq i \leq 4 \}$, then there are no new permissible functions on $\gamma_\ell$.  If either $b_3 = z_2 + 1$, then there are only 3 permissible functions on $\gamma_{b_3}$, and if $b_4 = z_4 +1$, then there are only 3 permissible functions on $\gamma_{b_4}$.
Similarly, if $z_1 = 1$, then there are only 2 permissible functions on $\gamma_{z_1}$, if $z_2 = z_1 + 1$, then there are only 2 permissible functions on $\gamma_{z_2}$, and if $b_5 = z_4 + 1$, then there are only 2 permissible functions on $\gamma_{b_5}$.  If $k \neq z_i$ or $b_i$ for any $i$, then there is a new permissible function on $\gamma_k$.
\end{lemma}

\begin{proof}
The proof is identical to that of Proposition~\ref{prop:nonew}.  For each of these loops $\gamma_\ell$, first enumerate the possible sequences of slopes $s_{\ell+1}[i]$.  Then note that, for any value $i$ that could satisfy $s_{\ell+1}[i] > s_\ell[i]$, there is no value $j$ such that $s_{\ell+1}[i] + s_{\ell+1}[j] = \sigma_\ell$.  Such values of $i$ must necessarily satisfy $s_{\ell+1}[i] > s_{\ell+1}[i-1]+1$, but the converse is not true.  For example, we consider the case $\ell = z_1$, and leave the remaining cases to the interested reader.  The possible sequences of slopes $s_{z_1+1}[i]$ are:
\[
(-3, -2, -1, 1, 2, 3, 4)
\]
\[
(-3, -2, -1, 0, 2, 3, 5)
\]
\[
(-3, -2, -1, 0, 1, 4, 5).
\]
\[
(-3, -2, -1, 0, 1, 4, 6).
\]
\[
(-3, -2, -1, 0, 1, 4, 7).
\]
By the definition of $z_1$, in the last two cases we have $s_{z_1+1}[6] = s_{z_1}[6]$.  In each of the cases, we see that for any of the remaining values of $i$ satisfying $s_{\ell+1}[i] > s_{\ell+1}[i-1]+1$, there is no value $j$ such that $s_{\ell+1}[i] + s_{\ell+1}[j] = 6$.

We will prove the last statement in the case where $z_1 \neq 1$, $z_2 \neq z_1 + 1$, $b_3 \neq z_2 + 1$, $b_4 \neq z_3 + 1$, and $b_5 \neq z_4 +1$.  The other cases are similar.  Note that there are 2 functions $\varphi_{ij}$ satisfying $s_1 (\varphi_{ij}) > 6$, and two more functions $\varphi_{ij}$ satisfying $s_{20} (\varphi_{ij}) < 2$.  Each of the remaining 24 functions is permissible on some loop.  Of these, the number of functions that are new on $\gamma_{\ell}$ for $\ell \in \{ 1, z_i+1 \mid 1 \leq i \leq 4 \}$ is at most $3+3+4+4+3=17$, leaving at least 7 functions.  There are no new new functions on $\gamma_{\ell}$ for $\ell \in \{ z_i , b_i \}$.  There are $19-7-5=7$ values of $\ell$ that are in neither of these two sets.  Since the number of functions remaining is greater than or equal to the number of values of $\ell$ in neither set, we see that we must in fact have equality, and there must be a new function on $\gamma_{\ell}$ for $\ell \notin \{ z_i , b_i \}$.
\end{proof}

To complete the proof it suffices to show that, for our choice of $\sigma$, every function $\varphi_{i,j}$ is assigned to some bridge or loop.  By construction, $\alpha(\varphi_{6,6}) = \alpha(\varphi_{5,6}) = \beta_1$, and $\alpha(\varphi_{0,1}) = \alpha(\varphi_{0,0}) = \beta_{g+1}$.  If $z_1 \neq 1$, there are 3 permissible functions on $\gamma_1$ by Lemma~\ref{Lem:CountingGenus19}, and if $z_1 = 1$, there are 2.  On each loop $\gamma_k$ for $k < z_1$, there is one new permissible function.  To each such loop, we assign a function $\varphi \in \cB$.  Moreover, if there is an unassigned departing permissible function on $\gamma_k$, we assign it to $\gamma_k$.  It follows that there are 3 unassigned permissible functions on each loop $\gamma_k$ with $k < z_1$, and on $\gamma_{z_1}$, there are 2.  These two functions are assigned to the loop $\gamma_{z_1}$ and the bridge $\beta_{z_1+1}$.

A similar analysis, using Lemma~\ref{Lem:CountingGenus19}, shows that we assign a function to every loop $\gamma_k$ with $k > z_1$, we assign one function to $\beta_{z_i+1}$ for all $i$, and one additional function (aside from $\varphi_{0,1}$ and $\varphi_{0,0}$) to $\beta_{23}$.  In particular, since there are precisely 19 loops, the total number of functions assigned to a bridge or loop is 28.  Hence, the output of the algorithm is a certificate of independence among all 28 functions in $\cB = \{ \varphi_{i,j} \mid 0 \leq i \leq j \leq 6\}$.

Propositions~\ref{prop:noboundary}, \ref{prop:j2}, \ref{prop:nocodim2}, and \ref{prop:j3} together show that $Z$ satisfies the vanishing conditions from Proposition~\ref{prop:vanishing}.  We conclude that the degeneracy locus $Z$ is generically finite over each codimension 1 component of its image.  This proves Theorem~\ref{thm:genfinite} and completes the proof of Theorem~\ref{thm:generaltype}. \qed

\bibliography{math(4)}

\end{document}